\title[]{Long-Time Asymptotics for Solutions of the NLS Equation with a Delta Potential and Even Initial Data}
\author{Percy Deift and Jungwoon Park}
\address{Courant Institute of Mathematical Sciences, New York University, New York, NY, USA.}
\newtheorem{theorem}{Theorem}
\newtheorem{proposition}{Proposition}[section]
\newtheorem{definition}[proposition]{Definition}
\newtheorem{lemma}[proposition]{Lemma}
\newtheorem{corollary}[proposition]{Corollary}
\theoremstyle{remark}
\newtheorem{remark}[proposition]{Remark}
\newtheorem{notation}[proposition]{Notation}
\numberwithin{equation}{section}
\def\rb{\mathbb{R}}
\def\cb{\mathbb{C}}
\def\bal{\begin{aligned}}
\def\eal{\end{aligned}}
\def\beq{\begin{equation}}
\def\eeq{\end{equation}}
\def\bpm{\begin{pmatrix}}
\def\epm{\end{pmatrix}}
\def\bsm{\bigl(\begin{smallmatrix}}
\def\esm{\end{smallmatrix}\bigr)}
\DeclareMathOperator{\Res}{Res}
\DeclareMathOperator{\sech}{sech}
\def\dsize{\text{}}
\def\ovl{\overline}
\def\inv#1{{#1}^{-1}}
\def\rd{\mathrm{d}}
\def\mc#1{\mathcal{#1}}
\def\mf#1{\mathfrak{#1}}
\def\first#1{\widetilde{{#1}}_\delta}
\def\firste#1{\widetilde{{#1}}_\delta}
\def\second#1{\widetilde{{#1}}_\delta^{[\cdot]}}
\def\seconde#1{\widetilde{{#1}}_\delta^{[\cdot]}}
\begin{document}
\maketitle

\begin{abstract}
We consider the one-dimensional focusing nonlinear Schr\"odinger equation (NLS) with a delta potential and even initial data.
The problem is equivalent to the solution of the initial/boundary problem for NLS on a half-line with Robin boundary conditions at the origin.
We follow the method of Bikbaev and Tarasov which utilizes a B\"acklund transformation
to extend the solution on the half-line to a solution of the NLS equation on the whole line.
We study the asymptotic stability of the stationary 1-soliton solution of the equation under perturbation
by applying the nonlinear steepest-descent method for Riemann-Hilbert problems introduced by Deift and Zhou.
Our work strengthens, and extends, earlier work on the problem by Holmer and Zworski.
\end{abstract}

\section{Introduction}

	The nonlinear Schr\"{o}dinger (NLS) equation with an external potential $V(x)$
(Gross-Pitaevskii (GP) equation)
\beq
\label{eq:gpe}
i u_t + \frac 12 \Delta u + V(x) u + \kappa |u|^2 u = 0 \ ,
\ \ \ \ \ \kappa = \pm 1
\eeq
arises as a model for a wide variety of phenomena in physics. In particular (see e.g. \cite{PS})
equation \eqref{eq:gpe} provides a model for the evolution of Bose-Einstein condensates in dilute
boson gases at very low temperatures: in this case $V(x)$ is called the
trapping potential and $\kappa = +1$ or $-1$ depending on whether the
interaction of the bosons is attractive or repulsive, respectively.
Furthermore, in this Bose-Einstein model, $|u(x)|^2$ is the boson density and $\mc M = \int |u(x)|^2 dx$ is the
total number of bosons present in the system.  In the spatially inhomogeneous case
($V(x)\not \equiv c$), exact solutions of \eqref{eq:gpe} are hard to come by, even
in one dimension. Further reduction in the one-dimensional case
to the delta-potential at $x = 0$, $V(x) = q\delta_0 (x)$, still leaves a
formidable and currently much studied problem (see e.g. \cite{HZ} and the references
therein). However, as noted by Fokas, in one-dimension with $V(x) = q\delta_0 (x)$,  $q\in\rb$,
and $u_0 (x) = u(x,t=0)$ {\it even}, equation \eqref{eq:gpe} becomes integrable.
This is because the delta function introduces a jump in the derivative at
$x = 0$, $\frac 12 (u_x (0+) - u_x (0-)) + q u(0) = 0$, and if $u(x)$ is
even this relation reduces to
\beq
\label{eq:mixed_bdry}
u_x (0+ ) +q  u(0) = 0
\eeq
In other words, \eqref{eq:gpe} with even initial data reduces to the initial-boundary value (IBV)
problem for NLS on a half-line
\begin{equation}
\label{eq:nls_plus}
i u_t + \frac{1}{2} u_{xx} +|u|^2 u= 0 , \ \ x>0,
\end{equation}
with homogeneous boundary conditions \eqref{eq:mixed_bdry} at $x = 0$: such problems are known to be
integrable by an extension of the inverse scattering method
(see \cite{Sk}, \cite{Ta}, \cite{Fo}, \cite{Kh}, \cite{BT} and \cite{Ta2}).

	In \cite{HZ} the authors consider the GP equation in one-dimension with $V = q \delta_0$
and $\kappa =1$,
\beq
\label{eq:nls_delta}
iu_t + \frac 12 u_{xx} + q \delta_0 (x) u + |u|^2 u = 0
\eeq
with initial data of the form
\beq
\label{eq:nls_delta_initial}
u(x,0) = v_{\lambda} (x) + w (x)
\eeq
where $q$ is small, $w(x)$ is even and of order $O(q)$, and $v_{\lambda}$ has the special form
\beq
\label{E:gs}
v_{\lambda} (x) = \lambda {\rm sech} (\lambda |x| + {\rm tanh}^{-1}
(q/\lambda )), \ \ \lambda > |q|.
\eeq
The data $v_{\lambda}$ corresponds to the nonlinear ground state of the condensate obtained by
minimizing the energy $\mc H = \int_\rb [\frac 12 |u_x |^2 - q \delta_0 |u|^2
- \frac 12 |u|^4 ] dx$ subject to $\mc M = \int_\rb |u(x)|^2 dx = 2(\lambda -q)$.
Associated with $v_{\lambda}$, one has the stationary solution
$u_{\lambda} (x,t) = e^{i\lambda^2 t /2} v_{\lambda} (x)$ for \eqref{eq:nls_delta}
corresponding to $w \equiv 0$. The main result in \cite{HZ} concerns the asymptotic
stability of this ground state condensate under perturbations,
$w = O(q)$, $q \ll 1$. The authors in \cite{HZ} prove in particular that for
$1 \leq t \leq c|q|^{-2/7}$,
\beq
\label{eq:hz_result}
u(0,t) = e^{i\hat{\lambda}^2 t/2}
  \left( \hat{\lambda} - \sqrt{\frac 2{\pi t}}
    e^{i(\hat{\lambda}^2 t/2 + \pi /4)}
     \int_{0}^\infty w(x)dx \right) + O(q /t^{3/2} )
\eeq
for some explicit $\hat{\lambda}$, $\hat{\lambda} \sim v_{\lambda} (0)$
for $q$ small. The authors use PDE-Hamiltonian systems methods and do not
utilize the integrability of the system explicitly.
In this paper we analyze the above problem \eqref{eq:nls_delta}\eqref{eq:nls_delta_initial},
but now utilizing the full force of the integrability of the IBV
problem together with the steepest-descent method for Riemann-Hilbert Problems (RHPs) in \cite{DZ3}\cite{DZ}.
As indicated above, there are a number of different ways to show that the IBV problem
\eqref{eq:nls_plus}\eqref{eq:mixed_bdry} is completely integrable. We will follow the method of
Bikbaev and Tarasov \cite{Ta}\cite{BT}\cite{Ta2}, which is based in turn on Khabibullin \cite{Kh}.
Part of this paper is devoted to reformulating the method of
Bikbaev and Tarasov in the language of RHP's, so that the method in \cite{DZ3}\cite{DZ} can be applied.

Our main results  are as follows. We use the standard notations of the scattering and inverse scattering method
for the focusing NLS equation. We refer the reader, who may not be familiar with these notations,
for example, $a(z)$, B\"acklund extension, etc., to Sections 3 and 4 below.
We note that for $u(x,0)=v_{\lambda}(x)+\epsilon w(x)$, $w(x)\in H^{1,1}=\{w\in L^2(\rb): w$ is absolutely continuous, $xw(x), w'\in L^2(\rb)\}$,
the equation \eqref{eq:nls_delta} has a unique global (weak) solution in $H^{1,1}$,
i.e. $t\mapsto u(t)=u(x,t)$ is a continuous map from $\rb^+$ into $H^{1,1}(\rb)$ satisfying
\beq
\label{eq:weak_sol_H_q}
\bal
   & u(t) = e^{-iH_q  t/2} u_0 + i \int_0^t e^{-iH_q  (t-s)/2} |u(s)|^2 u(s) \rd s, \\
   & u(t=0) = u_0 =u_0(x) \in H^{1,1}(\rb). \\
\eal
\eeq
Here $H_q $ is the self-adjoint operator $-\frac {d^2}{dx^2} - 2q  \delta_0$ on $\rb$ with domain
$$ \bal D(H_q )=\{f\in L^2: \
& f \text{ is absolutely continuous,}  \\
& f' \text{ is absolutely continuous in } \rb\setminus \{0\}, \\
& f', f'' \in L^2, f'(0+) - f'(0-) + 2q  f(0)=0  \}.
\eal $$
We will also consider global weak solutions $u(t)$ of \eqref{eq:nls_plus} with boundary conditions \eqref{eq:mixed_bdry}
in the sense that $u(t)$ is a continuous map from $\rb^+$ into $H^{1,1}(\rb^+) = \{f\in L^2(\rb^+): f$ is absolutely continuous
$, f', xf \in L^2(\rb^+)\}$ which solves
\beq
\label{eq:weak_sol_H_q_plus}
\bal
   & u(t) = e^{-iH_q ^+ t/2} u_0 + i \int_0^t e^{-iH_q ^+ (t-s)/2} |u(s)|^2 u(s) \rd s, \\
   & u(t=0) = u_0 =u_0(x) \in H^{1,1}(\rb^+). \\
\eal
\eeq
Here $H_q ^+$ is the self-adjoint operator $-\frac {d^2}{dx^2}$ on $\rb^+$ with domain
$$\bal  D(H_q ^+)=\{f\in L^2(\rb^+):
& f \text{ and }f'\text{ are absolutely continuous}, \\
& f'' \in L^2(\rb^+), f'(0+) + q f(0) =0  \}.\\
\eal$$

\begin{definition}
We say that $u(t)$ solves a \emph{HNLS$_q^+$} if $u(t)$ is a (global, weak) solution to  \eqref{eq:nls_plus} with \eqref{eq:mixed_bdry}.
If $u(t)$ solves \eqref{eq:nls_plus} on $\rb^-$ with \eqref{eq:mixed_bdry}, we say that $u(t)$ solves a \emph{HNLS$_q^-$}.
\end{definition}

Also we will consider global weak solutions $u(t)$ of NLS \eqref{eq:focusing_nls} on the line,
by which we mean a continuous map from $\rb^+$ into $H^{1,1}(\rb)$ such that
\beq
\label{eq:weak_sol_H_0}
\bal
   & u(t) = e^{-iH_0 t/2} u_0 + i \int_0^t e^{-iH_0 (t-s)/2} |u(s)|^2 u(s) \rd s, \\
   & u(t=0) = u_0 \in H^{1,1}(\rb). \\
\eal
\eeq
Here $H_0$ is the self-adjoint operator $-\frac {d^2}{dx^2}$ on $\rb$ with domain
$$
D(H_0)=\{f\in L^2: f \text{ and }f'\text{ are absolutely continuous}, f', f'' \in L^2\}.
$$
Unless stated otherwise, whenever we discuss a solution of \eqref{eq:nls_delta} in $H^{1,1}(\rb)$,
\eqref{eq:nls_plus} with boundary conditions \eqref{eq:mixed_bdry} in $H^{1,1}(\rb^+)$, or \eqref{eq:focusing_nls} in $H^{1,1}(\rb)$,
we always mean the global weak solutions described above.
On a number of occasions, however, particularly in our motivations for the Bikbaev and Tarasov's method,
we will also consider classical solutions to HNLS$_q^+$, etc.,
i.e., the solutions which are $C^2$ with respect to $x$ and $C^1$ with respect to $t$.
In addition, on a few occasions, we will also need to consider solutions of NLS in $H^{k,k}(\rb)$, $k\geq1$,
where $H^{k,j} = \{u \in L^2(\rb): u, u', \cdots, u^{(k-1)}$ are absolutely continuous, $u^{(k)}(x), x^ju(x) \in L^2(\rb)\}$, $k\geq0$, $j\geq0$.
We will show that \eqref{eq:weak_sol_H_q}, \eqref{eq:weak_sol_H_q_plus} and \eqref{eq:weak_sol_H_0}
indeed have unique solutions in $H^{1,1}$, in Section \ref{sec:sol_eqs} below.
Moreover, the solutions $u(t)=u(t;u_0)$ of these equations depend on the initial data $u_0$, uniformly for $t$ in compact subsets of $\{t\geq0\}$.

\begin{theorem}[Asymptotics of $u(x,t)$ as $t\to \infty$]
\label{T:main}
Let $0< |q|<\mu_0 $. Suppose that $u(x,t)$ is the (unique, weak, global) solution of the equation \eqref{eq:nls_delta} with initial data
$$ u( x,0) = v_{\mu_0} (x) + \epsilon w(x),$$
where $w$ is even and $ \|w\|_{H^{1,1}( \mathbb{R} )}\leq c$.
Let $u_0^e(x)$ be the B\"acklund extension  of $ u(x,0) |_ {\rb^+}$ to $\rb$ with respect to $q$.
Let $a(z)$ and $r(z)$ be the scattering function and the reflection coefficient of $u_0^e(x)$, respectively.
Denote $z_0 = |x|/t$. Fix $ 0 < \kappa < \frac{1}{4} $ and $M>1$. Then, there exists an $\chi_0 =\chi_0(\mu_0) > 0$ such that the following holds:
For any $0\leq \epsilon \leq \chi_0 |q|^{\frac 12}$, $a(z)$ has one simple zero if $q>0$ and at most two simple zeros for $q<0$.
In both cases the zeros lie in $i\rb^+$. We denote the zeros by $z_1 = i\mu_1$, $z_2 = i\mu_2$.
Set
\beq
\label{eq:l_j}
l_j = \frac{1}{\pi} \int_{0}^{\infty} \frac{s }{s^2 + \mu_j^2} \log ( 1+|r(s)|^2 ) \rd s,
\eeq
and
\beq
\label{eq:delta_j}
\hat{\delta}_j = \exp \bigg[ \frac 1{2\pi i} \int_0^{z_0}  \frac{\log ( 1+|r(s)|^2 )}{s-i\mu_j}  \rd s\bigg].
\eeq
\beq
\label{eq:upsilon_j}
\bal
& \upsilon_j = -\sqrt{\frac{\mu_j-q}{\mu_j+q}} e^{i(\mu_j^2 t/2 + \rho_j + l_j)} \hat{\delta}_j^{2},
\ \ \hat \upsilon_j = \frac{\mu_j+q}{\mu_j-q} \upsilon_j, \\
& \upsilon_j^x = \upsilon_j e^{-\mu_j |x|}, \ \  \hat \upsilon_j^x = \hat \upsilon_j e^{-\mu_j |x|},
\eal
\eeq
for $j=1,2$, where $\rho_1$ is described in \eqref{eq:gamma_one_zero} if $a(z)$ has one zero,
and $\rho_1, \rho_2$ are given in \eqref{eq:gamma_two_zero} if $a(z)$ has two zeros.
Let $k_1$, $k_2$, $p_1$, $p_2$, $p_3$ and $p_4$ be given in \eqref{eq:k_1}, \eqref{eq:k_2}, \eqref{eq:p_1_p_2} and \eqref{eq:p_3_p_4}.

\vspace{2 mm}

\noindent\textnormal{(i)} Suppose that $a(z)$ has one simple zero at $z_1=i\mu_1\in i\rb^+$ as given in \eqref{eq:perturbed_mu},

\noindent$\bullet$ For $|x|\leq M$ and $t\geq 1$\textnormal{:}
\beq
\label{eq:sol_small_x_large_t_one_zero}
 u(x,t) = e^{i( \mu_1^2 t/2 + \rho_1 + l_1)} v_{\mu_1}(x) + O(\epsilon |q|^{-\frac 12} t^{-({\frac 12}+\kappa)}).
\eeq

\noindent$\bullet$ For $|x|\geq 1/M$ and $t\geq 1$\textnormal{:}
\beq
\label{eq:sol_large_x_large_t_one_zero}
\bal  u(x,t)
& = -\frac{k_1}{\sqrt t} - \frac {2\mu_1 (\upsilon_1^x + p_1 )(\ovl{p_2} \ovl{\upsilon_1^x} +1)}
{|\upsilon_1^x + p_1|^2 + |p_2\upsilon_1^x +1|^2}+O( \epsilon |q|^{-\frac 12} t^{-({\frac 12}+\kappa)}),\\
\eal
\eeq

\noindent\textnormal{(ii)} If $a(z)$ has two simple zeros at $z_1=i\mu_1$, $z_2=i\mu_2$
\textnormal{(}$\mu_1 > \mu_2$\textnormal{)}, set $\tau \equiv |q|\sqrt t$.

\noindent$\bullet$ For $|x|\leq M$, as $\tau \to \infty$\textnormal{:}
\beq
\label{eq:sol_small_x_large_t_two_zeros}
 \bal u(x,t)
& = e^{i(\mu_1^2 t/2 + \rho_1 + l_1)} \mu_1 \sech (\mu_1 x - \tanh^{-1}(q/\mu_1)) \\
& \quad - \frac {2\mu_2 (\hat\upsilon_2^x - s_0)(1+ \hat \upsilon_1^x \ovl{s_0})}
    {|\hat\upsilon_2^x - s_0|^2  + |1+ \hat \upsilon_1^x \ovl{s_0}|^2}
     + O(\epsilon |q|^{-\frac 12} t^{-(\frac 12 + \kappa)}),\\
\eal
\eeq
where
\beq
\label{eq:s_0}
s_0 = -\frac{2\mu_1}{\mu_1 - \mu_2} \frac{\hat \upsilon_1^x- \hat\upsilon_2^x}
{|\hat \upsilon_1^x|^2+1}.
\eeq

\noindent$\bullet$ For $|x| \geq 1/M$, as $\tau \to \infty$\textnormal{:}
\beq
\label{eq:sol_medium_x_large_t_two_zeros}
 \bal
& u(x,t) = -\frac{k_1}{\sqrt t} - \frac {2\mu_1 (\hat\upsilon_1^x + p_1 )(\ovl{p_2} \ovl{\hat\upsilon_1^x} +1)}
{|\hat\upsilon_1^x + p_1|^2 + |p_2\hat\upsilon_1^x +1|^2} -  \frac { 2\mu_2 s_1 \ovl{s_2}} {|s_1|^2 + |s_2|^2}\\
& \qquad\qquad  + O(\epsilon qe^{-\tau^2} + \epsilon^2 q(z_0^2 + q^2)^{-1} t^{-1} + \epsilon |q|^{-\frac 12} t^{-(\frac 12 + \kappa)}). \\
 \eal \eeq
where
\beq
\label{eq:s}
s = \frac{2\mu_1}{\mu_1 - \mu_2} \frac{(p_2 \hat \upsilon_1^x +1)(\hat \upsilon_2^x+p_3)-(\hat \upsilon_1^x + p_1)(p_4 \hat \upsilon_2^x + 1)}
{|\hat \upsilon_1^x + p_1|^2+|p_2 \hat \upsilon_1^x +1|^2},
\eeq
and
\beq
\label{eq:s_j}
\bal
& s_1 = \hat \upsilon_2^x+p_3 - (\ovl{p_2} \ovl{\hat \upsilon_1^x} +1) s, \\
&s_2 = p_4 \hat \upsilon_2^x + 1 + (\ovl{\hat \upsilon_1^x} + \ovl{p_1})s. \\
\eal
\eeq

\end{theorem}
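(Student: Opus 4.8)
The plan is to represent $u(x,t)$, through the B\"acklund extension, as the reconstruction of a Riemann--Hilbert problem (RHP) on the line with the discrete data built in, and then to run the Deift--Zhou nonlinear steepest-descent analysis of that RHP. First I would pin down the scattering data of $u_0^e$. When $\epsilon=0$ the B\"acklund extension of $v_{\mu_0}|_{\rb^+}$ is the stationary one-soliton, for which $r\equiv 0$ and $a$ has a single simple zero at $i\mu_0$; for $u(x,0)=v_{\mu_0}+\epsilon w$ the direct-scattering map $u_0\mapsto(r,\{z_j\},\{c_j\})$ built in Sections~3 and~4 is Lipschitz, so $r=O(\epsilon)$ in the relevant weighted norm (using $w\in H^{1,1}$), $z_1=i\mu_1$ is an $O(\epsilon)$ perturbation of $i\mu_0$, and I must rule out further zeros away from $i\mu_0$ except, for $q<0$, one additional zero $z_2=i\mu_2$ on $i\rb^+$. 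That all zeros lie on $i\rb^+$ is forced by the $z\mapsto -z$ symmetry the B\"acklund extension imprints on $(a,b)$; the count then follows from a Rouch\'e argument near $i\mu_0$ together with an analysis near the origin, where for $q<0$ the Robin condition admits at most one extra shallow bound state, with $\mu_2$ of the size that governs the scale $\tau=|q|\sqrt t$. (That the weak solution actually equals the RHP reconstruction follows from continuity of both in $u_0$, as recorded above.)

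With the RHP fixed, the phase $\theta(z;x,t)$ has one real stationary point of modulus $z_0=|x|/t$, and one conjugates by the scalar function $\delta(z)=\exp\big[\tfrac1{2\pi i}\int_0^{z_0}\tfrac{\log(1+|r(s)|^2)}{s-z}\,\rd s\big]$ — so that $\hat\delta_j=\delta(i\mu_j)$, cf.\ \eqref{eq:delta_j} — in order to trade the upper/lower factorization of the jump across the two sides of the stationary point. Splitting $r$ into an analytic part plus a controllably small remainder (rational or $\bar\partial$ approximation) and opening lenses, the jump becomes exponentially negligible off a shrinking neighbourhood of the stationary point, and the RHP decouples into: (a)~a rational RHP carrying the pole(s) $z_j$ with rescaled norming constants, whose solution is the one- (or two-) soliton factor dressed by the phase shifts $l_j$ of \eqref{eq:l_j}, the multipliers $\hat\delta_j^{2}$, and the constants $\rho_j$ of \eqref{eq:gamma_one_zero}/\eqref{eq:gamma_two_zero}, producing the parameters $\upsilon_j,\hat\upsilon_j$ of \eqref{eq:upsilon_j}; and (b)~a local parabolic-cylinder model at the stationary point, whose explicit solution supplies the dispersive $t^{-1/2}$ term with coefficient $k_1$ (a ratio of Gamma functions in $\nu=-\tfrac1{2\pi}\log(1+|r(z_0)|^2)$) and the interaction constants $p_1,\dots,p_4$ of \eqref{eq:p_1_p_2} and \eqref{eq:p_3_p_4}. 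The residual RHP is then solved by small-norm estimates, which is the source of the error term $O(\epsilon|q|^{-1/2}t^{-(1/2+\kappa)})$.

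Reconstructing $u(x,t)$ from the $1/z$ coefficient of the RHP solution as the product of these models, and weighing the soliton factor $\sim e^{-\mu_j|x|}$ against $t^{-1/2}$, gives the clean phase-shifted soliton $e^{i(\mu_1^2 t/2+\rho_1+l_1)}v_{\mu_1}(x)$ for $|x|\leq M$ and the mixed soliton/dispersive expressions — the rational functions of $\upsilon_1^x,\hat\upsilon_j^x,s,s_0,s_j$ — for $|x|\geq 1/M$; in the two-zero case one additionally composes the small soliton $i\mu_2$ with the stationary-phase sector, which accounts for the extra errors $\epsilon q e^{-\tau^2}$ and $\epsilon^2 q(z_0^2+q^2)^{-1}t^{-1}$ in \eqref{eq:sol_medium_x_large_t_two_zeros}. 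The main obstacle is uniformity: everything must hold uniformly in $z_0$ both for $|x|\leq M,\ t\geq 1$ (where $z_0$ may be $O(1)$) and for $|x|\geq 1/M$ (where $z_0\downarrow 0$), and as $z_0\to 0$ the real stationary point runs into the origin, hence towards the line $i\rb^+$ carrying the poles, so the $\delta$-function, the lens contours, the parabolic-cylinder parametrix and the residue contributions must be arranged so as not to interfere as these scales collide; the same near-collision for $q<0$, where $i\mu_2$ with $\mu_2=O(|q|)$ lies close to both the origin and the stationary point, is precisely why the two-zero asymptotics are asserted only in the regime $\tau=|q|\sqrt t\to\infty$ and carry $|q|$-dependent errors. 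Tracking the constant $\chi_0(\mu_0)$ and the powers of $|q|$ through the conjugation, the approximation of $r$, and the small-norm theory is the delicate part; the algebra turning the product of model solutions into the displayed formulae is routine.
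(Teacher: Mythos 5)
Your proposal is correct and follows essentially the same route as the paper: identify the scattering data of $u_0^e$ perturbatively (symmetry forcing zeros onto $i\rb^+$, Rouch\'e for the count, the shallow extra zero $\mu_2\sim -q$ for $q<0$), then run the Deift--Zhou analysis with the $\delta$-conjugation, rational approximation of $r$, lens opening, and the parabolic-cylinder parametrix, and finally dress with the soliton factors. The only cosmetic difference is that the paper first strips the poles out by a Darboux transformation, does steepest descent on the pole-free RHP, and then adds the poles back via the explicit Darboux formulas, whereas you describe the same decomposition as the RHP ``decoupling'' into a rational pole-carrying piece and the local model --- these are the same computation in a different order.
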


\begin{remark}
This result should be compared with \cite{FIS}
where the authors obtain the long-time asymptotics for the solution of NLS on half-line
using the method introduced by Fokas \cite{Fo2}.
\end{remark}

\begin{remark}
For $q<0$, case (ii) is generic in the sense that
there exists an open dense subset $\mathcal{U}\subset H^{1,1}(\rb^+) $ such that
any $w \in \mathcal{U}$ falls into case (ii) for all sufficiently small $\epsilon > 0$.
\end{remark}
\begin{remark}
If $\epsilon=0$, we have $\mu_1 = \mu_0$, $\rho_1 = 0$, $r(z) \equiv 0$ and hence $l_1=0, \hat{\delta}_1=1$. This matches the exact solution $u(x,t) = e^{i \mu_0^2 t/2} v_{\mu_0}(x)$ for $x\in\rb$.
\end{remark}
\begin{remark}
If the initial data has sufficient smoothness and decay,
the error term can be sharpened to order $O\big(\epsilon |q|^{-\frac 12} \frac{\log t}{t}\big)$
in \eqref{eq:sol_small_x_large_t_one_zero}, \eqref{eq:sol_large_x_large_t_one_zero}
and \eqref{eq:sol_small_x_large_t_two_zeros}.
\end{remark}
\begin{remark}
As we will see in the proof of the Theorem, \eqref{eq:sol_large_x_large_t_one_zero} and \eqref{eq:sol_medium_x_large_t_two_zeros} hold for all $x\in\rb$.
In the overlapping region $1/M \leq |x| \leq M$, as $z_0 = O(t^{-1})$,
$$
\hat{\delta_1} = 1+ O(\epsilon^2 t^{-1}) \text{ and } k_1 = O(r(z_0)) = O(\epsilon |q|^{-\frac 12} t^{-\frac 12}),
$$
and hence $p_1, p_2 = O(\epsilon |q|^{-\frac 12} t^{-1})$.
This implies that \eqref{eq:sol_small_x_large_t_one_zero} and \eqref{eq:sol_large_x_large_t_one_zero} are consistent.
In the case that $a(z)$ has two zeros, it is easily verified that
\eqref{eq:sol_small_x_large_t_two_zeros} and \eqref{eq:sol_medium_x_large_t_two_zeros} match as well.
\end{remark}

\begin{remark}
It can be verified that the leading terms in \eqref{eq:sol_small_x_large_t_two_zeros} represent the 2-soliton solution for NLS (cf. \cite{FT}).
\end{remark}

When $\epsilon=q$, the following result recovers, in particular, \eqref{eq:hz_result} and provides a more detailed description of the asymptotics of $u(x,t)$ for all $t\ll q^{-2}$.

\begin{theorem}[Asymptotics of $u(x,t)$ for $t\ll q^{-2}$, $q$ small]
\label{T:main2}
Suppose $u(x, t)$ solves \eqref{eq:nls_delta} with initial data
$$ u( x,0) = v_{\mu_0} (x) + q w(x),$$
where $ w(x)$  is real, even and $ \|w\|_{H^{1,1}( \mathbb{R} )}\leq c$.
Let $u_0^e(x)$ be the B\"acklund extension  of $ u(x,0) |_ {\rb^+}$ to $\rb$ with respect to $q$
and let $a(z)$ be the scattering function of $u_0^e(x)$. Define
\beq\bal
\label{eq:K_z}
& w_0=\int_0^\infty w(s) \rd s,\ \ w_1 = \int_{-\infty}^\infty w(s)v_{\mu_0} (s) \rd s,\\
& K(z) = -\mathrm{Re} \int_0^\infty e^{-isz} w(s)  \frac{z^2 - 2iz\mu_0 \tanh(\mu_0 s) - \mu_0^2}{z^2+\mu_0^2} \rd s. \\
\eal\eeq
Denote $\tau = |q|\sqrt{t}$ and $z_0 = |x|/t$. Fix $C, M>1$. Then, there exists a $q_0=q_0(\mu_0)$ such that the following holds: For any $0<|q|<q_0$, $a(z)$ has one simple zero if $q>0$ and at most two simple zeros for $q<0$. In both cases the zeros lie in $i\rb^+$.
We denote the zeros by $z_1 = i\mu_1$, $z_2 = i\mu_2$.
Set
\beq
\label{eq:nu_j_sm_t}
 \upsilon_1 = -\sqrt{\frac{\mu_1-q}{\mu_1+q}} e^{i\mu_1^2 t/2 }, \ \ \upsilon_1^x = \upsilon_1 e^{-\mu_1 |x|},
\eeq
Let $p_1$ and $p_2$ be given in \eqref{eq:p_12_small_q}.
\vspace{2 mm}

\noindent\textnormal{(i)} If $a(z)$ has one simple zero at $z_1=i\mu_1\in i\rb^+$, then $\mu_1 = \mu_0 + q w_1 +O(q^2)$. Moreover,

\noindent$\bullet$ for the region $|x|\leq M$ \textnormal{:}
\beq
\label{eq:sol_small_x_one_zero}
  u(x,t) =
\left\{
\bal
& e^{i \mu_1^2 t/2} \Bigg[ v_{\mu_1}(x) - q w_0 \sqrt{\frac{2}{\pi t}} \Big(e^{i\Omega}\sech^2\mu_1 x - e^{-i\Omega} \tanh^2 \mu_1 x \Big)  \Bigg] \\
&  \qquad\quad \ \ + O\Big( \frac {q}{\sqrt{t}} \big(t^{-\frac 14} + |q|^{\frac 12} + \tau |\log \tau| \big)\Big),  \ \ t\leq \frac 12 |q|^{-2}, \\
& e^{i \mu_1^2 t/2} v_{\mu_1}(x) + O(q^2), \qquad \qquad \qquad\qquad\quad \ t \geq \frac 1C |q|^{-2}, \\
\eal
\right.
\eeq
where $\Omega(x,t) = -x^2/(2t) + \mu_1^2 t/2 + \pi/4$.

\noindent$\bullet$ For $|x|\geq 1/M$ \textnormal{:}
\beq
\label{eq:sol_large_x_one_zero}
  u(x,t) =
\left\{
\bal
& i(z_0 - i\mu_1)p_1 - \frac {2\mu_1 (\upsilon_1^x + p_1 )(\ovl{p_2} \ovl{\upsilon_1^x} +1)}
{|\upsilon_1^x + p_1|^2 + |p_2\upsilon_1^x +1|^2} \\
&  \qquad \qquad+ O\Big( \frac {q}{\sqrt{t}} \big(t^{-\frac 14} + |q|^{\frac 12} + \tau |\log \tau| \big)\Big), \   t\leq \frac 12 |q|^{-2}, \\
& e^{i \mu_1^2 t/2} v_{\mu_1}(x) + O(q^2), \qquad \qquad \qquad\qquad\quad \ t \geq \frac 1C |q|^{-2}. \\
\eal
\right.
\eeq

\noindent\textnormal{(ii)} If $a(z)$ has two simple zeros $z_1=i\mu_1\in i\rb^+$ and $z_2=i\mu_2\in i\rb^+$\textnormal{:} necessarily $q<0$.
In this case, $\mu_1 = \mu_0 + q w_1  +O(q^2)$ and $\mu_2 = -q + O(q^3)$.
Moreover, the asymptotic behavior of $u(x,t)$ is the same as in one-zero case up to error estimates
in \eqref{eq:sol_small_x_one_zero} and \eqref{eq:sol_large_x_one_zero}.

\end{theorem}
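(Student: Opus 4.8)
The plan is to deduce Theorem~\ref{T:main2} from Theorem~\ref{T:main} by specializing to $\epsilon = q$ and pushing the steepest-descent analysis behind Theorem~\ref{T:main} one order further in the regime $q \to 0$, so as to extract the $O(q/\sqrt t)$ correction that Theorem~\ref{T:main} lumps into its remainder. The first ingredient is a perturbative description of the scattering data of the B\"acklund extension $u_0^e$ of $(v_{\mu_0}+qw)|_{\rb^+}$. The base point $v_{\mu_0}^e$ is reflectionless with a single eigenvalue at $i\mu_0$ (with a $q$-dependent norming constant, since $v_{\mu_0}$ itself carries $\tanh^{-1}(q/\mu_0)$), and $u_0^e = v_{\mu_0}^e + q\,w^e + O_{H^{1,1}}(q^2)$ because the extension depends affinely, to leading order, on the half-line data. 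First-order perturbation theory for the Zakharov--Shabat operator --- the Born approximation built from the explicit Jost functions of the one-soliton background --- then yields $r(z) = q\,K(z) + O(q^2)$ in a suitable weighted norm, with $K$ the integral in \eqref{eq:K_z} (the weights $\tanh(\mu_0 s)$ and $(z^2 - 2iz\mu_0\tanh(\mu_0 s) - \mu_0^2)/(z^2+\mu_0^2)$ being exactly those generated by $v_{\mu_0}^e$), the eigenvalue perturbing to $i\mu_1$ with $\mu_1 = \mu_0 + q\,w_1 + O(q^2)$, and, for $q<0$, a new eigenvalue bifurcating from the threshold at $i\mu_2$ with $\mu_2 = -q + O(q^3)$; for $q>0$ there is none. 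That $w$ is \emph{real} and even keeps the zeros of $a$ on $i\rb^+$, gives the $O(q^3)$ (rather than $O(q^2)$) refinement for $\mu_2$, and forces the norming-constant phases $\rho_j$ to be $O(q^2)$; combined with $l_j,\ \hat\delta_j - 1 = O(q^2)$ up to logarithms (these depend on $r$ quadratically, through $\log(1+|r|^2)$), this makes the $\upsilon_1$ of \eqref{eq:nu_j_sm_t} agree with that of \eqref{eq:upsilon_j} up to $O(q^2)$. In particular $r(0) = q\,K(0) = q\,w_0$, which is real.

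For $1 \le t \le \tfrac12|q|^{-2}$ (equivalently $\tau = |q|\sqrt t \le 1/\sqrt2$) I would re-enter the RHP analysis of Theorem~\ref{T:main} and isolate the $O(q/\sqrt t)$ term. After the standard reductions --- conjugating away the diagonal jump carrying $\log(1+|r|^2)$ (here an $O(q^2)$ perturbation), dressing out the soliton (or, for $q<0$, the two solitons), and deforming onto the steepest-descent contour through the stationary-phase point $z_*$, which is of order $z_0 = |x|/t$ --- the local model at $z_*$ is a parabolic-cylinder problem whose contribution to the off-diagonal reconstruction of $u$ is linear, to leading order, in $r(z_*)$ and its conjugate, of size $\sim |r(z_*)|/\sqrt t$, and carries the standard stationary-phase factor together with the soliton time factor $e^{i\mu_1^2 t/2}$. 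For $|x| \le M$ we have $z_0 = O(1/t)$, hence $z_* \to 0$ and $r(z_*) = q\,K(z_*) = q\,w_0 + O(q/t) + O(q^2)$; pulling this contribution back through the one-soliton dressing matrix, whose entries over $|x| = O(1)$ are built from $\sech(\mu_1 x)$ and $\tanh(\mu_1 x)$, produces precisely the term $-q\,w_0\sqrt{2/(\pi t)}\big(e^{i\Omega}\sech^2\mu_1 x - e^{-i\Omega}\tanh^2\mu_1 x\big)$ of \eqref{eq:sol_small_x_one_zero}, and evaluating at $x=0$ recovers \eqref{eq:hz_result}. The error terms then come out as: $O(\tfrac{q}{\sqrt t}t^{-1/4})$ from the next parabolic-cylinder correction; $O(\tfrac{q}{\sqrt t}|q|^{1/2})$ from the residual higher-order scattering-data errors; and $O(\tfrac{q}{\sqrt t}\tau|\log\tau|) = O(q^2|\log\tau|)$ from the $O(q^2)$ nonlinear contributions together with the near-threshold eigenvalue $i\mu_2$, which, when present, sits at distance $\sim|q|$ from $z_*$ and supplies the logarithm. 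For $|x| \ge 1/M$ the same analysis applies, but now $z_0$ need not be small, so $K$ is evaluated at $z_0$ itself; pulling back through the dressing produces the term $i(z_0 - i\mu_1)p_1$ together with the soliton expression in $\upsilon_1^x$, which is \eqref{eq:sol_large_x_one_zero}.

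For $t \ge \tfrac1C|q|^{-2}$ (so $1/\sqrt t \le |q|\sqrt C$) no refinement is needed: the dispersive correction is then $O(q/\sqrt t) = O(q^2)$, and since $\rho_1 + l_1,\ \hat\delta_1 - 1,\ p_1,\ p_2$ are all $O(q^2)$ while the soliton expression is $O(1)$, Theorem~\ref{T:main}(i) gives $u(x,t) = e^{i\mu_1^2 t/2}v_{\mu_1}(x) + O(q^2)$ for $|x| \le M$ directly; for $|x| \ge 1/M$ one uses that, setting $p_1 = p_2 = 0$ and using $|\upsilon_1|^2 = (\mu_1-q)/(\mu_1+q) = e^{-2\tanh^{-1}(q/\mu_1)}$, the soliton reconstruction $-2\mu_1(\upsilon_1^x+p_1)(\ovl{p_2}\,\ovl{\upsilon_1^x}+1)/(|\upsilon_1^x+p_1|^2 + |p_2\upsilon_1^x+1|^2)$ collapses to $e^{i\mu_1^2 t/2}\mu_1\sech(\mu_1|x|+\tanh^{-1}(q/\mu_1)) = e^{i\mu_1^2 t/2}v_{\mu_1}(x)$, again up to $O(q^2)$. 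Case~(ii) (necessarily $q<0$, two eigenvalues) is handled by running the same scheme on Theorem~\ref{T:main}(ii): the dressing is now by the two-soliton matrix with eigenvalues $i\mu_1 \approx i\mu_0$ and $i\mu_2 \approx i|q|$, and the key observation is that $\mu_2 + q = O(q^3)$ forces the second norming constant, hence $\hat\upsilon_2$, to be $O(|q|)$, so that $s_0$, $s$ and the $\mu_2$-terms in \eqref{eq:sol_small_x_large_t_two_zeros}--\eqref{eq:s_j} reduce, to the order being tracked, to the one-eigenvalue expressions \eqref{eq:sol_small_x_one_zero}--\eqref{eq:sol_large_x_one_zero}.

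The main obstacle is the middle step: producing the $O(q/\sqrt t)$ term with its explicit constant, \emph{uniformly} over $1 \le t \le \tfrac12|q|^{-2}$ and over $x$. The difficulty is that, for $|x|$ bounded and $t\to\infty$, the stationary point $z_*$ collides with the origin --- hence, when $q<0$, with the near-threshold eigenvalue $i\mu_2$ and with the portion of the contour used to dress out the solitons --- so that the parabolic-cylinder model, the soliton dressing, and the $i\mu_2$ pole all interact in a shrinking neighbourhood of $0$. Disentangling this interaction, and in particular tracking how the two small parameters $q$ (the size of the scattering data) and $t^{-1}$ (the dispersive rate) combine through $\tau = |q|\sqrt t$, is precisely what forces --- and is needed to justify --- the $t^{-1/4}$ and $\tau|\log\tau|$ error terms; the remainder of the argument is bookkeeping layered on top of the estimates already established in the proof of Theorem~\ref{T:main}.
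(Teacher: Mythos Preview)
Your overall strategy---perturb the scattering data, feed this into the parabolic-cylinder steepest-descent machinery of Theorem~\ref{T:main}, then dress by the soliton---is plausible in spirit but differs substantially from what the paper does, and contains a real gap in the scattering-data step.

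The paper does \emph{not} re-enter the parabolic-cylinder analysis for Theorem~\ref{T:main2}. Instead it exploits smallness of $r$ directly: since $\|r_f\|_{L^2\cap L^\infty}=O(q)$, the pole-free RHP is solved to $O(q^2)$ by the Born approximation (Lemma~\ref{lem:expansion_rhp}), giving $m_f(z_1)=I+\frac{1}{2\pi i}\int\bsm 0 & r_f/(s-z_1)\\ \ovl{r_f}/(s-z_1)&0\esm e^{\pm i\theta}\,\rd s+O(q^2)$ and $u_f=\frac{1}{2\pi}\int r_f e^{i\theta}+O(q^2)$. These integrals are then evaluated by a tailored stationary-phase lemma (Lemma~\ref{lem:stationary_phase_estimate}) which handles functions of the form $g_1+h\,g_2$ with $h=\prod\frac{iq_k}{z+iq_k}$; the $t^{-1/4}$, $|q|^{1/2}$ and $\tau|\log\tau|$ errors all come out of that lemma. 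No contour deformation, no $\delta$-conjugation, no parabolic-cylinder model.

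Your claim that first-order perturbation theory gives $r(z)=q\,K(z)+O(q^2)$, and in particular $r(0)=q\,w_0$, is where the argument breaks. From \eqref{eq:b} one has (one-zero case, $\beta=-q$)
\[
b(z)=\big(g_3(z)+g_3(-z)\big)+\frac{2iq}{z-iq}\,g_3(z),
\]
with $g_3=O(q)$. The first bracket equals $-2q\,K(z)+O(q^2)$, but the second term is \emph{also} $O(q)$ on $\rb$ (since $|2iq/(z-iq)|\le2$), not $O(q^2)$; its derivative is $O(1)$ near $z=0$ so it is not small in $H^{1,0}$ either. In fact the singular piece is exactly what enforces $b(0)=0$ (hence $r(0)=0$, by Remark~\ref{rmk:symmetry_r_backlund}), contradicting your $r(0)=q\,w_0$. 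If you plug the true $r_f(z_0)$ into the parabolic-cylinder leading term $k_1\propto r_f(z_0)$ you get, for $|x|\le M$, something that vanishes as $z_0\to0$ and \emph{misses} the $q\,w_0/\sqrt t$ correction entirely. What actually produces that correction is the non-singular part $g_3(z)+g_3(-z)$ evaluated at $z_0$, while the $\frac{2iq}{z-iq}g_3$ piece contributes the $\tau|\log\tau|$ error---this splitting is the content of Lemma~\ref{lem:stationary_phase_estimate}, and it is the step your outline lacks.

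Your large-time case $t\ge\frac1C|q|^{-2}$ by direct appeal to Theorem~\ref{T:main} also falls slightly short: with $\epsilon=q$ the error there is $O(|q|^{1/2}t^{-(1/2+\kappa)})$, $\kappa<1/4$, which at $t\sim|q|^{-2}$ is $O(|q|^{3/2+2\kappa})$, not $O(q^2)$. The paper gets $O(q^2)$ uniformly from the $\tau\ge1/C$ branch of the same stationary-phase lemma, again bypassing Theorem~\ref{T:main}.
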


\begin{remark}
As we will see in the proof of the Theorem, \eqref{eq:sol_large_x_one_zero} holds for all $x\in\rb$.
\end{remark}

\begin{remark}
The results in Theorem \ref{T:main} and Theorem \ref{T:main2} are complementary to each other.
The asymptotic form \eqref{eq:sol_small_x_large_t_one_zero}, for example, provides a decay rate in the error term as $t\to\infty$ whereas \eqref{eq:sol_small_x_one_zero} does not. If $t \ll |q|^{-2}$, however,
$$
\epsilon |q|^{-\frac 12} t^{-({\frac 12}+\kappa)} \geq |q|^{\frac 12} t^{-\frac 34} \gg \frac {|q|}{\sqrt t}.
$$
Hence the solution is described in greater detail by \eqref{eq:sol_small_x_one_zero} in this region.
\end{remark}

\begin{remark}
The above result shows that, in both cases, whether $a(z)$ has one or two zeros, the leading terms are the same for $t\ll |q|^{-2}$.
In particular, in the case that $a(z)$ has two zeros, the contribution from the second zero is not noticeable for $t \ll |q|^{-2}$.
The one-zero case and the two-zero case indeed behave differently, however, for larger times, $t\gg |q|^{-2}$ (see Theorem \ref{T:main}).
Hence the critical region where the solution changes its behavior is $t\sim q^{-2}$.
\end{remark}

\begin{remark}
The determining role played by the parameter $\tau=|q|\sqrt t$ can be traced,
at least at the formal level, to the following observation:
the B\"acklund transformation taking NLS from the half line to the full line introduces terms of the form $\frac q{z\pm iq}$,
which are small away from $z=\mp q$.
On the other hand the solution of the Cauchy problem involves integrals of the form $\int f(z) \frac q{z\pm iq} e^{-itz^2/2}$,
from which we see that the leading order contributions take the form $\sim f(\mp iq) e^{itq^2/2} = f(\mp iq) e^{i\tau^2/2}$,
in which the role of $\tau$ is prominently displayed.
\end{remark}

\begin{remark}
Note that in the cross-over region, $\frac 1C \leq \tau \leq C$
the formulae \eqref{eq:sol_small_x_large_t_two_zeros}, \eqref{eq:sol_medium_x_large_t_two_zeros},
\eqref{eq:sol_small_x_one_zero} and \eqref{eq:sol_large_x_one_zero} describe the solution only up to an error $O(q^2) = O(t^{-1})$.
\end{remark}

\begin{remark}
Holmer and Zworski \cite{HZ} used ${\hat{\lambda}} = \mu_0 + qw_1$ in \eqref{eq:hz_result}
to describe the evolution of solutions to \eqref{eq:nls_delta} for time interval $1 \ll t\leq c|q|^{-2/7}(\ll c|q|^{-2})$.
From \eqref{eq:sol_small_x_one_zero} we see that as $\mu_1={\hat{\lambda}} + O(q^2)$,
$$ \bal u(0,t)
& = e^{i \mu_1^2 t/2} \bigg( v_{\mu_1}(0) - q w_0 \sqrt{\frac{2}{\pi t}} e^{i(\mu_1^2 t/2 + \pi/4)} \bigg) + o\bigg(\frac{q}{\sqrt t}\bigg)\\
& = e^{i {\hat{\lambda}}^2 t/2} \bigg( v_{{\hat{\lambda}}}(0) - q w_0 \sqrt{\frac{2}{\pi t}} e^{i({\hat{\lambda}}^2 t/2 + \pi/4)} \bigg) + o\bigg(\frac{q}{\sqrt t} + q^2 t\bigg),\\
\eal
$$
which agrees to leading order with the result \eqref{eq:hz_result} of \cite{HZ} for times $t \ll |q|^{-2/3}$.
Of course $|q|^{-2/7} \ll |q|^{-2/3}$, and we see in particular that the result in \cite{HZ} is in fact true in the larger interval $1 \ll t \ll |q|^{-2/3}$. In order to describe the asymptotics up to time $t \sim |q|^{-2}$, however, we need to use the more accurate frequency $\mu_1^2$ in place of ${\hat{\lambda}}^2$ in \eqref{eq:hz_result}.
\end{remark}

In order to gain some perspective on the nonlinear problem \eqref{eq:nls_plus}\eqref{eq:mixed_bdry},
it is useful to consider first the linear case:
\beq
\label{eq:lin_SE}
\bal
& iu_t + \frac 12 u_{xx} = 0, \ \ \ x\geq 0, \ \ t\geq 0 , \\
&u(x,0) = u_0(x), \ \ \ x>0, \\
&u_x(0+,t) + q u(0,t) = 0, \ \ \ t\geq 0. \\
\eal
\eeq
for some $q \in \rb$. Here we assume that $u_0(x)$ is smooth and decays at some appropriate rate as $x \to \infty$.
The standard, functional analytic way to solve this problem utilizes the self-adjoint operator $H_q^+ = - d^2/dx^2$ acting in $L^2(0,\infty)$
with the domain $D(H_q^+) = \{ u \in L^2(0,\infty) : u, u' \text{ absolutely continuous}, u''\in L^2(0,\infty), u_x(0+)+q u(0+)=0\}$.
For $q<0$, $H_q^+$ has purely absolutely continuous spectrum on $(0,\infty)$, but for $q>0$, $H_q^+$  has, in addition,
an $L^2(\rb^+)$ eigenvalue at $-q^2$. The solution $u(x,t)$ of \eqref{eq:lin_SE} is then expressed
in terms of the spectral theory of $H_q^+$ as follows: for $q<0$,
\beq
\label{eq:cont_spec_rep}
u(x,t) = \int_0^\infty e^{-itz^2/2} r(z) f_q(x,z) \rd z,
\eeq
and for $q>0$,
\beq
\label{eq:lin_u_e_pos_alp}
\bal u(x,t)
& =\int_0^\infty e^{-itz^2/2} r(z) f_q (x,z) \rd z \\
&+ 2q e^{itq^2/2}e^{-q x} \int_0^\infty e^{-q y} u_0(y) \rd y. \\
\eal
\eeq
Here $r(z)$ is determined by the initial data $u_0(x)$ and the functions $f_q (x,z) =  \frac {e^{ixz}}{iz+q} -  \frac {e^{-ixz}}{-iz+q}$ are the continuum eigenfunctions of $H_q^+$.

In the nonlinear case on the whole line, $-\infty<x<\infty$, the method of Zakharov-Shabat \cite{ZS} shows how to solve NLS
in terms of the spectral theory of the associated Lax operator $\partial_x - L$ where
\beq
\label{eq:ZS_Lax_operator}
L = iz\sigma + \bpm 0 & u(x)\\ -\overline{u(x)} &0 \epm , \ \ \ \sigma = \frac 12 \sigma_3, \ \ \ \sigma_3 = \bpm 1 & 0\\ 0 &-1 \epm.
\eeq
On the half line, $x>0$, the method of Fokas \cite{Fo} provides, in an appropriate sense, an ``$H_q^+$-analog" of $L$,
and hence a solution procedure for HNLS$_q^+$.
Alternatively, however, we can try to solve the IBV problem by an analog of the ``method of images" from electrostatics, i.e.
we extend the solution $u(x,t)$ to a solution $u_e(x,t)$ of the differential equation on the whole real line,
in such a way that the boundary condition at $x=0$ is automatically satisfied.

In particular for the linear problem \eqref{eq:lin_SE}
we seek a $C^1$ function $u_e(x,t)$ that decays appropriately as $|x|\to\infty$ on $\rb$ such that
\begin{enumerate}[(i)]
\item $u_e(x,t)$ solves the Schr\"odinger equation $i\partial_t u_e + \frac 12 \partial_x^2 u_e = 0$ on $\rb \times \rb^+$,
\item $u_e(x,t=0) = u_0(x)$, $x>0$,
\item $(u_e)_x(x,t) + q u_e (x,t)$ is an odd function , $t\geq 0$.
\end{enumerate}
Condition (iii) immediately implies that $(u_e)_x(0,t) + q u_e (0,t)=0$
and so for $x>0$, $u_e(x,t)$ is a solution to \eqref{eq:lin_SE} with $u_e(x,0) = u_0(x)$, $x>0$.

To ensure (iii) for all $t\geq 0$, it is sufficient, by linearity and homogeneity, to verify condition (iii) at $t=0$, i.e.
$$
(u_e)_x(-x,0) + q u_e (-x,0) = -\big((u_e)_x(x,0) + q u_e (x,0)\big), \ \ \ x>0.
$$
Integrating this relation we obtain
\beq
\label{eq:lin_sol}
u_e(-x,0) = u_0(x) + 2q \int_0^x e^{q(x-s)} u_0(s) \rd s , \ \ \ x>0.
\eeq
Set
\beq
\label{eq:lin_ext}
\bal u_e(x,0) =
\left\{
\bal
& u_0(x), \qquad\qquad\qquad\qquad\qquad\qquad \ \ x>0, \\
& u_0(-x) + 2q \int_0^{-x} e^{-q(x+s)} u_0(s) \rd s, \ \ x<0. \\
\eal
\right.
\eal
\eeq
A direct calculation shows that $u_e(x,0)$ is indeed $C^1$ and gives rise, in principle, to a solution of
the Schr\"odinger equation satisfying (i)(ii)(iii) and hence a solution of \eqref{eq:lin_SE}.

There are, however, some technical considerations. If $q<0$, then one sees from \eqref{eq:lin_sol} that $u_e(x,0)$ decays
(at a rate dictated by $q$ and $u_0(x)$) as $x\to -\infty$. This means, in particular, that $u_e(x,t)$ solves a PDE on $\rb$
with initial data in some standard class for which existence and uniqueness are easily established.
Indeed, one can solve (i)(ii)(iii) using Fourier theory (equivalently, using the spectral theory of $H=-d^2/dx^2$ in $L^2(\rb)$)
\beq
\label{eq:fou_rep}
u_e(x,t) = \frac 1{2\pi} \int_\rb e^{i(xz-tz^2/2)} \hat u (z) \rd z
\eeq
where $\hat u(z)$ decays appropriately as $|z|\to \infty$. To ensure the oddness of $(u_e)_x(x,t) + q u_e(x,t)$,
we see that $r(z) \equiv (iz+q) \hat u(z)$ must be odd, from which it follows that
\beq
\label{eq:lin_r}
r(z) =  \int_\rb e^{-ixz} F(x) \rd x
\eeq
where $F(x)=(u_0)_x(x) + q u_0(x)$, $x>0$ and $F(x) = -F(-x)$ for $x<0$. To check (ii), we note that for $x>0$,
$(u_e)_x(x,0) + q u_e(x,0) = \int e^{ixz} r(z) \rd z = (u_0)_x(x) + q u_0(x)$ and so $e^{q x} u_e(x,0) = e^{q x} u_0(x) + c$.
Letting $x\to +\infty$, we see that $c=0$ and so $u_e(x,0) = u(x)$ for $x>0$. Thus,
\beq
\label{eq:lin_u_e}
\bal u_e(x,t)
& = \int_\rb e^{i(xz-tz^2/2)} \frac {r(z)}{iz+q} \rd z \\
& = \int_0^\infty e^{-itz^2/2} r(z) \Big( \frac {e^{ixz}}{iz+q} -  \frac {e^{-ixz}}{-iz+q} \Big)\rd z \\
\eal
\eeq
where $r(z)$ is determined from \eqref{eq:lin_r}. In other words the method of images leads us back
to the functional analytical approach using the operator $H_q^+$.
If $q>0$, however, $u_e(x,0) \sim e^{q |x|} \int_0^\infty e^{-q s} u_0(s)\rd s$ as $x\to -\infty$
so that generically $u_e(x,0)$ does not decay as $x\to -\infty$. Thus the method of images breaks down.
Said differently, the functional analytic solution procedure \eqref{eq:lin_u_e_pos_alp} for $q>0$ cannot be ``unfolded"
to a ``method of images" on the whole line with $u_e(x)$ sufficiently smooth and decaying.

The method of \cite{Ta}\cite{BT}\cite{Ta2} to solve HNLS$_q^+$ is a nonlinear analog
of the ``method of images": we seek an extension $u_e(x,t)$ of the solution $u(x,t)$ of \eqref{eq:nls_plus} such that
$u_e(x,t)$ solves NLS, $i\partial_t u_e + \frac{1}{2}\partial_x^2 u_e +|u_e|^2 u_e = 0$, on the whole line and
the boundary condition $(u_e)_x(0,t) + q u_e (0,t)=0$ is automatically satisfied.
As NLS is not linear (and also not homogeneous), choosing $(u_e)_x(x,0) + q u_e (x,0)$ to be odd no longer works.
The situation is more subtle: The key ingredient in \cite{Ta}\cite{BT}\cite{Ta2} is the notion of a B\"acklund transformation. As we will show (see Remark \ref{rmk:nonlinearization} below), the method of \cite{Ta}\cite{BT}\cite{Ta2} is not just a nonlinear analog of the method of images for \eqref{eq:lin_SE}, but in fact a nonlinearization of the method in an appropriate sense.

We recall (see e.g. \cite{RS}) that a B\"acklund transformation refers very generally to a mapping $\mathcal{B}$
whereby a solution, say $u$, of one differential equation is transformed into a solution, say $v=\mathcal{B}(u)$, of another equation.
The two equations may be the same in which case $\mathcal{B}$ is called an auto-B\"acklund transformation.
B\"acklund transformations have their origin in the transformation theory of pseudospherical surfaces
by Bianchi and B\"acklund in the 1880's. (For such surfaces the equation that underlies the theory is the sine-Gordon equation (see \cite{RS}).)
In the modern theory of integrable system, in particular, the B\"acklund transformation has emerged as a major tool.
The map $u \mapsto \mathcal{B}(u) \equiv -u$ is a trivial auto-B\"acklund transformation for NLS:
More interesting, for example, is the transformation (see e.g. \cite{De}) $w\mapsto W$ where
\beq
\label{eq:Kdv_backlund}
\bal
& W(x,t) = -2 \frac {d^2}{dx^2} \log w(x,t), \\
& w(x,t) = e^\theta + \alpha e^{-\theta}, \ \ \alpha>0, \ \ \ \theta = \beta x - 4 \beta^3 t, \ \ \ \beta\in\rb,
\eal
\eeq
which takes the solution $w(x,t)$ of the (integrable!) linear equation
$$
   w_t + 4 w_{xxx} = 0
$$
into a (soliton) solution $W(x,t)$ of the KdV equation
$$
W_t - 6WW_x + W_{xxx} = 0.
$$
The idea in \cite{Ta}\cite{BT}\cite{Ta2} is to use a B\"acklund transformation to extend the solution $u(x,t)$ in $x>0$ to NLS \eqref{eq:nls_plus} to a solution $u_e(x,t)$ on the whole line, in such a way that the boundary condition $u_e(0,t) + q u_e(0,t)=0$ is automatically satisfied, as desired. In addition the transformation should be such, as in the linear case, that the extension $u_e(x,t)$ is smooth and decays as $x\to \pm\infty$ so that the standard methods of scattering/inverse scattering theory, and hence Riemann-Hilbert methods can be applied.
Riemann-Hilbert theory provides the precise analog in the nonlinear case of the Fourier transform method given in \eqref{eq:fou_rep}.
As in the linear case (cf. $q>0$), however, there are limitations on the applicability of this ``method of images."
(see Remark \ref{rmk:limit_images} below)

The paper is organized as follows.
In Section 2, we prove the existence of solutions of equations \eqref{eq:weak_sol_H_q}, \eqref{eq:weak_sol_H_q_plus} and \eqref{eq:weak_sol_H_0}.
In Section 3, we describe the scattering and inverse scattering method
for the focusing NLS equation on the line using Riemann-Hilbert techniques.
In Section 4, we discuss B\"acklund transformation and the method of Bikbaev and Tarasov
to solve NLS on the half-line with boundary conditions \eqref{eq:mixed_bdry}.
A significant part of this section involves the rephrasing of the method of Bikbaev and Tarasov as a RHP.
In Sections 5, 6 and 7, we apply the steepest-descent method for RHP in \cite{DZ3}\cite{DZ} to analyze $u(x,t)$ as $t\to\infty$. (cf. Theorem \ref{T:main})
Finally in Section 8 we combine the results from Section 4 together with Riemann-Hilbert method
to obtain a more detailed description of the solution $u(x,t)$ for times $t=O(q^{-2})$ (cf. Theorem \ref{T:main2}).

We need to clarify a point of terminology.
We have defined a B\"acklund transformation very generally as a mapping
which takes solutions of one differential equation into the solution of another differential equations.
In particular, by \eqref{eq:backlund_commute_x}, the map $\psi \mapsto \tilde \psi = (z+P) \psi$ is a B\"acklund transformation
taking solutions of $(\partial_x - L)\psi = 0$ to solutions of $(\partial_x - \tilde L)\tilde \psi = 0$.
But \eqref{eq:backlund_commute_x} also shows that $\partial_x - \tilde L = (z+P)(\partial_x - L)(z+P)^{-1}$ is invertible
if and only if $\partial_x - L$ is invertible (apart, possibly, from points $z$ in the spec(-P)).
Said differently, the operators $(i\sigma)^{-1}(\partial_x - Q)$ and $(i\sigma)^{-1}(\partial_x - \tilde Q)$
are iso-spectral (modulo points in spec(-P)).
Such iso-spectral transformations are usually referred to in the inverse scattering community as \emph{Darboux transformations}.
There is considerable freedom in the choice of Darboux transformations,
and we see that the B\"acklund transformation that we utilize in this paper
taking solutions of NLS to solutions of NLS, are constructed from Darboux transformations
$\partial_x - L \mapsto \partial_x - \tilde L$ with a specific choice of parameters.
The choice of parameters in the Darboux transformations, depends in general, on the tack at hand.
For example, a different choice of parameters gives rise to an operator $ \partial_x - \tilde L$
with poles added in or removed from the spectrum of $\partial_x - L$.
Such Darboux transformations are discussed in the appendix.

\section{Solutions of equations \eqref{eq:weak_sol_H_q}, \eqref{eq:weak_sol_H_q_plus} and \eqref{eq:weak_sol_H_0}}
\label{sec:sol_eqs}

\begin{theorem}
Solutions to equations \eqref{eq:weak_sol_H_q}, \eqref{eq:weak_sol_H_q_plus} and \eqref{eq:weak_sol_H_0}
in $H^{1,1}(\rb)$, $H^{1,1}(\rb^+)$ and $H^{1,1}(\rb)$,respectively, exist and are unique for all $t\geq0$.
\end{theorem}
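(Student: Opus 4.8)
The plan is to treat all three equations in a unified way via the standard contraction-mapping (Duhamel/Picard iteration) argument, exploiting that $e^{-iH_\bullet t/2}$ is a unitary group on $L^2$ in each case ($H_q$, $H_q^+$, $H_0$), and then to upgrade local existence to global existence using the conserved quantities (mass and energy) of NLS. I would first establish local well-posedness: for a fixed $u_0$ in the appropriate space $X$ ($=H^{1,1}(\rb)$ or $H^{1,1}(\rb^+)$), define the map $\Phi(u)(t) = e^{-iH_\bullet t/2}u_0 + i\int_0^t e^{-iH_\bullet(t-s)/2}|u(s)|^2u(s)\,\rd s$ on the Banach space $C([0,T];X)$, and show $\Phi$ is a contraction on a ball for $T$ small. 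The two ingredients needed are (a) $e^{-iH_\bullet t/2}$ is bounded on $X$ uniformly for $t$ in compact sets — for $H_0$ this is immediate from the Fourier representation, and for $H_q$, $H_q^+$ one can either use the explicit formulas for the propagator (the continuum eigenfunctions $f_q(x,z)$ together with, for $q>0$, the bound-state contribution, as displayed in \eqref{eq:lin_u_e_pos_alp}) or, more cleanly, note that $H_q = H_0 - 2q\delta_0$ is a rank-one (form-bounded) perturbation and transfer $H^{1,1}$-type bounds via the resolvent identity and a Gronwall estimate; and (b) the nonlinearity $u\mapsto |u|^2 u$ is locally Lipschitz from $X$ to $X$, which follows from the algebra property of $H^1$ in one dimension together with the weight estimate $\|x\,|u|^2u\| \lesssim \|u\|_{H^1}^2 \|xu\|$.

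Next I would pass from $H^{1,1}(\rb^+)$-solutions of HNLS$_q^\pm$ to $H^{1,1}(\rb)$-solutions by the even/odd reflection already implicit in the paper: since the initial data of interest is even and the boundary condition \eqref{eq:mixed_bdry} is exactly the compatibility condition that makes the delta-potential problem \eqref{eq:nls_delta} on $\rb$ equivalent to the half-line problem, a solution of \eqref{eq:weak_sol_H_q} with even data restricts to a solution of \eqref{eq:weak_sol_H_q_plus}, and conversely one extends by $u(-x,t):=u(x,t)$; thus it suffices to prove existence/uniqueness for \eqref{eq:weak_sol_H_q} and \eqref{eq:weak_sol_H_0}. (Strictly, the half-line statement is for general $H^{1,1}(\rb^+)$ data, not only even data, so one would instead just run the contraction argument directly on $H^{1,1}(\rb^+)$ using the self-adjointness of $H_q^+$ on $L^2(\rb^+)$; the mechanics are the same.)

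For globalization, I would show the $H^{1,1}$-norm of the solution cannot blow up in finite time. The $L^2$ norm is conserved (unitarity of the group plus the phase structure of the Duhamel term — equivalently $\frac{d}{dt}\|u\|_2^2 = 0$). The energy $\mc H$ (with the $-q\delta_0|u|^2$ or no potential term, as appropriate) is also conserved; since we are in the one-dimensional focusing case, the Gagliardo–Nirenberg inequality controls $\|u\|_4^4 \lesssim \|u\|_2^3\|u_x\|_2$, and combined with conservation of mass this yields an a priori bound on $\|u_x(t)\|_2$, hence on $\|u(t)\|_{H^1}$, for all $t$ (the delta-term $-q\delta_0|u|^2 = -q|u(0)|^2$ is controlled by $\|u\|_2\|u_x\|_2$ with a small constant since $|q|<\mu_0$, or absorbed by Young's inequality for any $q$). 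Finally the weighted norm $\|xu(t)\|_2$ grows at most linearly in $t$: differentiating, $\frac{d}{dt}\|xu\|_2^2 \lesssim \|xu\|_2\|u\|_{H^1}$, which with the already-established $H^1$ bound and Gronwall gives control of $\|xu(t)\|_2$ on every finite interval. These a priori bounds feed back into the local theory (whose existence time depends only on the $H^{1,1}$ norm) to yield global solutions; uniqueness is already built into the contraction argument and propagates by the standard continuation argument.

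The main obstacle I anticipate is not the NLS fixed-point machinery, which is routine, but the harmonic-analytic estimates for the perturbed propagators $e^{-iH_q t/2}$ and $e^{-iH_q^+ t/2}$ on the weighted space $H^{1,1}$ — in particular verifying that multiplication by $x$ interacts acceptably with these groups despite the singular potential (the domain of $H_q$ has a derivative jump at $0$). The cleanest route is probably to use the explicit spectral/Fourier-type representations (the $f_q(x,z)$ expansion) to write $e^{-iH_q t/2}$ as an honest oscillatory integral operator and estimate the weight via integration by parts in $z$, paying attention to the pole of $f_q$ at $z=\pm iq$ and, for $q>0$, to the exponentially growing-in-$x$ but $L^2(\rb^+)$-harmless bound state; alternatively one writes $H_q = H_0 - 2q\delta_0$ and uses Duhamel in the potential together with the $H_0$-estimates, which avoids the explicit eigenfunctions at the cost of another Gronwall loop.
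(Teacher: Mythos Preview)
Your overall architecture---local contraction mapping, then conservation laws, then globalization---matches the paper's, but two of the technical pillars are handled quite differently, and one of them is where your proposal has a real gap.

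For the propagator bounds on $H^{1,1}$, the paper does not use the explicit eigenfunction expansion or a perturbation-theoretic Gronwall loop. Instead it relies on a single clean identity: for $w\in H^{1,1}(\rb^+)$, if one sets $w_q(x)=w'(x)+q w(x)$ for $x\ge 0$ and extends $w_q$ oddly to $\rb$, then $(e^{-iH_q^+ t/2}w)_q = e^{-iH_0 t/2}w_q$. This conjugates the Robin half-line propagator to the free full-line propagator at the level of $L^2$, and the $H^1$- and weighted $L^2$-bounds for $U_t$ follow immediately from unitarity of $e^{-iH_0 t/2}$ and an integration by parts. This is both shorter and more robust than either route you sketch, and it recurs throughout the paper (e.g.\ in Step 2 to get continuity of $u_x$ and the boundary condition for $t>0$). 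Your proposed routes could in principle be pushed through, but you correctly flag them as the main obstacle, and the paper simply sidesteps that obstacle.

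The genuine gap is in Step 3, conservation of the energy $\mc H$. You write ``The energy $\mc H$ \ldots is also conserved'' and move on, implicitly assuming this follows by the usual approximation-by-smooth-solutions argument. The paper explicitly explains why that fails here: if $u$ satisfies the Robin condition $u'(0)+q u(0)=0$, then $|u|^2u$ in general does not, so $|u|^2u\notin D(H_q^+)$ and an iteration in higher Sobolev spaces breaks down; classical solutions of HNLS$_q^+$ are therefore not available by elementary means (they exist, but only via the Riemann--Hilbert/B\"acklund machinery developed later, which would be circular to invoke here). The paper instead gives a direct proof: it uses the $w\mapsto w_q$ identity again to rewrite $\int|u_x|^2 - q|u(0)|^2$ in terms of $\int|u_q|^2$, differentiates in $t$ using unitarity of $e^{-iH_0 t/2}$, and handles $\frac{d}{dt}\int|u|^4$ via a mollification $u_\epsilon = j_\epsilon * u$ on $(\epsilon,\infty)$, carefully tracking the boundary term at $x=\epsilon$ (which vanishes in the limit precisely because the Robin condition holds for $t>0$, established in Step 2). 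Your proposal misses this entirely, and without it the globalization argument does not close for the $H_q$ and $H_q^+$ problems.
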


The proof of this theorem is (essentially) standard.
For the convenience of the reader we provide some of the details for HNLS$_q^+$ \eqref{eq:weak_sol_H_q_plus}.
We proceed in steps.
Let $U_t \equiv e^{-iH_q^+ t/2}$ be the one parameter unitary operator generated by $H_q^+$ in $L^2(\rb^+)$.
If $u(t)=u(x,t)$ solves \eqref{eq:weak_sol_H_q_plus}, set
$$
\breve u(t) \equiv U_{-t} u(t) = u_0 + i \int_0^t U_{-s} |u(s)|^2 u(s) \rd s.
$$
For any $\phi \in C_0^\infty(0,\infty)$, as $U_t$ is unitary in $L^2(\rb^+)$,
$$ \bal  \frac d{dt} (\phi, u(t))
& = \frac d{dt} (U_{-t} \phi, \breve u(t)) \\
& = \big(-\frac i2 U_{-t}  H_q^+ \phi, \breve u(t) \big) + \big(U_{-t} \phi, i U_{-t} |u(t)|^2 u(t) \big), \\
\eal$$
where $(f, g) = \int_0^\infty \ovl{f} g$ is the standard inner product in $L^2(\rb^+)$.
Thus,
\beq
\label{eq:sol_weak_test_ftn}
\frac d{dt} (\phi, u(t))  = \frac i2 (\phi'', u) + i(\phi, |u(t)|^2 u(t)).
\eeq
Let $X = C([0,T], H^{1,1}(\rb^+))$, $0<T<\infty$, denote the set of continuous maps from $[0,T]$ to $H^{1,1}(\rb^+)$.
Equip X with a norm, $\|v\|_X \equiv \sup_{0\leq t \leq T} \|v(t)\|_{H^{1,1}(\rb^+)}$.
For $v=v(t)\in X$, define an operator
$$
(T_q v)(t) = U_t v(0) + i \int_0^t U_{t-s} |v(s)|^2 v(s) \rd s, \ \ 0\leq t \leq T.
$$

\vspace{0.1in}
\noindent \emph{Step 1 (Local existence and uniqueness)}: $T_q$ is a locally Lipschitz continuous mapping from $X$ to itself.
\vspace{0.1in}

From standard spectral theory(see \cite{CL}), we have
\beq
\label{eq:spec_rep_H_plus}
\bal
& U_t w = \frac 1{2\pi} \int_0^\infty e^{-iz^2 t/2} (f_q, w)(z) f_q(x,z) \rd z \\
& \qquad\qquad   + 2q \Big(\int_0^\infty e^{-q y} w(y) \rd y \Big) e^{-q x + iq^2 t/2}, \ \ w\in L^2(\rb^+), \\
\eal
\eeq
for $q>0$ where
$$
f_q(x,z) = e^{ixz} - \frac {q + iz}{q - iz} e^{-ixz}.
$$
If $q<0$, the second term is omitted in the spectral representation \eqref{eq:spec_rep_H_plus}.
As $U_t$ is unitary in $L^2(\rb^+)$, $\|U_t w\|_{L^2(\rb^+)} = \|w\|_{L^2(\rb^+)}$.
Now for $w\in H^{1,1}(\rb^+)$ define
$$
w_q(x) \equiv
\left\{
\bal
& w'(x) + q w(x), \ \ x\geq 0, \\
& -(w'(-x) + q w(-x)), \ \ x<0, \\
\eal
\right.
$$
Then, a direct calculation shows that
\beq
\label{eq:H_q_plus_H_0}
(U_t w)_q = e^{-iH_0 t/2} w_q.
\eeq
As $e^{-iH_0 t/2}$ is unitary in $L^2(\rb)$, it immediately follows that
$\|(U_t w)'\|_{L^2(\rb^+)} \leq c_1 \|w\|_{H^{1,1}(\rb^+)}$.
Moreover, by using integration by parts, we obtain $\|x(U_t w)\|_{L^2(\rb^+)}\leq (c_2 t + c_3) \|w\|_{H^{1,1}(\rb^+)}$.
Assembling the above results, we see that $\|U_t w\|_{H^{1,1}(\rb^+)} \leq (c_4 t + c_5) \|w\|_{H^{1,1}(\rb^+)}$
and hence for $v\in X$,
$$ \bal
& \|(T_q v)(t)\|_{H^{1,1}(\rb^+)}  \\
& \qquad \leq \|U_t v(0)\|_{H^{1,1}(\rb^+)} + \int_0^t \|U_{t-s} |v(s)|^2 v(s)\|_{H^{1,1}(\rb^+)} \rd s \\
& \qquad \leq (c_4 t + c_5) \bigg( \|v(0)\|_{H^{1,1}(\rb^+)} + \int_0^t \||v(s)|^2 v(s)\|_{H^{1,1}(\rb^+)} \rd s \bigg).\\
\eal
$$
Thus, $\|T_q v\|_X \leq (c_4 T + c_5)(1+ c_6 T \|v\|_X^2) \|v\|_X < \infty$.
For $v_1, v_2\in X$,
\beq
\label{eq:lipschitz}
\bal
& \|T_q v_1 - T_q v_2 \|_X  \\
& \qquad\leq \sup_{0\leq t\leq T} \int_0^t\|U_{t-s}(|v_1(s)|^2 v_1(s)-|v_2(s)|^2 v_2(s))\|_{H^{1,1}(\rb^+)} \rd s \\
& \qquad\leq c_7 T(c_4 T + c_5) \sup_{0\leq t\leq T} (\|v_1\|_{H^{1,0}}^2 + \|v_2\|_{H^{1,0}}^2) \|v_1 - v_2 \|_X, \\
\eal
\eeq
where ${H^{1,0}}=\{w\in L^2: w$ is absolutely continuous $w'\in L^2\}$ is the first Sobolev space.
This proves Step 1.
But then, it follows by a standard fixed point argument that there exists a unique (weak) solution to HNLS$_q^+$ locally in $H^{1,1}(\rb^+)$.
Let $u(t)=u(x,t)$, $0\leq t \leq T$, be the solution to HNLS$_q^+$ with $u(t=0)=u_0 \in H^{1,1}(\rb^+)$.

\vspace{0.1in}
\noindent \emph{Step 2}: If $u_0 \in H^{1,1}(\rb^+)$ and $u_0'\in L^1(\rb^+)$, then $u_x(x,t)$ is continuous in $x\in \rb^+$
and the boundary condition $u_x(0+,t) + q u(0,t) = 0$ holds for all $t>0$.
Moreover,
\beq
\label{eq:bounded_u_x}
|u_x(x,t)| \leq \frac c{\sqrt t}, \ \ 0 < t\leq T, \ 0<T<\infty.
\eeq
\vspace{0.1in}

From \eqref{eq:weak_sol_H_q_plus} and \eqref{eq:H_q_plus_H_0}, we have
\beq
\label{eq:u_t_alpha}
(u(t))_q = e^{-iH_0 t/2} (u_0)_q + i \int_0^t e^{-iH_0 (t-s)/2} (|u(s)|^2 u(s))_q \rd s.
\eeq
For $w\in H^{1,1}(\rb^+)$ such that $w'\in L^1(\rb^+)$, observe that $w_q \in L^1(\rb)$.
Using the classical formula for the kernel of $e^{-iH_0 t/2}$, we obtain from \eqref{eq:H_q_plus_H_0},
\beq
\label{eq:H_0_fourier_alpha}
(U_t w)_q = \sqrt{\frac 2 {\pi t}} \int_\rb w_q (s+x) e^{i(\frac {s^2}{2t} - \frac{\pi}4)} \rd s, \ \ t>0.
\eeq
Hence, $(U_t w)_q$ is continuous in $x$
and $|(U_t w)_q| \leq \frac c{\sqrt t} \|w_q\|_{L^1(\rb^+)}$.
Setting $w=u_0$ in \eqref{eq:H_0_fourier_alpha}, we see that the first term in \eqref{eq:u_t_alpha} is continuous in $x$.
Clearly $|u(t)|^2 u(t) \in H^{1,1}(\rb^+) \subset L^1(\rb^+)$ and $(|u(t)|^2 u(t))' \in L^1(\rb^+)$.
As $\frac 1{\sqrt {t-s}}$ is integrable on $[0,t]$, $0\leq t\leq T$, the second term in \eqref{eq:u_t_alpha} is also continuous in $x$.
Thus, $(u(x,t))_q$ is continuous and, as $u(x,t)$ is clearly continuous, we conclude that $u_x(x,t)$ continuous.
As $w_q$ is odd, it follows from \eqref{eq:H_0_fourier_alpha} that $(U_t w)_q(x=0)=0$
and hence the boundary condition $u_x(0+,t) + q u(0+,t) = 0$, $0<t\leq T$, immediately follows from \eqref{eq:u_t_alpha}.

\vspace{0.1in}
\noindent \emph{Step 3 (Conserved quantities)}: $\mc M \equiv \int_0^\infty |u|^2 \rd x$ and $\mc H \equiv \int_0^\infty |u_x|^2 - |u|^4 \rd x - q |u(0)|^2$ are constant in $t$.
\vspace{0.1in}

The usual approach here is to prove by direct calculation
that $\mc M$ and $\mc H$ are conserved for classical solutions (with smooth initial data),
and then to use an approximation argument.
However, the existence of classical solutions  for HNLS$_q^+$ does not appear to be an elementary matter.
The technical issue is that if a smooth function $u$ satisfies the boundary condition $u'(0) + q u(0) = 0$,
then $|u|^2 u$ in general does not.
In other words, although $|u|^2 u$ is smooth, $|u|^2 u$ may not be in the domain of the operator $H_q^+$
and so an iteration in a higher Sobolev space fails.

On the other hand, classical solutions to \eqref{eq:weak_sol_H_q_plus} satisfying the boundary conditions,
with arbitrary orders of smoothness and decay, can indeed be shown to exist (see Remark \ref{prop:classical_sol_NLS} below).
The proof of the existence of such smooth solutions, relies ultimately on Riemann-Hilbert methods.
(cf. proof of Proposition \ref{prop:classical_sol_NLS})
Such solutions could be used, in particular, to verify Step 3 for general $u\in H^{1,1}(\rb^+)$ by approximation as just described.
However, we will present a direct proof of Step 3
which does not use approximation by smooth solutions.
Relying on Riemann-Hilbert techniques to prove the existence of global solutions of \eqref{eq:weak_sol_H_q_plus}
does not seem appropriate.
Even more to the point, similar issues arise in the analysis of \eqref{eq:weak_sol_H_q}.
In this case, solutions to the equation which are smooth enough to yield a direct proof of the constancy of $\mc M$ and $\mc H$,
are not even known to exist for general (non-even) data (cf. Step 5 below).
So one needs the analog for \eqref{eq:weak_sol_H_q} of the proof given below, to prove global existence.

Suppose first that $u_0\in H^{1,1}(\rb^+)$ and $u_0' \in L^1(\rb^+)$.
As $U_t$ is unitary, we have for $\breve u(t) = U_{-t} u(t)$,
$$ \bal
\frac {d}{dt} \int_0^\infty |u|^2 \rd x
& =  \frac {d}{dt} \int_0^\infty |\breve u|^2 \rd x
= \int_0^\infty \breve u_t \ovl{\breve u} +  \breve u \ovl{\breve u_t} \rd x \\
& = \int_0^\infty (iU_{-t}|u|^2 u) \ovl{\breve u} +  \breve u (\ovl{iU_{-t}|u|^2 u}) \rd x \\
& = \int_0^\infty (i|u|^2 u) \ovl{u}  + u (\ovl{i|u|^2 u}) \rd x =0,\\
\eal
$$
and hence $\int_0^\infty |u|^2 \rd x$ is constant. For $\mc H$, we first have
\beq
\label{eq:deriv_u_sqr}
\bal \int_0^\infty |u_q|^2 \rd x
& = \int_0^\infty |u_x + q x|^2 \rd x \\
& = \int_0^\infty |u_x|^2 \rd x - q |u(0,t)|^2 + q^2 \int_0^\infty |u|^2 \rd x. \\
\eal
\eeq
On the other hand, again as $e^{iH_0 t/2}$ is unitary in $L^2(\rb)$,
it follows from \eqref{eq:u_t_alpha} and the oddness of $u_q (x)$ that
$$\bal
& \frac {d}{dt} \int_0^\infty |u_q|^2 \rd x \\
& \ \ =\frac 12 \frac {d}{dt} \int_\rb |u_q|^2 \rd x = \frac 12 \frac {d}{dt} \int_\rb |e^{iH_0 t/2} u_q|^2 \rd x \\
& \ \ = \frac 12 \int_\rb \ovl{e^{iH_0 t/2} u_q} (e^{iH_0 t/2} u_q)_t + e^{iH_0 t/2} u_q (\ovl{e^{iH_0 t/2} u_q})_t\rd x. \\
& \ \ = \frac 12 \int_\rb \ovl{e^{iH_0 t/2} u_q} (ie^{iH_0 t/2} (|u|^2 u)_q) + e^{iH_0 t/2} u_q (\ovl{e^{iH_0 t/2} (|u|^2 u)_q})\rd x. \\
& \ \ = \frac i2 \int_\rb \ovl{u_q} (|u|^2 u)_q - u_q (|u|^2 \ovl u)_q \rd x. \\
& \ \ = i \int_0^\infty (\ovl{u_x + q u}) ((|u|^2 u)' + q |u|^2 u) \\
& \qquad\qquad\qquad - (u_x + q u) ((|u|^2 \ovl u)' + q |u|^2 \ovl u)\rd x. \\
& \ \ = i \int_0^\infty u^2 \ovl{u}_x^2  - \ovl{u}^2 u_x^2  \rd x \equiv D(t)\\
\eal
$$
Hence, by \eqref{eq:deriv_u_sqr} together with the fact that $\int_0^\infty |u|^2$ is constant,
\beq
\label{eq:D_t}
\frac {d}{dt} \Big(\int_0^\infty |u_x|^2 \rd x - q |u(0,t)|^2 \Big) = D(t).
\eeq
Let $j(x)$ be any $C^\infty$-function supported on $(-1,1)$ such that $j\geq0$ and $\int_\rb j(x) \rd x = 1$,
and set $j_\epsilon (x) \equiv \frac 1 \epsilon j(\frac x\epsilon)$.
For any function $w\in H^{1,1}(\rb^+)$, denote $w_\epsilon(x) \equiv \int_0^\infty j_\epsilon(x-y) w(y) \rd y$.
Note that $j_\epsilon(x-y) \in C_0^\infty(0<y<\infty)$ for $x>\epsilon$.
Hence from \eqref{eq:sol_weak_test_ftn}, we have for $x > \epsilon$,
$$
(u_\epsilon)_t = \frac i2 (u_\epsilon)_{xx} + i(|u|^2 u)_\epsilon.
$$
Thus, integrating by parts and noting that $(u_\epsilon)_x=(u_x)_\epsilon$ for $x > \epsilon$, we obtain
\beq
\label{eq:u_epsilon_fourth}
 \bal \frac d{dt} \int_\epsilon^\infty |u_\epsilon|^4
& = 2\int_\epsilon^\infty \ovl{u}_\epsilon (\ovl{u}_\epsilon)_t u_\epsilon^2
+ \ovl{u}_\epsilon^2 u_\epsilon (u_\epsilon)_t \\
& =  D_\epsilon (t) + \delta_1^\epsilon(t)+ \delta_2^\epsilon(t),\\
\eal
\eeq
where
$$ \bal
& D_\epsilon (t)= i\int_\epsilon^\infty u_\epsilon^2 (\ovl{u}_\epsilon)_x^2 - \ovl{u}_\epsilon^2 (u_\epsilon)_x^2
, \\
& \delta_1^\epsilon(t) = i\big(u_\epsilon^2 \ovl{u}_\epsilon (\ovl{u}_\epsilon)_x - \ovl{u}_\epsilon^2 u_\epsilon (u_\epsilon)_x \big)(\epsilon), \text{ and } \\
& \delta_2^\epsilon(t) =  2i \int_\epsilon^\infty \ovl{u}_\epsilon^2 u_\epsilon (|u|^2 u)_\epsilon - \ovl{u}_\epsilon u_\epsilon^2 (|u|^2 \ovl {u})_\epsilon. \\
\eal
$$
As $j_\epsilon$ is an approximate identity, we see that for each $0<t\leq T$
$\delta_1^\epsilon(t)$ and $\delta_2^\epsilon(t) \to 0$ as $\epsilon \downarrow 0$:
here we need the fact that $u_x(x,t)$ is continuous in $x$ and satisfies the boundary condition $u_x(0+,t) + q u(0,t) = 0$.
Also $D_\epsilon (t) \to D(t)$.
Moreover, it follows from \eqref{eq:bounded_u_x}, and the fact that $t\mapsto u(t)$ is continuous, and hence bounded for $0<t\leq T$,
that $D_\epsilon (t) + \delta_1^\epsilon(t)+ \delta_2^\epsilon(t)$ is bounded by $\frac c{\sqrt t}$, uniformly for $\epsilon>0$.
Thus from \eqref{eq:D_t} and \eqref{eq:u_epsilon_fourth}, we obtain
$$ \bal
& \int_0^\infty |u_x(x,t)|^2 \rd x - q |u(0,t)|^2 - \int_\epsilon^\infty |u(x,t)|^4 \rd x \\
& \qquad = \int_0^\infty |u_x(x,0)|^2 \rd x - q |u(0,0)|^2 - \int_\epsilon^\infty |u(x,0)|^4 \rd x \\
& \qquad\qquad\qquad + \int_0^t D(s) - D_\epsilon (s) - \delta_1^\epsilon(s)- \delta_2^\epsilon(s) \rd s.\\
\eal $$
Now it follows by dominated convergence theorem that $\mc H$ is constant.

Step 3 for general data $u_0 \in H^{1,1}(\rb^+)$, then follows by approximation, $u_0 \to \check u_0 = u_0 \chi_R$, $R\to\infty$,
where $\chi_R \in C_0^\infty[0,\infty)$, $\chi_R(x) = 1$ for $0\leq x\leq R$, and $\chi_R(x) = 0$ for $x>R+1$.

\vspace{0.1in}
\noindent \emph{Step 4 (Global existence}: The solution $u(x,t)$ of HNLS$_q^+$ in $H^{1,1}(\rb^+)$ exists globally for all $t\geq 0$.
\vspace{0.1in}

From the basic estimates, we have for any constant $M>0$,
$$
\|u\|_{L^\infty(\rb^+)}^2 \leq 2\|u\|_{L^2(\rb^+)} \|u_x\|_{L^2(\rb^+)} \leq M\mc M + \frac 1M \|u_x\|_{L^2(\rb^+)}^2,
$$
and hence
$$\bal \|u_x\|_{L^2(\rb^+)}^2
& = \Big| \mc H + \int_0^\infty |u|^4 + q |u(0)|^2 \Big|  \\
& \leq |\mc H| + \|u\|_{L^\infty(\rb^+)}^2 (\mc M + |q|) \\
& \leq |\mc H| + \Big(M\mc M + \frac 1M \|u_x\|_{L^2(\rb^+)}^2 \Big) (\mc M + |q|). \\
\eal$$
Thus, if we set $M = 2(\mc M +|q|)$, then, as $\mc M$ and $\mc H$ are constant, $\|u_x\|_{L^2(\rb^+)}$ is uniformly bounded.

As Lipschitz constant in \eqref{eq:lipschitz} depends only on the $H^{1,0}$-norm of $u(t)$,
it follows by standard arguments using the a priori bound on $\|u_x(x,t)\|_{L^2}$, $\|u(x,t)\|_{L^2}$ that
the solution of HNLS$_q^+$ exists globally.

\vspace{0.1in}
\noindent \emph{Step 5}: Set $\hat u_0(x) \equiv u_0(|x|)$ and $\hat u(x,t) \equiv u(|x|,t)$, $x\in \rb$.
Then, $\hat u(x,t)$ solves \eqref{eq:weak_sol_H_q} with $\hat u(t=0) = \hat u_0$.
\vspace{0.1in}

The proof follows immediately by applying the following result,
\beq
\label{eq:ext_even}
(e^{-iH_q t/2} \hat w)(x) = (e^{-iH_q^+ t/2} w)(x), \ \ x>0
\eeq
for any $w\in L^2(\rb^+)$, and $\hat w(x) = w(|x|)$.
This identity follows directly by using the spectral representation \eqref{eq:spec_rep_H_plus} for $e^{-iH_q^+ t/2}$
and the spectral representation (cf. \cite{CL})
$$ \bal
& (e^{-iH_q t/2} \hat w)(x) \\
& \quad = \frac 1{4\pi} \int_\rb e^{-iz^2t/2} \frac {z^2}{z^2+q^2}
\big[ (f_1, \hat w)(z) f_1(x,z) + (f_2, \hat w)(z) f_2(x,z) \big] \rd z \\
& \qquad\qquad\qquad + q \Big( \int_\rb e^{-q |y|} \hat w(y) \rd y \Big) e^{-q |x|}
\eal $$
in the case $q>0$, where $f_1(x,z) = e^{ixz}$, $x>0$, and $f_1(x,z) = \big(1+\frac{q}{iz} \big)e^{ixz}-\frac{q}{iz}e^{-ixz}$, $x<0$,
and $f_2(x,z) = f_1(-x,z)$.
In the case $q<0$, the second term is omitted.

\vspace{0.1in}

For NLS on the line, solutions to IVP exist globally in $H^{k,k}(\rb)$ for all $k\geq1$.

\begin{proposition}
\label{prop:NLS_sol_H_k_k}
Let $u_0 \in H^{k,k}(\rb)$, $k\geq1$.
Then NLS on $\rb$ has a (unique) solution with $u(t=0)=u_0$ in $H^{k,k}(\rb)$, i.e.,
there exists a continuous map $t\mapsto u(t)\in H^{k,k}(\rb) \subset H^{1,1}(\rb)$, $u(t=0)=u_0$
such that $u(t)$ solves \eqref{eq:weak_sol_H_0}.
\end{proposition}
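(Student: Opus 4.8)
\emph{Plan.} The idea is to construct the $H^{k,k}$ solution on top of the $H^{1,1}$ solution of \eqref{eq:weak_sol_H_0} already produced by the Theorem at the beginning of Section~\ref{sec:sol_eqs}, using a persistence-of-regularity argument: \emph{local} existence by a contraction mapping carried out directly in the weighted space, and then \emph{global} existence via a priori bounds established by induction on $k$, the point being that the local existence time at level $k$ depends only on norms of order $\le k-1$.

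\emph{Local existence.} First I would fix $0<T<\infty$, let $X_k = C([0,T],H^{k,k}(\rb))$ with $\|v\|_{X_k} = \sup_{0\le t\le T}\|v(t)\|_{H^{k,k}(\rb)}$, and consider the map $(T_0 v)(t) = e^{-iH_0 t/2} v(0) + i\int_0^t e^{-iH_0(t-s)/2}|v(s)|^2 v(s)\,\rd s$ as in Section~\ref{sec:sol_eqs}. Since $H_0 = -d^2/dx^2$ is translation invariant, $\partial_x$ commutes with $e^{-iH_0 t/2}$, so the $H^{k,0}$ part of the estimate is immediate; for the weight I would use the identity
\beq
x^k e^{-iH_0 t/2} = e^{-iH_0 t/2}\,(x - it\partial_x)^k,
\eeq
which follows from $[H_0,x] = -2\partial_x$ and the Heisenberg evolution, and gives $\|e^{-iH_0 t/2} w\|_{H^{k,k}(\rb)}\le C(1+t)^k\|w\|_{H^{k,k}(\rb)}$. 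The nonlinear input is the estimate
\beq
\bigl\||u|^2 u\bigr\|_{H^{k,k}(\rb)} \le C\,\|u\|_{H^k(\rb)}^2\,\|u\|_{H^{k,k}(\rb)},
\eeq
whose unweighted part is the standard Moser/Leibniz estimate together with the embedding $H^k(\rb)\hookrightarrow L^\infty(\rb)$ for $k\ge1$, and whose weighted part follows by placing the full factor $x^k$ on one copy of $u$ and estimating the remaining two copies (and their derivatives) in $L^\infty$. Combining the two estimates, $T_0$ is a contraction on a ball of $X_k$ for $T = T(\|u_0\|_{H^{k,k}(\rb)})$ sufficiently small, yielding a unique local solution in $H^{k,k}(\rb)$; its uniqueness is in any case inherited from the $H^{1,1}$ uniqueness since $H^{k,k}(\rb)\subset H^{1,1}(\rb)$, and continuous dependence follows in the same way.

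\emph{Global existence.} I would argue by induction on $k$. For $k=1$ one already has global existence from the cited Theorem; the underlying a priori bound is that $\|u(t)\|_{L^2}^2$ and the Hamiltonian $\int_\rb\bigl(\tfrac12|u_x|^2 - \tfrac12|u|^4\bigr)\,\rd x$ are conserved, and the Gagliardo--Nirenberg inequality $\|u\|_{L^4}^4 \le C\|u\|_{L^2}^3\|u_x\|_{L^2}$ ($L^2$-subcriticality of NLS in one dimension) then bounds $\|u(t)\|_{H^1(\rb)}$ for all $t$, while the commutator identity above bounds $\|xu(t)\|_{L^2}$ with at most linear growth in $t$. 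For the inductive step, assume $\|u(t)\|_{H^{k-1,k-1}(\rb)}$ is bounded on every finite interval. Differentiating the equation $k$ times and pairing with $\partial_x^k u$, the highest-order part of the nonlinearity contributes $\lesssim \|u\|_{L^\infty}^2\|\partial_x^k u\|_{L^2}^2$ and the remainder is controlled by $H^{k-1,k-1}$ quantities (plus harmless powers of $\|u\|_{H^1}$ via Gagliardo--Nirenberg), so Gronwall keeps $\|\partial_x^k u(t)\|_{L^2}$ bounded on finite intervals. Finally, applying $x^k$ to the Duhamel formula and using $x^k e^{-iH_0\tau/2} = e^{-iH_0\tau/2}(x-i\tau\partial_x)^k$ together with the nonlinear estimate (all weight on one factor, the other factors and the commutator terms now under control), one obtains $\|x^k u(t)\|_{L^2}\le C(T) + C(T)\int_0^t\|x^k u(s)\|_{L^2}\,\rd s$ on $[0,T]$, and Gronwall excludes finite-time blow-up. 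Hence the $H^{k,k}$ solution extends to all $t\ge0$.

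\emph{Main obstacle.} The delicate point is the bookkeeping in the weighted, high-order estimates: correctly commuting powers of $x$ through the linear propagator, verifying that the resulting lower-order commutator terms are absorbed by the inductive hypothesis, and organizing the Gronwall arguments so that the level-$k$ existence time genuinely depends only on level-$(k-1)$ data. Once this structure is in place the estimates themselves reduce to routine Moser-type inequalities.
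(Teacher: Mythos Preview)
Your approach is correct and is the standard persistence-of-regularity argument for cubic NLS in weighted spaces. There is, however, nothing to compare it against: the paper states Proposition~\ref{prop:NLS_sol_H_k_k} without proof, treating it as a known result (as is customary for the focusing cubic NLS on the line), and only adds the remark that $H^{k,k}$ solutions for $k\ge3$ are classical. Your sketch --- local well-posedness in $H^{k,k}$ via contraction using the Galilei-type commutator $x^k e^{-iH_0 t/2} = e^{-iH_0 t/2}(x-it\partial_x)^k$ and the Moser product estimate, followed by induction on $k$ with Gronwall to propagate the bounds globally --- is precisely how one substantiates such a statement.
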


\begin{remark}
A standard argument shows that if $u(t)$ solves NLS in $H^{k,k}(\rb)$ for $k\geq3$,
then $u(t)$ is a classical solution.
\end{remark}

\section{Scattering and Inverse Scattering}
\label{sec:scat_inv}
We now recall the Riemann-Hilbert version (see \cite{BC}\cite{Zh1}\cite{Zh2}; see also \cite{DZ4}\cite{BDT}) of scattering and inverse scattering for the focusing NLS equation on the line,
\beq
\label{eq:focusing_nls}
\bal
& iu_t + \frac 12 u_{xx} + |u|^2 u = 0, \ \ t\geq0, \ \ -\infty<x<\infty, \\
& u(x,t=0) = u_0(x). \\
\eal
\eeq
Let $L$ denote the Zakharov-Shabat Lax operator in \eqref{eq:ZS_Lax_operator} above and set
\beq
\label{eq:operator_time}
E = -\frac i2 z^2 \sigma - \frac 12 z \bpm 0 & u\\ -\overline{u} &0 \epm + \frac 12 \bpm i|u|^2 & iu_x\\ i\overline{u}_x & -i|u|^2 \epm.
\eeq
The operators $\partial_x - L$ and $\partial_t - E$ form a Lax pair for the focusing NLS equation in the sense that the compatibility of $\partial_x \psi = L\psi$ and $\partial_t \psi = E\psi$, i.e. $\partial_t (\partial_x \psi) = \partial_x (\partial_t \psi)$ is equivalent to NLS \eqref{eq:focusing_nls}, i.e., $u=u(x,t)$ is a (classical) solution to \eqref{eq:focusing_nls}
if and only if $\partial_x - L$ and $\partial_t - E$ commute as operators on smooth functions $\psi=\psi(x,t)$, i.e.,
\beq
\label{eq:L_E_commute}
(\partial_x - L)(\partial_t - E) = (\partial_t - E)(\partial_x - L).
\eeq
Alternatively, NLS is equivalent to an iso-spectral deformation of the operator
\beq
\label{eq_iso_operator}
T = T(u) = (i\sigma)^{-1} \bigg( \partial_x - \bpm 0 & u(x)\\ -\overline{u(x)} &0 \epm \bigg).
\eeq
The following facts about solutions $\psi$ of $T\psi = z \psi$, or equivalently,
\beq
\label{eq:scattering_problem}
 \psi_x  = L\psi = (iz\sigma + Q)\psi, \ \ Q(x)=\bpm 0 & u(x)\\ -\overline{u(x)} &0 \epm,
\eeq
play a basic role in the theory that follows. The proofs are direct calculations.
Let $\sigma_2=\bsm 0& -i \\ i&0 \esm$ be the second Pauli matrix.

\begin{proposition}
\label{prop:scattering_symmetry}
Let $\psi(x,z)$ solve \eqref{eq:scattering_problem}. Then,

\noindent \textnormal{ (i)} $\ovl{\sigma_2 \psi(x,\ovl{z})\sigma_2}$ also solves \eqref{eq:scattering_problem} and

\noindent \textnormal{ (ii)} $\sigma_3 \psi(-x,-z)\sigma_3$ solves \eqref{eq:scattering_problem} with $Q(x)$ replaced by $Q(-x)$.
\end{proposition}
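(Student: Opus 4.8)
The plan is to verify both claims by direct substitution into \eqref{eq:scattering_problem}, using only that $\sigma_2$ and $\sigma_3$ are constant involutions ($\sigma_2^2=\sigma_3^2=I$) together with the (anti)commutation relations $\sigma_2\sigma_3=-\sigma_3\sigma_2$, $\sigma_3\sigma_3\sigma_3=\sigma_3$, and the real, off-diagonal structure of $\sigma=\frac12\sigma_3$ and $Q$. In each case the work reduces to a single pointwise matrix identity for the coefficient, after which the conclusion is immediate.

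For (i) I would set $\phi(x,z)=\ovl{\sigma_2\psi(x,\ovl z)\sigma_2}$. Since $\sigma_2$ is constant, differentiation in $x$ commutes with conjugation-by-$\sigma_2$, so $\phi_x=\ovl{\sigma_2\psi_x(x,\ovl z)\sigma_2}$. Substituting $\psi_x(x,\ovl z)=(i\ovl z\sigma+Q(x))\psi(x,\ovl z)$ and inserting $\sigma_2^2=I$ between the coefficient matrix and $\psi$ gives $\phi_x=\ovl{\sigma_2(i\ovl z\sigma+Q(x))\sigma_2}\,\phi$. It then remains to check $\ovl{\sigma_2(i\ovl z\sigma+Q(x))\sigma_2}=iz\sigma+Q(x)$: from $\sigma_2\sigma\sigma_2=-\sigma$ (with $\sigma$ real) one gets $\ovl{\sigma_2(i\ovl z\sigma)\sigma_2}=iz\sigma$, while a one-line computation of $\sigma_2 Q\sigma_2=\bpm 0 & \ovl u\\ -u & 0\epm$ shows $\ovl{\sigma_2 Q(x)\sigma_2}=Q(x)$ — the conjugation by $\sigma_2$ swaps the $(1,2)$ and $(2,1)$ entries and introduces the signs and bars that are exactly undone by the outer complex conjugation. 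Hence $\phi_x=L\phi$, which is (i).

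For (ii) I would set $\chi(x,z)=\sigma_3\psi(-x,-z)\sigma_3$. The chain rule in $x$ produces a factor $-1$, so $\chi_x=-\sigma_3\psi_x(-x,-z)\sigma_3$; substituting $\psi_x(-x,-z)=(-iz\sigma+Q(-x))\psi(-x,-z)$ and inserting $\sigma_3^2=I$ yields $\chi_x=-\sigma_3(-iz\sigma+Q(-x))\sigma_3\,\chi$. Since $\sigma_3$ commutes with $\sigma$, $-\sigma_3(-iz\sigma)\sigma_3=iz\sigma$, and since conjugating an off-diagonal matrix by $\sigma_3$ merely negates it, $-\sigma_3 Q(-x)\sigma_3=Q(-x)$. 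Therefore $\chi_x=(iz\sigma+Q(-x))\chi$, i.e. $\chi$ solves \eqref{eq:scattering_problem} with $Q(x)$ replaced by $Q(-x)$, proving (ii).

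There is no genuine obstacle here; the only point requiring a moment's care is the bookkeeping in (i), namely ensuring that the bars and signs generated by $\sigma_2 Q\sigma_2$ are precisely cancelled by the outer bar, and that the spectral parameter is conjugated ($\ovl z\mapsto z$) simultaneously with the sign change $\sigma_2\sigma\sigma_2=-\sigma$. Once the two coefficient identities $\ovl{\sigma_2(i\ovl z\sigma+Q)\sigma_2}=iz\sigma+Q$ and $-\sigma_3(-iz\sigma+Q(-\cdot))\sigma_3=iz\sigma+Q(-\cdot)$ are established, both assertions follow at once.
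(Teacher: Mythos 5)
Your proof is correct, and it is exactly the "direct calculation" the paper has in mind (the paper states the proofs of Proposition \ref{prop:scattering_symmetry} are direct calculations and omits the details). Both coefficient identities $\overline{\sigma_2(i\bar z\sigma+Q)\sigma_2}=iz\sigma+Q$ and $-\sigma_3(-iz\sigma+Q(-\cdot))\sigma_3=iz\sigma+Q(-\cdot)$ check out, so nothing further is needed.
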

Of course the factors $\sigma_2$ and $\sigma_3$ on the right in (i),(ii) can be omitted: we include them here in view of the applications that follow.

The scattering map $\mathcal{S}$ (see Definition \ref{def:scattering} below) maps $T$ to its scattering data.
We now describe the properties of $\mathcal{S}$ when $u$ lies in the Sobolev space
$H^{1,1} = \{u \in L^2(\rb): u'(x), xu(x) \in L^2(\rb)\}$.
This is sufficient for our purpose as we will only consider NLS and \eqref{eq:nls_delta} in $H^{1,1}$. (cf. \cite{DZ})
(For the action of $\mathcal{S}$ on the other spaces e.g.
$H^{k,j} = \{u \in L^2(\rb): u, u', \cdots, u^{(k-1)}$ are absolutely continuous, $u^{(k)}(x), x^ju(x) \in L^2(\rb)\}$, $k\geq0$, $j\geq 1$, or Schwartz space, see \cite{Zh2}).

We use the standard notation $\cb^+ = \{ \text{Im} \ z >0\}$, $\cb^- = \{ \text{Im} \ z <0\}$, $e_1 = (1,0)^T$ and $e_2 = (0,1)^T$.
For $u\in H^{1,1}$ and $z \in \cb^+$, the equation
\beq
\label{eq:m_2_minus_lsp}
 \phi_x  = (iz\sigma + Q)\phi
\eeq
has a unique column solution $\phi= (\phi_1, \phi_2)^T$ such that
\beq
\label{eq:m_2_minus}
\phi(x,z)e^{ixz/2} \to e_2 \text { as } x \to -\infty.
\eeq
As $x\to\infty$,
$$
\phi(x,z)e^{ixz/2} \to a(z)e_2.
$$
where the \emph{scattering function} $a(z)$ is
\begin{itemize}
\item analytic in $\cb^+$,
\item continuous in $\ovl{\cb^+}$ and
\item $a(z) \to 1$ as $z\to \infty$ in $\ovl{\cb^+}$.
\end{itemize}
We say that $u\in H^{1,1}(\rb)$ is \emph{generic} if $a(z)$ is non-zero in $\ovl{\cb^+}$ except at a finite number of points
$z_1, \cdots, z_n \in \cb^+$ where it has simple zeros $a(z_k)=0$, $a'(z_k)\neq 0$, $k=1, \cdots, n$.
The set of generic functions $u(x)$ is an open dense subset of $H^{1,1}(\rb)$ (see \cite{Zh2}\cite{BDT}), which we denote by $\mathcal{G}$.
Set $Z_+ = \{ z_1 , \cdots, z_n \}$, $Z_- = \{ \overline{z_1} , \cdots, \overline{z_n} \}$, and $Z = Z_+ \cup Z_-$.

\begin{theorem}[Direct Scattering: see \cite{BC}\cite{Zh2}; also cf. \cite{BDT}]
\label{thm:direct_scattering}
Let $u \in \mathcal{G} \subset H^{1,1}$.
\begin{enumerate}
\item[\textnormal{(1)}] For $z\in \cb\setminus (\rb \cup Z)$, the equation $T\psi = z \psi$ has a unique solution $\psi = \psi(x,z)$ such that
\begin{enumerate}
\item[\textnormal{(i)}] $\psi(x,z) e^{-ixz\sigma}$ is bounded for $x\in \rb$ and
\item[\textnormal{(ii)}] $\psi(x,z) e^{-ixz\sigma}\to I$ as $x\to -\infty$.
\end{enumerate}

\item[\textnormal{(2)}] Points $z\in Z$ correspond to $L^2(\rb)$ eigenvalues of $T$, $Th = zh$, $h=(h_1, h_2)^T \in L^2(\rb)$.
Moreover, $z=z_k$ is an $L^2$ eigenvalue if and only if $z=\ovl{z_k}$ is an $L^2$ eigenvalue.
The associated eigenfunctions $h$ have asymptotics
$$
\bal h=h(x,z_k)
& = (e_1 + o(1)) e^{ixz_k/2} \ \ \text{ as } x \to +\infty, \\
& = \gamma(z_k)(e_2 + o(1)) e^{-ixz_k/2} \ \ \text{ as } x \to -\infty, \\
\eal
$$
and
$$
\bal h=h(x,\ovl{z_k})
& = (e_2 + o(1)) e^{-ix\ovl{z_k}/2} \ \ \text{ as } x \to +\infty, \\
& = \gamma(\ovl{z_k})(e_1 + o(1)) e^{ix\ovl{z_k}/2} \ \ \text{ as } x \to -\infty, \\
\eal
$$
where $\gamma(z_k)= -\ovl{\gamma(\ovl{z_k})} \neq 0$.

\item[\textnormal{(3)}] For all $x\in \rb$, the solution $\psi(x,z)$ in \textnormal{(1)}
\begin{itemize}
\item is analytic in $\cb\setminus (\rb \cup Z)$,
\item has simple poles at $Z$ such that
$$ \bal
& \Res\displaylimits_{z = z_k} \psi(x,z)= \lim_{z \to z_k} \psi(x,z) \bpm 0&0\\c(z_k)&0 \epm, \\
& \Res\displaylimits_{z = \ovl{z_k}} \psi(x,z)= \lim_{z \to \ovl{z_k}} \psi(x,z) \bpm 0&c(\ovl{z_k})\\0&0 \epm. \\
\eal $$
where $c(z_k) = \gamma(z_k)/ a'(z_k)$, $c(\ovl{z_k}) = -\ovl{c(z_k)}$, $k=1, \cdots, n$, and
\item has continuous boundary values $\displaystyle{\psi_\pm (x,z) = \lim_{\epsilon \downarrow 0} \psi(x,z\pm i\epsilon)}$, $z\in\rb$,
satisfying the \underline{jump relation}
\beq
\psi_+(x,z) = \psi_-(x,z)v(z), \ \ z\in\rb,
\eeq
where the \underline{jump matrix} $v(z)$ has the form
\beq
v(z) = \bpm 1+|r(z)|^2 & r(z) \\ \ovl{r(z)} & 1 \epm,
\eeq
and the \underline{reflection coefficient} $r(z) \in H^{1,1}$.
\end{itemize}

\item[\textnormal{(4)}] For all $x\in\rb$,
\begin{itemize}
\item $\psi(x,z) e^{-ixz\sigma} \to I$ as $z\to\infty$ in $\cb^+$ or in $\cb^-$.
\item $\psi(x,z) e^{-ixz\sigma} = I + \frac {m_1(x)}{z} + o \big(\frac 1z \big)$ as $z\to\infty$ in any cone $|\textnormal{Im} z|> c|\textnormal{Re} z|$, $c>0$.
\end{itemize}
\end{enumerate}
\end{theorem}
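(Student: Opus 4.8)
The plan is to reconstruct the Beals--Coifman picture in the $H^{1,1}$ setting, following \cite{BC} and especially Zhou \cite{Zh2} (see also \cite{BDT}). Write $m(x,z)=\psi(x,z)e^{-ixz\sigma}$, so that $T\psi=z\psi$ becomes $m_x=iz[\sigma,m]+Qm$; columnwise this decouples, one entry of each column carrying no exponential and the other carrying $e^{\pm izx}$. First I would recast each column as a Volterra integral equation normalized at the appropriate endpoint ($+\infty$ for the $e_1$-type column, $-\infty$ for the $e_2$-type column $\phi$ of \eqref{eq:m_2_minus_lsp}--\eqref{eq:m_2_minus}) and solve it by a Neumann series; using only $u\in H^{1,1}\subset L^1$, this shows these columns are continuous in $x$, analytic in $z\in\cb^+$, continuous up to $\rb$, and tend to $e_1$, $e_2$ respectively as $z\to\infty$ in $\ovl{\cb^+}$, the $\cb^-$ analogues being obtained either directly or via Proposition~\ref{prop:scattering_symmetry}(i). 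This already gives part (4): inserting the Neumann series into the Volterra equations and integrating by parts once yields $m(x,z)=I+m_1(x)/z+o(1/z)$, the remainder controlled uniformly only in cones $|\mathrm{Im}\,z|>c|\mathrm{Re}\,z|$, because on $\rb$ the oscillatory integrals that occur decay merely by Riemann--Lebesgue.

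Next I would introduce the scattering matrix relating the $-\infty$- and $+\infty$-normalized solutions on $\rb$; its $(2,2)$-entry $a(z)$ is a Wronskian of two $\cb^+$-analytic Jost columns, hence analytic in $\cb^+$, continuous on $\ovl{\cb^+}$, and $a(z)\to1$ as $z\to\infty$, as stated in the text. The symmetry in Proposition~\ref{prop:scattering_symmetry}(i) forces the scattering matrix on $\rb$ to lie in $SU(2)$, from which the reflection coefficient $r(z)$ and the jump matrix $v(z)=\bpm 1+|r(z)|^2& r(z)\\ \ovl{r(z)}&1\epm$ are read off. I would then assemble $\psi$ for part (1) by taking, in $\cb^+$, the matrix whose second column is $\phi$ and whose first column is the $+\infty$-normalized $e_1$-column divided by $a(z)$ (and symmetrically in $\cb^-$); uniqueness is immediate, since boundedness of $\psi e^{-ixz\sigma}$ forces the second column to be a multiple of $\phi$ and the first to be a multiple of the $+\infty$-normalized $e_1$-column, after which the normalization at $x=-\infty$ fixes the multiples (here $a(z)\neq0$ as $z\notin Z$). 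The only singularities of $\psi$ in $\cb\setminus\rb$ are then simple poles at the zeros $Z_+$ of $a$ (and the conjugates $Z_-$): expanding $a(z)=a'(z_k)(z-z_k)+O((z-z_k)^2)$, $a'(z_k)\neq0$, in the divided column produces exactly the residue relations of part (3) with $c(z_k)=\gamma(z_k)/a'(z_k)$ and $c(\ovl{z_k})=-\ovl{c(z_k)}$ dictated by the symmetry, while $\psi_+=\psi_-v$ on $\rb$ is just the scattering relation rewritten.

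For part (2): at a zero $z_k$ of $a$ the $-\infty$-normalized column $\phi$ and the $+\infty$-normalized $e_1$-column become proportional (their Wronskian being $a(z_k)=0$), and this common solution decays exponentially as $x\to\pm\infty$ since $z_k\in\cb^+$; hence it is an $L^2(\rb)$ eigenfunction of $T$ at $z_k$ with the asymptotics and norming constant $\gamma(z_k)$ stated. Conversely an $L^2$ solution at $z\notin\rb$ can decay at $-\infty$ only along $e_2$ and at $+\infty$ only along $e_1$, so the two one-dimensional solution spaces must coincide, which forces $a(z)=0$; thus the point spectrum is exactly $Z$. The pairing $z_k\leftrightarrow\ovl{z_k}$ and the relation $\gamma(z_k)=-\ovl{\gamma(\ovl{z_k})}$ come once more from Proposition~\ref{prop:scattering_symmetry}(i).

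The genuinely delicate assertion is $r\in H^{1,1}(\rb)$, i.e. $r'\in L^2$ and $zr(z)\in L^2$, and here I would invoke the sharp mapping properties of the scattering transform established by Zhou \cite{Zh2} (see also \cite{BDT}). Differentiability of $r$ in $z$ is inherited from the smooth $z$-dependence of the Volterra solutions; the bound $\|zr\|_{L^2}\lesssim\|u\|_{H^{1,1}}$ mirrors $xu\in L^2$ and follows by differentiating the Volterra equations in $z$ and applying Plancherel to the nearly Fourier-type integrals that appear, and likewise $\|r'\|_{L^2}\lesssim\|u\|_{L^2}$. The one point requiring care specific to the generic class $\mathcal{G}$ is uniformity near the finitely many simple zeros of $a$ in $\cb^+$, handled using $a'(z_k)\neq0$; since for $u\in\mathcal{G}$ these zeros do not touch $\rb$, $a(z)$ stays bounded away from $0$ on the line and $1/a$ inherits the $H^{1,1}$-regularity of $a-1$. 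Assembling these facts gives every item of the theorem; the main obstacle, as indicated, is precisely this $H^{1,1}$-regularity of the reflection coefficient, which is where the analysis of \cite{Zh2}\cite{BDT} is essential.
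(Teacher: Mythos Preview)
The paper does not supply its own proof of this theorem: it is stated as a known result with attribution to \cite{BC}, \cite{Zh2}, and \cite{BDT}, and no proof environment follows. Your sketch is a faithful outline of exactly the Beals--Coifman/Zhou argument those references contain---Volterra construction of the Jost columns, assembly of the Beals--Coifman solution by dividing out $a(z)$, residue computation at the simple zeros, and the $H^{1,1}$ mapping property of the scattering transform from \cite{Zh2}---so there is nothing to compare and no gap to report.
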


\begin{remark}
A solution $\psi$ of $T\psi=z\psi$, $z\in \cb\setminus\rb$, satisfying (1)(i)(ii) is known as a \emph{Beals-Coifman solution}.
For emphasis we denote $\psi=\psi_{BC}$.
\end{remark}

\begin{remark}
\label{rmk:det_constant}
Note that as the vector field in \eqref{eq:m_2_minus_lsp} has trace zero, $\det \psi(x,z)$ is independent of $x$.
Hence by (1)(i),
\beq
\det \psi(x,z) =1, \text{ for all } x\in \rb, \ z\in \cb\setminus (\rb\cup Z).
\eeq
\end{remark}

\begin{remark}
\label{rmk:not_generic}
If $u\in H^{1,1}(\rb)$ is not generic, the zero set of $a(z;u)$ can be extremely complicated.
For example see \cite{Zh1}. Let $z_0>0$ be an arbitrary positive number and let $\epsilon>0$ be small.
Let $D^+ = \{z\in\cb^+:|z-z_0|<\epsilon\}$. Let $a(z)$ be any function which is analytic in $D^+$,
continuous up to the boundary $\partial D^+$, and non-zero in $\partial D^+ \cap \cb^+$.
Then there exists $u$ in Schwartz space, $\mc S(\rb)$, such that $\Delta = \{z\in D^+: a(z)=0\} \subset$ spec$\{T(u)\}$.
Thus we see, in particular, that there exist a Schwartz function $u$ with $L^2(\rb)$-eigenvalues $\{z_k\}\subset \cb^+$
accumulating at essentially arbitrary rates onto the real axis.
The long time asymptotics of solutions of NLS with such initial data $u$,
is correspondingly extremely complicated and not yet fully understood.
This is in sharp contrast to the case where $u\in\mc G$(see below).
\end{remark}

We note that the second column of $\psi(x,z)$, $z\in\cb^+$, is in fact the solution $\phi=(\phi_1, \phi_2)^T$ in \eqref{eq:m_2_minus_lsp}\eqref{eq:m_2_minus} above. Define
$$
a(z) \equiv \ovl{a(\ovl{z})}, \ \ z\in\cb^-.
$$
The asymptotic behavior of $\psi(x,z)e^{-ixz\sigma}$ in (1)(i)(ii) above is in $H^{1,1}$ in the following sense.

\begin{proposition}
\label{prop:BC_tail}
Let $z\in \cb\setminus(\rb\cup Z)$. Let $\chi^+(x)$ be a $C^\infty$ function such that $0\leq \chi^+ \leq 1$, $\chi^+(x)=1$ for $x\geq 1$ and $\chi^+(x)=0$ for $x\leq 0$. Let $\chi^- = 1 - \chi^+$. Then,
$$\bal
& \|(\psi(x,z)e^{-ixz\sigma}-I)\chi^-\|_{H^{1,1}} \leq c(u), \\
& \|(\psi(x,z)e^{-ixz\sigma}-a(z)^{-\sigma_3})\chi^+)\|_{H^{1,1}} \leq c(u), \\
\eal $$
\end{proposition}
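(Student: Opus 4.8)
The plan is to read off both asymptotic tails of $m(x,z):=\psi(x,z)e^{-ixz\sigma}$ from the Volterra integral equations obeyed by the columns of the Beals--Coifman solution and to estimate the resulting iterated integrals directly. I fix $z\in\cb^+\setminus Z_+$ throughout, the case $z\in\cb^-$ being handled identically via the symmetry of Proposition \ref{prop:scattering_symmetry}(i) (which, with uniqueness of the Beals--Coifman solution, gives $m(\cdot,z)=\overline{\sigma_2\,m(\cdot,\bar z)\,\sigma_2}$), and I set $b:=\operatorname{Im}z>0$. The second column of $m$ is $n=(n_1,n_2)^{T}=\phi(x,z)e^{ixz/2}$, with $\phi$ as in \eqref{eq:m_2_minus_lsp}--\eqref{eq:m_2_minus}; differentiating $\phi_x=(iz\sigma+Q)\phi$ and using $n(-\infty)=e_2$ gives the closed system $n_1(x)=\int_{-\infty}^{x}e^{iz(x-y)}u(y)n_2(y)\,\rd y$ and $n_2(x)=1-\int_{-\infty}^{x}\overline{u(y)}\,n_1(y)\,\rd y$, whose kernel satisfies $|e^{iz(x-y)}|=e^{-b(x-y)}\le1$ for $y\le x$. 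Since $a(z)\ne0$, the first column of $m$ equals $a(z)^{-1}\tilde m^{(1)}$, where $\tilde m^{(1)}=(\tilde p_1,\tilde p_2)^{T}$, the first column of the $+\infty$-normalized Jost matrix, solves the analogous system $\tilde p_1(x)=1-\int_{x}^{\infty}u\,\tilde p_2$ and $\tilde p_2(x)=\int_{x}^{\infty}e^{iz(y-x)}\overline{u(y)}\,\tilde p_1(y)\,\rd y$. In every case the structure is the same: an off-diagonal component $f\in\{m_{12},m_{21}\}$, vanishing at $\pm\infty$ and of convolution type $f=K*(u\,g)$ with $g$ bounded and $K(t)=e^{-b|t|}$ supported on a half-line; and a diagonal component $h\in\{m_{22},\tilde p_1\}$ with limits $\ell_-\in\{1,a(z)\}$ at $-\infty$ and $\ell_+\in\{a(z),1\}$ at $+\infty$, for which $h(x)-\ell_-=\pm\int_{-\infty}^{x}(\cdots)$ and $h(x)-\ell_+=\mp\int_{x}^{\infty}(\cdots)$ with integrand $\overline u f$ or $u f$.

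I would run the estimates in this order. (a) A Gronwall argument based on $u\in L^1(\rb)$ --- valid since $H^{1,1}\hookrightarrow L^1$ --- applied to each Volterra system gives $\|n_j\|_{L^\infty(\rb)}+\|\tilde p_j\|_{L^\infty(\rb)}\le C_0(u)$ with no smallness needed (the bound is $e^{\|u\|_{L^1}}$). (b) Since $K\in L^1(\rb)$, Young's inequality gives $\|f\|_{L^2(\rb)}\le C_0(u)\,\|K\|_{L^1}\,\|u\|_{L^2}$. (c) For the weighted bound, the elementary inequality $1+|x|\le(1+|x-y|)(1+|y|)$ yields $(1+|x|)\,|f(x)|\le\big[(1+|\cdot|)K\big]*\big[(1+|\cdot|)\,|u g|\big](x)$; as $K$ decays exponentially, $(1+|\cdot|)K\in L^1$, so $\|(1+|\cdot|)f\|_{L^2}\le C_0(u)\,\|(1+|\cdot|)K\|_{L^1}\,\|(1+|\cdot|)u\|_{L^2}\lesssim\|u\|_{H^{1,1}}$. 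In particular the off-diagonal entries $m_{12},m_{21}$ already lie in $H^{1,1}(\rb)$ with norm $\le c(u)$, which bounds their contributions to both $(m-I)\chi^-$ and $(m-a(z)^{-\sigma_3})\chi^+$. (d) Derivative bounds come straight from the ODEs, e.g. $n_1'=izn_1+un_2$, $n_2'=-\overline u\,n_1$, and likewise for the first column.

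It remains to treat the diagonal entries near each end. By (c), $(1+|\cdot|)f$ and $(1+|\cdot|)u$ lie in $L^2(\rb)$, so $\overline u f$ (resp.\ $u f$) lies in $L^1(\rb)$ against the weight $(1+|\cdot|)^2$. On $\operatorname{supp}\chi^-\subseteq(-\infty,1]$ one has $x\le1$, hence $1+|y|\ge\tfrac12(1+|x|)$ for every $y\le x$, and therefore $|h(x)-\ell_-|\le\int_{-\infty}^{x}|\overline u f|\lesssim(1+|x|)^{-2}\,\|(1+|\cdot|)^2\overline u f\|_{L^1}$; thus $(h-\ell_-)\chi^-$ and $(1+|\cdot|)(h-\ell_-)\chi^-$ are in $L^2$. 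The mirror estimate on $\operatorname{supp}\chi^+\subseteq[0,\infty)$, using $h(x)-\ell_+=\mp\int_x^\infty(\cdots)$, gives $(h-\ell_+)\chi^+$ and $(1+|\cdot|)(h-\ell_+)\chi^+$ in $L^2$. Combining (a)--(d) with these two estimates and $m^{(1)}=a(z)^{-1}\tilde m^{(1)}$ yields $\|(m-I)\chi^-\|_{H^{1,1}}\le c(u)$ and $\|(m-a(z)^{-\sigma_3})\chi^+\|_{H^{1,1}}\le c(u)$.

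The main obstacle --- essentially the only non-routine point --- is the pair of weighted estimates in (c) and in the diagonal bound above: a pointwise or $L^1$ bound on $h-\ell_\pm$ alone gives only boundedness of $(1+|\cdot|)(h-\ell_\pm)$, not membership in $L^2$, and a naive Cauchy--Schwarz seems to require more decay of $u$ than $H^{1,1}$ provides. The fix is to harvest one power $(1+|x|)^{-1}$ of decay on the off-diagonal entry from the convolution kernel, then a second power from the monotonicity of $1+|\cdot|$ on the half-line carved out by $\chi^\pm$; this is precisely where the weight in $H^{1,1}$ is consumed, everything else being Gronwall plus Young. I would also record that the constant $c=c(u)$ in fact depends on the fixed $z$ through $\operatorname{Im}z$ and $\operatorname{dist}(z,Z)$, degenerating as $z\to\rb\cup Z$, so the bound is uniform only on compact subsets of $\cb\setminus(\rb\cup Z)$.
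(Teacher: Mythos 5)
Your proof is correct and follows essentially the same route as the paper: Volterra equations for the two columns, an iteration/Gronwall bound for $L^\infty$, a weighted convolution estimate for the off-diagonal entries (the paper packages this, via Minkowski's inequality, into Lemma \ref{lem:H11_integral}), and the tail estimate $\bigl|\int_x^{\infty}\ovl{u}\,f\bigr|\lesssim (1+|x|)^{-2}$ for the diagonal entries together with the identification $a(z)=1-\int_{\rb}\ovl{u}\,(m_2)_1$. Your closing remark that the constant degenerates as $z\to\rb\cup Z$ is accurate and consistent with the $q$-dependence of the constant in Lemma \ref{lem:H11_integral}.
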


\begin{proof}
Let $z\in\cb^+\setminus Z$. The other case $z\in\cb^-\setminus Z$ is similar. Let $m(x,z)= (m_1, m_2) = \psi(x,z)e^{-ixz\sigma}$. Then (cf. \cite{BC},\cite{Zh2}), as $a(z)\neq 0$, $m_1$, $m_2$ are obtained as solutions of the following Volterra integral equations
\beq
\label{eq:m_1_BC}
m_1(x,z) = a(z)^{-1} e_1 -\int_x^{\infty} \bpm 1&0\\0&e^{iz(y-x)} \epm \bpm 0&u\\-\overline{u}&0 \epm m_1(y,z) \rd y,
\eeq
and
\beq
\label{eq:m_2_BC}
m_2(x,z) = e_2 +\int_{-\infty}^x \bpm e^{-iz(y-x)}&0\\0&1 \epm \bpm 0&u\\-\overline{u}&0 \epm m_2(y,z) \rd y,
\eeq
We consider \eqref{eq:m_2_BC}: the analysis of \eqref{eq:m_1_BC} is similar.
The solution $m_2(x,z)$ of \eqref{eq:m_2_BC} is obtained by standard iterations, which yields the bound
\beq
\label{eq:bound_sup}
\sup_{x\in\rb} |m_2(x,z)| \leq e^{ \int_{-\infty}^{\infty} |u(y)| \rd y}.
\eeq
Writing $m_2=((m_2)_1, (m_2)_2)^T$, \eqref{eq:m_2_BC} becomes
\beq
(m_2)_1(x,z) = \int_{-\infty}^x e^{-iz(y-x)} u(y) (m_2)_2(y,z) \rd y,
\eeq
and
\beq
\label{eq:m_2_2}
(m_2)_2(x,z) = 1- \int_{-\infty}^x \ovl{u}(y) (m_2)_1(y,z) \rd y,
\eeq
From Lemma \ref{lem:H11_integral} below,
$$
\|(m_2)_1\|_{H^{1,1}(\rb)} \leq c\|u (m_2)_2\|_{H^{0,1}(\rb)} \leq c\|(m_2)_2\|_{L^\infty(\rb)} \|u\|_{H^{0,1}(\rb)},
$$
and hence from Lemma \ref{lem:H11_integral} again,
$$
\|((m_2)_2-1)\chi^-\|_{H^{1,1}(\rb)}  \leq c\|u\|_{H^{0,1}} \|(m_2)_1\|_{H^{1,1}} \leq c \|u\|_{H^{0,1}}^2 \|(m_2)_2\|_{L^\infty}.
$$
As $x\to \infty$, we re-write \eqref{eq:m_2_2} as
\beq
(m_2)_2(x,z) = 1- \int_{-\infty}^\infty \ovl{u}(y) (m_2)_1(y,z) \rd y + \int_x^{\infty} \ovl{u}(y) (m_2)_1(y,z) \rd y.
\eeq
The quantity $1- \int_{-\infty}^\infty \ovl{u}(y) (m_2)_1(y,z) \rd y$ is precisely $a(z)$.
Arguing as above we see that
$$
\|((m_2)_2-a(z))\chi^+\|_{H^{1,1}(\rb)}  \leq c \|u\|_{H^{0,1}}^2 \|(m_2)_2\|_{L^\infty}.
$$
This completes the proof of the Proposition.
\end{proof}

\begin{lemma}
\label{lem:H11_integral}
Let $ f(x)\in H^{0,1}(\rb)$, $g(x) \in H^{1,1}(\rb)$ and $q > 0$. Then,
$$
\begin{aligned}
& \bigg\|\int_{\langle x\rangle}^\infty f(y) g(y)  \rd y \bigg\|_{H^{1,1}(\rb^+)} \leq c \|f\|_{H^{0,1}(\rb^+)}\|g\|_{H^{1,1}(\rb^+)} ,\\
& \bigg\|\int_{\langle x\rangle}^\infty f(y) e^{-q(y-\langle x\rangle)} \rd y\bigg\|_{H^{1,1}(\rb)} \leq c \|f\|_{H^{0,1}(\rb)}, \\
& \bigg\|\int_{-\infty}^{\langle x\rangle} f(y) e^{-q(\langle x\rangle-y)}  \rd y \bigg\|_{H^{1,1}(\rb)} \leq c \|f\|_{H^{0,1}(\rb)}, \\
\end{aligned}
$$
where c depends on $q$.
\end{lemma}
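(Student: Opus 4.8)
The plan is to read all three bounds as instances of one principle: each sends a function of $H^{0,1}$-type to an element of $H^{1,1}$ through an integral operator, and in each case the extra derivative comes for free from differentiating the integral, while the extra power of decay is the real content. The second and third estimates are (up to the harmless presence of $\langle\cdot\rangle$, which only affects a bounded region near the origin) convolutions of $f$ against a one-sided exponential kernel, and so will follow from Young's inequality together with an elementary identity for the derivative; the first is bilinear and will need a Hardy--Tonelli type argument.

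I would dispose of the second and third estimates first. Writing $h(x)=\int_{x}^{\infty}f(y)e^{-q(y-x)}\,\rd y=e^{qx}\int_{x}^{\infty}f(y)e^{-qy}\,\rd y$, one recognizes $h=f*k_{q}$ with $k_{q}(s)=e^{qs}\mathbf{1}_{\{s\le 0\}}$, and $\|k_{q}\|_{L^{1}(\rb)}=q^{-1}$, $\|\,|s|\,k_{q}\|_{L^{1}(\rb)}=q^{-2}$. Young's inequality gives $\|h\|_{L^{2}(\rb)}\le q^{-1}\|f\|_{L^{2}(\rb)}$; writing $x=(x-y)+y$ inside the convolution integral splits $xh(x)$ into the convolution of $f$ with $s\mapsto s\,k_{q}(s)$ and the convolution of $y\mapsto y f(y)$ with $k_{q}$, so $\|xh\|_{L^{2}(\rb)}\le q^{-2}\|f\|_{L^{2}}+q^{-1}\|xf\|_{L^{2}}$; and since $\int_{x}^{\infty}f(y)e^{-qy}\,\rd y$ is locally absolutely continuous with a.e.\ derivative $-f(x)e^{-qx}$, one gets the first-order relation $h'(x)=q\,h(x)-f(x)$ and hence $\|h'\|_{L^{2}}\le q\|h\|_{L^{2}}+\|f\|_{L^{2}}$. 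Summing, $\|h\|_{H^{1,1}(\rb)}\le c(q)\|f\|_{H^{0,1}(\rb)}$; the third estimate is identical with $k_{q}$ replaced by its reflection $e^{-qs}\mathbf{1}_{\{s\ge 0\}}$ and $h'(x)=f(x)-q\,h(x)$. (If $\langle x\rangle$ is read as $(1+x^{2})^{1/2}$ the convolution identity is only approximate, but the chain-rule factor $\langle x\rangle'=x/\langle x\rangle$ is bounded by $1$ and precisely cancels the Jacobian singularity at $x=0$, while near the origin every quantity is controlled by $\|f\|_{L^{1}}\le c\|f\|_{H^{0,1}}$, so the same estimates survive.)

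For the first estimate, set $h(x)=\int_{\langle x\rangle}^{\infty}f(y)g(y)\,\rd y$ on $\rb^{+}$. The derivative is immediate: $h'(x)=-(fg)(\langle x\rangle)\,\langle x\rangle'$, so $\|h'\|_{L^{2}(\rb^{+})}\le\|fg\|_{L^{2}}\le\|f\|_{L^{2}}\|g\|_{L^{\infty}}\le c\,\|f\|_{L^{2}}\|g\|_{H^{1,1}(\rb^{+})}$ by the one-dimensional Sobolev bound $\|g\|_{L^{\infty}}^{2}\le\|g\|_{L^{2}}^{2}+\|g'\|_{L^{2}}^{2}$. The heart of the matter is the weighted bound: using $\langle x\rangle\le\langle y\rangle$ on the range of integration together with Cauchy--Schwarz applied to the factorization $(\langle y\rangle f(y))\cdot g(y)$ gives the pointwise estimate $\langle x\rangle\,|h(x)|\le\|f\|_{H^{0,1}}\,\|g\|_{L^{2}(\langle x\rangle,\infty)}$; squaring, integrating in $x$, and then interchanging the order of integration (Tonelli) turns the surviving $x$-integral into $\int|g(y)|^{2}\,\big|\{x:\langle x\rangle\le y\}\big|\,\rd y\le\int y\,|g(y)|^{2}\,\rd y\le\|g\|_{H^{0,1}(\rb^{+})}^{2}$, whence $\|\langle x\rangle\,h\|_{L^{2}(\rb^{+})}\le\|f\|_{H^{0,1}}\|g\|_{H^{1,1}}$. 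The unweighted bound then follows by combining this (which controls $h$ on $\{\langle x\rangle\ge1\}$) with the crude estimate $|h(x)|\le\|fg\|_{L^{1}}\le\|f\|_{L^{2}}\|g\|_{L^{2}}$ near the origin. Adding the three contributions proves the first estimate.

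The only step I expect to demand real care --- the rest being Young's inequality or an explicit differentiation --- is the weighted $L^{2}$ bound in the first estimate: the naive pointwise argument shows only that $\langle x\rangle\,h(x)$ is bounded, not square-integrable, so one must postpone Cauchy--Schwarz until after integrating in $x$ in order to invoke Tonelli, and one must check that the resulting $x$-measure $\big|\{x:\langle x\rangle\le y\}\big|$ is $\le y$, so that only $\int y|g(y)|^{2}\,\rd y<\infty$, i.e.\ $g\in H^{0,1}(\rb^{+})$, is actually used.
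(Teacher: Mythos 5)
Your proof is correct, and for the second and third bounds it is essentially the paper's argument: the paper changes variables to $\int_0^\infty f(x+y)e^{-qy}\,\rd y$ and applies Minkowski's integral inequality with the weight $(1+|x|)\le(1+|x+y|)(1+y)$, which is the same computation as your Young's-inequality estimate with the kernel $s\,k_q(s)$; the derivative identity $h'=qh-f$ is used identically in both. (Also, the paper's $\langle x\rangle$ is just a dummy-variable placeholder for the argument of the function whose norm is taken --- the proof silently replaces it by $x$ --- so your hedge about the Japanese bracket was unnecessary, though harmless.) The only genuine divergence is in the weighted bound of the first estimate. The paper inserts the factor $\frac{y^2+1}{x^2+1}\ge 1$ into the integrand and applies Cauchy--Schwarz once to get the \emph{pointwise} decay $\bigl|\int_x^\infty fg\bigr|\le (1+x^2)^{-1}\|f\|_{H^{0,1}}\|g\|_{H^{0,1}}$, from which both the unweighted and the $x$-weighted $L^2$ bounds on $\rb^+$ follow immediately by integrating an explicit function. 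You instead keep the tail $\|g\|_{L^2(x,\infty)}$, square, and apply Tonelli, which also works and uses exactly the same hypotheses ($g\in H^{0,1}$ for the weight, $g\in L^\infty\supset H^{1,1}$ for the derivative term). The paper's version is slightly stronger (it yields a uniform pointwise rate, which is the form actually reused in Proposition \ref{prop:BC_tail}), while yours is the more routine duality/averaging argument; either suffices for the lemma as stated.
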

\begin{proof}
For $x>0$,
$$
\begin{aligned}
\bigg| \int_x^\infty f(y) g(y) \rd y \bigg|
& \leq  \int_x^\infty \frac {y^2+1}{x^2+1} |f(y)| |g(y)| \rd y \\
& \leq  \frac 1{x^2+1}  \|f\|_{H^{0,1}(\rb^+)}\|g\|_{H^{0,1}(\rb^+)}.\\
\end{aligned}
$$
and hence we obtain the inequality for $\big\|\int_{\langle x\rangle}^\infty f g \big\|_{H^{0,1}(\rb^+)}$.
As $\frac{d}{dx} \int_x^\infty f g = -f g $, the first inequality follows.
For the second inequality, observe that
$$
\int_x^\infty f(y) e^{-q(y-x)} \rd y = \int_0^\infty f(x+y) e^{-q y} \rd y.
$$
and hence by Minkowski's inequality,
$$
\begin{aligned}
& \bigg\|(1+|{\langle x\rangle}|) \int_{\langle x\rangle}^\infty f(y) e^{-q(y-\langle x\rangle)} \bigg\|_{L^2(\rb)} \\
& \leq \int_0^\infty \big\|(1+|{\langle x\rangle}|) f(\langle x\rangle+y) \big\|_{L^2(\rb)} e^{-q y} \rd y \\
& \leq c \|f\|_{H^{0,1}(\rb)} \int_0^\infty (1+y) e^{-q y} \rd y \leq c \|f\|_{H^{0,1}(\rb)} .\\
\end{aligned}
$$
Now, $\frac{d}{dx} \int_x^\infty f e^{q(x-y)}  = -f + \int_x^\infty q f e^{q(x-y)}$,
which lies in $L^2$, again by the Minkowski's inequality. This proves the the second inequality.
A simple change of variables shows that the third inequality follows from the second inequality.
\end{proof}

In \cite{ZS} the authors consider solutions $\phi$ of the Lax equation $T\phi = z\phi$ which are different from, but related to, the boundary values of the Beals-Coifman solutions $\psi_{\pm}(x,z) = ({\psi_1}_{\pm}(x,z), {\Psi_2}_{\pm}(x,z))$.
For $z\in \rb$, set
\beq
\label{eq:zs_akns_def}
\bal
& \psi^+(x,z) \equiv (a(z) {\psi_1}_+ (x,z), \ovl{a(z)} {\psi_2}_- (x,z)), \\
& \psi^-(x,z) \equiv ({\psi_1}_- (x,z), {\psi_2}_+ (x,z)), \\
\eal \eeq
Then $T\psi^{\pm} = z \psi{\pm}$, $z\in\rb$ and $\psi^{\pm}(x,z)$ have asymptotics,
\beq
\label{eq:zs_asymp}
\bal
& m^+(x,z) \equiv \psi^+(x,z)e^{-ixz\sigma} \to \text{ I as } x\to +\infty, \\
& m^-(x,z) \equiv \psi^-(x,z)e^{-ixz\sigma} \to \text{ I as } x\to -\infty, \\
\eal
\eeq
Moreover, the solutions $\psi^{\pm}$, $z\in\rb$, are uniquely determined by the conditions \eqref{eq:zs_asymp}.
From Proposition \ref{prop:scattering_symmetry},
\beq
\label{eq:symm_ZS_sol}
\psi_1^\pm(x,z) = i\sigma_2 \ovl{\psi_2^\pm(x,\ovl z)} = \bpm 0 & 1 \\ -1 & 0 \epm \ovl{\psi_2^\pm(x,\ovl z)}.
\eeq
As $\det \psi^\pm (x,z) \equiv 1$, $z\in\rb$, we conclude that for $x,z\in\rb$,
\beq
\label{eq:psi_pm_det}
\|\psi_j^\pm (x,z)\|^2 = |(\psi_j^\pm)_1 (x,z)|^2 + |(\psi_j^\pm)_2 (x,z)|^2 = 1, \ \ j=1,2.
\eeq

A direct calculation using the jump relation for $\psi_{\pm}$ shows that
\beq
\label{eq:scattering_matrix}
\psi^+(x,z) = \psi^-(x,z) S(z), \ \ z\in\rb
\eeq
where $S$ is the \emph{scattering matrix} and
$$
S(z) = \bpm a(z)&-\ovl{b(z)} \\ b(z) & \ovl{a(z)} \epm, \ \ b(z) \equiv a(z) \ovl{r(z)}, \ \ z \in \rb.
$$
We have
\beq
\label{eq:scattering_det}
|a(z)|^2 + |b(z)|^2 = \det S(z) = 1, \ \ z\in \rb.
\eeq
Note that if $u$ is generic, $a(z)\neq 0$ for $z\in\rb$ and as $a(z)\to1$ as $|z|\to\infty$, $\|b\|_{L^\infty(\rb)}\leq c<1$.
Let
\beq
\label{eq:gamma_set}
\Gamma_+ = \{\gamma(z_k), z_k \in Z_+\}.
\eeq
Knowledge of $(a,b,\Gamma_+)$ is equivalent to the knowledge of $(r,Z_+, K_+)$.
Indeed if $a$, $b$ and $\Gamma_+$ are known, then clearly $r(z)=\ovl{b(z)}/\ovl{a(z)}$ and $c(z_k) = \frac{\gamma(z_k)}{a'(z_k)}$, $k=1, \cdots, n$ are known.
Moreover, $Z_+$ is just the set of the zeros of $a(z)$ in $\cb^+$.
Conversely, as $|a(z)|^{-2} = 1+|r(z)|^2$, it is easy to verify that
\beq
\label{eq:a_exp_formula}
a(z) = \prod_{z_k \in Z_+} \frac{z-z_k}{z-\ovl{z_k}} e^{-l(z)}.
\eeq
where
\beq
\label{eq:l_z}
l(z) = \frac 1{2\pi i} \int_\rb \frac {\log \big(1+|r(s)|^2\big)}{s-z}   \rd s, \ \ z\in\cb^+.
\eeq
But then $b(z)=a(z)\ovl{r(z)}$, $z\in\rb$, and using the above formula \eqref{eq:a_exp_formula} it is easy to deduce that
\beq
\label{eq:b_H11}
b\in H^{1,1}(\rb).
\eeq
Also,
\beq
\label{eq:gamma_c_a}
\gamma(z_k)=c(z_k)a'(z_k), \ \ k=1, \cdots, n.
\eeq
We will use this fact repeatedly below without further comment.
The solutions $\psi^\pm$ are called the \emph{ZS-AKNS solutions} of $T\psi = z\psi$ (see \cite{AKNS}).
\eqref{eq:zs_akns_def} shows that $\psi_1^+$ and $\psi_2^-$, the first and second columns of $\psi^+$ and $\psi^-$, respectively, have analytic continuations to $\cb^+$. Also, $\psi_1^-$ and $\psi_2^+=\ovl{a(z)} {\psi_2}_- (x,z)=\ovl{a(\ovl{z})} {\psi_2}_- (x,z)$ and $\psi_2^+$ have analytic continuations to $\cb^-$.
Whenever we mention ZS-AKNS solutions in the sequel for $z\in\cb\setminus\rb$, we always mean the analytic continuations of $\psi^+$, $\psi_2^-$ and $\psi_1^-$, $\psi_2^+$ to $\cb^+$ (respectively $\cb^-$). Note that
\begin{equation}
\label{eq:zs_akns_anal_cont}
\psi_{ZS}(x,z) =
\left\{
\begin{aligned}
&(\psi_1^+(x,z) , \psi_2^-(x,z)) =\psi_{BC}(x,z) \bpm a(z) & 0 \\ 0 & 1 \epm, \ \ z \in \cb^+\\
&(\psi_1^-(x,z) , \psi_2^+(x,z)) =\psi_{BC}(x,z) \bpm 1 & 0 \\ 0 & \ovl{a(\ovl{z})} \epm, \ \ z \in \cb^-.
\end{aligned}
\right.
\end{equation}
In particular, as $\det \psi_{BC}(x,z) = 1$, we see that
\begin{equation}
\label{eq:zs_akns_det}
\det \psi_{ZS}(x,z) =
\left\{
\begin{aligned}
&a(z), \ \ z \in \cb^+\\
&\ovl{a(\ovl{z})}, \ \ z \in \cb^-.
\end{aligned}
\right.
\end{equation}
Also, from \eqref{eq:scattering_matrix}, we see that
\beq
\label{b_as_det}
b(z) =\det(\psi_1^-, \psi_1^+)(x,z), \ \ z\in\rb.
\eeq
Clearly by \eqref{eq:zs_akns_det}, $a(z)=0$ for $z\in\cb^+$ if and only if $\psi_1^+$ is proportional to $\psi_2^-$.
But as $\psi_1^+$ and $\psi_2^-$ decays exponentially as $x\to\pm\infty$, respectively,
we see that $a(z)=0$, $z\in\cb^+$ if and only if $z$ is an $L^2(\rb)$-eigenvalue of the operator $T$.
Similarly for $z\in\cb^-$, $a(z)=0$ if and only if $\psi_1^-$ is proportional to $\psi_2^+$.
Also, $a(z)=0$ if and only if $z$ is an $L^2(\rb)$-eigenvalue of the operator $T$.
Let $K=K_+ \cup K_- = \{ c(z_1), \cdots , c(z_n) \} \cup \{ c(\ovl{z_1}), \cdots , c(\ovl{z_n}) \}$.
\begin{definition}
\label{def:scattering}
The \underline{scattering map} $\mathcal{S}$ takes $u\in \mathcal{G}$ to its \emph{scattering data} $\mathcal{S}=\mathcal{S}(u)=(r, Z_+, K_+)$
$$
H^{1,1} \supset \mathcal{G} \ni u \mapsto \mathcal{S}(u) = (r(\cdot;u), Z_+(u), K_+(u)) \in H^{1,1} \times \cb_+^n \times (\cb\setminus 0)^{n}.
$$
where $n=n(u)$ depends on $u$.
\end{definition}

The inverse scattering map $\mathcal{S}^{-1}$ is constructed by solving a normalized RHP (see \cite{BC}, \cite{Zh2}).
The use of RHP's to solve inverse scattering problems, goes back to the work of Shabat \cite{Sh}.
For any $n\geq 0$, let
\begin{itemize}
\item $r\in H^{1,1}$,
\item $Z=Z_+ \cup Z_-$ where $Z_+ \in \cb_+^n$ and $Z_- = \ovl{Z_+} \in \cb_-^n$,
\item $K = K_+ \cup K_-$ where $K_+ = \{ c(z_1), \cdots , c(z_n) \} \in (\cb\setminus 0)^n$
and $K_- = \{ c(\ovl{z_1})=-\ovl{c(z_1)}, \cdots , c(\ovl{z_n})=-\ovl{c(z_n)} \}\in (\cb\setminus 0)^n$.
\end{itemize}
Set $v(z) = \bsm 1+|r(z)|^2 & r(z) \\ \ovl{r(z)} & 1 \esm$ and for $x\in\rb$,
$$ \bal v_x(z)
& = e^{ixz \textnormal{ad}\sigma} v(z) \equiv e^{ixz\sigma} v(z) e^{-ixz\sigma}
 = \bpm 1+|r_x(z)|^2 & r_x(z) \\ \ovl{r_x(z)} & 1 \epm, \\
\eal $$
where $r_x(z) = r(z)e^{ixz}$.
For fixed $x\in\rb$, let $m=m(x,z)$ solve the following normalized RHP:
\beq
\label{eq:jump_inv_scat}
\eeq

\begin{enumerate}[(i)]
\item $m(x,\cdot)$ is analytic in $\cb\setminus(\rb\cup Z)$.
\item $m(x,\cdot)$ has continuous boundary values on the axis satisfying
$$
m_+(x,z) = m_-(x,z) v_x(z), \ \ z \in \rb,
$$
\item $m(x,\cdot)$ has simple poles at $Z$ satisfying
$$ \bal
& \Res\displaylimits_{z = z_k} m(x,z)= \lim_{z \to z_k} m(x,z) v_x(z_k), \ \ v_x(z_k)=\bpm 0&0\\c_x(z_k)&0 \epm, \\
& \Res\displaylimits_{z = \ovl{z_k}} m(x,z)= \lim_{z \to \ovl{z_k}} m(x,z)  v_x(\ovl{z_k}), \ \ v_x(\ovl{z_k})=\bpm 0&c_x(\ovl{z_k})\\0&0 \epm. \\
\eal $$
where $c_x(z_k)=e^{-ixz_k} c(z_k)$, $c_x(\ovl{z_k})=e^{ix\ovl{z_k}} c(\ovl{z_k}) = -\ovl{c_x(z_k)}$.
\item $m(x,z) \to I$ as $z\to \infty$.
\end{enumerate}
Note that, by Theorem \ref{thm:direct_scattering}, if $u(x,z)$ is the Beals-Coifman solution for $u(x)$,
then $m(x,z)$ $\equiv \psi(x,z)e^{-ixz\sigma}$ is a solution of the above normalized RHP with $\mathcal{S}(u)=(r, Z_+, K_+)$.

\begin{remark}
\label{rmk:det_1}
If $m=m(x,z)$ solves the above RHP, $\det m(x,z) \equiv 1$.
Indeed, as $\det v_x(z) \equiv 1$, $\det m$ is analytic across $\rb$.
Also, $\det m$ is analytic at $z_k$:
this is because the residue condition is easily shown to be equivalent to the requirement that
$m(x,z) = M(x,z)\big(I+\frac {v_x(z_k)}{z-z_k} \big)$ where $M(x,z)$ is analytic and invertible near $z=z_k$.
On the other hand, $\det m(x,z) \to 1$ as $z\to\infty$ and so $\det m(x,z)\equiv 1$ by Liouville's theorem.
\end{remark}

\begin{remark}
\label{rmk:RHP_unique}
If the above RHP has a solution $m=m(x,z)$, then it is unique.
Indeed, if $\hat m=\hat m(x,z)$ is another solution of the RHP, a simple standard calculation shows that
the ratio $\hat m m^{-1}$ is an entire function which goes to I as $z\to\infty$.
By Liouville's theorem, it follows that $\hat m = m$.
\end{remark}

We now introduce a Cauchy operator on $\rb$,
$$ \bal
& Ch(z) = \int_\rb \frac {h(s)}{s-z} \frac{\rd s}{2\pi i}, \ \ z\in \cb\setminus\rb, \\
& C^{\pm}h(z) = \lim_{\epsilon \downarrow 0} Ch(z\pm i\epsilon), \ \ z \in \rb. \\
\eal $$
The operators $C^\pm$ are bounded, self-adjoint projections in $L^2(\rb)$, $\|C^\pm \|_{L^2(\rb) \to L^2(\rb)} = 1$, satisfying the important identity
\beq
\label{eq:cauchy_bd_id}
C^+ - C^- = \text{id}.
\eeq
Define the operator on $2\times 2$ matrix valued functions
\beq
\label{eq:cauchy_v_op}
C_v h = C^-(h(v-I)), \ \ h\in L^2.
\eeq
Clearly, $C_v$ is also bounded in $L^2$. For $x\in\rb$, let $\mu=\mu_x(z)$ be a $2\times 2$ matrix valued function on $\rb\cup Z$ with the following properties. On $\rb$, all the entries of $\mu-I=\mu(z)-I$ lie in $L^2(\rb)$ and for each $\zeta \in Z$, $\mu(\zeta) \in M_{2 \times 2}(\cb)$.
Suppose that $\mu=\mu_x$ solves the following system of coupled singular integral equations in $\rb\times Z$,
\beq
\label{eq:sing_mu}
\bal
& \big((1-C_{v_x})\mu_x\big)(z) = I + \sum_{\zeta \in Z} \frac{\mu_x(\zeta)v_x(\zeta)}{\zeta-z}, \quad z\in\rb, \\
& \bal \mu_x(\zeta) = I
    & + \frac 1{2\pi i} \int_\rb \frac {\mu_x(s)(v_x(s)-I)}{s-\zeta} \rd s \\
    & +  \sum_{\zeta' \in Z\setminus\{\zeta\}} \frac{\mu_x(\zeta')v_x(\zeta')}{\zeta'-\zeta}, \qquad\qquad \zeta\in Z. \\ \eal \\
\eal
\eeq
Set
\beq
\label{eq:eq_m_polar}
m(z) = m(x,z) \equiv I + \frac 1{2\pi i} \int_\rb \frac {\mu_x(s)(v_x(s)-I)}{s-z} \rd s + \sum_{\zeta \in Z} \frac{\mu_x(\zeta)v_x(\zeta)}{\zeta-z}
\eeq
for $z\in\cb\setminus(\rb\cup Z)$. Then, a direct calculation using \eqref{eq:cauchy_bd_id} shows that $m(z)$ solves the RHP \eqref{eq:jump_inv_scat}. Equations \eqref{eq:sing_mu} have a unique solution $\mu=\mu_x$ in $(I+L^2(\rb))\times (M_{2\times 2}(\cb))^{2n}$ by the following argument (see \cite{Zh2} for details): First one shows that the equations are Fredholm of index zero, and then one shows that the associated kernel of the equations is $\{0\}$
(More properly, one shows that equations \eqref{eq:sing_mu} are of type $\mathcal{F}\mathbf{x}=\mathbf{y}$ where $\mathcal{F}$ is a Fredholm operator with $\text{ind}(\mathcal{F})=0$ and $\text{Null}(\mathcal{F})=\text{dim Ker}(\mathcal{F})=0$. Thus $\text{codim}(\mathcal{F})= -\text{ind}(\mathcal{F}) + \text{Null}(\mathcal{F})=0$ and so $\mathcal{F}$ is a bijection).
Hence the solution $\mu_x$ exists and is unique in $(I+L^2(\rb))\times (M_{2\times 2}(\cb))^{2n}$ for each $x\in\rb$.
Moreover, as $z\to\infty$ in a cone $|\textnormal{Im} z|> c|\textnormal{Re} z|$, $c>0$,
$$
m(z) = m(x,z) = I + \frac {m_1(x)}{z} + o \Big(\frac 1z \Big).
$$
Set
$$
    u(x) = -i(m_1(x))_{12}.
$$
Then one shows that $r\in H^{1,1} \Rightarrow u(x) \in H^{1,1}$.
\begin{definition}
The \underline{inverse scattering map} $\mathcal{I}$ takes $(r,Z_+,K_+)\in H^{1,1} \times \cb_+^n \times (\cb\setminus 0)^{n}$ to the function $u(x) \in H^{1,1}$,
$$
(r,Z_+,K_+) \mapsto (r,Z_+ \cup \ovl{Z_+}, K_+ \cup (-\ovl{K_+})) \mapsto u(x).
$$
\end{definition}

\begin{theorem}
\label{thm:sca_inverse}
The maps $\mathcal{S}: \mc G \to H^{1,1} \times \cb_+^n \times (\cb\setminus 0)^{n}$
and $\mathcal{I}: H^{1,1} \times \cb_+^n \times (\cb\setminus 0)^{n} \to \mc G$ are continuous and inverse to each other.
\end{theorem}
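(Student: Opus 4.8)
The statement to prove is Theorem \ref{thm:sca_inverse}: the scattering map $\mathcal{S}$ and the inverse scattering map $\mathcal{I}$ are continuous and mutually inverse on the generic class $\mathcal{G} \subset H^{1,1}$. Let me plan a proof.

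The plan is to break the theorem into three assertions: (a) $\mathcal{I} \circ \mathcal{S} = \mathrm{id}$ on $\mathcal{G}$; (b) $\mathcal{S} \circ \mathcal{I} = \mathrm{id}$ on the scattering-data space; (c) both maps are continuous.

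For (a): Start with $u \in \mathcal{G}$ with Beals--Coifman solution $\psi(x,z)$ and scattering data $\mathcal{S}(u) = (r, Z_+, K_+)$. By Theorem \ref{thm:direct_scattering}, $m(x,z) = \psi(x,z)e^{-ixz\sigma}$ solves the normalized RHP \eqref{eq:jump_inv_scat} with these data. By Remark \ref{rmk:RHP_unique}, the solution of that RHP is unique, so the $m$ produced by the singular-integral construction \eqref{eq:sing_mu}--\eqref{eq:eq_m_polar} agrees with $\psi e^{-ixz\sigma}$. Now $u$ can be recovered from the $1/z$ coefficient: expanding $\psi e^{-ixz\sigma} = I + m_1(x)/z + o(1/z)$ and using the Lax equation \eqref{eq:scattering_problem}, $\partial_x(\psi e^{-ixz\sigma}) = iz[\sigma, \psi e^{-ixz\sigma}] + Q \psi e^{-ixz\sigma}$; matching the $O(z^0)$ term gives $Q(x) = i[\sigma, m_1(x)]$, i.e. $u(x) = -i(m_1(x))_{12}$, which is precisely the output of $\mathcal{I}$. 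Hence $\mathcal{I}(\mathcal{S}(u)) = u$.

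For (b): Start with data $(r, Z_+, K_+)$ in the target space. The construction \eqref{eq:sing_mu}--\eqref{eq:eq_m_polar} produces $m(x,z)$ solving the RHP, and sets $u(x) = -i(m_1(x))_{12} \in H^{1,1}$; one must check that $u$ is actually generic, i.e. $u \in \mathcal{G}$, and that applying $\mathcal{S}$ returns the original data. The key is that $m(x,z)$, being the unique solution of the RHP, must coincide with the normalized Beals--Coifman function of the potential $u$ it generates; this follows by a ``dressing'' argument: define $\psi(x,z) = m(x,z)e^{ixz\sigma}$, verify $\psi$ satisfies $\partial_x \psi = (iz\sigma + Q)\psi$ with $Q = i[\sigma, m_1]$ by differentiating the RHP (the jump and pole data are $x$-independent up to the explicit $e^{ixz\,\mathrm{ad}\,\sigma}$ conjugation, so $\psi_x \psi^{-1}$ is a polynomial in $z$ of degree one whose coefficients are read off from the asymptotics), and then check the normalization/analyticity to identify $(r, Z_+, K_+)$ as the scattering data of this $u$. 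In particular the zeros of the reconstructed $a(z)$ are exactly $Z_+$ and are simple because the pole data $K_+$ lie in $\cb \setminus 0$, so $u \in \mathcal{G}$.

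For (c): Continuity in both directions follows from the explicit solution formulas. For $\mathcal{S}$ continuous: the Volterra equations \eqref{eq:m_1_BC}--\eqref{eq:m_2_BC} and the estimates of Proposition \ref{prop:BC_tail} and Lemma \ref{lem:H11_integral} show $u \mapsto (m_1, m_2)$ and hence $u \mapsto (r, Z_+, K_+)$ depends continuously on $u \in H^{1,1}$ (the zeros $z_k$ and norming constants $c(z_k)$ depend continuously because $a(z)$ depends continuously and the zeros are simple, so one can apply Rouché/implicit function theorem locally). For $\mathcal{I}$ continuous: the singular integral equations \eqref{eq:sing_mu} depend continuously on the data, the Fredholm inverse $(1 - C_{v_x})^{-1}$ together with the finite-dimensional pole block depends continuously (since invertibility is an open condition and the index-zero Fredholm theory is stable under perturbation), and hence $m_1(x)$, and thus $u = -i(m_1)_{12}$, depends continuously in $H^{1,1}$; this is where one invokes ``$r \in H^{1,1} \Rightarrow u \in H^{1,1}$'' quantitatively. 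The main obstacle is the uniform control needed for continuity across configurations where the number $n$ of zeros could change or a zero approaches $\rb$ — but since we stay inside $\mathcal{G}$ and the statement fixes $n = n(u)$, continuity is understood locally in $u$ (on each connected piece of $\mathcal{G}$ with $n$ fixed), and the genuinely delicate estimates are precisely those already assembled in Proposition \ref{prop:BC_tail} and Lemma \ref{lem:H11_integral}; the rest is bookkeeping. I would cite \cite{BC}, \cite{Zh2}, \cite{BDT} for the detailed verifications and present only the structural argument above.
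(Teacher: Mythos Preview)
The paper does not actually prove this theorem: it is stated without proof and immediately followed by a remark citing \cite{BC}, \cite{Zh2}, and \cite{DZ} as the sources. Your structural outline is correct and in fact goes further than the paper itself, sketching precisely the dressing/uniqueness argument and Fredholm continuity reasoning that appear in those references; your plan to cite \cite{BC}, \cite{Zh2}, \cite{BDT} for the detailed verifications matches the paper's own treatment.
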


\begin{remark} (see \cite{BC} \cite{Zh2}; also cf. \cite{DZ})
The direct scattering problem yields the function $m(x,z)=m(x,z;u)=\psi(x,z;u)e^{-ixz\sigma}$
where $\psi$ is a Beals-Coifman solution of $\frac{d\psi}{dx} = (iz\sigma + \bsm 0 & u(x)\\ -\overline{u(x)} &0 \esm)\psi$.
On the other hand, the RHP yields the function $m(x,z) = m(x,z;r,Z_+, K_+)$.
We will simply use the notation $m(x,z)$; the precise meaning will be clear from the context.
Of course, if $(r,Z_+,K_+)=\mathcal{S}(u)$, then by the above theorem $m(x,z;u) =  m(x,z;r,Z_+, K_+)$.
\end{remark}

\begin{theorem}[Time evolution of the scattering data: see \cite{ZS}\cite{Zh1}]
\label{thm:evol_scatt_data}
Suppose $u_0 \in \mathcal{G} \subset H^{1,1}$. Then the (weak, global) solution $u(t)$ of \eqref{eq:focusing_nls} with $u(t=0)=u_0$ remains in $\mathcal{G}$, $u(t) \in \mathcal{G}$ for $t\geq0$,
and
$$\bal
& r(t) = r(u(t)) = r(z;u_0)e^{-iz^2t/2}, \ \ z\in\rb, \\
& Z_+(t) = \{ z_j(t) = z_j(u(t)) = z_j(u_0), j=1,\cdots,n \}, \\
& K_+(t) = \{ c_j(t) = c_j(u(t)) = c_j(u_0)e^{iz_j^2 t/2}, j=1,\cdots,n \}, \\
\eal$$
or equivalently,
$$\bal
& a(t) = a(u(t)) = a(z;u_0), \ \ z\in \ovl{\cb^+}, \\
& b(t) = b(u(t)) = b(z;u_0)e^{iz^2t/2}, \ \ z\in\rb, \\
& Z_+(t) = \{ z_j(t) = z_j(u(t)) = z_j(u_0), j=1,\cdots,n \}, \\
& \Gamma_+(t) = \{ \gamma_j(t) = \gamma_j(u(t)) = \gamma_j(u_0)e^{iz_j^2 t/2}, j=1,\cdots,n \}. \\
\eal$$
Conversely, if $r_0\in H^{1,1}(\rb)$ and
$$\bal
& Z_+^0 = \{ z_1^0 ,\cdots, z_n^0\}\subset \cb^+, \\
& K_+^0 = \{ c_1^0 ,\cdots, c_n^0\}\subset \cb\setminus \{0\}, \\
\eal$$
then $u(t)= \mathcal{I}\big(r_0(\diamondsuit)e^{-i\diamondsuit^2t/2},Z_+^0, \{c_j^0e^{i(z_j^0)^2t/2} \}\big)$
solves NLS with initial data $u_0 = \mc I (r_0, Z_+^0, K_+^0)$.
In particular, if $u_0 \in \mc G$ and $r_0 =r(z; u_0)$, $\{z_j^0=z_j(u_0)\}$, $\{c_j^0=c_j(u_0)\}$,
then the (weak, global) solution $u(t)$ of NLS with $u(0)=u_0$ is given by
$$\bal u(t)
&= \mathcal{I}(\mathcal{S}(u(t))) \\
&= \mathcal{I}(r(u(t)),Z_+(t)=Z_+, K_+(t)) \\
&= \mathcal{I}\big(r(\diamondsuit; u_0)e^{-i\diamondsuit^2t/2},\{z_j(u_0)\}, \{c_j(u_0)e^{iz_j^2t/2} \}\big).
\eal$$
\end{theorem}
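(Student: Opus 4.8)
\emph{Proof sketch.} The plan is to prove the forward evolution formula first for smooth, rapidly decaying initial data by the classical Lax-pair computation, then to extend it to all $u_0\in\mathcal{G}$ by density, and finally to read off the converse and the reconstruction formula from the fact (Theorem~\ref{thm:sca_inverse}) that $\mathcal{S}$ and $\mathcal{I}$ are mutually inverse homeomorphisms.

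\textbf{Step 1 (smooth data).} Suppose $u_0\in\mathcal{G}\cap\bigcap_{j}H^{j,j}(\rb)$, so that by Proposition~\ref{prop:NLS_sol_H_k_k} and uniqueness the solution $u(x,t)$ lies in $H^{j,j}(\rb)$ for every $j$ and every $t$, hence is classical with $u(\cdot,t),u_x(\cdot,t)$ decaying as $|x|\to\infty$. Let $\psi^{\pm}(x,t,z)$ be the ZS--AKNS solutions of $\psi_x=L\psi$, normalized as in \eqref{eq:zs_asymp}, built from $Q(x,t)=\bsm 0&u(x,t)\\-\ovl{u(x,t)}&0\esm$. Since $u$ solves NLS, $\partial_x-L$ and $\partial_t-E$ commute by \eqref{eq:L_E_commute}, so $\partial_t\psi^{\pm}-E\psi^{\pm}$ again solves $(\partial_x-L)(\cdot)=0$ and is therefore a $z$-dependent right-multiple of $\psi^{\pm}$; because the potential part of $E$ in \eqref{eq:operator_time} involves only $u,u_x$, we have $E(x,t,z)\to-\tfrac i2 z^2\sigma$ as $|x|\to\infty$, and matching the $x\to\pm\infty$ asymptotics pins down this multiple and produces first-order linear ODEs in $t$ for the entries of the scattering matrix $S(z,t)$ and for the norming constants $\gamma_j(t)$. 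Solving them gives $a(z,t)=a(z,0)$, $b(z,t)=b(z,0)e^{iz^2t/2}$ for $z\in\rb$, $Z_+(t)=Z_+(0)$, and $\gamma_j(t)=\gamma_j(0)e^{iz_j^2t/2}$; since $r=\ovl b/\ovl a$ and $c_j=\gamma_j/a'(z_j)$ this is exactly the asserted evolution. In particular $u(t)\in\mathcal{G}$ with the same number of eigenvalues for all $t$, and $u(t)=\mathcal{I}(\mathcal{S}(u(t)))=\mathcal{I}\big(r(\diamondsuit;u_0)e^{-i\diamondsuit^2t/2},Z_+(u_0),\{c_j(u_0)e^{iz_j^2t/2}\}\big)$.

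\textbf{Step 2 (general $u_0\in\mathcal{G}$).} Put $N=n(u_0)$. The stratum $\{u\in\mathcal{G}:n(u)=N\}$ is open in $H^{1,1}(\rb)$: $a(\cdot;u)$ depends continuously on $u$ uniformly on compacta of $\ovl{\cb^+}$ and uniformly near $z=\infty$ (where $a\to1$), while $|a(z;u)|\geq(1+\|r(\cdot;u)\|_{L^\infty}^2)^{-1/2}>0$ on $\rb$, so a Rouch\'e argument shows the finitely many simple zeros of $a(\cdot;u)$ in $\cb^+$ move continuously and neither escape to $\rb\cup\{\infty\}$ nor multiply under small perturbations. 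Choose $u_0^{(k)}\in C_0^\infty(\rb)\subset\bigcap_j H^{j,j}(\rb)$ with $u_0^{(k)}\to u_0$ in $H^{1,1}(\rb)$; then $u_0^{(k)}$ lies in this stratum for large $k$, and $\mathcal{S}(u_0^{(k)})\to\mathcal{S}(u_0)$ in $H^{1,1}\times\cb_+^{N}\times(\cb\setminus0)^{N}$ by continuity of $\mathcal{S}$ there (Theorem~\ref{thm:sca_inverse}). Let $u^{(k)}(t)$, $u(t)$ be the NLS solutions with data $u_0^{(k)}$, $u_0$. By Step~1, $u^{(k)}(t)=\mathcal{I}\big(r(\diamondsuit;u_0^{(k)})e^{-i\diamondsuit^2t/2},Z_+(u_0^{(k)}),\{c_j(u_0^{(k)})e^{iz_j^2t/2}\}\big)$; as $k\to\infty$ the argument of $\mathcal{I}$ converges in the fixed space $H^{1,1}\times\cb_+^{N}\times(\cb\setminus0)^{N}$ to $\big(r(\diamondsuit;u_0)e^{-i\diamondsuit^2t/2},Z_+(u_0),\{c_j(u_0)e^{iz_j^2t/2}\}\big)$, so by continuity of $\mathcal{I}$ the right side converges in $H^{1,1}(\rb)$ to $\mathcal{I}$ of that limit. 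On the other hand $u^{(k)}(t)\to u(t)$ in $H^{1,1}(\rb)$ for each $t$ by the continuous dependence of solutions on initial data recorded above. Equating the two limits gives $u(t)=\mathcal{I}\big(r(\diamondsuit;u_0)e^{-i\diamondsuit^2t/2},Z_+(u_0),\{c_j(u_0)e^{iz_j^2t/2}\}\big)\in\mathcal{G}$, and applying $\mathcal{S}$ yields $\mathcal{S}(u(t))=\big(r(\diamondsuit;u_0)e^{-i\diamondsuit^2t/2},Z_+(u_0),\{c_j(u_0)e^{iz_j^2t/2}\}\big)$; rewriting $r=\ovl b/\ovl a$, $\gamma_j=c_j a'(z_j)$ gives the equivalent $(a,b,Z_+,\Gamma_+)$ formulation.

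\textbf{Step 3 (converse and reconstruction).} Given $r_0\in H^{1,1}(\rb)$, $Z_+^0=\{z_1^0,\dots,z_n^0\}\subset\cb^+$ and $K_+^0=\{c_1^0,\dots,c_n^0\}\subset\cb\setminus\{0\}$, set $u_0=\mathcal{I}(r_0,Z_+^0,K_+^0)\in\mathcal{G}$. By Steps~1--2 the NLS solution $u(t)$ with $u(0)=u_0$ satisfies $\mathcal{S}(u(t))=\big(r_0 e^{-i\diamondsuit^2t/2},Z_+^0,\{c_j^0 e^{i(z_j^0)^2t/2}\}\big)$, hence $u(t)=\mathcal{I}(\mathcal{S}(u(t)))=\mathcal{I}\big(r_0 e^{-i\diamondsuit^2t/2},Z_+^0,\{c_j^0 e^{i(z_j^0)^2t/2}\}\big)$, which is the claimed formula; the final ``in particular'' assertion is this statement with $(r_0,Z_+^0,K_+^0)=\mathcal{S}(u_0)$.

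\textbf{Main obstacle.} The Lax-pair bookkeeping of Step~1 is entirely classical; the genuinely delicate point is the stability of the \emph{discrete} scattering data in the limit of Step~2, i.e.\ ensuring the approximants $u_0^{(k)}$ remain in the single stratum $n=N$ so that Theorem~\ref{thm:sca_inverse} applies with a $k$-independent number of eigenvalues. This is precisely where the uniform non-vanishing of $a(z;u_0)$ on $\rb$ and the normalization $a(z;u_0)\to1$ at $\infty$ are used. One could alternatively bypass Step~1 by verifying directly that $t\mapsto\mathcal{I}\big(r_0 e^{-i\diamondsuit^2t/2},Z_+^0,\{c_j^0 e^{i(z_j^0)^2t/2}\}\big)$ solves the weak equation \eqref{eq:weak_sol_H_0} (the ``RHP $\Rightarrow$ PDE'' direction) and then invoking uniqueness of weak solutions; this trades the eigenvalue-stability issue for the $x,t$-differentiability of the RHP solution.
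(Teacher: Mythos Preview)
The paper does not supply a proof of this theorem: it is stated as a known result with citations to \cite{ZS} and \cite{Zh1}, so there is no ``paper's own proof'' to compare against. That said, your proposal is correct and is the standard argument. The Remark immediately following the theorem statement in the paper records exactly the Lax-pair computation at the heart of your Step~1, deriving the $t$-evolution equations \eqref{eq:evolution_psi_1}--\eqref{eq:evolution_psi_2} for the ZS--AKNS columns $\psi_1^+,\psi_2^-$ from the commutation $(\partial_x-L)(\partial_t-E)=(\partial_t-E)(\partial_x-L)$ and the asymptotic matching $M(z,t)=\tfrac{iz^2}{2}\sigma$; your Step~1 is just the integration of these and the extraction of $a,b,\gamma_j$. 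Your Step~2 density argument and its attention to the openness of the stratum $\{n(u)=N\}$ is the right way to pass from classical to weak $H^{1,1}$ solutions, and Step~3 is an immediate consequence of Theorem~\ref{thm:sca_inverse}. Nothing is missing.
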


The long-time behavior of $u(t)$ is then inferred by evaluating $\mathcal{I}$ in terms of the RHP \eqref{eq:jump_inv_scat} and using the steepest descent method of \cite{DZ3}.

\begin{remark}
Let $\psi_{ZS}(x,t,z) \equiv (\psi_1^+, \psi_2^-)(x,t,z)$ for $z\in\cb^+$.
As the operator $\partial_t - E$ in \eqref{eq:operator_time} commutes with $\partial_x - L$ in \eqref{eq:ZS_Lax_operator},
$(\partial_x - L)((\partial_t - E)\psi_{ZS}) = (\partial_t - E)((\partial_x - L)\psi_{ZS}) = 0$, and so
$(\partial_t - E)\psi_{ZS}$ also solves \eqref{eq:scattering_problem}.
For $z\in\cb\setminus\rb$ such that $a(z)=\det \psi_{ZS} \neq 0$, we have $(\partial_t - E)\psi_{ZS} = \psi_{ZS} M$ for some matrix $M=M(z,t)$.
As $\psi_{ZS} e^{-ixz\sigma}$ is bounded as $x\to\pm\infty$, we see that $M(z,t) = \frac{iz^2 \sigma}2$, or equivalently,
\beq
\label{eq:evolution_psi_1}
\frac {\partial}{\partial t} \bpm (\psi_1^+)_1 \\ (\psi_1^+)_2 \epm =  \frac {i}2\bpm |u|^2 & u_x +i z u \\  \ovl{u}_x -i z \ovl{u} & z^2+|u|^2 \epm \bpm (\psi_1^+)_1 \\ (\psi_1^+)_2 \epm
\eeq
and
\beq
\label{eq:evolution_psi_2}
\frac {\partial}{\partial t} \bpm (\psi_2^-)_1 \\ (\psi_2^-)_2 \epm =  \frac {i}2\bpm -z^2+|u|^2 & u_x +i z u \\  \ovl{u}_x -i z \ovl{u} & |u|^2 \epm \bpm (\psi_2^-)_1 \\ (\psi_2^-)_2 \epm.
\eeq
\end{remark}

\section{The B\"acklund Transformation, the B\"acklund Extension and the Solution Procedure}
\label{sec:backlund}
Most of the results in this section are due to Bikbaev and Tarasov \cite{BT}\cite{Ta2}.
For completeness, what follows is a presentation of the results in \cite{BT}\cite{Ta2} with some interpretations, additions and alternative proofs.
Let $q \in \mathbb{R}$ and let $u(x)$ be an $L^1_{\text{loc}}$ complex-valued function on the line or the half-line $\rb^+$ or $\rb^-$.
Let $Q=\bsm 0&u\\ -\bar{u}&0 \esm$ as above. Consider the following $2\times 2$ matrix ODE,
\beq
\label{eq:backlund}
\left\{ \bal
& P_x = (Q - i[\sigma,P])P-PQ, \\
& P_0 \equiv P(0) = -iq \sigma_3. \\
\eal\right.
\eeq

\begin{proposition}
\label{p:backlund}
If $u\in L^1_{\text{loc}}(\rb)$, then there exists a unique global solution $P(x)$ of \eqref{eq:backlund} on $\rb$.
If $u\in L^1_{\text{loc}}(\rb^\pm)$, then there exists a unique global solution $P(x)$ of \eqref{eq:backlund} on $\rb^\pm$, respectively.
\end{proposition}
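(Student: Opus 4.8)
The right-hand side of \eqref{eq:backlund} is an affine-plus-quadratic vector field in $P$ (the quadratic part being $-i[\sigma,P]P$), with coefficients that are either constant or lie in $L^1_{\text{loc}}$ (namely $Q(x)$, since $u\in L^1_{\text{loc}}$). Hence by the standard Carath\'eodory--Picard theory for ODE's with $L^1_{\text{loc}}$ time dependence, \eqref{eq:backlund} has, near $x=0$, a unique absolutely continuous solution, and any two solutions coincide on the intersection of their intervals of existence. The substantive point is \emph{global} existence: \eqref{eq:backlund} is of Riccati type and a priori could blow up in finite time. The plan is to rule this out by linearizing the equation — exhibiting $P$ explicitly as a conjugate of the constant matrix $P_0=-iq\sigma_3$ by a fundamental matrix of the AKNS system \eqref{eq:scattering_problem}, which exists globally.

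Assume $q\neq 0$ (if $q=0$ then $P_0=0$ and uniqueness forces $P\equiv 0$, on $\rb$ and on $\rb^\pm$). Let $w=w(x)$ solve the linear system $w_x=(Q+q\sigma)w$ — i.e.\ $w_x=Lw$ at the value $z=-iq$ of the spectral parameter — with $w(0)=e_2$; since $u\in L^1_{\text{loc}}$ this has a unique global absolutely continuous solution on $\rb$ (resp.\ on $\rb^\pm$), and $w(x)\neq 0$ for all $x$, a nonzero solution of a linear homogeneous ODE being nowhere vanishing. Set $v(x)\equiv i\sigma_2\ovl{w(x)}$. By the conjugation symmetry underlying Proposition \ref{prop:scattering_symmetry} (applied columnwise, together with $\ovl{-iq}=iq$), $v$ solves $v_x=(Q-q\sigma)v$, i.e.\ $v_x=Lv$ at $z=iq$, and $v(0)=i\sigma_2\ovl{e_2}=e_1$. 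With $w=(w_1,w_2)^T$ one computes $v=(\ovl{w_2},-\ovl{w_1})^T$, so that the matrix $V(x)\equiv(\,v(x)\mid w(x)\,)$ has $\det V(x)=|w_1(x)|^2+|w_2(x)|^2=\|w(x)\|^2>0$ for every $x$; in particular $V(x)\in GL_2(\cb)$ throughout.

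Now define $P(x)\equiv V(x)P_0 V(x)^{-1}$, an absolutely continuous $M_{2\times2}(\cb)$-valued function on all of $\rb$ (resp.\ $\rb^\pm$). Clearly $P(0)=P_0$, and $P(x)^2=V(x)P_0^2V(x)^{-1}=-q^2I$ since $P_0^2=-q^2I$. To see that $P$ solves \eqref{eq:backlund}, note that $P_x=[\,V_xV^{-1},P\,]$ because $P_0$ is constant; using the two linear equations for $v$ and $w$ and the identities $\sigma=\tfrac{i}{2q}P_0$ (whence $-q\sigma=-\tfrac i2P_0$) and $\sigma_3=\tfrac iqP_0$, a short computation gives $V_xV^{-1}=Q+\tfrac1{2q}P_0P$. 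Substituting this and then using $P^2=-q^2I$ and $[\sigma,P]=\tfrac i{2q}[P_0,P]$ reduces $[\,V_xV^{-1},P\,]$ to $[Q,P]-i[\sigma,P]P=(Q-i[\sigma,P])P-PQ$, which is precisely the right-hand side of \eqref{eq:backlund}. Thus $P$ is a global solution, and by the local uniqueness recorded above it is the unique one; the argument on $\rb^+$ and on $\rb^-$ is identical.

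The main obstacle is exactly this global existence step. The naive first integrals do not suffice: $\operatorname{tr}P$ and $\det P$ are conserved (equal to $0$ and $q^2$), but a traceless $2\times2$ matrix of fixed determinant can still have arbitrarily large entries, so this does not bound $\|P(x)\|$ and does not by itself preclude blow-up. The resolution is the linearizing substitution $P=VP_0V^{-1}$; and the one fact that has to be checked for it to work — that $V(x)$ stays invertible — is forced by the conjugation symmetry, which pins $v=i\sigma_2\ovl w$ and hence makes $\det V=\|w\|^2$ nowhere zero. Everything else is bookkeeping.
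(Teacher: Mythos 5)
Your proposal is correct and follows essentially the same route as the paper: both linearize the Riccati-type equation by writing $P=VP_0V^{-1}$ with $V$ built from columns of the fundamental solution of $\psi_x=(iz\sigma+Q)\psi$ at $z=\pm iq$, and both use the conjugation symmetry to get $\det V=|\xi_1|^2+|\xi_2|^2>0$, which rules out blow-up. The only differences are presentational (you verify the ansatz directly and spell out the Carath\'eodory local theory, whereas the paper derives the conjugation form from $P_x=[Q-i\sigma P,P]$), and your invocation of $P^2=-q^2I$ in the final commutator computation is superfluous since $[\sigma P,P]=[\sigma,P]P$ holds identically.
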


\begin{proof}
Let $\Psi^0(x,z) = (\Psi_1^0(x,z),\Psi_2^0(x,z))$ solve \eqref{eq:scattering_problem} with $\Psi^0(0,z) = I$.
Equation \eqref{eq:backlund} can be linearized in the following way.
Note that \eqref{eq:backlund} can be rewritten in an iso-spectral form
$$
P_x = [Q-i\sigma P, P].
$$
Hence
\begin{equation}
\label{eq:sol_P}
P(x) = \varphi(x) P_0 (\varphi(x))^{-1}.
\end{equation}
for some invertible matrix $\varphi(x)$, $\varphi(0)=I$.
But then we must have $P_x = [\varphi'(x)(\varphi(x))^{-1}, P]$,
so we can choose $\varphi'\varphi^{-1}=Q-i\sigma P = Q-i\sigma \varphi P_0 \varphi^{-1}$ or $\varphi' = Q\varphi - i\sigma \varphi P_0 $.
Hence if $\varphi(x) = (\varphi_1(x),\varphi_2(x))$ we see that
$\varphi_1' = Q\varphi_1 + i(iq)\sigma \varphi_1$ and $\varphi_2' = Q\varphi_2 + i(-iq)\sigma \varphi_2$.
As $\varphi(0)=I$, we must have $\varphi(x) = (\Psi_1^0(x,iq), \Psi_2^0(x,-iq))$.
But by Proposition \ref{prop:scattering_symmetry}, $\Psi_2^0(x,-iq) = \overline{\sigma_2 \Psi^0(x,iq) \sigma_2}e_2 = \bsm 0&-1 \\ 1&0 \esm \ovl{\Psi_1^0(x,iq)}$.
Thus $\varphi(x) = \bigl(\begin{smallmatrix} \xi_1&-\overline{\xi_2}\\\xi_2&\overline{\xi_1}\end{smallmatrix}\bigr)$,
 where $\xi = \bigl(\begin{smallmatrix} \xi_1 \\ \xi_2 \end{smallmatrix}\bigr) = \Psi_1^0(x,iq)$.
In particular, $\det \varphi(x) = |\xi_1(x)|^2 + |\xi_2(x)|^2 \neq 0$ so that $\varphi(x)$ is invertible.
We conclude that $P = \varphi P_0 \varphi^{-1}$, $\varphi = \bsm \xi_1&-\overline{\xi_2}\\\xi_2&\overline{\xi_1} \esm$ is the desired unique global solution of \eqref{eq:backlund}.
\end{proof}

Henceforth $P(x)$ denotes the unique solution to \eqref{eq:sol_P} for $u(x) \in L^1_{\text{loc}}$.
For a column vector ${\mathfrak{b}}(x)=\bsm {\mathfrak{b}_1}(x) \\ {\mathfrak{b}_2}(x) \esm$ such that ${\mathfrak{b}}(x) \neq 0$ for all $x$, define
\begin{equation}
\label{eq:backlund_ratio_ftn}
\mathcal{F}({\mathfrak{b}}) \equiv \frac{\mathfrak{b}_1\overline{\mathfrak{b}_2}}{|{\mathfrak{b}_1}|^2 + |{\mathfrak{b}_2}|^2}. \end{equation}
Note that $\mathcal{F}({\mathfrak{b}})$ in \eqref{eq:backlund_ratio_ftn} is determined by the ratio of the entries in ${\mathfrak{b}}$.
From \eqref{eq:sol_P} we see that
\begin{equation}
\label{eq:P_matrix}
P(x) = \varphi P_0 \varphi^{-1} = \frac{-i q}{|\xi_1|^2 + |\xi_2|^2}
\bpm |\xi_1|^2 - |\xi_2|^2 & 2\xi_1\overline{\xi_2}\\ 2\overline{\xi_1}\xi_2 & -|\xi_1|^2 + |\xi_2|^2 \epm,
\end{equation}
where $\varphi_1 = \bigl(\begin{smallmatrix} \xi_1 \\ \xi_2 \end{smallmatrix}\bigr)$.
Denote
\beq
\label{eq:u_tilde}
\tilde{u} \equiv u - i P_{12} = u - 2q \mathcal{F}(\varphi_1).
\eeq
Then, from \eqref{eq:P_matrix},
\beq
\label{eq:tilde_Q}
\tilde{Q} \equiv Q - i [\sigma, P] = \bpm 0&\tilde{u}\\ -\overline{\tilde{u}}&0 \epm.
\eeq
In particular, by \eqref{eq:backlund},
\beq
\label{eq:cont_at_x_0}
\tilde u(0) = u(0).
\eeq

The choice of $P$ in \eqref{eq:backlund} is motivated by the following fact which is easily verified by direct computation:
Let $\psi(x,z)$ be a solution of \eqref{eq:scattering_problem},
\beq
\label{eq:scattering_problem_1}
\psi_x = L\psi = (iz\sigma+Q)\psi
\eeq
and set
$$
\tilde\psi(x,z) = (z+P(x))\psi(x,z).
$$
Then $P(x)$ solves \eqref{eq:scattering_problem} if and only if $\tilde\psi(x,z)$ is an eigensolution of
\beq
\label{eq:scattering_problem_tilde}
\tilde\psi_x = \tilde L \tilde \psi = (iz\sigma+\tilde Q)\tilde \psi, \ \ \tilde L=iz\sigma+\tilde Q,
\eeq
where $\tilde Q$ is given by \eqref{eq:tilde_Q}.
Equivalently, $P(x)$ solves \eqref{eq:scattering_problem} if and only if
\beq
\label{eq:backlund_commute_x}
(z+P)(\partial_x - L) = (\partial_x - \tilde L) (z+P).
\eeq
In particular, we see that $P(x)$ induces a B\"acklund transformation $\psi \mapsto \tilde \psi$ for scattering equations of type \eqref{eq:scattering_problem}.

\begin{remark}
Note that $const=\det \tilde \psi(x,z) = \det(z+P(x))\det\psi(x,z) = \det(z+P(x)) const$, so $\det(z+P(x))$ is independent of $x$.
This is consistent with the fact that \eqref{eq:backlund} is an iso-spectral deformation $P_0$.
\end{remark}

We call the transformation $\mc{B}_q : u \mapsto \tilde u =\mc{B}_q(u)$, the \emph{B\"acklund transformation} of $u(x)$ with respect to $q$.
If $u(x)$ is only defined on $\rb^+$ or $\rb^-$, then we call the map $\mc{B}_q^\pm : u \mapsto \tilde u =\mc{B}_q^\pm(u)$, the B\"acklund transformation of $u(x)$ with respect to $q$ on $\rb^+$ or $\rb^-$, respectively.
Clearly, $\mc{B}_q$ maps $L^1_{\text{loc}}(\rb) \to L^1_{\text{loc}}(\rb)$ and
$\mc{B}_q^\pm$ maps $L^1_{\text{loc}}(\rb^\pm) \to L^1_{\text{loc}}(\rb^\pm)$.

In general, B\"acklund transformations $u\mapsto\tilde u$ involve a choice of parameters,
symbolically $\tilde u = F(u; c_1, c_2, \cdots)$, for some functional $F$,
which takes, for example, one scattering problem \eqref{eq:scattering_problem_1} into another \eqref{eq:scattering_problem_tilde},
If $u(t)$ solves a dynamical equation in $t$, it is often possible to choose $c_1=c_1(t)$, $c_2=c_2(t)$ appropriately
to ensure that $\tilde u(t) = F(u(t); c_1(t), c_2(t), \cdots)$ also solves an (or maybe, the same) dynamical equation.
In this way the B\"acklund transformation for scattering problems, for example,
gives rise to (auto-)B\"acklund transformations for the dynamical systems.
For example, the B\"acklund transformation in \eqref{eq:Kdv_backlund} can also be viewed (see e.g. \cite{De})
as taking the trivial solution $W_{triv}(x,t)\equiv0$ of KdV to the solution \eqref{eq:Kdv_backlund} of KdV
via a B\"acklund transformation of Schr\"odinger operators
$$
H_{triv} = -\frac{d^2}{dx^2} + W_{triv} \mapsto H(t) = -\frac{d^2}{dx^2} + W(t)
$$
where $W(t)= W(x,t) = -2 \frac {d^2}{dx^2} \log \big( e^{\beta x} +  c e^{-\beta x} \big)$.
Only if $c$ is chosen so that $c=c(t)= q e^{8\beta^3 t}$, does $W_{triv} \mapsto W(t)$ result in a solution of KdV.
As we will see, the method in \cite{BT}\cite{Ta2} can be viewed as a B\"acklund transformation
taking functions on $\rb^+$ to functions on $\rb$, where the parameters are chosen automatically
in such a way that the solution $u(t)=u(x,t)$ of HNLS$_q^+$ is taken to a solution $u^e(t) = u^e(x,t)$ of \eqref{eq:focusing_nls} on $\rb$.
Said differently, as $\partial_x - \tilde L$ is conjugate to $\partial_x - L$, $\partial_x - \tilde L=(z+P)(\partial_x - L)(z+P)^{-1}$,
any iso-spectral deformation of $\partial_x - L$ will give rise to an iso-spectral deformation of $\partial_x - \tilde L$.
The challenge here is to choose $P$ appropriately so that $\partial_x - \tilde L$ solves the same iso-spectral deformation (viz. NLS) as $\partial_x - L$.

\begin{remark}
\label{rmk:trivial_P}
If $q = 0$, then $P(x) \equiv 0$ and $\tilde{u}(x) = u(x)$.
\end{remark}


Let $\mc R u(x) \equiv u(-x)$. As we now show, up to the reversal $\mc R$, the B\"acklund transformation is reciprocal.
\begin{lemma}
\label{lem:reciprocal_backlund}
Let $q \in \rb$.
\begin{enumerate}
\item[\textnormal{(i)}] If $u \in L^1_{\text{loc}}(\rb)$, then $\mc{R}\mc{B}_q \mc{R} \mc{B}_q u(x)= u(x)$, $x\in\rb$.
\item[\textnormal{(ii)}] If $u \in L^1_{\text{loc}}(\rb^\pm)$, then $\mc{R} \mc{B}_q^\pm u \in L^1_{\text{loc}}(\rb^\mp)$ and   $\mc{R}\mc{B}_q^\mp \mc{R} \mc{B}_q^\pm u(x)= u(x)$, $x\in\rb^\pm$, respectively.
\end{enumerate}
\end{lemma}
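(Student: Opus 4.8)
The plan is to deduce the statement from two separate identities for the B\"acklund maps: (A) the composition $\mc{B}_{-q}\circ\mc{B}_q$ is the identity on $L^1_{\text{loc}}(\rb)$, and (B) conjugation by the reflection $\mc R$ converts $\mc{B}_q$ into $\mc{B}_{-q}$, i.e.\ $\mc R\,\mc{B}_q\,\mc R = \mc{B}_{-q}$. Granting these, part (i) follows at once: $\mc R\mc{B}_q\mc R\mc{B}_q = (\mc R\mc{B}_q\mc R)\mc{B}_q = \mc{B}_{-q}\mc{B}_q = \mathrm{id}$. For part (ii) the same two identities hold verbatim with $\mc{B}_q$ replaced by $\mc{B}_q^{\pm}$, since every manipulation below is local in $x$; because $\mc R$ interchanges $L^1_{\text{loc}}(\rb^+)$ and $L^1_{\text{loc}}(\rb^-)$ one gets $\mc R\,\mc{B}_q^{\mp}\,\mc R = \mc{B}_{-q}^{\pm}$ and $\mc{B}_{-q}^{\pm}\mc{B}_q^{\pm}=\mathrm{id}$ on $\rb^{\pm}$, hence $\mc R\mc{B}_q^{\mp}\mc R\mc{B}_q^{\pm}u = u$ on $\rb^{\pm}$.

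For (A), fix $u\in L^1_{\text{loc}}(\rb)$, let $P=P(x)$ be the solution of \eqref{eq:backlund} with $P(0)=-iq\sigma_3$, and write $\tilde u = \mc{B}_q(u)$, $\tilde Q = Q-i[\sigma,P]$. The one piece of structure I need is that $P$ is an isospectral deformation of $-iq\sigma_3$, so $\mathrm{tr}\,P\equiv 0$ and $\det P \equiv \det(-iq\sigma_3)=q^2$; by Cayley--Hamilton $P^2 = -q^2 I$, and therefore $\{[\sigma,P],P\} = [\sigma,P^2] = 0$. Using only this commutator identity, a direct computation shows that $R:=-P$ solves \eqref{eq:backlund} with $Q$ replaced by $\tilde Q$, that $R(0)=iq\sigma_3 = -i(-q)\sigma_3$, and that the new potential $\tilde Q - i[\sigma,R]$ associated to $R$ is exactly $Q$. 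By the uniqueness part of Proposition \ref{p:backlund} (applied with $q$ replaced by $-q$, to the potential $\tilde u\in L^1_{\text{loc}}$), $R$ is precisely the matrix that defines $\mc{B}_{-q}$ acting on $\tilde u$; hence by \eqref{eq:u_tilde}, $\mc{B}_{-q}(\tilde u) = \tilde u - iR_{12} = \tilde u + iP_{12} = u$. (Alternatively, one inverts the intertwining \eqref{eq:backlund_commute_x}: since $P^2=-q^2I$ gives $(z+P)(z-P)=z^2+q^2$, relation \eqref{eq:backlund_commute_x} rearranges to $(z-P)(\partial_x-\tilde L)=(\partial_x-L)(z-P)$, which is \eqref{eq:backlund_commute_x} for the pair $(\tilde L,L)$ with intertwiner $z+(-P)$.)

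For (B), fix $w\in L^1_{\text{loc}}(\rb)$ and let $\Psi^{0,w}(x,z)$ be the solution of \eqref{eq:scattering_problem} for the potential $w$ normalized by $\Psi^{0,w}(0,z)=I$. From \eqref{eq:u_tilde} together with the proof of Proposition \ref{p:backlund} one has $\mc{B}_q(w)(x) = w(x) - 2q\,\mc F(\Psi^{0,w}_1(x,iq))$, and, replacing $q$ by $-q$, $\mc{B}_{-q}(w)(x) = w(x) + 2q\,\mc F(\Psi^{0,w}_1(x,-iq))$. By Proposition \ref{prop:scattering_symmetry}(ii) and uniqueness of the normalized solution, $\Psi^{0,\mc R w}(x,z) = \sigma_3\,\Psi^{0,w}(-x,-z)\,\sigma_3$, so $\Psi^{0,\mc R w}_1(x,iq) = \sigma_3\,\Psi^{0,w}_1(-x,-iq)$. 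Since $\mc F(\sigma_3 v) = -\mc F(v)$ for every nonzero column $v$, we get $(\mc{B}_q\mc R w)(x) = w(-x) + 2q\,\mc F(\Psi^{0,w}_1(-x,-iq))$, and applying $\mc R$ once more, $(\mc R\mc{B}_q\mc R w)(x) = w(x) + 2q\,\mc F(\Psi^{0,w}_1(x,-iq)) = \mc{B}_{-q}(w)(x)$, which is (B).

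The two computations that remain --- that $-P$ solves the conjugated B\"acklund ODE, and that $\mc F(\sigma_3 v)=-\mc F(v)$ --- are routine. The one point that has to be handled carefully is the bookkeeping of signs: the inverse of $\mc{B}_q$ is $\mc{B}_{-q}$, not $\mc{B}_q$ itself (this is exactly where the isospectral identity $P^2=-q^2I$ enters), and the reflection $\mc R$ flips the sign of the B\"acklund parameter $q$ through the $\sigma_3$-conjugation in Proposition \ref{prop:scattering_symmetry}(ii). Once these signs are tracked correctly, (A) and (B) combine to give (i), and the identical local argument gives (ii).
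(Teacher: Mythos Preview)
Your proof is correct but follows a genuinely different route from the paper's. The paper proceeds in one step: setting $u_1=\mc R\mc B_q(u)$ and $P_1(x)=\sigma_3 P(-x)\sigma_3$, it verifies directly (using the skew-adjointness $P^*=-P$ visible from \eqref{eq:P_matrix}) that $P_1$ solves the B\"acklund ODE \eqref{eq:backlund} for the potential $u_1$ with the \emph{same} parameter $q$ and initial value $P_1(0)=P(0)$, and then reads off $\tilde Q_1(x)=Q(-x)$, i.e.\ $\mc B_q(u_1)=\mc R u$. You instead factor the identity as $(\mc R\mc B_q\mc R)\mc B_q=\mc B_{-q}\mc B_q=\mathrm{id}$, proving separately (A) that $\mc B_{-q}$ inverts $\mc B_q$ via $R=-P$ and the isospectral relation $P^2=-q^2 I$, and (B) that reflection conjugates $\mc B_q$ to $\mc B_{-q}$ via Proposition \ref{prop:scattering_symmetry}(ii) and the elementary $\mc F(\sigma_3 v)=-\mc F(v)$. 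Your decomposition is more modular and yields two independently useful facts (in particular (A) makes transparent why the inverse of $\mc B_q$ is $\mc B_{-q}$ rather than $\mc B_q$), at the cost of two computations instead of one; the paper's single conjugation $P\mapsto\sigma_3 P(-\cdot)\sigma_3$ is shorter but bundles the reflection and the sign flip of $q$ together. Both arguments localize immediately to the half-line versions, giving (ii).
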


\begin{proof}
We prove (i): the proof of (ii) is similar. Suppose that $u\in L^1_{\text{loc}}(\rb)$.
Let $u_1 = \mc R \mc B_q (u)$ and let $P_1(x) = \sigma_3 P(-x) \sigma_3$.
Observe from \eqref{eq:P_matrix} that $P(x) = -P^*(x)$ where $P^*$ is an adjoint of $P$.
From \eqref{eq:tilde_Q},
$$\begin{aligned} Q(-x)
& = \tilde{Q}(-x)+i[\sigma, P(-x)] = \tilde{Q}(-x)-i[\sigma,P_1(x)]\\
& = Q_1(x)-i[\sigma,P_1(x)],\\
\end{aligned} $$
where $Q_1 = \bigl(\begin{smallmatrix} 0&u_1 \\ -\overline{u_1}&0\end{smallmatrix}\bigr)$. In addition,
$$ \begin{aligned} (P_1)_x(x)
& = (-P_1^*)_x (x) = \sigma_3 P_x^*(-x) \sigma_3 = \sigma_3 (\tilde{Q}P - PQ)^*(-x)\sigma_3 \\
& = \sigma_3 (-Q^*P^* + P^*\tilde{Q}^*)(-x)\sigma_3 = \sigma_3 (-QP + P\tilde{Q})(-x)\sigma_3 \\
& = Q(-x)P_1(x) - P_1(x)\tilde{Q}(-x) = Q(-x)P_1(x) - P_1(x)Q_1(x) \\
& = (Q_1(x)-i[\sigma,P_1(x)]) P_1(x) - P_1(x) Q_1(x). \\
\end{aligned} $$
Combining with $P_1(0) = P(0)$, we conclude by uniqueness that $\tilde{Q}_1(x) = Q_1(x)-i[\sigma,P_1(x)] = Q(-x)$.
Hence $\mc R \mc B_q (u_1) = u$, as desired.
\end{proof}

\begin{corollary}
$\mc B_q$ is a bijection from $L^1_{\text{loc}}(\rb)$ onto $L^1_{\text{loc}}(\rb)$. $\mc B_q^\pm$ is a bijection from $L^1_{\text{loc}}(\rb^\pm)$ onto $L^1_{\text{loc}}(\rb^\pm)$, respectively.
\end{corollary}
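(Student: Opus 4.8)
The statement is an immediate formal consequence of Lemma \ref{lem:reciprocal_backlund}, so the plan is simply to extract a two-sided inverse from the reciprocity identities established there. For the whole-line case: since $\mc R$ is manifestly an involution of $L^1_{\text{loc}}(\rb)$ and $\mc B_q$ maps $L^1_{\text{loc}}(\rb)$ into itself, the composite $\mc G_q \equiv \mc R \mc B_q \mc R$ also maps $L^1_{\text{loc}}(\rb)$ into itself. Part (i) of Lemma \ref{lem:reciprocal_backlund} reads $\mc R \mc B_q \mc R \mc B_q u = u$, i.e. $\mc G_q \circ \mc B_q = \mathrm{id}$, so $\mc B_q$ is injective. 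For surjectivity I would apply (i) to $\mc R u$ in place of $u$, obtaining $\mc R \mc B_q \mc R \mc B_q (\mc R u) = \mc R u$, and then compose on the left with $\mc R$ (using $\mc R^2 = \mathrm{id}$) to get $\mc B_q \mc R \mc B_q \mc R u = u$, i.e. $\mc B_q \circ \mc G_q = \mathrm{id}$. Hence $\mc B_q$ is a bijection of $L^1_{\text{loc}}(\rb)$ with $\mc B_q^{-1} = \mc R \mc B_q \mc R$.

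For the half-line case the argument is identical once one tracks which half-line each map acts on; I treat $\mc B_q^+$, the case $\mc B_q^-$ being symmetric. Here $\mc R$ is a bijection $L^1_{\text{loc}}(\rb^+) \to L^1_{\text{loc}}(\rb^-)$ and conversely, while $\mc B_q^-$ maps $L^1_{\text{loc}}(\rb^-)$ into itself, so $\mc G_q^+ \equiv \mc R \mc B_q^- \mc R$ maps $L^1_{\text{loc}}(\rb^+)$ into itself — this uses the first assertion of Lemma \ref{lem:reciprocal_backlund}(ii), namely that $\mc R \mc B_q^\pm u$ genuinely lies in $L^1_{\text{loc}}(\rb^\mp)$, which is what makes the composition well defined. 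Part (ii) then gives $\mc G_q^+ \circ \mc B_q^+ = \mathrm{id}$ on $L^1_{\text{loc}}(\rb^+)$, proving injectivity; applying (ii) in the form $\mc R \mc B_q^+ \mc R \mc B_q^- v = v$ with $v = \mc R u$ for $u \in L^1_{\text{loc}}(\rb^+)$ and composing with $\mc R$ yields $\mc B_q^+ \circ \mc G_q^+ = \mathrm{id}$, proving surjectivity. Thus $\mc B_q^\pm$ is a bijection of $L^1_{\text{loc}}(\rb^\pm)$ with inverse $\mc R \mc B_q^\mp \mc R$.

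Since everything reduces to the two reciprocity identities of Lemma \ref{lem:reciprocal_backlund} together with the triviality that $\mc R$ is an involution interchanging the two half-lines, there is no substantive obstacle. The only point requiring a word of care is confirming that the composition $\mc R \mc B_q^\mp \mc R$ really is a well-defined map between the stated function spaces, which is precisely the content of the first clause of Lemma \ref{lem:reciprocal_backlund}(ii).
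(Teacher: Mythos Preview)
Your proof is correct and takes essentially the same approach as the paper: both derive bijectivity directly from Lemma~\ref{lem:reciprocal_backlund} by using $\mc R^2 = \mathrm{id}$ to convert the left-inverse identity $\mc R \mc B_q \mc R \mc B_q = \mathrm{id}$ into the right-inverse identity $\mc B_q \mc R \mc B_q \mc R = \mathrm{id}$ (and similarly for $\mc B_q^\pm$). Your version is more explicit about tracking the half-line domains, but the mathematical content is identical.
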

\begin{proof}
As $\mc R^2 = 1$, $\mc{R}\mc{B}_q \mc{R} \mc{B}_q = 1$ implies $\mc{B}_q \mc{R} \mc{B}_q \mc{R}= 1$
and the bijectivity in $L^1_{\text{loc}}(\rb)$ follows. Similarly, $\mc{R}\mc{B}_q^\mp \mc{R} \mc{B}_q^\pm = 1_{L^1_{\text{loc}}(\rb^\pm)}$ and $\mc{B}_q^\pm \mc{R} \mc{B}_q^\mp \mc{R}= 1_{L^1_{\text{loc}}(\rb^\pm)}$,
which implies the bijectivity for $L^1_{\text{loc}}(\rb^\pm)$.
\end{proof}

\begin{remark}
 Clearly if $u\in L^1_{\text{loc}}(\rb)$, then
\beq
\label{eq:B_halfline}
(\mc B_q u)|_{\rb^\pm} = \mc B_q^\pm (u|_{\rb^\pm}).
\eeq
\end{remark}

\begin{proposition}
\label{prop:bijectivity_backlund}
$\mc B_q$ is a bijection from $H^{1,1}(\rb)$ onto $H^{1,1}(\rb)$. $\mc B_q^\pm$ is a bijection from $H^{1,1}(\rb^\pm)$ onto $H^{1,1}(\rb^\pm)$.
\end{proposition}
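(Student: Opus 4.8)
\emph{Plan and reduction.} The plan is to reduce the whole statement to the single assertion that $\mc B_q$ maps $H^{1,1}(\rb)$ \emph{into} $H^{1,1}(\rb)$, and then to extract that assertion from a direct analysis of the solution $P(x)$ of \eqref{eq:backlund}. By the Corollary following Lemma~\ref{lem:reciprocal_backlund}, $\mc B_q$ is already a bijection of $L^1_{\text{loc}}(\rb)$ with inverse $\mc R\mc B_q\mc R$, and $\mc R$ maps $H^{1,1}(\rb)$ onto itself; hence once we know $\mc B_q(H^{1,1}(\rb))\subseteq H^{1,1}(\rb)$, the inverse $\mc R\mc B_q\mc R$ also maps $H^{1,1}(\rb)$ into itself and the restriction of $\mc B_q$ to $H^{1,1}(\rb)$ is a bijection onto $H^{1,1}(\rb)$. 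For the half-line statements, given $u\in H^{1,1}(\rb^+)$ set $\hat u(x)=u(|x|)\in H^{1,1}(\rb)$; then $\mc B_q\hat u\in H^{1,1}(\rb)$, so by \eqref{eq:B_halfline} $\mc B_q^+u=(\mc B_q\hat u)|_{\rb^+}\in H^{1,1}(\rb^+)$, and bijectivity of $\mc B_q^\pm$ on $H^{1,1}(\rb^\pm)$ follows exactly as before using Lemma~\ref{lem:reciprocal_backlund}(ii) and the fact that $\mc R$ maps $H^{1,1}(\rb^\pm)$ onto $H^{1,1}(\rb^\mp)$. We may also assume $q>0$ (the case $q<0$ is analogous, and $q=0$ is trivial by Remark~\ref{rmk:trivial_P}).

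\emph{Setting up the core estimate.} Let $u\in H^{1,1}(\rb)$ (so $u\in L^1(\rb)$), write $\varphi_1=(\xi_1,\xi_2)^T=\Psi_1^0(\cdot,iq)$, and recall $\tilde u=\mc B_q u=u-iP_{12}=u-2q\mathcal F(\varphi_1)$ with $P_{12}=-2iq\,\xi_1\ovl{\xi_2}/N$, where $N:=|\xi_1|^2+|\xi_2|^2=\det\varphi$; since $u\in H^{1,1}$ it suffices to prove $P_{12}\in H^{1,1}(\rb)$. Differentiating in \eqref{eq:scattering_problem} at $z=iq$ one finds that $N$, $D:=|\xi_1|^2-|\xi_2|^2$ and $G:=\xi_1\ovl{\xi_2}$ satisfy the closed system $N'=-qD$, $G'=-uD$, $D'=-qN+4\operatorname{Re}(u\ovl G)$ with $N(0)=D(0)=1$, $G(0)=0$, together with the algebraic identity $N^2=D^2+4|G|^2$. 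Consequently $|D/N|\le 1$ and $|G/N|\le\tfrac12$ (so $|P_{12}|\le q$ on all of $\rb$), $N'/N=-qD/N=:-\beta$ with $\beta=iP_{11}$ real and $|\beta|\le q$, and $G(x)=-\int_0^x uD$. Using $D=-N'/q$ and integrating by parts one obtains the exact identity
\begin{equation*}
\tilde u(x)=-u(x)+\frac{2}{N(x)}\Big(u(0)+\int_0^x u'(s)N(s)\,ds\Big),
\end{equation*}
equivalently $F:=\tfrac12(\tilde u+u)=u-q\mathcal F(\varphi_1)$ is a \emph{bounded} solution (bounded because $|P_{12}|\le q$) of the scalar linear ODE $F'-\beta F=u'$ with $F(0)=u(0)$; one also has $P_{12}'=-(2u-iP_{12})P_{11}$.

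\emph{From the ODEs to $H^{1,1}$.} The modes of \eqref{eq:scattering_problem} at $z=iq$ are $e^{-qx/2}e_1$ and $e^{qx/2}e_2$, one of which dominates $\varphi_1(x)$ at each end; reading this off from the Volterra equations of Section~\ref{sec:scat_inv} (cf. \eqref{eq:m_1_BC}--\eqref{eq:m_2_BC}, Proposition~\ref{prop:BC_tail}; here $u\in L^1$ is used), $\beta$ has limits $\beta_\pm\in\{q,-q\}$ as $x\to\pm\infty$ and $N$ is comparable to $e^{-\beta_\pm x}$ near $\pm\infty$. One then solves $F'-\beta F=u'$ by the convolution-type formula $F(x)=\int_{-\infty}^x e^{\int_s^x\beta}u'(s)\,ds$ near an end where $\beta_\pm<0$, and $F(x)=-\int_x^\infty e^{\int_s^x\beta}u'(s)\,ds$ near an end where $\beta_\pm>0$ (the latter being forced by boundedness of $F$); in each case an $L^2$-bounded operator is applied to $u'\in L^2$, so $F\in L^2(\rb)$, whence $F'=\beta F+u'\in L^2$ and $P_{12}'=-(2u-iP_{12})P_{11}\in L^2$. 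For the weight one shows $x\tilde u\in L^2$: near a ``generic'' end one has the pointwise asymptotics $P_{12}(x)=-2iu(x)+O(\langle x\rangle^{-1}u(x))$, so $x\tilde u$ is controlled by $xu\in L^2$; near an ``exceptional'' end ($\varphi_1$ decaying there, so $N\sim c\,e^{-q|x|}$ and, since $|G|\le N/2$, $G(x)=\int_x^\infty uD$) the same bound follows from $|D|\le N$ together with $xu\in L^2$. Hence $P_{12}\in H^{1,1}(\rb)$, so $\tilde u\in H^{1,1}(\rb)$.

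\emph{Main obstacle.} The hard part is the last paragraph: carrying out the asymptotic analysis of $\varphi_1(x)=\Psi_1^0(x,iq)$ as $x\to\pm\infty$ for $u$ only in $H^{1,1}$ (not Schwartz), and in particular the weighted estimate $x\tilde u\in L^2$. The subtlety is that $\varphi_1$ is normalized at $x=0$, not a Jost solution, so one must distinguish the generic case (where $\varphi_1$ contains the exponentially growing mode at the end in question) from the exceptional case (where it reduces there to a decaying Jost solution, related to $i|q|$ lying at or near an $L^2$-eigenvalue of $T(u)$); these have opposite growth/decay of $N$, and the estimates must cover both uniformly. (Alternatively one can work on the dense set $\mc G$, where the Darboux relation $\tilde\psi=(z+P)\psi$ shows $\mc B_q u=\mc I\big(\tfrac{z-iq}{z+iq}r(z),\ Z_+\cup\{iq\},\ \tilde K_+\big)$ — an eigenvalue is added at $z=iq=i|q|$ — and deduce membership and continuity from Theorem~\ref{thm:sca_inverse}; but extending this to all of $H^{1,1}(\rb)$ by a limiting argument requires comparable work.)
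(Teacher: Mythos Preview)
Your route differs from the paper's and is attractive: instead of constructing on the half-line a Jost basis $g,h$ with $H^{1,1}$ tails and reading off $\mathcal{F}(\Psi_1^0)\in H^{1,1}$ directly (as the paper does, via a careful Volterra construction of the \emph{growing} solution $h$ together with Lemma~\ref{lem:H11_integral}), you work on the full line, derive the closed quadratic system $N'=-qD$, $G'=-uD$, $D'=-qN+4\operatorname{Re}(u\ovl G)$, reduce to the scalar ODE $F'-\beta F=u'$ with $|\beta|\le q$, and obtain $F,F'\in L^2$ from bounded-kernel convolutions. The reduction of the half-line statement via the even extension $\hat u(x)=u(|x|)$ and \eqref{eq:B_halfline} is fine, and the algebra $P_{12}+2iu=2iF$, $P_{12}'=-2FP_{11}$ is correct.

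The gap is in the weighted step. At a generic end you assert $P_{12}(x)=-2iu(x)+O(\langle x\rangle^{-1}u(x))$; since $P_{12}+2iu=2iF$, this is the claim $F(x)=O(\langle x\rangle^{-1}u(x))$, and it is \emph{false}. With $\beta\equiv -q$ one has $F(x)=u(x)-q\int_{-\infty}^{x}e^{-q(x-s)}u(s)\,ds$; for $u$ a Gaussian, $F(x)$ decays only like $e^{-qx}$ while $u(x)$ decays like $e^{-x^{2}}$, so $F/u\to\infty$. The convolution tail in $F$ is simply not pointwise comparable to $u$. The conclusion $xF\in L^2$ is still true, but the argument must go through Lemma~\ref{lem:H11_integral}: integrate by parts in $F(x)=e^{\int_{x_1}^{x}\beta}F(x_1)+\int_{x_1}^{x}e^{\int_s^x\beta}u'(s)\,ds$ to trade $u'$ for $u\in H^{0,1}$, and then bound the resulting convolution against the kernel $e^{-c(x-s)}$. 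This is exactly the paper's mechanism (the $H^{1,1}$ tail of the growing solution $h$), merely rephrased; your exceptional-end argument already uses it implicitly and is essentially correct. As an aside, your alternative via scattering data is also too quick: by Proposition~\ref{prop:tilde_scattering_data}, whether $\mc B_q$ adds a pole at $i|q|$ depends on $\beta_\pm$, i.e., on the same generic/exceptional dichotomy.
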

\begin{proof}
We only consider the case $\mc B_q^\pm$ and $q>0$. The other cases $\mc B_q^\pm$, $q<0$, and $\mc B_q$, $q \neq 0$ are similar.
As $\mc R$ is a bijection from $H^{1,1}(\rb^\pm)$ onto $H^{1,1}(\rb^\mp)$,
it is enough to prove that $\mc B_q^+$ and $\mc B_q^-$ map $H^{1,1}(\rb^\pm)$ into $H^{1,1}(\rb^\pm)$, respectively;
the bijectivity of $\mc B_q^+$ then follows from Lemma \ref{lem:reciprocal_backlund}(ii).
We consider $B_q^+$: the case $B_q^-$ is similar.
The proof hinges on the existence of the solutions $g(x,z)$ and $h(x,z)$ of the differential equation
\beq
\label{eq:linear_spectral_column_vector}
\psi_x = (iz\sigma + Q)\psi.
\eeq
on $\rb^+$ with the properties that
\beq
\label{eq:tail_behavior_g}
g(x,z) = (e_1 + r_1(x,z)) e^{ixz/2}, \ \ r_1 \in H^{1,1}(\rb^+).
\eeq
and
\beq
\label{eq:tail_behavior_h}
h(x,z) = (e_2 + r_2(x,z)) e^{-ixz/2}, \ \ r_2 \in H^{1,1}(\rb^+).
\eeq
Note that $h$ is not unique; for any constant $c$, $h+cg$ solves \eqref{eq:linear_spectral_column_vector} with \eqref{eq:tail_behavior_h} if $h$ solves \eqref{eq:linear_spectral_column_vector} with \eqref{eq:tail_behavior_h}.
The solution $g=\hat g e^{ixz/2}$ is uniquely specified by the asympotics $\hat g(x,z) \to e_1$ as $x\to\infty$, and
can be constructed by solving the integral equation
$$
\hat g(x,z) = e_1 - \int_x^\infty \bpm e^{iz(y-x)/2} &0\\0&1 \epm Q(y) \hat g(y,z) \rd y
$$
(cf. \eqref{eq:m_1_BC}) as in the proof of Proposition \eqref{prop:BC_tail}.
The solution $h$, however, cannot be obtained in a similar way via a Volterra equation.
For the reader's convenience, we prove the existence of $h$ following \cite{CL} pp. 104--105.
Note that the non-uniqueness of $h$ is reflected in the arbitrary choice of $x_0$.

For any given $u\in H^{1,1}(\rb^+)$, fix $x_0 >0$ so that $\int_{x_0}^\infty |u| < \frac 12$.
For $z\in\cb^+$, we consider the following integral equation for $g(x,z)=(g_1, g_2)^T$,
\beq
\label{eq:psi_2_plus_general}
\bal h(x,z)
& = e^{-ixz/2} e_2 + \int_{x_0}^x \bpm e^{-iz(y-x)/2} &0\\0&0 \epm Q(y) h(y,z) \rd y \\
& \qquad - \int_x^\infty \bpm 0 &0\\0&e^{iz(y-x)/2} \epm Q(y) h(y,z) \rd y, \ \ x\geq x_0. \\
\eal
\eeq
A direct calculation shows that $h(x,z)$ solves \eqref{eq:linear_spectral_column_vector}.
Setting $f(x,z) = h(x,z) e^{ixz/2}$, the integral equation \eqref{eq:psi_2_plus_general} becomes
\beq
\label{eq:m_2_plus_general}
f(x,z) = e_2 + (\mc T f)(x,z), \ \ x\geq x_0,
\eeq
where $\mc T$ is an integral operator defined by
$$
\bal (\mc T f)(x,z)
&  = \int_{x_0}^x \bpm e^{-iz(y-x)} &0\\0&0 \epm Q(y) f(y,z) \rd y \\
& \ \  - \int_x^\infty \bpm 0 &0\\0&1 \epm Q(y) f(y,z) \rd y, \ \ f\in L^\infty[x_0, \infty). \\
\eal
$$
Note that as $z\in \cb^+$ and $u\in L^1[x_0, \infty)$, $\mc T$ is well-defined.
Let $f_0 = e_2$ and define $f_{k+1} = e_2 + \mc T f_k$, $k\geq 0$, inductively.
Then,
$$
\|f_{k+1} - f_k\|_{L^\infty[x_0, \infty)} \leq \frac c{2^k}, \ \ k\geq 0.
$$
Indeed, $\|f_1 - f_0\|_{L^\infty[x_0, \infty)}\leq c$ and for $k\geq 1$,
$$\bal
& \|f_{k+1} - f_k\|_{L^\infty[x_0, \infty)} = \|\mc T(f_{k} - f_{k-1})\|_{L^\infty[x_0, \infty)} \\
& \quad \leq  \|f_{k} - f_{k-1}\|_{L^\infty[x_0, \infty)} \int_{x_0}^\infty |u| \leq \frac 12 \|f_{k} - f_{k-1}\|_{L^\infty[x_0, \infty)}. \\
\eal
$$
Hence $f\equiv f_0 + \sum_{k=1}^\infty f_{k} - f_{k-1}$ converges in $L^\infty[x_0, \infty)$ and solves the integral equation \eqref{eq:m_2_plus_general}.
Writing $f(x,z)=(f_1, f_2)^T$, \eqref{eq:m_2_plus_general} becomes
$$
\bal
& f_1(x,z) = \int_{x_0}^x e^{-iz(y-x)} u(y) f_2(y,z) \rd y, \\
& f_2(x,z) = 1+ \int_x^\infty \ovl{u}(y) f_1(y,z) \rd y, \\
\eal
$$
As $\|f\|_{L^\infty[x_0, \infty)} \leq c$, by Lemma \ref{lem:H11_integral}, $f_1 \in H^{1,1}[x_0,\infty)$
and therefore again by Lemma \ref{lem:H11_integral}, $f_2-1 \in H^{1,1}[x_0,\infty)$.
Now by the standard ODE theory, $h(x,z)$ extends uniquely to a solution of \eqref{eq:linear_spectral_column_vector} on $\rb^+$,
also denoted by $h(x,z)$. From the above calculations, we see \eqref{eq:tail_behavior_h}.
Now we fix $z=iq$. As $g(x,iq)$ and $h(x,iq)$ are clearly linearly independent for $x\geq 0$,
\beq
\label{eq:psi_0_1_lin_combi}
 \Psi_1^0(x,iq) = c_1 g(x,iq) + c_2 h(x,iq).
\eeq
for some constants $c_1$, $c_2$.
By \eqref{eq:P_matrix} and \eqref{eq:u_tilde}, we must show that $\mc F(\Psi_1^0(x,iq)) \in H^{1,1}(\rb^+)$.
Suppose $c_2 \neq 0$. Then as $x\to \infty$,
$$
\Psi_1^0(x,iq) = c_1 e^{-q x/2} \bpm 1 +r_3(x)\\ r_4(x) \epm, \ \ r_j\in H^{1,1}(\rb^+), \ \ j=3,4.
$$
Hence
$$
\mc F(\Psi_1^0(x,iq)) =\frac{(1+r_3) \ovl{r_4}}{|1+r_3|^2 + |r_4|^2}  \in H^{1,1}(\rb^+).
$$
On the other hand, if $c_2\neq 0$, then as $x\to \infty$,
\beq
\label{eq:psi_1_0_c_2_nonzero}
\Psi_1^0(x,iq) =c_2 e^{q x/2} \bpm r_5(x)\\ 1+r_6(x) \epm, \ \ r_j\in H^{1,1}(\rb^+), \ \ j=5,6.
\eeq
Thus,
$$
\mc F(\Psi_1^0(x,iq)) =\frac{r_5 \ovl{(1+r_6)}}{|r_5|^2 + |1+r_6|^2},
$$
which again lies in $H^{1,1}(\rb^+)$. This completes the proof.
\end{proof}

\begin{lemma}
\label{lem:P_tail}
Suppose $u\in H^{1,1}(\rb)$, and let $P(x)$ solve \eqref{eq:backlund}. Then, $P(x) \to -i\beta_{\pm} \sigma_3$ as $x \to \pm\infty$ where $\beta_{\pm}^2 = q^2$.
\end{lemma}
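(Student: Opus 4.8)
The plan is to read the limits of $P(x)$ off the explicit formula \eqref{eq:P_matrix}, using Proposition \ref{prop:bijectivity_backlund} to force the off-diagonal entries to decay. If $q=0$ then $P\equiv 0$ by Remark \ref{rmk:trivial_P} and there is nothing to prove, so assume $q\neq 0$. With $\varphi_1=(\xi_1,\xi_2)^T=\Psi_1^0(x,iq)$ as in \eqref{eq:P_matrix}, I would set
\[
p(x)=\frac{|\xi_1(x)|^2-|\xi_2(x)|^2}{|\xi_1(x)|^2+|\xi_2(x)|^2},\qquad r(x)=2\mc{F}(\varphi_1)(x),
\]
so that $P(x)=-iq\bpm p(x)&r(x)\\ \ovl{r(x)}&-p(x)\epm$, with $p$ real-valued and $p(x)^2+|r(x)|^2\equiv 1$ (from $(a-b)^2+4ab=(a+b)^2$ with $a=|\xi_1|^2$, $b=|\xi_2|^2$). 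Since $u\in H^{1,1}(\rb)\subset L^1_{\mathrm{loc}}(\rb)$, the solution $P$ of \eqref{eq:backlund} is absolutely continuous, so $p$ and $r$ are continuous on $\rb$.

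Next I would control $r$. By \eqref{eq:u_tilde}, $P_{12}=-iqr=i(\tilde u-u)$ where $\tilde u=\mc{B}_q(u)$; and by Proposition \ref{prop:bijectivity_backlund}, $\tilde u\in H^{1,1}(\rb)$. Hence $P_{12}=i(\tilde u-u)\in H^{1,1}(\rb)$, and being continuous it vanishes at $\pm\infty$, so $r(x)\to 0$ as $x\to\pm\infty$. From $p^2=1-|r|^2$ we get $p(x)^2\to 1$ as $x\to\pm\infty$; since $p$ is real, continuous and $|p(x)|\to 1$, it has constant sign for all sufficiently large $|x|$, and therefore $p(x)\to\varepsilon_+\in\{+1,-1\}$ as $x\to+\infty$ and $p(x)\to\varepsilon_-\in\{+1,-1\}$ as $x\to-\infty$. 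Consequently, as $x\to\pm\infty$,
\[
P(x)=-iq\bpm p(x)&r(x)\\ \ovl{r(x)}&-p(x)\epm\longrightarrow -iq\,\varepsilon_\pm\,\sigma_3=-i\beta_\pm\sigma_3,\qquad \beta_\pm:=q\,\varepsilon_\pm,
\]
and $\beta_\pm^2=q^2$, which is the assertion.

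I do not expect a genuine analytic obstacle: the whole argument is powered by Proposition \ref{prop:bijectivity_backlund} (equivalently, by the decay at $\pm\infty$ of the B\"acklund-transformed potential $\tilde u$). The only point that needs a moment's care is upgrading $p(x)^2\to 1$ to convergence of $p(x)$ itself; the continuity-plus-constant-sign observation above does this, but alternatively one can feed in the explicit decomposition $\Psi_1^0(\cdot,iq)=c_1 g+c_2 h$ from the proof of Proposition \ref{prop:bijectivity_backlund}, whose leading exponential behavior as $x\to\pm\infty$ determines $p$ and $r$ outright. The sign $\varepsilon_\pm$ (i.e.\ which of $\pm q$ the limit $\beta_\pm$ equals) is not pinned down by this argument, but that is irrelevant to the statement of the lemma.
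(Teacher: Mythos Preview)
Your proof is correct and takes a genuinely different route from the paper. The paper argues directly from the decomposition $\Psi_1^0(\cdot,iq)=c_1 g+c_2 h$ (with $g,h$ as in the proof of Proposition~\ref{prop:bijectivity_backlund}), performing a case analysis on whether the dominant exponential is $e^{|q|x/2}$ or $e^{-|q|x/2}$ to read off $\xi_1/\xi_2$ or $\xi_2/\xi_1\to 0$ at each end. You bypass this by invoking Proposition~\ref{prop:bijectivity_backlund} itself: since $\tilde u=\mc B_q u\in H^{1,1}(\rb)$, the off-diagonal entry $P_{12}=i(\tilde u-u)$ lies in $H^{1,1}(\rb)\hookrightarrow C_0(\rb)$, forcing $r\to 0$, and then the constraint $p^2+|r|^2\equiv 1$ plus continuity does the rest. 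This is shorter and avoids cases.

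What the paper's approach buys is extra information that yours does not: it identifies \emph{which} sign $\beta_\pm$ takes, in terms of whether $g_2(0,iq)$ (respectively $h_2(0,iq)$) vanishes. This is recorded in Remark~\ref{rmk:beta_depend_psi_1_0} and used downstream (e.g.\ in Propositions~\ref{prop:tilde_scattering_data} and~\ref{prop:IBV_rel_backlund}). As you note, the sign is not part of the lemma's statement, so for the lemma as stated your argument is complete; but if one needs the sign later, one must still revisit the $g,h$ decomposition.
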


\begin{proof}
From \eqref{eq:P_matrix}, $P(x) = \frac{-i q}{|\xi_1|^2 + |\xi_2|^2}
\bsm |\xi_1|^2 - |\xi_2|^2 & 2\xi_1\overline{\xi_2}\\ 2\overline{\xi_1}\xi_2 & -|\xi_1|^2 + |\xi_2|^2 \esm$,
where $\xi = \bsm \xi_1 \\ \xi_2 \esm = \Psi_1^0(x,iq)$, $\Psi_1^0(0,iq) = e_1$.
Suppose first that $q>0$. Then by \eqref{eq:psi_0_1_lin_combi}, $ \Psi_1^0(x,iq) = c_1 g(x,iq) + c_2 h(x,iq)$
for some constants $c_1$, $c_2$. From \eqref{eq:tail_behavior_g}\eqref{eq:tail_behavior_h},
$g(x,iq) = (e_1 + r_1(x,iq)) e^{-q x/2}$, $h(x,iq) = (e_2 + r_2(x,iq)) e^{q x/2}$
where $r_1(x,iq)$, $r_2(x,iq) \in H^{1,1}(\rb^+)$.
If the second component of $g(x,iq)=(g_1(x,iq), g_2(x,iq))^T$ vanishes at $x=0$, then necessarily $c_2=0$.
But then $\frac {\xi_2}{\xi_1} = \frac{g_2(x,iq)}{g_1(x,iq)} \to 0$ as $x\to+\infty$, and so $P(x) \to -iq \sigma_3$ as $x\to+\infty$.
If the second component of $g(x,iq)$ does not vanish at $x=0$, then $c_2 \neq 0$.
Then, as in \eqref{eq:psi_1_0_c_2_nonzero}, $\xi = \Psi_1^0(x,iq) = c_2 e^{q x/2} \bsm r_5(x)\\ 1+r_6(x) \esm$, where $r_5$, $r_6\in H^{1,1}(\rb^+)$.
It now follows that $\frac {\xi_1}{\xi_2} \to 0$ as $x\to+\infty$ and so $P(x)\to iq\sigma_3$ as $x\to+\infty$.
Thus $P(x)\to -i\beta_+\sigma_3$ as $x\to+\infty$, where $\beta_+=q$ if $g_2(0,iq) =0$ and $\beta_+=-q$ if $g_2(0,iq)\neq0$.

As in the case $x\to+\infty$, there are solutions $g(x,iq)$, $h(x,iq)$ of \eqref{eq:linear_spectral_column_vector} with the property that $g(x,iq) = (e_1 + r_1(x,iq)) e^{-q x/2}$ and $h(x,iq) = (e_2 + r_2(x,iq)) e^{q x/2}$
where $r_1(x,iq)$ and $r_2(x,iq)$ now belong to $H^{1,1}(\rb^-)$.
(As opposed to the case $x>0$, $h(x,iq)$ is now uniquely determined and $g(x,iq)$ is not unique).
Again $\xi=\Psi_1^0(x,iq) = c_1 g(x,iq) + c_2 h(x,iq)$.
If the second component of $h(x,iq)$ vanishes at $x=0$, then $c_1= 0$. Hence $\frac {\xi_1}{\xi_2} = \frac{h_1(x,iq)}{h_2(x,iq)}\to 0$ as $x\to-\infty$, and so $P(x)\to -iq\sigma_3$ as $x\to-\infty$.
On the other hand, as before, if the second component of $h(x,iq)$ does not vanish at $x=0$, then $c_1 \neq 0$
and we find that $P(x)\to iq\sigma_3$ as $x\to-\infty$.
Thus $P(x)\to -i\beta_-\sigma_3$ as $x\to-\infty$, where $\beta_-=q$ if $h_2(0,iq) =0$ and $\beta_-=-q$ if $h_2(0,iq)\neq0$.
If $q<0$, the situation is similar, and again $\beta_{\pm}^2 = q^2$.
\end{proof}

\begin{remark}
\label{rmk:beta_depend_psi_1_0}
From the above proof, we see that as $x\to\infty$,
if $\Psi_1^0(x,iq)$ grows exponentially, then $\beta_+=-|q|$ and
if $\Psi_1^0(x,iq)$ decays exponentially, then $\beta_+=|q|$.
On the other hand, as $x\to -\infty$,
if $\Psi_1^0(x,iq)$ grows exponentially, then $\beta_-=|q|$ and
if $\Psi_1^0(x,iq)$ decays exponentially, then $\beta_-=-|q|$.
For example, if $q>0$ and $a(i|q|) \neq 0$,
then $\Psi_1^0(x,iq)=c_1 \psi_1^+(x,iq)+ c_2 \psi_2^-(x,iq)$ for some constants $c_1$, $c_2$.
Hence,
\beq
\label{eq:equiv_ZS_component_beta}
\bal
& (\psi_1^+)_2(0,iq) = 0 \iff c_2 = 0 \iff \beta_+=\beta_-=q,  \\
& (\psi_2^-)_2(0,iq) = 0 \iff c_1 = 0 \iff \beta_+=\beta_-=-q, \text{ and } \\
& \bal (\psi_1^+)_2(0,iq) \neq 0, (\psi_2^-)_2(0,iq) \neq 0
    & \iff c_1 \neq 0, c_2 \neq 0 \\ &  \iff \beta_+=-q, \ \ \beta_-=q. \\ \eal
\eal
\eeq

\end{remark}

The following calculation is standard in the inverse scattering literature.
\begin{proposition}
\label{prop:tilde_scattering_data}
Let $q \in \rb\setminus \{0\}$ be given. Suppose $u\in H^{1,1}(\rb)$ and let $\tilde u = \mc B_q u\in H^{1,1}(\rb)$.
Let $S(z) = \bsm a(z)&-\ovl{b(z)} \\ b(z)&\ovl{a(z)} \esm$, $\tilde S(z) = \bsm \tilde a(z)&-\ovl{\tilde b(z)} \\ \tilde b(z)&\ovl{\tilde a(z)} \esm$ be the scattering matrices for $u$ and $\tilde u$, respectively.
Then,
\beq
\label{eq:S_tilde}
\tilde S(z) = (z-i\beta_- \sigma_3)S(z) (z-i\beta_+ \sigma_3)^{-1}, \ \ z\in\rb,
\eeq
where $P(x)\to -i\beta_\pm \sigma_3$ as $x\to \pm \infty$, $\beta_\pm^2 = q^2$.
Equivalently,
\beq
\label{eq:tilde_a_b}
\tilde a(z) =  \frac{z-i\beta_-}{z-i\beta_+} a(z), \ \ \tilde b(z) = \frac{z+i\beta_-}{z-i\beta_+} b(z) , \ \ z\in\rb.
\eeq
In particular, if $a(i|q|)\neq 0$, then
\beq
\label{eq:tilde_generic}
\tilde u \in \mc G,  \text{ if } \ u \in \mc G.
\eeq
Moreover,
\beq
\label{eq:tilde_zeros}
\tilde Z_+ = Z_+ \text{ or } \tilde Z_+ = Z_+ \cup \{i|q|\}
\eeq
In both cases,
\beq
\label{eq:tilde_norming}
\tilde \gamma(z_k) = \frac{z_k+i\beta_-}{z_k-i\beta_+} \gamma(z_k), \ \ z_k \in Z_+,
\eeq
and in the second case,
\beq
\label{eq:tilde_norming_alpha}
\tilde \gamma(i|q|) =
\left\{\bal
& \frac{(\psi_1^+)_2(0,iq)}{(\psi_2^-)_2(0,iq)}, \ \ \text{ if } \ q>0, \\
& \frac{(\psi_1^+)_1(0,-iq)}{(\psi_2^-)_1(0,-iq)}, \text{ if } \ q<0. \\
\eal\right.
\eeq
\end{proposition}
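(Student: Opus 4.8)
The plan is to push the ZS‑AKNS solutions of $T\psi=z\psi$ through the B\"acklund map $\psi\mapsto(z+P(x))\psi$ and read off the transformed scattering matrix from the normalizations at $x=\pm\infty$. Recall from \eqref{eq:backlund_commute_x} that $(z+P(x))\psi$ solves $\psi_x=(iz\sigma+\tilde Q)\psi$ whenever $\psi$ solves $\psi_x=(iz\sigma+Q)\psi$, and from Lemma \ref{lem:P_tail} that $P(x)\to -i\beta_\pm\sigma_3$ as $x\to\pm\infty$ with $\beta_\pm^2=q^2$. Since $z-i\beta_\pm\sigma_3$ is diagonal, $x$-independent, and (for $z\in\rb$) invertible, I would set
\[
\tilde\psi^\pm(x,z)\equiv (z+P(x))\,\psi^\pm(x,z)\,(z-i\beta_\pm\sigma_3)^{-1},
\]
and use $\psi^\pm(x,z)e^{-ixz\sigma}\to I$ as $x\to\pm\infty$ (see \eqref{eq:zs_asymp}), together with the commutativity of $(z-i\beta_\pm\sigma_3)^{-1}$ with $e^{-ixz\sigma}$, to conclude $\tilde\psi^\pm(x,z)e^{-ixz\sigma}\to I$ as $x\to\pm\infty$. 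By uniqueness of the ZS‑AKNS solutions these are the ZS‑AKNS solutions attached to $\tilde u$ (the analytic behaviour at $z=i|q|$ needs a small separate check, addressed at the end).

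Feeding $\psi^+=\psi^- S(z)$ from \eqref{eq:scattering_matrix} into the definition gives, for $z\in\rb$,
\[
\tilde\psi^+(z-i\beta_+\sigma_3)=(z+P)\psi^+=(z+P)\psi^- S=\tilde\psi^-(z-i\beta_-\sigma_3)S,
\]
hence $\tilde S(z)=(z-i\beta_-\sigma_3)S(z)(z-i\beta_+\sigma_3)^{-1}$, which is \eqref{eq:S_tilde}; expanding the diagonal factors yields \eqref{eq:tilde_a_b}. Next I would invoke Proposition \ref{prop:bijectivity_backlund} ($\tilde u\in H^{1,1}(\rb)$, so $\tilde a$ is analytic in $\cb^+$) and continue $\tilde a(z)=\tfrac{z-i\beta_-}{z-i\beta_+}a(z)$ from $\rb$ into $\cb^+$. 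When $a(i|q|)\neq 0$, analyticity of $\tilde a$ at $z=i\beta_+$ rules out the combination $\beta_+=|q|,\ \beta_-=-|q|$, leaving either $\beta_+=\beta_-$ (so $\tilde a=a$, $\tilde Z_+=Z_+$) or $\beta_+=-|q|,\ \beta_-=|q|$ (so $\tilde a=\tfrac{z-i|q|}{z+i|q|}a$, $\tilde Z_+=Z_+\cup\{i|q|\}$); the trichotomy \eqref{eq:equiv_ZS_component_beta} of Remark \ref{rmk:beta_depend_psi_1_0} says which case occurs in terms of $(\psi_1^+)_2(0,iq)$ and $(\psi_2^-)_2(0,iq)$ (and the analogue at $-iq$ when $q<0$). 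Since the prefactor introduces no zero on $\rb$ and no multiple zero (as $q\neq0$ and $i|q|\notin Z_+$), $\tilde a$ has the same generic structure as $a$ up to this one extra simple zero, giving \eqref{eq:tilde_generic} and \eqref{eq:tilde_zeros}.

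For the norming constants I would use that at a simple zero $z_k$ of $a$ the $L^2$ eigenfunction $h(x,z_k)$ of $T$ in Theorem \ref{thm:direct_scattering}(2) equals $\psi_1^+(\cdot,z_k)$ (by its $+\infty$ asymptotics) and $\gamma(z_k)\psi_2^-(\cdot,z_k)$ (by its $-\infty$ asymptotics), so $\psi_1^+(x,z_k)=\gamma(z_k)\psi_2^-(x,z_k)$; applying $(z_k+P(x))$ and dividing by the diagonal factors in the definition of $\tilde\psi^\pm$ gives $\tilde\psi_1^+(x,z_k)=\tfrac{z_k+i\beta_-}{z_k-i\beta_+}\gamma(z_k)\,\tilde\psi_2^-(x,z_k)$, i.e. \eqref{eq:tilde_norming} (here $z_k\neq i\beta_+$ since $i|q|\notin Z_+$). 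For the new zero at $z=i|q|$, the key point is that $\det(z+P(x))$ is $x$-independent (noted after \eqref{eq:backlund_commute_x}) and equals $\det(z-iq\sigma_3)$ at $x=0$, which vanishes at $z=i|q|$; hence $i|q|+P(x)$ has rank one for all $x$, so $(i|q|+P(x))\psi_1^+(x,i|q|)$ and $(i|q|+P(x))\psi_2^-(x,i|q|)$ lie in the one‑dimensional range of $i|q|+P(x)$ and are proportional, $\tilde\gamma(i|q|)$ being the ratio of their coefficients. Evaluating at $x=0$, where $i|q|+P(0)=i|q|-iq\sigma_3$ sends every vector to a multiple of $e_2$ when $q>0$ and of $e_1$ when $q<0$, produces the ratios in \eqref{eq:tilde_norming_alpha} (the denominators being nonzero precisely in the case the new zero occurs). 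The analyticity check deferred above is the same computation read in reverse: $w(x):=(i|q|+P(x))\psi_1^+(x,i|q|)$ satisfies the linear ODE $w_x=(i(i|q|)\sigma+\tilde Q)w$, so it vanishes identically when $w(0)=0$, which is exactly the case $\beta_+=|q|$ in which $\tilde a$ is analytic at $i|q|$.

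The main obstacle is not any single estimate but the bookkeeping around the extra zero: correctly pinning down in each case which of $\beta_\pm$ equals $+|q|$ and which equals $-|q|$, so that the formal identity \eqref{eq:tilde_a_b} is consistent both with analyticity of $\tilde a$ and with the denominators in \eqref{eq:tilde_norming_alpha} being nonzero, together with the rank‑one argument at $z=i|q|$ that makes $\tilde\gamma(i|q|)$ well defined. Everything else is either a direct computation with the explicit form \eqref{eq:P_matrix} of $P$ or a citation of earlier results in this section.
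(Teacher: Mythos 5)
Your proposal is correct and follows essentially the same route as the paper: conjugating the ZS--AKNS solutions by $(z+P(x))$ with the diagonal normalizing factors $(z-i\beta_\pm\sigma_3)^{-1}$, reading off \eqref{eq:S_tilde} from $\psi^+=\psi^-S$, continuing $\tilde a$ analytically, and extracting the norming constants from the proportionality of the transformed eigenfunctions evaluated at $x=0$ where $P(0)=-iq\sigma_3$. The only local difference is that you justify the proportionality at $z=i|q|$ via the rank-one property of $i|q|+P(x)$ (constant determinant vanishing at $x=0$), whereas the paper identifies $\tilde f^\pm=\frac{i|q|+P(x)}{2i|q|}\psi_1^+,\ \frac{i|q|+P(x)}{2i|q|}\psi_2^-$ with $\tilde\psi_1^+,\tilde\psi_2^-$ by computing their asymptotics as $x\to\pm\infty$ — both arguments land on the same evaluation at $x=0$ and give \eqref{eq:tilde_norming_alpha}.
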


\begin{proof}
From \eqref{eq:scattering_problem_tilde} and Lemma \ref{lem:P_tail}, we see that
\beq
\label{eq:tilde_ZS}
\tilde \psi^\pm(x,z) = (z+P(x))\psi^\pm (x,z)(z-i\beta_\pm \sigma_3)^{-1}, \ \ z\in\rb,
\eeq
where $\psi^\pm$, $\tilde \psi^\pm$ are the ZS-AKNS solutions for $u$ and $\tilde u$, respectively.
Substituting these relations into $\tilde \psi^+ =\tilde \psi^- \tilde S$ and utilizing $\psi^+=\psi^- S$,
we immediately obtain \eqref{eq:S_tilde} and hence \eqref{eq:tilde_a_b}.

Now suppose that $u\in\mc G$ and $a(i|q|)\neq0$.
Then $\tilde a(z) \neq 0$, $z\in\rb$, and $\tilde a(z) \to 1$ as $z\to\infty$.
Then relationship $\tilde a(z) =  \frac{z-i\beta_-}{z-i\beta_+} a(z)$ continues analytically to $\cb^+\setminus\{i\beta_+\}$.
Now clearly $i\beta_\pm \not\in Z_+$. Hence $\tilde a(z_k)=0$ if and only if $a(z_k)=0$.
and as $a'(z_k)\neq 0$, it follows that $z_k$ is a simple zero for $\tilde a(z)$.
From \eqref{eq:tilde_ZS}, we have by analytic continuations
$$ \bal
& \tilde\psi_1^+(x,z_k) = \frac{z_k+P(x)}{z_k-i\beta_+} \psi_1^+(x,z_k), \ \
\tilde\psi_2^-(x,z_k) = \frac{z_k+P(x)}{z_k+i\beta_-} \psi_2^-(x,z_k). \\
\eal$$
Substituting these relations into $\tilde\psi_1^+(x,z_k) = \tilde\gamma(z_k) \tilde\psi_2^-(x,z_k)$
and utilizing $\psi_1^+(x,z_k) = \gamma(z_k) \psi_2^-(x,z_k)$, we immediately obtain \eqref{eq:tilde_norming}.
From \eqref{eq:tilde_a_b} we see that $a(i|q|)=0$ if and only if $\beta_+ = -|q|$, $\beta_- = |q|$.
In this case, the zero at $i|q|$ is also simple.
In particular, $\tilde u \in \mc G$ in both cases in \eqref{eq:tilde_zeros}.
From \eqref{eq:tilde_ZS}, for $z=i|q|$,
$\tilde f^+(x) \equiv \frac{i|q|+P(x)}{2i|q|} \psi_1^+(x,i|q|)$ and
$\tilde f^-(x) \equiv \frac{i|q|+P(x)}{2i|q|} \psi_2^-(x,i|q|)$
are two solutions of $\frac{d\tilde\psi}{dx}=(-|q|\sigma + \tilde Q)\tilde\psi$ with asymptotics,
$\frac 1{2i|q|} \bsm i(|q|-\beta_+)&0\\0&i(|q|+\beta_+)\esm (e_1+o(1))e^{-|q|x/2}$ and
$\frac 1{2i|q|} \bsm i(|q|-\beta_-)&0\\0&i(|q|+\beta_-)\esm (e_2+o(1))e^{|q|x/2}$
as $x\to\pm\infty$, respectively.
Hence $\tilde f^+(x) = \tilde \psi_1^+(x,i|q|)$, $\tilde f^-(x) = \tilde \psi_2^-(x,i|q|)$.
By definition $\tilde f^+(x) = \tilde\gamma(i|q|) \tilde f^-(x)$.
Evaluating this relation at $x=0$, we find
$$
\bsm i(|q|-q)&0\\0&i(|q|+q)\esm \psi_1^+(0,i|q|) =
\tilde\gamma(i|q|) \bsm i(|q|-q)&0\\0&i(|q|+q)\esm \psi_2^-(0,i|q|)
$$
and so \eqref{eq:tilde_norming_alpha} follows.
It is easy to see that $\tilde\gamma(i|q|)$ is non-zero and finite.
\end{proof}

In general, if $u(t)$ solves NLS, $\tilde u(t) = \mc B_q u(t)$ will not solve NLS,
because $\beta_\pm(t)$ are not continuous functions of t.
For example, we see from the proof of Lemma \ref{lem:P_tail} that for $q>0$, say,
$\beta_+=q$ if $g_2(0,iq) =0$ and $\beta_+=-q$ if $g_2(0,iq)\neq0$.
But $g=(g_1, g_2)^T$ is just the ZS-AKNS solution $\psi_1^+(x,iq)=\psi_1^+(x,iq;u)$
and there is no reason why $(\psi_1^+)_2(0,iq)$ cannot pass through $0$ as $u(t)$ evolves under NLS.
In particular, we may have $(\psi_1^+)_2(0,iq; u(t=0))=0$ but $(\psi_1^+)_2(0,iq; u(t))\neq0$ for $t>0$
and hence $\beta_+(t)$ is not continuous at $t=0$.

Now from \eqref{eq:equiv_ZS_component_beta} $\tilde u$ has an eigenvalue at $z=iq$, $q>0$,
if and only if $(\psi_1^+)_2(0,iq; u)$ and $(\psi_2^-)_2(0,iq; u)$ are both non-zero.
Thus if $(\psi_1^+)_2(0,iq; u(t))=0$ for $t=0$
but $(\psi_1^+)_2(0,iq; u(t))$ and $(\psi_2^-)_2(0,iq; u(t))$ are non-zero for $t>0$,
we see by \eqref{eq:tilde_a_b} that $\partial_x \tilde \psi = (iz\sigma+\tilde Q)\tilde \psi$ has no $L^2$-eigenvalue at $z=iq$ for $t=0$,
but $\partial_x \tilde \psi = (iz\sigma+\tilde Q)\tilde \psi$ has an $L^2$-eigenvalue at $z=iq$ for $t>0$.
Furthermore, in the case when $(\psi_1^+)_2(0,iq; u(t))$ and $(\psi_2^-)_2(0,iq; u(t))$ are both non-zero,
the norming constant $\tilde\gamma(i|q|)$ in \eqref{eq:tilde_norming_alpha} will not, in general, evolve appropriately,
i.e. $\tilde \gamma(i|q|;u(t)) \neq \tilde \gamma(i|q|;u(0))e^{-iq^2 t/2}$.
In terms of the discussion preceding  Remark \ref{rmk:trivial_P},
the ``parameters" in the B\"acklund transformation $u\mapsto \tilde u = \mc B_q u$ are not chosen correctly.

As we now show, however, if $u(t)$ solves NLS on $\rb^+$ together with the boundary condition \eqref{eq:mixed_bdry},
then $\tilde u(t) = \mc B_q^+ u(t)$ is also a solution of NLS on $\rb^+$ (but with $q$ replaced by $-q$).
Thus in this case, the ``parameters" in the B\"acklund transformation are correctly chosen automatically.
The following lemma gives the necessary and sufficient conditions on $P=P(x,t)$
in order that $\mc B_q^+ u(t)$ solves NLS.

\begin{lemma}
\label{lem:equiv_NLS_evol}
Let $u(t)$ be a classical solution in $H^{1,1}$.
Suppose that $P$ solves \eqref{eq:backlund} and let $\tilde u(t) = \mc B_q^+ u(t)$ be the B\"acklund transformation of $u(t)$.
Let $L$, $\tilde L$ denote the Lax operators in \eqref{eq:ZS_Lax_operator}
and define $E$, $\tilde E$  as in \eqref{eq:operator_time} with $u$, $\tilde u$, respectively.
Then,
\beq
\label{E_P_commute}
(\partial_t - \tilde E)(z+P) = (z+P)(\partial_t - E)
\eeq
if and only if
\beq
\label{E_L_commute}
(\partial_t - \tilde E)(\partial_x - \tilde L) = (\partial_x - \tilde L)(\partial_t - \tilde E).
\eeq
In other words, condition \eqref{E_P_commute} is necessary and sufficient for $\tilde u(t)$ to solve NLS.
\end{lemma}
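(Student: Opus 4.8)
The plan is to phrase everything in terms of the matrix $V=V(x,t,z)\equiv zI+P(x,t)$ (also viewed as the operator of multiplication by it) and two ``curvature'' matrices attached to the transformed data: $Z\equiv-\partial_t\tilde L+\partial_x\tilde E+[\tilde E,\tilde L]$, which is exactly the matrix representing the operator $(\partial_t-\tilde E)(\partial_x-\tilde L)-(\partial_x-\tilde L)(\partial_t-\tilde E)$, and $W\equiv\partial_t P-\tilde E\,V+V\,E$, which is exactly the matrix representing $(\partial_t-\tilde E)\,V-V\,(\partial_t-E)$. By the zero-curvature formalism of \eqref{eq:L_E_commute}, condition \eqref{E_L_commute} is the statement $Z\equiv0$ and condition \eqref{E_P_commute} is the statement $W\equiv0$, so the lemma is the equivalence $W\equiv0\Longleftrightarrow Z\equiv0$. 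Two facts are available throughout: since $P$ solves \eqref{eq:backlund}, relation \eqref{eq:backlund_commute_x} reads $(\partial_x-\tilde L)\,V=V\,(\partial_x-L)$; and since $u(t)$ is a classical solution of NLS on $\rb^+$, the zero-curvature relation $(\partial_x-L)(\partial_t-E)=(\partial_t-E)(\partial_x-L)$ holds.

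For $W\equiv0\Rightarrow Z\equiv0$, I would substitute $(\partial_t-\tilde E)\,V=V\,(\partial_t-E)$ into $(\partial_t-\tilde E)(\partial_x-\tilde L)\,V$ and chain the two relations:
\[
\begin{aligned}
(\partial_t-\tilde E)(\partial_x-\tilde L)\,V
&=(\partial_t-\tilde E)\,V\,(\partial_x-L)=V\,(\partial_t-E)(\partial_x-L)\\
&=V\,(\partial_x-L)(\partial_t-E)=(\partial_x-\tilde L)\,V\,(\partial_t-E)=(\partial_x-\tilde L)(\partial_t-\tilde E)\,V.
\end{aligned}
\]
Hence $Z\,V=0$ as an operator, i.e. $Z(x,t,z)\,(zI+P(x,t))=0$ as matrices for all $x\ge0$, $t\ge0$. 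Since $\det(zI+P(x,t))$ is independent of $x$ (iso-spectrality of \eqref{eq:backlund}) and equals $\det(zI+P_0)=z^2+q^2$, the matrix $zI+P(x,t)$ is invertible for every $z\notin\{\pm iq\}$; thus $Z(x,t,z)=0$ there, and as $Z$ is polynomial in $z$, $Z\equiv0$.

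For the converse $Z\equiv0\Rightarrow W\equiv0$, the same chain of identities, now carrying the operator $W$ instead of assuming it vanishes, gives $(\partial_x-\tilde L)\,W=W\,(\partial_x-L)$; at the matrix level this is the linear ODE $\partial_x W=\tilde L\,W-W\,L$ for each fixed $(t,z)$. It then suffices to check $W$ at $x=0$. There $P(0,t)=P_0=-iq\sigma_3$ is independent of $t$, so $\partial_t P$ vanishes at $x=0$; also $\tilde u(0,t)=u(0,t)$ by \eqref{eq:cont_at_x_0}, and evaluating \eqref{eq:backlund} at $x=0$ (using $\tilde Q(0,t)=Q(0,t)$) gives $P_x(0,t)=[Q(0,t),P_0]$, whence $\tilde u_x(0,t)=u_x(0,t)+2q\,u(0,t)$. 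Substituting these into
\[
W(0,t,z)=(zI+P_0)\,E(0,t,z)-\tilde E(0,t,z)\,(zI+P_0)=[P_0,E(0,t,z)]+\big(E(0,t,z)-\tilde E(0,t,z)\big)(zI+P_0)
\]
and using the explicit form \eqref{eq:operator_time} of $E$, I expect a short computation to show that all powers of $z$ cancel and
\[
W(0,t,z)=q\bpm 0 & u_x(0,t)+q\,u(0,t)\\ -\,\overline{u_x(0,t)+q\,u(0,t)} & 0\epm .
\]
Since $u(t)$ solves HNLS$_q^+$, the boundary condition \eqref{eq:mixed_bdry} holds, so $W(0,t,z)=0$; by uniqueness for the linear ODE above, $W\equiv0$ on $\rb^+$, which is \eqref{E_P_commute}.

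The main obstacle — really the only delicate point — is this last evaluation: one must track the $z$-dependence in $[P_0,E(0)]$ and in $\big(E(0)-\tilde E(0)\big)(zI+P_0)$ and confirm that the cancellations forced by $\tilde u(0)=u(0)$ and $\tilde u_x(0)=u_x(0)+2q\,u(0)$ leave exactly the off-diagonal matrix built from the boundary operator $u_x(0)+q\,u(0)$. This is precisely where the boundary condition \eqref{eq:mixed_bdry} (implicit in ``classical solution of HNLS$_q^+$'') enters; the forward implication and the derivation of the $W$-ODE are formal operator manipulations with no such subtlety.
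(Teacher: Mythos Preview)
Your forward implication is the paper's argument verbatim. The converse, however, has a genuine gap: you invoke the Robin boundary condition $u_x(0,t)+q\,u(0,t)=0$ to kill $W(0,t,z)$, but the lemma does not assume $u(t)$ solves HNLS$_q^+$ --- only that $u(t)$ is a classical solution of the NLS PDE on $\rb^+$ (this is what the paper uses, and what makes the lemma an honest equivalence rather than a tautology once Proposition~4.12 is in hand). Your computation of $W(0,t,z)$ is in fact correct and matches the paper's $\Delta(0)$, but without the boundary condition it is generically nonzero, and your ODE argument then says nothing.

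The missing ingredient is $z$-independence of $W$. The paper first computes directly (their equation for $\Delta$) that
\[
W=P_t-\tfrac12(\tilde Q_1 P-PQ_1),
\]
using $\tilde Q-Q+i[\sigma,P]=0$ and the identity $\tilde Q_1-Q_1=\tilde QP-PQ$; in particular $W$ does not depend on $z$. Once this is known, your operator identity $(\partial_x-\tilde L)W=W(\partial_x-L)$ reads, at the matrix level, $iz[\sigma,W]=W_x-\tilde QW+WQ$ for \emph{all} $z$, which forces simultaneously $[\sigma,W]=0$ (so $W$ is diagonal) and $W_x=\tilde QW-WQ$. The right side of the latter is off-diagonal while the left side is diagonal, so both vanish and $W$ is constant in $x$. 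Since $W(0)$ is off-diagonal (your own computation), $W(0)=0$ follows --- with no boundary condition required. In short, the step you flagged as ``the only delicate point'' is exactly where your argument breaks; the remedy is not the boundary condition but the preliminary observation that $W$ carries no $z$.
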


\begin{proof}
If \eqref{E_P_commute} holds, then by \eqref{eq:L_E_commute} and \eqref{eq:backlund_commute_x},
$$ \bal
& (\partial_t - \tilde E)(\partial_x - \tilde L)(z+P) \\
& \quad = (\partial_t - \tilde E)(z+P)(\partial_x - L)  = (z+P)(\partial_t - E)(\partial_x - L)\\
& \quad  = (z+P)(\partial_x - L)(\partial_t - E)= (\partial_x - \tilde L)(z+P)(\partial_t - E) \\
& \quad  = (\partial_x - \tilde L)(\partial_t - \tilde E)(z+P), \\
\eal$$
which clearly proves \eqref{E_L_commute}.
Conversely, suppose that \eqref{E_L_commute} holds.
Define $Q_1\equiv\bsm i|u|^2 & iu_x\\ i\overline{u}_x & -i|u|^2 \esm$
and similarly $\tilde Q_1$ with $u$ replaced by $\tilde u$.
Then,
\beq
\label{eq:backlund_Q_1}
\tilde Q_1 - Q_1 = P_x = (\tilde Q P - PQ).
\eeq
Indeed, $(\tilde Q P - PQ)_{12} = (P_{12})_x = i(\tilde u - u)_x = (\tilde Q_1 - Q_1)_{12}$
and $(\tilde Q P - PQ)_{11} = \tilde u P_{21} - (-\ovl u) P_{12} = i(|\tilde u|^2 - |u|^2) = (\tilde Q_1 - Q_1)_{11}$.
Equation \eqref{eq:backlund_Q_1} now follows by symmetry.
Set
$$
\Delta \equiv (\partial_t - \tilde E)(z+P) - (z+P)(\partial_t - E).
$$
Then, by \eqref{eq:tilde_Q} and \eqref{eq:backlund_Q_1},
\beq
\label{eq:Delta_with_Q1}
 \bal \Delta
& = P_t - z(\tilde E - E) - (\tilde E P - PE) \\
& = \frac 12 z^2 (\tilde Q - Q + i[\sigma,P]) -\frac z2 \big(\tilde Q_1 - Q_1 - (\tilde Q P - PQ)\big) \\
& \qquad + \big(P_t - \frac 12 (\tilde Q_1 P - PQ_1) \big)\\
& = P_t - \frac 12 (\tilde Q_1 P - PQ_1). \\
\eal
\eeq
In particular, $\Delta$ is independent of $z$.
Again by \eqref{eq:L_E_commute} and \eqref{eq:backlund_commute_x}, we see that
$$ \bal
& (\partial_x - \tilde L)(\partial_t - \tilde E) (z+P) \\
& \quad = (\partial_t - \tilde E)(\partial_x - \tilde L) (z+P)=(\partial_t - \tilde E)(z+P)(\partial_x - L)  \\
& \quad= \Delta (\partial_x - L) + (z+P)(\partial_t - E)(\partial_x - L) \\
& \quad=\Delta (\partial_x - L) + (z+P)(\partial_x - L)(\partial_t - E) \\
& \quad= \Delta (\partial_x - L) + (\partial_x - \tilde L)(z+P)(\partial_t - E)  \\
& \quad= \Delta (\partial_x - L) - (\partial_x - \tilde L)\Delta + (\partial_x - \tilde L)(\partial_t - \tilde E) (z+P)\\
\eal$$
and hence
$$\bal
& \Delta (\partial_x - L) - (\partial_x - \tilde L)\Delta = 0, \ \ z\in\cb \\
& \iff iz[\sigma, \Delta] - (\Delta_x - \tilde Q \Delta + \Delta  Q ) = 0, \ \ z\in\cb \\
& \iff [\sigma, \Delta]=0, \ \ \Delta_x = \tilde Q \Delta - \Delta Q. \\
\eal$$
The first equation above implies that $\Delta$ is diagonal.
But then, $\tilde Q \Delta$ and $\Delta Q$ are off-diagonal and it follows that $\Delta_x=0$,
which implies that $\Delta$ is constant in $x$.
As $\tilde u(0)=u(0)$ and $P(x=0,t)=-iq \sigma_3$, $(\tilde Q_1 P - PQ_1)$ is off-diagonal at $x=0$ and $P_t(x=0)=0$,
and it follows from \eqref{eq:Delta_with_Q1} that $\Delta$ is also off-diagonal at $x=0$,
and hence we must have $\Delta = 0$ at $x=0$.
Thus, $\Delta \equiv 0$ for all $x$, which completes the proof.
\end{proof}

In motivating the approach in \cite{BT} and \cite{Ta2},
the following result plays a key role.

\begin{proposition}
\label{prop:IBV_rel_backlund}
Let $q \in \rb\setminus\{0\}$  be given.
Let $t \mapsto u(t) = u(x,t), x\geq0$ be $C^2$ with respect to $x$ and $C^1$ with respect to $t$
such that $u_{xx}$ and $u_t$ decay rapidly as $x\to+\infty$. Let $u_0(x) = u(x,0)$.
Then, the following statements are equivalent.
\begin{enumerate}
 \item[\textnormal{(i)}] $u(t)$ solves HNLS$_q^+$ with initial data $u_0$.

 \item[\textnormal{(ii)}] $\mc B_q^+ u(t)$ solves HNLS$_{-q}^+$ with initial data  $\mc B_q^+ u_0$.
\item[\textnormal{(iii)}] $\mc R \mc B_q^+ u(t)$ solves HNLS$_q^-$ with initial data  $\mc R \mc B_q^+ u_0$.
\end{enumerate}
\end{proposition}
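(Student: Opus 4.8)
The plan is to prove the equivalences (i)$\iff$(ii) and (ii)$\iff$(iii), the latter being the soft ingredient. For (ii)$\iff$(iii): the reversal $\mc R$ commutes with $\partial_t$, with $\partial_x^2$, and with $u\mapsto|u|^2u$, hence carries classical solutions of NLS on $\rb^+$ to classical solutions of NLS on $\rb^-$ (and intertwines $H_{-q}^+$ with $H_q^-$ at the level of the weak formulation); under $\mc R$ the relation $\tilde u_x(0+)-q\,\tilde u(0)=0$ defining HNLS$_{-q}^+$ becomes $(\mc R\tilde u)_x(0-)+q\,(\mc R\tilde u)(0)=0$, the HNLS$_q^-$ relation, while the datum $\mc B_q^+u_0$ passes to $\mc R\mc B_q^+u_0$; that $\mc R\mc B_q^+u\in H^{1,1}(\rb^-)$ is Lemma~\ref{lem:reciprocal_backlund}(ii) together with Proposition~\ref{prop:bijectivity_backlund}. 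It then remains to treat (i)$\iff$(ii).

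Set $\tilde u(t)=\mc B_q^+u(t)$. For (i)$\Rightarrow$(ii) there are three things to check: $\tilde u(0)=\mc B_q^+u_0$ (true by construction); that $\tilde u$ satisfies the HNLS$_{-q}^+$ boundary condition; and that $\tilde u$ solves NLS on $\rb^+$. The boundary condition is a one-line computation: from $P_x=\tilde QP-PQ$ with $P(0)=-iq\sigma_3$ and $\tilde u(0)=u(0)$ (\eqref{eq:cont_at_x_0}, so $\tilde Q(0)=Q(0)$) one gets $(P_{12})_x(0,t)=2iq\,u(0,t)$, hence $\tilde u_x(0+,t)=u_x(0+,t)+2q\,u(0,t)=q\,u(0,t)=q\,\tilde u(0,t)$ by \eqref{eq:mixed_bdry}, i.e.\ $\tilde u_x(0+,t)-q\,\tilde u(0,t)=0$. (That $\tilde u$ lies in the regularity class of the proposition follows by bootstrapping \eqref{eq:backlund} from $u\in C^2$, with decay supplied by Lemma~\ref{lem:P_tail}.) For the NLS evolution I invoke Lemma~\ref{lem:equiv_NLS_evol}: since $u(t)$ already solves NLS on $\rb^+$, it is enough to verify \eqref{E_P_commute}, equivalently that $\Delta:=P_t-\tfrac12(\tilde Q_1P-PQ_1)$ vanishes.

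To compute $P_t$ I use that the fundamental matrix $\Psi^0(x,t;z)$ of \eqref{eq:scattering_problem} normalized by $\Psi^0(0,t;z)=I$ satisfies $\partial_t\Psi^0=E\Psi^0-\Psi^0E(0,t;z)$ — apply the commuting operators $\partial_x-L$ and $\partial_t-E$ (legitimate because $u$ solves NLS on $\rb^+$) to $\Psi^0$ and differentiate the normalization in $t$. The Robin condition \eqref{eq:mixed_bdry} makes $E(0,t;iq)$ upper triangular and $E(0,t;-iq)$ lower triangular — the off-diagonal entries that vanish are multiples of $u_x(0,t)+q\,u(0,t)$ — so $E(0,t;iq)e_1=\lambda(t)e_1$ and $E(0,t;-iq)e_2=-\lambda(t)e_2$ with $\lambda=\tfrac i4q^2+\tfrac i2|u(0,t)|^2$. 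Since $\varphi=(\Psi_1^0(\cdot;iq),\Psi_2^0(\cdot;-iq))$, this yields $\varphi_t\varphi^{-1}$ in closed form; using $\varphi\,\mathrm{diag}(1,0)\,\varphi^{-1}=\tfrac12(I+\tfrac iqP)$ and $P=\varphi P_0\varphi^{-1}$ (so $P^2=-q^2I$) one obtains $\varphi_t\varphi^{-1}=\tfrac i4q^2\sigma_3+\tfrac12Q_1+\tfrac12QP-\tfrac{i\lambda}qP$, hence $P_t=[\varphi_t\varphi^{-1},P]=\tfrac i4q^2[\sigma_3,P]+\tfrac12[Q_1,P]+\tfrac12[QP,P]$. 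Finally $[\sigma_3,P]=2i(\tilde Q-Q)$ by \eqref{eq:tilde_Q}, $[QP,P]=QP^2-PQP=-q^2Q-PQP$, and $\tilde Q_1-Q_1=\tilde QP-PQ$ by \eqref{eq:backlund_Q_1}; substituting these collapses $P_t$ to $\tfrac12(\tilde Q_1P-PQ_1)$, i.e.\ $\Delta=0$. By Lemma~\ref{lem:equiv_NLS_evol}, $\tilde u(t)$ solves NLS, completing (i)$\Rightarrow$(ii).

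For (ii)$\Rightarrow$(i) I first record that $\mc B_{-q}^+\mc B_q^+=\mathrm{id}$ on $L^1_{\mathrm{loc}}(\rb^+)$: given the $P$ for $u$ (with $P(0)=-iq\sigma_3$), the matrix $-P$ solves \eqref{eq:backlund} with $Q$ replaced by $\tilde Q$ and with $(-P)(0)=iq\sigma_3$, the verification resting on $\tilde Q+i[\sigma,P]=Q$ and $[\sigma,P^2]=0$ (as $P^2=-q^2I$) to kill the extra term; by uniqueness (Proposition~\ref{p:backlund}) the $P$-matrix of $\mc B_{-q}^+$ on $\tilde u$ is $-P$, so $\mc B_{-q}^+\tilde u=\tilde u+iP_{12}=u$. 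Now apply the already-proved implication (i)$\Rightarrow$(ii), with $q$ replaced by $-q$, to $\tilde u(t)=\mc B_q^+u(t)$, which by hypothesis (ii) solves HNLS$_{-q}^+$ and lies in the required regularity class: this gives that $\mc B_{-q}^+\tilde u(t)=u(t)$ solves HNLS$_q^+$ with data $\mc B_{-q}^+\tilde u_0=u_0$, which is (i). The main obstacle is the identity $\Delta\equiv0$ in the third paragraph: everything hinges on reading the triangularity of $E(0,t;\pm iq)$ off the boundary condition and then on the unforgiving (but short) algebra reducing $[\varphi_t\varphi^{-1},P]$ to $\tfrac12(\tilde Q_1P-PQ_1)$ via $P^2=-q^2I$ and \eqref{eq:backlund_Q_1}; the reflection argument for (ii)$\iff$(iii) and the uniqueness argument for $\mc B_{-q}^+\mc B_q^+=\mathrm{id}$ are routine once the structure is in place.
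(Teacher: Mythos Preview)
Your proof is correct and takes a genuinely cleaner route than the paper's. The crucial difference is in the forward direction (i)$\Rightarrow$(ii): the paper works with the solution $g(x,t,z)$ of \eqref{eq:scattering_problem} normalized at $x=+\infty$ (so that $(\partial_t-E)g=\tfrac{iz^2}{4}g$), which forces a case split on whether $g_2(0,iq,t{=}0)$ vanishes; when it does not, the paper must reflect to $\rb^-$ and reduce to the other case. You instead compute the $t$-evolution of the fundamental solution $\Psi^0$ normalized at $x=0$, obtaining $\partial_t\Psi^0=E\Psi^0-\Psi^0 E(0,t;z)$, and then read the triangularity of $E(0,t;\pm iq)$ directly off the Robin condition $u_x(0,t)+q\,u(0,t)=0$. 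Since $\varphi=(\Psi^0_1(\cdot,iq),\Psi^0_2(\cdot,-iq))$ is precisely the matrix defining $P$, this yields $\varphi_t\varphi^{-1}$ and hence $P_t$ with no case analysis; the algebra reducing $[\varphi_t\varphi^{-1},P]$ to $\tfrac12(\tilde Q_1P-PQ_1)$ is then the same as in the paper (the diagonal pieces, your $\lambda$ versus the paper's $c_1'/c_1$, commute with $P_0$ and drop out). For the converse you prove and use the identity $\mc B_{-q}^+\mc B_q^+=\mathrm{id}$ via $\hat P=-P$, whereas the paper routes through $\mc R\mc B_q^-\mc R\mc B_q^+=\mathrm{id}$ (Lemma~\ref{lem:reciprocal_backlund}); both are short, and yours keeps everything on $\rb^+$. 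What the paper's approach buys is the connection to the asymptotic sign $\beta_+$ of $P$ (Remark~\ref{rmk:beta_depend_psi_1_0}), which is used elsewhere; what your approach buys is a case-free, self-contained argument.
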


\begin{proof}
Clearly (ii) and (iii) are equivalent. We will first show that (i) implies (ii).
Suppose that $q>0$. The case $q<0$ is similar and will be discussed below.
Define $E$ as in \eqref{eq:operator_time} and $\tilde E$ with $u$ replaced by $\tilde u = \mc B_q^+ u$.

\vspace{0.1in}
\noindent \emph{Claim}: (i) implies that $(\partial_t - \tilde E)(z+P) = (z+P)(\partial_t - E)$, $x,t\geq0$.
\vspace{0.1in}

\noindent Let $g(x,t,z)=(g_1, g_2)^T$, $x\geq0$, $z\in\ovl{\cb^+}$
be as in the proof of Proposition \ref{prop:bijectivity_backlund} with $u(x)=u(x,t)$.
As the operator $\partial_t - E$ in \eqref{eq:operator_time} commutes with $\partial_x - L$ in \eqref{eq:ZS_Lax_operator},
$(\partial_x - L)((\partial_t - E)g) = (\partial_t - E)((\partial_x - L)g) = 0$, and so
$(\partial_t - E)g$ also solves \eqref{eq:scattering_problem}.
As $(\partial_t - E)g e^{-ixz/2} \to \frac{iz^2}4 e_1$ as $x\to\infty$, we see that $(\partial_t - E)g = \frac{iz^2}4 g$, or equivalently,
\beq
\label{eq:evolution_g}
\frac {\partial g}{\partial t}  =  \frac {i}2\bpm |u|^2 & u_x +i z u \\  \ovl{u}_x -i z \ovl{u} & z^2+|u|^2 \epm g.
\eeq

If $g_2(0,iq, t=0)= 0$, then as $u_x(0,t)+q u(0,t)=0$,
it follows from \eqref{eq:evolution_g} that $g_2(0,iq,t) = 0$ for all $t\geq0$.
Let $P(x,t)$ solve \eqref{eq:backlund} with $u(x,t)$ and $P(x,t) \to -i\beta_+(t)\sigma_3$ as $x\to \infty$.
From \eqref{eq:psi_0_1_lin_combi}, we see that $\Psi^0_1(x,iq,t)=c_1(t) g(x,iq,t)$, $c_1(t) \neq 0$
and hence $\beta_+(t) = q$ by Remark \ref{rmk:beta_depend_psi_1_0}.
From \eqref{eq:sol_P}, we have $P(x,t) = \varphi(x,t) P_0 (\varphi(x,t))^{-1}$ where $\varphi=(\varphi_1, \varphi_2) = (\Psi_1^0(x,iq,t), \Psi_2^0(x,-iq,t))$.
Now, $(\varphi_1)_t = (c_1 g(iq))_t = \big(c_1'/c_1 - iq^2/4 + E(iq)\big) \varphi_1$.
As $\varphi_2 = \bsm 0&-1\\1&0\esm \ovl{\varphi_1}$, we have $(\varphi_2)_t = \big(\ovl{(c_1'/c_1)} + iq^2/4 + E(-iq)\big) \varphi_2$
and hence $\varphi_t = \varphi D  + \frac 12 (iq^2 \sigma \varphi - iq Q \varphi \sigma_3 + Q_1 \varphi)$,
where $D = \bsm c_1'/c_1 - iq^2/4 & 0 \\ 0 & \ovl{(c_1'/c_1)} + iq^2/4 \esm$,
which implies $\varphi_t \varphi^{-1}= \varphi D \varphi^{-1} + \frac 12 (iq^2 \sigma +QP + Q_1)$,
Thus, from the above calculations together with $P^2 = -q^2 I$,
$$\bal P_t
& = (\varphi_t \varphi^{-1})P - P(\varphi_t \varphi^{-1}) \\
& = \frac 12 \big(iq^2 [\sigma,P] + (QP-PQ)P + Q_1P-PQ_1\big) \\
& = \frac 12 \big((\tilde Q-Q)P^2 + (QP-PQ)P + Q_1P-PQ_1\big) \\
& = \frac 12 (\tilde Q_1 P - PQ_1), \\
\eal$$
and hence the Claim follows from \eqref{eq:Delta_with_Q1}.
But then, from Lemma \ref{lem:equiv_NLS_evol} we see that
the operators $\partial_t - \tilde E$, $\partial_x - \tilde L$ commute with each other.
Hence, $\tilde u(t)$ solves NLS on $\rb^+$.
As $\tilde Q(0) = Q(0)$, $\tilde Q_x(0) = Q_x(0) - i[\sigma, P_x(0)] = -q Q(0) - i[\sigma, \tilde Q(0) P_0 - P_0 Q(0)] = q \tilde Q(0)$
and hence  $\tilde u(t)$ solves HNLS$_{-q}^+$.

If $g_2(0,iq,t=0) \neq 0$, then from Remark \ref{rmk:beta_depend_psi_1_0}, we have $\beta_+(t=0) = -q$.
Set $v_0 = \mc R \mc B_{q}^+ u_0 \in H^{1,1}(\rb^-)$ and let $v(t)$ solve HNLS$_{q}^-$ with initial data $v_0$.
Let $G(x,t,z)=(G_1, G_2)^T$ be the (unique) solution of the spectral problem
$\Big(\partial_x - \Big(iz\sigma + \bsm 0 & v(t) \\ -\ovl{v(t)} & 0 \esm \Big)G = 0$
such that $G(x,t,z)e^{ixz/2} \to e_2$ as $x\to-\infty$, $z\in\ovl{\cb^+}$.
Let $P(x,t)$ be as above. Then by the proof of \eqref{eq:symmetry_ZS_AKNS22_1},
$$
G(x,t,z=0) = \frac{1}{z+i\beta_+(t=0)} \sigma_1 (z-\ovl{P(-x,t=0)}) \ovl{g(-x,-\ovl{z},t=0)}.
$$
In particular, we see that $G_2(0,iq,t=0)=0$
and it follows that $\Psi_1^0(x,iq;v_0) = c_1 G(x,iq,t=0)$, $x\leq 0$.
We are now in a similar situation to the case $g_2(0,iq,t=0)=0$, but for $x\leq 0$,
and we can conclude that $\mc R \mc B_q^- v(t)$ solves HNLS$_q^+$
with initial data $\mc R \mc B_q^- \mc R \mc B_{q}^+ u_0 = u_0$.
Hence $u(t) = \mc R \mc B_q^- v(t)$, which implies that $ \mc R \mc B_{q}^+ u(t)(=v(t))$ solves HNLS$_q^-$.
This shows that (i)$\Longrightarrow$(iii) in the case  $g_2(0,iq,t=0) \neq 0$,
and so completes the proof of the implication (i)$\Longrightarrow$(ii), (iii).

Conversely, if (ii) holds, then $\mc R \mc B_q^+ u(t)$ solves HNLS$_q^-$ with initial data  $\mc R \mc B_q^+ u_0$.
As we showed above, $u(t) = \mc R \mc B_q^- (\mc R \mc B_q^+ u(t))$ solves HNLS$_q^+$ with initial data $\mc R \mc B_q^- (\mc R \mc B_q^+ u_0)=u_0$, which proves (i).

If $q<0$, we consider the solution $g=g(x,z)$ of $(\partial_x - L)g=0$, $x\geq0$, $z\in\ovl{\cb^-}$,
with $g(x,z)e^{ixz/2}\to e_2$ as $x\to+\infty$.
Then, mutatis mutandis, the proof goes through as above.
\end{proof}

We will use the following definition.

\begin{definition}
Let $ u \in L^1_{\text{loc}}(\rb^+)$. Let $q \in \rb$ and define
$$ u^e(x) \equiv
\left\{
\begin{aligned}
& u(x), \ x \geq 0, \\
& \mc{R}\mc{B}_q^+ u(x), \ x < 0. \\
\end{aligned}
\right.
$$
Then, $u^e$ is called the \underline{B\"acklund extension} of $u\in L^1_{\text{loc}}(\rb^+)$ to $\rb$ with respect to $q$.
\end{definition}

\begin{remark}
We can also define the B\"acklund extension of $u\in L^1_{\text{loc}}(\rb^-)$:
$$ u^e(x) \equiv
\left\{
\begin{aligned}
& \mc{R}\mc{B}_q^- u(x), \ x \geq 0, \\
& u(x), \ x < 0. \\
\end{aligned}
\right.
$$
\end{remark}

The following procedure, which is motivated by Proposition \ref{prop:IBV_rel_backlund},
shows how to express the solution of the IVP \eqref{eq:nls_delta}
with even initial data $u_0\in H^{1,1}(\rb)$ in terms of a solution of NLS \eqref{eq:focusing_nls}.
In the generic case, the latter problem can be evaluated asymptotically as $t \to \infty$ using RHP/steepest descent methods.
In this way we are able to infer the long-time behavior of solutions of \eqref{eq:nls_delta}.

\begin{theorem}[Solution procedure for the IVP \eqref{eq:nls_delta} with even initial data; \cite{BT}]
\label{thm:Sol_proc}
Let $q \in \rb\setminus\{0\}$ be given and let $u_0$ be an even function in $H^{1,1}(\rb)$.
\vspace{0.1in}
\begin{enumerate}
\item[\emph{Step1.}] Set $u_0^+(x) = u_0(x)$, $x\geq0$.
Let $u_0^e$ be the B\"acklund extension of $u_0^+$.
\vspace{0.1in}

\item[\emph{Step2.}] Let $u^e(x,t)$ be the (unique, weak global) solution of NLS \eqref{eq:focusing_nls} in $H^{1,1}(\rb)$
with initial data $u^e(x,t=0)=u_0^e(x)$, in the sense of \eqref{eq:weak_sol_H_0}.
\vspace{0.1in}

\item[\emph{Step3.}] Set $u(x,t) \equiv  u^e(|x|,t)$.
\end{enumerate}
\vspace{0.1in}
Then $u^e|_{\rb^+}(x,t)$, $u(x,t)$ are the (unique, weak global) solutions of HNLS$_q^+$, \eqref{eq:nls_delta}
in the sense of \eqref{eq:weak_sol_H_q_plus}, \eqref{eq:weak_sol_H_q}, respectively.
\end{theorem}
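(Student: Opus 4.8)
The plan is to reduce the statement to the case of smooth, rapidly decaying data, to solve HNLS$_q^+$ \emph{directly} using the existence theory of Section \ref{sec:sol_eqs}, to Bäcklund--transform that solution into a solution of HNLS$_q^-$ on $\rb^-$ via Proposition \ref{prop:IBV_rel_backlund}, to glue the two half-line solutions into a solution of NLS on the whole line, and to identify that glued object with $u^e$ by uniqueness. The claim about $u(x,t)=u^e(|x|,t)$ then drops out of the even-reflection identity \eqref{eq:ext_even}.

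\textbf{The half-line solution and its Bäcklund transform.} By the theorem of Section \ref{sec:sol_eqs}, HNLS$_q^+$ has a unique weak global solution $U(t)$ with $U(0)=u_0|_{\rb^+}=:u_0^+\in H^{1,1}(\rb^+)$. I would first assume $u_0$ is $C^\infty$ and rapidly decaying and note that then, for every $t_0>0$, $U$ is $C^2$ in $x$, $C^1$ in $t$ and rapidly decaying on $\rb^+\times[t_0,\infty)$ (smoothing of $e^{-iH_0t/2}$ away from $t=0$ on data with enough finite moments, iterated through the half-line Duhamel formula). Proposition \ref{prop:IBV_rel_backlund}, applied on $[t_0,\infty)$, then gives that $V(t):=\mc R\mc B_q^+U(t)$ solves HNLS$_q^-$ there; letting $t_0\downarrow0$ and using continuity of $t\mapsto U(t)$ and of $\mc B_q^+$ on $H^{1,1}$ (cf.\ the proof of Proposition \ref{prop:bijectivity_backlund}), $V$ is the unique weak global solution of HNLS$_q^-$ with $V(0)=\mc R\mc B_q^+u_0^+=u_0^e|_{\rb^-}$.

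\textbf{Gluing.} Put $W(x,t):=U(x,t)$ for $x\ge0$ and $W(x,t):=V(x,t)$ for $x<0$. Since $\mc B_q^+U(t)$ and $U(t)$ agree at $x=0$ by \eqref{eq:cont_at_x_0} and $\mc R$ fixes the origin, $W$ is continuous across $x=0$; with $U(t)\in H^{1,1}(\rb^+)$, $V(t)\in H^{1,1}(\rb^-)$ this yields $W(t)\in H^{1,1}(\rb)$, continuously in $t$, $W(0)=u_0^e$. For $t>0$ one computes from \eqref{eq:backlund} and \eqref{eq:tilde_Q} that $\partial_x(\mc B_q^+U)(0,t)=U_x(0+,t)+2qU(0,t)$ (using $P(0)=-iq\sigma_3$ and $\widetilde u(0)=u(0)$), hence $V_x(0-,t)=-U_x(0+,t)-2qU(0,t)$; the Robin condition $U_x(0+,t)+qU(0,t)=0$ satisfied by $U$ (Section \ref{sec:sol_eqs}) then forces $V_x(0-,t)=U_x(0+,t)$, so $W(t)$ is actually $C^1$ across $x=0$. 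Being a classical solution of NLS on each half-line and $C^1$ across the origin, the distributional $\partial_x^2W$ carries no $\delta$ or $\delta'$ contribution at $x=0$, so $W$ solves $i\partial_tW+\tfrac12\partial_x^2W+|W|^2W=0$ distributionally on $\rb\times(0,\infty)$; combined with $W\in C([0,\infty);H^{1,1}(\rb))$ and $W(0)=u_0^e$ this is equivalent to $W$ solving \eqref{eq:weak_sol_H_0}. By uniqueness for \eqref{eq:weak_sol_H_0}, $W=u^e$, so $u^e|_{\rb^+}=U$ solves HNLS$_q^+$ with data $u_0^e|_{\rb^+}=u_0^+$.

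\textbf{General data and the even reflection.} For general even $u_0\in H^{1,1}(\rb)$ I would approximate by smooth rapidly decaying $u_{0,n}\to u_0$ in $H^{1,1}(\rb)$; the assertion holds for each $u_{0,n}$ by the previous step and passes to the limit by continuous dependence of the HNLS$_q^+$ flow, of the NLS flow (both Section \ref{sec:sol_eqs}) and of $u_0\mapsto u_0^e$ on the data. Finally, since $u^e|_{\rb^+}=U$ solves HNLS$_q^+$, the identity \eqref{eq:ext_even} (Step 5 of Section \ref{sec:sol_eqs}) shows that $u(x,t)=u^e(|x|,t)=U(|x|,t)$ solves \eqref{eq:weak_sol_H_q}; evenness of $u_0$ gives $u(x,0)=u_0(|x|)=u_0(x)$, so $u$ is the unique weak global solution of \eqref{eq:nls_delta} with data $u_0$. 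The main obstacle is the gluing step: the $C^1$-matching at $x=0$ for $t>0$, which is exactly where the choice $P(0)=-iq\sigma_3$ links up with the Robin condition, together with the need to run Proposition \ref{prop:IBV_rel_backlund} (stated for classical solutions) at the level of the $H^{1,1}$ weak solutions of Section \ref{sec:sol_eqs} — which I handle by restricting to $[t_0,\infty)$, $t_0>0$, and letting $t_0\downarrow0$, thereby also avoiding any circular appeal to the existence of smooth half-line solutions.
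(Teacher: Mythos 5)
Your proposal runs the construction in the opposite direction to the paper's: you start from the weak half-line solution $U(t)$ of HNLS$_q^+$ furnished by Section \ref{sec:sol_eqs}, push it through Proposition \ref{prop:IBV_rel_backlund} to produce a solution on $\rb^-$, glue at the origin, and identify the result with $u^e$ by uniqueness for \eqref{eq:weak_sol_H_0}. The $C^1$-matching computation at $x=0$ is correct, but the argument has a genuine gap at its foundation. Proposition \ref{prop:IBV_rel_backlund} requires $U$ to be $C^2$ in $x$, $C^1$ in $t$, with rapidly decaying $u_{xx}$ and $u_t$, and your claim that the weak solution acquires this regularity for $t\ge t_0>0$ by ``smoothing away from $t=0$ iterated through the half-line Duhamel formula'' is exactly the bootstrap that the paper points out fails (see the discussion in Step 3 of Section \ref{sec:sol_eqs}): even when $u(t)$ satisfies the Robin condition $u_x(0+)+qu(0)=0$, the nonlinearity does not --- one computes $(|u|^2u)_x(0)+q|u|^2u(0)=-2q|u(0)|^2u(0)\neq 0$ --- so $|u(t)|^2u(t)$ is not in the domain of $H_q^+$, its odd ``$q$-extension'' has a jump at the origin, and the iteration in a higher Sobolev space compatible with $H_q^+$ breaks down. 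Restricting to $[t_0,\infty)$ does not help, because the boundary incompatibility of the forcing term is present at every time. The only route the paper has to classical solutions of HNLS$_q^+$ (Proposition \ref{prop:classical_sol_NLS}, Proposition \ref{prop:HNLS_H_k_k_23}) goes through the solution procedure of Theorem \ref{thm:Sol_proc} itself, so invoking those results here would be circular.

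A second, smaller gap: both when you send $t_0\downarrow 0$ to get $V(0)=\mc R\mc B_q^+u_0^+$ and in your final approximation step, you invoke ``continuity of $\mc B_q^+$ on $H^{1,1}$'' and of $u_0\mapsto u_0^e$. The paper shows these maps are \emph{not} continuous in general (see the discussion preceding Lemma \ref{lem:conv_backlund_ext}); continuity holds only at data with $g_2(0,iq)\neq 0$, which is precisely why the paper's proof splits into the cases $g_2(0,iq;u_0^+)\neq0$ and $g_2(0,iq;u_0^+)=0$ (handling the latter by reflecting to $\rb^-$). Your proposal omits this dichotomy. The paper avoids both difficulties by arguing in the other direction: approximate $u_0^+$ by smooth compactly supported data satisfying the boundary condition whose B\"acklund extension is generic (Lemma \ref{lem:limit_H_11 bdry}); the extension then lies in $H^{3,3}(\rb)$, where NLS on the whole line does propagate classical regularity (Proposition \ref{prop:NLS_sol_H_k_k}); $q$-symmetry is preserved under the flow via the scattering-data characterization (Proposition \ref{p:symm_scattering}, Theorem \ref{thm:evol_scatt_data}), so the boundary condition holds by Proposition \ref{prop:mixed_cond_x_0}; and only then does one pass to the limit using Lemma \ref{lem:conv_backlund_ext}. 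To repair your argument you would need an independent proof that weak $H^{1,1}$ solutions of HNLS$_q^+$ with smooth data are classical for $t>0$, which is the hard point the paper deliberately circumvents.
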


Up till this point, what we have shown is that
if we have a solution $u^+(x,t)$ of HNLS$_q^+$ with certain technical properties,
then $u^+(x,t)$ should be constructible via the solution procedure in Theorem \ref{thm:Sol_proc}.
What we have to show, is that the solution procedure  in Theorem \ref{thm:Sol_proc}
indeed produce the solution $u^+(x,t)$ to HNLS$_q^+$ with $u^+(x,t=0)=u_0^+(x)\in H^{1,1}(\rb^+)$.
The proof of this fact involves various technicalities, and will be given
in the Proof of Theorem \ref{thm:Sol_proc} below at the end of this section.

By Theorem \ref{thm:Sol_proc}, if $u^+(x,t)$ solves HNLS$_q^+$,
then $u^e(x,t)$ is a solution of NLS on $\rb$ with the property that
$\mc B_q u^e(t)$ is also a solution of NLS.
The following proposition provides a converse to this statement.

\begin{proposition}
\label{prop:IBV_rel_backlund_conv}
Let $u(t)$ be a classical solution of NLS \eqref{eq:focusing_nls} on $\rb$ with $u(t=0)= u_0\in H^{1,1}(\rb)$.
Suppose that $a(i|q|)\neq 0$ where $a(z)$ is the scattering function for $u_0$.
If $\mc B_q u(t)$ solves NLS on $\rb$, then $u(t)|_{\rb^+}$ solves HNLS$_q^+$.
\end{proposition}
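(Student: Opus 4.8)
The plan is to reduce the statement to the single assertion that $u(x,t)$ satisfies the Robin boundary condition $u_x(0{+},t) + q\,u(0,t) = 0$ for all $t\geq 0$. Once this is known, $u(t)|_{\rb^+}$ is a classical solution of NLS on $\rb^+$ obeying \eqref{eq:mixed_bdry}, and since it lies in $H^{1,1}(\rb^+)$ it must, by the uniqueness established in Section \ref{sec:sol_eqs}, coincide with the weak solution of \eqref{eq:weak_sol_H_q_plus} with data $u_0|_{\rb^+}$; that is, it solves HNLS$_q^+$.

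First I would record the whole-line analogue of Lemma \ref{lem:equiv_NLS_evol}: if $P=P(x,t)$ solves \eqref{eq:backlund} with $u(\cdot,t)$ and $\tilde u(t)=\mc B_q u(t)$, then \eqref{E_P_commute} holds if and only if \eqref{E_L_commute} holds. The proof given in the excerpt is entirely algebraic and uses only \eqref{eq:L_E_commute}, \eqref{eq:backlund_commute_x}, the identity \eqref{eq:Delta_with_Q1}, and the normalizations $P(0,t)=-iq\sigma_3$ and $\tilde u(0)=u(0)$ (via \eqref{eq:cont_at_x_0}), so it transfers verbatim to $\rb$. Since $\mc B_q u(t)$ solves NLS on $\rb$ by hypothesis, \eqref{E_L_commute} holds, hence so does \eqref{E_P_commute}; equivalently, by \eqref{eq:Delta_with_Q1},
$$
\Delta \;:=\; P_t - \tfrac12\big(\tilde Q_1 P - P Q_1\big)\;\equiv\;0 \qquad\text{on }\rb\times\{t\geq0\},
$$
where $Q_1=\bsm i|u|^2 & iu_x\\ i\ovl{u}_x & -i|u|^2 \esm$ and $\tilde Q_1$ is the same expression with $u$ replaced by $\tilde u$.

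Next I would evaluate $\Delta\equiv 0$ at $x=0$. Since $P(0,t)=-iq\sigma_3$ does not depend on $t$, we have $P_t(0,t)=0$, so $\tilde Q_1(0,t)\,(-iq\sigma_3) = (-iq\sigma_3)\,Q_1(0,t)$. Because $\tilde u(0,t)=u(0,t)$, the two diagonal entries of this matrix identity are automatic, while the $(1,2)$ entry reads $-q\,\tilde u_x(0,t) = q\,u_x(0,t)$, i.e. (as $q\neq 0$)
$$
\tilde u_x(0,t) + u_x(0,t) = 0 .
$$
Independently, \eqref{eq:backlund} together with \eqref{eq:tilde_Q} gives $P_x = \tilde Q P - PQ$; evaluating at $x=0$ with $\tilde Q(0)=Q(0)$ and $P(0)=-iq\sigma_3$ yields, by a direct computation, $(P_{12})_x(0,t) = 2iq\,u(0,t)$, hence $\tilde u_x(0,t) = u_x(0,t) - i(P_{12})_x(0,t) = u_x(0,t) + 2q\,u(0,t)$. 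Substituting this into the previous display gives $2u_x(0,t)+2q\,u(0,t)=0$, which is exactly the required Robin condition.

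The remaining work is routine: one needs $t\mapsto P(\cdot,t)$ to be $C^1$ so that $P_t$ and $\Delta$ make sense pointwise, which follows from smooth dependence on parameters for the linear system \eqref{eq:sol_P} given that $u$ is a classical solution (and that $\tilde u=u-iP_{12}$ is then itself classical, so that Lemma \ref{lem:equiv_NLS_evol} applies to it); and one needs the identification of the restricted classical solution with the weak solution of \eqref{eq:weak_sol_H_q_plus}, which is immediate from uniqueness. Note that the hypothesis $a(i|q|)\neq 0$ is not used in the boundary-condition derivation itself; it enters only to guarantee, via Proposition \ref{prop:tilde_scattering_data}, that $\tilde u$ stays generic, keeping the statement consistent with the surrounding development. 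The only point that genuinely requires care is thus the passage from the half-line Lemma \ref{lem:equiv_NLS_evol} to its whole-line counterpart, and I expect that to be a direct rerun of the existing argument rather than a real obstacle.
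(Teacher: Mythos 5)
Your proof is correct, but it takes a genuinely different route from the paper's. The paper establishes the Robin condition through the scattering data: it splits into the cases $\beta_+(t=0)\neq\beta_-(t=0)$ and $\beta_+(t)\equiv\beta_-(t)$; in the first it differentiates $\log\tilde\gamma(iq,t)$ in $t$ using \eqref{eq:tilde_norming_alpha} and the evolution equations \eqref{eq:evolution_psi_1}--\eqref{eq:evolution_psi_2}, arriving at $(\ovl{u}_x+q\ovl{u})(0,t)\,a(iq)=0$ and hence the boundary condition since $a(iq)\neq0$; in the second it shows that exactly one of $(\psi_1^+)_2(0,iq,t)$, $(\psi_2^-)_2(0,iq,t)$ vanishes identically (using $a(iq)\neq0$ plus continuity to exclude simultaneous vanishing) and reads the boundary condition off the corresponding evolution equation. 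You instead run the converse direction of Lemma \ref{lem:equiv_NLS_evol} on the whole line --- which does transfer verbatim, since that argument is purely algebraic and only evaluates at $x=0$ --- to get $\Delta\equiv0$, and then note that $\Delta(0)=-\tfrac12(\tilde Q_1P-PQ_1)(0)=0$ forces $\tilde u_x(0,t)+u_x(0,t)=0$ in the $(1,2)$-entry; combined with the always-valid identity $\tilde u_x(0)=u_x(0)+2q\,u(0)$ (the computation of Proposition \ref{prop:mixed_cond_x_0}), this gives the Robin condition. Your version is more elementary, needs no case analysis, and --- as you observe --- never uses $a(i|q|)\neq0$, whereas the paper's proof uses that hypothesis essentially; so your route in fact shows the implication holds without it. What the paper's argument buys instead is the finer scattering-theoretic picture (which component vanishes and how that correlates with the $\beta_\pm$ dichotomy), which is reused elsewhere in Section~4. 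The regularity points you flag (classicality of $\tilde u$, $C^1$ dependence of $P$ on $t$, and identifying the restricted classical solution with the weak solution of \eqref{eq:weak_sol_H_q_plus}) are treated at the same level of informality in the paper's own proof, so they are not gaps relative to its standard.
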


\begin{proof}
We only consider $q>0$. The other case $q<0$ is similar.
If $\beta_+(t=0) \neq \beta_-(t=0)$, then necessarily $\beta_+(t=0) = -q$ and $\beta_-(t=0) = q$.
As both $u(t)$ and $\mc B_q u(t)$ solve NLS,
from \eqref{eq:tilde_a_b}, $\frac{z-i\beta_-(t)}{z-i\beta_+(t)} a(z;u(t)) = a(z;\mc B_q u(t)) = a(z;\mc B_q u(t=0)) = \frac{z-i\beta_-(t=0)}{z-i\beta_+(t=0)} a(z;t=0) = \frac{z-iq}{z+iq} a(z;u(t=0)) = \frac{z-iq}{z+iq} a(z;u(t))$.
Thus, $\beta(t) = -q$ and $\beta_-(t) = q$ for all $t\geq0$ and hence $\tilde a(z;u(t))$ has a simple zero at $z=iq$.
As  $\gamma(iq;\mc B_q u(t)) = \gamma(iq;\mc B_q u(t=0))e^{\frac{i(iq)^2 t}2}$,
it follows from \eqref{eq:evolution_psi_1}\eqref{eq:evolution_psi_2} that
$$ \bal  -\frac{iq^2}2
& = \frac {\partial}{\partial t} \log (\tilde \gamma(iq,t))
= \frac {\partial}{\partial t} \log \Big(\frac{(\psi_1^+)_2(0,iq)}{(\psi_2^-)_2(0,iq)} \Big)\\
& = -\frac{iq^2}2 + \frac i2 (\ovl{u}_x + q \ovl{u}) \big|_{x=0} \Big( \frac{(\psi_1^+)_1}{(\psi_1^+)_2}-\frac{(\psi_2^-)_1}{(\psi_2^-)_2}\Big)(0,iq) \\
& = -\frac{iq^2}2 + \frac i2 (\ovl{u}_x + q \ovl{u}) \big|_{x=0} \frac{a(iq)}{(\psi_1^+)_2 (\psi_2^-)_2 (0,iq)} \\
\eal $$
and hence $u_x(0,t) + q u(0,t) = 0$. Thus $u(t)|_{\rb^+}$ solves HNLS$_q^+$.

If $\beta_+(t)=\beta_-(t)$ for all $t\geq0$, then from \eqref{eq:equiv_ZS_component_beta},
we see that either $(\psi_1^+)_2(0,iq,t)$ or $(\psi_2^-)_2(0,iq,t)$ is zero for all $t\geq0$.
As $0\neq a(iq)=\det(\psi_1^+, \psi_2^-)(0,iq,t)$,
$(\psi_1^+)_2(0,iq,t)$ and $(\psi_2^-)_2(0,iq,t)$ cannot vanish simultaneously.
But, these functions are continuous in $t$.
Thus, if $(\psi_1^+)_2(0,iq,t=0)$(respectively $(\psi_2^-)_2(0,iq,t=0)$) is zero,
then $(\psi_1^+)_2(0,iq,t)$(respectively $(\psi_2^-)_2(0,iq,t)$) is zero for all $t\geq0$.
If $(\psi_1^+)_2(0,iq,t) = 0$ for all $t\geq0$, then $(\psi_1^+)_1(0,iq,t) \neq 0$ for all $t\geq0$
and hence it follows from \eqref{eq:evolution_psi_1} that $u_x(0,t)+q u(0,t)=0$ for all $t\geq0$.
Similarly, if $(\psi_2^-)_2(0,iq,t)=0$ for all $t\geq0$,
then from \eqref{eq:evolution_psi_2}, $u_x(0,t)+q u(0,t)=0$ for all $t\geq0$.
Thus, we conclude that $u(t)|_{\rb^+}$ solves HNLS$_q^+$.
\end{proof}

The question arises whether the B\"acklund extension  method described above
provides the only way to extend the solution $u^+(t)$ of HNLS$_q^+$ to a solution $u^e(t)$ of NLS on the whole line
such that $u^e(t)|_{\rb^+} = u^+(t)$.
If $v(t)$ is any such extension, we must have
\beq
\label{eq:cond_extension}
\bal
&v(0-,t)=v(0+,t)=u^+(0+,t), \\
&v_x(0-,t)=v_x(0+,t)=u^+_x(0+,t). \\
\eal
\eeq
Thus the question reduces to showing that the solution of NLS on $\rb^-$ is uniquely specified by \eqref{eq:cond_extension}.
But this is true, as can be seen, for example from the work of Isakov \cite{Is}.
Thus the B\"acklund extension method is the \emph{only} way
in which one can solve HNLS$_q^+$ by extension to a solution of NLS on $\rb$.

We now develop further properties of the B\"acklund extensions.
\begin{definition}
Let $q\in \rb$ and let $u \in L^1_{\text{loc}}(\rb)$.
We say that $u$ is \underline{$q$-symmetric} if
\begin{equation}
\label{def:symm}
u = \mc{R}\mc{B}_q u.
\end{equation}
\end{definition}

\begin{remark}
\label{rmk:symmetric_extension}
 Note that if $u(x)$ is $q$-symmetric, then $u$ is the B\"acklund extension of $u|_{\rb^+}$.
\end{remark}

\begin{proposition}
\label{prop:backlund_ext}
The B\"acklund extension of a function $u\in L^1_{\text{loc}}(\rb)$ is $q$-symmetric.
\end{proposition}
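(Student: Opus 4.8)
The plan is to unwind the two definitions and reduce the claim to the reciprocity identity of Lemma~\ref{lem:reciprocal_backlund}(ii). Writing $u\in L^1_{\text{loc}}(\rb^+)$ (replacing $u$ by its restriction to $\rb^+$ if it is given on all of $\rb$), I let $v\equiv u^e$ denote its B\"acklund extension, so that $v|_{\rb^+}=u$ and $v|_{\rb^-}=\mc R\mc B_q^+u$. The first step is to observe that $v\in L^1_{\text{loc}}(\rb)$: indeed $\mc R\mc B_q^+u\in L^1_{\text{loc}}(\rb^-)$ by Lemma~\ref{lem:reciprocal_backlund}(ii), so $\mc B_qv$ is well defined by Proposition~\ref{p:backlund}, and the goal becomes $v=\mc R\mc B_qv$.

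The one point needing attention is the \emph{locality} of the B\"acklund transformation, i.e.\ the relation $(\mc B_qv)|_{\rb^\pm}=\mc B_q^\pm(v|_{\rb^\pm})$ from \eqref{eq:B_halfline}; this holds because the ODE~\eqref{eq:backlund} defining $P$ is initialized at $x=0$, so $P(x)$ restricted to $\rb^\pm$ depends only on $u|_{\rb^\pm}$, and $\tilde u=u-iP_{12}$ inherits the same property. Granting this, I would evaluate $\mc R\mc B_qv$ separately on $\rb^+$ and $\rb^-$. For $x>0$ (so $-x<0$),
\[
(\mc R\mc B_qv)(x)=(\mc B_qv)(-x)=\bigl(\mc B_q^-(v|_{\rb^-})\bigr)(-x)=\bigl(\mc B_q^-\mc R\mc B_q^+u\bigr)(-x)=\bigl(\mc R\mc B_q^-\mc R\mc B_q^+u\bigr)(x),
\]
which equals $u(x)=v(x)$ by Lemma~\ref{lem:reciprocal_backlund}(ii). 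For $x<0$ (so $-x>0$),
\[
(\mc R\mc B_qv)(x)=(\mc B_qv)(-x)=\bigl(\mc B_q^+(v|_{\rb^+})\bigr)(-x)=(\mc B_q^+u)(-x)=(\mc R\mc B_q^+u)(x)=v(x).
\]
Hence $v=\mc R\mc B_qv$ on $\rb$, i.e.\ $u^e$ is $q$-symmetric.

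I do not expect a substantive obstacle here: the argument is pure bookkeeping with the reversal $\mc R$ and the already-established reciprocity Lemma. The only items to state with care are the locality relation~\eqref{eq:B_halfline} and the fact that all identities above hold in $L^1_{\text{loc}}$, so the value at the single point $x=0$ plays no role (if continuity at the origin is wanted, it follows from $\mc B_q^\pm u(0)=u(0)$ via~\eqref{eq:cont_at_x_0}).
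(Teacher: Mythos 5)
Your proof is correct and follows essentially the same route as the paper: restrict $\mc R\mc B_q u^e$ to $\rb^+$ and $\rb^-$ separately, invoke the locality relation \eqref{eq:B_halfline}, and conclude via the reciprocity Lemma \ref{lem:reciprocal_backlund}(ii). The only difference is cosmetic — you spell out the pointwise bookkeeping and the justification of \eqref{eq:B_halfline}, which the paper states as an immediate remark.
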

\begin{proof}
By \eqref{eq:B_halfline} and Lemma \ref{lem:reciprocal_backlund}(ii),
$$
(\mc R \mc B_q u^e)|_{\rb^+} = \mc R ((\mc B_q u^e)|_{\rb^-}) = \mc R (\mc B^-_q (u^e|_{\rb^-}))
= \mc R \mc B_q^- \mc R \mc B_q^+ u = u,
$$
and
$$
(\mc R \mc B_q u^e)|_{\rb^-} = \mc R ((\mc B_q u^e)|_{\rb^+}) = \mc R (\mc B^+_q (u^e|_{\rb^+})) = \mc R \mc B_q^+ u. $$
\end{proof}

\begin{lemma}
\label{cor:limit_P}
Let $ u(x) \in H^{1,1}(\rb)$.
If $u$ is $q$-symmetric, then $\beta \equiv \beta_+ = \beta_-$.
In other words, $P(x) \to -i\beta \sigma_3$ as $|x| \to \infty$ where $\beta^2 = q^2$.
\end{lemma}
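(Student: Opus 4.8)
The plan is to show that $q$-symmetry of $u$ forces a reflection symmetry on the B\"acklund potential $P$ itself, namely $P(x)=\sigma_3 P(-x)\sigma_3$ for all $x\in\rb$, and then to read off $\beta_+=\beta_-$ by letting $x\to+\infty$. Since $u\in H^{1,1}(\rb)$, Lemma \ref{lem:P_tail} already gives $P(x)\to -i\beta_\pm\sigma_3$ as $x\to\pm\infty$ with $\beta_\pm^2=q^2$, so the only thing left to prove is the equality $\beta_+=\beta_-$.

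First I would establish the functional equation $P(x)=\sigma_3 P(-x)\sigma_3$. This is essentially contained in the proof of Lemma \ref{lem:reciprocal_backlund}: there it is checked by a direct computation that, for \emph{any} $u\in L^1_{\text{loc}}(\rb)$, the matrix $P_1(x):=\sigma_3 P(-x)\sigma_3$ solves the B\"acklund ODE \eqref{eq:backlund} with $Q$ replaced by $Q_{u_1}$, where $u_1:=\mc R\mc B_q u$, and that $P_1(0)=\sigma_3(-iq\sigma_3)\sigma_3=-iq\sigma_3=P(0)$; in other words $P_1$ is \emph{the} B\"acklund potential associated to $u_1$, with the \emph{same} parameter $q$. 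Now if $u$ is $q$-symmetric, then by definition $u_1=\mc R\mc B_q u=u$, so $P_1$ and $P$ are both solutions of the initial value problem \eqref{eq:backlund} for the same potential $Q_u$ and the same initial value $-iq\sigma_3$. By the uniqueness statement in Proposition \ref{p:backlund}, $P_1\equiv P$, i.e.
\[
P(x)=\sigma_3 P(-x)\sigma_3,\qquad x\in\rb.
\]

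Finally, letting $x\to+\infty$ in this identity and applying Lemma \ref{lem:P_tail} on both sides gives
\[
-i\beta_+\sigma_3=\lim_{x\to+\infty}P(x)=\sigma_3\Big(\lim_{x\to+\infty}P(-x)\Big)\sigma_3=\sigma_3\,(-i\beta_-\sigma_3)\,\sigma_3=-i\beta_-\sigma_3,
\]
where the last equality uses $\sigma_3^3=\sigma_3$. Hence $\beta_+=\beta_-=:\beta$, and $\beta^2=q^2$, so $P(x)\to -i\beta\sigma_3$ as $|x|\to\infty$, as claimed. The only point that requires any care — and it is an observation rather than a real obstacle — is recognizing that $q$-symmetry, a priori a statement about $u$ alone, descends via the uniqueness of the B\"acklund potential to the functional equation $P(x)=\sigma_3 P(-x)\sigma_3$; once that is in hand the conclusion is immediate, the verification that the reflected matrix still carries the initial value $-iq\sigma_3$ being trivial from $\sigma_3^2=I$.
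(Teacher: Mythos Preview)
Your proof is correct and follows exactly the same approach as the paper's own proof: both use the identity $P(x)=\sigma_3 P(-x)\sigma_3$ (extracted from the proof of Lemma~\ref{lem:reciprocal_backlund} together with the uniqueness in Proposition~\ref{p:backlund} applied to $u_1=\mc R\mc B_q u=u$), and then invoke Lemma~\ref{lem:P_tail} to conclude $\beta_+=\beta_-$. The paper's proof is simply a one-line version of what you have written out in full.
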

\begin{proof}
From the proof of Lemma \ref{lem:reciprocal_backlund}, $P(x)=\sigma_3 P(-x) \sigma_3$ and the result follows from Lemma \ref{lem:P_tail}.
\end{proof}

The above calculations together with Proposition \ref{prop:scattering_symmetry} imply the following.
\begin{proposition}
\label{prop:auto_backlund}
Suppose that $u(x)$ is $q$-symmetric. Let $P(x)$ solve \eqref{eq:backlund} with $Q(x) = \bsm 0 & u(x)\\-\ovl{u(x)}&0 \esm$
and let $\psi(x,z)$ be an eigensolution of \eqref{eq:scattering_problem}. Set $\tilde\psi(x,z) = (z+P(x))\psi(x,z)$. Then,
$$
\hat\psi(x,z) \equiv  \sigma_3\tilde\psi(-x,-z)= \sigma_3(-z+P(-x))\psi(-x,-z)
$$
and
$$
\psi^\#(x,z) \equiv  (-i)\ovl{\sigma_2 \hat\psi(x,\ovl{z})} = \sigma_1 (z-\ovl{P(-x)})\ovl{\psi(-x,-\ovl{z})}
$$
also solve \eqref{eq:scattering_problem} where $\sigma_1 = \bsm 0&1\\1&0\esm$ is the first Pauli matrix.
Thus the maps $\psi \mapsto \hat\psi$ and $\psi \mapsto \psi^\#$ are auto-B\"acklund transformations for the equation \eqref{eq:scattering_problem}.
In particular,
\beq
\label{eq:ZS_symmetry_0}
\psi^\pm (x,z) = \sigma_3 (z-P(-x))\psi^\mp(-x,-z) (z+i\beta\sigma_3)^{-1} \sigma_3, \ \ z\in\rb
\eeq
and
\beq
\label{eq:ZS_symmetry}
\psi^\pm (x,z) = \sigma_1 (z-\ovl{P(-x)})\ovl{\psi^\mp(-x,-z)} (z-i\beta\sigma_3)^{-1} \sigma_1, \ \ z\in\rb
\eeq
where $\psi^\pm(x,z)$ are the associated ZS-AKNS solutions and $P(x)\to-i\beta\sigma_3$ as $x\to \pm\infty$.
\end{proposition}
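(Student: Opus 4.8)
The plan is to construct $\hat\psi$ and $\psi^\#$ by composing the two elementary symmetries of Proposition \ref{prop:scattering_symmetry} with the B\"acklund transformation \eqref{eq:backlund_commute_x}, invoking $q$-symmetry only at the last moment to bring the potential back to $Q$. Since $\psi$ solves \eqref{eq:scattering_problem}, relation \eqref{eq:backlund_commute_x} shows that $\tilde\psi(x,z)=(z+P(x))\psi(x,z)$ solves \eqref{eq:scattering_problem_tilde}, i.e.\ it solves \eqref{eq:scattering_problem} with $Q$ replaced by $\tilde Q=Q-i[\sigma,P]$ of \eqref{eq:tilde_Q}. Because $u$ is $q$-symmetric, $\mc B_q u=\mc R u$, so $\tilde u(x)=u(-x)$ and hence $\tilde Q(x)=Q(-x)$. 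Applying the symmetry of Proposition \ref{prop:scattering_symmetry}(ii) (with the potential $\tilde Q$ in place of $Q$) to $\tilde\psi$ then shows that $\hat\psi(x,z)=\sigma_3\tilde\psi(-x,-z)$ solves \eqref{eq:scattering_problem} with $\tilde Q(x)$ replaced by $\tilde Q(-x)=Q(x)$, i.e.\ $\hat\psi$ solves \eqref{eq:scattering_problem} itself; substituting $\tilde\psi(-x,-z)=(-z+P(-x))\psi(-x,-z)$ gives the displayed formula for $\hat\psi$. Applying Proposition \ref{prop:scattering_symmetry}(i) to $\hat\psi$ (whose potential $Q$ is off-diagonal and skew-Hermitian, so the symmetry preserves it) shows that $\ovl{\sigma_2\hat\psi(x,\ovl z)}$, and hence its scalar multiple $\psi^\#(x,z)=(-i)\ovl{\sigma_2\hat\psi(x,\ovl z)}$, solves \eqref{eq:scattering_problem}; the closed form $\psi^\#(x,z)=\sigma_1(z-\ovl{P(-x)})\ovl{\psi(-x,-\ovl z)}$ then follows by substitution using $\ovl{\sigma_2}=-\sigma_2$, $\ovl{\sigma_3}=\sigma_3$ and $\sigma_2\sigma_3=i\sigma_1$. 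This establishes that $\psi\mapsto\hat\psi$ and $\psi\mapsto\psi^\#$ are auto-B\"acklund transformations for \eqref{eq:scattering_problem}.

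For the ZS-AKNS identities, fix $z\in\rb$ and apply $\psi\mapsto\hat\psi$ column by column to the matrix ZS-AKNS solution $\psi^\mp(x,z)$; right-multiplying by the constant-in-$x$, invertible matrix $(z+i\beta\sigma_3)^{-1}\sigma_3$ produces the matrix
\[
\Phi(x,z)=\sigma_3(z-P(-x))\,\psi^\mp(-x,-z)\,(z+i\beta\sigma_3)^{-1}\sigma_3,
\]
each of whose columns solves \eqref{eq:scattering_problem}, since an overall sign and right-multiplication by a constant matrix preserve this property. Since $u$ is $q$-symmetric, Lemma \ref{cor:limit_P} gives $P(x)\to -i\beta\sigma_3$ as $|x|\to\infty$ with $\beta^2=q^2$. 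Combining this with the defining asymptotics $\psi^\mp(y,w)e^{-iyw\sigma}\to I$ as $y\to\mp\infty$ from \eqref{eq:zs_asymp}, and using that $\sigma_3$, $(z+i\beta\sigma_3)^{-1}$ and $e^{\pm ixz\sigma}$ are diagonal and hence mutually commuting, one computes $\Phi(x,z)e^{-ixz\sigma}\to I$ as $x\to\pm\infty$; by the uniqueness of the ZS-AKNS solutions determined by \eqref{eq:zs_asymp}, $\Phi(x,z)=\psi^\pm(x,z)$, which is \eqref{eq:ZS_symmetry_0}. Identity \eqref{eq:ZS_symmetry} is obtained in exactly the same way, now applying $\psi\mapsto\psi^\#$ to $\psi^\mp$ and right-multiplying by $(z-i\beta\sigma_3)^{-1}\sigma_1$: here one uses $\ovl{P(-x)}\to i\beta\sigma_3$ as $|x|\to\infty$, $\ovl{e^{ixz\sigma}}=e^{-ixz\sigma}$ for $z\in\rb$, and the anticommutation $\sigma_1 e^{ixz\sigma}=e^{-ixz\sigma}\sigma_1$, to again arrive at $\sigma_1 I\sigma_1=I$ in the limit.

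The bulk of this is routine Pauli-matrix and conjugation bookkeeping, together with verifying the explicit formulas by substitution. The one place requiring care is the evaluation of the limits in the second step: one must track how the oscillatory factor $e^{\pm ixz\sigma}$ (bounded since $z\in\rb$) commutes past $\sigma_1$ and $\sigma_3$ so that the conjugating factors $(z\pm i\beta\sigma_3)^{\pm1}$ genuinely cancel against the boundary values $-i\beta\sigma_3$ of $P(-x)$ (respectively $i\beta\sigma_3$ of $\ovl{P(-x)}$). It is worth noting that it is precisely $q$-symmetry --- which by Lemma \ref{cor:limit_P} forces $\beta_+=\beta_-=\beta$ --- that makes the same constant $\beta$ appear in the limits at $x\to+\infty$ and at $x\to-\infty$, and hence makes the single pair of identities \eqref{eq:ZS_symmetry_0}, \eqref{eq:ZS_symmetry} hold. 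I anticipate no difficulty beyond this.
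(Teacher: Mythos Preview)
Your proposal is correct and follows essentially the same route the paper indicates: the paper's entire ``proof'' is the sentence preceding the Proposition (``The above calculations together with Proposition~\ref{prop:scattering_symmetry} imply the following''), and you have unpacked precisely that---compose the B\"acklund intertwining \eqref{eq:backlund_commute_x} with the two symmetries of Proposition~\ref{prop:scattering_symmetry}, use $q$-symmetry to identify $\tilde Q(\cdot)=Q(-\,\cdot)$, and then pin down \eqref{eq:ZS_symmetry_0}, \eqref{eq:ZS_symmetry} by matching the asymptotics \eqref{eq:zs_asymp} using Lemma~\ref{cor:limit_P} and uniqueness of the ZS-AKNS solutions. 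Your bookkeeping (commuting the diagonal factors for \eqref{eq:ZS_symmetry_0}, the anticommutation $\sigma_1 e^{ixz\sigma}=e^{-ixz\sigma}\sigma_1$ for \eqref{eq:ZS_symmetry}) checks out.
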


\begin{remark}
\label{rmk:symmetry_ZS_AKNS22}
As $\psi_1^+(x,z)$, $\psi_2^-(x,z)$ continue analytically into $\cb^+$,
for $z\in\cb^+\setminus\{- i\beta\}$,
\beq
\label{eq:symmetry_ZS_AKNS22_1}
\psi_1^+(x,z) = \frac{1}{z+i\beta} \sigma_1 (z-\ovl{P(-x)}) \ovl{\psi_2^-(-x,-\ovl{z})},
\eeq
and for $z\in\cb^+\setminus\{i\beta\}$,
\beq
\label{eq:symmetry_ZS_AKNS22_2}
\psi_2^-(x,z) = \frac{1}{z-i\beta} \sigma_1 (z-\ovl{P(-x)}) \ovl{\psi_1^+(-x,-\ovl{z})}.
\eeq
If $\beta>0$, say, it is easy to check that the apparent singularity in \eqref{eq:symmetry_ZS_AKNS22_2} as $z\to i\beta$,
is in fact removable, as it should be, etc.
\end{remark}

\begin{lemma}
\label{lem:symm_prop}
Suppose that $ u(x) \in H^{1,1}(\rb)$ is $q$-symmetric.
Then, the scattering function $a(z)$ of $u(x)$ does not vanish at $z=i|q|$.
\end{lemma}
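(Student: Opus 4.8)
The plan is to argue by contradiction, using the rigidity that $q$-symmetry forces on the tails of $P(x)$. It suffices to treat $q>0$ (so that $i|q|=iq\in\cb^+$); the case $q<0$ is entirely analogous. Suppose, for contradiction, that $a(iq)=0$.

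Since $a(iq)=0$ with $iq\in\cb^+$, the discussion following \eqref{eq:zs_akns_det} shows that $\psi_1^+(\cdot,iq)$ is proportional to $\psi_2^-(\cdot,iq)$; the common function $h_{\mathrm{ev}}(x)\equiv\psi_1^+(x,iq)=\gamma(iq)\psi_2^-(x,iq)$ is precisely the $L^2(\rb)$-eigenfunction of $T$ at $z=iq$, and in particular decays exponentially both as $x\to+\infty$ and as $x\to-\infty$. The solution space of \eqref{eq:scattering_problem} at $z=iq$ is two-dimensional, and since $iz\sigma|_{z=iq}=-\frac{q}{2}\sigma_3$, each solution is the sum of an $e^{-qx/2}$-mode and an $e^{qx/2}$-mode, up to corrections decaying like the appropriate exponential (cf. \eqref{eq:tail_behavior_g}, \eqref{eq:tail_behavior_h}). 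Hence $h_{\mathrm{ev}}$ spans the one-dimensional space of solutions that decay at $+\infty$, and it likewise spans the one-dimensional space of solutions that decay at $-\infty$; consequently any solution $f$ independent of $h_{\mathrm{ev}}$ must grow exponentially at \emph{both} ends.

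I would then write $\Psi_1^0(\cdot,iq)=c_1h_{\mathrm{ev}}+c_2 f$, noting $\Psi_1^0(0,iq)=e_1\neq 0$. If $c_2\neq 0$, then $\Psi_1^0(x,iq)$ grows exponentially as $x\to\pm\infty$, so Remark~\ref{rmk:beta_depend_psi_1_0} gives $\beta_+=-|q|$ and $\beta_-=|q|$. If $c_2=0$, then $c_1\neq 0$ and $\Psi_1^0(x,iq)=c_1h_{\mathrm{ev}}(x)$ decays exponentially as $x\to\pm\infty$, and the same remark gives $\beta_+=|q|$ and $\beta_-=-|q|$. In either case $\beta_+=-\beta_-$, and since $q\neq 0$ this contradicts Lemma~\ref{cor:limit_P}, which states that a $q$-symmetric $u\in H^{1,1}(\rb)$ has $\beta_+=\beta_-$. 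Therefore $a(i|q|)\neq 0$.

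The only step that needs genuine care is the middle one: justifying that, when $a(iq)=0$, the solution $\Psi_1^0(\cdot,iq)$ necessarily has the \emph{same} exponential character (both growing, or both decaying) at $+\infty$ and at $-\infty$. This rests on the fact that the decaying-at-$+\infty$ and decaying-at-$-\infty$ subspaces of the ODE at $z=iq$ are each one-dimensional and, because $a(iq)=0$, coincide --- exactly the mechanism identifying zeros of $a$ in $\cb^+$ with $L^2(\rb)$-eigenvalues of $T$ that is already used in Section~\ref{sec:scat_inv}. It requires only the exponential-dichotomy asymptotics of the Beals--Coifman / ZS-AKNS solutions and no new estimates.
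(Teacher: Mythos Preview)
Your argument is correct, but it takes a different route from the paper's own proof. The paper does not go through Remark~\ref{rmk:beta_depend_psi_1_0} or the dichotomy for $\Psi_1^0(\cdot,iq)$ at all. Instead it uses the explicit intertwining identities between $\psi_1^+$ and $\psi_2^-$ that follow from $q$-symmetry (Remark~\ref{rmk:symmetry_ZS_AKNS22}): if $a(i|q|)=0$ and $\psi_1^+=\gamma\psi_2^-$ at $z=i|q|$, then evaluating \eqref{eq:symmetry_ZS_AKNS22_1} or \eqref{eq:symmetry_ZS_AKNS22_2} at $x=0$ (where $P(0)=-iq\sigma_3$) forces $\psi_1^+(0,i|q|)=0$, an immediate contradiction. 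So the paper's proof is purely algebraic at a single point, whereas yours is dynamical, exploiting the global exponential dichotomy and the already-established constraint $\beta_+=\beta_-$ from Lemma~\ref{cor:limit_P}. Your route has the virtue of being self-contained at this stage (it needs only Lemma~\ref{lem:P_tail}, Remark~\ref{rmk:beta_depend_psi_1_0}, and Lemma~\ref{cor:limit_P}); the paper's route is shorter but relies on Proposition~\ref{prop:auto_backlund} and its consequences, which in the paper are placed \emph{between} Lemma~\ref{cor:limit_P} and Lemma~\ref{lem:symm_prop} precisely so that they are available here.
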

\begin{proof}
Let $\psi_1^+$,  $\psi_2^-$ be the associated ZS-AKNS solutions and let $P(x)$ solve \eqref{eq:backlund}.
By Lemma \ref{cor:limit_P}, $P(x) \to -i\beta \sigma_3$ as $x \to \pm\infty$ where $\beta^2 = q^2$.
Suppose first that $\beta = -|q|<0$.
If $a(i|q|)=0$, necessarily $\psi_1^+(x,i|q|) = \gamma \psi_2^-(x,i|q|)$ for some constant $\gamma \neq 0$.
But then, it follows from \eqref{eq:symmetry_ZS_AKNS22_2} that
$$
\psi_1^+(0,i|q|) = \gamma \psi_2^-(x,i|q|)
= \frac{\gamma}{2i|q|} \sigma_1(i|q|-iq\sigma_3)\ovl{\psi_1^+(0,i|q|)}.
$$
As $i|q|-iq\sigma_3$ is either $\bsm 0&0\\0&2iq \esm$ or $\bsm -2iq&0\\0&0 \esm$,
we conclude that $\psi_1^+(0,i|q|)$ $= 0$, which is a contradiction.
If $\beta_+ = |q|>0$, we utilize \eqref{eq:symmetry_ZS_AKNS22_1} and the proof is similar.
\end{proof}

\begin{remark}
\label{rmk:back_ext_not_vanish}
It follows by Proposition \ref{prop:backlund_ext} and Lemma \ref{lem:symm_prop} that
the scattering function $a(z)$ of any B\"acklund extension of $u\in H^{1,1}(\rb^+)$ does not vanish at $z=i|q|$.
\end{remark}

$\mc B_q$, $\mc B_q^\pm$ are not continuous in $H^{1,1}(\rb)$, $H^{1,1}(\rb^\pm)$, respectively.
In other words, a small perturbation of $u(x)$ may not result in a small perturbation of the B\"acklund transformation $ \tilde{u}(x)$.
For example, consider the case when $q>0$ and $u(x) = 0$ on $\rb^+$.
Let $f_\lambda(x) = v_{\lambda}(x)$, $x\geq0$, where $v_{\lambda}$ is defined by \eqref{E:gs}.
Then, $\|f_\lambda\|_{H^{1,1}(\rb^+)} \rightarrow 0$ as $\lambda \downarrow q$.
From Remark \ref{rmk:1_soliton}, $\|\widetilde{f}_{\lambda}\|_{H^{1,1}(\rb^+)} = \|\eta_{\lambda}\|_{H^{1,1}(\rb^-)}$,
but $\|\eta_{\lambda}\|_{H^{1,1}(\rb^-)}$ is not small for any $\lambda > q > 0$.
In the case when $q<0$ and  $u(x)=v_\mu |\mathbb{R}^+$, there is a sequence of 2-solitons with symmetry conditions which converges to $v_\mu$ on $\rb^+$.

The following lemma, however, shows that $\mc B_q$, $\mc B_q^\pm$ are continuous at $u\in H^{1,1}(\rb^+)$
for which $g_2(0,iq;u) \neq 0$,
where $g(x,z)=(g_1, g_2)^T$, $x\geq0$, $z\in\ovl{\cb^+}$ be as in the proof of Proposition \ref{prop:bijectivity_backlund}.
\begin{lemma}
\label{lem:conv_backlund_ext}
Let $q>0$ be given.
Let $\{u_n\}$ be any sequence in $H^{1,1}(\rb^+)$ such that $u_n \to u$ as $n\to\infty$.
If $g_2(0,iq) \neq 0$, then $\mc R \mc B_q^+ u_n \to \mc R \mc B_q^+ u$.
In particular, $u_n^e \to u^e$ where  $u_n^e$, $u^e$ are the B\"acklund extensions of $u_n$, $u$, respectively.
\end{lemma}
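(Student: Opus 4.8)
The plan is to reduce everything to the statement that $\mc B_q^+ u_n \to \mc B_q^+ u$ in $H^{1,1}(\rb^+)$. Since $\mc R$ is an isometry of $H^{1,1}(\rb^+)$ onto $H^{1,1}(\rb^-)$, this gives $\mc R\mc B_q^+ u_n \to \mc R\mc B_q^+ u$ in $H^{1,1}(\rb^-)$; as $u_n^e = u_n$ on $\rb^+$ and $u_n^e = \mc R\mc B_q^+ u_n$ on $\rb^-$ (likewise for $u^e$), and these functions belong to $H^{1,1}(\rb)$ by \eqref{eq:cont_at_x_0} and Proposition \ref{prop:bijectivity_backlund}, and the $H^{1,1}(\rb)$-norm of a function splits additively into its $H^{1,1}(\rb^\pm)$-norms, one then gets $u_n^e \to u^e$ in $H^{1,1}(\rb)$. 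By \eqref{eq:u_tilde}, $\mc B_q^+ u_n = u_n - 2q\,\mc F(\varphi_{1,n})$ with $\varphi_{1,n} = \Psi_1^0(\cdot,iq;u_n)$, and since $u_n \to u$ in $H^{1,1}(\rb^+)$ already, it suffices to prove $\mc F(\varphi_{1,n}) \to \mc F(\varphi_1)$ in $H^{1,1}(\rb^+)$.

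First I would set up, for each $u_n$, the two solutions $g_n,h_n$ of \eqref{eq:linear_spectral_column_vector} at $z=iq$ exactly as in the proof of Proposition \ref{prop:bijectivity_backlund}: $g_n = (e_1 + r_{1,n})e^{-qx/2}$ with $r_{1,n}\in H^{1,1}(\rb^+)$ from a Volterra equation (cf. \eqref{eq:tail_behavior_g}), and $h_n = (e_2 + r_{2,n})e^{qx/2}$ with $r_{2,n}\in H^{1,1}(\rb^+)$ from the fixed-point construction based at a point $x_0>0$ (cf. \eqref{eq:tail_behavior_h}). Using $H^{1,1}(\rb^+)\hookrightarrow L^1(\rb^+)$, one may fix a \emph{single} $x_0$ with $\int_{x_0}^\infty|u_n|<\tfrac12$ for all large $n$; then continuity of the Volterra solution and of the contraction fixed point in the coefficient $u$, together with the $H^{1,1}$-estimates of Lemma \ref{lem:H11_integral} (as in the proof of Proposition \ref{prop:BC_tail}), give $r_{1,n}\to r_1$ and $r_{2,n}\to r_2$ in $H^{1,1}(\rb^+)$, and $g_n(\cdot,iq)\to g(\cdot,iq)$, $h_n(\cdot,iq)\to h(\cdot,iq)$ uniformly on $[0,x_0]$ (hence on all compacts, by continuous dependence of ODEs). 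Since $iz\sigma + Q$ is traceless and $r_{j,n}\to 0$ at $+\infty$, one has $\det(g_n,h_n)\equiv 1$; writing $\varphi_{1,n} = c_{1,n}g_n + c_{2,n}h_n$ and evaluating at $x=0$ with $\varphi_{1,n}(0)=e_1$ gives $c_{2,n} = -g_{n,2}(0,iq)$ and $c_{1,n} = h_{n,2}(0,iq)$. Hence $c_{2,n}\to -g_2(0,iq)\neq 0$ \emph{by hypothesis}, so $c_{2,n}\neq 0$ for large $n$ (i.e. $\beta_+(u_n)=-q$, by Remark \ref{rmk:beta_depend_psi_1_0}), and $d_n := c_{1,n}/c_{2,n}\to d := -h_2(0,iq)/g_2(0,iq)$, a finite limit.

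Next I would exploit the scale-invariance $\mc F(\lambda\mathfrak b)=\mc F(\mathfrak b)$ (valid for any nonvanishing scalar function $\lambda$; see \eqref{eq:backlund_ratio_ftn}) to write, for large $n$,
\[
\mc F(\varphi_{1,n}) = \mc F(d_n g_n + h_n) = \mc F(e_2 + s_n), \qquad s_n := d_n e^{-qx}(e_1 + r_{1,n}) + r_{2,n}.
\]
Then $s_n\to s := d\,e^{-qx}(e_1+r_1)+r_2$ in $H^{1,1}(\rb^+)$ with $\sup_n\|s_n\|_{H^{1,1}(\rb^+)}<\infty$, using that $e^{-qx}$ times an $H^{1,1}(\rb^+)$-function is again in $H^{1,1}(\rb^+)$ and $d_n\to d$. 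I would then establish a uniform lower bound $|(s_n)_1|^2 + |1+(s_n)_2|^2\ge\delta>0$ on $\rb^+$: for $x\ge X_0$ the uniform $H^{1,1}(\rb^+)$-bound forces $|s_n(x)|\le\tfrac12$ for all $n$, so this quantity is $\ge\tfrac14$; for $x\in[0,X_0]$ it equals $|\varphi_{1,n}(x,iq)|^2 e^{-qx}/|c_{2,n}|^2$, and $\varphi_{1,n}(\cdot,iq)$ is continuous, nowhere zero, and converges uniformly on $[0,X_0]$ to $\varphi_1(\cdot,iq)\neq 0$ while $|c_{2,n}|$ stays bounded. On the region where this denominator stays $\ge\delta$, the map $\mathfrak b\mapsto\mc F(\mathfrak b)$ is a smooth rational function with bounded derivatives, and a routine estimate — writing $\mc F(e_2+s) = \big(s_1 + s_1\ovl{s_2}\big)/\big(1 + |s_1|^2 + 2\,\mathrm{Re}\,s_2 + |s_2|^2\big)$, with numerator in $H^{1,1}(\rb^+)$ and the reciprocal of the denominator differing from $1$ by a bounded $H^1$-function, and using stability of $H^{1,1}(\rb^+)$ under multiplication by $H^1\cap L^\infty$-functions — shows $s\mapsto\mc F(e_2+s)$ is Lipschitz on $H^{1,1}(\rb^+)$-bounded sets with denominator bounded below. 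Hence $\mc F(e_2+s_n)\to\mc F(e_2+s)$ in $H^{1,1}(\rb^+)$, which yields $\mc B_q^+ u_n\to\mc B_q^+ u$ in $H^{1,1}(\rb^+)$ and finishes the proof.

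The two delicate points are: (1) choosing $x_0$ uniformly and extracting $H^{1,1}(\rb^+)$-continuity of $u\mapsto r_{1,n},r_{2,n}$ from the constructions of Proposition \ref{prop:bijectivity_backlund} with constants uniform in $n$ (tedious but standard, via Lemma \ref{lem:H11_integral}); and (2), the genuine heart of the matter, the uniform lower bound on the denominator of $\mc F$. It is exactly here that the hypothesis $g_2(0,iq)\neq 0$ enters: it forces $c_{2,n}\not\to 0$, so the exponentially growing component $h_n$ persists and $\varphi_{1,n}$ behaves like $e^{qx/2}(e_2+o(1))$ as $x\to\infty$, keeping the denominator away from $0$. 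If $g_2(0,iq)=0$, then $c_{2,n}$ may vanish or change sign along the sequence, the leading behaviour of $\varphi_{1,n}$ switches between $e^{\pm qx/2}$, and $\mc B_q^+$ is genuinely discontinuous — as the example $u\equiv 0$ on $\rb^+$ preceding the lemma illustrates.
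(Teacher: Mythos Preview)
Your proof is correct and follows the same overall strategy as the paper: decompose $\Psi_1^0(\cdot,iq)$ in a basis of solutions with known exponential asymptotics, use the hypothesis $g_2(0,iq)\neq 0$ to guarantee the growing component persists (so that $c_{2,n}\not\to 0$), normalize by the growing factor, and conclude via continuity of the rational map $\mc F$ on sets where its denominator is bounded below.

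The only notable difference is the choice of basis. You work with the pair $(g,h)$ built in Proposition~\ref{prop:bijectivity_backlund} directly from $u$ on $\rb^+$, and extract $H^{1,1}$-continuity of $u\mapsto r_{1,n},r_{2,n}$ from those Volterra/fixed-point constructions. The paper instead writes $\Psi_1^0(\cdot,iq;u^e)=c_1\psi_1^+(\cdot,iq;u^e)+c_2\psi_2^-(\cdot,iq;u^e)$ in terms of the ZS--AKNS solutions of the B\"acklund extension $u^e$, and invokes Lemma~\ref{lem:perturbed_potential} for the convergence of $\psi_1^+$, $m_2^-$, and $a(iq;u^e)$; this is shorter but relies on the observation (from Proposition~\ref{prop:backlund_scattering}) that for $x\geq 0$ these objects are in fact determined by $u|_{\rb^+}$ alone, so that no circularity arises. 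Your route makes the half-line nature of the argument more transparent and avoids any reference to $u^e$ until the very end; the paper's route recycles existing lemmas more efficiently. Either way the crux is the same, and your identification of the two ``delicate points'' is exactly right.
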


\begin{proof}
Note that $g(x,z;u) = \psi_1^+(x,z;u^e)$, $x\geq0$.
From Remark \ref{rmk:back_ext_not_vanish}, we have $a(iq;u^e) \neq 0$,
 and hence $\Psi_1^0(x,iq;u^e)=c_1(u^e) \psi_1^+(x,iq;u^e)+ c_2(u^e) \psi_2^-(x,iq;u^e)$,
where $c_1(u^e) = (\psi_2^-)_2(0,iq;u^e)/a(iq;u^e)$, $c_2(u^e) = -(\psi_1^+)_2(0,iq;u^e)/a(iq;u^e)$.
Note that $c_2(u^e)$ is non-zero.
Let $\xi_n(x) = ((\xi_n)_1, (\xi_n)_2)^T = \Psi_1^0(x,iq;u_n^e) e^{-q x/2}$ and
$\xi(x) = (\xi_1, \xi_2)^T = \Psi_1^0(x,iq;u^e) e^{-q x/2}$.
From \eqref{eq:u_tilde}, we see that
$$
\mc B_q^+ u_n = u_n - 2q \frac {(\xi_n)_1\overline{(\xi_n)_2}}{|(\xi_n)_1|^2 + |(\xi_n)_2|^2}
$$
and
$$
\mc B_q^+ u = u - 2q \frac {\xi_1\overline{\xi_2}}{|\xi_1|^2 + |\xi_2|^2}.
$$
By Lemma \ref{lem:perturbed_potential}, we have $a(iq;u_n^e) \to a(iq;u^e)$
and hence $c_1(u_n^e) \to c_1(u^e)$, $c_2(u_n^e) \to c_2(u^e)$.
Set $m_2^-(x,z) = ((m_2^-)_1, (m_2^-)_2)^T \equiv \psi_2^-(x,z) e^{ixz/2}$.
Again by Lemma \ref{lem:perturbed_potential}, we have $\psi_1^+(x,iq;u_n^e) \to \psi_1^+(x,iq;u^e)$ in $H^{1,1}(\rb^+)$,
$(m_2^-)_1(x,iq;u_n^e) \to (m_2^-)_1(x,iq;u^e)$ in $H^{1,1}(\rb^+)$,
$(m_2^-)_2(x,iq;u_n^e) \to (m_2^-)_2(x,iq;u^e)$ in $L^\infty(\rb^+)$,
and $\partial_x (m_2^-)_2(x,iq;u_n^e) \to \partial_x(m_2^-)_2(x,iq;u^e)$ in $L^2(\rb^+)$,
As $|\xi_1(x)|^2 + |\xi_2(x)|^2 \geq c >0$ for all $x\geq0$,
we conclude that $\mc B_q^+ u_n \to \mc B_q^+ u$ in $H^{1,1}(\rb^+)$.
\end{proof}

The following fact is basic.
\begin{proposition}
\label{prop:mixed_cond_x_0}
If $u\in C^1(\rb)$ is $q$-symmetric, then
\beq
\label{eq:mixed_cond_x_0}
u'(0) + q u(0) = 0.
\eeq
In other words, $q$-symmetry yields the boundary condition.
\end{proposition}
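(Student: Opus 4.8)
The plan is to differentiate the defining relation $u = \mc{R}\mc{B}_q u$ (see \eqref{def:symm}) at $x=0$ and to read off the boundary condition from the ODE \eqref{eq:backlund} for $P$ evaluated at the origin. Concretely, write $\tilde u = \mc{B}_q u = u - iP_{12}$ as in \eqref{eq:u_tilde}, where $P$ is the solution of \eqref{eq:backlund} with $P(0) = -iq\sigma_3$. The hypothesis $u\in C^1(\rb)$ makes $Q$ continuous, so the right-hand side $\tilde Q P - PQ$ of \eqref{eq:backlund} is continuous, whence $P\in C^1$ and in particular $\tilde u\in C^1(\rb)$. Since $q$-symmetry says precisely $\tilde u(x) = u(-x)$ for all $x$, differentiating and setting $x=0$ gives $\tilde u'(0) = -u'(0)$.

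The second step is a short direct computation of $\tilde u'(0)$. From \eqref{eq:u_tilde}, $\tilde u'(0) = u'(0) - i\,(P_{12})'(0)$. Now $\tilde Q(0) = Q(0)$ by \eqref{eq:cont_at_x_0} (equivalently \eqref{eq:tilde_Q}), so evaluating \eqref{eq:backlund} at $x=0$ yields $P_x(0) = \tilde Q(0)P(0) - P(0)Q(0) = [Q(0),P(0)] = -iq\,[Q(0),\sigma_3]$. A one-line matrix multiplication gives $[Q(0),\sigma_3]_{12} = -2u(0)$, hence $(P_{12})'(0) = 2iq\,u(0)$ and therefore $\tilde u'(0) = u'(0) + 2q\,u(0)$.

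Comparing the two expressions for $\tilde u'(0)$ gives $-u'(0) = u'(0) + 2q\,u(0)$, i.e.\ $u'(0) + q\,u(0) = 0$, which is \eqref{eq:mixed_cond_x_0}. There is no real obstacle: the only points requiring care are that the differentiation of the symmetry relation is legitimate --- which is exactly the $C^1$-propagation through \eqref{eq:backlund} noted above --- and the sign bookkeeping in passing from $u = \mc{R}\mc{B}_q u$ to $\tilde u'(0) = -u'(0)$. The degenerate case $q=0$ is automatic: then $P\equiv 0$ by Remark~\ref{rmk:trivial_P}, the relation reduces to $u(x)=u(-x)$, and $u'(0)=0$.
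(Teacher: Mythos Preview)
Your proof is correct and takes essentially the same approach as the paper: both differentiate the $q$-symmetry relation $\tilde u(x)=u(-x)$ at $x=0$ and then use the ODE \eqref{eq:backlund} at the origin (where $P(0)=-iq\sigma_3$ and $\tilde Q(0)=Q(0)$) to compute the derivative. The only cosmetic difference is that the paper carries the computation at the matrix level, showing $\tilde Q_x(0)+Q_x(0)=2(Q_x(0)+q Q(0))$, whereas you extract the $(1,2)$-entry from the outset.
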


\begin{proof}
Observe that $\tilde{Q}(0) = Q(0) - i [\sigma,P(0)] = Q(0)$ and
$$
\begin{aligned}  \tilde{Q}_x(0) + Q_x(0)
& = 2Q_x(0) - i[\sigma,P_x(0)] \\
& = 2Q_x(0) - i[\sigma,\tilde{Q}(0)P(0) -P(0)Q(0)] \\
& = 2Q_x(0) - i[\sigma,iq[\sigma_3, Q(0)]] \\
& = 2(Q_x(0)+q Q(0)). \\
\end{aligned}
$$
As $\tilde{Q}_x(0) = -Q_x(0)$ by the $q$-symmetry condition, the equation \eqref{eq:mixed_cond_x_0} follows.
\end{proof}


\begin{proposition}
\label{lem:backlund_ext_c2}
 If $u\in C^2(\rb^+)$ and $u'(0+) + q u(0) = 0$,
 then the B\"acklund extension of $u$ belongs to $C^2(\rb)$.
\end{proposition}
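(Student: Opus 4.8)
The plan is to show that the two defining pieces of the B\"acklund extension $u^e$ --- the function $u$ itself on $\rb^+$ and $\mc R\mc B_q^+u$ on $\rb^-$ --- are each of class $C^2$ up to the origin and that they glue together there to second order. Set $\tilde u=\mc B_q^+u=u-iP_{12}$, with $P$ the solution of \eqref{eq:backlund} (cf. \eqref{eq:u_tilde}). Since $u^e(x)=\tilde u(-x)$ for $x<0$, one has $(u^e)^{(k)}(0-)=(-1)^k\tilde u^{(k)}(0)$, so it suffices to prove the three gluing identities $\tilde u(0)=u(0)$, $\tilde u'(0)=-u'(0)$, $\tilde u''(0)=u''(0)$, together with the fact that $u^e$ is $C^2$ on each of $\rb^+$ and $\rb^-$ separately.

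Regularity away from $0$ is a routine bootstrap: since $u\in C^2(\rb^+)$ the potential $Q$ is $C^2$, so the fundamental matrix $\varphi=(\Psi_1^0(\cdot,iq),\Psi_2^0(\cdot,-iq))$ solving $\varphi'=Q\varphi-i\sigma\varphi P_0$ (see the proof of Proposition \ref{p:backlund}) is $C^3$ on $\rb^+$; as $\det\varphi=|\xi_1|^2+|\xi_2|^2$ is bounded away from $0$, $P=\varphi P_0\varphi^{-1}\in C^3(\rb^+)$, whence $\tilde u\in C^2(\rb^+)$ and $u^e\in C^2(\rb\setminus\{0\})$. The zeroth- and first-order gluing identities come directly from the excerpt: $\tilde Q(0)=Q(0)$ gives $\tilde u(0)=u(0)$ (this is \eqref{eq:cont_at_x_0}), and the computation in the proof of Proposition \ref{prop:mixed_cond_x_0} gives $\tilde Q_x(0)=Q_x(0)+2qQ(0)$, i.e. $\tilde u'(0)=u'(0)+2qu(0)$; the hypothesis $u'(0)+qu(0)=0$ then turns this into $\tilde u'(0)=qu(0)=-u'(0)$.

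For the second-order identity I would differentiate the B\"acklund equation in the form $P_x=\tilde QP-PQ$ once more, obtaining $P_{xx}=\tilde Q_xP+\tilde QP_x-P_xQ-PQ_x$, and evaluate at $x=0$. There $P(0)=-iq\sigma_3$ and $P_x(0)=\tilde Q(0)P(0)-P(0)Q(0)=-iq[Q(0),\sigma_3]$, which is off-diagonal. The two terms carrying the diagonal factor $P(0)$ combine to $\tilde Q_x(0)P(0)-P(0)Q_x(0)=-iq\big([Q_x(0),\sigma_3]+2qQ(0)\sigma_3\big)$; a one-line $2\times2$ computation shows the off-diagonal entries of the bracket are $-2(u'(0)+qu(0))$ and $-2(\overline{u'(0)}+q\overline{u(0)})$, which vanish by hypothesis. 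The remaining terms are $\tilde Q(0)P_x(0)-P_x(0)Q(0)=[Q(0),P_x(0)]$, a commutator of two off-diagonal matrices, hence diagonal. Therefore $P_{xx}(0)$ is diagonal, so $(P_{xx}(0))_{12}=0$ and $\tilde u''(0)=u''(0)-i(P_{xx}(0))_{12}=u''(0)$. Combined with the regularity on each side and the first two identities, this yields $u^e\in C^2(\rb)$. The only real work is this last $2\times2$ algebra at the origin, and its content is simply that the boundary condition $u'(0)+qu(0)=0$ is exactly what annihilates the off-diagonal part of $P_{xx}(0)$; everything else is either the standard ODE bootstrap or a direct appeal to identities already in the paper.
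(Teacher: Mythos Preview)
Your argument is correct and is essentially the paper's own proof: both reduce the $C^2$ gluing to showing that the off-diagonal part of $P_{xx}(0)$ vanishes, via $P_{xx}=\tilde Q_xP+\tilde QP_x-P_xQ-PQ_x$ evaluated at the origin. The only cosmetic difference is that the paper first uses the boundary condition to write $\tilde Q_x(0)=-Q_x(0)$ (so that $\tilde Q_x(0)P(0)-P(0)Q_x(0)=iq\{Q_x(0),\sigma_3\}=0$ immediately), whereas you carry $\tilde Q_x(0)=Q_x(0)+2qQ(0)$ and invoke the boundary condition on the resulting off-diagonal entries $-2(u'(0)+qu(0))$; the remaining diagonal commutator $[Q(0),P_x(0)]$ is handled identically.
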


\begin{proof}
From the proof of Proposition \ref{prop:mixed_cond_x_0},
we have $u^e(0-)=u^e(0+)$ and $(u^e)'(0-)=(u^e)'(0+)$. For $u''(0)$,
 $$ \begin{aligned} \partial^2_x Q^e(0-) - \partial^2_x Q^e(0+)
& = \partial^2_x \tilde{Q}(0) - \partial^2_x Q(0) \\
& = - i[\sigma,\partial^2_x P](0)\\
 & = - i[\sigma, \tilde{Q}_x P -P Q_x + \tilde{Q} P_x - P_x Q](0) \\
 & = - i[\sigma, -Q_x P -P Q_x - [P_x, Q](0) \\
 & = - i[\sigma, -iq [[\sigma_3,Q(0)],Q(0)] =0.\\
 \end{aligned} $$
\end{proof}


We need the following lemma to prove Proposition \ref{p:symm_scattering} below.
\begin{lemma}
\label{lem:scattering_invertible}
Let $\Psi(x,z)$ be any invertible solution to \eqref{eq:scattering_problem} for $z\in\rb$. Then,
$$
S(z) = \lim_{x \to \infty}e^{ixz\sigma}\Psi(-x,z)\Psi^{-1}(x,z)e^{ixz\sigma}.
$$
\end{lemma}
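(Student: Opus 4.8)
The plan is to recognize $S(z)$ as the transition matrix between the two ZS-AKNS fundamental solutions $\psi^-$ and $\psi^+$ of \eqref{eq:scattering_problem}, and to show that the right-hand side of the stated formula reproduces it, using only the normalizations $m^\pm(x,z)=\psi^\pm(x,z)e^{-ixz\sigma}\to I$ as $x\to\pm\infty$ from \eqref{eq:zs_asymp} together with the jump relation \eqref{eq:scattering_matrix}.

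First I would fix $z\in\rb$. Since the coefficient matrix $iz\sigma+Q(x)$ has zero trace, $\det\Psi(x,z)$ is independent of $x$, so the hypothesis that $\Psi$ is invertible makes $\Psi(\cdot,z)$ a fundamental matrix solution of \eqref{eq:scattering_problem} for \emph{all} $x$. As $\psi^-(\cdot,z)$ is likewise a fundamental matrix ($\det\psi^-\equiv1$), there is a constant (in $x$) invertible matrix $C=C(z)$ with $\Psi(x,z)=\psi^-(x,z)C$. Using \eqref{eq:scattering_matrix} in the form $\psi^-(x,z)=\psi^+(x,z)S(z)^{-1}$ we also get $\Psi(x,z)=\psi^+(x,z)S(z)^{-1}C$, hence $\Psi^{-1}(x,z)=C^{-1}S(z)\bigl(\psi^+(x,z)\bigr)^{-1}$. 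Substituting, the factors $C$ cancel:
\[
e^{ixz\sigma}\Psi(-x,z)\Psi^{-1}(x,z)e^{ixz\sigma}
= e^{ixz\sigma}\psi^-(-x,z)\,S(z)\,\bigl(\psi^+(x,z)\bigr)^{-1}e^{ixz\sigma}.
\]

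Next I would pass to the normalized solutions. From $m^-(x',z)=\psi^-(x',z)e^{-ix'z\sigma}$ with $x'=-x$ one has $\psi^-(-x,z)=m^-(-x,z)e^{-ixz\sigma}$, so $e^{ixz\sigma}\psi^-(-x,z)=e^{ixz\,\textnormal{ad}\,\sigma}\,m^-(-x,z)$; similarly $\bigl(\psi^+(x,z)\bigr)^{-1}e^{ixz\sigma}=e^{-ixz\,\textnormal{ad}\,\sigma}\,\bigl(m^+(x,z)\bigr)^{-1}$. Thus the quantity in question equals $\bigl(e^{ixz\,\textnormal{ad}\,\sigma}m^-(-x,z)\bigr)\,S(z)\,\bigl(e^{-ixz\,\textnormal{ad}\,\sigma}(m^+(x,z))^{-1}\bigr)$. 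As $x\to+\infty$ (so $-x\to-\infty$) we have $m^-(-x,z)\to I$ and $\bigl(m^+(x,z)\bigr)^{-1}\to I$ by \eqref{eq:zs_asymp}; the only point needing care is that the conjugation $e^{\pm ixz\,\textnormal{ad}\,\sigma}$ multiplies off-diagonal entries by the unimodular factors $e^{\pm ixz}$ (here $z\in\rb$) and fixes diagonal entries, so a matrix tending to the \emph{diagonal} matrix $I$ still tends to $I$ after this conjugation. Hence both outer factors converge to $I$ and the limit is $S(z)$, as claimed.

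The argument is essentially algebra together with the boundary asymptotics; the closest thing to an obstacle is exactly this last remark, namely that the oscillatory conjugation is harmless because the limit $I$ is diagonal, so the unimodular oscillations act only on entries that are already vanishing. One should also note that the representation $\Psi=\psi^-C$ holds globally in $x$ (again by constancy of $\det\Psi$), so nothing beyond $u\in H^{1,1}(\rb)$ — which already guarantees $Q\in L^1_{\mathrm{loc}}$ and hence solvability of \eqref{eq:scattering_problem} — is required.
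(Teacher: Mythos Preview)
Your proof is correct and follows essentially the same approach as the paper: write $\Psi$ as a fundamental solution times a constant matrix (the paper uses $\Psi=\psi^+M$, you use $\Psi=\psi^-C$, which is an immaterial variation), cancel the constant, and reduce to $e^{ixz\sigma}\psi^-(-x,z)\,S(z)\,(\psi^+(x,z))^{-1}e^{ixz\sigma}$, then use the normalizations $m^\pm\to I$ at $\pm\infty$. Your treatment of the final limit is actually more careful than the paper's, since you make explicit why the unimodular conjugation $e^{\pm ixz\,\mathrm{ad}\,\sigma}$ is harmless when the limit is diagonal.
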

\begin{proof}
Let $\psi^\pm = (\psi^\pm_1, \psi^\pm_2)$ be the ZS-AKNS solutions normalized at $\pm\infty$, respectively.
As $\psi^+$ solves \eqref{eq:scattering_problem} and is invertible, $\Psi(x,z) = \psi^+(x,z)M(z)$ for some invertible matrix $M(z)$.
$$ \begin{aligned}
& \lim_{x \to \infty}e^{ixz\sigma}\Psi(-x,z)\Psi^{-1}(x,z)e^{ixz\sigma} \\
& \quad = \lim_{x \to \infty}e^{ixz\sigma}\psi^+(-x,z)(\psi^+(x,z))^{-1}e^{ixz\sigma} \\
& \quad = \lim_{x \to \infty}e^{ixz\sigma}\psi^-(-x,z)S(z)(\psi^+(x,z))^{-1}e^{ixz\sigma} \\
\end{aligned} $$
Since $\psi^-(-x,z) e^{ixz\sigma}, \psi^+(x,z)e^{-ixz\sigma} \to I$ as $x \to \infty$, we obtain the result.
\end{proof}

For $u\in H^{1,1}(\rb)$, the $q$-symmetry condition \eqref{def:symm} can be reformulated in terms of the scattering data of $u$ as follows.
\begin{proposition}
\label{p:symm_scattering}
Let $q \in \rb\setminus\{0\}$ be given.
Suppose that $u\in \mc G \subset H^{1,1}(\rb)$ is generic
with scattering data $a(z)$, $b(z)$, $Z_+ =\{z_1, \cdots, z_n\}$, $\Gamma_+=\{\gamma(z_1), \cdots, \gamma(z_n)\}$.
Then, $u$ is $q$-symmetric if and only if
\begin{equation}
\label{eq:symm_scattering}
\left\{
\begin{aligned}
&\overline{a(- \overline{z})} = a(z), \ \ z \in \overline{\cb^+} ,\\
&b(-z) = b(z) \frac{z+i\beta}{z-i\beta}, \ \ z \in \rb, \\
&z_k \neq \pm i\beta \text{ and } \gamma (z_k) \overline{\gamma(-\overline{z_k})} = \frac{z_k - i\beta}{z_k + i\beta} , \ \ k=1,\dots,n, \\
&\beta = (-1)^n q. \\
\end{aligned}
\right.
\end{equation}
\end{proposition}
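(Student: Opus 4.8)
The plan is to reformulate $q$-symmetry as $\mathcal{B}_q u = \mathcal{R}u$ (equivalently $u = \mathcal{R}\mathcal{B}_q u$, since $\mathcal{R}^2 = 1$ and $\mathcal{R}\mathcal{B}_q\mathcal{R}\mathcal{B}_q = 1$ by Lemma~\ref{lem:reciprocal_backlund}) and then translate each side into scattering data: for the forward implication I use the auto-B\"acklund symmetries of Proposition~\ref{prop:auto_backlund}, and for the reverse implication I compute $\mathcal{S}(\mathcal{R}\mathcal{B}_q u)$ from $\mathcal{S}(u)$ using Proposition~\ref{prop:tilde_scattering_data} together with the reversal rules, and conclude by injectivity of $\mathcal{S}$ (Theorem~\ref{thm:sca_inverse}).

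\emph{Only if.} Assume $u$ is $q$-symmetric. By Lemma~\ref{cor:limit_P}, $P(x)\to -i\beta\sigma_3$ as $|x|\to\infty$ with $\beta^2 = q^2$, and by Lemma~\ref{lem:symm_prop} $a(i|q|)\neq 0$, so no $z_k$ equals $\pm i\beta$. Plugging the two relations for $\psi^+$ and $\psi^-$ from \eqref{eq:ZS_symmetry} into $S(z) = \bigl(\psi^-(x,z)\bigr)^{-1}\psi^+(x,z)$ and cancelling the $\psi$-factors yields, for $z\in\rb$,
\[
S(z) = \sigma_1\,(z - i\beta\sigma_3)\,\overline{S(-z)^{-1}}\,(z - i\beta\sigma_3)^{-1}\,\sigma_1 .
\]
Reading off the $(1,1)$- and $(2,1)$-entries gives $a(z) = \overline{a(-z)}$ and $b(-z) = \frac{z+i\beta}{z-i\beta}\,b(z)$ on $\rb$; since $a(z)$ and $\overline{a(-\bar z)}$ are both analytic in $\cb^+$ and continuous up to $\rb$ where they agree, the first identity extends to $\overline{a(-\bar z)} = a(z)$ on $\overline{\cb^+}$ (Schwarz reflection) --- the first line of \eqref{eq:symm_scattering} --- while the $b$-identity is the second line, and in particular $Z_+$ is invariant under $z\mapsto -\bar z$. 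For the norming constants I evaluate \eqref{eq:symmetry_ZS_AKNS22_1} and \eqref{eq:symmetry_ZS_AKNS22_2} at $z = z_k$ and at $z = -\bar z_k$ (again a zero of $a$), substitute the eigenrelations $\psi_1^+(x,z_k) = \gamma(z_k)\psi_2^-(x,z_k)$ and likewise at $-\bar z_k$, and simplify to get $\gamma(z_k)\,\overline{\gamma(-\bar z_k)} = \frac{z_k - i\beta}{z_k + i\beta}$, the third line. For the sign, note first that at a purely imaginary eigenvalue $z_k = i\mu_k$ (where $-\bar z_k = z_k$) the norming relation reads $|\gamma(i\mu_k)|^2 = \frac{\mu_k - \beta}{\mu_k + \beta}$, and positivity of the left side together with $\mu_k > 0$ forces $\mu_k > |q|$; thus every imaginary eigenvalue lies strictly above $i|q|$. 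The first line of \eqref{eq:symm_scattering} makes $a(i\eta)$ real for $\eta\geq 0$ with $a(i\infty)=1$ and sign changes only at the simple imaginary zeros, so $\operatorname{sign} a(i|q|) = (-1)^m$ with $m$ the number of imaginary eigenvalues; since $z\mapsto -\bar z$ couples the remaining eigenvalues in pairs, $n-m$ is even and $\operatorname{sign} a(i|q|) = (-1)^n$. Finally, evaluating \eqref{eq:symmetry_ZS_AKNS22_1} (resp.\ \eqref{eq:symmetry_ZS_AKNS22_2}) at $x=0$, $z=i|q|$, with $P(0)=-iq\sigma_3$, and using $a(i|q|)=\det\bigl(\psi_1^+(0,i|q|),\psi_2^-(0,i|q|)\bigr)$, one finds $a(i|q|)=|(\psi_2^-)_2(0,i|q|)|^2>0$ when $\beta=q$ and $a(i|q|)=-|(\psi_1^+)_2(0,i|q|)|^2<0$ when $\beta=-q$, i.e. $\operatorname{sign} a(i|q|)=\operatorname{sign}(\beta/q)$; hence $\beta = (-1)^n q$.

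\emph{If.} Set $\beta = (-1)^n q$ and assume \eqref{eq:symm_scattering}. Since the third line forces $z_k\neq\pm i|q|$, $a(i|q|;u)\neq 0$, and then by the dichotomy in the proof of Proposition~\ref{prop:tilde_scattering_data} ($a(i|q|)=0\iff \beta_+=-|q|,\ \beta_-=|q|$) we have $\beta_+(u)=\beta_-(u)=:\beta'$ with $\beta'^2=q^2$. The sign bookkeeping above shows $\operatorname{sign} a(i|q|;u)=(-1)^n$ using only lines one and three of \eqref{eq:symm_scattering}; the analogous identification $\operatorname{sign} a(i|q|;u)=\operatorname{sign}(\beta'/q)$, obtained from the behavior of the Beals--Coifman/ZS--AKNS solutions at $z=i|q|$, then forces $\beta'=(-1)^n q=\beta$. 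Consequently $\mathcal{B}_q u$ has scattering data $\bigl(a,\ \tfrac{z+i\beta}{z-i\beta}b,\ Z_+,\ \tfrac{z_k+i\beta}{z_k-i\beta}\gamma(z_k)\bigr)$; applying the reversal rules $a_{\mathcal{R}w}(z)=\overline{a_w(-\bar z)}$, $b_{\mathcal{R}w}(z)=b_w(-z)$, $Z_+(\mathcal{R}w)=-\overline{Z_+(w)}$, and the corresponding rule for the $\gamma$'s, and inserting \eqref{eq:symm_scattering}, one checks entry by entry that $\mathcal{R}\mathcal{B}_q u$ and $u$ have the same scattering data, so $\mathcal{R}\mathcal{B}_q u = u$.

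The hardest step is the sign bookkeeping for $\beta=(-1)^nq$: establishing on the one hand $\operatorname{sign} a(i|q|)=(-1)^n$ (which needs the constraint, read off from the norming-constant identity, that all imaginary eigenvalues lie above $i|q|$), and on the other hand that this sign equals $\operatorname{sign}(\beta/q)$ (which requires the explicit evaluation of the auto-B\"acklund symmetry at $x=0$, $z=i|q|$, and, for the ``if'' direction, a somewhat closer analysis of the scattering solutions there). A secondary nuisance is getting the reversal transformation law for the norming constants exactly right, and disposing of the degenerate case $b\equiv 0$, $n=0$, where $u\equiv 0$ is $q$-symmetric trivially by Remark~\ref{rmk:trivial_P}.
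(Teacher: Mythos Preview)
Your forward direction is essentially correct, though it differs from the paper's route: the paper determines the sign of $\beta$ via the relation $S(0)=\tfrac{q}{\beta}I$ (obtained from Lemma~\ref{lem:scattering_invertible} applied to $\Psi^0$) together with a direct computation of $a(0)=(-1)^n$ from \eqref{eq:formula_a}, whereas you track sign changes of $a(i\eta)$ on the imaginary axis and match $\operatorname{sign} a(i|q|)=\operatorname{sign}(\beta/q)$ by evaluating \eqref{eq:symmetry_ZS_AKNS22_1}--\eqref{eq:symmetry_ZS_AKNS22_2} at $x=0$, $z=i|q|$. Both arguments are valid.

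The backward direction, however, has a genuine gap. Your key step is to assert that $a(i|q|;u)\neq 0$ forces $\beta_+(u)=\beta_-(u)$, citing ``the dichotomy in the proof of Proposition~\ref{prop:tilde_scattering_data}''. That dichotomy concerns $\tilde a(i|q|)$, not $a(i|q|)$; and in any case \eqref{eq:equiv_ZS_component_beta} explicitly exhibits, for $q>0$ with $a(iq)\neq 0$, the case $(\psi_1^+)_2(0,iq)\neq 0$, $(\psi_2^-)_2(0,iq)\neq 0$, in which $\beta_+=-q\neq q=\beta_-$. So $\beta_+=\beta_-$ is \emph{not} a consequence of $a(i|q|)\neq 0$ for an arbitrary generic $u$. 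Worse, the ``analogous identification'' $\operatorname{sign} a(i|q|)=\operatorname{sign}(\beta'/q)$ that you invoke next was, in your own forward argument, obtained from the auto-B\"acklund identities \eqref{eq:symmetry_ZS_AKNS22_1}--\eqref{eq:symmetry_ZS_AKNS22_2}, which are themselves consequences of $q$-symmetry; using them here is circular. Without knowing $\beta_\pm(u)$ you cannot apply Proposition~\ref{prop:tilde_scattering_data} to compute $\mathcal{S}(\mathcal{B}_q u)$ and close the argument.

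The paper's backward proof proceeds quite differently and avoids this issue entirely: it builds $U(x)=\mathfrak b(x)U_0\mathfrak b(x)^{-1}$ from the Beals--Coifman solution at $z=\pm i\beta$ (with $\beta=(-1)^nq$ taken from the hypothesis), defines $\breve m$ by an explicit conjugation \eqref{eq:def_symm_RHP}, and shows $m(x,z)=\overline{\breve m(-x,-\bar z)}$ by checking the full normalized RHP (jump, residues, normalization) and invoking uniqueness. It then uses this identity at $x=0$ to prove $m_{21}(0,iq)=0$ (or $m_{11}(0,-iq)=0$, according to the parity of $n$), which forces $\Psi_1^0(\cdot,iq)$ to be proportional to a single column of $\psi(\cdot,i\beta)$ and hence $U(x)=P(x)$; the relation $u(x)=\tilde u(-x)$ then drops out of the $1/z$-asymptotics. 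In short, the paper never needs to identify $\beta_\pm(u)$ a priori: it works directly with the RHP and recovers $P$ from $m$.
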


\begin{remark}
 Note that if $u$ is $q$-symmetric, then $a(i|q|)\neq 0$ by Lemma \ref{lem:symm_prop}.
 Hence $z_k \neq \pm i\beta$. For if $z_k=\pm i\beta$ then $z_k^2=-\beta^2=-q^2$
 and so $z_k=i|q|$, which contradicts $a(i|q|)\neq 0$.
\end{remark}

\begin{remark}
\label{rmk:symmetry_r_backlund}
For $z\in\rb$, we have
$$
|r(z)| = \bigg| \frac{b(z)}{a(z)} \bigg| =  \bigg| \frac{b(-z)}{a(-z)} \bigg| = |r(-z)|.
$$
By setting $z=0$ in the symmetry condition for $b(z)$, it follows that $b(0)=0$ and hence $r(0)=0$.
\end{remark}

\begin{proof}
Suppose first that $u(x)$ is $q$-symmetric.
From \eqref{eq:ZS_symmetry}, it follows that for $z\in\rb$,
\beq \bal S(z)
& = (\psi^-)^{-1}\psi^+(x,z)\\
& = \sigma_1 (z-i\beta\sigma_3) \ovl{(\psi^+)^{-1}\psi^-(-x,-z)} (z-i\beta\sigma_3)^{-1} \sigma_1\\
& = \sigma_1 (z-i\beta\sigma_3) \ovl{S^{-1}(-z)} (z-i\beta\sigma_3)^{-1} \sigma_1.\\
\eal \eeq
As $S(z) = \bsm a(z)& -\ovl{b(z)}\\ b(z)& \ovl{a(z)} \esm$, we conclude that $a(z) = \ovl{a(-z)} \text { and } b(-z) = b(z)\frac{z+i\beta}{z-i\beta}$, $z\in\rb$
and hence by analytic continuation,
\beq
\label{eq:a_b_symmetry}
a(z) = \ovl{a(-\ovl{z})}, \ \ z\in \ovl{\cb^+} \text { and } b(-z) = b(z)\frac{z+i\beta}{z-i\beta}, \ \ z\in\rb.
\eeq
Let $\Psi^0(x,z)$ be the solution of \eqref{eq:scattering_problem} with $\Psi^0(0,z)=I$.
By Proposition \ref{prop:auto_backlund}, there exists an $M_0(z)$ such that
$\sigma_3 (-z+P(-x))\Psi^0(-x,-z) = \Psi^0(x,z)M_0(z)$.
Setting $x=0$, $z=0$, we obtain $M_0(0) = -iq I$ and hence $\sigma_3 P(-x)\Psi^0(-x,0) = -iq \Psi^0(x,0)$.
Thus, by Lemma \ref{lem:scattering_invertible},
$$ S(0) = \lim_{x \to \infty}\Psi^0(-x,0)(\Psi^0)^{-1}(x,0) = \lim_{x \to \infty} -iq (\sigma_3 P(-x))^{-1} = \frac{q}{\beta}I $$
On the other hand, from \eqref{eq:a_b_symmetry}, $a(0) = \ovl{a(0)}$, $b(0)=0$ and so $S(0) = a(0)I$.
Note that $a(z)$ has the explicit formula \eqref{eq:a_exp_formula} (recall $|r|^2 = |b|^2/|a|^2, |a|^2+|b|^2=1$),
\beq
\label{eq:formula_a}
a(z) = \prod_{k=1}^n \frac{z-z_k}{z-\overline{z_k}}\exp \Big[\frac{1}{2\pi i} \int_{\rb} \frac{\log(1-|b(s)|^2)}{s-z}\rd s\Big], \ \  z\in\cb^+.
\eeq
Using this formula together with $b(0)=0$ and $|b(-z)|=|b(z)|$ from \eqref{eq:a_b_symmetry},
we see that $a(0) = \prod_{k=1}^n z_k/\prod_{k=1}^n \ovl{z_k}$.
In computing $a(0)=\lim_{z\to 0, z\in\cb^+} a(z)$ via \eqref{eq:formula_a},
we have used the fact that $b(z)\in H^{1,1}\subset H^{1,0}$ (see \eqref{eq:b_H11})
and so $|b(z)|^2 = O(|z|)$ as $z\to 0$, $z\in\rb$.
As $a(z_k)=0$ if and only if $a(-\ovl{z_k})=0$, $a(0) = \prod_{k=1}^n (-\ovl{z_k})/\prod_{k=1}^n \ovl{z_k}=(-1)^n$.
Thus, $\beta=q/a(0) = (-1)^n q$.
Now using \eqref{eq:symmetry_ZS_AKNS22_2}, we see that
$$
\bal \psi_1^+(x,z_k)
& = \gamma(z_k) \psi_2^-(x,z_k) \\
& =  \frac{\gamma(z_k)}{z_k-i\beta} \sigma_1 (z_k-\ovl{P(-x)}) \ovl{\psi_1^+(-x,-\ovl{z_k})} \\
& =  \frac{\gamma(z_k)\ovl{\gamma(-\ovl{z_k})}}{z_k-i\beta} \sigma_1 (z_k-\ovl{P(-x)}) \ovl{\psi_2^-(-x,-\ovl{z_k})} \\
\eal
$$
Comparing with \eqref{eq:symmetry_ZS_AKNS22_1},
we obtain $\gamma(z_k)\ovl{\gamma(-\ovl{z_k})}= \frac{z_k - i\beta}{z_k+i\beta}$.

Conversely, suppose that the scattering data for $u(x)$ satisfy the symmetries \eqref{eq:symm_scattering}.
Denote by $m(x,z)=(m_1(x,z), m_2(x,z))$ the solution to the normalized RHP with the jump matrix
$$v_x(z) = \bpm 1+|r_x(z)|^2 & r_x(z) \\ \overline{r_x(z)} & 1 \epm,
\ \ r_x(z) \equiv  r(z)e^{ixz}.
$$
Define
\begin{equation}
\label{eq:def_B}
{\mathfrak{b}}(x) = (m_1(x,i\beta), m_2(x,-i\beta)).
\end{equation}
From the relation in Proposition \ref{prop:scattering_symmetry}(i), we see that $m_2(x,-i\beta)=i\sigma_2 \ovl{m_1(x,i\beta)}$.
Hence $\det b(x) = |m_1(x,i\beta)|^2 >0$ and so $b(x)$ is invertible. Set
\beq
\label{eq:def_U0_Ux}
U_0 = -i\beta\sigma_3, \ U(x) = {\mathfrak{b}}(x)U_0 {\mathfrak{b}}(x)^{-1},
\eeq
and define
\begin{equation}
\label{eq:def_symm_RHP}
{\breve m}(x,z) \equiv
\left\{
\begin{aligned}
& \sigma_1 (z-U(x))^{-1} m(x,z) (z-U_0) a(z)^{\sigma_3} \sigma_1 \ , \ \ z \in \cb^+\setminus\{i |q|\},\\
& \sigma_1 (z-U(x))^{-1} m(x,z) (z-U_0) \ovl{a(\ovl{z})}^{-\sigma_3} \sigma_1 \ , \ z \in \cb^-\setminus\{-i |q|\}.\\
\end{aligned}
\right.
\end{equation}

\vspace{0.1in}
\noindent \emph{Claim}: $m(x,z)=\ovl{\breve m(-x,-\ovl{z})}$.
\vspace{0.1in}

\noindent We prove this claim in a number of steps. First, for $z \in \rb$,
$$
 \begin{aligned} ({\breve{m}_-})^{-1} {\breve{m}_+} (x,z)
& = \sigma_1 \overline{a(z)}^{\sigma_3} (z-U_0)^{-1} v_x(z) (z-U_0) a(z)^{\sigma_3} \sigma_1 \\
& = \sigma_1 \bpm 1  & r_x(z) \frac{z-i\beta}{z+i\beta} \frac{\overline{a(z)}}{a(z)} \\ \overline{r_x(z)} \frac{z+i\beta}{z-i\beta} \frac{a(z)}{\overline{a(z)}} & 1+|r_x(z)|^2 \epm \sigma_1 \\
& = \bpm 1+|r_{-x}(-z)|^2 & \overline{r_{-x}(-z)} \\ r_{-x}(-z) & 1 \epm.\\
\end{aligned}
$$
Here, we used the symmetries \eqref{eq:symm_scattering} for the scattering data.
Set ${m^\#}(x,z) \equiv \overline{{\breve m}(-x,-\overline{z})}$. Then,
$$
({m^\#}_-)^{-1} {m^\#}_+ (x,z) = \ovl{({\breve{m}_-})^{-1} {\breve{m}_+} (-x,-z)} = v_x(z).
$$
Observe that for $z\in\cb\setminus (\rb \cup \{ \pm i|q| \}$,
$$
(z-U(x))^{-1} m(x,z) (z-U_0) = {\mathfrak{b}} \bpm ({\mathfrak{b}}^{-1}m)_{11}&({\mathfrak{b}}^{-1}m)_{12}\frac{z-i\beta}{z+i\beta}\\ ({\mathfrak{b}}^{-1}m)_{21} \frac{z+i\beta}{z-i\beta}&({\mathfrak{b}}^{-1}m)_{22} \epm.
$$
Since $({\mathfrak{b}}^{-1}m)_{21}(x,i\beta) = 0$ and $({\mathfrak{b}}^{-1}m)_{12}(x,-i\beta) = 0$ for $x \in \rb$, $m^\#(x,z)$ does not have a pole at $z=\pm i \beta$ in the $z$-plane.
As $u\in\mc G$, for any pole $z_k\in\cb^+$,
$$
\Res\displaylimits_{z = z_k} m(x,z)= \lim_{z \to z_k} (z-z_k) m(x,z) = \lim_{z \to z_k} m(x,z) \bpm 0&0\\ c_x(z_k) &0 \epm.
$$
where $c_x(z_k) = e^{-ixz_k} c(z_k)$ and $c(z_k) = \frac{ \gamma(z_k)}{a'(z_k)}$.
Equivalently,
\beq
\label{eq:residue_equiv}
\lim_{z \to z_k} a(z) m_1(x,z) = a'(z_k) m_2(x,z_k) c_x(z_k) = \gamma(z_k) m_2 (x,z_k) e^{-ixz_k}.
\eeq
For $m^\#(x,z)$, we have
\beq
\label{eq:residue_m_sharp}
 \begin{aligned}
& \ovl{\Res\displaylimits_{z = -\ovl{z_k}} m^\#(-x,z)} \\
& =\lim_{z \to -\ovl{z_k}} \ovl{(z-(-\ovl{z_k}) m^\#(-x,z)} =\lim_{z \to z_k} -(z-z_k) \ovl{m^\#(-x,-\ovl{z})}\\
& = \lim_{z \to z_k} -\sigma_1 (z-U(x))^{-1} (z-z_k) m(x,z) a(z)^{\sigma_3}(z-U_0) \sigma_1 \\
& = \lim_{z \to z_k} -\sigma_1 (z-U(x))^{-1} (z-z_k) \bpm a(z) m_1(x,z) & \frac {m_2(x,z)}{a(z)}  \epm (z-U_0) \sigma_1 \\
& = -\sigma_1 (z_k -U(x))^{-1} \bpm \frac {z_k-i\beta}{a'(z_k)} m_2(x,z_k) & 0 \epm \\
\end{aligned}
\eeq
Thus at $-\ovl{z_k}$, the second column of $m^\#(x,z)$ is analytic, and the first column has a simple pole.
A similar calculation shows that at $-z_k$, the first column of $m^\#(x,z)$ is analytic, and the second column has a simple pole.
On the other hand, if we set $c^\#_x(z_k) = e^{-ixz_k} c^\#(z_k)$, then
$$ \begin{aligned}
& \lim_{z \to -\ovl{z_k}} \ovl{m^\#(-x,z) \bpm 0&0\\ c^\#_{-x}(-\ovl{z_k})&0 \epm}
=\lim_{z \to z_k} \ovl{m^\#(-x,-\ovl{z})} \bpm 0&0\\ \ovl{c^\#_{-x}(-\ovl{z_k})}&0 \epm\\
& = \lim_{z \to z_k}\sigma_1 (z -U(x))^{-1} m(x,z) a(z)^{\sigma_3}(z-U_0) \sigma_1 \bpm 0&0\\ \ovl{c^\#_{-x}(-\ovl{z_k})}&0 \epm\\
& =  \lim_{z \to z_k} \sigma_1 (z -U(x))^{-1} \bpm \ovl{c^\#_{-x}(-\ovl{z_k})} (z + i\beta)a(z) m_1(x,z) & 0\epm \\
& = \sigma_1 (z_k -U(x))^{-1} \bpm \ovl{c^\#(-\ovl{z_k})} (z_k + i\beta) \gamma(z_k) m_2(x,z_k) & 0\epm , \\
\end{aligned} $$
Here we have used \eqref{eq:residue_equiv}.
Comparing with \eqref{eq:residue_m_sharp} and using \eqref{eq:symm_scattering}, we have
$$
c^\#(-\ovl{z_k}) =  -\ovl{\frac 1{a'(z_k) \gamma(z_k)}\frac{z_k - i\beta}{z_k + i\beta}} = -\frac {\gamma(-\ovl{z_k})}{\ovl{a'(z_k)}}.
$$
As $a(z)=\ovl{a(-\ovl{z})}$, $\frac {a(z)}{z-z_k}=-\ovl{\big(\frac{a(-\ovl{z})}{-\ovl{z}-(-\ovl{z_k})}\big)}$ and so $a'(z_k) = -\ovl{a'(-\ovl{z_k})}$. Hence it follows that $c^\#(-\ovl{z_k}) = \frac {\gamma(-\ovl{z_k})}{a'(-\ovl{z_k})} = c(-\ovl{z_k})$ and so $c^\#(z_k)=c(z_k)$ for $k=1, \cdots, n$.
Assembling the above results, we conclude by the uniqueness for the RHP that ${m^\#}(x,z) = m(x,z)$, which proves the above Claim.

Now fix $x=0$ and write ${\mathfrak{b}} = {\mathfrak{b}}(0)$ and $m(z)=m(0,z)$, etc.
If $q>0$ and $n$ is even, then $i\beta \in \cb^+$ and
$$ \begin{aligned} \overline{m_{21}(i\beta)}
& = \overline{m^\#_{21}(i\beta)} = \breve{m}_{21}(i\beta) \\
& = \lim_{z \to i\beta} \big({\mathfrak{b}}(z-U_0)^{-1} {\mathfrak{b}}^{-1}m(z)(z-U_0) a(z)^{\sigma_3}\big)_{12} \\
& = \frac {\mf b_{12}}{a(i\beta)} ({\mathfrak{b}}^{-1}m)_{22}(i\beta) \\
& = \frac {m_{12}(-i\beta)}{a(i\beta)} \frac{1}{\det {\mathfrak{b}}} (-m_{21}m_{12} + m_{11} m_{22})(i\beta) \\
& = -\frac {\overline{m_{21}(i\beta)}}{a(i\beta) \det {\mathfrak{b}}}. \\
\end{aligned}
$$
Let $Z_+^{(1)} = Z_+ \cap i\rb$ and $Z_+^{(2)} = Z_+ \cap \{z\in \cb^+ : \textnormal{Re} z >0\}$.
If $z_k=i\xi \in Z_+^{(1)}$, then by the symmetries \eqref{eq:symm_scattering},
$0<|\gamma(i\xi)|^2 = \frac{\xi -\beta}{\xi+\beta}$ and so $\frac {i\beta-i\xi}{i\beta+i\xi}<0$.
If $z_k \in Z_+^{(2)}$, then $-\ovl{z_k} \in Z_+$ and
$\frac {i\beta-z_k}{i\beta-\ovl{z_k}} \frac {i\beta-(-\ovl{z_k}) }{i\beta-(\ovl{-\ovl{z_k})}}=|\frac {i\beta-z_k}{i\beta-\ovl{z_k}}|^2>0$.
Since $n$ is even, $|Z_+^{(1)}|$ is even and hence we see that $\prod_{k=1}^n \frac{i\beta-z_k}{i\beta-\overline{z_k}}>0$.
On the other hand, as $|b(-z)|=|b(z)|$, $z\in\rb$, we see $\int_{\rb} \frac{\log(1-|b(s)|^2)}{s-i\beta}\rd s \in i\rb$,
so it follows from the formula \eqref{eq:formula_a} that $a(i\beta)>0$.
Since $\det {\mathfrak{b}} = |m_1(i\beta)|^2 > 0$, it follows that
$$
m_{21}(0,iq) = m_{21}(0,i\beta) = 0.
$$
It follows in a similar way that
$$
\begin{aligned}
& m_{21}(0,iq) = m_{21}(0,i\beta) = 0, \ \ \text{if }q<0, \ \ n\text{ is even,} \\
& m_{11}(0,-iq) = m_{11}(0,i\beta) = 0, \ \ \text{if }q>0, \ \ n\text{ is odd,} \\
& m_{11}(0,-iq) = m_{11}(0,i\beta) = 0, \ \ \text{if }q<0, \ \ n\text{ is odd.} \\
\end{aligned}
$$
Set $\psi(x,z) \equiv m(x,z)e^{ixz\sigma}$.
If n is even,
$$
 \Psi_1^0 (x,iq) = \psi(x,iq)\psi(0,iq)^{-1} e_1 = c_1 \psi_1(x,iq) = c_1 \psi_1(x,i\beta),
$$
for some nonzero constant $c_1$.
If n is odd,
$$
\Psi_2^0 (x,-iq) = \psi(x,-iq)\psi(0,-iq)^{-1} e_2 = c_2 \psi_1(x,-iq) = c_2 \psi_1(x,i\beta),
$$
for some nonzero constant $c_2$.
Thus, using the symmetries $\Psi_2^0(x,\ovl{z}) = i\sigma_2 \ovl{\Psi_1^0(x,z)}$, etc., we obtain
$$
\varphi(x) = (\Psi_1^0 (x,iq), \Psi_2^0 (x,-iq)) =
\left\{
\begin{aligned}
& e^{-\frac {\beta x}2} {\mathfrak{b}}(x)\bpm c_1 & 0 \\ 0 & \overline{c_1} \epm, \text{ if } \beta=q,\\
& e^{-\frac {\beta x}2} {\mathfrak{b}}(x)\bpm 0 & c_2 \\ -\overline{c_2} & 0 \epm, \text{ if } \beta=-q. \\
\end{aligned}
\right.
$$
In both cases we see that
$$
U(x) = {\mathfrak{b}}(x) U_0 {\mathfrak{b}}(x)^{-1} = \varphi(x) (-iq \sigma_3) \varphi(x)^{-1} = P(x).
$$
where $P(x)$ solves \eqref{eq:backlund}.
Let $m(x,z) = I + \frac {m_1(x)}z + o(\frac 1z)$ and $a(z) = 1 + \frac {a_1}z + o(\frac 1z)$ be the expansions as $z \to \infty$, $|\textnormal{Im} z|> c|\textnormal{Re} z|$, $c>0$.
As $\breve m(x,z) = \ovl{m(-x,-\ovl z)}$, it follows from \eqref{eq:def_symm_RHP} that
$$
-\overline{m_1(-x)} = \sigma_1 (P(x) + m_1(x) + a_1 \sigma_3 - U_0)\sigma_1.
$$
From the symmetries $m(x,z) = \ovl{\sigma_2 m(x,\ovl z) \sigma_2}$, $P(x) = \ovl{\sigma_2 P(x) \sigma_2}$,
we see that $(m_1)_{12}(x) = -\ovl{(m_1)_{21}(x)}$ and $P_{12}(x) = -\ovl{P_{21}(x)}$.
Hence, it follows that
$$ \begin{aligned} u(x)
& = -i (m_1)_{12}(x) = i \big(\ovl{\sigma_1 (P(-x) + m_1(-x) + a_1 \sigma_3 - U_0)\sigma_1}\big)_{12} \\
& = i\big(\ovl{P(-x) + m_1(-x)})_{21} = -i (m_1)_{12}(-x) - iP_{12}(-x)\\
& = u(-x) - i P_{12}(-x) = \tilde{u}(-x), \\
\end{aligned}
$$
which completes the proof.
\end{proof}

\begin{remark}
\label{rmk:nonlinearization}
Suppose we are in a situation when the reflection coefficient $r(z) = \frac{\ovl b}{\ovl a}$ is small,
as is the case corresponding to our data $u_0 = v_\mu + \epsilon w$
(note that for $\epsilon=0$, $u_0 = v_\mu$ gives rise to a solution with $r\equiv 0$).
The solution of the RHP $m=(m_{ij})_{1\leq i,j\leq 2}$ is given by (see \eqref{eq:eq_m_polar})
\beq
\label{eq:eq_m_polar2}
m(x,z) = I + \frac 1{2\pi i} \int_\rb \frac {\mu_x(s)(v_x(s)-I)}{s-z} \rd s + \text{polar terms}.
\eeq
where $\mu_x$ solves \eqref{eq:sing_mu}.
For $r$ small, we see that $\mu \sim I + \nu$ where $\nu$ is a sum of polar terms.
Substituting $\mu_x \sim I + \nu$ into \eqref{eq:eq_m_polar2}, we obtain
$$ \bal u(x)
& = -i \lim_{z\to\infty} z m_{12}(x,z) \\
& = i \int_\rb \mu_x(v_x - I) \frac {\rd z}{2 \pi i} + \text{polar terms}\\
& \sim \frac 1{2 \pi} \int_\rb r(z) e^{i(xz-z^2 t/2)} + \text{polar terms}\\
\eal $$
Here, as $|a|^2 = 1- |b|^2$, $r = \frac{\ovl b}{\ovl a} \sim \ovl b$.
Neglecting the polar contribution we see that
this formula is of the same form as formula \eqref{eq:fou_rep} for the solution in the linear case
$u_e(x,t) = \frac 1{2\pi} \int_\rb e^{i(xz-tz^2/2)} \hat u (z) \rd z$.
Here $(iz+q)\hat u(z)$ is anti-symmetric.
But from \eqref{eq:symm_scattering}, $\ovl{b(z)}(iz+\beta)=-\ovl{b(-z)}(-iz+\beta)$,
so that in the case $\beta=q$, we see that $\ovl b$ and $\hat u$ have the same symmetry.
Thus the B\"acklund extension is not only the nonlinear analog of the method of images;
it is the non-linearization of the method of images.

The situation is even more striking in ``u-space".
Indeed, from \eqref{eq:u_tilde}, $\tilde{u} = u - 2q \frac{\xi_1\overline{\xi_2}}{|\xi_1|^2 + |\xi_2|^2}$,
where $\xi=(\xi_1, \xi_2)^T$ solve
\beq
\label{eq:lin_spec_xi}
\partial_x \xi = (iz\sigma + Q) \xi, \ \ \xi(x=0)=e_1.
\eeq
Iterating \eqref{eq:lin_spec_xi} for $u$ small, we obtain $\xi(x) = (e^{-q x/2}, -\int_0^x e^{q(x-2y)/2} \ovl{u}(y) \rd y )^T + O(u^2)$.
Thus $\tilde u(x) = u(x) + 2q \int_0^x e^{q(x-2y)/2} \ovl{u}(y) \rd y + O(u^2)$.
As $u(x)=\tilde u(-x)$ for $x<0$, we see that $u^e(x)$ is precisely the same as $u_e$ in \eqref{eq:lin_ext}, up to terms of order $u^2$.
\end{remark}

The following result is an immediate consequence of Theorem \ref{thm:sca_inverse} and Proposition \ref{p:symm_scattering}.
Recall from \eqref{eq:b_H11} that $r\in H^{1,1}(\rb)$ if and only if $b\in H^{1,1}(\rb)$.

\begin{corollary}
Let $(r,Z_+,K_+)\in H^{1,1} \times \cb_+^n \times (\cb\setminus 0)^{n}$.
Define $a(z)$ as in \eqref{eq:a_exp_formula} and set $b(z)=a(z)\ovl{r(z)}$.
If $\{a(z), b(z), \{z_k\}, \{\gamma_k\}, \beta\}$ satisfies \eqref{eq:symm_scattering}, then
$u=\mc I (r,Z_+,K_+)\in H^{1,1}(\rb)$ and is $q$-symmetric.
\end{corollary}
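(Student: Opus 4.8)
The plan is to deduce the statement directly from the two results it cites, with only some bookkeeping in between. First I would apply Theorem~\ref{thm:sca_inverse}: since the triple $(r,Z_+,K_+)$ lies in $H^{1,1}\times\cb_+^n\times(\cb\setminus 0)^n$, the function $u=\mc I(r,Z_+,K_+)$ belongs to $\mc G\subset H^{1,1}(\rb)$, and because $\mc S$ and $\mc I$ are mutually inverse, $\mc S(u)=(r,Z_+,K_+)$. Next I would translate this into the $(a,b,Z_+,\Gamma_+)$ description of the scattering data: by \eqref{eq:a_exp_formula} the scattering function of $u$ is exactly the $a(z)$ built from $r$ in the statement, the off-diagonal datum is $b(z)=a(z)\ovl{r(z)}$, the discrete set of zeros of $a$ in $\cb^+$ is $Z_+=\{z_1,\dots,z_n\}$, and by \eqref{eq:gamma_c_a} the norming constants are $\gamma(z_k)=c(z_k)a'(z_k)$ with $c(z_k)\in K_+$. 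Hence the quintuple $\{a(z),b(z),\{z_k\},\{\gamma_k\},\beta\}$ in the hypothesis is precisely the scattering data of $u$; note in passing that $b\in H^{1,1}(\rb)$ by \eqref{eq:b_H11}, matching the setting of Proposition~\ref{p:symm_scattering}.

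Second, with $u\in\mc G$ and its scattering data identified as above, I would invoke the ``if'' direction of Proposition~\ref{p:symm_scattering}: since that scattering data satisfies the symmetry relations \eqref{eq:symm_scattering} by hypothesis, $u$ is $q$-symmetric. Together with $u\in H^{1,1}(\rb)$ from the first step, this is exactly the assertion of the corollary.

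There is no genuine obstacle here; the one point that needs care is the identification carried out in the first paragraph, namely checking that the objects $a$, $b$, and $\{\gamma_k\}$ manufactured by hand from $(r,Z_+,K_+)$ as prescribed in the statement really do coincide with the genuine scattering quantities of $u=\mc I(r,Z_+,K_+)$. This is precisely what the inverse-scattering correspondence of Theorem~\ref{thm:sca_inverse}, together with the formulas \eqref{eq:a_exp_formula} and \eqref{eq:gamma_c_a}, guarantees, so once this is made explicit the argument is immediate.
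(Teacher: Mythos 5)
Your proposal is correct and follows exactly the route the paper intends: the paper states that the corollary is an immediate consequence of Theorem~\ref{thm:sca_inverse} and Proposition~\ref{p:symm_scattering}, which is precisely the two-step argument you give (invert the scattering map to get $u\in\mc G\subset H^{1,1}(\rb)$ with $\mc S(u)=(r,Z_+,K_+)$, identify the quintuple with the scattering data of $u$ via \eqref{eq:a_exp_formula} and \eqref{eq:gamma_c_a}, then apply the converse direction of Proposition~\ref{p:symm_scattering}). Your careful note about verifying that the hand-built $a$, $b$, $\{\gamma_k\}$ coincide with the genuine scattering quantities of $u$ is exactly the bookkeeping the paper leaves implicit.
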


\begin{remark}
\label{rmk:1_soliton}
Consider the 1-parameter family of 1-soliton solutions $e^{i\lambda^2 t/2} \eta_\lambda$ of NLS,
\begin{equation}
\label{eq:1_soliton}
\eta_\lambda (x) = \lambda \sech ( \lambda  x + \tanh^{-1}( q / \lambda ) ), \ \ |q|<\lambda.
\end{equation}
A direct calculation shows that the scattering data of $Q=\bsm 0&\eta_\lambda \\ -\ovl{\eta_\lambda}&0 \esm$ is given by
$$
n=1, \ \ z_1=i\lambda, \ \ a(z) = \frac{z-i\lambda}{z+i\lambda}, \ \ b(z) \equiv 0, \ \ \gamma(i\lambda) = \sqrt{\frac{\lambda+q}{\lambda-q}},
$$
from which we immediately check by \eqref{eq:symm_scattering} that $\eta_\lambda$ is $q$-symmetric.
Note that $\eta_\lambda$ coincides with the nonlinear ground state $v_\lambda$ for $x \geq 0$.
In view of Remark \ref{rmk:symmetric_extension}, the B\"acklund extension of $v_\lambda |_{\mathbb{R}^+}$ is $\eta_\lambda$.
\end{remark}

Theorem \ref{thm:evol_scatt_data}, Proposition \ref{prop:backlund_ext},
Proposition \ref{prop:mixed_cond_x_0} and Proposition \ref{p:symm_scattering}
imply the following important result.
\begin{proposition}
\label{prop:classical_sol_NLS}
Let $u(t)$ solve NLS on $\rb$ with $u(t=0)=u_0 \in H^{1,1}(\rb)$.
If $u_0$ is $q$-symmetric and generic,
then $u(t)$ is $q$-symmetric for all $t\geq0$.
If $u(t)$ is a classical solution, then $u(t)|_{\rb^+}$ and $\mc R \mc B_q^+ (u(t)|_{\rb^+})$ solve HNLS$_q^\pm$, respectively.
\end{proposition}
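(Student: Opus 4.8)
\emph{Strategy.} The proof combines the four cited results: the idea is to transport $q$-symmetry along the NLS flow by reading it off the scattering data. Since $u_0\in\mc G$ is $q$-symmetric, Proposition~\ref{p:symm_scattering} tells us that its scattering data $(a,b,Z_+=\{z_1,\dots,z_n\},\Gamma_+)$ satisfy the symmetry relations \eqref{eq:symm_scattering} with $\beta=(-1)^nq$. By Theorem~\ref{thm:evol_scatt_data}, $u(t)$ remains in $\mc G$ for all $t\ge0$, and the data of $u(t)$ are obtained from those of $u_0$ by $a(z;t)=a(z;u_0)$, $b(z;t)=b(z;u_0)e^{iz^2t/2}$, $Z_+(t)=Z_+$, $\gamma_j(t)=\gamma_j(u_0)e^{iz_j^2t/2}$; in particular $n$, and hence $\beta$, is unchanged. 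I would then check that the evolved data again satisfy \eqref{eq:symm_scattering}: the identity $\overline{a(-\overline z)}=a(z)$, the condition $z_k\ne\pm i\beta$, and $\beta=(-1)^nq$ are untouched; for $b$, using $(-z)^2=z^2$, $b(-z;t)=b(-z;u_0)e^{iz^2t/2}=b(z;u_0)\tfrac{z+i\beta}{z-i\beta}e^{iz^2t/2}=b(z;t)\tfrac{z+i\beta}{z-i\beta}$; and for the norming constants, writing $z_l=-\overline{z_k}\in Z_+$, one has $\overline{z_l^2}=z_k^2$, so $\gamma(z_k;t)\overline{\gamma(z_l;t)}=\gamma(z_k;u_0)\overline{\gamma(z_l;u_0)}\,e^{iz_k^2t/2}e^{-i\overline{z_l^2}t/2}=\gamma(z_k;u_0)\overline{\gamma(z_l;u_0)}=\tfrac{z_k-i\beta}{z_k+i\beta}$. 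Since $u(t)\in\mc G$, the converse half of Proposition~\ref{p:symm_scattering} then yields that $u(t)$ is $q$-symmetric for all $t\ge0$. (Equivalently, the Corollary following Remark~\ref{rmk:nonlinearization} says that scattering data satisfying \eqref{eq:symm_scattering} produce $q$-symmetric potentials, and the display above shows that this class of data is invariant under the flow.)

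\emph{The boundary value problems.} Assume now in addition that $u(t)$ is a classical solution. Then for each $t$, $u(t)\in C^1(\rb)$ is $q$-symmetric, so Proposition~\ref{prop:mixed_cond_x_0} gives $u_x(0,t)+qu(0,t)=0$. Since $u(t)$ solves NLS on all of $\rb$, its restriction to $\rb^+$ is a classical solution of \eqref{eq:nls_plus} obeying the Robin condition \eqref{eq:mixed_bdry}, hence solves HNLS$_q^+$ (a classical solution satisfying the boundary condition verifies the Duhamel formula \eqref{eq:weak_sol_H_q_plus} by the standard semigroup arguments of Section~\ref{sec:sol_eqs}). For the second assertion, $q$-symmetry of $u(t)$ together with \eqref{eq:B_halfline} gives $u(t)|_{\rb^-}=\big(\mc R\mc B_q u(t)\big)\big|_{\rb^-}=\mc R\big(\mc B_q^+(u(t)|_{\rb^+})\big)$; since $u(t)|_{\rb^-}$ is a classical solution of NLS on $\rb^-$ still satisfying \eqref{eq:mixed_bdry}, it solves HNLS$_q^-$, and therefore so does $\mc R\mc B_q^+(u(t)|_{\rb^+})$. (If $u$ decays rapidly enough one may alternatively invoke (i)$\Leftrightarrow$(iii) of Proposition~\ref{prop:IBV_rel_backlund} directly.)

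\emph{Main difficulty.} There is no deep obstacle here; the real content lies in the quoted results and the rest is bookkeeping. The two points needing genuine care are: (a) confirming that genericity truly persists along the flow, since the converse direction of Proposition~\ref{p:symm_scattering} is only available for $u\in\mc G$ --- this is supplied verbatim by Theorem~\ref{thm:evol_scatt_data}; and (b) verifying that the restriction to a half-line of a classical solution on $\rb$ satisfying the Robin condition is a solution of HNLS$_q^\pm$ in the precise sense of \eqref{eq:weak_sol_H_q_plus}, which is routine. The phase computations for $b$ and $\gamma$ above are elementary.
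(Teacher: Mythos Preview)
Your proposal is correct and follows essentially the same route as the paper, which merely cites Theorem~\ref{thm:evol_scatt_data}, Proposition~\ref{prop:backlund_ext}, Proposition~\ref{prop:mixed_cond_x_0}, and Proposition~\ref{p:symm_scattering} without spelling out the argument. Your detailed verification that the symmetry conditions \eqref{eq:symm_scattering} are preserved under the flow, and your identification $u(t)|_{\rb^-}=\mc R\mc B_q^+(u(t)|_{\rb^+})$ directly from the definition of $q$-symmetry together with \eqref{eq:B_halfline}, make explicit exactly what the paper leaves implicit (the role of Proposition~\ref{prop:backlund_ext} in the paper's list is through Remark~\ref{rmk:symmetric_extension}, which is equivalent to your argument).
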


From Proposition \ref{prop:bijectivity_backlund} the B\"acklund extension $u^e$ of $u\in H^{1,1}(\rb^+)$ belongs to $H^{1,1}(\rb)$.
In order to compute the scattering data for $u^e$, we need to know $u^e \in \mc G$.
We have the following proposition.
\begin{proposition}[Scattering data for the B\"acklund extension]
\label{prop:backlund_scattering}
Let $ u (x)\in H^{1,1}(\rb^+)$ and let $g(x,z)$ be the (unique) solution of \eqref{eq:scattering_problem} for $x\geq 0$, $z\in \ovl{\cb^+}$ given in \eqref{eq:tail_behavior_g} with $g(x,z)e^{-ixz/2} \to e_1$ as $x\to\infty$.
Denote $\bigl(\begin{smallmatrix} A(z) \\B(z) \end{smallmatrix}\bigr) \equiv g(0,z)$. Define
\begin{equation}
\label{eq:beta}
\begin{aligned} \beta =
\left\{ \begin{aligned}
& q, \qquad  \text{if} \ \ q>0, B(iq)=0 \ \ \text{or} \ \ q<0, A(-iq) \neq 0, \\
& -q, \ \ \text{if} \ \ q>0, B(iq)\neq0 \ \ \text{or} \ \ q<0, A(-iq) = 0, \\
\end{aligned}\right.
\end{aligned}
\end{equation}
and for $z \in \ovl{\cb^+}$, set
\begin{equation}
\label{eq:a}
a(z) \equiv \frac 1{z-i\beta}\big((z-iq)A(z)\overline{A(-\overline z)} - (z+iq)B(z)\overline{B(-\overline z)} \big).
\end{equation}
Suppose that $a(z)$ has only a finite number of zeros $z_1,\dots,z_n$, $n\geq0$, in $\ovl{\cb^+}$,
all of which are simple and lie in $\cb^+$.
For $z \in \rb$, set
\begin{equation}
\label{eq:b}
b(z) \equiv \frac 1{z+i\beta}\big((z+iq)A(-z)B(z) + (z-iq)A(z)B(-z) \big),
\end{equation}
For $1\leq k \leq n$, set
\begin{equation}
\label{eq:gamma_z_k}
\gamma(z_k) \equiv
\left\{ \begin{aligned}
& \frac {z_k-i\beta}{z_k+iq} \frac {A(z_k)}{\overline{B(-\overline{z_k})}} \ , \ \ \text{if} \ A(z_k) \neq 0 ,\\
& \frac {z_k-i\beta}{z_k-iq} \frac {B(z_k)}{\overline{A(-\overline{z_k})}} \ , \ \ \text{if} \ A(z_k) = 0 ,\\
\end{aligned}\right.
\end{equation}
Then, the B\"acklund extension $u^e$ of $u$ is generic and
$\{a(z)$, $b(z)$, $\{z_k\}$, $\{\gamma_k\}\}$ is the scattering data $\mc S(u^e)$ for $u^e$.
Moreover, $\mc S(u^e)$ satisfies the symmetries \eqref{eq:symm_scattering}.
\end{proposition}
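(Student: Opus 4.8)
The plan is to reduce every assertion to explicit $2\times2$ Wronskian computations evaluated at $x=0$, where $P(0)=-iq\sigma_3$ is known, exploiting the auto-B\"acklund symmetries available because $u^e$ is $q$-symmetric. First I would record the three facts already in hand: $u^e\in H^{1,1}(\rb)$ by Proposition \ref{prop:bijectivity_backlund}; $u^e$ is $q$-symmetric by Proposition \ref{prop:backlund_ext}; and its scattering function does not vanish at $z=i|q|$ by Lemma \ref{lem:symm_prop}. Since $g(x,z)$ solves the scattering problem on $\rb^+$ with the potential built from $u=u^e|_{\rb^+}$ and carries the $+\infty$ normalization of \eqref{eq:tail_behavior_g}, uniqueness of such solutions forces $g(x,z)=\psi_1^+(x,z;u^e)$ for $x\geq0$; in particular $g(0,z)=\bsm A(z)\\ B(z)\esm=\psi_1^+(0,z;u^e)$. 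By \eqref{eq:zs_akns_det} the scattering function is $a(z)=\det\bigl(\psi_1^+(0,z;u^e),\psi_2^-(0,z;u^e)\bigr)$ for $z\in\cb^+$, by \eqref{b_as_det} $b(z)=\det\bigl(\psi_1^-(0,z;u^e),\psi_1^+(0,z;u^e)\bigr)$ for $z\in\rb$, and at a zero $z_k$ of $a$ one has $\psi_1^+(x,z_k;u^e)=\gamma(z_k)\psi_2^-(x,z_k;u^e)$. Thus it suffices to express $\psi_2^-(0,\cdot;u^e)$ and $\psi_1^-(0,\cdot;u^e)$ through $g(0,\cdot)$.

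Next I would pin down $\beta$, where $P(x)\to-i\beta\sigma_3$ as $x\to\pm\infty$; the two limits coincide because $u^e$ is $q$-symmetric (Lemma \ref{cor:limit_P}). By Lemma \ref{lem:P_tail} and the discussion in Remark \ref{rmk:beta_depend_psi_1_0}, $\beta$ is $+q$ or $-q$ according to whether the relevant Jost solution at $z=i|q|$ decays or grows, which — once the $q$-symmetry is used to exclude $\beta_+\ne\beta_-$ — reduces to the vanishing/nonvanishing of $(\psi_1^+)_2(0,iq;u^e)=B(iq)$ when $q>0$, and of $(\psi_1^+)_1(0,-iq;u^e)=A(-iq)$ when $q<0$; this yields exactly \eqref{eq:beta}. (As a consistency check one sees that the right side of \eqref{eq:a} has an apparent pole at $z=i\beta$ that is removable in $\ovl{\cb^+}$ precisely under the conditions of \eqref{eq:beta}.) With $\beta$ fixed, I evaluate the $q$-symmetry relations of Proposition \ref{prop:auto_backlund} at $x=0$, using $\ovl{P(0)}=iq\sigma_3$ and $P(0)=-iq\sigma_3$. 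From \eqref{eq:symmetry_ZS_AKNS22_2},
\[
\psi_2^-(0,z;u^e)=\tfrac{1}{z-i\beta}\,\sigma_1(z-iq\sigma_3)\,\ovl{g(0,-\ovl z)}=\tfrac{1}{z-i\beta}\bsm (z+iq)\ovl{B(-\ovl z)}\\ (z-iq)\ovl{A(-\ovl z)}\esm ,
\]
and from the minus version of \eqref{eq:ZS_symmetry_0},
\[
\psi_1^-(0,z;u^e)=\tfrac{1}{z+i\beta}\,\sigma_3(z+iq\sigma_3)\,g(0,-z)=\tfrac{1}{z+i\beta}\bsm (z+iq)A(-z)\\ -(z-iq)B(-z)\esm .
\]
Substituting these, together with $\psi_1^+(0,z;u^e)=g(0,z)$, into the Wronskian formulas above gives \eqref{eq:a} and \eqref{eq:b} after a one-line determinant computation; and solving $g(0,z_k)=\gamma(z_k)\psi_2^-(0,z_k;u^e)$ componentwise — the first component when $A(z_k)\neq0$, the second (legitimate since $g(\cdot,z_k)$ never vanishes) when $A(z_k)=0$ — gives \eqref{eq:gamma_z_k}.

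Finally, the function \eqref{eq:a} just derived \emph{is} the scattering function $a(z;u^e)$, so the hypothesis that it has only finitely many zeros in $\ovl{\cb^+}$, all simple and in $\cb^+$, is precisely the assertion $u^e\in\mc G$; this establishes genericity and legitimizes the full scattering data $\bigl(a,b,\{z_k\},\{\gamma_k\}\bigr)$ identified above as $\mc S(u^e)$. The symmetries \eqref{eq:symm_scattering} then follow at once: $u^e$ is both $q$-symmetric and generic, so Proposition \ref{p:symm_scattering} applies directly. I expect the main obstacle to be the middle step — carefully tracking the $\sigma_1$/$\sigma_3$ conjugations and the $\pm z$ versus $\pm\ovl z$ arguments in the auto-B\"acklund identities at $x=0$, and in particular checking that all four sign cases of \eqref{eq:beta} correctly record which component of $g(0,i|q|)$ controls $\beta$ (and, consistently, makes the apparent pole in \eqref{eq:a} removable). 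The remaining steps are short determinant manipulations or direct citations of results already established above.
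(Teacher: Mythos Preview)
Your proposal is correct and follows essentially the same route as the paper's proof: identify $g=\psi_1^+(\cdot\,;u^e)$ on $\rb^+$, use the $q$-symmetry auto-B\"acklund relations \eqref{eq:ZS_symmetry_0} and \eqref{eq:symmetry_ZS_AKNS22_2} evaluated at $x=0$ (where $P(0)=-iq\sigma_3$) to express $\psi_1^-(0,z;u^e)$ and $\psi_2^-(0,z;u^e)$ in terms of $A,B$, and then read off $a,b,\gamma(z_k)$ from the Wronskian formulas and the proportionality $\psi_1^+=\gamma(z_k)\psi_2^-$. Your treatment is slightly more explicit than the paper's in justifying $\beta^e=\beta$ via Remark \ref{rmk:beta_depend_psi_1_0} and in invoking Proposition \ref{p:symm_scattering} for the symmetries, but the argument is the same.
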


\begin{proof}
Let $u^e$ be the B\"acklund extension of $u$ and let $P^e(x)$ solve \eqref{eq:backlund}.
As $u^e$ is $q$-symmetric, we have $P^e(x) \to -i\beta^e \sigma_3$ as $|x|\to\infty$, $(\beta^e)^2 = q^2$.
Let $(\psi^e)^\pm$ be the associated ZS-AKNS solutions.
It is easy to check from the proof of Lemma \ref{lem:P_tail} that in fact $\beta^e = \beta$.
Clearly $(\psi^e)_1^+(x,z) = g(x,z)$, $x\geq0$, $z\in\ \ovl{\cb^+}$ and hence $(\psi^e)_1^+(0,z) = g(0,z) = (A(z),B(z))^T$.
As $u^e$ is $q$-symmetric, we see from \eqref{eq:ZS_symmetry_0} that
$$ \bal (\psi^e)_1^- (0,z)
& 
 = \frac 1{z+i\beta} \bpm (z+iq)A(-z) \\-(z-iq)B(-z) \epm, \ \ z\in\rb. \\
\eal
$$
Thus, by \eqref{b_as_det}, $b^e(z) \equiv b(z;u^e) = \det((\psi^e)_1^-, (\psi^e)_1^+)(0,z) = b(z)$.
On the other hand, it follows from \eqref{eq:symmetry_ZS_AKNS22_2} that
$$ \bal (\psi^e)_2^- (0,z)
& 
 =\frac 1{z-i\beta} \bpm (z+iq)\overline{B(-\ovl{z})} \\(z-iq)\overline{A(-\ovl{z})} \epm, \ \ z\in\ovl{\cb^+}. \\
\eal
$$
So, the scattering function $a^e(z)\equiv a(z;u^e) = \det((\psi^e)_1^+,(\psi^e)_2^-)(0,z) = a(z)$.
From this fact, we see that $a^e(z)$ has only a finite number of zeros $z_1,\dots,z_n$, $n\geq0$, in $\ovl{\cb^+}$,
all of which are simple and lie in $\cb^+$.
Moreover, if $A(z_k) \neq 0$, then $((\psi^e)_1^+)_1(0,z_k) = A(z_k) \neq 0$ and so $((\psi^e)_2^-)_1(0,z_k) \neq 0$.
Thus the norming constant $\gamma^e(z_k)\equiv \gamma(z_k;u^e) = \frac {((\psi^e)_1^+)_1(0,z_k)}{((\psi^e)_2^-)_1(0,z_k)}
= \frac {z_k-i\beta}{z_k+iq} \frac {A(z_k)}{\overline{B(-\overline{z_k})}}$.
If $A(z_k) = 0$, then $B(z_k)\neq 0$ and hence $((\psi^e)_1^+)_2(0,z_k) = B(z_k) \neq 0$. So, $((\psi^e)_2^-)_2(0,z_k) \neq 0$
and hence $\gamma^e(z_k) = \frac {((\psi^e)_1^+)_2(0,z_k)}{((\psi^e)_2^-)_2(0,z_k)} = \frac {z_k-i\beta}{z_k-iq} \frac {B(z_k)}{\overline{A(-\overline{z_k})}}$.
Hence $\gamma^e(z_k)=\gamma(z_k)$.
Thus $\{a(z)$, $b(z)$, $\{z_k\}$, $\{\gamma_k\}\}$ is the scattering data $\mc S(u^e)$ for $u^e$;
the fact that $u^e\in \mc G$, follows directly from our assumption on $a(z)$.
\end{proof}

\begin{remark}
The fact that $\{a(z)$, $b(z)$, $\{z_k\}$, $\{\gamma_k\}\}$ is the scattering data $\mc S(u^e)$ for $u^e$,
implies a variety of regularity properties for $a(z)$,$b(z)$.
In particular, it follows necessarily that if $\beta=|q|$, then the apparent singularity of $a(z)$ at $z=i\beta$ is removable:
this can be seen directly as follows:
$\beta=|q|$ if $q>0$ and $B(iq)=0$, or $q<0$ and $A(-iq)=0$.
In the first case we have
$$
a(z) = A(z)\overline{A(-\overline z)} - \frac {z+iq}{z-iq}B(z)\overline{B(-\overline z)}
$$
and in the second case we have
$$
a(z) = \frac {z-iq}{z+iq} A(z)\overline{A(-\overline z)} - B(z)\overline{B(-\overline z)}.
$$
In both cases, the analyticity of $a(z)$ is clear.
Furthermore, as $z\to i|q|$, $a(z) \to |A(i|q|)|^2$ or $-|B(i|q|)|^2$, respectively.
As $A(z)$ and $B(z)$ cannot vanish simultaneously, we see that $a(i|q|) \neq 0$ in both cases.
If $q>0$ and $B(iq) \neq0$, then $\beta=-q$ and $a(i|q|)=a(iq)=-|B(i|q|)|^2 \neq 0$ and
if $q<0$ and $A(-iq) \neq0$, then $\beta=q$ and $a(i|q|)=a(-iq)=|A(i|q|)|^2 \neq 0$.
This shows that $a(z)$ is analytic in $\cb^+$ and non-zero at $z=i|q|$.

To see that the norming constant $\gamma(z_k)$ in \eqref{eq:gamma_z_k} are well-defined and non-zero, suppose that $B(-\ovl{z_k})\neq0$:
if $A(z_k)=0$, then as $a(z_k)=0$, we must have from \eqref{eq:a} that $B(z_k)=0$,
which contradicts the fact that $A(z)$ and $B(z)$ cannot vanish simultaneously.
Thus $A(z_k) \neq 0$ and $\gamma(z_k)$ is well-defined and non-zero.
If $B(-\ovl{z_k})=0$, then $A(-\ovl{z_k})\neq0$.
If $B(z_k)=0$, then again from \eqref{eq:a} we see that $A(z_k)=0$, which is a contradiction.
Thus we see that in all cases $\gamma(z_k)$ is well-defined and non-zero.
Finally, the fact that $|a(z)|^2+|b(z)|^2=1$, $z\in\rb$, in a direct calculation
using \eqref{eq:a}\eqref{eq:b} together with $|A(z)|^2+|B(z)|^2 = 1$, $z\in\rb$, from \eqref{eq:psi_pm_det}.
The symmetry properties \eqref{eq:symm_scattering} can also be verified directly.
\end{remark}

\begin{remark}
\label{rmk:limit_images}
As noted in \cite{Ta2}, p.437, in the repulsive case $\kappa=-1$ in \eqref{eq:gpe}, corresponding to the defocusing NLS equation,
one can similarly construct the B\"acklund extension $u^e$ of $u$ as above.
In this case, however, $a(z)$ may have a zero on the imaginary axis,
so that the spectral data for $u^e$ is singular.
This is the analog in the nonlinear situation that
the method of images in the linear case breaks down if $q>0$, as described in Section 1.
\end{remark}

The set of functions $u$ on $\rb^+$ whose B\"acklund extensions $u^e$ are generic, is open and dense in $H^{1,1}(\rb^+)$.
We have the following result.

\begin{proposition}
\label{prop:backlund_generic}
  Let $\mc G^+$ be the set of functions $u\in H^{1,1}(\rb^+)$ such that the scattering function $a(z)$ of the B\"acklund extension of $u$ has only a finite number zeros in $\ovl{\cb^+}$, all of which are simple and lie in $\cb^+$.
Then, $\mc G^+$ is open and dense in $H^{1,1}(\rb^+)$.
\end{proposition}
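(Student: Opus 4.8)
The plan is to push the genericity condition on $u^e$ down to the half-line, using the explicit formula for $\mc S(u^e)$ in Proposition \ref{prop:backlund_scattering}, and then to argue by the standard perturbation theory of analytic families of functions. First I would record the continuity input. For $u\in H^{1,1}(\rb^+)$ and $z\in\ovl{\cb^+}$ let $g(x,z;u)$ be the solution of \eqref{eq:scattering_problem} normalised by \eqref{eq:tail_behavior_g}; as in the proof of Proposition \ref{prop:bijectivity_backlund} it is obtained from a Volterra equation whose kernel is bounded uniformly for $z\in\ovl{\cb^+}$, so $u\mapsto g(0,\cdot\,;u)=(A(\cdot\,;u),B(\cdot\,;u))$ is a real-analytic — in particular continuous — map from $H^{1,1}(\rb^+)$ into the space of functions that are analytic in $\cb^+$, continuous on $\ovl{\cb^+}$, and tend to $(1,0)$ at $z=\infty$. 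Put
\[
\hat a(z;u)\equiv(z-iq)A(z;u)\,\ovl{A(-\ovl z;u)}-(z+iq)B(z;u)\,\ovl{B(-\ovl z;u)},
\]
the numerator of \eqref{eq:a}; then $u\mapsto\hat a(\cdot\,;u)$ is continuous, $\hat a$ is analytic in $\cb^+$ and continuous on $\ovl{\cb^+}$, and $\hat a(z;u)/(z-iq)\to 1$ as $z\to\infty$, uniformly for $u$ in bounded sets. By Proposition \ref{prop:backlund_scattering}, $a(z;u^e)=\hat a(z;u)/(z-i\beta)$ with $\beta=\beta(u)\in\{q,-q\}$ as in \eqref{eq:beta}; the factor $z-i\beta$ never vanishes on $\rb$ and vanishes in $\ovl{\cb^+}$ only at $i|q|$, and only when $i\beta=i|q|$, in which case — $u^e$ being $q$-symmetric by Proposition \ref{prop:backlund_ext} — Lemma \ref{lem:symm_prop} gives $a(i|q|;u^e)\ne0$, so that $\hat a(\cdot\,;u)$ has then merely an extra \emph{simple} zero at $i|q|\in\cb^+$. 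Consequently
\[
u\in\mc G^+\ \Longleftrightarrow\ \hat a(\cdot\,;u)\text{ has only finitely many zeros in }\ovl{\cb^+},\text{ all simple and all in }\cb^+ ;
\]
the point of this reformulation is that it removes the discontinuity of $u\mapsto\beta(u)$, which is what obstructs using Proposition \ref{prop:backlund_scattering} naively.

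For openness I would argue exactly as in the standard proof that $\mc G$ is open in $H^{1,1}(\rb)$. If $u_0\in\mc G^+$, then for $u$ near $u_0$ the function $\hat a(\cdot\,;u)$ is uniformly close on $\ovl{\cb^+}$ to $\hat a(\cdot\,;u_0)$, and $\hat a(z;u)/(z-iq)\to1$ uniformly in $u$ near $u_0$; since $\hat a(\cdot\,;u_0)$ has no zeros on $\rb$ and finitely many zeros in $\cb^+$, all simple, Rouch\'e's theorem on a large half-disc (the uniform bound near $z=\infty$ excluding zeros there) shows $\hat a(\cdot\,;u)$ has the same zero count in $\cb^+$, all simple, and none on $\rb$, so $u\in\mc G^+$.

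For density, given $u_0\in H^{1,1}(\rb^+)$ and $\epsilon>0$, I would first choose $u_1\in C_c^\infty([0,\infty))$ with $\|u_1-u_0\|_{H^{1,1}(\rb^+)}<\epsilon/2$; compact support makes $A(z;u_1),B(z;u_1)$, hence $\hat a(z;u_1)$, entire in $z$. If $u_1\notin\mc G^+$, fix a real $w\in C_c^\infty([0,\infty))$ and form the family $\Xi(z,t)\equiv\hat a(z;u_1+tw)$, analytic in $z$ and real-analytic in $t\in\rb$. Away from the times when a zero of $\Xi(\cdot\,,t)$ meets $\rb$ or two zeros collide in $\cb^+$, the zeros in $\cb^+$ move real-analytically with $t$ (implicit function theorem for simple zeros, Puiseux branches at collisions), and none enter from $z=\infty$ because $\Xi(z,t)/(z-iq)\to1$. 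Using that $0\in\mc G^+$ — indeed $\hat a(z;0)=z-iq$, which in $\ovl{\cb^+}$ has at most the single simple zero $iq$ (for $q>0$, and no zero for $q<0$) — as an anchor, together with the openness already proved, one shows that $N_w=\{t:u_1+tw\notin\mc G^+\}$ is closed with empty interior. Picking $t\notin N_w$ with $|t|\,\|w\|$ small enough that $\|u_1+tw-u_0\|<\epsilon$ yields $u=u_1+tw\in\mc G^+$.

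The hard part will be the last step: showing that $N_w$ has empty interior for a suitable $w$, i.e.\ that a degeneracy (a multiple zero of $\hat a(\cdot\,;u)$, or a zero on $\rb$) cannot survive an appropriate analytic perturbation of $u$ supported on $\rb^+$. This requires (i) the real-analytic dependence of $A(z;u),B(z;u)$ on $u$ with enough uniformity up to $\rb$ and to $z=\infty$; (ii) a transversality statement — that perturbations along $\rb^+$ do move, respectively split, a degenerate zero — which I would check by a variation-of-parameters computation of $\partial_tA,\partial_tB$ together with the formula \eqref{eq:a}, exhibiting a $w$ for which the relevant combination is nonzero; and (iii) bookkeeping for zero collisions (Puiseux expansions) and the exclusion of accumulation of zeros on $\rb$. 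All of this is parallel to, and modeled on, the known proof that $\mc G$ is open and dense in $H^{1,1}(\rb)$ \cite{Zh2}\cite{BDT}.
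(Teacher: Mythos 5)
Your openness argument is essentially the paper's: pass to the numerator $\hat a$ (the paper calls it $a_1(z;v)=a(z;v)(z-i\beta(v))$ precisely to neutralize the discontinuity of $\beta$), use the uniform continuity of $u\mapsto g(0,\cdot\,;u)$ on $\ovl{\cb^+}$ (Lemma \ref{lem:perturbed_potential}), the lower bound on $|a|$ near $\rb$ and near $\infty$, and Rouch\'e. That half is fine.

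The density half has a genuine gap. The statement that $N_w=\{t:u_1+tw\notin\mc G^+\}$ has empty interior does not follow from the openness of $\mc G^+$ together with the observation that $0\in\mc G^+$: openness only makes $N_w$ closed, and the zero function is not on the line $\{u_1+tw\}$ in general, so it anchors nothing. Emptiness of the interior of $N_w$ is exactly the transversality statement you defer to item (ii) of your ``hard part,'' and it is the entire content of the paper's density proof. Concretely, the paper (a) takes the perturbation $v$ supported in $[L,L']$ strictly to the right of the support of $u_1$, so that the first variation of $m_1^+$ collapses to the closed form $e^{ixz}f(x,z)=c(z)m_2^+(x,z;u)$ with $c(z)=\int_L^{L'}\ovl{v(y)}e^{izy}\,dy$, whence $\frac{\partial a_1}{\partial s}(z,0)=c(z)\ovl{\mu(-\ovl z)}-\ovl{c(-\ovl z)}\,\mu(z)$ with $\mu(z)=(z-iq)A(z)B(-z)+(z+iq)A(-z)B(z)$; (b) proves the nondegeneracy that $\mu(\xi)$ and $\mu(-\ovl\xi)$ cannot vanish simultaneously at a zero $\xi$ of $a$, using $\det m^+\equiv1$ and formula \eqref{eq:a}; (c) chooses a \emph{complex} $v$ (a smoothed $e^{i\theta}\delta_L$, with $\theta$ adjusted) so that $\frac{\partial a_1}{\partial s}(\xi,0)\neq0$ at every zero, then invokes Proposition \ref{prop:bdt72} to split any multiple zero into simple ones for small $s\neq0$, and finally performs a second perturbation to push the (finitely many) real zeros off $\rb$ by arranging $\xi'(0)\notin\rb$. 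Your choices of a \emph{real} $w$ and of $w$ with support possibly overlapping that of $u_1$ would both obstruct this computation: the overlap destroys the closed form for the variation, and the reality of $w$ removes the phase freedom in $c(z)$ used to make the combination nonzero. Without step (b) in particular, there is no a priori reason a perturbation supported on $\rb^+$ moves a degenerate zero at all, so the proposal as written does not close.
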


\begin{proof}
For $u \in \mc G^+$, consider a small disk $D=D(u,\epsilon)$ in $H^{1,1}(\rb^+)$ centered at $u$ with radius $\epsilon>0$.
For $v\in D$, let $g(0,z) = g(0,z;v)$, $a(z)=a(z;v)$ and $\beta = \beta(v)$ be defined as in Proposition \ref{prop:backlund_scattering}.
By Lemma \ref{lem:perturbed_potential},
\beq
\label{eq:uniform_bound}
\sup_{v\in D} \sup_{z\in \ovl{\cb^+}} |g(0,z;v)-g(0,z;u)|\leq c\epsilon.
\eeq
We have $|a(z;u)| \geq \hat c > 0$ for $z\in\rb$.
Define $a_1(z;v) \equiv a(z;v) (z-i\beta(v))$.
As $|z-i\beta(v)|=|z-i\beta(u)|$, $z\in\rb$, it follows from \eqref{eq:uniform_bound} that
$$
|a(z;v)|=\bigg|\frac{a_1(z;v)}{z-i\beta(u)}\bigg| \geq |a(z;u)| - \bigg| \frac{a_1(z;v)-a_1(z;u)}{z-i\beta(u)} \bigg|
\geq \hat c - c\epsilon >0,
$$
for sufficiently small $\epsilon>0$.
On the other hand, since $a(z;u) \to 1$ as $z\to\infty$, by \eqref{eq:uniform_bound} again,
we have for some $R>0$, $|a(z;v)|\geq c^\# > 0$ for $|z|\geq R$, uniformly for $v\in D$.
Now $a_1(z;u)$ has only a finite number of zeros in $\cb^+$, all of which are simple.
This is clear for $z\neq i\beta$.
However, if $\beta=|q|>0$, then as $a(i|q|;u)\neq 0$, $a_1(z;u)$ has only a simple zero at $z=i|q|=i\beta$.
Again by Lemma \ref{lem:perturbed_potential} together with Rouch\'e's theorem,
we see that $a_1(z;v)$ also has only a finite number of zeros in $\cb^+$ all of which are simple.
This proves that $\mc G^+$ is open.

We will show that $\mc G^+$ is dense in $H^{1,1}(\rb^+)$.
Let $u \in H^{1,1}(\rb^+)$.
We can assume that $u$ has compact support, say $[0,L]$.
Let $p(x,s) = u(x) + s v(x)$ where $s \in \rb$, and
$v\in H^{1,1}(\rb^+)$ has compact support in $[L,L']$, $L'>L$, and will be determined later.
Denote the associated ZS-AKNS solution normalized at $+\infty$ by $\psi^+(x,z;p)$, $x\geq 0$.
Let $m^+(x,z;p) = \psi^+(x,z;p) e^{-ixz\sigma} = (m_1^+, m_2^+)$,
$m_1^+(0,z;p) = \bigl(\begin{smallmatrix} A(z;p)\\B(z;p) \end{smallmatrix}\bigr)$,
$m_1^+(0,z;u) = \bigl(\begin{smallmatrix} A(z)\\B(z) \end{smallmatrix}\bigr)$ and
$f(x,z) = \frac {\partial m_1^+(x,z;p)}{\partial s}\big|_{s=0}$.
Note that since $p(x,s)$ has compact support, $m^+(x,z;p)$ is analytic in $z$-plane.
By differentiating the integral equation for $m_1^+$ with respect to $s$ at $s=0$, we obtain
$$
\begin{aligned} f(x,z)
& = -\int_x^{\infty} \bpm 1&0\\0&e^{-iz(x-y)} \epm \bpm 0&v\\-\overline{v}&0 \epm m_1^+(y,z;u) \\
& \quad -\int_x^{\infty} \bpm 1&0\\0&e^{-iz(x-y)} \epm \bpm 0&u\\-\overline{u}&0 \epm f(y,z) \\
& = \text{I} + \text{II}.
\end{aligned}
$$
Since $m_1^+(x,z;u) = e_1$ for $x \geq L$ and $v(x)=0$ for $0\leq x<L$, we have
$$
\text{I} = e^{-ixz} \bpm 0 \\ c(z) \epm, \ \ 0\leq x\leq L,
$$
where $c(z) = \int_L^{L'} \overline{v(y)} e^{izy}$. Therefore, for $0\leq x<L$,
$$
e^{ixz} f(x,z) = \bpm 0 \\ c(z) \epm - \int_x^{\infty} \bpm e^{iz(x-y)}&0\\0&1 \epm \bpm 0&u\\-\overline{u}&0 \epm e^{iyz} f(y,z).
$$
Since $c(z) m_2^+(x,z;u)$, $z\in\rb$, solves the same integral equation, it follows by uniqueness that
\beq
\label{eq:f_m_2_plus}
 e^{ixz} f(x,z) = c(z) m_2^+(x,z;u).
\eeq
for $0\leq x<L$ and $z\in\rb$.
But then by analytic continuation, \eqref{eq:f_m_2_plus} remains true for all $z\in\cb$, $0\leq x<L$.
Thus,
\beq
\label{eq:partial_s_AB}
\frac {\partial }{\partial s}\Big|_{s=0}\bpm A(z;p) \\ B(z;p)\epm = f(0,z) = c(z) m_2^+(0,z;u)
= c(z) \bpm -\overline{B(\overline{z})} \\ \overline{A(\overline{z})}\epm .
\eeq
Let $a_1(z,s) \equiv a_1(z;p) = a(z;p)(z-i\beta(p))$. Then,
\beq
\label{eq:a_1_mu_c}
\frac {\partial a_1}{ \partial s} (z,0)
= c(z) \overline{\mu(-\overline{z})} - \overline{c(-\overline{z})} \mu(z),
\eeq
where
$$
\mu(z) = (z-iq)A(z)B(-z) + (z+iq)A(-z) B(z).
$$
Now we prove that, for suitable choice of $v$,
\beq
\label{eq:partial_s_a_1}
\frac {\partial a_1}{ \partial s} (\xi,0) \neq 0
\eeq
for any zero $\xi$ of $a(z)=a(z;u)$.
We first show that $\mu(\xi)$ and $\mu(-\overline{\xi})$ do not vanish simultaneously.
Recall that
\begin{equation}
\label{eq:det_m_plus}
\det m^+(x,z) \equiv 1 \Rightarrow A(z)\overline{A(\overline{z})} + B(z)\overline{B(\overline{z})} = 1, \ z \in \cb.
\end{equation}
Suppose that $a(\xi)=0$, $\mu(\xi)=0$ and $\mu(-\overline{\xi})=0$ for some $\xi \in \cb^+$.
Note that $\xi \neq i|q|$.
If $B(\xi)=0$, $A(\xi) \neq 0$. But then as $a(\xi)=0$, $A(-\ovl\xi)=0$ by \eqref{eq:a}.
But then $B(-\ovl\xi)\neq0$ and it follows from $\mu(-\overline{\xi})=0$ that $A(\ovl\xi)=0$.
This contradicts \eqref{eq:det_m_plus} and hence $B(\xi)\neq0$.
On the other hand, if $A(-\ovl\xi)=0$, then as $a(\xi)=0$ and $B(\xi)\neq0$,
we must have $B(-\ovl\xi)=0$, which is a contradiction. Hence $A(-\ovl\xi)\neq0$.
Now as $a(\xi)=0$, $ (\xi-iq) A (\xi) = (\xi+iq){\frac {\overline{B(-\overline\xi)}}{\overline{A(-\overline\xi)}} } B (\xi)$ and hence by \eqref{eq:det_m_plus} again,
$$
\begin{aligned} \mu(\xi)
& = (\xi+iq) \bigg( {\frac {\overline{B(-\overline\xi)}}{\overline{A(-\overline\xi)}} } B (\xi) B(-\xi)  + A(-\xi)B(\xi) \bigg) \\
& = (\xi+iq) \frac {B(\xi)}{\overline{A(-\overline\xi)}}   \neq 0,\\
\end{aligned}
$$
which is a contradiction.
Now, as $a(z;u)$ is analytic across $\rb$ and $a(z;u) \to 1$ as $z\to\infty$, $z\in\ovl{\cb^+}$, $a(z;u)$ has only a finite number of zeros in $\ovl{\cb^+}$.
As $c(z) = \int_L^{L'} \overline{v(y)} e^{izy}$ and $\overline{c(-\overline{z})} = \int_L^{L'} v(y) e^{izy}$,
using the above results, we can find a function $v$ such that \eqref{eq:partial_s_a_1} is satisfied for all the zeros of $a(z;u)$.
For example, a simple way to do this is to consider $v=e^{i\theta} \delta_L(x)$,
where $ \delta_L(x)$ is a delta function at $x=L$, and then choose $\theta\in\rb$ appropriately;
smoothing out $v$ then produces the desired $v$.
But then, by Proposition \ref{prop:bdt72} below, all the zeros of $a_1(z;p)$ are simple for sufficiently small $s\neq0$
and hence so are those of $a(z;p)$.
In other words, there exists $p(x)$ which has compact support such that
$a(z;p)$ has only a finite number of zeros in $\ovl{\cb^+}$, all of which are simple.
We now show that any real zeros of $a(z;p)$, if there are any, can be moved off the real axis
by perturbing $p(x)\to p_{\hat s}(x)=p(x) + \hat s \hat v(x)$,
where $\hat v(x)$ again has compact support, and $\hat s\neq 0$ is arbitrarily small.
By standard analysis, if $\xi$ is a simple zero of $a(z;p_{\hat s=0})$,
then for $\hat s$ small, $a(z;p_{\hat s})$ has simple zero $\xi(\hat s)$ near $\xi=\xi(0)$, $a(\xi(\hat s);p_{\hat s})=0$ and $\xi(\hat s)$ is differentiable.
As $a_1(\xi(\hat s);p_{\hat s})=0$, we have
\beq
\label{eq:diff_a_zero}
0=\xi'(0) a_1'(\xi(0);p) + \frac {\partial a_1}{ \partial \hat s} (\xi(0);p).
\eeq
Here \eqref{eq:a_1_mu_c} is given with $u$ replaced by $p$.
If $a(z;p)$ has real zeros, as $a_1'(\xi(0);p) \neq 0$, it is easy to see that
we can choose $\hat v$ such that \eqref{eq:partial_s_a_1} is satisfied
and $\xi'(0)$ is not real for all (the finite number of) real zeros $\xi(0)$ of $a(z;p)$.
Thus, for sufficiently small $\hat s\neq0$, all the zeros of $a(z;p_{\hat s})$ in $\ovl{\cb^+}$ are simple and non-real.
This completes the proof that $\mc G^+$ is open and dense in $H^{1,1}(\rb^+)$.
\end{proof}

\begin{proposition}[\cite{BDT}, p.72]
\label{prop:bdt72}
Suppose $U$ is a neighborhood of the origin in $\cb \times \rb$ and suppose $h:U \to \cb$ is smooth and holomorphic with respect to the first variable. Suppose
$$
h(0,0) =0, \ \ \frac{\partial h}{\partial s} (0,0) \neq 0, \ \ h=h(z,s).
$$
Then, for small $s\neq0$, the zeros of $h(\cdot,s)$ near $z=0$ are simple.
\end{proposition}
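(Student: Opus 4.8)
I would prove Proposition~\ref{prop:bdt72} by contradiction, using the elementary fact that a \emph{non-}simple zero of $h(\cdot,s)$ at a point $z$ is exactly a common zero of $h(\cdot,s)$ and $\partial_z h(\cdot,s)$; the goal is then to show that, near the origin, such common zeros occur only for $s=0$. The only data that goes in is the vanishing order of the holomorphic function $z\mapsto h(z,0)$ at $z=0$ together with $C^1$-regularity in $s$.

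First I would split off the easy cases. If $h(\cdot,0)\equiv0$ near $z=0$, write $h(z,s)=s\,g(z,s)$ with $g(z,s)=\int_0^1\partial_s h(z,ts)\,\rd t$; this $g$ is smooth, holomorphic in $z$, and $g(0,0)=\partial_s h(0,0)\neq0$, so $g$ is nonzero near $(0,0)$ and $h(\cdot,s)$ has no zero near $z=0$ once $s\neq0$ is small --- the assertion is vacuous. Otherwise, by the identity theorem $h(z,0)=z^{k}g(z)$ for some $k\geq1$ and some holomorphic $g$ with $g(0)\neq0$. If $k=1$ then $\partial_z h(0,0)=g(0)\neq0$, so $\partial_z h$ is nonzero on a neighbourhood of $(0,0)$ and every zero of $h(\cdot,s)$ there is automatically simple. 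The case $k\geq2$ is the substantive one.

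For $k\geq2$, suppose for contradiction that there is a sequence $(z_{n},s_{n})\to(0,0)$ with $s_{n}\neq0$ and $h(z_{n},s_{n})=\partial_z h(z_{n},s_{n})=0$. Applying the mean value theorem in $s$ to $h$ and using $\partial_s h\to\partial_s h(0,0)\neq0$ near the origin, I would get $|h(z_{n},0)|\asymp|s_{n}|$; since $h(z_{n},0)=z_{n}^{k}g(z_{n})$ with $g(z_{n})\to g(0)\neq0$, this forces $|z_{n}|\asymp|s_{n}|^{1/k}$. Next, from $\partial_z h(z,0)=z^{k-1}\tilde g(z)$ with $\tilde g(z)=kg(z)+zg'(z)$ and $\tilde g(0)=kg(0)\neq0$, one gets $|\partial_z h(z_{n},0)|\asymp|z_{n}|^{k-1}\asymp|s_{n}|^{(k-1)/k}$; but the mean value theorem in $s$ applied to $\partial_z h$ (whose $s$-derivative is bounded near the origin, by smoothness) gives $|\partial_z h(z_{n},0)|=|\partial_z h(z_{n},s_{n})-\partial_z h(z_{n},0)|=O(|s_{n}|)$. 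Hence $|s_{n}|^{(k-1)/k}=O(|s_{n}|)$, i.e. $1=O(|s_{n}|^{1/k})\to0$, a contradiction.

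The one place that needs care --- and, I expect, the only real obstacle --- is the bookkeeping of the three scales in the last paragraph: a would-be multiple zero must sit at $|z|\asymp|s|^{1/k}$, where $\partial_z h(\cdot,0)$ is already of size $|s|^{(k-1)/k}$, and the inequality $(k-1)/k<1$ (valid precisely because $k\geq2$) is what makes the $O(|s|)$ correction coming from the $s$-dependence too small to cancel it. This is also exactly where $C^1$-regularity in $s$, via $\partial_s h(0,0)\neq0$, is genuinely used; everything else is routine.
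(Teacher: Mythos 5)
Your proof is correct. Note that the paper itself gives no argument for this proposition --- it is quoted from \cite{BDT} --- so there is nothing internal to compare against; your case split on the vanishing order $k$ of $h(\cdot,0)$ at $z=0$, followed by the three-scale comparison $|z_n|\asymp|s_n|^{1/k}$, $|\partial_z h(z_n,0)|\asymp|s_n|^{(k-1)/k}$ versus the $O(|s_n|)$ perturbation in $s$, is a complete and self-contained elementary proof. The only cosmetic point: the mean value theorem in its equality form fails for complex-valued functions, so you should phrase both difference estimates via the integral form $h(z,s)-h(z,0)=s\int_0^1\partial_s h(z,ts)\,\rd t$ (which you already use in the degenerate case) and the analogous identity for $\partial_z h$; the bounds you need follow exactly as stated.
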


\begin{remark}
It is of interest to see whether the scattering function $a(z)$ for the B\"acklund extension
of a function $u\in H^{1,1}(\rb^+)$, can indeed have zeros on $\rb$.
Fix any integer $n\geq0$ and let $\xi = \sqrt 2 (n+\frac 12) \pi >0$ and $q = \xi \cot \xi$ ia finite and non-zero.
Define $u(x) = \frac \xi 2 \chi_{[1,2]}$, $x\geq 0$ where $\chi$ is a characteristic function.
Then, a direct calculation shows that for $z\in\rb$,
$$
\left\{
\begin{aligned}
& A(z) = e^{\frac {iz}2} \Big\{ \cos W - \frac {iz}{2W} \sin W \Big\} , \\
& B(z) = \frac {\xi e^{\frac {3iz}2}}{2W}   \sin W ,\\
\end{aligned}
\right.
$$
where $W = W(z) \equiv \frac 12 \sqrt {\xi^2 + z^2}$. Since $\overline{A(-z)} = A(z)$ and $\overline{B(-z)} = B(z)$, by \eqref{eq:a},
$$
a(z) = \frac {z-iq}{z-i\beta} \Big\{(A(z))^2 - \frac {z+iq}{z-iq} (B(z))^2 \Big\}, \ \ z\in\rb.
$$
and it is easy to check that $a(z)$ vanishes at $\pm \xi$.
\end{remark}

Before we present the proof of Theorem \ref{thm:Sol_proc}, we need the following lemma of approximation.
\begin{lemma}
\label{lem:limit_H_11 bdry}
Suppose $u\in H^{1,1}(\rb)$.
Then there exists a sequence of smooth functions $\{u_n\}$ with compact support in $[0,\infty)$
such that
\begin{enumerate}
\item[\textnormal{(i)}] $u_n \to u$ in $H^{1,1}(\rb^+)$,
\item[\textnormal{(ii)}] $u_n'(0) + q u_n(0)=0$, and
\item[\textnormal{(iii)}] the B\"acklund extension $u_n^e$ of $u_n$ is generic.
\end{enumerate}
\end{lemma}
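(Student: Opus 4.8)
The plan is to approximate $u$ in $H^{1,1}(\rb^+)$ in three successive stages, each of which preserves the properties already obtained. First I would reduce to compactly supported data: given $u\in H^{1,1}(\rb^+)$, set $u^{(R)}=u\chi_R$ with $\chi_R\in C_0^\infty[0,\infty)$ as in Step 3 of the proof in Section \ref{sec:sol_eqs}, so that $u^{(R)}\to u$ in $H^{1,1}(\rb^+)$ as $R\to\infty$. Next, mollify: with the standard approximate identity $j_\varepsilon$ used in Section \ref{sec:sol_eqs}, reflect $u^{(R)}$ evenly across the origin, mollify, and restrict to $\rb^+$; this produces $C^\infty$ compactly supported functions converging to $u^{(R)}$ in $H^{1,1}(\rb^+)$. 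After these two stages we have smooth compactly supported $v_n\to u$ in $H^{1,1}(\rb^+)$, but in general $v_n'(0)+q v_n(0)\neq 0$.

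To enforce the boundary condition (ii), I would add a small correction supported near $0$. Fix a smooth cutoff $\zeta\in C_0^\infty[0,\infty)$ with $\zeta(0)=1$ and $\zeta'(0)=0$ (for instance $\zeta$ identically $1$ on $[0,\delta]$ and vanishing past $2\delta$), and another smooth $\eta\in C_0^\infty(0,\infty)$ with $\int_0^\infty\eta\neq 0$ but $\eta(0)=\eta'(0)=0$. Given $v_n$, let $\alpha_n\equiv v_n'(0)+qv_n(0)$; note $\alpha_n\to 0$ since $v_n\to u$ in $H^{1,1}(\rb^+)\hookrightarrow C^0$ together with $v_n'\to u'$ (one uses here that the mollification can be arranged to converge in $H^{1,1}$, hence the traces at $0$ converge). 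Now replace $v_n$ by $w_n\equiv v_n - \alpha_n\, x\,\zeta(x)$. Then $w_n(0)=v_n(0)$ and $w_n'(0)=v_n'(0)-\alpha_n$, so $w_n'(0)+q w_n(0)=0$, while $\|\alpha_n x\zeta\|_{H^{1,1}(\rb^+)}\le c|\alpha_n|\to 0$; hence $w_n\to u$ in $H^{1,1}(\rb^+)$ and (i), (ii) hold. The functions $w_n$ are still smooth and compactly supported.

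Finally, to arrange genericity of the B\"acklund extension (iii), I would invoke Proposition \ref{prop:backlund_generic}: $\mc G^+$ is open and dense in $H^{1,1}(\rb^+)$. Thus each $w_n$ can be perturbed, within $H^{1,1}(\rb^+)$ by an amount $<1/n$, to a function lying in $\mc G^+$. The density argument in the proof of Proposition \ref{prop:backlund_generic} is carried out precisely by perturbations $w_n\to w_n+s\,\hat v$ with $\hat v$ smooth and compactly supported in $(0,\infty)$ and $s$ arbitrarily small; since such $\hat v$ satisfies $\hat v(0)=\hat v'(0)=0$, the perturbation leaves the boundary relation (ii) intact, and smoothness and compact support are preserved. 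So one obtains $u_n\in\mc G^+$, smooth with compact support in $[0,\infty)$, with $u_n'(0)+qu_n(0)=0$ and $\|u_n-w_n\|_{H^{1,1}(\rb^+)}<1/n$, hence $u_n\to u$ in $H^{1,1}(\rb^+)$; this is exactly (i), (ii), (iii).

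The only delicate point is the interplay between the boundary-condition correction and the genericity perturbation: one must make sure the class of perturbations used to reach $\mc G^+$ can be chosen to annihilate the trace data at $0$, which is why I single out perturbations vanishing to second order at the origin. That the density proof of Proposition \ref{prop:backlund_generic} indeed uses such perturbations (it uses $\hat v$ with compact support strictly inside $(0,\infty)$) is what makes the argument close cleanly; no new estimate beyond those already in the excerpt is required.
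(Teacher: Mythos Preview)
Your overall strategy is sound, and your use of Proposition \ref{prop:backlund_generic} for step (iii) is correct: the density perturbations there are supported in $[L,L']$ with $L>0$, so they vanish to all orders at the origin and preserve (ii). However, there is a genuine gap in the boundary-correction step. The claim that $\alpha_n = v_n'(0) + q v_n(0) \to 0$ is unjustified: the hypothesis is only $u \in H^{1,1}(\rb^+)$, with no boundary condition imposed on $u$; in particular $u'(0)$ need not even be defined, and there is no reason for $u'(0)+qu(0)$ to vanish. Concretely, mollifying the even reflection with an even kernel gives $v_n'(0)=0$, so $\alpha_n = q\,v_n(0) \to q\,u(0)$, which is generically nonzero. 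With a \emph{fixed} cutoff $\zeta$ the correction $\alpha_n\, x\,\zeta$ therefore does not tend to $0$ in $H^{1,1}$, and $w_n$ fails to converge to $u$. The repair is easy: let the bump shrink with $n$, e.g.\ replace $\zeta$ by $\zeta_n(x)=\zeta(nx)$; then $\|\alpha_n\,x\,\zeta_n\|_{H^{1,1}(\rb^+)} \le c\,|\alpha_n|\,n^{-1/2}\to 0$ since $\alpha_n$ is bounded, and the rest of your argument goes through unchanged.

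For comparison, the paper avoids this issue by a different construction near the origin: it approximates $f\equiv u'+qu$ on $[0,1]$ by $f_n\in C_0^\infty(0,1)$ in $L^2$, and sets $u_n^{(2)}(x)=e^{-qx}u(0)+\int_0^x e^{-q(x-s)}f_n(s)\,\rd s$, i.e.\ solves the ODE $(u_n^{(2)})'+q\,u_n^{(2)}=f_n$. Since $f_n(0)=0$, the boundary condition is automatic, and $u_n^{(2)}\to u$ in $H^1[0,1]$ follows directly from $f_n\to f$ in $L^2$; this is then glued via a partition of unity to a standard smooth compactly supported approximation on $[\tfrac12,\infty)$. The ODE trick sidesteps any discussion of traces of $v_n'$ at $0$.
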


\begin{proof}
As the set $\mc G^+$ in Proposition \ref{prop:backlund_generic} is open and dense,
it is enough to show that there exists a sequence of smooth functions $\{u_n\}$ with compact support in $[0,\infty)$ such that (i) and (ii) hold.
By standard estimates, there exist smooth functions $u_n^{(1)}$ with compact support in $[\frac 12, \infty)$
such that $u_n^{(1)} \to u$ in $H^{1,1}[\frac 12, \infty)$.
Now let $\{f_n\}$ be a sequence of functions in $C_0^\infty(0,1)$
such that $f_n \to f\equiv u' + q u$ in $L^2[0,1]$.
Set $u_n^{(2)} \equiv e^{-q x} u(0) + \int_0^x e^{-q(x-s)} f_n (s) \rd s$, $x\in[0,1]$.
Then $u_n^{(2)}(x)$ is smooth and $(u_n^{(2)})'(x) + q u_n^{(2)}(x) = f_n(x)$,
so that $(u_n^{(2)})'(0) + q u_n^{(2)}(0) = f_n(0) = 0$.
Also $u_n^{(2)}$ converges in $L^2[0,1]$, $u_n^{(2)} \to u^{(2)}$,
and we have $u^{(2)}(x) = e^{-q x} u(0) + \int_0^x e^{-q(x-s)} (u'(s)+q u(s)) \rd s = u(x)$.
But then $(u_n^{(2)})' = f_n - q u_n^{(2)} \to (u' + q u)-q u  = u'$, also in $L^2[0,1]$.
Let $\chi \in C_0^\infty(\rb^+)$ such that $0\leq \chi(x)\leq 1$, $\chi (x) = 0$ for $0\leq x\leq \frac 12$, and $\chi(x)=1$ for $x\geq1$.
Let $u_n = u_n^{(1)} \chi + u_n^{(2)} (1-\chi)$.
Then clearly $u_n \to u$ in $H^{1,1}(\rb^+)$ and $u_n'(0) + q u_n(0)=0$.
\end{proof}

\begin{proof}[Proof of Theorem \ref{thm:Sol_proc}]
Suppose that $q>0$. The case $q<0$ is similar.
Let $g(x,z) = (g_1, g_2)^T$ be as in the proof of Proposition \ref{prop:bijectivity_backlund}.
If $g_2(0, iq; u_0^+) \neq 0$, then by Lemma \ref{lem:limit_H_11 bdry},
we can choose a sequence $\{u_0^{(n)}\}$ of $C^\infty$-functions with compact support in $[0,\infty)$ such that
\begin{enumerate}
\item $u_0^{(n)} \to u_0$ as $n\to\infty$ in $H^{1,1}(\rb^+)$,
\item $(u_0^{(n)})'(0+) + q u_0^{(n)}(0) = 0$, and
\item the B\"acklund extension $(u_0^{(n)})^e$ of $u_0^{(n)}$ is generic.
\end{enumerate}
As $u_0^{(n)}$ has compact support, it is easy to verify from \eqref{eq:u_tilde}
that $\mc R \mc B_q^+ u_0$ is also smooth and decays at least exponentially.
It follows by Proposition \ref{lem:backlund_ext_c2} that $(u_0^{(n)})^e\in H^{3,3}(\rb)$
and hence it follows by Proposition \ref{prop:NLS_sol_H_k_k}
that NLS has a solution $u_n^e(t) = u_n^e(x,t)$ in $H^{3,3}(\rb)$ with $u_n^e(t=0) = (u_0^{(n)})^e$.
In particular, $u_n^e(t)$ is a classical solution of NLS.
By Proposition \ref{prop:classical_sol_NLS} together with the properties of $u_0^{(n)}$ above,
we see  that $u_n^e(t)|_{\rb^+}$ solves HNLS$_q^+$.
On the other hand, by Lemma \ref{lem:conv_backlund_ext} below, we see that $(u_0^{(n)})^e \to u_0^e$ in $H^{1,1}(\rb)$
and hence $u_n^e(t)\to u^e(t)$ in $H^{1,1}(\rb)$ as $n\to\infty$, uniformly for $t$ in any interval $[0,T]$, $0<T<\infty$.
In particular, $u_n^e(t)|_{\rb^+} \to u^e(t)|_{\rb^+}$ as $n\to\infty$.
Hence, it follows that $u^e(t)|_{\rb^+}$ also solves HNLS$_q^+$.

If $g_2(0, iq; u_0^+) = 0$, then define $G(x,z)=(G_1, G_2)^T$ as in the proof of Proposition \ref{prop:IBV_rel_backlund}.
We obtain $G_2(0,iq; \mc{R}\mc{B}_q^+ u_0^+) \neq 0$ and hence
we are in the similar situation to the case $g_2(0, iq; u_0^+) \neq 0$,
but now on $\rb^-$ with $v_0 \equiv \mc{R}\mc{B}_q^+ u_0^+$.
We choose a sequence $\{v_0^{(n)}\}$ which converges to $v_0$ in $H^{1,1}(\rb^-)$
and gives rise to a classical solution $v_n(t)$ with $v_n(t=0)=(v_0^{(n)})^e$,
where $(v_0^{(n)})^e$ is the B\"acklund extension of $v_0^{(n)}\in H^{1,1}(\rb^-)$.
Let $v_0^e$ be the B\"acklund extension of $v_0\in H^{1,1}(\rb^-)$.
By Proposition \ref{prop:backlund_ext} below, we see that $v_0^e = u_0^e$
and hence $(v_0^{(n)})^e \to u_0^e$.
Thus, $u^e(t)|_{\rb^+} = \lim_{n\to\infty} v_n(t)|_{\rb^+}$ solves HNLS$_q^+$.

Now it follows immediately that $u(x,t) \equiv u^e(|x|,t)$ solves the IVP \eqref{eq:nls_delta}
with $u(x,0) = u_0(x)$ in the sense of \eqref{eq:weak_sol_H_q}.
\end{proof}

\begin{remark}
\label{rmk:sol_approx}
It follows from the proof of Theorem \ref{thm:Sol_proc},
that if $u(t) \in H^{1,1}(\rb^+)$, $u(t=0)=u_0$, is a solution of HNLS$_q^+$,
then there exists a sequence of solutions $u_n(t)\in H^{3,3}(\rb^+)$ of HNLS$_q^+$,
such that for $t\geq0$, $\|u_n(t) - u(t)\|_{H^{1,1}(\rb)} \to 0$ as $n\to \infty$.
In particular, any solution to HNLS$_q^+$ in $H^{1,1}(\rb^+)$ can be approximated
by a classical solution to HNLS$_q^+$.
With a little more work, one can show that the $u_n$'s can be chosen in $H^{k,k}(\rb^+)$ for any $k\geq3$.
\end{remark}

\begin{remark}
\label{rmk:sol_alpha_symm}
It also follows from the proof of Theorem \ref{thm:Sol_proc},
that if $u_0 \in H^{1,1}(\rb)$ and $u^e(t)$ is the solution of NLS on $\rb$ with $u^e(t=0)=u_0^e$,
the B\"acklund extension of $u_0$, then $u^e(t)$ is $q$-symmetric (cf. Proposition \ref{prop:classical_sol_NLS}).
\end{remark}

\begin{proposition}
\label{prop:HNLS_H_k_k_backlund}
Let $k\geq2$ and suppose $u_0\in H^{k,k}(\rb^+)$.
In addition, suppose that the B\"acklund extension $u_0^e$ of $u_0$ is in $H^{k,k}(\rb)$.
Then HNLS$_q^+$ has a unique solution $u(t)$ in $H^{k,k}(\rb^+)$ with $u(t=0)=u_0$
and satisfies the boundary condition $u_x(0+,t) +q u(0+,t) = 0$, $t\geq0$.
\end{proposition}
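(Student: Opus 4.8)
The plan is to run the B\"acklund-extension solution procedure of Theorem \ref{thm:Sol_proc} and then propagate the extra regularity through it. By hypothesis the B\"acklund extension $u_0^e$ of $u_0$ lies in $H^{k,k}(\rb)$ (note that Proposition \ref{prop:bijectivity_backlund} only gives $u_0^e\in H^{1,1}(\rb)$ in general, which is why the $H^{k,k}$ hypothesis on $u_0^e$ is imposed separately), and by Proposition \ref{prop:backlund_ext} it is $q$-symmetric. Applying Proposition \ref{prop:NLS_sol_H_k_k} to $u_0^e$, NLS on $\rb$ has a (unique) solution $u^e(t)\in H^{k,k}(\rb)\subset H^{1,1}(\rb)$ with $u^e(t=0)=u_0^e$; since $H^{k,k}\subset H^{1,1}$ and weak global solutions in $H^{1,1}(\rb)$ are unique (Section \ref{sec:sol_eqs}), this $u^e(t)$ is precisely the weak global $H^{1,1}(\rb)$ solution produced in Step 2 of Theorem \ref{thm:Sol_proc}. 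I then set $u(t)\equiv u^e(t)|_{\rb^+}$ as the candidate solution of HNLS$_q^+$.

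Next I would record that $u(t)$ has all the required properties. First, $u(t)\in H^{k,k}(\rb^+)$ for every $t\geq0$ and $t\mapsto u(t)$ is continuous into $H^{k,k}(\rb^+)$: this is immediate from the continuity of $t\mapsto u^e(t)\in H^{k,k}(\rb)$ together with the boundedness of the restriction map $H^{k,k}(\rb)\to H^{k,k}(\rb^+)$, and $u(t=0)=u_0|_{\rb^+}=u_0$. Second, by the proof of Theorem \ref{thm:Sol_proc} (equivalently Remark \ref{rmk:sol_alpha_symm}), $u^e(t)$ is $q$-symmetric for all $t\geq0$ and $u^e(t)|_{\rb^+}$ is the unique weak global solution of HNLS$_q^+$ in $H^{1,1}(\rb^+)$ in the sense of \eqref{eq:weak_sol_H_q_plus}; hence the $u(t)$ just constructed solves HNLS$_q^+$. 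Third, since $k\geq2$, the Sobolev embedding $H^{k,k}(\rb)\subset H^2(\rb)\subset C^1(\rb)$ gives $u^e(t)\in C^1(\rb)$, so by Proposition \ref{prop:mixed_cond_x_0} the $q$-symmetry of $u^e(t)$ yields $u_x(0+,t)+q\,u(0+,t)=(u^e)_x(0,t)+q\,u^e(0,t)=0$ for all $t\geq0$.

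Finally, for uniqueness: any solution of HNLS$_q^+$ in $H^{k,k}(\rb^+)$ with $u(0)=u_0$ is in particular a weak global $H^{1,1}(\rb^+)$ solution in the sense of \eqref{eq:weak_sol_H_q_plus}, and such solutions are unique by the theorem of Section \ref{sec:sol_eqs}; so it must coincide with the $u(t)$ above. The only genuinely new input in all of this is the persistence of $H^{k,k}$ regularity under the NLS flow on the whole line, which is exactly Proposition \ref{prop:NLS_sol_H_k_k}; the remainder is an assembly of Theorem \ref{thm:Sol_proc}, Proposition \ref{prop:backlund_ext}, Proposition \ref{prop:mixed_cond_x_0}, and the uniqueness statement of Section \ref{sec:sol_eqs}. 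Accordingly I expect the only delicate point to be the bookkeeping that identifies the $H^{k,k}$-valued curve $u^e(t)|_{\rb^+}$ with ``the'' HNLS$_q^+$ solution — that is, verifying that the weak formulation \eqref{eq:weak_sol_H_q_plus} holds for it — which is guaranteed once one invokes the proof of Theorem \ref{thm:Sol_proc} (the restriction of the line-solution is the $H^{1,1}(\rb^+)$ HNLS solution, and here it happens to lie in $H^{k,k}(\rb^+)$) together with uniqueness in $H^{1,1}(\rb^+)$.
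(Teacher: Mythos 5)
Your proposal is correct and follows essentially the same route as the paper: solve NLS on the line in $H^{k,k}(\rb)$ with initial data $u_0^e$, restrict to $\rb^+$ and invoke Theorem \ref{thm:Sol_proc} to verify the weak formulation \eqref{eq:weak_sol_H_q_plus}, then use the $q$-symmetry of $u^e(t)$ together with $H^{k,k}\subset C^1$ for $k\geq2$ and Proposition \ref{prop:mixed_cond_x_0} to obtain the boundary condition. The only difference is that you spell out the identification with the $H^{1,1}$ weak solution and the uniqueness step more explicitly than the paper does.
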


\begin{proof}
As $u_0^e\in H^{k,k}(\rb)$, NLS has a solution $u^e(t)$ in $H^{k,k}(\rb)$ with $u^e(t=0)=u_0^e$.
But then, by Theorem \ref{thm:Sol_proc}, as $H^{k,k} \subset H^{1,1}$,
$u^e(t)|_{\rb^+}$ satisfies \eqref{eq:weak_sol_H_q_plus},
and hence $u^e(t)|_{\rb^+}$ is a solution of HNLS$_q^+$ in $H^{k,k}(\rb^+)$.
Furthermore, as $H^{k,k}(\rb^+) \subset C^1(\rb^+)$, for $k\geq 2$,
and $u^e(t)$ is $q$-symmetric (see Remark \ref{rmk:sol_alpha_symm}),
the boundary condition at $x=0$ follows from Proposition \ref{prop:mixed_cond_x_0}.
\end{proof}

\begin{remark}
Let $k\geq2$. As $u_0^e \in H^{k,k}(\rb)$, the boundary condition $u_0'(0+) +q u_0(0+) = 0$
is automatic for $u_0$ (see Proposition \ref{prop:mixed_cond_x_0}).
\end{remark}

The condition above on $u_0^e$ is rather implicit.
Of course, the condition can be rephrased in terms of requirement on the derivatives of $u_0$ at $x=0+$.
For $k=2$ or $3$, the conditions are particularly simple.

\begin{proposition}
\label{prop:HNLS_H_k_k_23}
Suppose $u_0\in H^{k,k}(\rb^+)$, $k=2$ or $3$, and in addition, suppose that $u_0'(0+) +q u_0(0+) = 0$ .
Then HNLS$_q^+$ has a unique solution $u(t)$ in $H^{k,k}(\rb^+)$, $k=2$ or $3$, with $u(t=0)=u_0$
and satisfying the boundary condition $u_x(0+,t) +q u(0+,t) = 0$, $t\geq0$.
\end{proposition}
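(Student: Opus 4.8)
The plan is to deduce the statement from Proposition~\ref{prop:HNLS_H_k_k_backlund}. That proposition already yields, under the hypotheses $u_0\in H^{k,k}(\rb^+)$ and $u_0^e\in H^{k,k}(\rb)$, a unique solution $u(t)$ of HNLS$_q^+$ in $H^{k,k}(\rb^+)$ with $u(t=0)=u_0$ satisfying $u_x(0+,t)+qu(0+,t)=0$ for all $t\geq0$ --- exactly the conclusion we want, together with uniqueness and persistence of the boundary condition. Hence the entire task is to prove the implication
\[
u_0\in H^{k,k}(\rb^+),\ \ u_0'(0+)+qu_0(0+)=0\ \Longrightarrow\ u_0^e\in H^{k,k}(\rb),\qquad k=2,3 .
\]

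First I would treat the behaviour of $u_0^e$ away from $x=0$, i.e.\ I would upgrade Proposition~\ref{prop:bijectivity_backlund} (the case $k=1$) to the statement that $\mathcal{B}_q^+$ maps $H^{k,k}(\rb^+)$ into itself for $k=2,3$. By \eqref{eq:u_tilde}, $\mathcal{B}_q^+u_0=u_0-2q\,\mathcal{F}(\varphi_1)$ with $\varphi_1=\Psi_1^0(\cdot,iq)$, and from the proof of Proposition~\ref{prop:bijectivity_backlund} one has $\varphi_1=c_1g(\cdot,iq)+c_2h(\cdot,iq)$, where $g=(e_1+r_1)e^{ixz/2}$ and $h=(e_2+r_2)e^{-ixz/2}$ solve the linear spectral equation on $\rb^+$. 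The remainders $r_1,r_2$ satisfy Volterra-type integral equations whose coefficient is $Q=\bsm 0&u_0\\-\ovl{u_0}&0\esm$; bootstrapping the iteration that gave $r_1,r_2\in H^{1,1}(\rb^+)$, now using $u_0\in H^{k,k}(\rb^+)$ and the estimates of Lemma~\ref{lem:H11_integral} differentiated once or twice (this is the usual regularity statement for the direct scattering map on the spaces $H^{k,j}$), yields $r_1,r_2\in H^{k,k}(\rb^+)$. After factoring out the dominant exponential, $\varphi_1$ equals $c\,e^{\pm qx/2}\bigl(e_j+\rho\bigr)$ with $c\neq0$ a constant, $j\in\{1,2\}$, and $\rho\in H^{k,k}(\rb^+)$; since $\mathcal{F}$ depends only on the ratio of the entries of its argument, is smooth there, and has denominator $|(\varphi_1)_1|^2+|(\varphi_1)_2|^2=\det\varphi$ bounded below, writing $\mathcal{F}(\varphi_1)$ in terms of the $r_j$ (as in the two cases of the proof of Proposition~\ref{prop:bijectivity_backlund}) shows $\mathcal{F}(\varphi_1)\in H^{k,k}(\rb^+)$. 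Hence $\mathcal{B}_q^+u_0\in H^{k,k}(\rb^+)$, so $\mathcal{R}\mathcal{B}_q^+u_0\in H^{k,k}(\rb^-)$, and $u_0^e$ is of class $H^{k,k}$ on each of $\rb^+$ and $\rb^-$ separately.

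It then remains to match $u_0^e$ across $x=0$. The key observation is that membership of $u_0^e$ in $H^{k,k}(\rb)$ requires only that $u_0^e,(u_0^e)',\dots,(u_0^e)^{(k-1)}$ be absolutely continuous on $\rb$ (in particular continuous at $0$), while a jump of $(u_0^e)^{(k)}$ at $0$ is harmless for the $L^2$ requirement. For $k=3$ one has $H^{3,3}(\rb^+)\subset C^2(\rb^+)$, so Proposition~\ref{lem:backlund_ext_c2} applies and gives $u_0^e\in C^2(\rb)$; together with the half-line regularity of the previous step this yields $u_0^e\in H^{3,3}(\rb)$. For $k=2$ the data is only $C^1$, so Proposition~\ref{lem:backlund_ext_c2} is unavailable; instead the computation in the proof of Proposition~\ref{prop:mixed_cond_x_0} shows that $u_0'(0+)+qu_0(0+)=0$ forces $\tilde Q_x(0)=-Q_x(0)$, i.e.\ $(\mathcal{B}_q^+u_0)'(0+)=-u_0'(0+)$, which together with $(\mathcal{B}_q^+u_0)(0)=u_0(0)$ from \eqref{eq:cont_at_x_0} gives $(u_0^e)'(0-)=(u_0^e)'(0+)$ and $(u_0^e)(0-)=(u_0^e)(0+)$; hence $u_0^e\in C^1(\rb)$ and, with the previous step, $u_0^e\in H^{2,2}(\rb)$. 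In either case $u_0^e\in H^{k,k}(\rb)$, and Proposition~\ref{prop:HNLS_H_k_k_backlund} finishes the proof.

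The main obstacle is the regularity upgrade of the second paragraph: one must verify that the auxiliary solutions $g,h$ (equivalently $\varphi_1$) pick up the two additional orders of smoothness and decay of $u_0$, which is a routine but slightly tedious bootstrapping of the Volterra estimates behind Proposition~\ref{prop:bijectivity_backlund} and Lemma~\ref{lem:H11_integral}. Everything else --- the matching at $x=0$ and the reduction to Proposition~\ref{prop:HNLS_H_k_k_backlund} --- is immediate, the only point to watch being that for $k=2$ the initial data is merely $C^1$, so one must use Proposition~\ref{prop:mixed_cond_x_0} in place of Proposition~\ref{lem:backlund_ext_c2}.
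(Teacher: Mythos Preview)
Your proposal is correct and follows essentially the same approach as the paper: reduce to Proposition~\ref{prop:HNLS_H_k_k_backlund} by showing $u_0^e\in H^{k,k}(\rb)$, handle the half-line regularity by bootstrapping the argument of Proposition~\ref{prop:bijectivity_backlund}, and verify the matching at $x=0$ via (the proof of) Proposition~\ref{prop:mixed_cond_x_0} for $k=2$ and Proposition~\ref{lem:backlund_ext_c2} for $k=3$. The paper's proof is organized identically, merely phrasing the matching as the three conditions $w^{(j)}(0-)=u_0^{(j)}(0+)$, $j\le k-1$, and deferring the half-line regularity to the same remark (``follows in a similar way to the proof that $u_0^e$ of $u_0\in H^{1,1}(\rb^+)$ lies in $H^{1,1}(\rb)$'').
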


\begin{proof}
It is enough to show that $u_0^e$ is in $H^{k,k}$, $k=2,3$.
Let $w = \mc R \mc B_q^+ u_0$.
We must show that for $k=2$,

(i)$w(0-) = u_0(0+)$,

(ii)$w'(0-) = u_0'(0+)$,

\noindent and in addition, if $k=3$,

(iii) $w''(0-) = u_0''(0+)$.

\noindent As $H^{2,2} \subset C^1$, conditions (i)(ii) follow from (the proof of) Proposition \ref{prop:mixed_cond_x_0}.
On the other hand, as $H^{3,3} \subset C^2$, condition (iii) follows from Proposition \ref{lem:backlund_ext_c2}.
The proof that the B\"acklund extension $u_0^e$ of $u_0 \in H^{k,k}(\rb^+)$ belongs to $H^{k,k}(\rb)$, $k=2$ or $3$,
follows in a similar way to the proof that $u_0^e$ of $u_0 \in H^{1,1}(\rb^+)$ lies in $H^{1,1}(\rb)$
(cf. Proposition \ref{prop:bijectivity_backlund}).
\end{proof}

\begin{remark}
For $k=3$, the solution $u(t)$ in $H^{3,3}$ is a classical solution.
\end{remark}

\begin{remark}
By Proposition \ref{prop:HNLS_H_k_k_23}, solutions $u(t)$ of HNLS$_q^+$ in $H^{2,2}$
satisfy the boundary condition $u_x(0+,t) +q u(0+,t) = 0$, $t\geq0$.
As $H^{2,2} \subset \{u\in H^{1,1}(\rb^+): u' \in L^1(\rb^+)\}$,
we see that this result is consistent with Step 2 above for $t>0$.
\end{remark}

\section{Long-time Asymptotics}
\label{sec:longtime}
By the results of the previous section, in order to determine the long-time behavior of the solution $u(x,t)$ of the IVP \eqref{eq:nls_delta},
we need to analyze the solution $u^e(x,t)$ to the equation \eqref{eq:focusing_nls} with initial data given by the B\"acklund extension $u_0^e$ of $u_0|_{\rb^+}$.
We consider only the case when $u_0^e$ is generic
so that the associated RHP \eqref{eq:jump_inv_scat} has a finite number of simple poles in $\cb \setminus \rb$ (cf. Remark \ref{rmk:not_generic}).
Rather than analyzing the asymptotic behavior of the RHP directly,
it is convenient  to remove the poles using Darboux transformations (see the Appendix for details).
Once the long time behavior of the RHP without poles is determined,
we then add the poles back in, again using Darboux transformations.
This procedure of removing the poles is not necessary:
we can certainly apply the Deift-Zhou steepest-descent method directly to RHP's with poles.
However, in situations where there are only one or two pairs of poles,
which is of primary interest here, it is more convenient to proceed by first stripping out the poles.

In order to explain the above procedure in more detail,
suppose that the scattering data of $u^e (x, t=0) = u_0^e (x)$ is given by
the reflection coefficient function $r(z)=\ovl{b(z)}/\ovl{a(z)}$ with $\|r\|_{H^{1,1}(\rb)} < \infty$,
and one pair of simple eigenvalues at $z=z_1\equiv i\mu_1 \in i\rb^+$ with the norming constant $\gamma_1 = \gamma(z_1) \neq 0$,
and at $\ovl{z_1}$ with corresponding norming constant $-\ovl{\gamma_1}$.
Here, $a(z)$, $b(z)$ and $\gamma_1$ are constructed from $u_0 \in H^{1,1}(\rb^+)$ as in Proposition \ref{prop:backlund_scattering}.
Note that the norming constant $\gamma_1$ is of the form $\gamma_1= e^{-i\rho}\sqrt{\frac{\mu_1+q}{\mu_1-q}}$ for some $\rho \in \rb$.
Indeed, by Proposition \ref{p:symm_scattering},
\beq
\label{eq:norming_const_real}
\gamma (z_1) \overline{\gamma(-\overline{z_1})} = \frac{z_1 - i\beta}{z_1 + i\beta}, \ \ \beta=-q \ \
\Rightarrow \ \ |\gamma (z_1)| = \sqrt{\frac{\mu_1 +q}{\mu_1-q}}.
\eeq
The evolution of the scattering data is given as follows by Theorem \ref{thm:evol_scatt_data}:
\begin{equation}
\label{eq:scattering_data}
z_1 (t)=i\mu_1, \ \ \gamma_1 (t) = \gamma_1 e^{iz_1 ^2 t/2}, \ \ r(z,t) = r(z)e^{-iz^2 t/2} \text{ for } z\in\rb.
\end{equation}
Define
\begin{equation}
\label{eq:scattering_data2}
r_f(z) \equiv r(z) \frac{z-\overline{z_1}}{z-z_1}.
\end{equation}
By the Appendix, $r_f$ is the pole-free reflection coefficient that one obtains
after removing the poles at $z=z_1, \ovl{z_1}$ by using a Darboux transformation.
For each fixed $x\in\rb$ and $t>0$, let the $2 \times 2$ matrix function $m_f(x,t,z)$ solve the normalized RHP $(\mathbb{R}, v_f) $:
$m_f(x,t,z)$ is analytic in $\cb\setminus\rb$ and the jump matrix is given by
\begin{equation}
\label{eq:jump_mat}
v_f(z) = v_f(x,t,z) \equiv \bpm 1+|r_f(z)|^2&r_f(z)e^{i\theta} \\ \overline{r_f(z)}e^{-i\theta} &1 \epm,
\  \theta=\theta(x,t,z) \equiv xz - \frac{tz^2}{2}.
\end{equation}
Let $u_f(x,t)$ be the associated potential function with $Q_f = \bsm 0 & u_f \\ -\ovl{u_f} & 0 \esm$.
If we apply the Darboux transformation to add in poles at $z_1 =i\mu_1$, $\ovl{z_1}=-i\mu_1$
with the norming constant $\gamma_1 (t) = \gamma_1 e^{iz_1 ^2 t/2}$,
we obtain the solution $m(x,t,z)$ to the RHP corresponding to \eqref{eq:scattering_data}.
In particular, we have from \eqref{eq:b_darboux} and \eqref{eq:darboux_u},
\begin{equation}
\label{eq:remove_one_u}
u^e(x,t) = u_f(x,t) + i(z_1-\overline{z_1}) \mathcal{F}({\mathfrak{b}})
\end{equation}
where
\beq
\label{eq:remove_one_b}
 {\mathfrak{b}} = m_f(z_1)e^{ixz_1 \sigma} \bpm 1 \\ \frac{-c_1 (t)}{z_1-\overline{z_1}} \epm,
\ \ c_1 (t) = \frac{\gamma_1(t)}{a'(z_1)}.
\eeq
The pole-free solution $u_f(x,t)$ can be read off from the residue of $m_f(x,t,z)$, i.e.,
\beq
\label{eq:u_f_residue}
u_f(x,t) = -i((m_f)_1(x,t))_{12}, \ \  (m_f)_1(x,t) \equiv \lim_{z\to\infty} z(m_f(x,t,z)-I).
\eeq
The goal is to obtain an explicit formula for the asymptotic of $m_f(x,t,z)$ as $t \rightarrow \infty$;
the behavior of $u^e(x,t)$ as $t\to\infty$ can then be read off from \eqref{eq:remove_one_u} and \eqref{eq:remove_one_b}.
The case where there are $n\geq 2$ simple eigenvalues, is treated similarly. Here we use
$$
r_f(z) = r(z) \prod_{k=1}^n \frac{z-\overline{z_k}}{z-z_k}.
$$

We now apply the steepest-descent method introduced by Deift and Zhou in \cite{DZ3} to the focusing NLS at hand (see also \cite{DIZ}).
We will follow \cite{DZ} in which the authors analyze the long-time behavior of the defocusing NLS equation,
making suitable modification for the focusing case.
Along the way we will state many technical results (e.g. Proposition \ref{prop:prop_delta}, etc.) without proof:
the proofs are very similar to analogous results in \cite{DZ} and the details are left to the reader.

In view of the preceding discussion we must consider normalized RHP's $(\Sigma=\rb,v)$ on $\rb$
with  jump matrices of the form,
\beq
\label{eq:jump_v}
v(z) = v(x,t,z) = \bpm 1+|r(z)|^2&r(z)e^{i\theta} \\ \overline{r(z)}e^{-i\theta} &1 \epm, \ \theta= xz - \frac{tz^2}{2},
\eeq
where $r\in H^{1,1}(\rb)$.
We are interested in the behavior of the solution $m=m(x,t,z)$ of $(\rb,v)$ as $t\to \infty$.

Solutions $m=m(x,t,z)$ of the RHP as given in \eqref{eq:jump_inv_scat} are classical
in the sense that $m$ is continuous up to the boundary in $\cb^+$ and $\cb^-$.
Such solutions $m$ exist because the reflection coefficient $r\in H^1(\rb)$.
For a general RHP on an oriented contour $\Sigma \subset \cb$ with jump matrix $v\in L^\infty(\Sigma)$,
we must define the RHP in a weaker sense (see \cite{LS}, \cite{CG}, and also \cite{DZ}), as follows.

Let $C_\Sigma$ denote the Cauchy operator
\beq
\label{eq:cauchy_operator}
(C_\Sigma h)(z) = \int_\Sigma \frac {h(s)}{s-z} \frac{\rd s}{2\pi i} , \ \ z\in \cb \setminus \Sigma,
\eeq
with associated boundary operators
\beq
\label{eq:cauchy_bdry_op}
(C^\pm_\Sigma h)(z) = \lim_{z' \to z^\pm} (C_\Sigma h)(z').
\eeq
Here $z^\pm$ refer to the orientation of $\Sigma$ at the point $z\in\Sigma$:
by convention if one traverses the contour in the direction of the orientation,
the ($\pm$)-sides lie to the left (right) respectively.
For example, if $\Sigma=\rb$ with orientation from $-\infty$ to $\infty$,
then $z^\pm$ refer to limits as $z' \to z\in\rb$ from $\cb^+/\cb^-$ respectively.
If $\Sigma$ is sufficiently regular - Lipschitz would do -
the operators $C^\pm_\Sigma$ are bounded in $L^p(\Sigma)$, $1<p<\infty$ (see e.g. \cite{BK}),
and the limits in \eqref{eq:cauchy_bdry_op} exist pointwise almost everywhere in $\Sigma$.
We always have
\beq
\label{eq:cauchy_bdry_id}
C^+_\Sigma - C^-_\Sigma = \text{id}.
\eeq
Given $\Sigma$ and a matrix $v$,
we say that a pair $h_\pm$ of $L^p(\Sigma)$ functions lies in $\partial C(L^p)$
if $h_\pm = C^\pm_\Sigma g$ for some (unique) $g\in L^p(\Sigma)$.
We say $m_\pm$ solves the normalized RHP $(\Sigma, v)_{L^2}$, if
\begin{itemize}
\item $m_\pm \in I+\partial   C (L^2)$,
\item $m_+(z) = m_-(z) v(z)$, $z\in\Sigma$.
\end{itemize}
Note that if $m_\pm$ solves the normalized RHP $(\Sigma, v)$,
then $m_\pm = I+C^\pm_\Sigma g$ for some $g\in L^2(\Sigma)$.
Hence $m(z) = I+(C_\Sigma g)(z) = I+ \int_\Sigma \frac{g(s)}{s-z} \frac{\rd s}{2\pi i}$, $z\in \cb \setminus \Sigma$,
is analytic in $\cb\setminus\Sigma$, and clearly solves the RHP $(\Sigma, v)$ a.e. on $\Sigma$.
Although the original RHP $(\rb, v)$ has a classical solution,
in the process of solving for $m(x,t,z)$ as $t\to\infty$,
we will encounter RHP's which only have solutions in the generalized sense (see e.g. \eqref{eq:delta_def} below, etc.).
It is easy to see that the classical solution $m(x,t,z)$ of $(\Sigma, v)$ is a also a solution in the generalized sense.

The first step in the steepest descent method is to separate $e^{i\theta}$ and $e^{-i\theta}$ in the jump matrix $v(x,t,z)$
in such a way as to respect the signature table for $\mathrm{Re}(i\theta)$.
We have $\mathrm{Re}(i\theta)=\mathrm{Re} [ it(z_0^2-(z-z_0)^2)/2 ]=t\mathrm{Im}(z-z_0)^2/2$
where $z_0 = x/t$ is the stationary phase point for $\theta$.
Let $\delta_\pm$ be the solution of the scalar, normalized RHP
$({\mathbb{R}}_- + z_0, 1+ |r|^2)$,
\beq
\label{eq:delta_def}
\left\{\aligned
&\delta_+ = \delta_- (1+|r|^2),\qquad z\in {\mathbb{R}}_-
+z_0,\\
&\delta_\pm - 1 \in \partial   C (L^2),
\endaligned\right.
\eeq
where the contour ${\mathbb{R}}_- + z_0$ is oriented from $-\infty$ to
$z_0$.

\begin{proposition}[Properties of $\delta$]
\label{prop:prop_delta}
Suppose $r\in L^\infty({\rb}) \cap
L^2({\rb})$ and $\|r\|_{L^\infty} \le \rho < \infty$.
The extension $\delta$ of $\delta_\pm$ off ${\rb}_- +z_0$ is given by the formula
\begin{equation}
\label{eq:delta}
\delta(z) = \exp \Big[ \frac1{2\pi i} \int^{z_0}_{-\infty} \frac{\log
(1+|r(s)|^2)}{s-z} ds \Big],\qquad z\in {\cb} \setminus ({\rb}_-
+ z_0).
\end{equation}
and satisfies for $z\in {\cb}\setminus ({\rb}_- +z_0)$,
$$\delta(z) \overline{\delta(\bar z)} = 1,$$
$$ (1+\rho^2)^{-\frac12} \le
|\delta(z)|, |\delta^{-1}(z)| \le (1+\rho^2)^{\frac12}$$ and
$$|\delta^{\pm 1}(z)|\le 1 \quad \text{for}\quad
\pm \text{Im } z>0.$$
For real $z$,
$$|\delta_+(z) \delta_-(z)| = 1 \quad \text{and, in
particular,}\quad |\delta(z)|  = 1\quad \text{for} \quad
z>z_0,$$
$$|\delta_+(z)| = |\delta^{-1}_-(z)| = (1 + |r(z)|^2
)^{\frac12},\qquad z<z_0,$$ and
$$\Delta\equiv \delta_+\delta_- = \exp \Big[\frac1{i\pi}
\text{ P.V. } \int^{z_0}_{-\infty}
\frac{\log(1+|r(s)|^2)}{s-z}ds \Big], $$
where $P.V.$ denotes
the principal value and $|\Delta| =
|\delta_+\delta_-| = 1$.
$$\|\delta_\pm - 1\|_{L^2(dz)} \le c\rho (1+\rho^2)^{\frac12}\|r\|_{L^2} .$$
\end{proposition}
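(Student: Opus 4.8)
The plan is to derive everything from the explicit formula \eqref{eq:delta} for $\delta$, which one first verifies actually solves the scalar RHP \eqref{eq:delta_def}. Writing $\nu(z) \equiv \frac{1}{2\pi i}\log(1+|r(z)|^2)$ for $z < z_0$, the function $\delta(z) = \exp[C_{\rb_-+z_0}\nu(z)]$ is analytic off $\rb_- + z_0$, and the additive jump $C^+_{\rb_-+z_0}\nu - C^-_{\rb_-+z_0}\nu = \nu$ (by \eqref{eq:cauchy_bdry_id}) exponentiates to the multiplicative jump $\delta_+/\delta_- = 1+|r|^2$; since $r \in L^2$, $\log(1+|r|^2) \in L^2$ as well, so $C^\pm_{\rb_-+z_0}\nu \in L^2$ and hence $\delta_\pm - 1 \in \partial C(L^2)$ after checking that $e^w - 1$ maps $L^2 \cap L^\infty$ boundary values into $\partial C(L^2)$ — here one uses that $\log(1+|r|^2)$ is also bounded, $\|\log(1+|r|^2)\|_{L^\infty} \le \log(1+\rho^2)$, so the exponent stays bounded and $e^w - 1 = w + O(w^2)$ with all terms in $\partial C(L^2)$. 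Then Remark \ref{rmk:RHP_unique}-style uniqueness (the scalar analogue) identifies this $\delta$ with the solution of \eqref{eq:delta_def}.

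Next I would read off each stated property directly from \eqref{eq:delta}. The Schwarz-type symmetry $\delta(z)\overline{\delta(\bar z)} = 1$ follows because $\overline{\log(1+|r(s)|^2)} = \log(1+|r(s)|^2)$ is real on $\rb$, so $\overline{\delta(\bar z)} = \exp[-\frac{1}{2\pi i}\int_{-\infty}^{z_0}\frac{\log(1+|r|^2)}{s-z}ds] = \delta(z)^{-1}$. For the size bounds: $\log|\delta(z)| = \mathrm{Re}\,C_{\rb_-+z_0}\nu(z) = -\frac{1}{2\pi}\int_{-\infty}^{z_0}\log(1+|r(s)|^2)\,\mathrm{Im}\frac{1}{s-z}\,ds = \frac{1}{2\pi}\int_{-\infty}^{z_0}\log(1+|r(s)|^2)\frac{\mathrm{Im}\,z}{|s-z|^2}ds$. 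Since the integrand has the sign of $\mathrm{Im}\,z$ and $\int \frac{\mathrm{Im}\,z}{|s-z|^2}ds$ over the half-line is at most the full-line value $\pi$, we get $0 \le \log|\delta(z)| \le \frac{1}{2}\log(1+\rho^2)$ for $\mathrm{Im}\,z > 0$, giving $|\delta(z)| \le (1+\rho^2)^{1/2}$ and, combined with $|\delta^{-1}| = |\overline{\delta(\bar z)}|$, the two-sided bound; likewise $|\delta(z)| \le 1$ for $\mathrm{Im}\,z > 0$ and $|\delta^{-1}(z)| \le 1$ for $\mathrm{Im}\,z < 0$. On the real axis, for $z > z_0$ the contour does not separate $z$ and the Cauchy integral is real, forcing $|\delta(z)| = 1$; for $z < z_0$, Plemelj gives $\delta_\pm(z) = \Delta(z)^{1/2}(1+|r(z)|^2)^{\pm 1/2}$ with $\Delta = \delta_+\delta_-$ the (unimodular) principal-value exponential displayed, whence $|\delta_+\delta_-| = 1$ and $|\delta_\pm(z)| = (1+|r(z)|^2)^{\pm 1/2}$.

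Finally, for the $L^2$ estimate $\|\delta_\pm - 1\|_{L^2(dz)} \le c\rho(1+\rho^2)^{1/2}\|r\|_{L^2}$, I would write $\delta_\pm - 1 = (e^{w_\pm} - 1)$ with $w_\pm = C^\pm_{\rb_-+z_0}\nu$, use the elementary inequality $|e^{w}-1| \le |w|e^{|w|}$ together with the uniform bound $|w_\pm| \le \|C^\pm\|\cdot\|\nu\|_{L^\infty}$-type control — more carefully, $\|w_\pm\|_{L^\infty}$ is controlled by $\log(1+\rho^2)$ via the same positivity argument as above — so $\|\delta_\pm - 1\|_{L^2} \le (1+\rho^2)^{1/2}\|w_\pm\|_{L^2} \le (1+\rho^2)^{1/2}\|C^\pm\|_{L^2\to L^2}\|\nu\|_{L^2}$, and then $\|\nu\|_{L^2} = \frac{1}{2\pi}\|\log(1+|r|^2)\|_{L^2} \le c\|\,|r|^2/(1+|r|^2)\cdot(1+|r|^2)\ldots\|$; using $\log(1+t) \le t$ for the pointwise bound $\log(1+|r|^2) \le |r|^2 \le \rho|r|$... actually $|r|^2 \le \rho^2$ is too crude, so instead use $\log(1+|r(s)|^2) \le |r(s)|^2$ and then $\||r|^2\|_{L^2} \le \rho\|r\|_{L^2}$, producing exactly $\|\nu\|_{L^2} \le \frac{\rho}{2\pi}\|r\|_{L^2}$ and hence the claimed bound with $c = \|C^-\|_{L^2\to L^2}/(2\pi) = 1/(2\pi)$. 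The main obstacle — really the only nontrivial point — is making the passage from the additive Cauchy-integral estimates to the multiplicative quantity $\delta_\pm$ fully rigorous, i.e. controlling $e^{w}-1$ in $L^2$ uniformly in $z_0$; this is handled cleanly by the $L^\infty$ bound on the exponent, which is available precisely because $\log(1+|r|^2) \in L^\infty(\rb)$ with norm $\le \log(1+\rho^2)$. All remaining assertions are direct consequences of \eqref{eq:delta} and the Plemelj formula, as in the analogous computation in \cite{DZ}.
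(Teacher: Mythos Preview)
The paper does not prove this proposition --- it explicitly defers to \cite{DZ}, noting that the proofs ``are very similar to analogous results in \cite{DZ} and the details are left to the reader.'' Your approach is the standard one and is correct in outline, but two steps need repair.

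First, your Poisson-kernel computation correctly gives $\log|\delta(z)|=\frac{1}{2\pi}\int_{-\infty}^{z_0}\log(1+|r(s)|^2)\,\frac{\mathrm{Im}\,z}{|s-z|^2}\,ds\ge0$ for $\mathrm{Im}\,z>0$, i.e.\ $|\delta(z)|\ge1$ there --- yet you then write ``likewise $|\delta(z)|\le1$ for $\mathrm{Im}\,z>0$,'' which contradicts what you just derived. In fact the printed inequality $|\delta^{\pm1}(z)|\le1$ for $\pm\,\mathrm{Im}\,z>0$ is a sign slip inherited from the defocusing analysis in \cite{DZ} (where $\log(1-|r|^2)\le0$ reverses the sign of the Poisson integral); in the focusing case the correct statement is $|\delta^{\mp1}(z)|\le1$ for $\pm\,\mathrm{Im}\,z>0$, and your own computation proves it. You should flag the discrepancy rather than assert a conclusion that contradicts your calculation. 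Second, in the $L^2$ bound you claim ``$\|w_\pm\|_{L^\infty}$ is controlled by $\log(1+\rho^2)$ via the same positivity argument,'' but the positivity argument bounds only $\mathrm{Re}\,w_\pm$; the imaginary part is a Hilbert-type principal-value integral and blows up logarithmically at $z_0$ (indeed $\delta(z)\sim(z-z_0)^{-i\nu(z_0)}$ there, cf.\ \eqref{eq:delta_0_def}), so $w_\pm\notin L^\infty$ and the inequality $|e^w-1|\le|w|e^{|w|}$ is unusable. The fix is minor: use instead $|e^w-1|=\bigl|\int_0^1 we^{tw}\,dt\bigr|\le|w|\,e^{\max(0,\mathrm{Re}\,w)}$, which needs only the bound on $\mathrm{Re}\,w_\pm$ that you actually have; then $\|\delta_\pm-1\|_{L^2}\le(1+\rho^2)^{1/2}\|w_\pm\|_{L^2}$ and the rest of your chain (with $\log(1+|r|^2)\le|r|^2\le\rho|r|$) goes through to give the stated estimate.
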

We write $w=(w^-, w^+)$ for a matrix $v$ and any of its factorization $v = (v^-)^{-1} v^+ = (I-w^-)^{-1}(I+w^+)$,
$v^\pm$, $(v^\pm)^{-1} \in L^\infty(\Sigma)$.
Denote the singular integral operator $C_w(h) = C^+(hw^-) + C^-(hw^+)$ acting on $L^p$-function valued matrices
where $C^\pm$ is a limit of Cauchy operator $C$ from the (+)-side(respectively, (-)-side) of the oriented contour $\Sigma$.
Note that if $w=(0,v-I)$ then $C_w = C_v$ as in \eqref{eq:cauchy_v_op}.
As shown in \cite{DZ}, if $(1-C_w)^{-1}$ is bounded in $L^p$ for one factorization $v=(1-w^{-1})^{-1}(I+w^+)$,
then it is bounded for all such factorizations.

\begin{lemma}
\label{lem:exist_resol_Cw}
Let $v$ be as in \eqref{eq:jump_v}, with $r\in H^{1,1}(\rb)$.
Then for any factorization $v = (v^-)^{-1} v^+$, $v^\pm, (v^\pm)^{-1} \in L^\infty(\rb)$,
the operator $1-C_w$ with $w = (w^-, w^+) = (I-v^-,v^+ - I)$ has a bounded inverse in $L^2(\rb)$ and
$$
\|(1-C_{w})^{-1}\|_{L^2(\mathbb{R})} \leq c(1+\rho^2),
$$
where $\|r\|_\infty \leq \rho < \infty$.
\end{lemma}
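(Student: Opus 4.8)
The plan is to reduce the statement to the known resolvent bound in \cite{DZ} for the defocusing case by exploiting the fact, recalled just above the lemma, that boundedness of $(1-C_w)^{-1}$ in $L^2$ is independent of the choice of factorization $v = (v^-)^{-1}v^+$. Thus it suffices to establish the bound for one convenient factorization, and then transfer it to the arbitrary factorization $v = (v^-)^{-1}v^+$ in the statement. I would work with the two ``triangular'' factorizations of $v$ dictated by the signature table for $\mathrm{Re}(i\theta) = t\,\mathrm{Im}(z-z_0)^2/2$: for $z > z_0$ use
\[
v = \begin{pmatrix} 1 & 0 \\ \overline{r}e^{-i\theta} & 1 \end{pmatrix}
\begin{pmatrix} 1 & re^{i\theta} \\ 0 & 1 \end{pmatrix},
\]
and for $z < z_0$ the opposite-handed factorization through the $\delta$-function, exactly as in \cite{DZ}. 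On each half-line $w^\pm$ has a single nonzero off-diagonal entry built from $r$ (times a unimodular oscillatory factor and, on $z<z_0$, bounded factors of $\delta_\pm$), so $\|w^\pm\|_{L^\infty} \le \rho$ and, more to the point, the relevant Cauchy-type operator estimates go through verbatim.

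The core of the argument is the observation, standard in the Beals--Coifman theory and used in \cite{DZ}, that for a factorization in which $w^+$ and $w^-$ are supported on disjoint parts of the contour (here $(z_0,\infty)$ and $(-\infty,z_0)$ respectively after introducing $\delta$), one has $C_w^2 = C^-(C^+(\,\cdot\,w^-)\,w^+) + C^+(C^-(\,\cdot\,w^+)\,w^-)$, and the cross terms $C^+(\,\cdot\,w^+)\,w^+$, $C^-(\,\cdot\,w^-)\,w^-$ vanish because $w^+ w^+$ and $w^- w^-$ are both zero (nilpotency of the strictly triangular matrices) — no, more carefully, because $w^{\pm}$ are nilpotent and the supports are arranged so that the surviving terms are controlled. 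Squaring gains a factor of $\rho^2$ while $\|C^\pm\|_{L^2\to L^2}=1$, so $\|C_w^2\| \le c\rho^2$ — wait, this only shows smallness of $C_w^2$ for $\rho$ small, which is not assumed. Instead I would follow \cite{DZ} precisely: one shows $1-C_w$ is Fredholm of index zero (the jump matrix has determinant $1+|r|^2 > 0$ bounded away from $0$ and $\infty$, and $r$ decays, so $v - I \to 0$ and the operator is a compact perturbation of the identity), and then shows the kernel is trivial using the positive-definiteness of $v$ together with the Zhou/Beals--Coifman vanishing-lemma argument: if $(1-C_w)h = 0$ then the function $m = I + C(h\,w)$ solves a homogeneous RHP with $m_+ = m_- v$, and pairing $\int_\Sigma m_+ \overline{m_-}^T$ against the positivity of $v$ forces $m \equiv 0$, hence $h = 0$. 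Fredholm index zero plus trivial kernel gives invertibility.

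For the quantitative bound $\|(1-C_w)^{-1}\|_{L^2} \le c(1+\rho^2)$ I would invoke the uniform version of this argument exactly as carried out in \cite{DZ} for $v$ of the same algebraic shape: the constant in the vanishing lemma depends only on $\|v^{\pm 1}\|_{L^\infty}$, which for our $v$ is controlled by $1+\rho^2$ (using $\det v = 1+|r|^2$ and $|r| \le \rho$, together with the bounds $(1+\rho^2)^{-1/2} \le |\delta_\pm| \le (1+\rho^2)^{1/2}$ from Proposition \ref{prop:prop_delta} when the $\delta$-conjugated factorization is used), and the passage from ``invertible'' to ``invertible with norm $\le c(1+\rho^2)$'' is the content of the resolvent estimate in \cite{DZ}, whose proof is insensitive to the sign of the nonlinearity since it uses only $v = v^* > 0$ and $v, v^{-1} \in L^\infty$. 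Finally, since we have the bound for the specific triangular factorization(s), the factorization-independence statement recalled before the lemma upgrades it to the arbitrary factorization $v = (v^-)^{-1}v^+$ claimed, completing the proof.

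The main obstacle I anticipate is not conceptual but bookkeeping: making sure that, as in \cite{DZ}, the $L^\infty$ bounds on $v^{\pm 1}$ — and hence the constant in the vanishing lemma — are genuinely $O(1+\rho^2)$ and do not secretly involve $\|r\|_{H^{1,1}}$ or $t$; this is why one must phrase everything in terms of the factorizations respecting the signature table, where the oscillatory and $\delta$ factors are unimodular or bounded by $(1+\rho^2)^{\pm 1/2}$, so that $r$ enters only through $\rho$. The $H^{1,1}$ hypothesis on $r$ is needed only to guarantee $r \in L^\infty \cap L^2$ with enough decay that $1-C_w$ is Fredholm; it does not enter the final constant. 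I expect the write-up to consist almost entirely of citing the relevant estimates from \cite{DZ} and checking that the focusing sign does not disturb the positivity $v = v^* > 0$, which it does not.
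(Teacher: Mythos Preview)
Your proposal lands on the same argument as the paper: show $1-C_v$ is Fredholm of index zero, kill the kernel via the vanishing-lemma pairing $0=\int m_+ m_-^*=\int m_- v m_-^*$ using $v=v^*>0$, extract the quantitative bound from the smallest eigenvalue of $v$, and then invoke the factorization-independence result (Corollary 2.7 in \cite{DZ}) to pass to an arbitrary factorization. The paper carries this out directly for the trivial factorization $w=(0,v-I)$ and computes the eigenvalue bound $\lambda_0\ge\frac{2+\rho^2-\sqrt{(2+\rho^2)^2-4}}{2}$ explicitly, rather than citing \cite{DZ} for the final constant.

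One remark: your detour through the signature-table triangular factorizations and the $\delta$-conjugation is unnecessary here. Those enter only later, in Lemma~\ref{lem:conj_equiv} and beyond, when one actually deforms the contour. For the present lemma the paper never introduces $\delta$; the positivity $v>0$ holds for the bare jump matrix $v$ on $\rb$, and that is all that is used. Dropping that detour would streamline your write-up without changing the substance.
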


\begin{proof}
By Corollary 2.7 in \cite{DZ}, it is enough to show that $1-C_v$ is invertible in $L^2(\rb)$.
We will first show that $1-C_v$ is a Fredholm operator of index 0.
Define $w=(0,v-I)$ and $\hat{w} = (0, \inv v - I)$. Then for $f \in L^2 (\rb)$, using \eqref{eq:cauchy_bd_id} and \eqref{eq:cauchy_v_op},
$$
\begin{aligned} C_w (C_{\hat{w}}f)
& = C^-[(C_{\hat{w}}f)(v-I)] = C^-\big[C^-(f(\inv v - I))(v-I)\big] \\
& = C^-\big[\big(C^+(f(\inv v - I)) - f(\inv v - I)\big)(v-I)\big] \\
& = C^-\big[C^+(f(\inv v - I))(v-I) + f( v + \inv v -2I)\big] \\
& = C^-\big[C^+(f(\inv v - I))(v-I)\big] + C_w f + C_{\hat{w}}f. \\
\end{aligned}
$$
As $v$ is continuous and $v-I\to 0$ as $|z|\to \infty$,
it follows that we can approximate $v-I$ by a sequence of rational functions of the form,
$$ \sum_{i=1}^{n} \frac{B_i}{z-a_i}, \ \ a_i \in \cb \setminus \rb.$$
Let $h = f(\inv v - I)$. Then,
$$
\begin{aligned}
C^-\Big((C^+h)(\diamondsuit) \frac{1}{\diamondsuit - a_i}\Big)(z)
& = \lim_{\epsilon \downarrow 0} \int_{\rb} \frac{(C^+h)(s)}{s - a_i}\frac 1{s-(z-i\epsilon )} \frac{\rd s}{2 \pi i} \\
& =\left\{ \begin{aligned}
& 0 \qquad \quad\quad \text{if } a_i \in \cb^-, \\
& \frac{Ch(a_i)}{a_i-z} \quad \ \text{if } a_i \in \cb^+. \\
\end{aligned} \right. \\
\end{aligned}
$$
which implies that $f \mapsto C^-\Big((C^+(f(\inv v - I))) \frac{1}{\diamondsuit - a_i}\Big)$
has either rank 0 or 1 and hence is compact in $L^2(\rb)$.
It follows that $C^-[C^+(f(\inv v - I))(v-I)]$ is operator limit of compact operators and hence is compact.
But $(1-C_w)(1-C_{\hat{w}}) = 1 - C_w - C_{\hat{w}} + C_w C_{\hat{w}}= 1+ C^-[C^+(\diamondsuit(\inv v - I))(v-I)]$.
Similarly, we see that $(1-C_{\hat{w}})(1-C_w)-1$ is compact, and hence $1-C_w$ is a Fredholm operator.
For $0\leq\gamma\leq1$, set $v(\gamma) \equiv \bsm 1+\gamma^2|r|^2&\gamma r e^{i \theta}\\ \gamma\overline{r}e^{-i \theta}&1   \esm $.
Clearly $v(1) = v$. The same argument shows that $v(\gamma)$ is Fredholm for all $\gamma\in[0,1]$.
But for $\gamma = 0$, $v(0) = I$ and so $1-C_{v(0)} = 1$;
we conclude that ind$(1-C_{v(\gamma=1)}) = $ind$(1-C_{v(\gamma=0)}) = 0$.
Thus $1-C_v$ is Fredholm of index zero.
To complete the proof, we must show that $(1-C_v)f=0$, $f\in L^2(\rb)$, implies that $f=0$.
Set $m_\pm = C^\pm(f(v-I)) \in \partial C(L^2)$.
Then as $f = C_v f = C^- (f(v-I)) = m_-$, we have
$$\bal
m_+
& = C^+ (f(v-I)) = C^-(f(v-I)) + f(v-I) \\
& = m_- + m_- (v-I) = m_- v.\\
\eal
$$
On the other hand, by Cauchy's theorem,
$$
0 = \int_{\rb} m_+ m_-^* \rd z = \int_{\rb} m_- v m_-^* \rd z.
$$
But $v$ is clearly strictly positive definite and hence $f=m_-\equiv 0$.

Suppose that $m_\pm \in \partial C(L^2)$ and $m_+ = m_- v + f$ for $f\in L^2(\rb)$.
As above, we have
$$
0 = \int_{\rb} m_+ m_-^* \rd z = \int_{\rb} m_- v m_-^* + \int_{\rb} f m_-^*.
$$
and hence
$$
\bigg|\int_{\rb} m_- v m_-^* \bigg| \leq \|f\|_{L^2} \|m_-\|_{L^2}.
$$
But $v\geq \lambda_0$, where $\lambda_0 \geq \frac{2+\rho^2 - \sqrt{(2+\rho^2)^2 - 4}}2 >0$ is the smallest eigenvalue of $v$.
Hence $\lambda_0 \|m_-\|_{L^2}^2 \leq \|f\|_{L^2} \|m_-\|_{L^2}$ or $\|m_-\|_{L^2} \leq \frac 1{\lambda_0} \|f\|_{L^2} $.
But this bound then implies $\| (1-C_w)^{-1}\| \leq \frac c{\lambda_0}$, for any factorization $v=(I-w_-)^{-1} (I+w_+)$ of $v$,
by Proposition 2.6 and Corollary 2.7 in \cite{DZ}.
\end{proof}

Let $m_\delta \equiv m \delta^{-\sigma_3}$ solve the normalized RHP $(\rb, v_{\delta})$ where $v_{\delta} = \delta_-^{\sigma_3} v \delta_+^{-\sigma_3}$. In other words,
$$ \begin{aligned}
w_\delta &= (w_\delta^- , w_\delta^+ ) \\
&= \left\{
\begin{aligned}
& \bigg( \bpm 0&r \delta^2 e^{i\theta} \\ 0&0 \epm , \bpm 0&0 \\ \overline{r}  \delta^{-2}e^{-i\theta}&0 \epm \bigg) , \qquad \qquad   z > z_0, \\
& \Bigg( \bpm 0&0 \\ \frac{\overline{r}}{1+|r|^2} \delta_-^{-2} e^{-i\theta} & 0 \epm ,
\bpm 0 & \frac{r}{1+|r|^2} \delta_+^2 e^{i\theta} \\ 0 & 0 \epm  \Bigg) , \ \ z < z_0.
\end{aligned}
\right.
\end{aligned} $$

\begin{notation}
We say that the bounds on two operators $A$ and $B$ in a Banach space are \emph{equivalent}
if and only if $c^{-1}\|B\| \leq \|A\| \leq c\|B\|$ for some $c>1$.
\end{notation}

\begin{lemma}[cf. \cite{DZ}, p. 1042]
\label{lem:conj_equiv}
The operator $(1-C_{w_\delta})^{-1}$ exists in $L^2(\mathbb{R})$ and the bound on $(1-C_{w_\delta})^{-1}$ is equivalent to the bound on $(1-C_{w})^{-1}$.
\end{lemma}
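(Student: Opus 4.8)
The plan is to show that $1-C_{w_\delta}$ and $1-C_w$ differ only by composition with bounded operators that are boundedly invertible, all bounds depending on nothing but $\rho=\|r\|_{L^\infty}$. Since Lemma~\ref{lem:exist_resol_Cw} already provides the existence of $(1-C_w)^{-1}$ in $L^2(\rb)$ together with a bound, this simultaneously yields the existence of $(1-C_{w_\delta})^{-1}$ and the equivalence of the two resolvent bounds in the sense of the Notation preceding the Lemma.

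First I would record the algebraic relation between the two weights. Comparing the explicit formulas for $w_\delta=(w_\delta^-,w_\delta^+)$ with the piecewise triangular factorization $w=(w^-,w^+)$ of the matrix $v$ in \eqref{eq:jump_v} — the one that is upper/lower triangular for $z>z_0$ and has the reversed triangular order for $z<z_0$ — one checks by inspection that
$$
w_\delta^{\pm}(z)=\delta_\pm(z)^{\sigma_3}\,w^{\pm}(z)\,\delta_\pm(z)^{-\sigma_3},\qquad z\in\rb,
$$
which is just the pointwise form of $v_\delta=\delta_-^{\sigma_3}v\,\delta_+^{-\sigma_3}$ together with the continuity $\delta_+=\delta_-$ on $(z_0,\infty)$. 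The point is then that, by Proposition~\ref{prop:prop_delta}, $\delta$ is analytic in $\cb\setminus(\rb_-+z_0)$ — hence separately analytic, bounded, and with bounded inverse in $\cb^+$ and in $\cb^-$, $|\delta^{\pm1}(z)|\le(1+\rho^2)^{1/2}$ — its only jump being confined to the ray $\rb_-+z_0$, across which $\|\delta_\pm-1\|_{L^2}$ is finite. This lets one commute the multiplication operators $h\mapsto h\,\delta_\pm^{\sigma_3}$ and $h\mapsto h\,\delta_\pm^{-\sigma_3}$ past the Cauchy projections $C^\pm$ at the price only of a contribution supported on $\rb_-+z_0$: substituting the display above into $C_{w_\delta}h=C^+\!\big(h\,\delta_-^{\sigma_3}w^-\delta_-^{-\sigma_3}\big)+C^-\!\big(h\,\delta_+^{\sigma_3}w^+\delta_+^{-\sigma_3}\big)$ and exploiting the analyticity of $\delta^{\pm\sigma_3}$ in each half-plane, one rewrites $1-C_{w_\delta}$ as $1-C_w$ composed with operators that are products of such multiplications and bounded Cauchy-type operators, each invertible with norm bounded by a constant $c=c(\rho)$. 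This is exactly the manipulation carried out in \cite{DZ}, p.~1042, to which I would refer for the routine estimates.

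Granting this, the conclusion is immediate: $1-C_{w_\delta}$ is invertible in $L^2(\rb)$ if and only if $1-C_w$ is, and
$$
c^{-1}\,\|(1-C_w)^{-1}\|_{L^2(\rb)}\ \le\ \|(1-C_{w_\delta})^{-1}\|_{L^2(\rb)}\ \le\ c\,\|(1-C_w)^{-1}\|_{L^2(\rb)},\qquad c=c(\rho).
$$
The main obstacle is bookkeeping rather than anything conceptual: because $\delta_+\neq\delta_-$ on $\rb_-+z_0$, the multipliers $\delta_\pm^{\sigma_3}$ do not commute with $C^\pm$ on the nose, so one must carefully collect the extra terms supported on that ray and verify that they are absorbed into bounded, boundedly invertible factors. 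The specific choice of triangular factorizations of $v$ on the two sides of the stationary point $z_0$ — made precisely so as to be compatible with the analytic structure of $\delta$ — is what keeps these terms under control.
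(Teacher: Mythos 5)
The paper itself offers no proof of this lemma -- it is one of the technical statements explicitly deferred to \cite{DZ}, p.~1042 -- so there is nothing internal to compare against; judged on its own terms, your sketch identifies the correct mechanism and matches the argument the paper is pointing to. The conjugation identity $w_\delta^{\pm}=\delta_\pm^{\sigma_3}w^{\pm}\delta_\pm^{-\sigma_3}$ is right, as is the role of the boundedness and analyticity of $\delta^{\pm1}$ in each half-plane (note $\|\delta^{\pm1}\|_{L^\infty}\le(1+\rho^2)^{1/2}$ from Proposition \ref{prop:prop_delta}), and you correctly flag that the only real work is the interaction of the multipliers with $C^\pm$ across $\rb_-+z_0$. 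One imprecision worth fixing: for $z<z_0$ the relevant factorization of $v$ is not a plain reversed-triangular product $(I-w^-)^{-1}(I+w^+)$ -- no such product can reproduce the diagonal entries $1+|r|^2$ and $1$ of $v$ -- but the three-factor form $v=(I-w^-)^{-1}(1+|r|^2)^{\sigma_3}(I+w^+)$, and the middle diagonal factor is precisely what the jump $\delta_+=\delta_-(1+|r|^2)$ is engineered to cancel, since $\delta_-^{\sigma_3}(1+|r|^2)^{\sigma_3}\delta_+^{-\sigma_3}=I$. Your conjugation formula for $w_\delta^\pm$ survives this correction, but the bookkeeping you defer to \cite{DZ} is exactly the bookkeeping of this middle factor, so it should be named. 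As an aside, a more self-contained route is available: using $\overline{\delta_-(z)}=\delta_+(z)^{-1}$ (which follows from $\delta(z)\overline{\delta(\bar z)}=1$), one has $v_\delta=\delta_-^{\sigma_3}v\,(\delta_-^{\sigma_3})^*$, so $v_\delta$ is again positive definite Hermitian with smallest eigenvalue bounded below by $\lambda_0(1+\rho^2)^{-1}$, and the proof of Lemma \ref{lem:exist_resol_Cw} applies verbatim to give the invertibility and the equivalent bound directly, without any commuting of multipliers past Cauchy projections.
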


For a function $f$ on $\mathbb{R}$ we introduce a rational function which coincides with $f$ at $z_0$,
\begin{equation}
\label{eq:bracket}
[f](z)=\frac {f(z_0)}{(1+i(z-z_0))^2},\ \ \ z\in\mathbb{R} \ .
\end{equation}
Replacing $r$ by the rational function $[r]$ etc., we obtain
$$ \begin{aligned}
w_\delta^{[\cdot]} &= ((w_\delta^{[\cdot]})^- , (w_\delta^{[\cdot]})^+ ) \\
&\equiv \left\{
\begin{aligned}
& \bigg( \bpm 0&[r] \delta^2 e^{i\theta} \\ 0&0 \epm , \bpm 0&0 \\ [ \overline{r} ] \delta^{-2}e^{-i\theta} &0 \epm \bigg) , \ \ \qquad \qquad  \ \ z > z_0, \\
& \Bigg( \bpm 0&0 \\ \Big[ \frac{\overline{r}}{1+|r|^2} \Big] \delta_-^{-2} e^{-i\theta} & 0 \epm ,
\bpm 0 & \Big[ \frac{r}{1+|r|^2} \Big] \delta_+^2 e^{i\theta}  \\ 0 & 0 \epm  \Bigg) , \ \ z < z_0.
\end{aligned}
\right.
\end{aligned} $$
Let $\Sigma^e\equiv\rb\cup(z_0+e^{i\pi/4}\rb)\cup(z_0+e^{-i\pi/4}\rb)$
be oriented from left to right (see Figure 1).
\begin{figure}
\begin{center}
\includegraphics{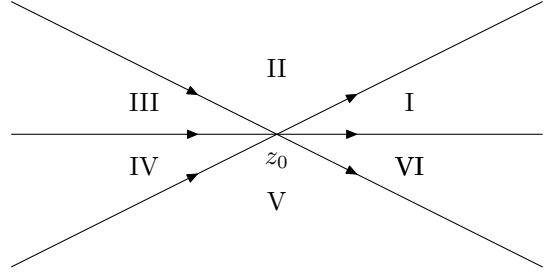}
\caption{Contour $\Sigma^e$}
\end{center}
\end{figure}

Introduce the trivial extension $v_e^{[\cdot]}$ of $v_\delta^{[\cdot]}$ onto $\Sigma^e$
 by setting $v_e^{[\cdot]}=v_\delta^{[\cdot]}$ on $\rb$ and $v_e^{[\cdot]}=I$ on $\Sigma^e \setminus \rb$.

\begin{lemma}[cf. \cite{DZ}, pp. 1043--1044]
\label{lem:triv_ext_equiv}
Suppose that $v_e^{[\cdot]} = (1-(w_e^{[\cdot]})^-)^{-1}(1+(w_e^{[\cdot]})^+)$ is a factorization as above.
Then, $(1-C_{ w_\delta^{[\cdot]}})^{-1}$ is bounded in $L^2(\rb)$ if and only if $(1-C_{ w_e^{[\cdot]}})^{-1}$ is bounded in $L^2(\Sigma^e)$
and the bounds are equivalent.
\end{lemma}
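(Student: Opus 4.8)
The plan is to exploit that $v_e^{[\cdot]}$ is the identity on $\Sigma^e\setminus\rb$, so that $C_{w_e^{[\cdot]}}$ is a ``trivial extension'' of $C_{w_\delta^{[\cdot]}}$ in the sense of \cite{DZ}. First, by the factorization-independence of the boundedness of $(1-C_w)^{-1}$ recalled before Lemma~\ref{lem:exist_resol_Cw}, I may work with the natural factorization in which $(w_e^{[\cdot]})^\pm$ restrict on $\rb$ to $(w_\delta^{[\cdot]})^\pm$ and vanish on $\Lambda\equiv(z_0+e^{i\pi/4}\rb)\cup(z_0+e^{-i\pi/4}\rb)$. Decompose $L^2(\Sigma^e)=L^2(\rb)\oplus L^2(\Lambda)$ and write $h=(h_1,h_2)$, $f=(f_1,f_2)$. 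The key observation is that for any $h$ the densities $h\,(w_e^{[\cdot]})^\pm$ are supported on $\rb$ and equal $h_1(w_\delta^{[\cdot]})^\pm$, so that $C_{\Sigma^e}\big(h\,(w_e^{[\cdot]})^\pm\big)$ coincides with the Cauchy transform over $\rb$ of $h_1(w_\delta^{[\cdot]})^\pm$ and is analytic across $\Lambda\setminus\{z_0\}$; consequently $C_{w_e^{[\cdot]}}h$ depends only on $h_1$, its restriction to $\rb$ is exactly $C_{w_\delta^{[\cdot]}}h_1$, and its restriction to $\Lambda$ is the trace on $\Lambda$ of a bounded Cauchy transform of $h_1$.

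With this in hand the equation $(1-C_{w_e^{[\cdot]}})h=f$ is equivalent to the triangular system $(1-C_{w_\delta^{[\cdot]}})h_1=f_1$ on $\rb$ and $h_2=f_2+(C_{w_e^{[\cdot]}}h)|_\Lambda$, the latter right-hand side depending only on $f_2$ and $h_1$. If $(1-C_{w_\delta^{[\cdot]}})^{-1}$ exists on $L^2(\rb)$, I set $h_1=(1-C_{w_\delta^{[\cdot]}})^{-1}f_1$ and then read off $h_2$; using the uniform bound $\|C_{w_e^{[\cdot]}}\|_{L^2(\Sigma^e)}\le c_0(\rho)$ --- which follows from the $L^2$-boundedness of $C^\pm_{\Sigma^e}$ together with the $L^\infty$-bounds on $(w_\delta^{[\cdot]})^\pm$ coming from Proposition~\ref{prop:prop_delta} and $\|[r]\|_\infty\le\|r\|_\infty\le\rho$ --- I obtain $\|(1-C_{w_e^{[\cdot]}})^{-1}\|\le c\,(1+\|(1-C_{w_\delta^{[\cdot]}})^{-1}\|)$. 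Conversely, if $(1-C_{w_e^{[\cdot]}})^{-1}$ exists on $L^2(\Sigma^e)$, surjectivity of $1-C_{w_\delta^{[\cdot]}}$ follows by solving $(1-C_{w_e^{[\cdot]}})h=(f_1,0)$ and taking the $\rb$-component, and injectivity follows because a kernel element $g$ of $1-C_{w_\delta^{[\cdot]}}$ generates the kernel element $h=(g,(C_{w_e^{[\cdot]}}(g,0))|_\Lambda)$ of $1-C_{w_e^{[\cdot]}}$; the same bookkeeping gives $\|(1-C_{w_\delta^{[\cdot]}})^{-1}\|\le\|(1-C_{w_e^{[\cdot]}})^{-1}\|$. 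Combining the two inequalities yields the claimed equivalence of bounds.

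The main obstacle --- indeed the only nonalgebraic point --- is the careful justification in the first step that $C^\pm_{\Sigma^e}$ restricted to $\rb$ reduces to $C^\pm_{\rb}$, i.e.\ that the self-intersection of $\Sigma^e$ at the stationary-phase point $z_0$ is harmless: one must note that a.e.\ point of $\rb$ is approached from $\cb^\pm$ through a neighbourhood in which $\Sigma^e$ locally coincides with $\rb$, and that the relevant densities lie in $L^2$ near $z_0$, so that no boundary contribution is lost. This is precisely the computation of \cite{DZ}, pp.~1043--1044, which I would simply reproduce.
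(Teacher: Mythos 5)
Your proof is correct and is essentially the standard argument of \cite{DZ}, pp.~1043--1044, that the paper itself cites in lieu of a proof: since $(w_e^{[\cdot]})^{\pm}$ are supported on $\rb$, the operator $1-C_{w_e^{[\cdot]}}$ is block lower-triangular with respect to $L^2(\Sigma^e)=L^2(\rb)\oplus L^2(\Sigma^e\setminus\rb)$, with diagonal blocks $1-C_{w_\delta^{[\cdot]}}$ and the identity, and the off-diagonal restriction operator is bounded by $c(1+\rho^2)$. Your handling of the only delicate point (the self-intersection at $z_0$ and the reduction of $C^{\pm}_{\Sigma^e}$ to $C^{\pm}_{\rb}$ for densities supported on $\rb$) is the right one, so nothing further is needed.
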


Now, set
\begin{equation}
\label{eq:Phi_deform}
 \Phi =
\left\{
\begin{aligned}
& I \quad \  \qquad\qquad\qquad\qquad\text{on II or V}\\
& \bpm 1&0\\  -[\bar{r}]\delta^{-2} e^{-i\theta} &1\epm\qquad\quad \ \text{on I}\qquad\qquad\qquad\hfill\\
& \bpm 1&-\Big[\frac{r}{1+|r|^2} \Big]\delta^2 e^{i\theta} \\ 0&1\epm  \ \quad \text{on III}\hfill\\
& \bpm 1&0\\ \Big[\frac{\ovl{r}}{1+ |r|^2}\Big]
\delta^{-2}e^{-i\theta}  &1\epm \ \quad
\text{on IV}
\hfill\\
& \bpm 1&[r] \delta^2 e^{i\theta} \\
0&1\epm\  \ \quad \qquad\quad\text{on VI}\hfill
\end{aligned}
\right.
\end{equation}
and set $v_d^{[\cdot]} \equiv \Phi_-^{-1} v_e^{[\cdot]} \Phi_+$ where $\Phi\pm$ denote the $(\pm)$-limits on the contour $\Sigma^e$.
Direct calculation shows that

\begin{equation}
\label{eq:v_d_deform}
 v_d^{[\cdot]} =
\left\{
\begin{aligned}
& I \ \ \ \qquad\qquad\qquad\qquad\text{on}\quad \rb\hfill\\
& \bpm 1&0\\  [\bar{r}]\delta^{-2} e^{-i\theta}&1\epm\qquad\quad \ \ \text{on}\quad z_0 + e^{i\pi/4} \rb^+\hfill\\
& \bpm 1&\Big[\frac{r}{1+|r|^2}\Big]\delta^2 e^{i\theta}\\ 0&1\epm \ \ \quad \text{on}\quad z_0 + e^{-i\pi/4} \rb^-\hfill\\
& \bpm 1&0\\ \Big[\frac{\ovl{r}}{1+ |r|^2}\Big]
\delta^{-2}e^{-i\theta}&1\epm  \quad
\text{on}\quad z_0 + e^{i\pi/4} \rb^-
\hfill\\
& \bpm 1&[r] \delta^2 e^{i\theta}\\
0&1\epm\  \quad \qquad\quad\text{on}\quad z_0 + e^{-i\pi/4}
\rb^+.\hfill
\end{aligned}
\right.
\end{equation}

Because $\Phi$ is analytic and uniformly bounded in $\cb \setminus \Sigma^e$ for all $x \in \rb$, $t \geq 0$, we obtain the following.
\begin{lemma}[cf. \cite{DZ}, pp. 1044--1045]
\label{lem:deform_equiv}
The operator $(1-C_{ v_d^{[\cdot]}})^{-1}$ is bounded if and only if $(1-C_{ v_e^{[\cdot]}})^{-1}$ is bounded in $L^2(\Sigma^e)$
and the bounds are equivalent.

The bound on , if it exists, is equivalent to the bound on .
\end{lemma}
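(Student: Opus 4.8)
The plan is to run the contour‐deformation argument of \cite{DZ}, pp.~1044--1045, essentially verbatim; the only point that really requires rechecking in the focusing setting is that the conjugating matrix $\Phi$ is uniformly bounded with uniformly bounded inverse. Recall that $v_d^{[\cdot]}=\Phi_-^{-1}v_e^{[\cdot]}\Phi_+$ on $\Sigma^e$, with $\Phi$ given explicitly on the six sectors I,\dots,VI by \eqref{eq:Phi_deform}. The idea is: (a) show that $\Phi,\Phi^{-1}$ are analytic in $\cb\setminus\Sigma^e$ and bounded there, uniformly for $x\in\rb$, $t\geq0$; (b) deduce that $m\mapsto m\Phi$ is a bijection between the $L^2$-normalized solutions of $(\Sigma^e,v_e^{[\cdot]})_{L^2}$ and $(\Sigma^e,v_d^{[\cdot]})_{L^2}$; (c) translate this through the correspondence between the RHP and the singular integral equation $(1-C_w)\mu=I$ to obtain that $1-C_{w_d^{[\cdot]}}$ and $1-C_{w_e^{[\cdot]}}$ are intertwined by bounded multiplication operators, hence have equivalent resolvent bounds.

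For step (a), analyticity is clear from \eqref{eq:Phi_deform}: the brackets $[r]$, $[\bar r]$, $[r/(1+|r|^2)]$ are rational, and $\delta^{\pm1}$ is analytic on $\cb\setminus(\rb_-+z_0)\supset\cb\setminus\Sigma^e$. For the bound, every nonzero off-diagonal entry of $\Phi$ is a product of (i) one of these brackets, which is bounded on $\cb$ by $\|r\|_{L^\infty}$; (ii) a factor $\delta^{\pm2}$, for which Proposition \ref{prop:prop_delta} gives $|\delta^{\pm1}|\leq(1+\rho^2)^{1/2}$ with $\rho=\|r\|_{L^\infty}$; and (iii) an exponential $e^{\pm i\theta}$. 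Since $\theta=xz-tz^2/2=\tfrac t2(z_0^2-(z-z_0)^2)$ with $z_0=x/t$, one has $\mathrm{Re}(i\theta)=\tfrac t2\,\mathrm{Im}(z-z_0)^2$, so $|e^{-i\theta}|\leq1$ on the closed sectors I and IV and $|e^{i\theta}|\leq1$ on the closed sectors III and VI — and these are exactly the exponentials appearing in the corresponding entries of $\Phi$, while $\Phi=I$ on II and V. Hence $\|\Phi\|_{L^\infty(\cb\setminus\Sigma^e)}\leq c(\rho)$ uniformly in $x,t$; since each $\Phi$ is unipotent, $\Phi^{-1}$ is obtained by flipping the sign of the single off-diagonal entry, so the same bound holds for $\Phi^{-1}$.

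For steps (b)--(c), one first notes that $\Phi\to I$ as $z\to\infty$ in $\cb\setminus\Sigma^e$ (each bracket is $O(|z|^{-2})$ while the other factors stay bounded) and that $\Phi_\pm,\Phi_\pm^{-1}\in L^\infty(\Sigma^e)$ with $\Phi_\pm-I\in L^2(\Sigma^e)$. If $m$ solves $(\Sigma^e,v_e^{[\cdot]})_{L^2}$ then $(m\Phi)_+=m_+\Phi_+=m_-v_e^{[\cdot]}\Phi_+=(m\Phi)_-\,v_d^{[\cdot]}$, and $m\Phi-I=(m-I)\Phi+(\Phi-I)$ has $L^2(\Sigma^e)$ boundary values, hence (by the Plemelj representation) lies in $\partial C(L^2)$; the inverse map $\hat m\mapsto\hat m\Phi^{-1}$ works likewise, so the two $L^2$-normalized RHPs are in bijection. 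Passing to the associated integral equations and tracking norms exactly as in \cite{DZ}, the resolvents $(1-C_{w_d^{[\cdot]}})^{-1}$ and $(1-C_{w_e^{[\cdot]}})^{-1}$ are conjugate via the bounded operators $h\mapsto\Phi_\pm h$, whence
\[
c^{-1}\,\big\|(1-C_{w_e^{[\cdot]}})^{-1}\big\|_{L^2(\Sigma^e)}\ \leq\ \big\|(1-C_{w_d^{[\cdot]}})^{-1}\big\|_{L^2(\Sigma^e)}\ \leq\ c\,\big\|(1-C_{w_e^{[\cdot]}})^{-1}\big\|_{L^2(\Sigma^e)},
\]
with $c$ depending only on the $L^\infty$-bound for $\Phi^{\pm1}$ from step (a); in particular one is bounded iff the other is. Combining this with Corollary 2.7 of \cite{DZ} (bounds for different factorizations of the same jump matrix are equivalent), the bound for $(1-C_{v_d^{[\cdot]}})^{-1}$ for any factorization is equivalent to the bound for $(1-C_{v_e^{[\cdot]}})^{-1}$ for any factorization, which gives the second assertion of the lemma.

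The hard part — really the only step that is not a mechanical transcription of \cite{DZ} — is the verification in step (a) that in \eqref{eq:Phi_deform} each $\Phi$-entry carries the exponential that is bounded on its sector, i.e. that the deformation respects the signature table of $\mathrm{Re}(i\theta)$. This holds precisely because the sectors I,\dots,VI are cut out by the rays $z_0+e^{\pm i\pi/4}\rb$, along which $\mathrm{Im}(z-z_0)^2$, and hence $\mathrm{Re}(i\theta)$, changes sign, so that on each open sector exactly one of $e^{i\theta}$, $e^{-i\theta}$ is bounded and \eqref{eq:Phi_deform} has been arranged to use it. The remaining bookkeeping ($\Phi_\pm-I\in L^2(\Sigma^e)$, membership in $\partial C(L^2)$, and the tracking of constants through the integral equation) is routine and identical to \cite{DZ}.
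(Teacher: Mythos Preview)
Your proposal is correct and follows essentially the same approach as the paper: the paper does not give a separate proof but simply notes (in the sentence preceding the lemma) that ``$\Phi$ is analytic and uniformly bounded in $\cb\setminus\Sigma^e$ for all $x\in\rb$, $t\geq0$'' and then cites \cite{DZ}, pp.~1044--1045, for the conclusion. You have filled in exactly the details behind that sentence---the signature-table check ensuring the correct exponential appears in each sector, the unipotence of $\Phi$ giving the bound on $\Phi^{-1}$---and then sketched the standard \cite{DZ} argument for the equivalence of resolvent bounds under conjugation by such a $\Phi$.
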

As noted above, bounds on $(1-C_{ v_d^{[\cdot]}})^{-1}$ and $(1-C_{ v_e^{[\cdot]}})^{-1}$
imply bounds on $(1-C_{ w_d^{[\cdot]}})^{-1}$ and $(1-C_{ w_e^{[\cdot]}})^{-1}$
for any factorizations $v_d^{[\cdot]} = \big(1-(w_d^{[\cdot]})^-\big)^{-1}\big(1+(w_d^{[\cdot]})^+\big)$
and $v_e^{[\cdot]} = \big(1-(w_e^{[\cdot]})^-\big)^{-1}\big(1+(w_e^{[\cdot]})^+\big)$.
For $z<z_0$, define
$$
\bal & \beta(z,z_0) \equiv \\
& \ \int^{z_0}_{-\infty} \{\log(1+|r(s)|^2) -\log(1+|r(z_0)|^2) \chi^0(s) (s-z_0+1)\} \frac{ds}{2\pi i(s-z)}, \\
\eal
$$
where $\nu(z_0) = \frac1{2\pi} \log(1+|r(z_0)|^2)$ and
$\chi^0(s)$ denotes the characteristics function of the interval $(z_0-1,z_0)$.
Define
\beq
\label{eq:delta_0_def}
\delta_0(z) \equiv e^{\beta(z_0,z_0) - i\nu(z_0)} (z-z_0)^{-i\nu(z_0)}.
\eeq
Set
\begin{equation}
\label{def:v_D_L}
 v_d^L =
\left\{
\begin{aligned}
& I \qquad\qquad\qquad\qquad\qquad\qquad\text{on}\quad \rb\hfill\\
& \bpm 1&0\\ \bar r(z_0)\delta^{-2}_0 e^{-i\theta}&1\epm\qquad\quad \quad \text{on}\quad z_0 + e^{i\pi/4} \rb^+\hfill\\
& \bpm 1& \frac{r(z_0)}{1+|r(z_0)|^2}\delta^2_0 e^{i\theta}\\ 0&1\epm \qquad \quad \ \ \text{on}\quad z_0 + e^{-i\pi/4} \rb^-\hfill\\
& \bpm 1&0\\ \frac{\bar r(z_0)}{1+ |r(z_0)|^2}
\delta^{-2}_0 e^{-i\theta}&1\epm  \qquad \ \
\text{on}\quad z_0 + e^{i\pi/4} \rb^-
\hfill\\
& \bpm 1&r(z_0) \delta^2_0 e^{i\theta}\\
0&1\epm \ \qquad\qquad\quad\text{on}\quad z_0 + e^{-i\pi/4}
\rb^+.\hfill
\end{aligned}
\right.
\end{equation}
\begin{lemma}[cf. \cite{DZ}, Lemma 2.14]
\label{lem:close_to_local}
Let $r \in H^{1,0}(\rb)$ with $\|r\|_{H^{1,0}(\rb)} \leq \lambda < \infty$. Then, for $1\le p \le \infty$,
$$
\| v_d^{[\cdot]} -  v_d^L\|_{L^p(\Sigma^e)} \le
\frac{c}{t^{\frac14+\frac1{2p}}}$$
uniformly for $t\ge 1$
and all $x\in\rb$ where $c$ depends on $\lambda$.
Hence, $(1-C_{ v_d^{[\cdot]}})^{-1}$ is bounded if and only if $(1-C_{ v_d^L})^{-1}$ is bounded in $L^2(\Sigma^e)$
and the bounds are equivalent uniformly for $t\ge 1$ and all $x\in\rb$.
\end{lemma}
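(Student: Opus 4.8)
The plan is to follow the argument of \cite{DZ}, Lemma 2.14, adapted to the focusing jump matrix \eqref{eq:v_d_deform}. Since $v_d^{[\cdot]} = v_d^L = I$ on $\rb$, the difference $v_d^{[\cdot]} - v_d^L$ is supported on the four rays of $\Sigma^e$ emanating from $z_0$. On each such ray write $z = z_0 + u\,e^{\pm i\pi/4}$, $u\ge 0$; then $\mathrm{Re}(i\theta) = \tfrac{t}{2}\mathrm{Im}(z-z_0)^2 = -\tfrac{t}{2}u^2$, so $|e^{\pm i\theta(z)}| = e^{-tu^2/2}$. Comparing \eqref{eq:v_d_deform} with \eqref{def:v_D_L} entrywise, on the ray $z_0 + e^{-i\pi/4}\rb^+$ the single nonzero off-diagonal entry of the difference is $\bigl([r](z)\,\delta(z)^2 - r(z_0)\,\delta_0(z)^2\bigr)e^{i\theta}$, and on the other three rays it is of exactly the same form with $[r],r(z_0),\delta^2$ replaced by $[\overline r],\overline{r(z_0)},\delta^{-2}$, or by $\bigl[\tfrac{r}{1+|r|^2}\bigr],\tfrac{r(z_0)}{1+|r(z_0)|^2}$, etc. So it suffices to estimate, on each ray, the scalar $f(z) \equiv [r](z)\,\delta(z)^{\pm 2} - r(z_0)\,\delta_0(z)^{\pm 2}$ (and its analogues) by $c\,(u^{1/2}\wedge 1)$, and then to carry out a Gaussian integral against $e^{-tu^2/2}$.

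For the pointwise bound I would split $f = \bigl([r](z)-r(z_0)\bigr)\delta(z)^{\pm2} + r(z_0)\bigl(\delta(z)^{\pm2}-\delta_0(z)^{\pm2}\bigr)$. By \eqref{eq:bracket}, $[r](z)-r(z_0) = r(z_0)\bigl((1+i(z-z_0))^{-2}-1\bigr)$, hence $|[r](z)-r(z_0)| \le c\,(|z-z_0|\wedge 1) = c\,(u\wedge 1)$, while $|\delta(z)^{\pm2}| \le 1+\rho^2$ by Proposition \ref{prop:prop_delta}. For the second term, $\delta_0$ (see \eqref{eq:delta_0_def}) is by construction the local model of $\delta$ at $z_0$: using \eqref{eq:delta}, the definition of $\beta(\cdot,z_0)$ and the correction term built into it, one has $\delta(z)^{\pm2}/\delta_0(z)^{\pm2} = \exp\bigl(\pm 2(\beta(z,z_0)-\beta(z_0,z_0))\bigr)$ with $|\beta(z,z_0)-\beta(z_0,z_0)| \le c\,|z-z_0|^{1/2}$ — the standard Hölder estimate, valid uniformly once $r\in H^{1,0}(\rb)$ with $\|r\|_{H^{1,0}(\rb)}\le\lambda$, since $H^{1,0}(\rb)\hookrightarrow C^{1/2}(\rb)$ (cf. \cite{DIZ}\cite{DZ}). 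As $|\delta_0(z)^{\pm2}|$ is bounded, this gives $|\delta(z)^{\pm2}-\delta_0(z)^{\pm2}|\le c\,|z-z_0|^{1/2}$ for $z$ near $z_0$, and away from $z_0$ it is $\le c$ by Proposition \ref{prop:prop_delta} again; altogether $|f(z)| \le c\,(u^{1/2}\wedge 1)$, uniformly in $x\in\rb$ and $t\ge 1$, and the same bound then holds for every entry of $v_d^{[\cdot]} - v_d^L$.

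The remaining step is elementary: on each ray, for $1\le p<\infty$,
\[
\int_0^\infty \bigl|f(z)e^{i\theta}\bigr|^p\,du \;\le\; c\int_0^\infty (u^{1/2}\wedge 1)^p\,e^{-ptu^2/2}\,du \;\le\; c\,e^{-pt/2} + c\,t^{-p/4-1/2}\!\int_0^\infty v^{p/2}e^{-pv^2/2}\,dv \;\le\; c\,t^{-p/4-1/2},
\]
where one splits the integral at $u=1$ and substitutes $u=v/\sqrt t$ on $0\le u\le1$; summing over the finitely many rays gives $\|v_d^{[\cdot]}-v_d^L\|_{L^p(\Sigma^e)}\le c\,t^{-1/4-1/(2p)}$, and for $p=\infty$ one maximizes $u^{1/2}e^{-tu^2/2}$ in $u$ to obtain the bound $c\,t^{-1/4}$. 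For the equivalence of resolvents, the $p=\infty$ case yields $\|C_{v_d^{[\cdot]}}-C_{v_d^L}\|_{B(L^2(\Sigma^e))}\le c\,\|v_d^{[\cdot]}-v_d^L\|_{L^\infty(\Sigma^e)}\le c\,t^{-1/4}$, so for $t$ larger than some fixed $T_0$ this is less than half the reciprocal of the uniform bound on $(1-C_{v_d^{[\cdot]}})^{-1}$ coming from Lemma \ref{lem:exist_resol_Cw} through the equivalences in Lemmas \ref{lem:conj_equiv}--\ref{lem:deform_equiv}, and the second resolvent identity then gives boundedness of $(1-C_{v_d^L})^{-1}$ with an equivalent bound; for $1\le t\le T_0$ one argues directly, repeating the Fredholm-index-zero-with-trivial-kernel argument of Lemma \ref{lem:exist_resol_Cw} for the (bounded, continuous, rapidly-vanishing-off-$\rb$) jump matrix $v_d^L$, with uniformity over the compact $t$-interval following by continuity.

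I expect the only genuine obstacle to be the Hölder estimate $|\beta(z,z_0)-\beta(z_0,z_0)|\le c\,|z-z_0|^{1/2}$, together with the bookkeeping needed to see that $f$ — which carries the non-decaying factor $(z-z_0)^{-i\nu(z_0)}$ through $\delta_0$ and hence does \emph{not} tend to $0$ as $u\to\infty$ along the rays — nevertheless stays uniformly bounded, so that the Gaussian $e^{-ptu^2/2}$ by itself produces all the decay in $t$. Everything else is a routine transcription of \cite{DZ}, Lemma 2.14, to the focusing sign, and can be left to the reader.
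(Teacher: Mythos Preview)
The paper does not give its own proof of this lemma: it is stated with the reference ``cf.\ \cite{DZ}, Lemma 2.14'' and falls under the blanket disclaimer in Section \ref{sec:longtime} that such technical results are left to the reader as straightforward adaptations of \cite{DZ}. Your proposal is precisely such an adaptation --- the pointwise bound $c(u^{1/2}\wedge 1)$ on each ray via the H\"older estimate for $\beta(z,z_0)$, followed by integration against the Gaussian $e^{-ptu^2/2}$ --- and this is exactly the argument of \cite{DZ}, Lemma 2.14, with the defocusing factors $1-|r|^2$ replaced by $1+|r|^2$; so your approach and the paper's (implicit) approach coincide.

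One small remark: your final paragraph on the resolvent equivalence for $1\le t\le T_0$ is more than the paper actually needs. The chain of equivalences in Lemmas \ref{lem:conj_equiv}--\ref{lem:deform_local} is only invoked in Proposition 5.9 for $t\ge t_0$ sufficiently large, where the $L^\infty$ bound $ct^{-1/4}$ is small enough for the second resolvent identity to apply directly; the separate Fredholm argument for bounded $t$ is unnecessary.
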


\begin{remark}
If $r$ had more decay, say $r \in H^{1,1}(\rb)$, one sees by integration by part that
\begin{equation}
\beta(z_0,z_0) - i\nu(z_0) = -\frac1{2\pi i} \int^{z_0}_{-\infty} \log(z_0-s) d \log (1+|r(s)|^2) \ .
\end{equation}
\end{remark}

\begin{lemma}[cf. \cite{DZ}, p. 1049]
\label{lem:deform_local}
The operator $(1-C_{ v_d^L})^{-1}$ exists in $L^2(\Sigma^e)$ and its bound is equivalent to the bound on $(1-C_{ v^L})^{-1}$ in $L^2(\rb)$
where $ v^L \equiv \bpm 1+|r(z_0)|^2&r(z_0)e^{i\theta}\\ \overline{r(z_0)}e^{-i\theta}&1 \epm$.
\end{lemma}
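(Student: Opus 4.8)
The statement asserts that the operator $(1-C_{v_d^L})^{-1}$ exists in $L^2(\Sigma^e)$ and that its bound is equivalent to the bound on $(1-C_{v^L})^{-1}$ in $L^2(\rb)$, where $v^L$ is the ``pure local'' jump matrix on $\rb$. This is the analog, for the model problem with frozen coefficients $r(z_0)$ and $\delta_0$, of the chain of reductions carried out in Lemmas \ref{lem:conj_equiv}, \ref{lem:triv_ext_equiv}, \ref{lem:deform_equiv}: we want to run those same three moves in reverse. Concretely, I would factor each step. First, undo the contour deformation: there is a piecewise-analytic, uniformly bounded matrix $\Phi^L$ (the analog of $\Phi$ in \eqref{eq:Phi_deform}, but with $r$ replaced by $r(z_0)$ and $\delta$ by $\delta_0$) such that $v_d^L = (\Phi^L_-)^{-1} v_e^L \Phi^L_+$, where $v_e^L$ is the trivial extension onto $\Sigma^e$ of the jump matrix on $\rb$ obtained by conjugating $v^L$ by $\delta_0^{\pm\sigma_3}$. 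Since $\Phi^L$ and $(\Phi^L)^{-1}$ are analytic and uniformly bounded in $\cb\setminus\Sigma^e$ (for all $x\in\rb$, $t\geq 0$ — this is the point where the explicit power $(z-z_0)^{-i\nu(z_0)}$ in $\delta_0$ matters, since $|\delta_0^{\pm1}|$ is uniformly bounded), Lemma \ref{lem:deform_equiv} (applied with $r(z_0)$ in place of $r$) gives that $(1-C_{v_d^L})^{-1}$ is bounded iff $(1-C_{v_e^L})^{-1}$ is, with equivalent bounds.

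Second, undo the trivial extension: by (the analog of) Lemma \ref{lem:triv_ext_equiv}, boundedness of $(1-C_{v_e^L})^{-1}$ on $L^2(\Sigma^e)$ is equivalent, with equivalent bounds, to boundedness of $(1-C_{v_\delta^L})^{-1}$ on $L^2(\rb)$, where $v_\delta^L = (\delta_0)_-^{\sigma_3} v^L (\delta_0)_+^{-\sigma_3}$ — here one uses that adding copies of the identity jump on the extra rays $\Sigma^e\setminus\rb$ does not change the operator norm of the resolvent, exactly as in the reference. Third, undo the $\delta$-conjugation: by (the analog of) Lemma \ref{lem:conj_equiv}, and using the uniform bounds $(1+\rho^2)^{-1/2}\le|\delta_0^{\pm1}|\le(1+\rho^2)^{1/2}$ of the model factor (these follow from the analog of Proposition \ref{prop:prop_delta} for $\delta_0$, or can be checked directly from \eqref{eq:delta_0_def}), the resolvent $(1-C_{w_\delta^L})^{-1}$ is bounded iff $(1-C_{w^L})^{-1}$ is, with equivalent bounds, where $w^L = (0, v^L - I)$ so that $C_{w^L} = C_{v^L}$. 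Chaining the three equivalences gives the asserted equivalence of bounds.

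**Existence.** It remains to establish that the resolvent actually exists, i.e.\ that $1-C_{v^L}$ is invertible in $L^2(\rb)$. For this I would invoke Lemma \ref{lem:exist_resol_Cw}: $v^L$ has precisely the form \eqref{eq:jump_v} with $r$ replaced by the constant $r(z_0)$, which certainly lies in $H^{1,1}(\rb)$ (it is a constant — or, if one prefers to stay strictly inside the Volterra framework of that lemma, one may instead run the Fredholm-plus-vanishing-kernel argument from the proof of Lemma \ref{lem:exist_resol_Cw} verbatim, since it only used continuity of $v$, $v-I\to0$ at $\infty$, and strict positive-definiteness of $v$, all of which $v^L$ satisfies). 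Hence $1-C_{v^L} = 1-C_{w^L}$ is invertible with $\|(1-C_{w^L})^{-1}\|_{L^2(\rb)}\le c(1+\rho^2)$, $\rho = \|r\|_{L^\infty} \ge |r(z_0)|$, and the equivalences above transport this to invertibility of $1-C_{v_d^L}$ on $L^2(\Sigma^e)$.

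**Main obstacle.** The routine bookkeeping is harmless; the one place that requires genuine care is verifying that the model deformation matrix $\Phi^L$ and its inverse are \emph{uniformly} bounded in $\cb\setminus\Sigma^e$ over all $x\in\rb$ and $t\geq1$, and likewise that $\delta_0^{\pm1}$ is uniformly bounded — because the whole claim is an equivalence of \emph{bounds}, not merely of invertibility, and this uniformity is what makes the equivalence useful downstream (in the comparison with the parabolic-cylinder model problem). The factor $\delta_0$ in \eqref{eq:delta_0_def} is designed precisely so that $e^{\beta(z_0,z_0)-i\nu(z_0)}$ is a bounded unimodular-modulus prefactor and $(z-z_0)^{-i\nu(z_0)}$ has modulus $e^{\nu(z_0)\arg(z-z_0)}$, which is bounded above and below once $|\nu(z_0)| = \frac1{2\pi}|\log(1+|r(z_0)|^2)|\le\frac1{2\pi}\log(1+\rho^2)$ is controlled; on each ray of $\Sigma^e$ the argument $\arg(z-z_0)$ is one of the fixed values $0,\pm\pi/4,\pi$, so the exponentials $e^{\pm i\theta}$ appearing in $v_d^L$ and in $\Phi^L$ are of the form $e^{-t|z-z_0|^2/2\cdot(\text{fixed sign})}e^{(\text{bounded})}$ and are uniformly bounded on the relevant half-rays. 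Once this uniform-boundedness verification is recorded (it is entirely parallel to the discussion preceding Lemma \ref{lem:deform_equiv} in \cite{DZ}), the proof is complete.
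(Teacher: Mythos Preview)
Your chain of equivalences---conjugation by $\delta_0^{\sigma_3}$, trivial extension to $\Sigma^e$, deformation by a model $\Phi^L$---is correct and is the approach of \cite{DZ}; the uniform boundedness of $\Phi^L$, $(\Phi^L)^{-1}$, $\delta_0^{\pm 1}$ that you flag is the substantive point, and your verification sketch is fine.

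The gap is in the existence argument. A nonzero constant is not in $H^{1,1}(\rb)$ (it is not even in $L^2$), so Lemma~\ref{lem:exist_resol_Cw} does not apply as stated; and your fallback claim that $v^L - I \to 0$ at infinity is false: the $(1,1)$-entry of $v^L - I$ is the constant $|r(z_0)|^2$ and the off-diagonal entries $r(z_0)e^{\pm i\theta}$ oscillate without decay on $\rb$, so the rational-approximation step in the Fredholm argument fails for $v^L$ on $\rb$. The repair is to run the two halves of that argument at opposite ends of your chain. On the cross (where $v_d^L \neq I$), every nonzero entry of $v_d^L - I$ carries a factor $\delta_0^{\pm 2} e^{\pm i\theta}$, and on each ray the relevant exponential has modulus $e^{-t|z-z_0|^2/2}$; hence $v_d^L - I$ decays to zero and the Fredholm/index-zero argument of Lemma~\ref{lem:exist_resol_Cw} applies to $1-C_{v_d^L}$ on $\Sigma^e$. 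For the kernel, the positive-definiteness bound $v^L \geq \lambda_0 > 0$ requires no decay and yields $\ker(1-C_{v^L}) = 0$ on $\rb$; this transports back along the chain since each step is a bounded conjugation. Fredholm of index zero with trivial kernel then gives invertibility.
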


\begin{proposition}[cf. \cite{DZ}, Proposition 2.13]
For all sufficiently large $t$, say $t \geq t_0$, $(1-C_{ w_\delta^{[\cdot]}})^{-1}$ exists in $L^2(\rb)$, and for some uniform constant c
$$ \| (1-C_{ w_\delta^{[\cdot]}})^{-1} \|_{L^2(\rb)} \leq c \text{ for }  x \in \rb \text{ and } t \geq t_0 . $$
\end{proposition}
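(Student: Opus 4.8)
The plan is to obtain the desired uniform bound by running the chain of operator equivalences of Lemmas \ref{lem:triv_ext_equiv}--\ref{lem:deform_local} in reverse, reducing the boundedness of $(1-C_{w_\delta^{[\cdot]}})^{-1}$ on $L^2(\rb)$ to the boundedness in $L^2(\rb)$ of the resolvent of the singular integral operator attached to the explicitly solvable local model RHP $(\rb,v^L)$, with all the comparison constants independent of $x\in\rb$ and $t$.

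First I would assemble the equivalences. By Lemma \ref{lem:triv_ext_equiv}, $(1-C_{w_\delta^{[\cdot]}})^{-1}$ is bounded in $L^2(\rb)$ if and only if $(1-C_{w_e^{[\cdot]}})^{-1}$ is bounded in $L^2(\Sigma^e)$, the bounds being equivalent; by Lemma \ref{lem:deform_equiv} (using that the conjugating matrix $\Phi$ of \eqref{eq:Phi_deform} is analytic and uniformly bounded off $\Sigma^e$, uniformly in $x,t$) this holds if and only if $(1-C_{v_d^{[\cdot]}})^{-1}$ is bounded in $L^2(\Sigma^e)$; by Lemma \ref{lem:close_to_local}, since $\|v_d^{[\cdot]}-v_d^L\|_{L^p(\Sigma^e)}\le c\,t^{-1/4-1/(2p)}$ uniformly for $t\ge 1$ and all $x\in\rb$, this is equivalent, uniformly for $x\in\rb$ and $t\ge 1$, to the boundedness of $(1-C_{v_d^L})^{-1}$ in $L^2(\Sigma^e)$; and by Lemma \ref{lem:deform_local} the latter is in turn equivalent to the boundedness of $(1-C_{v^L})^{-1}$ in $L^2(\rb)$, where $v^L$ is the constant-amplitude model jump of Lemma \ref{lem:deform_local}. (The resolvent bounds for $w$ and $w_\delta$ supplied by Lemmas \ref{lem:exist_resol_Cw} and \ref{lem:conj_equiv} cannot be invoked directly here, since replacing $r$ by the rational function $[r]$ is not a small perturbation; one genuinely needs the model-problem route just described.)

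It therefore remains to show that $(1-C_{v^L})^{-1}$ exists in $L^2(\rb)$ with a bound depending only on $\rho$, uniformly in $x$ and $t$ with $z_0=x/t$. Here I would rescale $z=z_0+\zeta/\sqrt t$ and conjugate by $\delta_0$ from \eqref{eq:delta_0_def}; the resulting RHP is the standard parabolic-cylinder model problem of \cite{DZ3}, \cite{DIZ}, \cite{DZ}, which possesses an explicit solution $m^L$ in terms of parabolic cylinder functions. Since only the single complex number $r(z_0)$ (with $|r(z_0)|\le\rho$) enters the model, $m^L$ and $(m^L)^{-1}$ are bounded in $L^\infty$ uniformly over $|r(z_0)|\le\rho$, whence $(1-C_{v^L})^{-1}$ is bounded in $L^2(\rb)$ by a constant $c=c(\rho)$; the rescaling acts isometrically on $L^2$, and the conjugation by $\delta_0$ is bounded above and below by Proposition \ref{prop:prop_delta}, so no uniformity in $x,t$ is lost. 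Feeding this back up the chain of equivalences, and choosing $t_0$ large enough that the comparison in Lemma \ref{lem:close_to_local} is effective, gives $\|(1-C_{w_\delta^{[\cdot]}})^{-1}\|_{L^2(\rb)}\le c$ for all $x\in\rb$ and $t\ge t_0$.

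All the reductions above are ``soft'' --- operator-norm equivalences with constants independent of $x$ and $t$ --- so I expect the main obstacle to lie in the last step: verifying that the parabolic-cylinder model RHP $(\rb,v^L)$ is solvable for every admissible value of $r(z_0)$ and that the resulting resolvent bound is uniform over $|r(z_0)|\le\rho$ (and over $x,t$). A secondary point requiring care is the $x$-uniformity of the estimate in Lemma \ref{lem:close_to_local}; this is, however, exactly what that lemma provides, so once it and the model-problem bound are in hand the proposition follows by concatenation.
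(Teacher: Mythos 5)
Your proposal is correct and follows essentially the same route the paper intends: the proposition is exactly the concatenation of Lemmas \ref{lem:triv_ext_equiv}, \ref{lem:deform_equiv}, \ref{lem:close_to_local} and \ref{lem:deform_local}, reducing everything to the uniform invertibility of $1-C_{v^L}$ for the constant-coefficient model, which is handled by the explicit parabolic-cylinder solution (Section 6) together with the strict positive-definiteness of $v^L$ exactly as in \cite{DZ}, Proposition 2.13. Your remark that the passage from $r$ to $[r]$ is not a small perturbation on $\rb$, so that Lemmas \ref{lem:exist_resol_Cw} and \ref{lem:conj_equiv} cannot be used directly, is also accurate and is precisely why the detour through $\Sigma^e$ is needed.
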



From the previous results, we see that for $t\geq t_0$,
$(1-C_{w_\delta^{[\cdot]}})^{-1}$, $(1-C_{v_e^{[\cdot]}})^{-1}$, $(1-C_{v_d^{[\cdot]}})^{-1}$ and $(1-C_{v_d^L})^{-1}$ exist and bounded in $L^2$
and the bounds are equivalent.
It follows that the normalized RHPs $(\rb, v_\delta^{[\cdot]})$, $(\Sigma^e, v_e^{[\cdot]})$, $(\Sigma^e, v_d^{[\cdot]})$ and  $(\Sigma^e, v_d^L)$
have unique solutions $m_\delta^{[\cdot]}$, $m_e^{[\cdot]}$, $m_d^{[\cdot]}$ and $m_d^L \in I+\partial C(L^2)$, respectively.
Note that
\beq
\label{eq:phi_delta_d}
m_\delta^{[\cdot]} = m_d^{[\cdot]}\Phi^{-1}.
\eeq

For reasons that will become clear (see \eqref{comp_contour} below),
we now reverse the orientation on $\rb_-+z_0$ to obtain a contour $\widetilde{\rb}_{z_0} = e^{i\pi}(\rb^++z_0) \cup (\rb^++z_0)$
with associated jump matrix $\widetilde{v}_\delta = v_\delta $
for $z>z_0$ and $\widetilde{v}_\delta = v_\delta^{-1}$ for $z<z_0$.
From the form of $v_\delta$, we see that $\widetilde{v}_\delta = (I- \widetilde{w}_\delta^-)^{-1}(I + \widetilde{w}_\delta^+)$ where
$$ \begin{aligned}
\widetilde w_\delta &= (\widetilde w_\delta^- , \widetilde w_\delta^+ ) \\
&= \left\{
\begin{aligned}
& \bigg( \bpm 0&r \delta^2 e^{i\theta} \\ 0&0 \epm , \bpm 0&0 \\  \overline{r}  \delta^{-2} e^{-i\theta}&0 \epm \bigg) , \ \ \qquad  \quad \ \ z > z_0, \\
& \Bigg( \bpm 0&-\frac{r}{1+|r|^2} \tilde\delta_-^2  e^{i\theta}\\ 0& 0 \epm ,
\bpm 0 & 0  \\ -\frac{\overline{r}}{1+|r|^2}  \tilde\delta_+^{-2} e^{-i\theta} & 0 \epm  \Bigg) , \ \ z < z_0.
\end{aligned}
\right.
\end{aligned} $$
where $\tilde\delta_\pm(z)$ denotes the boundary
values of $\delta(z)$ on $\widetilde{\rb}_{z_0}$. Thus
$\tilde\delta_\pm(z) = \delta_\pm(z)$ for $z>z_0$ and
$\tilde\delta_\pm(z) = \delta_\mp(z)$ for $z<z_0$.

We use $\rho,\lambda$ and $\eta$ to denote $L^\infty$, $H^{1,0}$ and $H^{1,1}$ bounds for $r$ respectively. Thus
$$
\|r\|_{L^\infty} \le \rho,\quad \|r\|_{H^{1,0}} \le \lambda,\quad \|r\|_{H^{1,1}} \le \eta .
$$
Extend $\widetilde{\rb}_{z_0}$ to a contour $\Gamma_{z_0} =
\widetilde{\rb}_{z_0} \cup (z_0 + e^{i\pi/2} \rb_-) \cup (z_0 +
e^{-i\frac\pi2}\rb_-)$.
As a complete contour, $\Gamma_{z_0}$ has the important property
\beq
\label{comp_contour}
C^+_{\Gamma_{z_0}} C^-_{\Gamma_{z_0}} = C^-_{\Gamma_{z_0}}
C^+_{\Gamma_{z_0}} = 0.
\eeq
In the following lemmas we will
assume for convenience that $x=0$. Thus $z_0=0$, $\delta =
\delta_{z_0=0}$, $\Delta = \Delta_{z_0=0}$, $\widetilde{\rb}
\equiv \widetilde{\rb}_{z_0=0}$, $\Gamma \equiv
\widetilde\Gamma_{z_0=0}$, and $\theta = -tz^2/2$.

\begin{lemma}[cf. \cite{DZ}, Lemma 4.1]
For $z\in \rb \backslash 0$,
$$
|\Delta'(z)|\le \text{\rm I} + \text{\rm II},$$ where
$$\begin{aligned}
&\|\text{\rm I}\|_{L^2} \leq c\rho\lambda\\
&|\text{\rm II}| \leq c\rho^2 \frac1{|z|}.
\end{aligned}$$
\end{lemma}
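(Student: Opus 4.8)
The plan is to compute $\Delta(z)$ explicitly from the formula in Proposition~\ref{prop:prop_delta}, namely
$$
\Delta(z) = \delta_+(z)\delta_-(z) = \exp\Big[\frac{1}{i\pi}\,\text{P.V.}\int_{-\infty}^{0}\frac{\log(1+|r(s)|^2)}{s-z}\,ds\Big], \qquad z<0,
$$
(recall $x=0$, so $z_0=0$), and $|\Delta(z)|=1$ throughout $\rb\setminus 0$, while $\Delta(z)=1$ for $z>0$ since then $\delta_+=\delta_-^{-1}$. Thus only $z<0$ needs attention. Differentiating the exponent and using the Plemelj/Sokhotski formula, I would write
$$
\Delta'(z) = \Delta(z)\cdot\frac{1}{i\pi}\,\frac{d}{dz}\,\text{P.V.}\int_{-\infty}^{0}\frac{\log(1+|r(s)|^2)}{s-z}\,ds,
$$
and since $|\Delta(z)|=1$, it suffices to bound the derivative of the P.V.\ integral. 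First I would split $\log(1+|r(s)|^2) = \big(\log(1+|r(s)|^2)-\log(1+|r(0)|^2)\big) + \log(1+|r(0)|^2)$ on a neighborhood of $0$ (say on $(-1,0)$, using a cutoff), exactly mirroring the decomposition used to define $\beta(z,z_0)$ and $\delta_0$ in \eqref{eq:delta_0_def}. The constant piece, $\log(1+|r(0)|^2)$ times the P.V.\ integral of $1/(s-z)$ over $(-1,0)$, produces a term behaving like $c\rho^2/|z|$ near $z=0$ after differentiation — this will be term II. The remaining piece has a H\"older-type modulus of continuity coming from $r\in H^{1,0}\subset C^{1/2}$, which kills the singularity at $s=z$ and yields an $L^2$ function of $z$; this will be term I, bounded by $c\rho\lambda$.

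In more detail: for term II, $\frac{d}{dz}\text{P.V.}\int_{-1}^0 \frac{ds}{s-z}$ on $z<0$ can be computed in closed form as $\frac{d}{dz}\log\frac{|z|}{|z+1|} = \frac1z - \frac1{z+1}$, which is $O(1/|z|)$; multiplying by $|\log(1+|r(0)|^2)|\le c\rho^2$ and by $|\Delta(z)|=1$ gives $|\text{II}|\le c\rho^2/|z|$. For term I, I would differentiate under the integral sign (justified by the H\"older continuity) to get something of the form
$$
\text{I}(z) = \frac{\Delta(z)}{i\pi}\int \frac{\log(1+|r(s)|^2)-\log(1+|r(0)|^2)\chi^0(s)(\cdots)}{(s-z)^2}\,ds + (\text{tail terms}),
$$
and then estimate its $L^2(dz)$ norm. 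The key analytic input is that $s\mapsto\log(1+|r(s)|^2)$ is Lipschitz-composed with an $H^{1,0}$ function, hence $|\log(1+|r(s)|^2)-\log(1+|r(z)|^2)|\le c\rho\,|s-z|^{1/2}\|r'\|_{L^2}^{?}$ — more precisely one uses $\|f(s)-f(z)\|\le \|f'\|_{L^2}|s-z|^{1/2}$ for $f=\log(1+|r|^2)\in H^{1,0}$, so that $f'\in L^2$ with $\|f'\|_{L^2}\le c\rho\|r'\|_{L^2}\le c\rho\lambda$. Inserting this bound and applying a standard fractional-integral / Hardy-type estimate (or simply Minkowski's inequality together with the kernel estimate) shows $\|\text{I}\|_{L^2}\le c\rho\lambda$. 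This is essentially the computation carried out in \cite{DZ}, Lemma~4.1, and I would follow it verbatim with the focusing-case sign changes, which do not affect any of the estimates since only $1+|r|^2>1$ and $|r|$ enter.

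The main obstacle is the careful handling of the principal-value integral near the double singularity at $s=z$ when differentiating: one must legitimately exchange $\frac{d}{dz}$ with the P.V.\ integral and control the resulting $\int (s-z)^{-2}(\cdots)$, which is only conditionally convergent and requires the H\"older cancellation of the numerator. I expect to spend most of the effort verifying that the numerator's $|s-z|^{1/2}$ (or better, the integrated $H^{1,0}$-regularity) genuinely tames the kernel and gives an honest $L^2(dz)$ bound uniform in $x$ (equivalently, uniform in the location $z_0$ of the stationary point, by translation invariance). Everything else — the explicit logarithmic derivative for the constant piece, the observation $|\Delta|=1$, and the triviality of the $z>z_0$ region — is routine. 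Since all constants depend only on $\rho=\|r\|_{L^\infty}$ and $\lambda=\|r\|_{H^{1,0}}$, and the estimate is manifestly translation-invariant, the bound holds uniformly for all $x\in\rb$ as claimed.
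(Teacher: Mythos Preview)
Your approach is essentially correct and is the same one the paper inherits from \cite{DZ}, Lemma~4.1 (the paper gives no independent proof). One correction, though: for $z>z_0=0$ there is no jump, so $\delta_+=\delta_-=\delta$ and $\Delta=\delta^2$ with $|\delta|=1$; thus $|\Delta|=1$ but $\Delta\not\equiv1$, and the region $z>0$ is not trivial. The same decomposition still works there---the P.V.\ integral becomes an ordinary integral and the endpoint $s=0$ again produces the $c\rho^2/|z|$ singularity as $z\to0^+$---so your argument goes through once you drop the claim that only $z<0$ needs attention.
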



\begin{lemma}[\cite{DZ}, p.1064]
Suppose $f\in H^{1,0}$. Then for all $t>1$,
\begin{equation}
\label{eq:osci_line}
\left|\int_{\rb} f\Delta^{\pm1} e^{\mp itz^2}\ dz\right| \le
\frac{c(1+\rho\lambda)}{t^{{\frac 12}}}
\|f\|_{H^{1,0}}.
\end{equation}
In addition, if $f(0) = 0$, then for
all $t>1$,
\begin{equation}
\label{eq:osci_line_0}
\left|\int_{\rb} f\Delta^{\pm 1}e^{\mp itz^2} \ dz\right| \le
\frac{c(1+\rho\lambda)}{t^{3/4}}
\|f\|_{H^{1,0}}.
\end{equation}
\end{lemma}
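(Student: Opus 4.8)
The plan is to prove both inequalities (the two signs being complex conjugates of one another, and $z_0=0$ here so the stationary point of the phase and the singularity of $\Delta=\delta_{z_0=0}$ both sit at the origin) by the classical stationary‑phase dichotomy: cut the integral at the scale $|z|\sim t^{-1/2}$, discard the oscillation on the small piece, and integrate by parts once on the large piece. The two structural inputs I would use are $|\Delta^{\pm1}(z)|=1$ for real $z$, whence $\Delta^{\pm1}\in L^\infty(\rb)$ (Proposition on the properties of $\delta$), and the bound $|\Delta'(z)|\le\mathrm I(z)+\mathrm{II}(z)$ with $\|\mathrm I\|_{L^2}\le c\rho\lambda$, $|\mathrm{II}(z)|\le c\rho^2/|z|$ from the preceding lemma, together with the absolute continuity of $\Delta$ on $\rb\setminus\{0\}$. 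I would also use the one‑dimensional embedding $H^{1,0}\hookrightarrow C_0(\rb)$ (so $\|f\|_\infty\le c\|f\|_{H^{1,0}}$, $f$ decays at $\pm\infty$ so boundary terms there vanish, and $\rho=\|r\|_\infty\le c\|r\|_{H^{1,0}}=c\lambda$), and the elementary estimate $|f(z)-f(0)|\le|z|^{1/2}\|f'\|_{L^2}$.

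Concretely, setting $\epsilon=t^{-1/2}$ and writing $\int_\rb=\int_{|z|\le\epsilon}+\int_{|z|>\epsilon}$: on the inner region I would drop $e^{\mp itz^2}$ and bound the integral by $\int_{|z|\le\epsilon}|f|\,dz$, which is $\le 2\epsilon\|f\|_\infty$ in general and $\le\|f'\|_{L^2}\int_{|z|\le\epsilon}|z|^{1/2}\,dz$ when $f(0)=0$. On the outer region I would integrate by parts via $e^{\mp itz^2}=\frac{\mp1}{2itz}\,\partial_z e^{\mp itz^2}$; the boundary terms at $z=\pm\epsilon$ have size $(t\epsilon)^{-1}|f(\pm\epsilon)|$, and the surviving term $\frac1{2t}\int_{|z|>\epsilon}\big(\tfrac{f\Delta^{\pm1}}{z}\big)'e^{\mp itz^2}\,dz$ I would estimate by expanding the derivative as $\tfrac{f'\Delta^{\pm1}}{z}+\tfrac{f(\Delta^{\pm1})'}{z}-\tfrac{f\Delta^{\pm1}}{z^2}$ and taking $L^1$‑norms on $\{|z|>\epsilon\}$: the $f'/z$, $f/z^2$ and $f\,\mathrm{II}/z$ pieces are controlled by Cauchy--Schwarz against $\big(\int_{|z|>\epsilon}z^{-2}\big)^{1/2}\le c\epsilon^{-1/2}$ or against $\int_{|z|>\epsilon}z^{-2}\le c\epsilon^{-1}$ (using $\|\Delta^{\pm1}\|_\infty\le1$ and $|\mathrm{II}|\le c\rho^2/|z|$), while the $f\,\mathrm I/z$ piece I would bound by $\|\mathrm I\|_{L^2}\big(\int_{|z|>\epsilon}|f|^2z^{-2}\big)^{1/2}$, estimating $|f|\le\|f\|_\infty$ for the first claim and invoking Hardy's inequality $\int_\rb|f|^2z^{-2}\,dz\le 4\|f'\|_{L^2}^2$ (valid since $f(0)=0$) for the second. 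Multiplying through by $\frac1{2t}$, substituting $\epsilon=t^{-1/2}$, and using $t^{-3/4}\le t^{-1/2}$ for $t>1$ together with $\rho^2\le c\rho\lambda$, the terms assemble into $c(1+\rho\lambda)t^{-1/2}\|f\|_{H^{1,0}}$, which is \eqref{eq:osci_line}; rerunning the same bookkeeping with $|f(z)|\le|z|^{1/2}\|f'\|_{L^2}$ inserted everywhere (and Hardy's inequality for the $\mathrm I$‑term) upgrades the rate to $t^{-3/4}$ and yields \eqref{eq:osci_line_0}.

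The one genuinely delicate point, and the main obstacle, is that $\Delta'$ is not in $L^2$: it carries the nonintegrable $1/|z|$ singularity $\mathrm{II}$ at the stationary point, so $f\Delta^{\pm1}$ is not an $H^1$ function and no off‑the‑shelf "integrate by parts once'' oscillatory estimate applies. It is exactly this $\mathrm{II}$ term that forces the cut‑off to sit at scale $t^{-1/2}$, and it is the $L^2$ factor $\mathrm I$ of $\Delta'$ that necessitates Hardy's inequality in the sharpened estimate — the naive bound $|f(z)|^2\le|z|\,\|f'\|_{L^2}^2$ would only produce the divergent $\int_{|z|>\epsilon}|z|^{-1}\,dz$. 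Everything else is routine accounting of powers of $t$ and $\epsilon$.
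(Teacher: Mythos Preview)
Your argument is correct and is precisely the stationary-phase computation carried out in \cite{DZ}, p.~1064, which the paper simply cites without reproducing a proof; the paper provides no independent argument to compare against. The cut at $|z|=t^{-1/2}$, one integration by parts on the exterior, and the splitting of $(\Delta^{\pm1})'$ via the preceding lemma's $\mathrm I+\mathrm{II}$ decomposition (with Hardy's inequality handling the $\mathrm I$ contribution when $f(0)=0$) is exactly how the estimate is obtained there.
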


\begin{figure}
\label{fig:gamma}
\begin{center}
$$\vbox{\offinterlineskip \halign{\strut \quad \hfil $#$\hfil\quad &
\vrule\quad \hfil $#$\hfil\quad\cr D_2&D_1\cr \noalign{\hrule}
D_3&D_4\cr}}$$
\caption{Contour $\Gamma$}
\end{center}
\end{figure}

Let $D_j, j=1,\ldots, 4$, be the $j^{\text{\rm th}}$ quadrant in
$\cb\backslash\Gamma$
(see Figure 2).
In the Lemma below $H^q$ denotes Hardy space.

\begin{lemma}[\cite{DZ}, pp.1065--1069]
Suppose $f\in H^{1,0}$, then for $2\le p < \infty$ and for all $t\ge 0$,
\begin{equation}
\label{eq:osci_gamma}
\left\{
\begin{aligned}
	&\begin{aligned}
	\|C^-_{\rb^+\to\Gamma} \delta^{-2} fe^{it\diamondsuit^2}\|_{L^p}
		& \le {\dsize\frac{c}{(1+t)^{{\frac 12}p}}} \|\delta^{-2}\|_{L^\infty(D_1)} \|f\|_{H^{1,0}} \\
		& \le {\dsize\frac{c (1+\rho^2)}{(1+t)^{{\frac 12}p}}} \|f\|_{H^{1,0}},\hfill
	\end{aligned} \\
	&\begin{aligned}
	\|C^-_{e^{i\pi} \rb^+\to \Gamma} \tilde \delta^{-2}_+ fe^{it\diamondsuit^2} \|_{L^p}
		& \le {\dsize\frac{c}{(1+t)^{{\frac 12}p}}} \|\delta^{-2}\|_{L^\infty(D_3)} \|f\|_{H^{1,0}} \\	
		& \le {\dsize\frac{c (1+\rho^2)}{(1+t)^{{\frac 12}p}}} \|f\|_{H^{1,0}},\hfill
	\end{aligned} \\
	&\begin{aligned}
	\|C^+_{\rb^+\to\Gamma} \delta^2 fe^{-it\diamondsuit^2}\|_{L^p}
		& \le {\dsize\frac{c}{(1+t)^{{\frac 12}p}}} \|\delta^2\|_{L^\infty(D_4)} \|f\|_{H^{1,0}} \\	
		& \le {\dsize\frac{c (1+\rho^2)}{(1+t)^{{\frac 12}p}}} \|f\|_{H^{1,0}},\hfill
	\end{aligned} \\
	&\begin{aligned}
	\|C^+_{e^{i\pi}\rb^+\to \Gamma} \tilde\delta^2_-fe^{-it\diamondsuit^2}\|_{L^p}
		& \le {\dsize\frac{c}{(1+t)^{{\frac 12}p}}} \|\delta^2\|_{L^\infty(D_2)} \|f\|_{H^{1,0}} \\
		& \le {\dsize\frac{c (1+\rho^2)}{(1+t)^{{\frac 12}p}}} \|f\|_{H^{1,0}},\hfill
	\end{aligned} \\
\end{aligned}
\right.
\end{equation}
Suppose in addition that $f(0) = 0$ and that $g$ is a function in
the Hardy space $H^q(\cb\backslash \rb)$ for some $2\le q\le
\infty$. Then for all $t\ge 0$,
\begin{equation}
\label{eq:osci_gamma_0}
\left\{
\begin{aligned}
& \|C^-_{\rb^+\to\Gamma} g_+ fe^{it\diamondsuit^2}\|_{L^2} \le {\dsize\frac{c}{(1+t)^{\frac12-\frac1q}}} \|g\|_{H^q(\cb \backslash \rb)} \|f\|_{H^{1,0}},\hfill\\
& \|C^-_{e^{i\pi}\rb^+\to\Gamma} \tilde g_+ fe^{it\diamondsuit^2}\|_{L^2} \le {\dsize\frac{c}{(1+t)^{\frac12-\frac1q}}} \|g\|_{H^q(\cb\backslash\rb)} \|f\|_{H^{1,0}},\hfill\\
& \|C^+_{\rb^+\to\Gamma} g_-fe^{-it\diamondsuit^2}\|_{L^2} \le
{\dsize\frac{c}{(1+t)^{\frac12-\frac1q}}}\|g\|_{H^q(\cb
\backslash\rb)} \|f\|_{H^{1,0}},\hfill\\
& \|C^+_{e^{i\pi}\rb^+\to\Gamma} \tilde g_- fe^{-it\diamondsuit^2}
\|_{L^2} \le {\dsize\frac{c}{(1+t)^{\frac12-\frac1q}}}
\|g\|_{H^q(\cb\backslash\rb)}
\|f\|_{H^{1,0}},\hfill
\end{aligned}
\right.
\end{equation}
where $g_\pm$
are the boundary values of $g$ on $\rb$ and $\tilde g_\pm = g_\mp$
on $e^{i\pi}\rb^+$.
\end{lemma}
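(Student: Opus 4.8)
The plan is to prove these estimates exactly as in \cite{DZ}, pp.~1065--1069: they are purely analytic statements about Cauchy operators on the contour $\Gamma$ and about oscillatory integrals, and the passage from the defocusing setting of \cite{DZ} to the focusing case at hand changes only the signs occurring in $\theta$ and in the formula \eqref{eq:delta} for $\delta$, none of which survives the absolute-value estimates. After translating the stationary phase point to the origin as above, so that $z_0=0$ and $\theta=-tz^2/2$, I would observe that within each of the two families of inequalities the four individual statements are interchanged by the reflections $z\mapsto\bar z$, $z\mapsto -z$ together with $\delta\mapsto\delta^{-1}$ (using $\delta(z)\overline{\delta(\bar z)}=1$ from Proposition~\ref{prop:prop_delta}), so it suffices to treat one representative of each family.

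For the first family \eqref{eq:osci_gamma}: since $\delta^{\pm2}$, restricted to a quadrant $D_j$, is the boundary value of a function that is analytic and uniformly bounded there by Proposition~\ref{prop:prop_delta}, I would use the $L^p$-boundedness of the Cauchy projections $C^\pm$ to pull the factor $\|\delta^{\pm2}\|_{L^\infty(D_j)}$ out and reduce to estimating $\|C^\mp_{\rb^+\to\Gamma}(fe^{\mp it\diamondsuit^2})\|_{L^p}$ with $f\in H^{1,0}(\rb)$. The kernel $\frac{1}{s-z}$, with $s$ ranging over $\rb^+$ and $z$ over $\Gamma$, is then estimated by the standard van der Corput splitting of the $s$-integral at scale $s\sim t^{-1/2}$: on $0\le s\lesssim t^{-1/2}$ one bounds the integrand crudely using $|e^{\mp its^2}|=1$, $\|f\|_{L^\infty}\le c\|f\|_{H^{1,0}}$, and the length of the interval; on $s\gtrsim t^{-1/2}$ one integrates by parts once via $e^{\mp its^2}=\frac{1}{\mp2its}\frac{d}{ds}e^{\mp its^2}$, the boundary and derivative terms being controlled by $\|f\|_{H^{1,0}}$. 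This yields the pointwise $t^{-1/2}$ decay of the stationary-phase contribution, localized to a window of length $\sim t^{-1/2}$; combining with H\"older's inequality and the $L^p$-boundedness of $C^\pm$ gives the asserted $L^p$ rate.

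For the second family \eqref{eq:osci_gamma_0}, the outer factor $g$ is only assumed to lie in a Hardy space $H^q(\cb\setminus\rb)$, $2\le q\le\infty$, rather than in $L^\infty$, and the hypothesis $f(0)=0$ is what compensates for this. Here I would exploit the extra vanishing by writing $f(s)=s\,\tilde f(s)$ with $\tilde f\in L^2$ (equivalently $f(s)=\int_0^1 f'(\tau s)\,s\,d\tau$), so that on the window $s\lesssim t^{-1/2}$ the integrand carries an additional factor of size $\sim t^{-1/2}$, which upgrades the previous $L^2$ estimate. Keeping $g$ as a genuine Hardy-space function and interpolating between this improved $L^2$ bound and the $L^2$-boundedness of $C^\pm$ then produces the fractional rate $(1+t)^{-(\frac12-\frac1q)}$, which correctly reduces to mere $L^2$-boundedness at $q=2$ and to the full $t^{-1/2}$ gain at $q=\infty$. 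Throughout, the completeness relation \eqref{comp_contour} for $\Gamma$ is what makes the restriction ``$\rb^+\to\Gamma$'' meaningful and guarantees that the relevant Cauchy operators are bounded despite the self-intersection of $\Gamma$ at the origin.

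The main obstacle is obtaining the sharp fractional decay rates in \eqref{eq:osci_gamma_0} (and the precise $p$-dependence in \eqref{eq:osci_gamma}): these do not follow directly from the pointwise stationary-phase bound but require an interpolation argument that simultaneously tracks the order of vanishing of $f$ at the stationary point and the Hardy-space order of the $\delta$-factor, while respecting the geometry of $\Gamma$ near its crossing point. Since all of this is carried out in full detail in \cite{DZ}, the work here reduces to routine verification that the sign changes of the focusing case leave every step intact.
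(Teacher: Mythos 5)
The paper offers no proof of this lemma at all---it is quoted verbatim from \cite{DZ}, pp.~1065--1069---so the only question is whether your sketch reconstructs a workable argument. It does not: its two load-bearing steps fail. First, you cannot ``use the $L^p$-boundedness of the Cauchy projections to pull the factor $\|\delta^{\pm2}\|_{L^\infty(D_j)}$ out.'' Boundedness of $C^\pm$ on $L^p$ only lets you extract $\|\delta^{-2}\|_{L^\infty(\rb^+)}$, the sup on the contour itself; the fact that the stated bound involves the sup over the \emph{open quadrant} $D_1$ is precisely the signal that the proof must treat $\delta^{-2}e^{itz^2}$ (together with the ``analytic part'' of $f$, produced by splitting $\hat f$ and completing the square $e^{isz+itz^2}=e^{-is^2/(4t)}e^{it(z+s/(2t))^2}$) as a bounded analytic function on $D_1$ and deform the Cauchy integral off $\rb^+$ onto the steepest-descent ray $e^{i\pi/4}\rb^+$, where $|e^{itz^2}|=e^{-t|z|^2}$. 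That deformation is the entire source of the decay: $\|e^{-t|\cdot|^2}\|_{L^p(\rd r)}\sim t^{-1/(2p)}$ is exactly the rate asserted in \eqref{eq:osci_gamma}, whereas your heuristic (``pointwise $t^{-1/2}$ on a window of width $t^{-1/2}$'') would predict $t^{-1/2-1/(2p)}$, which is not what the lemma says---a sign that the mechanism you describe is not the one at work. Second, the van der Corput step cannot be performed under the Cauchy kernel: integrating by parts via $e^{its^2}=\tfrac{1}{2its}\tfrac{d}{ds}e^{its^2}$ differentiates $\tfrac{1}{s-z}$ and produces the kernel $\tfrac{1}{(s-z)^2}$, which does not define a bounded operator into $L^p(\Gamma)$ as $z$ ranges up to $\rb^+$; naive stationary phase does not commute with the singular integral, which is exactly why \cite{DZ} avoids it.

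For \eqref{eq:osci_gamma_0}, writing $f(s)=s\tilde f(s)$ with $\tilde f\in L^2$ is legitimate (Hardy's inequality), but the ``interpolation'' you invoke is unspecified and in any case rests on the same two steps above. The correct route again deforms into the quadrant where $g_\pm$ continues analytically and applies H\"older with exponents $(q,q')$ on the deformed ray: the Hardy-space exponent enters through $\|g\|_{H^q}$ and the Gaussian supplies the remaining power of $t$, giving $(1+t)^{-(1/2-1/q)}$; the hypothesis $f(0)=0$ is what renders the non-analytic remainder of $f$ harmless near the stationary point. In short, you have named the right ingredients (analyticity of $\delta^{\pm2}$ and $e^{\mp itz^2}$ in the appropriate quadrants, completeness of $\Gamma$, the role of $f(0)=0$), but the argument you propose to assemble them with cannot produce the stated bounds; the proof is a contour deformation into $D_1,\dots,D_4$, not a stationary-phase estimate carried out on $\rb^+$.
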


Since the $H^{1,0}$- and $L^\infty$-norms of $r(z)$ is invariant under translation,
appropriate choices of $f$ in \eqref{eq:osci_gamma} give rise to the following.
\begin{corollary}[cf. \cite{DZ}, Corollary 4.4]
\label{cor:cw}
For any $2\le p < \infty$, and for all $t\ge 0$
$$
\|C^\pm_{\widetilde{\rb}_{z_0}\to \Gamma_{z_0}} \widetilde{
{w}}_\delta^\pm\|_{L^p(\Gamma_{z_0})} \le \frac{c \lambda (1+\rho^2)}{(1+t)^{{\frac 12}p}}. $$
\end{corollary}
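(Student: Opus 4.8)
The plan is to reduce the claimed estimate on $C^\pm_{\widetilde{\rb}_{z_0}\to\Gamma_{z_0}}\widetilde w_\delta^\pm$ directly to the oscillatory-integral bounds of \eqref{eq:osci_gamma}, exploiting translation invariance to move $z_0$ to the origin. First I would recall the explicit form of $\widetilde w_\delta^\pm$: on $z>z_0$ the nonzero entries are $r\delta^2 e^{i\theta}$ (upper) and $\overline r\delta^{-2}e^{-i\theta}$ (lower), while on $z<z_0$ they are $-\frac{r}{1+|r|^2}\widetilde\delta_-^2 e^{i\theta}$ and $-\frac{\overline r}{1+|r|^2}\widetilde\delta_+^{-2}e^{-i\theta}$. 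Each of the four scalar entries is, up to the bounded factor $\delta^{\pm2}$ (whose $L^\infty$ norm is controlled by $(1+\rho^2)$ via Proposition \ref{prop:prop_delta}), of the shape $f(z)e^{\pm i\theta}$ where $f$ is $r$ or $\frac{r}{1+|r|^2}$ restricted to a half-line, and $\theta=-tz^2/2$ after translating to $z_0=0$. Since $\|r\|_{H^{1,0}}\le\lambda$ and $\|r\|_{L^\infty}\le\rho$ are translation invariant, and $\|\frac{r}{1+|r|^2}\|_{H^{1,0}}\le c\lambda$, each $f$ satisfies the hypotheses of the oscillatory lemma with constant $\le c\lambda$.

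Next I would apply \eqref{eq:osci_gamma} entry by entry, matching the Cauchy operators to the correct geometry: the upper-right entry on $\rb^+$ (i.e. $z>z_0$) contributes a $C^-$ term through $C^-_{\rb^+\to\Gamma}\delta^{-2}fe^{it\diamondsuit^2}$ (after conjugation by $\sigma_3$ the oscillation and the $\delta$-power line up as in the third displayed bound), and similarly the $z<z_0$ pieces land on the $e^{i\pi}\rb^+$ ray and are handled by the $\widetilde\delta_\pm^{\mp2}$ bounds. In every case \eqref{eq:osci_gamma} gives a factor $\frac{c(1+\rho^2)}{(1+t)^{p/2}}$ times $\|f\|_{H^{1,0}}\le c\lambda$, i.e. exactly $\frac{c\lambda(1+\rho^2)}{(1+t)^{p/2}}$. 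Summing the (finitely many) pieces of $\widetilde w_\delta^\pm$ and absorbing constants gives the asserted bound on $\|C^\pm_{\widetilde{\rb}_{z_0}\to\Gamma_{z_0}}\widetilde w_\delta^\pm\|_{L^p(\Gamma_{z_0})}$ uniformly in $x$ (equivalently in $z_0$) and $t\ge0$.

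The routine but non-trivial bookkeeping is making sure the signs of the exponents $e^{\pm i\theta}$ and the powers $\delta^{\pm2}$, $\widetilde\delta_\pm^{\pm2}$ are consistently matched to the $C^+$ versus $C^-$ versions and to the $D_j$-quadrants in which the relevant $\|\delta^{\pm2}\|_{L^\infty(D_j)}$ are evaluated; a sign slip here would put the oscillation on the wrong half-plane and the decay would fail. Concretely one must check that in each of the four cases the analytic continuation of the triangular factor decays in the half-plane where $C^\mp$ closes the contour — this is precisely the signature-table compatibility that motivated the factorization $v_\delta=\delta_-^{\sigma_3}v\delta_+^{-\sigma_3}$ and the choice of $\Phi$ in \eqref{eq:Phi_deform}, so it is already built into the construction. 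I expect the main (only) obstacle to be this orientation/sign verification; once it is done the estimate is an immediate application of \eqref{eq:osci_gamma} together with the uniform $L^\infty$ control on $\delta^{\pm1}$ from Proposition \ref{prop:prop_delta} and the translation invariance of the $H^{1,0}$ and $L^\infty$ norms of $r$. The proof is therefore short: reduce to $z_0=0$, identify each entry as $fe^{\pm i\theta}$ with $\|f\|_{H^{1,0}}\lesssim\lambda$, quote \eqref{eq:osci_gamma}, and sum.
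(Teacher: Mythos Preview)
Your proposal is correct and follows exactly the paper's approach: the paper's entire justification is the single sentence preceding the corollary, namely that translation invariance of the $H^{1,0}$- and $L^\infty$-norms of $r$ allows one to reduce to $z_0=0$ and then make appropriate choices of $f$ in the oscillatory estimates \eqref{eq:osci_gamma}. Your write-up simply expands this sentence.

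One small caution: in your illustrative matching in the second paragraph you write that the upper-right entry $r\delta^2 e^{i\theta}$ on $z>z_0$ is handled by $C^-_{\rb^+\to\Gamma}\delta^{-2}f e^{it\diamondsuit^2}$, but at $z_0=0$ one has $e^{i\theta}=e^{-itz^2/2}$ and the factor is $\delta^{2}$, so the relevant bound from \eqref{eq:osci_gamma} is the $C^+_{\rb^+\to\Gamma}\delta^{2}fe^{-it\diamondsuit^2}$ line, not the first one. You already flag this sign/orientation bookkeeping as the only real obstacle, so just carry it out carefully entry by entry; once the four entries are correctly paired with the four displayed bounds (and the $e^{i\pi}\rb^+$ versions for $z<z_0$), the estimate follows immediately with the constant $c\lambda(1+\rho^2)(1+t)^{-1/(2p)}$ as stated.
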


We need the following $L^p$ bound on solutions of RHPs.
\begin{proposition}[cf. \cite{DZ}, Proposition 4.5]
\label{prop:L_p_bound_op}
Suppose that r is a continuous function on $\rb$ such that
$$\lim_{z\to\infty}r(z)=0, \ \text{ and } \ \|r\|_{L^\infty(\rb)}\leq \rho < \infty. $$
Let $v$ be given as in \eqref{eq:jump_v} and $v=(1-w^-)^{-1}(1+w^+)$ is any factorization as above.
Then for any $p\ge2$, there exists $t_0=t_0(r,p)$ such
that for $t\ge t_0$ and all $x\in\rb$, $(1-C_{ w})^{-1}$
exists in $L^p(\rb)$ and
$$\|(1-C_{ w})^{-1}\|_{L^p(\rb)\rightarrow L^p(\rb)}\le
c $$ for $t\ge t_0$ and all $x\in\rb$.
\end{proposition}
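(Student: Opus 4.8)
The plan is to establish the $L^p$ resolvent bound first for $r$ in a dense subclass of ``nice'' functions — where the steepest-descent reduction built up in the preceding lemmas applies essentially verbatim — and then to remove the regularity by a second-resolvent perturbation argument. The one soft fact used throughout is that the Cauchy projections $C^\pm_\Sigma$ are bounded on $L^p(\Sigma)$ for every $1<p<\infty$ and every Lipschitz contour $\Sigma$; consequently $\|C_w\|_{L^p\to L^p}\le c_p(\|w^-\|_{L^\infty}+\|w^+\|_{L^\infty})$ for any factorization $v=(1-w^-)^{-1}(1+w^+)$, and, by Corollary 2.7 of \cite{DZ}, it suffices to prove the stated bound for one convenient factorization of $v$. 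I will also use throughout the translation invariance $z\mapsto z-z_0$, which makes all contour operator norms independent of $z_0=x/t$ and hence yields uniformity in $x$ automatically (cf.\ the remark preceding Corollary \ref{cor:cw}).

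First I would treat the case of a Schwartz function $r$, or more generally $r\in H^{1,0}(\rb)$. Here the whole chain of reductions leading up to Corollary \ref{cor:cw} goes through with $L^2$ replaced by $L^p$: the conjugation by $\delta^{\pm\sigma_3}$ (Lemma \ref{lem:conj_equiv}), the trivial extension onto $\Sigma^e$ and then onto $\Gamma_{z_0}$ (Lemma \ref{lem:triv_ext_equiv}), the deformation by $\Phi$ (Lemma \ref{lem:deform_equiv}), and the reversal of orientation on $\rb_-+z_0$ all preserve boundedness of the relevant resolvents in $L^p$, with equivalent bounds, simply because the conjugating matrices $\delta^{\pm\sigma_3}$ and $\Phi$ are analytic and uniformly bounded (Proposition \ref{prop:prop_delta}) while the underlying Cauchy operators are $L^p$-bounded; none of these arguments uses $p=2$. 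What remains is to bound $(1-C_{\widetilde w_\delta})^{-1}$ in $L^p(\Gamma_{z_0})$, uniformly for $t$ large and all $x$. For this I would argue as in \cite{DZ}: the matrices $\widetilde w_\delta^-$, $\widetilde w_\delta^+$ are everywhere strictly upper-, resp.\ strictly lower-triangular, $\Gamma_{z_0}$ is a complete contour so that \eqref{comp_contour} holds, and the pieces of $\Gamma_{z_0}$ carrying $\widetilde w_\delta^\mp$, together with the oscillatory factors $\delta^{\pm2}e^{\pm i\theta}$, lie in domains of analyticity permitting the relevant Cauchy integrals to be deformed; combined with the $L^p$ decay of Corollary \ref{cor:cw}, this gives $\|(C_{\widetilde w_\delta})^2\|_{L^p(\Gamma_{z_0})\to L^p(\Gamma_{z_0})}\to 0$ as $t\to\infty$, while $\|C_{\widetilde w_\delta}\|_{L^p\to L^p}\le c(1+\rho^2)$. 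Hence for $t\ge t_1=t_1(r,p)$ the Neumann series $(1-C_{\widetilde w_\delta})^{-1}=(1+C_{\widetilde w_\delta})\sum_{k\ge0}(C_{\widetilde w_\delta}^2)^k$ converges in operator norm, with a bound depending only on $\rho$ and $p$; transferring it back through the equivalences above yields $\|(1-C_w)^{-1}\|_{L^p(\rb)}\le c(\rho,p)$ for $t\ge t_1$ and all $x$.

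Next I would remove the regularity assumption. Given a continuous $r$ with $r(z)\to0$, $\|r\|_{L^\infty}\le\rho$, and any factorization $v=(1-w^-)^{-1}(1+w^+)$ as in the statement, pick a Schwartz $r_0$ (Schwartz functions are dense in $C_0(\rb)$ for the sup norm) with $\|r-r_0\|_{L^\infty(\rb)}\le\epsilon$, $\epsilon$ to be fixed; let $v_0$ be the jump matrix \eqref{eq:jump_v} built from $r_0$, so $\|v-v_0\|_{L^\infty(\rb)}\le c(1+\rho)\epsilon$ by the explicit form of \eqref{eq:jump_v}. Set $w_0^-\equiv w^-$ and $w_0^+\equiv(1-w^-)v_0-1=w^++(1-w^-)(v_0-v)$; then $v_0=(1-w_0^-)^{-1}(1+w_0^+)$ is a valid factorization for $\epsilon$ small, with $\|w_0^+-w^+\|_{L^\infty}\le c(\rho)\epsilon$. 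By the previous step together with Corollary 2.7 of \cite{DZ} (to pass to this factorization of $v_0$), $\|(1-C_{w_0})^{-1}\|_{L^p}\le c_0=c_0(\rho,p)$ for $t\ge t_1=t_1(r_0,p)$ and all $x$. Since $\|C_w-C_{w_0}\|_{L^p\to L^p}=\|C_{w-w_0}\|_{L^p\to L^p}\le c_p\|w^+-w_0^+\|_{L^\infty}\le c_pc(\rho)\epsilon$, fixing $\epsilon$ so that $c_0c_pc(\rho)\epsilon\le\frac12$ and writing $1-C_w=(1-C_{w_0})\big(1-(1-C_{w_0})^{-1}(C_{w_0}-C_w)\big)$ makes the second factor invertible by Neumann series; hence $(1-C_w)^{-1}$ exists in $L^p(\rb)$ with $\|(1-C_w)^{-1}\|_{L^p}\le2c_0$ for $t\ge t_0\equiv t_1$ and all $x$, which is the assertion.

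The step I expect to be the main obstacle is the uniform (for $t$ large) bound on $(1-C_{\widetilde w_\delta})^{-1}$ in $L^p(\Gamma_{z_0})$: this is the one place where the completeness identity \eqref{comp_contour}, the triangular structure of $\widetilde w_\delta^\pm$, and the analytic deformation of the exponential factors must genuinely be combined with the decay of Corollary \ref{cor:cw} to make $(C_{\widetilde w_\delta})^2$ \emph{small in $L^p$ operator norm} — a real steepest-descent estimate rather than soft functional analysis, though the argument of \cite{DZ} is insensitive to $p$ once the $L^p$-boundedness of $C^\pm$ is in hand. Everything else — the chain of equivalence lemmas and the density/perturbation passage — is routine.
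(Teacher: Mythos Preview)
The paper does not supply its own proof here; the result is taken over from \cite{DZ}, Proposition~4.5. Your overall architecture --- prove the $L^p$ resolvent bound first for a convenient dense subclass of reflection coefficients, then transfer it to general continuous $r\in C_0(\rb)$ by a second-resolvent perturbation in the $L^\infty$ norm --- is exactly the architecture of the \cite{DZ} proof, and your perturbation step is carried out correctly.

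The divergence from \cite{DZ}, and the place where your argument is incomplete, is the treatment of the dense class. In \cite{DZ} the dense class consists of \emph{rational} functions with poles off $\rb$: for rational $r$ the factors $r\,e^{\pm i\theta}$, $\frac{r}{1+|r|^2}e^{\pm i\theta}$, etc.\ are meromorphic in $\cb$, so after the conjugation by $\delta^{\sigma_3}$ one can genuinely deform the contour onto steepest-descent rays where the exponentials decay, making $\|w^\pm\|_{L^\infty}$ small for $t$ large (up to finitely many residues), whence a Neumann series gives the $L^p$ resolvent directly. You instead take $r\in H^{1,0}$ and assert that completeness of $\Gamma_{z_0}$, triangularity of $\widetilde w_\delta^\pm$, and Corollary~\ref{cor:cw} force $\|(C_{\widetilde w_\delta})^2\|_{L^p\to L^p}\to0$. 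Triangularity does kill the two ``same-sign'' terms in $(C_{\widetilde w_\delta})^2$, leaving $C^+\big((C^-(\,\cdot\, \widetilde w_\delta^+))\widetilde w_\delta^-\big)+C^-\big((C^+(\,\cdot\, \widetilde w_\delta^-))\widetilde w_\delta^+\big)$; but neither Corollary~\ref{cor:cw} nor the estimates \eqref{eq:osci_gamma}--\eqref{eq:osci_gamma_0} control these as operators on $L^p$. Corollary~\ref{cor:cw} bounds only $C^\pm(I\cdot\widetilde w_\delta^\pm)$ --- a function, not an operator norm --- and \eqref{eq:osci_gamma}--\eqref{eq:osci_gamma_0} require the non-oscillatory multiplier to lie in $H^{1,0}$ (or vanish at $z_0$), which $C^\mp(h\,\widetilde w_\delta^\pm)$ does not for generic $h\in L^p$. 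Since for merely $H^{1,0}$ data $r$ the factor $r$ itself has no analytic continuation off $\rb$, there is no contour one can move, and the needed decay would have to come from an oscillatory-integral estimate on the \emph{product} integrand, which the quoted lemmas do not supply. Replacing your dense class by rational functions (uniformly dense in $C_0(\rb)$) makes the deformation step rigorous, and your second-resolvent passage to general $r$ then applies without change.
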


By Lemma \ref{lem:conj_equiv} and Proposition \ref{prop:L_p_bound_op}, we obtain for $r\in H^{1,0}$ and $2\le p<\infty$,
$$
\|(1-C_{\widetilde{
 w}_\delta})^{-1}\|_{L^p(\widetilde{\rb}_{z_0})\to
L^p(\widetilde{\rb}_{z_0})} \le c_p,
$$
where $c_p$ is
uniform for all $x\in\rb$ and all $t\ge t_0$.
Reversing the orientation on $\rb$ as above, we see that (cf. Proposition 2.8 in \cite{DZ})
\begin{equation}
\label{eq:Lp_bound}
\|(1-C_{\widetilde{
 w}_\delta})^{-1}\|_{L^p(\widetilde{\rb}_{z_0})\to
L^p(\widetilde{\rb}_{z_0})} \le c'_p,
\end{equation}
where $c'_p$ is uniform for all $x\in\rb$ and all $t\ge t_0$.
Let $\widetilde \mu_\delta \in I + L^2(\widetilde \rb)$ be the solution of $(1-C_{\widetilde w_\delta}) \widetilde\mu_\delta = I$.
Using Corollary \ref{cor:cw} and \eqref{eq:Lp_bound}, we obtain the following result.

\begin{corollary}[cf. \cite{DZ}, Corollary 4.6]
\label{cor:mu_minus_I}
For any $2\le p < \infty$ and for all $t\ge 0$
$$
\|\first \mu - I\|_{L^p} \le \frac{c_p \lambda (1+\rho^2)}{(1+t)^{{\frac 12}p}}. $$
\end{corollary}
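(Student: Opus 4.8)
The plan is to put the defining singular integral equation for $\first\mu$ into resolvent form and then estimate the two resulting factors using results already at hand. First I would note that since $(1-C_{\widetilde w_\delta})\first\mu = I$, subtracting $(1-C_{\widetilde w_\delta})I$ from both sides gives $(1-C_{\widetilde w_\delta})(\first\mu-I) = C_{\widetilde w_\delta}I$, and hence, for $t\ge t_0$ where by \eqref{eq:Lp_bound} (together with Proposition~\ref{prop:L_p_bound_op}) the operator $(1-C_{\widetilde w_\delta})^{-1}$ is bounded on $L^p(\widetilde{\rb}_{z_0})$ with norm $\le c'_p$ uniformly for $x\in\rb$, we have $\first\mu - I = (1-C_{\widetilde w_\delta})^{-1}\,C_{\widetilde w_\delta}I$. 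It therefore suffices to estimate $\|C_{\widetilde w_\delta}I\|_{L^p}$.

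Next I would unpack $C_{\widetilde w_\delta}I$. By the definition $C_w h = C^+(hw^-)+C^-(hw^+)$ applied to the constant matrix $I$, one has $C_{\widetilde w_\delta}I = C^+\widetilde w_\delta^- + C^-\widetilde w_\delta^+$, where $\widetilde w_\delta^\pm$ are supported on $\widetilde{\rb}_{z_0}$; viewing them as extended by zero onto the complete contour $\Gamma_{z_0}$, these two terms are exactly $C^+_{\widetilde{\rb}_{z_0}\to\Gamma_{z_0}}\widetilde w_\delta^-$ and $C^-_{\widetilde{\rb}_{z_0}\to\Gamma_{z_0}}\widetilde w_\delta^+$ restricted back to $\widetilde{\rb}_{z_0}\subset\Gamma_{z_0}$. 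Note that although the constant $I$ is not itself in $L^p$, the quantity $C_{\widetilde w_\delta}I$ is, since $\widetilde w_\delta^\pm$ are products of $r\in H^{1,1}\subset L^2\cap L^\infty$ with the factors $\delta^{\pm2}$, $\tilde\delta_\pm^{\pm2}$, which are uniformly bounded by Proposition~\ref{prop:prop_delta}. Corollary~\ref{cor:cw} then gives, for $2\le p<\infty$ and all $t\ge0$, $\|C^\pm_{\widetilde{\rb}_{z_0}\to\Gamma_{z_0}}\widetilde w_\delta^\pm\|_{L^p(\Gamma_{z_0})}\le \frac{c\lambda(1+\rho^2)}{(1+t)^{\frac12 p}}$, and since the $L^p(\widetilde{\rb}_{z_0})$-norm is dominated by the $L^p(\Gamma_{z_0})$-norm, $\|C_{\widetilde w_\delta}I\|_{L^p}\le \frac{c\lambda(1+\rho^2)}{(1+t)^{\frac12 p}}$.

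Combining the two estimates gives, for $t\ge t_0$, $\|\first\mu - I\|_{L^p}\le c'_p\cdot\frac{c\lambda(1+\rho^2)}{(1+t)^{\frac12 p}} = \frac{c_p\lambda(1+\rho^2)}{(1+t)^{\frac12 p}}$, which is the claim. For the remaining range $0\le t\le t_0$ the weight $(1+t)^{\frac12 p}$ is bounded above by $(1+t_0)^{\frac12 p}$, so it is enough to bound $\|\first\mu - I\|_{L^p}$ by a fixed multiple of $\lambda(1+\rho^2)$ uniformly in $x$; this follows from the same resolvent-form identity, using the uniform $L^p$-boundedness of $(1-C_{\widetilde w_\delta})^{-1}$ over the compact $t$-interval (by the Fredholm argument behind Lemma~\ref{lem:exist_resol_Cw} and Lemma~\ref{lem:conj_equiv}, together with continuity in $x$ and $t$) and the estimate for $\|C_{\widetilde w_\delta}I\|_{L^p}$ just obtained.

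Given the inputs, the proof is short; within it the only point requiring care — and the place an error is easiest to make — is the orientation bookkeeping. One must track consistently the reversal of orientation on $\rb_-+z_0$ that turns $\delta_\pm$ into $\tilde\delta_\pm$, $v_\delta$ into $\widetilde v_\delta^{\pm1}$, and $w_\delta$ into $\widetilde w_\delta$, and then the completion of the contour to $\Gamma_{z_0}$ with property \eqref{comp_contour}, so that the boundary operators appearing in $C_{\widetilde w_\delta}I$ are precisely the $C^\pm_{\widetilde{\rb}_{z_0}\to\Gamma_{z_0}}\widetilde w_\delta^\pm$ estimated in Corollary~\ref{cor:cw}; the analytic estimates themselves have all been quoted.
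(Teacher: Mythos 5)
Your argument is correct and is essentially the paper's own proof: the corollary is obtained directly from Corollary \ref{cor:cw} and the resolvent bound \eqref{eq:Lp_bound} via the identity $\first{\mu} - I = (1-C_{\widetilde w_\delta})^{-1}C_{\widetilde w_\delta}I$, exactly as you set it up. Your added care about the range $0\le t\le t_0$ and the orientation bookkeeping on $\Gamma_{z_0}$ only makes explicit what the paper leaves implicit.
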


\begin{lemma}[cf. \cite{DZ}, Lemma 4.7]
\label{lem:estimate}
For any $2\le p < \infty$ and for $t$ sufficiently large,
$$\|C^\pm \first\mu (\widetilde{ w}_\delta^{[\cdot]\pm} - \widetilde{ w}_\delta^\mp) \|_{L^2} \le ct^{-\frac12+\frac1{2p}} .$$
\end{lemma}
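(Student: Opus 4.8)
The estimate is the analogue of \cite{DZ}, Lemma 4.7, and I would follow that argument. Recall that for $t$ large $(1-C_{\widetilde{w}_\delta})^{-1}$ is bounded in $L^p(\widetilde\rb_{z_0})$ for $2\le p<\infty$, so $\first\mu$, the solution of $(1-C_{\widetilde{w}_\delta})\first\mu=I$, is well defined, and Corollary \ref{cor:mu_minus_I} applies; the present estimate is exactly what is needed to pass from the resolvent of $C_{\widetilde{w}_\delta}$ to that of its rationalized version $C_{\widetilde{w}_\delta^{[\cdot]}}$. The key structural observation is that every nonzero entry of the componentwise difference $\widetilde{w}_\delta^{[\cdot]}-\widetilde{w}_\delta$ has the form $\big(f-[f]\big)\,\delta^{\pm2}\,e^{\pm i\theta}$ — with $\delta$ replaced by the appropriate boundary value $\tilde\delta_\pm$ on the part $z<z_0$ — where $f$ is one of $r$, $\bar r$, $r/(1+|r|^2)$, $\bar r/(1+|r|^2)$. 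By \eqref{eq:bracket}, $[f](z_0)=f(z_0)$, so $f-[f]$ vanishes at the stationary phase point $z_0$; since $r\in H^{1,1}(\rb)$, the function $f-[f]$ lies in $H^{1,0}(\rb)$ with norm bounded uniformly in $x$ and $t$ (after the translation $z_0\mapsto 0$, which preserves $H^{1,0}$- and $L^\infty$-norms); and by Proposition \ref{prop:prop_delta} the factors $\delta^{\pm2}$, $\tilde\delta_\pm^{\pm2}$ lie in the Hardy spaces $H^\infty(\cb\setminus\rb)$ with norms controlled by $(1+\rho^2)$.

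I would then write $\first\mu=I+(\first\mu-I)$ and split
$$
C^\pm\first\mu\big(\widetilde{w}_\delta^{[\cdot]\pm}-\widetilde{w}_\delta^\mp\big)
= C^\pm\big(\widetilde{w}_\delta^{[\cdot]\pm}-\widetilde{w}_\delta^\mp\big)
+ C^\pm\Big((\first\mu-I)\big(\widetilde{w}_\delta^{[\cdot]\pm}-\widetilde{w}_\delta^\mp\big)\Big).
$$
For the first (pure oscillatory) term, extend $\widetilde\rb_{z_0}$ to the complete contour $\Gamma_{z_0}$, on which the difference is supported on the two rays $\rb^++z_0$ and $e^{i\pi}(\rb^++z_0)$, and apply the oscillatory Cauchy-operator estimates \eqref{eq:osci_gamma_0} ray by ray, taking $f$ to be the translated difference (which vanishes at $0$) and $g=\delta^{\pm2}$ or $\tilde\delta_\pm^{\mp2}\in H^\infty(\cb\setminus\rb)$. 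With the Hardy exponent $q=\infty$ this gives a bound of order $t^{-1/2}$, hence in particular $\le c\,t^{-1/2+1/(2p)}$, uniformly in $x$.

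For the second term, since $C^\pm$ is bounded on $L^2$, it suffices to bound $\big\|(\first\mu-I)\big(\widetilde{w}_\delta^{[\cdot]\pm}-\widetilde{w}_\delta^\mp\big)\big\|_{L^2}$. By H\"older's inequality with exponents $p$ and $2p/(p-2)$ this is at most $\|\first\mu-I\|_{L^p}\,\big\|\widetilde{w}_\delta^{[\cdot]\pm}-\widetilde{w}_\delta^\mp\big\|_{L^{2p/(p-2)}}$; the second factor is bounded uniformly in $x,t$ because $f-[f]$, times the bounded $\delta$-factors, lies in every $L^q(\rb)$, $2\le q\le\infty$, with uniformly bounded norm, while the first factor decays in $t$ by Corollary \ref{cor:mu_minus_I}. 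Assembling the two pieces yields the asserted bound $c\,t^{-1/2+1/(2p)}$.

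The part that needs the most care is the orientation-reversal bookkeeping — matching the components $\widetilde{w}_\delta^{[\cdot]\pm}$ (defined on the original orientation / on $\Sigma^e$) with $\widetilde{w}_\delta^\mp$ on the reversed contour $\widetilde\rb_{z_0}$ — together with the correct piecewise application on $\Gamma_{z_0}$ of the complex-contour oscillatory estimates \eqref{eq:osci_gamma}--\eqref{eq:osci_gamma_0}, since those are stated for the operators $C^\pm_{\rb^+\to\Gamma}$ and $C^\pm_{e^{i\pi}\rb^+\to\Gamma}$ and one must check that $\Phi$-type analytic factors and the $\delta$'s play the role of the Hardy-space functions $g$. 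Uniformity in $x$ throughout is exactly the content of the translation-invariance of the relevant norms of $r$ and of the bounds for $\delta$ in Proposition \ref{prop:prop_delta}.
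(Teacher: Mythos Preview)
Your overall strategy—split $\first\mu = I + (\first\mu - I)$ and treat the two pieces separately—matches the approach in \cite{DZ}, and your handling of the first (pure oscillatory) piece via \eqref{eq:osci_gamma_0} with $g=\delta^{\pm2}\in H^\infty$ is correct and gives $O(t^{-1/2})$. The problem is the second piece. Your H\"older argument produces
\[
\big\|(\first\mu-I)\big(\widetilde w_\delta^{[\cdot]\pm}-\widetilde w_\delta^{\pm}\big)\big\|_{L^2}
\le \|\first\mu-I\|_{L^p}\,\big\|\widetilde w_\delta^{[\cdot]\pm}-\widetilde w_\delta^{\pm}\big\|_{L^{2p/(p-2)}}
\le c\,t^{-1/(2p)},
\]
since the second factor carries no $t$-decay. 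But $t^{-1/(2p)}$ is strictly \emph{larger} than the asserted $t^{-1/2+1/(2p)}$ for every $p>2$ (they agree only at $p=2$). So ``assembling the two pieces'' does not yield the stated bound; the sum is controlled by its worse term, $t^{-1/(2p)}$.

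The point you are missing is that the $(\first\mu-I)$ piece must also be fed through the oscillatory Cauchy estimate \eqref{eq:osci_gamma_0}, not just through the $L^2$-boundedness of $C^\pm$. The function $\first\mu-I$ is itself a sum of boundary values of Hardy-$H^p$ functions on the complete contour $\Gamma_{z_0}$ (write $\first\mu-I=C_{\widetilde w_\delta}\first\mu=C^+(\first\mu\widetilde w_\delta^-)+C^-(\first\mu\widetilde w_\delta^+)$ and use the projection structure coming from \eqref{comp_contour}), and by Corollary~\ref{cor:mu_minus_I} these pieces have $L^p$-norm $\le c\,t^{-1/(2p)}$. Multiplying by the bounded analytic factor $\delta^{\pm2}$ keeps them in $H^p$ with the same bound. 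Now apply \eqref{eq:osci_gamma_0} with Hardy exponent $q=p$: the estimate contributes a factor $(1+t)^{-(1/2-1/p)}$ times $\|g\|_{H^p}\|f\|_{H^{1,0}}$, and since $\|g\|_{H^p}\le c\,t^{-1/(2p)}$ the product is $t^{-(1/2-1/p)}\cdot t^{-1/(2p)}=t^{-1/2+1/(2p)}$. In short, bounding $C^\pm$ crudely on $L^2$ discards exactly the oscillatory gain that the lemma is recording; the completeness of $\Gamma_{z_0}$ and the $H^q$-version of the oscillatory estimate are where the real work happens for this term.
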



\begin{lemma}[cf. \cite{DZ}, Lemma 4.8]
\label{lem:bracket_close}
For any $2\le p<\infty$ and for $t$ sufficiently large,
\beq
\label{eq:lem_bracket_close}
\Bigg|\int_{\widetilde\rb} \first{\mu}\firste{w} - \int_{\widetilde\rb} \second{\mu}\seconde{w} \Bigg| \le ct^{-\frac34+\frac1{2p}}
\eeq
and for $z\in\cb\setminus\rb$,
\beq
\label{eq:lem_bracket_close2}
 \big|\first{m}(z) - \second{m}(z)\big| \le c(z) t^{-\frac34+\frac1{2p}}
 \eeq
where $c(z)$ is a constant depending on $z$.
\end{lemma}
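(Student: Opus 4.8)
The statement is a perturbation estimate comparing the solution of the ``bracketed'' localized RHP, built from the rational approximant $[\cdot]$ of the data, with the solution of the RHP built from the true data. The natural approach is the standard second-resolvent manoeuvre of Deift--Zhou, applied on the reversed, complete contour $\widetilde\rb_{z_0}$ (equivalently $\Gamma_{z_0}$), where the Cauchy projection identity \eqref{comp_contour} makes all computations clean. First I would recall that $\first\mu$ solves $(1-C_{\firste w})\first\mu = I$ and $\second\mu$ solves $(1-C_{\seconde w})\second\mu = I$, where both resolvents are uniformly bounded in $L^p(\widetilde\rb_{z_0})$ for $t\ge t_0$ by \eqref{eq:Lp_bound} (and the analogous bound for the bracketed operator, which follows from the same Fredholm/vanishing-kernel argument since $[r]$ has the same $H^{1,0}$ and $L^\infty$ norms up to constants). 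Writing the resolvent identity
$$
\second\mu - \first\mu = (1-C_{\seconde w})^{-1}\big(C_{\seconde w} - C_{\firste w}\big)\first\mu,
$$
the whole estimate reduces to controlling $(C_{\seconde w} - C_{\firste w})\first\mu$ in $L^2$, i.e. to controlling $C^\pm\big(\first\mu\,(\firste w^{[\cdot]\pm} - \firste w^\mp)\big)$, which is exactly what Lemma \ref{lem:estimate} provides: it is bounded by $c\,t^{-\frac12+\frac1{2p}}$. Combined with the uniform resolvent bound this gives $\|\second\mu - \first\mu\|_{L^2} \le c\,t^{-\frac12+\frac1{2p}}$.

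**From the $\mu$-difference to the integral and to $m$.** For the first inequality \eqref{eq:lem_bracket_close} I would split
$$
\int_{\widetilde\rb}\first\mu\,\firste w - \int_{\widetilde\rb}\second\mu\,\seconde w
= \int_{\widetilde\rb}(\first\mu - \second\mu)\,\firste w + \int_{\widetilde\rb}\second\mu\,(\firste w - \seconde w).
$$
The first piece is bounded using Cauchy--Schwarz by $\|\first\mu - \second\mu\|_{L^2}\,\|\firste w\|_{L^2}$; the factor $\|\firste w\|_{L^2}$ is itself $O(t^{-1/4})$ by Corollary \ref{cor:cw} (with $p=2$), so this piece is $O(t^{-1/2+\frac1{2p}}\cdot t^{-1/4})$, which is more than enough. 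The second piece is bounded by $\|\second\mu\|_{L^2(\text{on supp})}$ times $\|\firste w - \seconde w\|_{L^2}$; here one writes $\second\mu = I + (\second\mu - I)$, with $\|\second\mu - I\|_{L^2} = O(t^{-1/4})$ (the bracketed analogue of Corollary \ref{cor:mu_minus_I}), and $\|\firste w - \seconde w\|_{L^2} = \|\firste w^{[\cdot]} - \firste w\|_{L^2}$ is estimated by Lemma \ref{lem:estimate} (or directly by the $L^2$-closeness of $r$ to $[r]$ after the $\delta$-conjugation, as in Lemma \ref{lem:close_to_local}) to be $O(t^{-1/2+\frac1{2p}})$; note $\int_{\widetilde\rb} I\cdot(\firste w - \seconde w)$ needs the extra decay that comes from the oscillatory cancellation of $\delta^{\pm2}e^{i\theta}$, i.e. \eqref{eq:osci_gamma}, rather than a crude $L^2$ bound, but $t^{-3/4+\frac1{2p}}$ is still the outcome. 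Collecting terms, the dominant contribution is $t^{-3/4+\frac1{2p}}$, giving \eqref{eq:lem_bracket_close}. For \eqref{eq:lem_bracket_close2}, since $\first m(z) - I = \int_{\widetilde\rb}\frac{(\first\mu\,\firste w)(s)}{s-z}\frac{ds}{2\pi i}$ and similarly for $\second m$, one has for fixed $z\in\cb\setminus\rb$
$$
\big|\first m(z) - \second m(z)\big| \le \frac{1}{2\pi\,\mathrm{dist}(z,\widetilde\rb)}\,\Big\|\int_{\widetilde\rb}\first\mu\,\firste w - \int_{\widetilde\rb}\second\mu\,\seconde w\Big\|,
$$
and one repeats the same splitting; the $(s-z)^{-1}$ kernel is an $L^2(ds)$ function of $s$ with norm depending on $z$, so Cauchy--Schwarz against $\|\first\mu - \second\mu\|_{L^2}$ etc. produces exactly $c(z)\,t^{-3/4+\frac1{2p}}$.

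**The main obstacle.** The genuinely delicate point is not the resolvent algebra — that is entirely routine — but making sure that in estimating $\int_{\widetilde\rb} I\cdot(\firste w^{[\cdot]} - \firste w^\mp)$ and the analogous terms one does \emph{not} lose the half-power of $t$ that a naive $L^2\times L^2$ bound would suggest ($t^{-1/4}\cdot t^{-1/4} = t^{-1/2}$, which is weaker than the claimed $t^{-3/4+\frac1{2p}}$ for $p$ near $2$). The resolution, exactly as in \cite{DZ} Lemma 4.8, is to exploit that the difference $r - [r]$ vanishes to second order at $z_0$ together with the oscillatory-integral estimates \eqref{eq:osci_line_0} and \eqref{eq:osci_gamma_0} (the ``$f(0)=0$'' versions), which upgrade the decay from $t^{-1/2}$ to $t^{-3/4}$ on the leading-order term. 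So the plan is: (i) set up the second-resolvent identity, (ii) feed in Lemma \ref{lem:estimate} and the uniform resolvent bounds to get $\|\first\mu - \second\mu\|_{L^2}$, (iii) split the integral $\int\first\mu\firste w - \int\second\mu\seconde w$ into three pieces and bound the $I$-times-difference piece using the vanishing-at-$z_0$ improved oscillatory estimates, the rest by Cauchy--Schwarz against the $O(t^{-1/4})$ tails, and (iv) divide by the Cauchy kernel to pass from the integral bound to \eqref{eq:lem_bracket_close2}. Since every ingredient is already in place and mirrors \cite{DZ} verbatim after the focusing/defocusing sign change, I would present the argument in that order and leave the purely computational verifications to the reader.
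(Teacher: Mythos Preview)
Your proposal is correct and follows precisely the approach of \cite{DZ}, Lemma 4.8, which is exactly what the paper invokes: the second-resolvent identity together with Lemma~\ref{lem:estimate}, Corollaries~\ref{cor:cw} and~\ref{cor:mu_minus_I}, and the improved oscillatory estimates \eqref{eq:osci_line_0}, \eqref{eq:osci_gamma_0} for the $I\cdot(\firste w - \seconde w)$ piece. For \eqref{eq:lem_bracket_close2} the paper likewise just inserts the Cauchy kernel $(s-z)^{-1}$ into the integrand and repeats the splitting, which is what you do.
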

\begin{proof}
The proof of \eqref{eq:lem_bracket_close} follows the proof of Lemma 4.8 in \cite{DZ}.
The proof of \eqref{eq:lem_bracket_close2} is similar using the relation
$\first m(z) = I + \int \frac{\first \mu(s)\first w(s)}{s-z} \frac{ ds}{2\pi i}$, and
$\second m(z) = I + \int \frac{\second \mu(s)\second w(s)}{s-z} \frac{ ds}{2\pi i}$.
\end{proof}

Since $\Phi=I$ in the region II, $\widetilde{m}_\delta^{[\cdot]} = m_\delta^{[\cdot]} = m_d^{[\cdot]}\Phi^{-1} = m_d^{[\cdot]}$ on II
and hence the residues at $z=\infty$ are the same,
i.e. $\lim_{z\to\infty,z\in \text{II}} z \widetilde m_\delta^{[\cdot]} = \lim_{z\to\infty,z\in \text{II}} z m_d^{[\cdot]}$,
which implies $\int_{\widetilde\rb} \widetilde{{\mu}}_\delta^{[\cdot]} \widetilde{ w}_\delta^{[\cdot]} = \int_{\Sigma^e}{{\mu}}_d^{[\cdot]} {{w}}_d^{[\cdot]} $.

\begin{lemma}[cf. \cite{DZ}, pp. 1073--1074]
For $t$ sufficiently large,
\beq
\label{eq:lem_bracket_close_local}
\Bigg|\int_{\Sigma^e} {{\mu}}_d^L{ w}_d^L - \int_{\Sigma^e}{{\mu}}_d^{[\cdot]} {{w}}_d^{[\cdot]} \Bigg| \le ct^{-\frac34}.
\eeq
and for $z\in\cb\setminus\rb$,
\beq
\label{eq:lem_bracket_close_local2}
\big|m_d^L(z) - m_d^{[\cdot]}(z) \big| \le c(z)t^{-\frac34}
\eeq
where $c(z)$ is a constant depending on $z$.
\end{lemma}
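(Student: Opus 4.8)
The plan is to follow the scheme of Lemma~\ref{lem:bracket_close} and of \cite{DZ}, pp.~1073--1074, comparing the RHP's $(\Sigma^e, v_d^{[\cdot]})$ and $(\Sigma^e, v_d^L)$ through a resolvent estimate on the solutions $\mu_d^{[\cdot]}$ and $\mu_d^L$ of $(1-C_{w_d^{[\cdot]}})\mu_d^{[\cdot]} = I$ and $(1-C_{w_d^L})\mu_d^L = I$. Three facts are already in hand and will be used repeatedly: first, $(1-C_{w_d^L})^{-1}$ and $(1-C_{w_d^{[\cdot]}})^{-1}$ are bounded in $L^2(\Sigma^e)$, uniformly in $x\in\rb$ and in $t\geq t_0$, with equivalent bounds (this is the content of the equivalence chain Lemmas~\ref{lem:conj_equiv}--\ref{lem:deform_local} together with the preceding Proposition); second, $\|v_d^{[\cdot]} - v_d^L\|_{L^p(\Sigma^e)} \leq c\,t^{-\frac14-\frac1{2p}}$ for $1\leq p\leq\infty$, by Lemma~\ref{lem:close_to_local}; and third, $v_d^{[\cdot]} = v_d^L = I$ on $\rb$, so that the $w$-matrices live on the crossing rays $z_0 + e^{\pm i\pi/4}\rb$, along which the exponential occurring in $v_d^L$ (and in $v_d^{[\cdot]}$) decays like $e^{-\frac t2|z-z_0|^2}$; consequently $\|w_d^L\|_{L^p(\Sigma^e)} \leq c\,t^{-\frac1{2p}}$ and, by the standard bound $\|\mu - I\|_{L^2}\leq \|(1-C_w)^{-1}\|\,\|C_wI\|_{L^2}\leq c\|w\|_{L^2}$, $\|\mu_d^{[\cdot]} - I\|_{L^2(\Sigma^e)} \leq c\,t^{-\frac14}$.

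First I would estimate $\mu_d^L - \mu_d^{[\cdot]}$. Using the second resolvent identity,
\[
\mu_d^L - \mu_d^{[\cdot]} = (1-C_{w_d^L})^{-1}\bigl(C_{w_d^L} - C_{w_d^{[\cdot]}}\bigr)\mu_d^{[\cdot]},
\]
and I would split $\mu_d^{[\cdot]} = I + (\mu_d^{[\cdot]} - I)$. The term $(C_{w_d^L} - C_{w_d^{[\cdot]}})I$ is bounded in $L^2$ by $c\|v_d^{[\cdot]} - v_d^L\|_{L^2}\leq c\,t^{-\frac12}$ (fact two with $p=2$), while $(C_{w_d^L} - C_{w_d^{[\cdot]}})(\mu_d^{[\cdot]} - I)$ is bounded by $c\|v_d^{[\cdot]} - v_d^L\|_{L^\infty}\,\|\mu_d^{[\cdot]} - I\|_{L^2}\leq c\,t^{-\frac14}\cdot t^{-\frac14} = c\,t^{-\frac12}$ (fact two with $p=\infty$, and fact three). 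With the uniform resolvent bound (fact one) this gives $\|\mu_d^L - \mu_d^{[\cdot]}\|_{L^2(\Sigma^e)}\leq c\,t^{-\frac12}$.

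Next I would write
\[
\int_{\Sigma^e}\!\bigl(\mu_d^L w_d^L - \mu_d^{[\cdot]} w_d^{[\cdot]}\bigr)
= \int_{\Sigma^e}\!(\mu_d^L - \mu_d^{[\cdot]})\,w_d^L
+ \int_{\Sigma^e}\!(\mu_d^{[\cdot]} - I)(w_d^L - w_d^{[\cdot]})
+ \int_{\Sigma^e}\!(w_d^L - w_d^{[\cdot]}),
\]
and bound the three integrals, respectively, by $\|\mu_d^L - \mu_d^{[\cdot]}\|_{L^2}\|w_d^L\|_{L^2}\leq c\,t^{-\frac12}\!\cdot t^{-\frac14}$, by $\|\mu_d^{[\cdot]} - I\|_{L^2}\|w_d^L - w_d^{[\cdot]}\|_{L^2}\leq c\,t^{-\frac14}\!\cdot t^{-\frac12}$, and by $\|w_d^L - w_d^{[\cdot]}\|_{L^1(\Sigma^e)}\leq c\,t^{-\frac34}$ (fact two with $p=1$). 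Each is $O(t^{-\frac34})$, which is \eqref{eq:lem_bracket_close_local}. For \eqref{eq:lem_bracket_close_local2} I would use $m_d^L(z) = I + \int_{\Sigma^e}\frac{\mu_d^L w_d^L}{s-z}\frac{ds}{2\pi i}$ and the analogous formula for $m_d^{[\cdot]}$, so that $m_d^L(z) - m_d^{[\cdot]}(z) = \int_{\Sigma^e}\frac{\mu_d^L w_d^L - \mu_d^{[\cdot]} w_d^{[\cdot]}}{s-z}\frac{ds}{2\pi i}$; for $z\notin\Sigma^e$ one has $\sup_{s\in\Sigma^e}|s-z|^{-1} = c(z) < \infty$, and splitting the numerator exactly as above and reusing the $L^1$ and $L^2\times L^2$ bounds yields $|m_d^L(z) - m_d^{[\cdot]}(z)| \leq c(z)\,t^{-\frac34}$.

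The work here is entirely bookkeeping rather than conceptual: the one point requiring care is to check that along each crossing ray the oscillatory exponential genuinely furnishes Gaussian decay in $z-z_0$, so that the $L^p(\Sigma^e)$ norms of $w_d^L$ and of $w_d^L - w_d^{[\cdot]}$ carry exactly the powers $t^{-1/(2p)}$ and $t^{-1/4-1/(2p)}$, and then to pair the H\"older exponents ($L^2\!\times\!L^2$, $L^\infty\!\times\!L^2$, $L^1$) at each step so that they add to precisely $-3/4$. The only ingredient that must genuinely be available before starting is the uniform $L^2(\Sigma^e)$-boundedness of $(1-C_{w_d^L})^{-1}$, inherited through the equivalence chain down to the model resolvent $(1-C_{v^L})^{-1}$ on $\rb$; everything else is a routine combination of Cauchy-operator bounds.
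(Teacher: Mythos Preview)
Your proposal is correct and follows essentially the same approach as the paper, which simply refers the reader to \cite{DZ}, pp.~1073--1074 and notes that \eqref{eq:lem_bracket_close_local2} is proved analogously to \eqref{eq:lem_bracket_close_local}. You have accurately spelled out the standard resolvent/H\"older-pairing argument: the second resolvent identity combined with Lemma~\ref{lem:close_to_local} and the Gaussian decay of $w_d^L$ on the diagonal rays gives $\|\mu_d^L-\mu_d^{[\cdot]}\|_{L^2}\le ct^{-1/2}$, and your three-term decomposition with the $L^2\times L^2$ and $L^1$ pairings yields the required $t^{-3/4}$ bound.
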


\begin{proof}
As in Lemma \ref{lem:bracket_close}, the proof of \eqref{eq:lem_bracket_close_local2} is similar to the proof of \eqref{eq:lem_bracket_close_local}.
\end{proof}


If $m_1$ and $\delta_1$ are the residues at $z=\infty$ of $m(z)
= m(x,t,z)$ and $\delta(z)^{\sigma_3}$ respectively, we see that
$$ m_1(x,t) = -\frac{1}{2 \pi i} \int_{\rb}{\mu}{w} \quad \text{and} \quad \delta_1(x,t) = -\frac1{2\pi i} \int_{-\infty}^{z_0} \log (1 + |r|^2) \sigma_3. $$
Since $m_\delta(z) = \widetilde{m}_\delta(z) = m(z)\delta(z)^{-\sigma_3}$,
the residue $(\widetilde{m}_\delta)_1(x,t)$ at $z=\infty$ of $\widetilde{m}_\delta(x,t,z)$ is given by $(\widetilde{m}_\delta)_1 = m_1  - \delta_1$.
Assembling the above results, we conclude that for any $p \geq 2$ and all sufficiently large $t$,
\begin{equation}
\label{eq:u_c_localized}
\begin{aligned}
u_f(x,t) &= -i (m_1(x,t))_{12} = -i((\widetilde m_\delta)_1(x,t))_{12} \\
&= \frac{1}{2 \pi} \int_{\widetilde\rb} (\widetilde{{\mu}}_\delta\widetilde{ w}_\delta )_{12}
=  \frac{1}{2 \pi} \int_{\Sigma^e} ({{\mu}}_d^L{ w}_d^L )_{12} + O(t^{-\frac34+\frac1{2p}}) \\
&= -i ((m_d^L)_1(x,t))_{12} + O(t^{-\frac34+\frac1{2p}}), \\
\end{aligned}
\end{equation}
where $(m_d^L)_1$ is the residue of $m_d^L$ at $z=\infty$.
Similarly, for any fixed $z \in \cb\setminus\rb$,
\begin{equation}
\label{eq:m_z1_localized}
\begin{aligned} m(z)
&= \widetilde{m}_\delta(z)\delta^{\sigma_3}(z) \\
&= \widetilde m_\delta^{[\cdot]}(z) \delta^{\sigma_3}(z) + O(t^{-\frac34+\frac1{2p}}) \\
&= m_d^{[\cdot]}(z)\Phi^{-1}(z)\delta^{\sigma_3}(z)
+ O(t^{-\frac34+\frac1{2p}}) \\
&= m_d^L(z)\Phi^{-1}(z)\delta^{\sigma_3}(z)
+ O(t^{-\frac34+\frac1{2p}}). \\
\end{aligned}
\end{equation}

\begin{remark}
\label{rmk:error_prop_norm}
Clearly by the above estimates, the error terms in \eqref{eq:u_c_localized} and \eqref{eq:m_z1_localized}
are controlled by  $H^{1,0}$ norm of the reflection coefficient function $r(z)$.
In particular, if $\|r\|_{L^\infty} \leq c \epsilon$, $\epsilon \ll 1$, then it is easy to check that
the error terms in \eqref{eq:u_c_localized} and \eqref{eq:m_z1_localized}
are $O(\|r\|_{H^{1,0}} t^{-\frac34+\frac1{2p}})$, $O(\big\|\frac {r(\cdot)}{\cdot - z}\big\|_{H^{1,0}} t^{-\frac34+\frac1{2p}})$, respectively.
\end{remark}

If $\|r\|_{H^{1,0}}$ is sufficiently small, then the singular operators $(1-C_{w_{\delta}^{[\cdot]}})^{-1}$, etc. are bounded uniformly on $t\geq0$,
which leads the following result.
\begin{lemma}
\label{lem:small_r_estimate}
If $r(z)$ has sufficiently small norm in $H^{1,0}$, we have for all $t\geq 1$,
$$
u_f(x,t)= -i ((m_d^L)_1(x,t))_{12} + O(\|r\|_{H^{1,0}} t^{-\frac34+\frac1{2p}})
$$
and for any fixed $z\in \cb\setminus\rb$,
$$
m(z)= m_d^L(z)\Phi^{-1}(z)\delta^{\sigma_3}(z) + O\bigg(\Big\|\frac {r(\cdot)}{\cdot-z} \Big\|_{H^{1,0}} t^{-\frac34+\frac1{2p}}\bigg).
$$
\end{lemma}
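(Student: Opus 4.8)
The plan is to revisit the chain of equivalences of operator bounds established in Lemmas~\ref{lem:exist_resol_Cw}--\ref{lem:deform_local} and observe that when $\|r\|_{H^{1,0}}$ is small, the resolvents in question are in fact bounded \emph{uniformly in $t\ge 1$} (and all $x\in\rb$), not merely for $t\ge t_0(r)$. First I would note that for $r$ with small $H^{1,0}$-norm, the Neumann series argument gives directly that $(1-C_{v_d^L})^{-1}$ exists and is bounded in $L^2(\Sigma^e)$ with a bound depending only on $\|r\|_{L^\infty}$, uniformly for all $t\ge0$: indeed $\|v_d^L - I\|_{L^\infty}$ is controlled by $\|r(z_0)\|\le\rho$ and, crucially, the off-diagonal entries carry the oscillatory factors $e^{\pm i\theta}$ which decay along the deformed contour $\Sigma^e\setminus\rb$, so that $\|C_{w_d^L}\|_{L^2}\le c\rho<1$ for $\rho$ small. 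Then, running Lemmas~\ref{lem:close_to_local}, \ref{lem:deform_equiv}, \ref{lem:triv_ext_equiv} and \ref{lem:conj_equiv} \emph{backwards} (each asserting equivalence of bounds, with the equivalence in Lemma~\ref{lem:close_to_local} being uniform for $t\ge1$), we deduce that $(1-C_{w_\delta^{[\cdot]}})^{-1}$ is bounded in $L^2(\rb)$ uniformly for $t\ge1$ and all $x\in\rb$.

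Next I would re-examine the derivation of \eqref{eq:u_c_localized} and \eqref{eq:m_z1_localized}. These identities were obtained by chaining the estimates of Corollary~\ref{cor:mu_minus_I}, Lemma~\ref{lem:estimate}, Lemma~\ref{lem:bracket_close} and the local-model lemma \eqref{eq:lem_bracket_close_local}--\eqref{eq:lem_bracket_close_local2}, all of whose error constants are \emph{proportional to the $H^{1,0}$-norm of $r$} once the governing resolvents are uniformly bounded. This is precisely the content of Remark~\ref{rmk:error_prop_norm}: the appearances of $\|r\|$ in Corollary~\ref{cor:cw}, in the oscillatory-integral bounds \eqref{eq:osci_gamma}, and in Lemma~\ref{lem:estimate} are all linear in $\|r\|_{H^{1,0}}$, while the $\delta$-factors and the resolvent norms contribute only constants depending on $\rho$ (hence on $\|r\|_{H^{1,0}}$, which we are taking small and therefore bounded). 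Tracking the $z$-dependence for the second estimate, the relevant quantity is $\int \frac{\mu_\delta(s) w_\delta(s)}{s-z}\frac{ds}{2\pi i}$, whose bound picks up the factor $\big\|\frac{r(\cdot)}{\cdot - z}\big\|_{H^{1,0}}$ from the same oscillatory estimates applied to $f(s)=r(s)/(s-z)$ rather than $f(s)=r(s)$.

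So the proof would proceed in three moves: (i) establish the uniform (in $t\ge1$, $x\in\rb$) boundedness of $(1-C_{w_\delta^{[\cdot]}})^{-1}$, $(1-C_{v_e^{[\cdot]}})^{-1}$, $(1-C_{v_d^{[\cdot]}})^{-1}$ and $(1-C_{v_d^L})^{-1}$ under the smallness hypothesis on $\|r\|_{H^{1,0}}$, by running the equivalence-of-bounds lemmas starting from the Neumann-series bound for the local model; (ii) redo the estimates of Corollary~\ref{cor:mu_minus_I} through \eqref{eq:lem_bracket_close_local2}, now keeping the explicit linear dependence on $\|r\|_{H^{1,0}}$ (resp.\ $\big\|\frac{r(\cdot)}{\cdot-z}\big\|_{H^{1,0}}$) in the constants, which is legitimate because the resolvent bounds from step (i) no longer depend on $r$ beyond $\rho$; (iii) assemble these as in \eqref{eq:u_c_localized}--\eqref{eq:m_z1_localized} to read off the claimed formulae with the sharpened error terms.

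The main obstacle is step (i): one must verify that the smallness of $\|r\|_{H^{1,0}}$ really does propagate through \emph{all} the reductions uniformly in $t$. The delicate point is Lemma~\ref{lem:close_to_local}, which compares $v_d^{[\cdot]}$ with the local model $v_d^L$ and whose bound $c\,t^{-1/4-1/(2p)}$ has a constant $c$ depending on $\lambda=\|r\|_{H^{1,0}}$; one needs that for $\|r\|_{H^{1,0}}$ small this comparison still shows $(1-C_{v_d^{[\cdot]}})^{-1}$ is uniformly bounded given that $(1-C_{v_d^L})^{-1}$ is. This follows from the second resolvent identity $(1-C_{v_d^{[\cdot]}})^{-1} = (1-C_{v_d^L})^{-1}\bigl(1 - (C_{v_d^{[\cdot]}}-C_{v_d^L})(1-C_{v_d^L})^{-1}\bigr)^{-1}$ together with $\|C_{v_d^{[\cdot]}}-C_{v_d^L}\|_{L^2}\le c\,\|v_d^{[\cdot]}-v_d^L\|_{L^\infty}\le c\,t^{-1/4}\to 0$, which is exactly the uniform equivalence already asserted in Lemma~\ref{lem:close_to_local}; so in fact the hard part is largely bookkeeping, confirming that no constant along the chain secretly grows with $t$. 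Once this is checked, the two displayed formulae in the Lemma are immediate from Remark~\ref{rmk:error_prop_norm}.
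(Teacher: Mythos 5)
Your proposal is correct and follows essentially the same route the paper intends: a Neumann-series (small-norm) argument gives uniform boundedness of the resolvents $(1-C_{w})^{-1}$ for all $t\geq 0$ once $\|r\|_{L^\infty}\leq c\|r\|_{H^{1,0}}$ is small, after which the error terms in \eqref{eq:u_c_localized}--\eqref{eq:m_z1_localized} are seen to be linear in $\|r\|_{H^{1,0}}$ (resp.\ $\|r(\cdot)/(\cdot-z)\|_{H^{1,0}}$) exactly as in Remark \ref{rmk:error_prop_norm}. The only stylistic difference is that you bootstrap uniform boundedness from the local model back through the equivalence lemmas, whereas one can apply the Neumann series directly to each operator since $\|C_w\|_{L^2}\leq c(\|w^+\|_{L^\infty}+\|w^-\|_{L^\infty})\leq c\rho<1$; this does not affect correctness.
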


\section{Localized RHP}
\label{sec:Localized RHP}
We compute the solution of the localized RHP $(\Sigma^e , {v}_d^L)$ explicitly in terms of parabolic cylinder functions (see \cite{AS}).
Introduce the scaling $\zeta  = \sqrt{t}(z-z_0)$ and set (see \eqref{eq:delta_0_def})
\beq
\label{eq:phi_of_zeta}
\phi (\zeta) = \delta_0^2(z)e^{i \theta(z)} = \alpha_0^2 \zeta^{-2 i \nu(z_0)} e^{-i\zeta^2/2}
\eeq
where
$$\alpha_0 = \exp \Big[\frac{i x^2}{4t} + \frac {i}2 \nu(z_0)\log t + \beta(z_0,z_0)-i \nu(z_0)\Big].$$
Define
\beq
\label{eq:def_M_infty}
M_d^L(x,t,\zeta) \equiv m_d^L(x,t,z), \ \ M^\infty(\zeta) \equiv M_d^L(\zeta)\phi^{\sigma}(\zeta), \ \ z \in \cb\setminus\Sigma^e,
\eeq
and
\begin{equation}
\label{def:v_D_L}
\bal V^\infty
& \equiv \phi (\zeta)^{-\text{ad}\sigma} v_d^L(z) \\
&
=\left\{
\begin{aligned}
& I \qquad\qquad \ \qquad\qquad\qquad\qquad\text{on } z_0 + \rb^+\hfill\\
& \bpm 1&0\\ \bar r(z_0)&1\epm \qquad\qquad \ \quad\quad \quad \ \text{on } z_0 + e^{i\pi/4} \rb^+\hfill\\
& \bpm 1+ |r(z_0)|^2&0\\ 0&\frac 1{1+ |r(z_0)|^2}\epm  \ \quad \ \
\text{on }  z_0 + \rb^-
\hfill\\
& \bpm 1& \frac{r(z_0)}{1+|r(z_0)|^2}\\ 0&1\epm \qquad\qquad \ \qquad   \text{on } z_0 + e^{-i\pi/4} \rb^-\hfill\\
& \bpm 1&0\\ \frac{\bar r(z_0)}{1+ |r(z_0)|^2}
&1\epm  \ \qquad\qquad \ \quad \ \
\text{on }  z_0 + e^{i\pi/4} \rb^-
\hfill\\
& \bpm 1&r(z_0) \\
0&1\epm \ \qquad\qquad \ \qquad\quad\text{on } z_0 + e^{-i\pi/4}
\rb^+.\hfill
\end{aligned}
\right. \\
\eal
\end{equation}
Then, it follows that $M^\infty_+(\zeta) = M^\infty_-(\zeta) V^\infty$ on $\zeta \in \Sigma^e-z_0$.
Since $\det V_d^L \equiv 1$ on $\Sigma^e$, $\det M_d^L(z) \equiv 1$ (see Remark \ref{rmk:det_1}), $\det M^\infty = \det(M_d^L \phi^{-\sigma}) = 1$,
and hence $(M_d^L)^{-1}, (M_\infty)^{-1}$ exist.
\begin{figure}
\begin{center}
\includegraphics{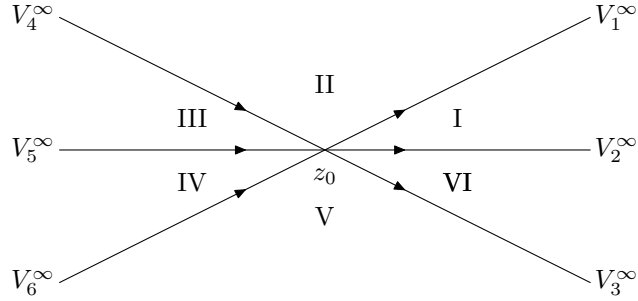}
\caption{Numbering of $V^\infty$ on $\Sigma^e$}
\end{center}
\end{figure}
Numbering the restrictions of piecewise constant jump matrix $V^\infty$ on $\Sigma^e$ as in Figure 3,
we have the cyclic condition $V^\infty_3 V^\infty_2 V^\infty_1 = V^\infty_6 V^\infty_5 V^\infty_4$,
which implies, in particular (see e.g. \cite{BDT}), that in each sector I, II, $\cdots$, VI,
$M^\infty(\zeta)$ is the restriction of an entire function.
In particular this implies that $M^\infty$ is differentiable with respect to $\zeta$
and $\partial_\zeta M^\infty_+(\zeta) = \partial_\zeta M^\infty_-(\zeta) V^\infty$ on $\zeta \in \Sigma^e-z_0$.
Thus, $(\partial_{\zeta}M^\infty) (M^\infty)^{-1}$ has a trivial jump matrix and hence is entire.
As $\zeta \rightarrow \infty$, $\zeta \in \cb\setminus(\Sigma^e-z_0)$,
$$
\begin{aligned}  \partial_{\zeta}M^\infty (M^\infty)^{-1}
& = \partial_{\zeta}(M_d^L \phi^\sigma)(\phi^{-\sigma}(M_d^L)^{-1}) \\
& = \Big[ \partial_{\zeta} M_d^L \phi^\sigma + M_d^L \Big(\frac{-2i \nu(z_0)\sigma}{\zeta} - i\zeta\sigma \Big)\phi^\sigma \Big](\phi^{-\sigma}(M_d^L)^{-1}) \\
& = M_d^L (- i\zeta\sigma ) (M_d^L)^{-1} + O\Big(\frac 1\zeta \Big).
\end{aligned}
$$
Here, we have used the fact that $\partial_\zeta M_d^L = O(\zeta^{-1})$.
As $M_d^L = I + \frac{(M_d^L)_1}\zeta + o(\zeta^{-1})$ as $\zeta \to \infty$, we have
$$
\begin{aligned} \hat{M}
& \equiv (\partial_{\zeta}M^\infty + i \zeta \sigma M^\infty) (M^\infty)^{-1} = \partial_{\zeta}M^\infty (M^\infty)^{-1} + i \zeta \sigma \\
& = i \zeta [\sigma, M_d^L] (M_d^L)^{-1} + O\Big(\frac 1{\zeta} \Big) \\
& = i \zeta \Big[\sigma, I+\frac{(M_d^L)_1}{\zeta}+o\Big(\frac 1{\zeta} \Big)\Big] \Big(I - \frac{(M_d^L)_1}{\zeta}+o\Big(\frac 1{\zeta} \Big) \Big) + O\Big(\frac 1{\zeta} \Big) \\
& = i [\sigma, (M_d^L)_1] + o(1) .\\
\end{aligned}
$$
But $\hat{M}$ has no jumps across $\Sigma^e$ and hence by Liouville's theorem, $\hat{M} \equiv i [\sigma, (M_d^L)_1]$.
Then, from the definition of $\hat{M}$, we obtain the following system of ODEs
$$(\partial_{\zeta}M^\infty + i \zeta \sigma M^\infty) = i [\sigma, (M_d^L)_1] M^\infty. $$
Set
\beq
\label{eq:k_j_def}
 \bpm 0&k_1 \\ k_2&0 \epm = i [\sigma, (M_d^L)_1].
 \eeq
The differential equations for $M^\infty_{11}$ and $M^\infty_{21}$ are given by
\beq
\label{eq:ode_parabolic}
 \left\{
\begin{aligned}
& \partial_{\zeta}M^\infty_{11} + \frac {i \zeta}2  M^\infty_{11} = k_1 M^\infty_{21}, \\
& \partial_{\zeta}M^\infty_{21} - \frac {i \zeta}2 M^\infty_{21} = k_2 M^\infty_{11}. \\
\end{aligned}
\right.
\eeq
Then,
$$ \begin{aligned} k_1 k_2 M^\infty_{11}
& =\partial_{\zeta} \Big(\partial_\zeta M^\infty_{11} + \frac i2 \zeta M^\infty_{11} \Big) - \frac{i \zeta}2 \Big(\partial_{\zeta}M^\infty_{11} + \frac i2 \zeta M^\infty_{11} \Big) \\
& = \partial_\zeta^2 M^\infty_{11} + \frac {\zeta^2}4 M^\infty_{11} + \frac i2 M^\infty_{11}
\end{aligned}
$$
Set $\eta \equiv e^{-\frac{3i \pi}4} \zeta$ and $g(\eta) \equiv M^\infty_{11}(\zeta)$.
Then, we see that $g$ satisfies the parabolic cylinder equation
$$
\partial_\eta^2 g + \Big( \frac 12 - \frac{\eta^2}4 + a \Big) g = 0, \ \text{ where } \ a=ik_1 k_2.
$$
General solutions of this equation can be written in terms of standard entire parabolic cylinder functions $D_a(\cdot)$ as follows.
\beq
\label{eq:gen_sol_parabolic}
 M^\infty_{11}(\zeta) = c_1 D_a \Big( e^{-\frac{3i \pi}4} \zeta \Big) + c_2 D_a \Big( - e^{-\frac{3i \pi}4} \zeta \Big) .
\eeq
Since $M_d^L(\zeta) \to I$ as $\zeta \to \infty, z\in i\rb$, we have
\beq
\label{eq:asym_parabolic}
\begin{aligned} (M^\infty_+)_{11}(\zeta)
 & = (1+o(1))\phi^\sigma_{11} \\
& = (1+o(1))\alpha_0 e^{\frac{3 \pi}4 \nu(z_0)}\eta^{-i \nu(z_0)}e^{-\frac14 \eta^2} .
\end{aligned}
\eeq
But from \cite{AS}, as $\eta \rightarrow \infty$,
\begin{equation}
\label{eq:asymp_para_cylin}
D_a(\eta) =
\left\{
\begin{aligned}
& \eta^a e^{-\frac14 \eta^2} (1+O(a\eta^{-2})), \\
& \qquad\qquad \text{for } |\arg\eta| < \frac{3 \pi}4 , \\
& \eta^a e^{-\frac14 \eta^2} (1+O(a\eta^{-2})) \\
& - (2\pi)^{\frac 12}(\Gamma(-a))^{-1}e^{a\pi i}\eta^{-a-1}e^{\frac{\eta^2}4}(1+O(a\eta^{-2})), \\
& \qquad\qquad \text{for } \frac{\pi}4 < \arg\eta < \frac{5 \pi}4 , \\
& \eta^a e^{-\frac14 \eta^2} (1+O(a\eta^{-2})) \\
& - (2\pi)^{\frac 12}(\Gamma(-a))^{-1}e^{-a\pi i}\eta^{-a-1}e^{\frac{\eta^2}4}(1+O(a\eta^{-2})), \\
& \qquad\qquad \text{for } -\frac{5\pi}4 < \arg\eta < -\frac{\pi}4 , \\
\end{aligned}
\right.
\end{equation}
Inserting \eqref{eq:asymp_para_cylin} into \eqref{eq:gen_sol_parabolic} and comparing with \eqref{eq:asym_parabolic},
we see that $a=-i\nu(z_0)$, and $c_1 = \alpha_0 e^{\frac{3 \pi}4  \nu(z_0)}$.
Utilizing \eqref{eq:ode_parabolic}, we thus have for $\zeta \in \text{II} - z_0 \equiv \text{II}_{z_0}$,
\beq
\label{eq:M_for_II_1}
\begin{aligned}
& M^\infty_{11}(\zeta) = \alpha_0 e^{\frac{3 \pi}4  \nu(z_0)} D_a \Big( e^{-\frac{3i \pi}4} \zeta \Big), \\
& M^\infty_{21}(\zeta) = \frac{\alpha_0}{k_1} e^{\frac{3 \pi}4  \nu(z_0)} \Big[ \partial_\zeta \Big(D_a \Big( e^{-\frac{3i \pi}4} \zeta \Big)\Big) + \frac{i \zeta}2 D_a \Big( e^{-\frac{3i \pi}4} \zeta \Big) \Big], \\
\end{aligned}
\eeq
Similarly, we have for $\zeta \in \text{II}_{z_0}$,
\beq
\label{eq:M_for_II_2}
\begin{aligned}
& M^\infty_{12}(\zeta) = \frac{\alpha_0^{-1}}{k_2} e^{-\frac{ \pi}4  \nu(z_0)} \Big[ \partial_\zeta \Big(D_{-a} \Big( e^{-\frac{i \pi}4} \zeta \Big)\Big) - \frac{i \zeta}2 D_{-a} \Big( e^{-\frac{i \pi}4} \zeta \Big) \Big], \\
& M^\infty_{22}(\zeta) = \alpha_0^{-1} e^{-\frac{ \pi}4  \nu(z_0)} D_{-a} \Big( e^{-\frac{i \pi}4} \zeta \Big). \\
\end{aligned}
\eeq
Also, for $\zeta \in \text{V}_{z_0}$,
\beq
\label{eq:M_for_V}
\begin{aligned}
& M^\infty_{11}(\zeta) = \alpha_0 e^{-\frac{ \pi}4  \nu(z_0)} D_a \Big( e^{\frac{i \pi}4} \zeta \Big), \\
& M^\infty_{21}(\zeta) = \frac{\alpha_0}{k_1} e^{-\frac{ \pi}4  \nu(z_0)} \Big[ \partial_\zeta \Big(D_a \Big( e^{\frac{i \pi}4} \zeta \Big)\Big) + \frac{i \zeta}2 D_a \Big( e^{\frac{i \pi}4} \zeta \Big) \Big], \\
& M^\infty_{12}(\zeta) = \frac{\alpha_0^{-1}}{k_2} e^{\frac{3 \pi}4  \nu(z_0)} \Big[ \partial_\zeta \Big(D_{-a} \Big( e^{\frac{3i \pi}4} \zeta \Big)\Big) - \frac{i \zeta}2 D_{-a} \Big( e^{\frac{3i \pi}4} \zeta \Big) \Big], \\
& M^\infty_{22}(\zeta) = \alpha_0^{-1} e^{\frac{3 \pi}4  \nu(z_0)} D_{-a} \Big( e^{\frac{3i \pi}4} \zeta \Big). \\
\end{aligned}
\eeq
Let $M^\infty_{\text{II}}, M^\infty_{\text{V}}$ be the analytic extensions to $\cb$ of $M^\infty|_{\text{II}_{z_0}}$, $M^\infty|_{\text{V}_{z_0}}$, respectively. Then, from the jump matrix $V^\infty$ we observe that for all $\zeta \in \cb$,
$$
M^\infty_{\text{II}}(\zeta)  = M^\infty_{\text{V}}(\zeta) \bpm1&r(z_0)\\ 0&1 \epm \bpm1&0\\ \overline{r(z_0)}&1 \epm
= M^\infty_{\text{V}}(\zeta) \bpm1+|r(z_0)|^2&r(z_0)\\  \overline{r(z_0)}&1  \epm.
$$
Therefore, as $\det M_V^\infty = 1$,
$$
\begin{aligned} \overline{r(z_0)}
& =  (M^\infty_{\text{II}})_{21} (M^\infty_{\text{V}})_{11} - (M^\infty_{\text{II}})_{11}(M^\infty_{\text{V}})_{21} \\
& = \frac{\alpha_0^2}{k_1} e^{\frac{ \pi}2 \nu(z_0)} W\Big[D_a \Big( e^{\frac{i \pi}4} \zeta \Big), D_a \Big( e^{\frac{-3i \pi}4} \zeta \Big)\Big] \\
& = \frac{\alpha_0^2}{k_1} e^{\frac{ \pi}2 \nu(z_0)} \Big[\frac{\sqrt{2 \pi}}{\Gamma(-a)} e^{\frac{i \pi}4}\Big] \\
\end{aligned}
$$
where $W(f,g) = fg'-f'g$ is the Wronskian of $f$ and $g$.
Hence,
\begin{equation}
\label{eq:k_1}
\begin{aligned}
& k_1 = \frac{\sqrt{2 \pi} e^{\frac{i \pi}4} \alpha_0^2 e^{\frac{ \pi}2 \nu(z_0)}} {\overline{r(z_0)} \Gamma(-a)}
	= \beta(z_0) e^{i[x^2/(2t) + \nu(z_0)\log t]}, \\
\end{aligned}
\end{equation}
where
\begin{equation}
\label{eq:beta_z_zero}
\begin{aligned}
& \begin{aligned} |\beta(z_0)|^2
& = \frac{2\pi e^{\pi \nu(z_0)}}{|r(z_0)|^2|\Gamma(i \nu(z_0))|^2} = \frac{2\pi e^{\pi \nu(z_0)}}{|r(z_0)|^2} \Big(\frac{\nu(z_0) \sinh(\pi \nu(z_0))}{\pi}\Big) \\
& = \nu(z_0) ,\\
\end{aligned} \\
& \begin{aligned} \arg\beta(z_0)
 & = \frac{\pi}4 - 2i (\beta(z_0,z_0)-i \nu(z_0)) + \arg(r(z_0)) - \arg(\Gamma(i \nu(z_0))) \\
 & = \frac{\pi}4 + \frac 1{\pi} \int_{-\infty}^{z_0} \log(z_0 -s) \rd  \log(1+|r(s)|^2)\\
 & \qquad  + \arg(r(z_0)) - \arg(\Gamma(i \nu(z_0)) . \\
\end{aligned} \\
\end{aligned}
\end{equation}
and
\beq
\label{eq:k_2}
	k_2 = \frac a{i k_1} = -\frac{\nu(z_0)}{k_1} =  -\ovl{\beta(z_0)} e^{-i[x^2/(2t) + \nu(z_0)\log t]}.
\eeq
Inserting \eqref{eq:k_1}, \eqref{eq:k_2} into \eqref{eq:M_for_II_1}, \eqref{eq:M_for_II_2} respectively,
we obtain the explicit formulae for $M^\infty$ in region II.
On the other regions, $M^\infty$ is obtained by simply using the constant jump matrix $V^\infty$.

\section{Proof of Theorem \ref{T:main}}
For functions $u$, $\check{u}\in H^{1,1}(\rb)$,
let $\psi^\pm(x,z)$, $\check \psi^\pm(x,z)$ be the associated ZS-AKNS solutions, respectively (see Section 3).
Set $m^{\pm}=(m_1^\pm, m_2^\pm) \equiv \psi^\pm e^{-ixz\sigma}$,
$\check{m}^{\pm}=(\check{m}_1^\pm, \check{m}_2^\pm) \equiv \check \psi^\pm e^{-ixz\sigma}$.
Denote $\Delta u = \check{u} - u$, $\Delta m_1^+ = \check{m}_1^+ - m_1^+$, etc.
We write $m_1^+ = ((m_1^+)_1, (m_1^+)_2)^T$, etc.
\begin{lemma}
\label{lem:perturbed_potential}
Let $u\in H^{1,1}(\rb)$. For any $\check{u}\in H^{1,1}(\rb)$ such that $\|\check{u} - u \|_{H^{1,1}(\rb)} \leq \epsilon$,
$$
\|\Delta m_1^+(\cdot,z)\|_{L^\infty(\rb)} \leq c_1 \|\Delta u\|_{H^{1,1}} \leq c_1 \epsilon,
$$
where $c_1$ depends on $u$ and is uniform on $z\in \ovl{\cb^+}$.
Moreover, for any fixed $z\in \cb^+$,
$$
\|\Delta (m_1^+)_2(\cdot,z)\|_{H^{0,1}(\rb)} \leq c_2 \epsilon, \ \  \|\Delta \partial_x m_1^+(\cdot,z)\|_{L^2(\rb)} \leq c_2 \epsilon
$$
where $c_2$ depends on $u$ and $z$. A similar result holds for $m_2^-(x,z)$.
\end{lemma}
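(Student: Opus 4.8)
The plan is to run the standard perturbation theory for the Volterra integral equations satisfied by the ZS--AKNS solutions, combined with the weighted estimates of Lemma \ref{lem:H11_integral}. Recall from \eqref{eq:zs_akns_def}, \eqref{eq:zs_asymp} and the derivation leading to \eqref{eq:m_1_BC} that $m_1^+(x,z)$ is the unique bounded solution of the Volterra equation
\[
m_1^+(x,z) = e_1 - \int_x^\infty \bpm 1 & 0 \\ 0 & e^{iz(y-x)} \epm Q(y)\, m_1^+(y,z)\, \rd y,
\]
with $\check m_1^+$ solving the same equation with $Q$ replaced by $\check Q = \bsm 0 & \check u \\ -\ovl{\check u} & 0 \esm$. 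For $z \in \ovl{\cb^+}$ the kernel has modulus $\le 1$ on $\{y\ge x\}$, so Neumann iteration gives $\sup_x |m_1^+(x,z)| \le e^{\|u\|_{L^1}}$ and $\sup_x |\check m_1^+(x,z)| \le e^{\|u\|_{L^1}+c\epsilon}$, both uniformly for $z\in\ovl{\cb^+}$.

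First I would prove the $L^\infty$ bound. Subtracting, $\Delta m_1^+$ solves the Volterra equation
\[
\Delta m_1^+(x,z) = -\int_x^\infty \bpm 1 & 0 \\ 0 & e^{iz(y-x)} \epm \big( \Delta Q(y)\,\check m_1^+(y,z) + Q(y)\,\Delta m_1^+(y,z) \big)\, \rd y,
\]
whose forcing term is controlled in $L^\infty$ by $\|\Delta u\|_{L^1}\sup|\check m_1^+| \le c\,\|\Delta u\|_{H^{1,1}}$, using $\|\Delta u\|_{L^1}\le c\|\Delta u\|_{H^{0,1}}$. A Gronwall estimate for Volterra equations then yields $\|\Delta m_1^+(\cdot,z)\|_{L^\infty(\rb)}\le c_1\|\Delta u\|_{H^{1,1}}$, with $c_1$ depending only on $\|u\|_{L^1}$ (say for $\epsilon\le 1$) and uniform in $z\in\ovl{\cb^+}$; this is the first assertion.

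Next, fix $z\in\cb^+$ and set $q\equiv\mathrm{Im}\,z>0$. From the component form of the Volterra equation, $(m_1^+)_2(x,z) = \int_x^\infty e^{iz(y-x)}\ovl u(y)(m_1^+)_1(y,z)\,\rd y$, and hence
\[
\Delta(m_1^+)_2(x,z) = \int_x^\infty e^{iz(y-x)}\big( \ovl{\Delta u}(y)(\check m_1^+)_1(y,z) + \ovl u(y)\Delta(m_1^+)_1(y,z) \big)\,\rd y.
\]
Applying the second inequality of Lemma \ref{lem:H11_integral} with decay rate $q$, and bounding the two pieces of the integrand by $\|\ovl{\Delta u}(\check m_1^+)_1\|_{H^{0,1}}\le\|\Delta u\|_{H^{0,1}}\sup|\check m_1^+|$ and $\|\ovl u\,\Delta(m_1^+)_1\|_{H^{0,1}}\le\|u\|_{H^{0,1}}\|\Delta m_1^+\|_{L^\infty}$ (the latter controlled by the first assertion), one gets $\|\Delta(m_1^+)_2(\cdot,z)\|_{H^{1,1}(\rb)}\le c_2\epsilon$, in particular the desired $H^{0,1}$ bound. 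For the derivative I would use the ODE in the form $\partial_x m_1^+ = iz[\sigma,m^+]e_1 + Q m_1^+$, which reduces to $\partial_x m_1^+ = \big(u(m_1^+)_2,\ -iz(m_1^+)_2-\ovl u(m_1^+)_1\big)^T$; subtracting, $\partial_x\Delta m_1^+$ is a linear combination of $\Delta(m_1^+)_2$, $\Delta u\,(\check m_1^+)_j$ and $u\,\Delta(m_1^+)_j$ ($j=1,2$), each of which is $O(\epsilon)$ in $L^2(\rb)$ by the $H^{0,1}$-bound on $\Delta(m_1^+)_2$, the $L^\infty$-bounds on $\Delta m_1^+$ and $\check m_1^+$, and $u,\Delta u\in L^2$. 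This gives $\|\Delta\partial_x m_1^+(\cdot,z)\|_{L^2(\rb)}\le c_2\epsilon$.

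The statement for $m_2^-$ is entirely parallel, with $\int_x^\infty$ replaced by $\int_{-\infty}^x$ in the Volterra equation \eqref{eq:m_2_BC} and with the third inequality of Lemma \ref{lem:H11_integral} used in place of the second. The only genuine subtlety --- more a bookkeeping point than a real obstacle --- is the dependence on $z$: the weighted ($H^{1,1}$-type) estimates on $(m_1^+)_2$ and its $x$-derivative require the strict inequality $\mathrm{Im}\,z>0$ so that the exponential factor in the Volterra kernel supplies decay, which is precisely why $c_2$ (unlike $c_1$) cannot be taken uniform up to the real axis, whereas the uniform-in-$z$ bound uses only the $L^\infty$ Volterra theory, for which $\mathrm{Im}\,z\ge 0$ suffices.
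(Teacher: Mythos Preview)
Your proof is correct and follows essentially the same route as the paper's: subtract the two Volterra equations, bound the forcing term in $L^\infty$ and iterate (Gronwall) for the uniform-in-$z$ sup bound, then invoke Lemma~\ref{lem:H11_integral} on the exponentially damped integral for the weighted $H^{0,1}$ bound on the second component, and finally read off the $L^2$ bound on $\partial_x\Delta m_1^+$ from the first-order ODE. The only cosmetic difference is that you split $\check Q\check m - Qm = \Delta Q\,\check m + Q\,\Delta m$ whereas the paper uses $\Delta Q\,m + \check Q\,\Delta m$; both are equivalent, and your $L^\infty$ step (using $\|\Delta u\|_{L^1}\sup|\check m_1^+|$) is in fact slightly cleaner than the paper's appeal to Proposition~\ref{prop:BC_tail}, since it avoids any issue about whether $(m_1^+)_2\in L^1$ uniformly down to $z\in\rb$.
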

\begin{proof}
The difference $\Delta m_1^+$ solves the integral equation (cf. \eqref{eq:m_1_BC})
\beq
\label{eq:diff_T_+}
 \Delta m_1^+ = \mf b_+ + T_+(\Delta m_1^+)
 \eeq
where
$$
\begin{aligned}  \mf b_+
&= -\int_x^\infty \bpm1&0\\0& e^{-iz (x-y)}\epm \bpm0&\Delta u\\-\Delta\overline{u}&0\epm m_1^+ \rd y \\
&= \int_x^\infty \bpm -\Delta u (m_1^+)_{2} \\ \Delta \overline{u} e^{-iz (x-y)}  (m_1^+)_{1}\epm \rd y .\\
\end{aligned}
$$
and $T_+$ is the operator acting on column vectors $\mf f$ defined by
$$
T_+(\mf f)(x) = -\int_x^\infty \bpm1&0\\0& e^{-iz (x-y)}\epm \bpm0&\check{u}\\-\overline{\check{u}}&0\epm \mf f(y)  \rd y
$$
Since $(m_1^+)_{2} \in H^{1,1}(\rb) \subset L^1(\rb)$ and $(m_1^+)_{1} \in L^\infty(\rb)$ (see Proposition \ref{prop:BC_tail}),
$$ \|\mf b_+\|_{L^\infty(\rb)}  \leq c \|\Delta u \|_{H^{1,1}} $$
uniformly on $z\in \ovl{\cb^+}$.
For $n \geq 1$,
$$
\begin{aligned} |T_+^n(\mf b_+)|
& \leq \|\mf b_+\|_\infty \int_x^\infty|\check{u}(y_1)| \int_{y_1}^\infty|\check{u}(y_2)|\cdots\int_{y_{n-1}}^\infty|\check{u}(y_{n})| \\
& \leq \|\mf b_+\|_\infty \frac{1}{n!} \Big[ \int_x^\infty|\check{u}(y)| \rd y \Big]^{n+1} .\\
\end{aligned}
$$
and hence
$$ \|\Delta m_1^+ \|_{L^\infty(\rb)} \leq e^{\int_{-\infty}^\infty|\check{u}(y)| \rd y} \|\mf b_+\|_\infty \leq c \|\Delta u \|_{H^{1,1}} .$$
Now we fix $z\in\cb^+$. By Lemma \ref{lem:H11_integral},
$$ \|(\mf b_+)_2\|_{H^{1,1}(\rb)} \leq c \|\Delta u (m_1^+)_1 \|_{H^{0,1}} \leq c \|\Delta u \|_{H^{0,1}}. $$
Assembling these results, we obtain
$$ \begin{aligned} \|\Delta (m_1^+)_2 \|_{H^{0,1}(\rb)}
 & = \|(\mf b_+)_2 + (T_+(\Delta m_1^+))_2 \|_{H^{0,1}}  \\
 & \leq c \|\Delta u \|_{H^{0,1}} + c \|\check{u} \|_{H^{0,1}} \|\Delta (m_1^+)_1 \|_{L^\infty} \leq c \|\Delta u \|_{H^{1,1}}. \\
 \end{aligned} $$
For $\Delta \partial_x m_1^+$, since $\Delta \partial_x m_1^+ = \bigl(\begin{smallmatrix} 0&0\\0&-iz\end{smallmatrix}\bigr) \Delta m_1^+ + \Delta Q m_1^+ + \check Q \Delta m_1^+$, we see that
$$ \begin{aligned} \|\Delta \partial_x m_1^+ \|_{L^2(\rb)}
& \leq c\|\Delta (m_1^+)_2\|_{L^2} + \|\Delta Q\|_{L^2}\|m_1^+\|_{L^\infty} + \|\check Q\|_{L^2}\|\Delta m_1^+\|_{L^\infty} \\
& \leq c \|\Delta u \|_{H^{1,1}}. \\
 \end{aligned} $$
and hence the proof is done.

\end{proof}

\begin{lemma}[cf. \cite{DZ}, Theorem 3.2]
\label{lem:ZS_H_1_0}
Let $u(x)\in H^{0,1}(\rb)$. Fix $x \in\rb$. Then, the associated ZS-AKNS solutions $m^\pm(x,\cdot) \in I+H^{1,0}(dz)$.
Moreover, if $\|\check{u}-u\|_{H^{0,1}} < \epsilon$, then $\|\check{m}^\pm(x,\cdot) - m^\pm(x,\cdot) \|_{H^{1,0}} \leq c \epsilon $.
\end{lemma}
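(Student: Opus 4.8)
The plan is to follow the proof of Theorem 3.2 in \cite{DZ}, working directly with the Volterra integral equations satisfied by the columns of $m^\pm(x,\cdot)$ and using Plancherel's theorem to trade the physical-space weight $xu\in L^2$ for the frequency-space derivative bound $\partial_z m^\pm\in L^2(\rd z)$. I will carry out the argument for the first column $m_1^+$ of $m^+$; writing $m_1^+=((m_1^+)_1,(m_1^+)_2)^T$, the scattering problem \eqref{eq:scattering_problem} together with the normalization $m_1^+\to e_1$ as $x\to+\infty$ gives
\[
(m_1^+)_1(x,z)=1-\int_x^\infty u(y)(m_1^+)_2(y,z)\,\rd y,\qquad (m_1^+)_2(x,z)=\int_x^\infty e^{iz(y-x)}\ovl{u(y)}(m_1^+)_1(y,z)\,\rd y,
\]
and the remaining columns $m_2^+,m_1^-,m_2^-$ are handled identically (integrating from $+\infty$ or $-\infty$, with the conjugate oscillatory factor), so I suppress them. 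Since $H^{0,1}(\rb)\subset L^1(\rb)$ by Cauchy--Schwarz ($\|u\|_{L^1}\le c\|u\|_{H^{0,1}}$), the Neumann iteration $m_1^+=\sum_{n\ge0}T^ne_1$ of the integral operator $T$ above converges absolutely and uniformly in $(x,z)$, giving the analogue of \eqref{eq:bound_sup}. Note that $T$ alternately applies a non-oscillatory kernel (a factor of $u$, integrated in $y$) and an oscillatory kernel $e^{iz(y-x)}$ (a factor of $\ovl u$, integrated in $y$); hence in the explicit $n$-fold integral representing $T^ne_1$ the variables $x<a_1<\dots<a_n$ each carry one factor of $u$ or $\ovl u$ (the innermost, largest variable $a_n$ among them), and the total oscillatory phase has the form $e^{iz\phi_n}$ with $0\le\phi_n\le a_n-x$ a sum of consecutive gaps.

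For the $L^2(\rd z)$ bound, the leading term $\int_x^\infty e^{iz(y-x)}\ovl{u(y)}\,\rd y$ is, after substituting $\xi=y-x$, the Fourier transform of $\ovl{u(x+\cdot)}\mathbf 1_{(0,\infty)}$, so by Plancherel its $L^2(\rd z)$-norm is $\le c\|u\|_{L^2}$, uniformly in $x$. For a general term $T^ne_1$, integrating out all variables on the level sets $\{\phi_n=\phi\}$ writes it as $\int_\rb e^{iz\phi}H_{n,x}(\phi)\,\rd\phi$, so Plancherel reduces the estimate to $\|H_{n,x}\|_{L^2(\rd\phi)}$; a repeated application of Young's convolution inequality together with the pointwise bound $\int|u(y)||u(y+c)|\,\rd y\le\|u\|_{L^2}^2$ yields $\|H_{n,x}\|_{L^2}\le c\,\|u\|_{L^2}\,\|u\|_{L^1}^{\,n-1}/\lfloor(n-1)/2\rfloor!$, the factorial coming from the ordered simplex, uniformly in $x$; summing in $n$ gives $\sup_x\|m_1^+(x,\cdot)-e_1\|_{L^2(\rd z)}\le c\|u\|_{H^{0,1}}$. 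For the derivative, differentiating the $n$-fold integral in $z$ only hits the single exponential $e^{iz\phi_n}$ and brings down a factor $\phi_n$ with $0\le\phi_n\le a_n-x\le(1+|x|)\langle a_n\rangle$; since $a_n$ carries a factor of $u$ (resp. $\ovl u$), this extra weight is absorbed into $\langle\cdot\rangle u\in L^2$, which is exactly where the hypothesis $u\in H^{0,1}$ is used. Repeating the Plancherel/Young estimate with one copy of $\|u\|_{L^2}$ replaced by $(1+|x|)\|\langle\cdot\rangle u\|_{L^2}$ and summing gives $\|\partial_z m_1^+(x,\cdot)\|_{L^2(\rd z)}\le c(1+|x|)\|u\|_{H^{0,1}}$, so $m^\pm(x,\cdot)\in I+H^{1,0}(\rd z)$ for every fixed $x$.

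For the Lipschitz statement I would expand $\Delta m_1^+=\check m_1^+-m_1^+=\sum_{n\ge1}(\check T^n-T^n)e_1$ and telescope via $\check T^n-T^n=\sum_{j=0}^{n-1}\check T^j(\check T-T)T^{n-1-j}$, so that each summand is an $n$-fold integral with exactly one factor $\Delta u=\check u-u$ (from $\check T-T$) and $n-1$ factors of $u$ or $\check u$. Running the estimates above with this single $\Delta u$ factor playing the Plancherel role when it occupies an oscillatory slot (using $\|\langle\cdot\rangle\Delta u\|_{L^2}\le\|\Delta u\|_{H^{0,1}}$) and an $L^1$ role otherwise (using $\|\Delta u\|_{L^1}\le c\|\Delta u\|_{H^{0,1}}$), and bounding all other factors through $\|u\|_{H^{0,1}}$ and $\|\check u\|_{H^{0,1}}\le\|u\|_{H^{0,1}}+\epsilon$, gives absolute convergence of the resulting series and $\|\Delta m^\pm(x,\cdot)\|_{H^{1,0}(\rd z)}\le c(x)\epsilon$, uniformly for $\epsilon\le1$. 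The step I expect to be the main obstacle is the $L^2(\rd z)$-bookkeeping of the multiple oscillatory integrals in the middle paragraph: one must verify that in every Neumann term exactly one factor of $u$ can be assigned to the Plancherel/Fourier estimate while the rest are absorbed into $L^1$-norms, with the ordered-simplex domain supplying enough decay in $n$ for summability. This is precisely the content of the corresponding argument in \cite{DZ}, and the only modification here is the harmless replacement of the defocusing jump matrix by the focusing one, which does not affect the structure of the kernels.
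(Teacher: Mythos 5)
Your proposal is correct, but it organizes the argument differently from the paper. The paper's proof (following \cite{DZ}, Theorem 3.2) never differentiates the Neumann series term by term: it packages the Volterra structure into the operator $K_u$ acting in the mixed norms $L^2(dz)\otimes L^\infty(dx)$ and $L^2(dz)\otimes L^2_{\rb^+}(dx)$, proves $\|(1-K_u)^{-1}\|\le ce^{\|u\|_{L^1}}$ once, gets the $L^2(dz)$ bound from $\|K_uI\|_{L^2(dz)\otimes L^\infty(dx)}=c\|(\int_{\langle x\rangle}^\infty|u|^2)^{1/2}\|_{L^\infty}\le c\|u\|_{L^2}$ (Plancherel plus Hardy), and then -- this is the key device your proof replaces by hand combinatorics -- observes that the conjugated derivative $M\equiv(\partial_z-ix\,\mathrm{ad}\,\sigma)m^+$ satisfies its own closed Volterra equation $M=-iK'_{\cdot u(\cdot)}m^+ +K_uM$, where $K'$ is built from $yQ(y)$; this is exactly where $xu\in L^2$ enters, and the same resolvent bound finishes the estimate. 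The Lipschitz statement then comes for free by writing the analogous equations for $\Delta m^+$ and $\Delta M$ and reusing the resolvent, with no telescoping of $\check T^n-T^n$. Your term-by-term route does work, and the step you flag as the main obstacle is in fact easier than you fear: since the innermost slot of $T^ne_1$ is always the oscillatory one, a single Minkowski integral inequality in the innermost variable $a_n$ (a unit-slope affine function of $\phi$ for fixed $a_1,\dots,a_{n-1}$) yields $\|H_{n,x}\|_{L^2(d\phi)}\le\|u\|_{L^2}\,\|u\|^{n-1}_{L^1(x,\infty)}/(n-1)!$ directly from the ordered simplex in the remaining variables, which is stronger than your $\lfloor(n-1)/2\rfloor!$ and makes summability immediate; the same computation with the weight $\phi\le a_n-x\le(1+|x|)\langle a_n\rangle$ charged to $a_n$ gives the derivative bound. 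The trade-off is that the paper's formulation gives uniform-in-$x$ mixed-norm control (which it reuses verbatim for $\Delta m^+$, $\Delta M$), whereas your expansion is more elementary and self-contained but requires redoing the bookkeeping separately for the difference estimate.
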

\begin{proof}
We will provide the proof only for $m^+$.
Define operators $K_u$ acting on $2 \times 2$ matrix functions $B$ as follows,
\beq
\label{eq:operator_K}
 (K_uB)(x,z) = \int^x_{+\infty} e^{i(x-y)z\text{ ad } \sigma} Q(y) B (y,z)dy, \ x,z\in \rb
\eeq
Then, ZS-AKNS solutions for real $z$ satisfy $m^+ = I + K_um^+$.
We use the following notation.
If $\mf M$ is a measure space and $\mf B$ is a Banach space,
then $\mf B \otimes L^p(\mf M) \equiv L^p(\mf M \to \mf B)$ denotes the space of $\mf B$-valued $L^p$ functions
with norm $\|f\|_{\mf B \otimes L^p(\mf M)} = \|\|f\|_{\mf B} \|_{L^p(\mf M)}$.
Direct calculations show that
\beq
\label{L_2_cross_L_inf}
\|K_uB\|_{L^2(dz)\otimes L^\infty(dx)} \leq c \|u\|_{L^1} \|B\|_{L^2(dz)\otimes L^\infty(dx)},
\eeq
\beq
\label{L_2_cross_L_2}
\|K_uB\|_{L^2(dz)\otimes L_{\rb^+}^2(dx)} \leq c \|u\|_{H^{0,1}} \|B\|_{L^2(dz)\otimes L^\infty(dx)},
\eeq
and
\beq
\label{L_2_cross_L_inf_L_2}
\|K_uB\|_{L^2(dz)\otimes L^\infty(dx)} \leq c \|u\|_{L^2} \|B\|_{L^2(dz)\otimes L_{\rb^+}^2(dx)},
\eeq
Standard iterations for the Volterra integral equation give
$$
\|(1-K_u)^{-1}\|_{L^2(dz)\otimes L^\infty(dx)\to L^2(dz) \otimes L^\infty(dx)}\le c e^{\|u\|_{L^1}}.
$$
By Fourier theory and Hardy's inequality,
$$\begin{aligned}
& \|K_uI\|_{L^2(dz)\otimes L^\infty(dx)} = c \left\|\left(\int^{\langle x\rangle}_{+\infty} |u|^2dy\right)^{\frac12} \right\|_{L^\infty(dx)} \le c \|u\|_{L^2}\\
& \begin{aligned} \|K_uI\|_{L^2(dz) \otimes L^2_{\rb^+}(dx)}
	&= c \left\|\left(\int^{\langle x\rangle}_{+\infty} |u|^2dy\right)^{\frac12} \right\|_{L^2_{\rb^+}(dx)}  \le c\|u\|_{H^{0,1}}. \\
	\end{aligned}
\\
\end{aligned}
$$
As $m^+ = I + (1-K_u)^{-1}K_uI$, by using \eqref{L_2_cross_L_inf} we see that
$$
\|m^+ - I\|_{L^2(dz)\otimes L^\infty(dx)} \le c\|K_uI\|_{L^2(dz)\otimes L^\infty(dx)} \leq c,
$$
and hence by \eqref{L_2_cross_L_2},
$$
\bal \|m^+ - I\|_{L^2(dz)\otimes L_{\rb^+}^2(dx)}
&=  \|K_u (m^+-I) + K_u I\|_{L^2(dz)\otimes L_{\rb^+}^2(dx)} \leq c.\\
\eal
$$
Define $M \equiv (\partial_z -i x \text{ ad }\sigma)m^+$. Then $M$ satisfies the equation
$$\begin{aligned} M(x,z)
&= -i \int^x_{+\infty} e^{i(x-y)z \text{ ad } \sigma} \text{ad } \sigma (yQ(y))m^{+} (y,z)dy\\
&\quad + \int^x_{+\infty} e^{i(x-y)z \text{ ad } \sigma} Q(y) M(y,z)dy\\
&= -i K'_{\cdot u(\cdot)} m^+ + K_uM
\end{aligned}$$
where $K'$ is the operator defined in \eqref{eq:operator_K} with $\text{ad } \sigma (Q)$ in place of $Q$.
As $M = -i(1-K_u)^{-1}K'_{\cdot \Delta u(\cdot)} m^+$, it follows by \eqref{L_2_cross_L_inf_L_2} that
$$
\begin{aligned} \|M\|_{L^2(dz)\otimes L^\infty(dx)}
& \leq c\|K'_{\cdot u(\cdot)} (m^+-I) + K'_{\cdot u(\cdot)}I \|_{L^2(dz)\otimes L^\infty(dx)}\\
& \leq c\|\cdot u(\cdot)\|_{L^2} \big(\|m^+ - I\|_{L^2(dz)\otimes L_{\rb^+}^2(dx)} + 1\big) \leq c.\\
\end{aligned} $$
The equations for $\Delta m^+=\check{m}^+ - m^+$, $\Delta M=\check{M} - M$ are given by (cf. \eqref{eq:diff_T_+})
$$
\begin{aligned}
& \Delta m^+ = K_{\Delta u} \check{m}^+ + K_u \Delta m^+ ,\\
& \Delta M = -i (K'_{\cdot \Delta u(\cdot)} \check{m}^+ + K'_{\cdot u(\cdot)} \Delta m^+) + K_{\Delta u} \check M + K_u \Delta M .\\
\end{aligned}
$$
Similarly, we have
$$\bal \|\Delta m^+\|_{L^2(dz)\otimes L^\infty(dx)}
& \leq c \|K_{\Delta u}(\check{m}^+ -I) + K_{\Delta u}I\|_{L^2(dz)\otimes L^\infty(dx)}\\
&  \leq c\|\Delta u\|_{H^{0,1}} \\
\eal$$
and hence
$$\bal \|\Delta m^+\|_{L^2(dz)\otimes L^2_{\rb^+}(dx)}
& = \|K_{\Delta u}(\check{m}^+ -I) + K_{\Delta u}I + K_{u}\Delta m^+\|_{L^2(dz)\otimes L^2_{\rb^+}(dx)}\\
&  \leq c\|\Delta u\|_{H^{0,1}}\big(\|\check{m}^+ -I\|_{L^2(dz)\otimes L^\infty(dx)}+1) \\
& \qquad + c\|u\|_{H^{0,1}} \|\Delta m^+\|_{L^2(dz)\otimes L^\infty(dx)}\\
&  \leq c\|\Delta u\|_{H^{0,1}} \\
\eal$$
Therefore for $\Delta M$ we see that
$$
\begin{aligned}
\|\Delta M\|_{L^2(dz)\otimes L^\infty(dx)}
& \leq c \|K'_{\cdot \Delta u(\cdot)} (\check{m}^+ - I) + K'_{\cdot \Delta u(\cdot)}I\|_{L^2(dz)\otimes L^\infty(dx)} \\
& \qquad+ c \|-iK'_{\cdot u(\cdot)} \Delta m^+ + K_{\Delta u} \check M \|_{L^2(dz)\otimes L^\infty(dx)} \\
& \leq c\|\Delta u\|_{H^{0,1}} \\
\end{aligned}
$$
In particular, $\partial_z m^+(0,z) = M(0,z) \in L^2(\rb)$. Also, $\partial_z \Delta m^+(0,z) = \Delta M(0,z)\in L^2(\rb)$ and hence $\|\Delta m^+ \|_{H^{1,0}} \leq c \|\Delta u\|_{H^{0,1}}$.
\end{proof}

Let $\psi_1^+$, $(\psi_0)_1^+$ be the first columns of the ZS-AKNS solutions for $u_0^e$, $\eta_{\mu_0}$ in Theorem \ref{T:main} and \eqref{eq:1_soliton}, respectively.
Set $m_1^+ \equiv \psi_1^+ e^{-ixz/2}$, $(m_0)_1^+ \equiv (\psi_0)_1^+ e^{-ixz/2}$ ($x\geq0$), respectively.
By Remark \ref{rmk:1_soliton}, $\eta_{\mu_0}(x)$ in \eqref{eq:1_soliton} is the B\"acklund extension of $v_{\mu_0}(x)$
and it is straightforward to see that $(m_0)_1^+$ and $(m_0)_2^-$ have the explicit formulae (see \cite{RS})
\beq
\label{eq:m_0_1_plus}
\bal
& (m_0)_1^+(x,z) = \frac 1{z + i\mu_0}\bpm z+i \mu_0 \tanh (\mu_0  x + \tanh^{-1}( q / \mu_0 ))\\ i \mu_0 \sech (\mu_0  x + \tanh^{-1}( q / \mu_0 )) \epm, \\
& (m_0)_2^-(x,z) = \frac 1{z + i\mu_0}\bpm i \mu_0 \sech (\mu_0  x + \tanh^{-1}( q / \mu_0 )) \\
z-i \mu_0 \tanh (\mu_0  x + \tanh^{-1}( q / \mu_0 )) \epm. \\
\eal
\eeq
Define (cf. Proposition \ref{prop:backlund_scattering})
\beq
\label{eq:AB_0_AB_1}
\bal
& \bpm A(z) \\B(z) \epm \equiv m_1^+(0,z), \ \
\bpm A_0(z) \\B_0(z) \epm \equiv (m_0)_1^+(0,z), \\
& A_1(z) \equiv A(z) - A_0(z), \ \ B_1(z) \equiv B(z) - B_0(z). \\
\eal
\eeq
Then
\beq
\label{A_0_B_0}
\bpm A_0(z) \\B_0(z) \epm = \frac{1}{z+i\mu_0} \bpm z+iq \\ i\sqrt{\mu_0^2-q^2} \epm.
\eeq
By Proposition \ref{prop:backlund_scattering}, the scattering data for the B\"acklund extension $u_0^e$ of $u_0(x)|_{\rb^+}=u(x,0)|_{\rb^+}$ is given by
\begin{equation}
\label{eq:scattering_expansion}
\bal
& a(z)= \frac 1{z-i\beta}\Big[(z+iq)\frac {z-i\mu_0}{z+i\mu_0} + (z-iq)g_1(z) - (z+iq)g_2(z) \Big], \\
& b(z) = \frac 1{z+i\beta}\big[(z+iq)g_3(z) + (z-iq)g_3(-z)\big]. \\
\eal
\end{equation}
where
\beq
\label{eq:scattering_expansion2}
\bal
& g_1(z) = A_0(z)\overline{A_1(-\ovl{z})}+A_1(z)\overline{A_0(-\ovl{z})}+A_1(z)\overline{A_1(-\ovl{z})} \\
& g_2(z) = B_0(z)\overline{B_1(-\ovl{z})}+B_1(z)\overline{B_0(-\ovl{z})}+B_1(z)\overline{B_1(-\ovl{z})}\\
& g_3(z) = A_0(-z) B_1(z)+A_1(-z) B_0(z)+A_1(-z) B_1(z) \\
\eal
\eeq
and $\beta$ is determined as in \eqref{eq:beta}.
By Lemma \ref{lem:ZS_H_1_0}, we have
\beq
\label{eq:A_1_B_1_small}
\|A_1\|_{H^{1,0}(\rb)} \leq c \epsilon, \ \ \|B_1\|_{H^{1,0}(\rb)} \leq c \epsilon,
\eeq
and hence
\beq
\label{g_j_small}
\|g_j\|_{H^{1,0}(\rb)}  \leq c \epsilon, \ \ 1\leq j \leq 3.
\eeq
Thus,
\beq
\label{eq:b_norm}
\|b\|_{L^2(\rb)}  \leq c \epsilon, \ \|b\|_{L^\infty(\rb)}  \leq c \epsilon \text{ and } \|b\|_{H^{1,0}(\rb)}  \leq c \epsilon |q|^{-\frac 12}.
\eeq
Define $a_0(z) \equiv \frac{z+iq}{z-i\beta}\frac{z-i\mu_0}{z+i\mu_0}$. Then,
\beq
\label{eq:a_norm}
\|a-a_0\|_{L^\infty(\rb)}  \leq c \epsilon , \ \ \|a -a_0\|_{H^{1,0}(\rb)}  \leq c \epsilon |q|^{-\frac 12}.
\eeq
As $a(z;\eta_{\mu_0})=\frac{z-i\mu_0}{z+i\mu_0}$, $\eta_{\mu_0}$ is generic
and it follows from the proof of Proposition \ref{prop:backlund_generic} that
for all sufficiently small $\epsilon>0$ $a(z) = a(z;u_0^e)$ is also generic.
As $a(z;\eta_{\mu_0})$ has only one zero in $\cb^+$, it follows that $\beta(\eta_{\mu_0}) = (-1)^1 q = -q$.
Suppose $q>0$.
Then $a_1(z;\eta_{\mu_0}) \equiv a(z;\eta_{\mu_0})(z-i\beta(\eta_{\mu_0})) = \frac{z-i\mu_0}{z+i\mu_0} (z+iq)$ has only one zero in $\cb^+$,
and hence, $a_1(z;u_0^e)$ also has only one zero $z_1 \sim i\mu_0$ in $\cb^+$, by Rouch\'e's theorem.
But
\beq
\label{eq:a_1_a}
a_1(z;u_0^e) = a(z;u_0^e)(z-i\beta(u_0^e)),
\eeq
and as $|\beta(u_0^e)| = |q| < \mu_0$, we see that $a(z_1;u_0^e) = 0$.
Hence $a(z;u_0^e)$ also has one zero $z_1 \sim i \mu_0$ in $\cb^+$ and necessarily $\beta(u_0^e) = (-1)^1 q = -q$,
which is consistent with the fact that $a_1(iq;u_0^e)\neq 0$ (see \eqref{eq:a_1_a}).
Now suppose $q<0$.
Then $a_1(z;\eta_{\mu_0})$ has two zeros $i\mu_0, -iq$.
Again by Rouch\'e, $a_1(z;u_0^e)$ has two zeros $z_1 \sim i \mu_0, z_2 \sim -iq$ in $\cb^+$.
If $z_2 = -iq$, then as $a(-iq;u_0^e)\neq 0$ by Lemma \ref{lem:symm_prop},
it follows from \eqref{eq:a_1_a} that $\beta(u_0^e)=-q$,
and that $a(z;u_0^e)=\frac{a_1(z;u_0^e)}{z+iq}$ has only one zero $z_1 \sim i\mu_0$ in $\cb^+$
(note that this is consistent with $\beta(u_0^e)=-q$).
On the other hand if $z_2 \neq -iq$, then it follows from \eqref{eq:a_1_a} that $\beta(u_0^e) = q$
(otherwise $a_1(z;u_0^e)$ would have three zeros in $\cb^+$).
But then, again from  \eqref{eq:a_1_a}, we see that $a(z;u_0^e)$ has two zeros $z_1 \sim i \mu_0, z_2 \sim -iq$ in $\cb^+$
(note again that this is consistent with $\beta(u_0^e)=q$).
By the symmetry condition $a(z;u_0^e) = \ovl{a(-\ovl z;u_0^e)}$ from \eqref{eq:symm_scattering},
we see that the zeros $z_1, z_2$ must lie on $i\rb^+$.
We have proved the following result.

\begin{proposition}[Zeros of $a(z;u_0^e)$ in $\cb^+$]
\label{prop:zeros_a} $\ $

\begin{enumerate}
\item[\textnormal{(i)}] If $q>0$, $a(z;u_0^e)$ has one simple zero $z_1 = i\mu_1 \in i\rb^+$, $\mu_1 \sim \mu_0$.
\item[\textnormal{(ii)}] If $q<0$, and $a_1(-iq;u_0^e)=0$, then $a(z;u_0^e)$ has one simple zero $z_1 = i\mu_1 \in i\rb^+$, $\mu_1 \sim \mu_0$.
\item[\textnormal{(iii)}] If $q<0$, and $a_1(-iq;u_0^e)\neq0$, then $a(z;u_0^e)$ has two simple zeros $z_1 = i\mu_1 \in i\rb^+$, $\mu_1 \sim \mu_0$
and $z_2 = i\mu_2 \in i\rb^+$, $\mu_2 \sim -q$, $\mu_2 \neq -q$.
\end{enumerate}
\end{proposition}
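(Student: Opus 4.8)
The plan is to view $a(z;u_0^e)$ as a small, explicit perturbation of the scattering function of the stationary $1$-soliton $\eta_{\mu_0}$ --- which by Remark \ref{rmk:1_soliton} is precisely the B\"acklund extension of $v_{\mu_0}|_{\rb^+}$ --- and to locate and count the zeros by Rouch\'e's theorem. First I would record the unperturbed data: $a(z;\eta_{\mu_0}) = \frac{z-i\mu_0}{z+i\mu_0}$, $b(z;\eta_{\mu_0}) \equiv 0$, one simple zero at $i\mu_0$, and $\beta(\eta_{\mu_0}) = (-1)^1 q = -q$, so the auxiliary function $a_1(z;\eta_{\mu_0}) = a(z;\eta_{\mu_0})(z - i\beta(\eta_{\mu_0})) = \frac{z-i\mu_0}{z+i\mu_0}(z+iq)$ is analytic in $\cb^+$ with exactly one zero there when $q>0$ (since $-iq \in \cb^-$) and exactly two, $i\mu_0$ and $-iq = i|q|$, when $q<0$. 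The explicit ZS--AKNS solutions \eqref{eq:m_0_1_plus} supply $A_0, B_0$ as in \eqref{A_0_B_0}, hence $a_0(z) = \frac{z+iq}{z-i\beta}\frac{z-i\mu_0}{z+i\mu_0}$.

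Next I would quantify the perturbation. Since $u_0|_{\rb^+} = v_{\mu_0} + \epsilon w$ with $\|w\|_{H^{1,1}}$ bounded, Lemma \ref{lem:ZS_H_1_0} (together with Lemma \ref{lem:perturbed_potential}) gives $\|A_1\|_{H^{1,0}(\rb)} + \|B_1\|_{H^{1,0}(\rb)} \le c\epsilon$ (see \eqref{eq:A_1_B_1_small}), whence the correction terms $g_1, g_2, g_3$ of \eqref{eq:scattering_expansion2} satisfy \eqref{g_j_small} and, through the formula \eqref{eq:scattering_expansion} for the scattering data of $u_0^e$ (Proposition \ref{prop:backlund_scattering}), $\|a - a_0\|_{L^\infty(\rb)} \le c\epsilon$ and $\|a - a_0\|_{H^{1,0}(\rb)} \le c\epsilon|q|^{-1/2}$ as in \eqref{eq:a_norm}, together with the analogous smallness of $a_1(\cdot;u_0^e) - a_1(\cdot;\eta_{\mu_0})$. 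Running the Rouch\'e argument from the proof of Proposition \ref{prop:backlund_generic} on a large semicircular contour in $\ovl{\cb^+}$ --- using that $a \to 1$ at infinity, that $|a|$ is bounded below on $\rb$ for $\epsilon$ small, and that the unperturbed zeros of $a_1$ are simple --- then yields: for $q>0$, $a_1(z;u_0^e)$ has exactly one zero $z_1 \sim i\mu_0$ in $\cb^+$, and for $q<0$, exactly two simple zeros $z_1 \sim i\mu_0$ and $z_2 \sim -iq$ in $\cb^+$; genericity (simple zeros, none on $\rb$) follows the same way for $\epsilon$ small.

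The decisive step is to pass from $a_1(z;u_0^e)$ to $a(z;u_0^e) = a_1(z;u_0^e)/(z - i\beta(u_0^e))$, pinning down $\beta(u_0^e) \in \{q,-q\}$ in each case. Here the key input is Lemma \ref{lem:symm_prop}: since $u_0^e$ is $q$-symmetric (Proposition \ref{prop:backlund_ext}), $a(i|q|;u_0^e) \neq 0$, and since $a(z;u_0^e)$ is analytic in $\cb^+$, the point $i\beta(u_0^e)$, if it lies in $\cb^+$, must be a zero of $a_1(z;u_0^e)$. For $q>0$: as $|\beta(u_0^e)| = |q| < \mu_0$, the unique zero $z_1 \sim i\mu_0$ of $a_1$ is not $i\beta(u_0^e)$, so $a(z_1;u_0^e) = 0$, giving statement (i); $\beta(u_0^e) = -q$ is forced, consistent with $a_1(iq;u_0^e)\neq 0$ (see \eqref{eq:a_1_a}). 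For $q<0$: if $a_1(-iq;u_0^e) = 0$, then since $a(-iq;u_0^e) = a(i|q|;u_0^e)\neq 0$ the factor $z - i\beta(u_0^e)$ must cancel this zero, so $\beta(u_0^e) = -q$ and $a(z;u_0^e) = a_1(z;u_0^e)/(z+iq)$ keeps only the zero $z_1 \sim i\mu_0$ --- statement (ii); if $a_1(-iq;u_0^e)\neq 0$, then $\beta(u_0^e) = -q$ would make $a$ singular at $-iq$, so $\beta(u_0^e) = q$, whose imaginary part $iq$ lies in $\cb^-$ and hence introduces no pole in $\cb^+$, so $a(z;u_0^e)$ retains both zeros $z_1 \sim i\mu_0$, $z_2 \sim -iq$, with $z_2 \neq -iq$ precisely because $a(-iq;u_0^e)\neq 0$ --- statement (iii). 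Finally, the symmetry $a(z;u_0^e) = \ovl{a(-\ovl z;u_0^e)}$ from \eqref{eq:symm_scattering} makes the zero set invariant under $z \mapsto -\ovl z$; since each zero is simple and lies near $i\mu_0$ or $i|q|$ on the imaginary axis, this forces $z_1, z_2 \in i\rb^+$.

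I expect the main obstacle to be the uniform Rouch\'e estimate: one must control $a_1(z;u_0^e) - a_1(z;\eta_{\mu_0})$ not merely on compact sets but along a contour running to infinity in $\ovl{\cb^+}$, combining the $H^{1,0}$ and $L^\infty$ bounds on $A_1, B_1$ with the precise behaviour of \eqref{eq:scattering_expansion} near the possibly exceptional point $i\beta$ --- and, in tandem, the bookkeeping that rules out an ``extra'' zero at $i\beta(u_0^e)$ in each sign and subcase, for which Lemma \ref{lem:symm_prop} is the indispensable ingredient.
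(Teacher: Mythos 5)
Your proposal is correct and follows essentially the same route as the paper: compare with the $1$-soliton $\eta_{\mu_0}$ via the explicit formulas \eqref{A_0_B_0}, \eqref{eq:scattering_expansion}, use the perturbation bounds \eqref{eq:A_1_B_1_small}--\eqref{eq:a_norm} and the Rouch\'e argument from Proposition \ref{prop:backlund_generic} to count the zeros of $a_1(z;u_0^e)$, then pin down $\beta(u_0^e)$ via Lemma \ref{lem:symm_prop} and the identity $a_1 = a\cdot(z-i\beta)$, and finish with the symmetry $a(z)=\ovl{a(-\ovl z)}$ to place the zeros on $i\rb^+$. The only cosmetic difference is in case (iii), where you rule out $\beta(u_0^e)=-q$ by noting $a$ would acquire a pole at $-iq$, while the paper phrases the same contradiction as $a_1$ acquiring a third zero; the two are equivalent.
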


\begin{lemma}
\label{lem:zeros_a}
In the notation of Proposition \ref{prop:zeros_a},
$$
\bal
& \mu_1 = \mu_0 + \epsilon w_1 +O(\epsilon q +\epsilon^2 ), \\
& \mu_2 = -q + O(\epsilon^2 q).\\
\eal
$$
where $w_1 = \int_\rb \mathrm{Re} w(y)v_{\mu_0}(y) \rd y$.
Moreover, $\sqrt{\frac {\mu_2+q}{\mu_2-q}} = O(\epsilon)$.
\end{lemma}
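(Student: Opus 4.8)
The plan is to track the zeros of $a_1(z;u_0^e)$ via the implicit function theorem / Rouch\'e perturbation already set up in Proposition \ref{prop:zeros_a}, and then convert information about the zeros of $a_1$ into information about the zeros of $a(z;u_0^e)$ using the relation $a_1(z;u_0^e) = a(z;u_0^e)(z-i\beta(u_0^e))$ in \eqref{eq:a_1_a}. The starting point is the explicit expansion \eqref{eq:scattering_expansion}: multiplying through by $z-i\beta$ gives
\beq
a_1(z;u_0^e) = (z+iq)\frac{z-i\mu_0}{z+i\mu_0} + (z-iq)g_1(z) - (z+iq)g_2(z),
\eeq
where $g_1,g_2$ are the quadratic-in-$(A_1,B_1)$ quantities of \eqref{eq:scattering_expansion2}, satisfying $\|g_j\|_{H^{1,0}}\le c\epsilon$ by \eqref{g_j_small}. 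First I would isolate the factor $(z-i\mu_0)$ and write, near $z=i\mu_1\sim i\mu_0$,
\beq
0 = a_1(i\mu_1;u_0^e) = (i\mu_1+iq)\frac{i\mu_1-i\mu_0}{i\mu_1+i\mu_0} + (i\mu_1-iq)g_1(i\mu_1) - (i\mu_1+iq)g_2(i\mu_1),
\eeq
so that
\beq
\mu_1 - \mu_0 = -\frac{\mu_1+\mu_0}{2\mu_1}\Big[g_1(i\mu_1) - \frac{\mu_1-q}{\mu_1+q}g_2(i\mu_1)\Big] \cdot (\text{analytic, }O(1)).
\eeq
To leading order $\mu_1-\mu_0 = -[g_1(i\mu_0)-g_2(i\mu_0)] + O(\epsilon^2) + O(\epsilon q)$ (the $O(\epsilon q)$ coming from the $\frac{\mu_1-q}{\mu_1+q}$ versus $1$ discrepancy and from the shift $i\mu_1\to i\mu_0$). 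It then remains to identify the linear-in-$\epsilon$ part of $g_1(i\mu_0)-g_2(i\mu_0)$. Since $g_1 = A_0\overline{A_1(-\bar\cdot)} + A_1\overline{A_0(-\bar\cdot)} + O(\epsilon^2)$ and similarly for $g_2$, and since $A_1 = \partial_s A|_{s=0}\,\epsilon + O(\epsilon^2)$ with $A_1,B_1$ given (to first order) by the perturbation formula analogous to \eqref{eq:partial_s_AB}, namely $\binom{A_1}{B_1}(z) = \epsilon\,c(z)\binom{-\overline{B_0(\bar z)}}{\overline{A_0(\bar z)}}$ type terms built from $(m_0)_2^-$ and $w$, I would substitute the explicit formulas \eqref{eq:m_0_1_plus}, \eqref{A_0_B_0} for $(m_0)_1^+, (m_0)_2^-$ at $z=i\mu_0$. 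At $z=i\mu_0$ the vector $(m_0)_1^+$ degenerates (its two components become proportional, reflecting $a_0(i\mu_0)=0$), and a direct computation should collapse the combination $g_1(i\mu_0)-g_2(i\mu_0)$ to $-\epsilon\int_\rb \mathrm{Re}\,w(y)\,v_{\mu_0}(y)\,dy + O(\epsilon^2) = -\epsilon w_1 + O(\epsilon^2)$, giving $\mu_1 = \mu_0 + \epsilon w_1 + O(\epsilon q + \epsilon^2)$. The cleanest route to the coefficient $w_1$ is the standard first-order eigenvalue-perturbation identity for the Zakharov--Shabat operator: the derivative of the eigenvalue is $\langle$ perturbation $Q_1$, squared eigenfunction $\rangle$ divided by a normalization, and the squared eigenfunction of $\eta_{\mu_0}$ at $z=i\mu_0$ produces exactly $v_{\mu_0}(y)$ after using that $w$ is real and even so that only $\int w v_{\mu_0}$ survives; I would present this identification rather than grinding the $g_j$ algebra.

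For $\mu_2$ (case (iii), $q<0$), the analysis is the same but centered at $z = -iq$. From \eqref{eq:a_1_a}, $z_2$ is a zero of $a_1(z;u_0^e)$ near $-iq$; since $\eta_{\mu_0}$ contributes the exact factor $(z+iq)$ to $a_1(z;\eta_{\mu_0})$, the zero at $-iq$ is created entirely by the perturbation, i.e. by the $g_j$ terms. Writing $z_2 = -iq + \delta$ and expanding,
\beq
0 = a_1(-iq+\delta;u_0^e) = \delta\cdot\frac{-iq-i\mu_0}{-iq+i\mu_0} + (-2iq)g_1(-iq) - 0\cdot g_2 + O(\delta\epsilon + \delta^2),
\eeq
where I have used that the factor $(z+iq)$ multiplying the main term and $g_2$ vanishes at $z=-iq$ to leading order, so $g_2$ enters only at order $\delta\epsilon$ and the $(z+iq)(z-i\mu_0)/(z+i\mu_0)$ term contributes $\delta$ times a nonzero constant. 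This gives $\delta = O(\epsilon)$ at first glance, but one must be more careful: $g_1(-iq) = A_0(-iq)\overline{A_1(iq)} + A_1(-iq)\overline{A_0(iq)} + O(\epsilon^2)$, and I would check using \eqref{A_0_B_0} and the explicit first-order form of $A_1$ whether $A_0(-iq)\overline{A_1(iq)} + A_1(-iq)\overline{A_0(iq)}$ has a vanishing real/imaginary part forced by the reality and evenness of $w$; the statement $\mu_2 = -q + O(\epsilon^2 q)$ asserts that the $O(\epsilon)$ contribution to $\mathrm{Im}\,z_2$ cancels, so $\delta = O(\epsilon^2)$ after accounting for the $-2iq$ prefactor (hence the factor $q$). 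Concretely I expect the reality symmetry $\overline{A_1(\bar z)} = $ (something symmetric) together with $A_0(-iq) = 0$ or a cancellation in $A_0(-iq)\overline{B_0(iq)}$-type cross terms to kill the linear term; this is the delicate bookkeeping step. Finally, $\sqrt{\frac{\mu_2+q}{\mu_2-q}} = O(\epsilon)$ follows immediately: from $\mu_2 = -q + O(\epsilon^2 q)$ we get $\mu_2 + q = O(\epsilon^2 q) = O(\epsilon^2|q|)$ and $\mu_2 - q = -2q + O(\epsilon^2 q)$ which is bounded below in modulus by $c|q|$, so the ratio is $O(\epsilon^2)$ and its square root is $O(\epsilon)$ (in fact $O(\epsilon)$ with room to spare; I would just state $O(\epsilon)$ as claimed).

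The main obstacle I anticipate is the second assertion, $\mu_2 = -q + O(\epsilon^2 q)$: proving that the naively-$O(\epsilon)$ displacement of the "extra" zero away from $-iq$ is actually $O(\epsilon^2 q)$ requires exhibiting a genuine cancellation in the linear term, which in turn requires pinning down the first-order perturbations $A_1, B_1$ at the special point $z = \mp iq$ using the reality and evenness of $w$ (and the precise form of the B\"acklund/ZS data for $\eta_{\mu_0}$, i.e. \eqref{eq:m_0_1_plus}--\eqref{A_0_B_0}). By contrast, the $\mu_1$ asymptotics is a routine first-order eigenvalue perturbation once the squared-eigenfunction identity is invoked, and the $\sqrt{(\mu_2+q)/(\mu_2-q)}$ bound is immediate algebra. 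A secondary, purely technical point is keeping the error terms honest: the error in $\mu_1$ is stated as $O(\epsilon q + \epsilon^2)$, so one must verify that the $q$-dependence entering through the replacements $\tanh^{-1}(q/\mu_0)$, $\sqrt{\mu_0^2 - q^2}$, and $\frac{\mu_1 - q}{\mu_1 + q}$ is genuinely linear in $q$ at this order and does not produce, say, an $O(\epsilon)$ term independent of both the smallness of $\epsilon$ and the smallness of $q$ — here the bounds \eqref{eq:A_1_B_1_small}, \eqref{g_j_small} together with analyticity of all the factors near $z = i\mu_0$, $z = -iq$ on a disk whose radius is bounded below uniformly in the relevant range of $q$, do the job.
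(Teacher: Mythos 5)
Your strategy is essentially the paper's: both arguments run off the explicit formula \eqref{eq:scattering_expansion} for $a(z)$ (equivalently $a_1$), locate $z_1$ by a first-order eigenvalue perturbation and $z_2$ by exhibiting a cancellation at $z=-iq$, and then read off $\sqrt{(\mu_2+q)/(\mu_2-q)}=O(\epsilon)$ from $\mu_2+q=O(\epsilon^2 q)$. Two remarks on where the details diverge. For $\mu_2$, the ``delicate bookkeeping step'' you flag is resolved in the paper by a clean two-step bootstrap on $K=\frac{\mu_2+q}{\mu_2-q}$: equation \eqref{eq:a_zero_z_2} first gives $K=O(\epsilon)$, hence $A_0(z_2)=\frac{\mu_2+q}{\mu_2+\mu_0}=O(\epsilon q)$; since $-\ovl{z_2}=z_2$ for $z_2\in i\rb^+$, one then has $g_1(z_2)=A_0(z_2)\,2\,\mathrm{Re}\,A_1(z_2)+|A_1(z_2)|^2=O(\epsilon^2)$, whence $K=O(\epsilon^2)$. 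So of your two candidate mechanisms the correct one is the vanishing of $A_0$ near $-iq$ (your ``$A_0(-iq)=0$'' option); no reality or evenness of $w$ enters, and indeed none is available, since Lemma \ref{lem:zeros_a} is used in Theorem \ref{T:main} where $w$ need not be real (hence the $\mathrm{Re}\,w$ in $w_1$). For $\mu_1$, your appeal to the standard squared-eigenfunction perturbation identity is morally what the paper computes, but the paper derives it from scratch on the half-line: it builds the fundamental matrix $\phi=(\psi_1^+,\dot\psi_1^+-\gamma_1\dot\psi_2^-)$ with $\det\phi=-\gamma_1 a'(z_1)\neq0$, iterates the resulting Volterra equation once in $\Delta Q$ on $[0,\infty)$, and lands on $\mathrm{Re}(if_2)=-\epsilon\int_0^\infty\mathrm{Re}\,w\,v_{\mu_0}+O(\epsilon q+\epsilon^2)=-\epsilon w_1/2+\cdots$. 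Working on the half-line via $A,B$ is not cosmetic: the perturbation of $u_0^e$ on $\rb^-$ is the linearized B\"acklund image of $\epsilon w$, not $\epsilon w(-x)$ itself, so a naive whole-line pairing needs the extra observation that the discrepancy is $O(\epsilon q)$ (absorbed in the error) before it is legitimate. Finally, your displayed leading-order relation $\mu_1-\mu_0=-[g_1-g_2](i\mu_0)+\cdots$ drops a factor $2\mu_0$ (the correct relation is $\mu_1-\mu_0=-2\mu_0[g_1-g_2](z_1)+O(\epsilon q+\epsilon^2)$, compensated by $g_1-g_2=-\epsilon w_1/(2\mu_0)+\cdots$); since you defer that computation the final constant is unaffected, but the bookkeeping must be carried through to actually land on $w_1$.
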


\begin{proof}
For $z_2 = i\mu_2$, as $\beta = +q$ and $\mu_2 \neq -q$ by Proposition \ref{prop:zeros_a} (iii), we have from \eqref{eq:scattering_expansion},
\beq
\label{eq:a_zero_z_2}
0=a(z_2) = \frac{\mu_2+q}{\mu_2-q} \frac{\mu_2-\mu_0}{\mu_2+\mu_0} + g_1(z_2) - \frac{\mu_2+q}{\mu_1-q} g_2(z_2).
\eeq
Set $K=\frac {\mu_2+q}{\mu_2-q}$.
From \eqref{g_j_small} and \eqref{eq:a_zero_z_2} we see that
$K =  -g_1(z_2)\big(\frac{\mu_2-\mu_0}{\mu_2+\mu_0}- g_2(z_2)\big)^{-1} = O(\epsilon)$.
Hence $\mu_2 = -q\frac{1+K}{1-K} = -q + O(\epsilon q)$,
which implies that $A_0(z_2)= \frac{\mu_2+q}{\mu_2+\mu_0}=O(\epsilon q)$.
But then from \eqref{eq:scattering_expansion2} $g_1(z_2) = A_0(z_2)(A_1(z_2)+\ovl{A_1(z_2)}) +O(\epsilon^2)=O(\epsilon^2)$,
which now implies that $K = O(\epsilon^2)$ and hence $\mu_2 = -q + O(\epsilon^2 q)$.

For $z_1 = i\mu_1$, let $(\psi_1^+, \psi_2^-)$, $((\psi_0)_1^+, (\psi_0)_2^-)$ be the ZS-AKNS solutions
for $u_0^e$, $\eta_{\mu_0}$ in $\ovl{\cb^+}$, respectively.
Define $\varphi (x,z) \equiv \dot\psi_1^+ - \gamma_1 \dot\psi_2^-$, $z\in\cb^+$
where $\dot\psi_1^+ = \partial_z \psi_1^+$ and $\dot\psi_2^- = \partial_z \psi_2^-$
and $\gamma_1$ is the norming constant for $z_1$.
Fix $z=z_1=i\mu_1$ and set $\phi(x) = (\psi_1^+, \varphi)(x,z_1)$.
As
\beq
\label{eq:det_phi}
 \bal 0 \neq a'(z_1)
& = \det (\dot\psi_1^+ , \psi_2^-) + \det (\psi_1^+ , \dot\psi_2^-) \\
& = \det (\dot\psi_1^+ , \gamma_1^{-1} \psi_1^+) + \det (\psi_1^+ , \dot\psi_2^-) \\
& = -\gamma_1^{-1} \det \phi, \\
\eal
\eeq
$\phi^{-1}$ exists for all $x\in\rb$.
A simple computation shows that $\phi_x = (iz_1\sigma + Q) \phi$ where $Q = \bsm 0 & u_0^e\\ -\ovl{u_0^e} & 0 \esm$.
As $((\psi_0)_1^+)_x = (iz_1\sigma + Q_0) (\psi_0)_1^+$ where $Q_0 = \bsm 0 & \eta_{\mu_0}\\ -\ovl{\eta}_{\mu_0} & 0 \esm$,
we see that
\beq
\label{eq:diff_phi_psi}
(\phi^{-1} (\psi_0)_1^+)_x = -(\phi^{-1} \Delta Q \phi)(\phi^{-1} (\psi_0)_1^+), \ \ \Delta Q = Q - Q_0.
\eeq
Let $M = \phi e^{-ixz_1\sigma}$ and $(m_0)_1^+ = (\psi_0)_1^+ e^{-ixz_1/2}$. Then,
$$
\phi^{-1} (\psi_0)_1^+ = \bpm 1 & 0\\0&e^{ixz_1} \epm M^{-1} (m_0)_1^+ .
$$
Since $(m_0)_1^+ \to e_1$ and $M\to \bsm 1 & 0\\0& -\gamma_1 a'(z_1) \esm$ as $x\to +\infty$,
it follows from \eqref{eq:diff_phi_psi} that
$$
\phi^{-1} (\psi_0)_1^+ = e_1 + \int_x^\infty  \phi^{-1} \Delta Q \phi (\phi^{-1} (\psi_0)_1^+),
$$
and hence
$$
M^{-1} (m_0)_1^+ = e_1 + \int_x^\infty  \bpm 1 & 0\\0&e^{-i(x-y)z_1} \epm G M^{-1} (m_0)_1^+ \rd y,
$$
where $G = M^{-1} \Delta Q M$.
As $M(x)$ is uniformly bounded on $\rb$ and $\|\Delta Q\|_{H^{0,1}} \leq c \epsilon$,
it follows by a standard iteration that
$$
M^{-1} (m_0)_1^+ = e_1 + \int_x^\infty \bpm 1 & 0\\0&e^{-i(x-y)z_1} \epm G e_1 \rd y+ O(\epsilon^2),
$$
uniformly on $x\geq0$. In particular, multiplying by $M$ and setting $x=0$, we obtain
\beq
\label{eq:diff_expansion}
\bpm A_1(z_1) \\ B_1(z_1) \epm = m_1^+ (0,z_1) - (m_0)_1^+ (0,z_1) = -M(0) \bpm f_1 \\ f_2 \epm + O(\epsilon^2),
\eeq
where $f_1 = \int_0^\infty G_{11}$ and $f_2 = \int_0^\infty  e^{-\mu_1 y} G_{21}$.
Set
$$
M_0 \equiv ((\psi_0)_1^+, (\dot\psi_0)_1^+ - \gamma_1 (\dot\psi_0)_2^-) e^{-ixz_1\sigma}.
$$
As $A_0(z_1) \neq 0$ by \eqref{A_0_B_0}, it follows from \eqref{eq:gamma_z_k} and \eqref{eq:diff_expansion} that
the norming constant $\gamma_1 = \gamma(z_1)$ for $u_0^e$ is given by
\beq
\label{eq:gamma_1_1_small}
\gamma_1 = \frac{z_1 - i\beta}{z_1 +iq} \frac {A(z_1)}{B(z_1)} = (1+O(\epsilon+q)) \frac {A_0(z_1)}{B_0(z_1)} = 1+O(\epsilon+q).
\eeq
From \eqref{eq:scattering_expansion} and \eqref{g_j_small}, we have
\beq
\label{eq:a_z_1}
0=a(z_1) = \frac{\mu_1+q}{\mu_1-\beta} \frac{\mu_1-\mu_0}{\mu_1+\mu_0} + g_1(z_1) - g_2(z_1) + O(\epsilon q),
\eeq
and hence, again by \eqref{g_j_small},
\beq
\label{eq:mu_mu_0}
\mu_1 = \mu_0 + O(q+\epsilon).
\eeq
As $\bsm A_0(z_1) \\ B_0(z_1) \esm = \frac 12 \bsm 1 \\ 1 \esm + O(\epsilon)$ by \eqref{A_0_B_0},
it follows from \eqref{eq:scattering_expansion2} and \eqref{eq:A_1_B_1_small} that
\beq
\label{eq:g_1_minus_g_2}
\bal g_1(z_1) - g_2(z_1)
& = \mathrm{Re}(A_1(z_1)-B_1(z_1)) + O(\epsilon q + \epsilon^2) \\
\eal
\eeq
Direct calculation using \eqref{eq:m_0_1_plus} and \eqref{eq:mu_mu_0} shows that
$M_0(0,z_1) = \frac 12 \bsm 1& -i \mu_0^{-1} \\1& i \mu_0^{-1} \esm + O(q+\epsilon)$.
By Lemma \ref{lem:perturbed_potential}, $|m_1^+(0, z) - (m_0)_1^+(0, z)| \leq c \epsilon$
and $|m_2^-(0, z) - (m_0)_2^-(0, z)| \leq c \epsilon$ for $z\in \ovl{\cb^+}$.
Then it follows by analyticity of $m_1^+(0, z)$, etc. that
\beq
\label{eq:M_M_0_close}
|M(0, z_1) - (M_0)(0, z_1)| \leq c \epsilon
\eeq
Hence from \eqref{eq:diff_expansion} and \eqref{eq:gamma_1_1_small},
\beq
\label{eq:diff_expansion2}
\bal
A_1(z_1) - B_1(z_1)
& = - (1, -1) M(0) \bpm f_1 \\ f_2 \epm + O(\epsilon^2)\\
& = - (1, -1) M_0(0) \bpm f_1 \\ f_2 \epm + O(\epsilon^2)\\
& = \frac{if_2}{\mu_0} + O(\epsilon q + \epsilon^2). \\
\eal
\eeq
Now, from \eqref{eq:a_exp_formula}, $a'(z_1) = \frac 1{z_1 - \ovl{z_1}} e^{-l(z_1)}$ if $z_1$ is the only (simple) zero of $a(z)$ in $\cb^+$
and $a'(z_1) = \frac 1{z_1 - \ovl{z_1}}\frac {z_1 - z_2}{z_1 - \ovl{z_2}} e^{-l(z_1)} = \frac 1{z_1 - \ovl{z_1}} e^{-l(z_1)} + O(q)$
if $a(z)$ has a second (simple) zero at $i\mu_2 \sim O(q)$.
As $l(z_1) = O(\|r\|_{L^2}^2) = O(\epsilon^2)$,
it follows from \eqref{eq:det_phi} that $\det M = \det \phi = -\gamma_1 a'(z_1) = -\frac 1{2i\mu_1} + O(q+ \epsilon)$.
Assembling the above results,
$$ \bal \mathrm{Re}(if_2)
& = \mathrm{Re} \int_0^\infty i e^{-\mu_1 y} (M^{-1} \Delta Q M)_{21} \rd y \\
& = \mathrm{Re} \int_0^\infty  \frac {ie^{-\mu_1 y}}{\det M}  (- \epsilon w M_{21}^2 -  \epsilon \ovl{w} M_{11}^2) \rd y  \\
& = \mathrm{Re} \int_0^\infty  \frac {ie^{-\mu_1 y}}{\det M}  (- \epsilon w (M_0)_{21}^2 -  \epsilon \ovl{w} (M_0)_{11}^2) \rd y  + O(\epsilon^2) \\
& = -\epsilon \int_0^\infty  \mathrm{Re} w(y) v_{\mu_0}(y) \rd y  + O(\epsilon q + \epsilon^2) \\
& = -\frac{\epsilon w_1}2 + O(\epsilon q + \epsilon^2) \\
\eal$$
Thus it follows from \eqref{eq:a_z_1}, \eqref{eq:g_1_minus_g_2} and \eqref{eq:diff_expansion2}
that $\mu_1 = \mu_0 + \epsilon w_1 +O(\epsilon q +\epsilon^2 )$.
\end{proof}

\begin{remark}
The zeros of $a(z)$ can be written in terms of $r(z)$ and $w^e \equiv u_0^e - \eta_{\mu_0}$ using the conserved integrals for NLS. Recall that the conserved integrals can be read off from the expansion of $\log a(z) = \sum_{n=1}^{\infty} I_n z^{-n}$ as $z\to\infty$ (see \cite{ZS}). The first 3 coefficients in the expansion are given by
$$ \bal
& I_1 = -i\int_\rb |u(x,t)|^2 \rd x, \ \ I_2 = \frac 12 \int_\rb (u\ovl{u}_x-u_x \ovl{u})(x,t) \rd x, \\
& I_3 = i\int_\rb (|u|^4 - |u_x|^2)(x,t) \rd x. \\
\eal $$
Also, if $a(z)$ has one zero, it follows from \eqref{eq:a_exp_formula} that as $z\to\infty$, $\textnormal{Im} z> c|\textnormal{Re} z|$, $c>0$,
$$ \bal \log a(z)
& = \log \Big(\frac{z-i\mu_1}{z+i\mu_1} \Big) - l(z) \\
& =\frac{1}{z} \Big[ -2i\mu_1 + \frac{1}{2 \pi i} \int_{\mathbb{R}} \log(1+|r(s)|^2) \rd s  \Big] + O\Big(\frac{1}{z^2}\Big), \\
\eal $$
where $l(z)$ is given in \eqref{eq:l_z} and so we have
\beq
\label{eq:perturbed_mu}
\mu_1 = \mu_0 + \frac{1}{2}\int_{\mathbb{R}}  \eta_{\mu_0}( w^e+\overline{w^e} ) + |w^e|^2 - \frac{1}{4\pi}\int_{\rb} \log(1+|r|^2).
\eeq
This formula should be compared with Lemma \ref{lem:zeros_a}.
We see, in particular, that the contribution of $w^e \equiv u_0^e - \eta_{\mu_0}$ is approximated by $w_1$.
Similarly, if $a(z)$ has two zeros,
\beq
\label{eq:two_zeros}
\bal
& \mu_1 + \mu_2 = \frac 12 \int_\rb |u_0^e|^2 - \frac{1}{4\pi}\int_\rb \log(1+|r(s)|^2)\rd s,\\
& \mu_1^3 + \mu_2^3 = \frac 32 \int_\rb (|u_0^e|^4 - |(u_0^e)_x|^2) - \frac{3}{4\pi} \int_\rb s^2\log(1+|r(s)|^2)\rd s. \\
\eal
\eeq
\end{remark}

\begin{proof}[Proof of Theorem \ref{T:main}]
Fix $0< \kappa < \frac{1}{4}$. It is enough to consider $x\geq0$. First, we assume that $a(z)$ has one simple zero $z_1 = i\mu_1$.
From \eqref{eq:symm_scattering}, we have $\beta=-q$.
By \eqref{eq:norming_const_real} and Theorem \ref{thm:evol_scatt_data}, the norming constant $\gamma_1(t)=\gamma(t;z_1)$ is given by
\beq
\label{eq:gamma_one_zero}
\gamma_1(t)= \gamma_1 e^{iz_1^2 t/2} \text{ where } \gamma_1 = e^{-i\rho_1} \sqrt{\frac{\mu_1+q}{\mu_1-q}}, \text{ for some } \rho_1 \in \rb.
\eeq
Recalling the construction in the Appendix of the solution of a RHP with one pair of poles
in terms of the solution of a RHP without poles using a Darboux transformation, we set
$$
r_f(z) \equiv r(z) \frac{z-\overline{z_1}}{z-z_1}, \ \ z\in\rb, \text{ and }
c_1(t) = \frac {\gamma_1(t)}{a'(z_1)} .
$$
Note from \eqref{eq:a_exp_formula} that $a(z) = \frac{z-i\mu_1}{z+i\mu_1} e^{-l(z)}$
where $l(z)$ is defined in \eqref{eq:l_z}.
As $r(z) = \frac{\ovl{b(z)}}{\ovl{a(z)}}$, by \eqref{eq:b_norm} and \eqref{eq:a_norm}, we see that
$$
\|r\|_{L^\infty} \leq c\|b\|_{L^\infty} \leq c \epsilon, \ \ \|r\|_{L^2} \leq c\|b\|_{L^2} \leq c \epsilon,
$$
and
$$
\bal \|r'\|_{L^2}
&  = \bigg\|\frac {b'} a - \frac{ba'}{a^2} \bigg\|_{L^2}  \leq c \big(\|b'\|_{L^2} + \|b\|_{L^\infty} \|a'\|_{L^2} \big) \leq c \epsilon|q|^{-\frac 12}.\\
\eal
$$
Therefore,
$$
\|r_f\|_{L^\infty} = \|r\|_{L^\infty} \leq c \epsilon, \ \ \|r_f\|_{H^{1,0}} \leq c \|r\|_{H^{1,0}} \leq c \epsilon|q|^{-\frac 12}.
$$
Assume that $\epsilon\leq \chi_0 |q|^{\frac 12}$ where $\chi_0 >0$ will be determined further on.
Denote by $(m_f)_\pm \in I+\partial C(L^2)$ the solution of the normalized RHP $(\rb, v_f(z))$ without poles
where $v_f(z) = \bsm 1+|r_f(z)|^2&r_f(z)e^{i\theta} \\ \overline{r_f(z)}e^{-i\theta} &1 \esm$.
Such a solution exists by the general theory of Section \ref{sec:longtime}.
As before, $m_f(x,t,z)$, $z\in \cb\setminus\rb$, denote the extension of $(m_f)_\pm$ off the axis.
We now apply the steepest-descent analysis in Section \ref{sec:longtime} with $r(z)$ replaced by $r_f(z)$.
In the analysis in Section \ref{sec:longtime}, various auxiliary functions such as $m_d^L$ etc. are introduced:
in the present context we should properly use the notation $(m_f)_d^L$ etc., but we simply write $m_d^L$.

Let $\zeta_1 = \sqrt t (z_1 - z_0) = \sqrt t (i\mu_1 - z_0)$.
By the estimate \eqref{eq:m_z1_localized}, Lemma \ref{lem:small_r_estimate} and \eqref{eq:def_M_infty} for $t\geq 1$,
$$
\bal m_f(z_1)
& =  m_d^L(z_1)\Phi^{-1}(z_1)\delta(z_1)^{\sigma_3}+ O(\epsilon |q|^{-\frac 12} t^{-(\frac 12 + \kappa)}) \\
& = M^\infty(\zeta_1)\phi^{-\sigma}(\zeta_1) \Phi^{-1}(z_1)\delta(z_1)^{\sigma_3} + O(\epsilon |q|^{-\frac 12} t^{-(\frac 12 + \kappa)}) \\
& = M_{\text{II}}^\infty(\zeta_1)\phi^{-\sigma}(\zeta_1) \phi^{\text{ad} \sigma}(K^\infty)(\zeta_1) \Phi^{-1}(z_1)\delta(z_1)^{\sigma_3} \\
& \qquad\qquad + O(\epsilon |q|^{-\frac 12} t^{-(\frac 12 + \kappa)}) .\\
\eal
$$
where $\Phi$ is defined in \eqref{eq:Phi_deform} with $r_f(z)$ and
\beq
\label{eq:K_infty}
\bal K^\infty(\zeta) =
\left\{\bal
& I, \qquad\qquad\qquad\quad \  \zeta \in \text{II}_{z_0}, \\
& \bpm 1 & \frac {r_f(z_0)}{1+|r_f(z_0)|^2} \\ 0 & 1 \epm ^{-1}, \zeta \in \text{III}_{z_0}. \\
\eal \right.
\eal
\eeq
Note that if $\zeta_1 \in \text{II}_{z_0}$,
$$
\phi^{\text{ad} \sigma}(K^\infty)(\zeta_1)=I, \ \ \Phi^{-1}(z_1) = I,
$$
If $\zeta_1 \in \text{III}_{z_0} $, then $x\geq \mu_1 t$ and hence
$|e^{i\theta(\zeta_1)}|=|e^{-i\zeta_1^2/2}|=e^{-\mu_1 x}$ decay exponentially as $t\to \infty$. As $r_f(z_0) = O(\epsilon)$,
$$
\phi^{\text{ad} \sigma}(K^\infty)(\zeta_1)=I+O(\epsilon e^{-c t}), \ \ \Phi^{-1}(z_1) = I+O(\epsilon  e^{-c t}).
$$
Let $k_1$, $k_2$ be given in \eqref{eq:k_1} and \eqref{eq:k_2} with $r_f(z)$.
We use from \cite{AS} the identity $D_a'(\eta) = - \frac 12 \eta D_a(\eta) + a D_{a-1}(\eta)$ for $\eta \in \cb$ to obtain
$$
\begin{aligned} \partial_\zeta \Big(D_a \Big( e^{-\frac{3i \pi}4} \zeta \Big)\Big)
& = e^{-\frac{3i \pi}4} D_a'\Big( e^{-\frac{3i \pi}4} \zeta \Big)\\
& = e^{-\frac{3i \pi}4} \Big[ -\frac 12 e^{-\frac{3i \pi}4} \zeta D_a\Big( e^{-\frac{3i \pi}4} \zeta \Big) + a D_{a-1}\Big( e^{-\frac{3i \pi}4} \zeta \Big)\Big]\\
& = -\frac{i}2 \zeta D_a\Big( e^{-\frac{3i \pi}4} \zeta \Big) + a e^{-\frac{3i \pi}4} D_{a-1}\Big( e^{-\frac{3i \pi}4} \zeta \Big)\\
\end{aligned}
$$
and hence by \eqref{eq:M_for_II_1},
$$
(M_{\text{II}}^\infty)_{21}(\zeta) = \frac{\alpha_0}{k_1} e^{\frac{3 \pi}4  \nu(z_0)} a e^{-\frac{3i \pi}4} D_{a-1} \Big( e^{-\frac{3i \pi}4} \zeta \Big).
$$
Similarly, it follows from \eqref{eq:M_for_II_2} that
$$
(M_{\text{II}}^\infty)_{12}(\zeta) = - \frac{\alpha_0^{-1}}{k_2} e^{-\frac{\pi}4  \nu(z_0)} a e^{-\frac{i \pi}4} D_{-a-1} \Big( e^{-\frac{i \pi}4} \zeta \Big).
$$
Thus as $a = -i\nu(z_0) = O(\epsilon^2)$, we see from asymptotic expansion \eqref{eq:asymp_para_cylin} of $D_a(\cdot)$ that
$$
M_{\text{II}}^\infty(\zeta_1)\phi^{-\sigma}(\zeta_1) = \bpm 1&p_1 \\ p_2&1 \epm + O(\epsilon^2 t^{-1}),
$$
where
\beq
\label{eq:p_1_p_2}
p_1 = -\frac{ik_1}{\zeta_1} = \frac{ik_1}{\sqrt t (z_0 - i\mu_1)} \text{ and }
p_2 = \frac{ik_2}{\zeta_1} = -\frac{ik_2}{\sqrt t (z_0 - i\mu_1)}.
\eeq
Let $\delta(z)$ be given in \eqref{eq:delta} with $r_f(z)$.
Since $|r_f(z)| = |r(z)| = |r(-z)| = |r_f(-z)|$ for $z\in\rb$ by Remark \ref{rmk:symmetry_r_backlund},
we have $e^{l(z_1)} \delta(z_1)^{-2} = e^{-il_1} \hat{\delta}_1^{-2}$
where $l_1$, $\delta_1$ are given in \eqref{eq:l_j} and \eqref{eq:delta_j}, respectively.
Assembling the above results, we obtain
$$
\begin{aligned}
{\mathfrak{b}}(x,t) &\equiv m_f(z_1)e^{ixz_1 \sigma} \bpm 1 \\ \frac{-c_1 (t)}{z_1-\overline{z_1}} \epm \\
& =  \big( M_{\text{II}}^\infty(\zeta_1)\phi^{-\sigma}(\zeta_1) + O(\epsilon |q|^{-\frac 12} t^{-(\frac 12 + \kappa)}) \big)
\delta(z_1)^{\sigma_3} e^{ixz_1 \sigma}  \bpm 1 \\ \frac{-c_1 (t)}{z_1-\overline{z_1}} \epm\\
& = \upsilon_1^{-1} \delta(z_1) e^{\frac{\mu_1 x}2} \bigg[\bpm \upsilon_1^x + p_1  \\ p_2 \upsilon_1^x + 1 \epm + O(\epsilon |q|^{-\frac 12} t^{-({\frac 12}+\kappa)})\bigg].\\
\end{aligned}
$$
where $\upsilon_1$, $\upsilon_1^x$ are defined in \eqref{eq:upsilon_j}.
For $u_f(x,t)$, we have the expansion $ m_d^L(z) = M_d^L(\zeta) = I + (M_d^L)_1 \zeta^{-1} + o(\zeta^{-1}) = I + (M_d^L)_1 (\sqrt{t}z)^{-1} + o(z^{-1})$ as $z \to \infty$, which implies that $(m_d^L)_1 = \frac 1{\sqrt t}(M_d^L)_1$.
Hence, by \eqref{eq:u_c_localized} and \eqref{eq:k_j_def},
$$
\begin{aligned} u_f(x,t)
& = -i((m_d^L)_1)_{12}  + O(\epsilon|q|^{-\frac 12} t^{-({\frac 12}+\kappa)}) \\
& = -\frac{i}{\sqrt t}((M_d^L)_1)_{12}  + O(\epsilon|q|^{-\frac 12} t^{-({\frac 12}+\kappa)}) \\
& = -\frac {k_1}{\sqrt t} + O(\epsilon|q|^{-\frac 12} t^{-({\frac 12}+\kappa)}). \\
\end{aligned}
$$
It follows by the Darboux transformation that
\beq
\label{eq:u_one_zero_eq}
\bal  u(x,t)
& = u_f(x,t) + i(z_1 - \ovl{z_1})\mathcal{F}({\mathfrak{b}}(x,t)) \\
& = -\frac{k_1}{\sqrt t} - \frac {2\mu_1 (\upsilon_1^x + p_1 )(\ovl{p_2} \ovl{\upsilon_1^x} +1)}
{|\upsilon_1^x + p_1|^2 + |p_2\upsilon_1^x +1|^2}+O( \epsilon |q|^{-\frac 12} t^{-({\frac 12}+\kappa)}), \\
\eal
\eeq
where $\mathcal{F}$ is defined in \eqref{eq:backlund_ratio_ftn}.
This equation holds, in particular, for $x\geq 1/M$.

For $0\leq x \leq M$, $z_0 = O(t^{-1})$ and hence $\hat{\delta}_1 = 1+O(\epsilon^2 t^{-1})$.
By Remark \ref{rmk:symmetry_r_backlund}, $r_f(0) = r(0)=0$.
Since $r_f(z)= \int_0^z r_f'(s) \rd s$, we have $r_f(z_0) = O(\epsilon|q|^{-\frac 12} |z_0|^{\frac 12}) = O(\epsilon|q|^{-\frac 12} t^{-\frac12})$
and hence $k_1, k_2 = O(\epsilon|q|^{-\frac 12} t^{-\frac12})$.
Thus,
$$
u_f(x,t) = O(\epsilon|q|^{-\frac 12} t^{-({\frac 12}+\kappa)}), \ \ p_1, p_2 = O(\epsilon|q|^{-\frac 12} t^{-1}).
$$
and hence it follows by \eqref{eq:u_one_zero_eq} that
$$
\bal u(x,t)
& =  \frac {- 2\mu_1 \upsilon_1^x}{|\upsilon_1^x|^2+1}  + O(\epsilon |q|^{-\frac 12} t^{-({\frac 12}+\kappa)})\\
& = e^{i( \mu_1^2 t/2 + \rho_1 + l_1)} v_{\mu_1}(x) + O(\epsilon |q|^{-\frac 12} t^{-({\frac 12}+\kappa)}), \\
\eal
$$

Now, suppose that $a(z)$ has two zeros $z_1 = i\mu_1$ and $z_2=i\mu_2$.
We have $\beta=q$ from \eqref{eq:symm_scattering}.
Again by \eqref{eq:norming_const_real} and Theorem \ref{thm:evol_scatt_data}, the norming constants $\gamma_j(t)=\gamma(t;z_j)$, $j=1,2$ are given by
\beq
\label{eq:gamma_two_zero}
\gamma_j(t) = \gamma_j e^{iz_j^2 t/2}, \text{ where } \gamma_j = e^{-i\rho_j} \sqrt{\frac{\mu_j-q}{\mu_j+q}}, \text{ for some } \rho_j \in \rb.
\eeq
Set $\zeta_2 = \sqrt t (z_2 - z_0) = \sqrt t (i\mu_2 - z_0)$.
Recalling from the Appendix the action of repeated Darboux transformation, we set
$$
r_f(z) \equiv r(z) \frac{z-\overline{z_1}}{z-z_1}\frac{z-\overline{z_2}}{z-z_2}, \  \text{ and }
c_1(t) = \frac {\gamma_1(t)}{a_1'(z_1)}, \ c_2(t) = \frac {\gamma_2(t)}{a'(z_2)},
$$
where $a_1(z) = \frac{z-z_1}{z-\overline{z_1}} e^{-l(z)}$ and $a(z) = \frac{z-z_2}{z-\overline{z_2}}a_1(z)$.
Denote by $m_1(x,t,z)$ the solution of the normalized RHP with the reflection coefficient $r_1(z) \equiv r(z) \frac{z-\overline{z_2}}{z-z_2}$ and one pair of simple poles at $z=\pm i\mu_1$ whose norming constants are $\gamma_1(t), -\ovl{\gamma_1(t)}$, respectively.
We have
$$
\bal {\mathfrak{b}_1}(x,t)
& \equiv m_f(z_1) e^{iz_1 x \sigma} \bpm 1 \\ \frac{-c_1(t)}{z_1 - \ovl{z_1}} \epm \\
& = \hat\upsilon_1^{-1} \delta(z_1) e^{\frac{\mu_1 x}2} \bigg[\bpm \hat \upsilon_1^x+p_1  \\ p_2 \hat \upsilon_1^x + 1 \epm
+ O( \epsilon|q|^{-\frac 12} t^{-(\frac 12+\kappa)})\bigg]. \\
\eal
$$
where $\hat \upsilon_1$, $\hat \upsilon_1^x$ are given in \eqref{eq:upsilon_j}.
Write $\mf b_1 = ((\mf b_1)_1, (\mf b_1)_2)^T$ and define
$$\bal
& \lambda_1 \equiv \frac{z_2 - \ovl{z_1}}{z_2 - z_1}, \ \
\hat {{\mathfrak{b}}}_1 \equiv \bpm ({\mathfrak{b}}_1)_1 & -(\ovl{{\mathfrak{b}}_1})_2 \\ ({\mathfrak{b}}_1)_2 & (\ovl{{\mathfrak{b}}_1})_1 \epm, \\
& \hat \mu \equiv \bpm z_2-z_1 & 0 \\ 0 & z_2-\ovl{z_1}\epm = (z_2-z_1) \bpm 1 & 0 \\ 0 & \lambda_1 \epm, \\
\eal $$
Let $\psi_1(x,t,z) = m_1(x,t,z) e^{ixz\sigma}$.
It follows from the Darboux transformation (see \eqref{eq:darboux}) that
\beq
\label{eq:mf_b_2}
\bal {\mathfrak{b}_2}
& \equiv \psi_1(z_2) \bpm 1 \\ \frac{-c_2(t)}{z_2 - \ovl{z_2}} \epm \\
& = \hat {{\mathfrak{b}}}_1 \hat\mu \hat {{\mathfrak{b}}}_1^{-1} \big(m_f(z_2)e^{ixz_2\sigma}\big) \hat\mu^{-1}
\bpm 1 \\ \frac{-c_2(t)}{z_2 - \ovl{z_2}} \epm \\
&=  \hat {{\mathfrak{b}}}_1 \hat\mu \hat {{\mathfrak{b}}}_1^{-1} \big(m_f(z_2) \delta(z_2)^{-\sigma_3} \big)
\bpm \hat\upsilon_2^x \\ 1 \epm \frac{e^{\frac{\mu_2 x}2} \delta(z_2) }{\hat\upsilon_2(z_2 - z_1)}. \\
\eal
\eeq
where $\hat \upsilon_2$, $\hat \upsilon_2^x$ are given in \eqref{eq:upsilon_j}.
As $z_2=-iq+O(\epsilon^2 q)$ by Lemma \ref{lem:zeros_a}, we have
$$
\bigg\|\frac {r_f(\cdot)}{\cdot-z_2}\bigg\|_{H^{1,0}} \leq c \epsilon |q|^{-\frac 32}.
$$
Now set $\tau = |q|\sqrt t$ and assume $t \gg q^{-2}$.
Then, $\tau \gg 1$. It follows by \eqref{eq:K_infty} that
$$\phi^{\text{ad} \sigma}(K^\infty)(\zeta_2) \Phi^{-1}(z_2) =
\left\{\bal
& I+O(\epsilon e^{-\mu_2 x}) = I+O(\epsilon e^{-\tau^2}), \text{ if } x\geq \mu_2 t, \\
& I \qquad\qquad\qquad\qquad\qquad\qquad, \text{ otherwise}.\\
\eal\right.
$$
As $|\zeta_2|^{-2} = (t (z_0^2 + \mu_2^2))^{-1} = O(\tau^{-2})$, it follows by \eqref{eq:asymp_para_cylin} again that
\beq
\label{localRHP_approx}
M_{\text{II}}^\infty(\zeta_2)\phi^{-\sigma}(\zeta_2) = \bpm 1&p_3 \\ p_4&1 \epm + O(\epsilon^2 (z_0^2 + q^2)^{-1} t^{-1}),
\eeq
where
\beq
\label{eq:p_3_p_4}
p_3 = -\frac{ik_1}{\zeta_2} = \frac{ik_1}{\sqrt t (z_0 - i\mu_2)}, \ \
p_4 = \frac{ik_2}{\zeta_2} = -\frac{ik_2}{\sqrt t (z_0 - i\mu_2)},
\eeq
Thus, using Lemma \ref{lem:small_r_estimate} again, we see that
$$\bal
& m_f(z_2) \delta(z_2)^{-\sigma_3} \\
& \qquad = M_{\text{II}}^\infty(\zeta_2)\phi^{-\sigma}(\zeta_2) \phi^{\text{ad} \sigma}(K^\infty)(\zeta_2) \Phi^{-1}(z_2)
+ O(\epsilon |q|^{-\frac 32} t^{-(\frac 12 + \kappa)}) \\
& \qquad= \bpm 1&p_3 \\ p_4&1 \epm + O(\epsilon e^{-\tau^2} + \epsilon^2 (z_0^2 + q^2)^{-1} t^{-1} + \epsilon |q|^{-\frac 32} t^{-(\frac 12 + \kappa)})\\
\eal
$$
where $\epsilon e^{-\tau^2}$ is dropped from the error term if $x< \mu_2 t$.
Inserting
$\hat {{\mathfrak{b}}}_1 \hat\mu \hat {{\mathfrak{b}}}_1^{-1} =  \frac {z_2-z_1}{|(\mf b_1)_1|^2 +|(\mf b_1)_2|^2}
\bsm |(\mf b_1)_1|^2 + \lambda_1 |(\mf b_1)_2|^2 & (\mf b_1)_1 \ovl{(\mf b_1)_2}(1-\lambda_1)
\\  \ovl{(\mf b_1)_1} (\mf b_1)_2(1-\lambda_1) & \lambda_1 |(\mf b_1)_1|^2 + |(\mf b_1)_2|^2 \esm$
into \eqref{eq:mf_b_2} and assembling the above results, we obtain
\beq
\label{eq:mf_b_2_2}
\bal
 {\mathfrak{b}_2}
&  = e^{\frac{\mu_2 x}2} \delta(z_2) \hat\upsilon_2  \bigg[\bpm s_1\\s_2 \epm \\
& \qquad + O(\epsilon e^{-\tau^2} + \epsilon^2 (z_0^2 + q^2)^{-1} t^{-1} + \epsilon |q|^{-\frac 32} t^{-(\frac 12 + \kappa)}) \bigg], \\
\eal
\eeq
where $s_1$ and $s_2$ are given, after some calculations, by \eqref{eq:s_j}.
Again by the Darboux transformation, it follows that
$$
\bal
& u(x,t) \\
& = u_f(x,t) + i(z_1 - \ovl{z_1})\mathcal{F}({\mathfrak{b}_1}(x,t))  + i(z_2 - \ovl{z_2})\mathcal{F}({\mathfrak{b}_2}(x,t)),\\
& = -\frac{k_1}{\sqrt t} - \frac {2\mu_1 (\hat\upsilon_1^x + p_1 )(\ovl{p_2} \ovl{\hat\upsilon_1^x} +1)}
{|\hat\upsilon_1^x + p_1|^2 + |p_2\hat\upsilon_1^x +1|^2} -  \frac { 2\mu_2 s_1 \ovl{s_2}} {|s_1|^2 + |s_2|^2}\\
& \qquad\qquad  + O(\epsilon qe^{-\tau^2} + \epsilon^2 q(z_0^2 + q^2)^{-1} t^{-1} + \epsilon |q|^{-\frac 12} t^{-(\frac 12 + \kappa)}). \\
\eal
$$
This equation holds, in particular, for $x\geq 1/M$.

If $0\leq x \leq M$, we have $z_0 = O(t^{-1})$ and $k_1, k_2 = O(\epsilon|q|^{-\frac 12} t^{-\frac 12})$ as above.
Hence $p_1, p_2 = O(\epsilon|q|^{-\frac 12} t^{-1})$ and $p_3, p_4 =O(\epsilon|q|^{-\frac 32} t^{-1})$.
Thus,
$$
\bal  u(x,t)
& = - \frac {2\mu_1 \hat\upsilon_1^x}{|\hat\upsilon_1^x|^2+1} -  \frac { 2\mu_2 s_1 \ovl{s_2}} {|s_1|^2 + |s_2|^2}
 + O(\epsilon |q|^{-\frac 12} t^{-(\frac 12 + \kappa)}), \\
& = e^{i(\mu_1^2 t/2 + \rho_1 + l_1)} \mu_1 \sech (\mu_1 x - \tanh^{-1}(q/\mu_1)) \\
& \quad - \frac {2\mu_2 (\hat\upsilon_2^x - s_0)(1+ \hat \upsilon_1^x \ovl{s_0})}
    {|\hat\upsilon_2^x - s_0|^2  + |1+ \hat \upsilon_1^x \ovl{s_0}|^2}
    + O(\epsilon |q|^{-\frac 12} t^{-(\frac 12 + \kappa)}), \\
\eal
$$
where $s_0$ is given in \eqref{eq:s_0}.
This completes the proof of Theorem \ref{T:main}.
\end{proof}

\section{Proof of Theorem \ref{T:main2}}
\begin{lemma}
\label{lem:real_norming_const}
Let $u(x)$ be a real-valued function on the half line $x\geq0$ and
let $u^e(x)$ be the B\"acklund extension of $u(x)$ with respect to $q$.
Let $\psi^+(x,z)$ be the ZS-AKNS solution associated with $u^e$.
Then,
\beq
\label{eq:psi_+_real}
\psi^+(x,z) = \ovl{\psi^+(x,-z)}, \ \ x\geq0, \ \ z\in \rb.
\eeq

Suppose that the scattering function $a(z)$ of $u^e(x)$ has n simple zeros in $\cb^+$.
If $z=i\mu\in i\rb^+$ is a zero of $a(z)$, the corresponding norming constant $\gamma(i\mu)$ of $z=i\mu$ is real.
For $u(x)=v_{\mu_0} (x) + qw(x)$, $w$ real, $q\ll 1$, in particular, we have
$$
	\gamma(i\mu) = \sqrt{\frac{\mu - \beta}{\mu+\beta}}, \ \ \beta=(-1)^n q.
$$
\end{lemma}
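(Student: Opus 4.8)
The plan is to establish the conjugation symmetry \eqref{eq:psi_+_real} first, and then read off the reality, and finally the value, of $\gamma(i\mu)$ from the explicit formula \eqref{eq:gamma_z_k} for the scattering data of a B\"acklund extension. The key point for \eqref{eq:psi_+_real} is that on $\rb^+$ the B\"acklund extension $u^e$ coincides with the given real-valued $u$, so the potential $Q^e = \bsm 0 & u^e \\ -\ovl{u^e} & 0 \esm$ satisfies $\ovl{Q^e(x)} = Q^e(x)$ for $x\geq 0$. Write $\psi^+ = (\psi_1^+,\psi_2^+)$ with $\psi_1^+ e^{-ixz/2}\to e_1$ as $x\to+\infty$. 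For real $z$, conjugating the equation $\partial_x \psi_1^+ = (iz\sigma + Q^e)\psi_1^+$ shows that on $[0,\infty)$ the vector $\ovl{\psi_1^+(x,z)}$ solves $\partial_x\phi = (-iz\sigma + Q^e)\phi$ with $\ovl{\psi_1^+(x,z)}\,e^{ixz/2}\to e_1$ as $x\to+\infty$; but $\psi_1^+(x,-z)$ solves the same equation on $[0,\infty)$ with the same asymptotics at $+\infty$, so by uniqueness of the solution normalized at $+\infty$ (a Volterra argument, exactly as in the construction of $g$ in \eqref{eq:tail_behavior_g}), $\psi_1^+(x,-z) = \ovl{\psi_1^+(x,z)}$ for $x\geq 0$, $z\in\rb$. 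The matching identity for $\psi_2^+$ follows from the built-in ZS--AKNS symmetry \eqref{eq:symm_ZS_sol}, $\psi_1^+ = i\sigma_2\ovl{\psi_2^+(\cdot,\ovl z)}$: for real $z$ this gives $\psi_2^+(x,z) = -i\sigma_2\ovl{\psi_1^+(x,z)}$, and combining with the first-column identity yields $\psi_2^+(x,z) = \ovl{\psi_2^+(x,-z)}$ for $x\geq 0$. This proves \eqref{eq:psi_+_real}.

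Next I would deduce the reality of $\gamma(i\mu)$. In the notation of Proposition \ref{prop:backlund_scattering}, $\bsm A(z)\\ B(z)\esm = g(0,z) = \psi_1^+(0,z)$ for $u^e$. By \eqref{eq:psi_+_real} and analytic continuation (both sides extend analytically to $\ovl{\cb^+}$), $A(z) = \ovl{A(-\ovl z)}$ and $B(z) = \ovl{B(-\ovl z)}$ on $\ovl{\cb^+}$. Since $-\ovl{i\mu} = i\mu$ for $\mu$ real, this forces $A(i\mu),B(i\mu)\in\rb$, and at $z_k = i\mu$ also $\ovl{B(-\ovl{z_k})} = B(i\mu)$, $\ovl{A(-\ovl{z_k})} = A(i\mu)$. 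As $\beta = (-1)^n q\in\rb$, the prefactor $\frac{z_k-i\beta}{z_k+iq} = \frac{\mu-\beta}{\mu+q}$ (or $\frac{z_k-i\beta}{z_k-iq} = \frac{\mu-\beta}{\mu-q}$) in \eqref{eq:gamma_z_k} is real, so $\gamma(i\mu)$, being equal to $\frac{\mu-\beta}{\mu+q}\frac{A(i\mu)}{B(i\mu)}$ or $\frac{\mu-\beta}{\mu-q}\frac{B(i\mu)}{A(i\mu)}$, is real.

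For the last assertion I would combine this with the $q$-symmetry of $u^e$. By Proposition \ref{prop:backlund_ext} the extension $u^e$ is $q$-symmetric, so the third relation in \eqref{eq:symm_scattering} (Proposition \ref{p:symm_scattering}) applies: at $z_k = i\mu$, where $-\ovl{z_k} = i\mu = z_k$, it reads $|\gamma(i\mu)|^2 = \gamma(i\mu)\ovl{\gamma(i\mu)} = \frac{i\mu-i\beta}{i\mu+i\beta} = \frac{\mu-\beta}{\mu+\beta}$, and the same proposition guarantees $z_k\neq\pm i\beta$, so this is a well-defined positive quantity. Since $\gamma(i\mu)$ is real, $\gamma(i\mu) = \pm\sqrt{(\mu-\beta)/(\mu+\beta)}$, and it remains to pin the sign as $+$. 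Here I would use $q\ll 1$ and $u = v_{\mu_0}+qw$: for the zero $\mu_1\sim\mu_0$, Lemmas \ref{lem:perturbed_potential} and \ref{lem:ZS_H_1_0} show $A(i\mu_1),B(i\mu_1)$ are $O(1)$-close to the explicit soliton values $A_0(i\mu_1),B_0(i\mu_1)$ of \eqref{A_0_B_0} (from \eqref{eq:m_0_1_plus}), which are strictly positive, while $\frac{\mu_1-\beta}{\mu_1+q}>0$ because $\mu_1>|q|$; hence $\gamma(i\mu_1)>0$. For the bifurcating zero $\mu_2\sim -q$ (present when $n=2$, i.e. $q<0$, $\beta=q$), one uses the second branch of \eqref{eq:gamma_z_k}, where the prefactor collapses to $\frac{\mu_2-q}{\mu_2-q}=1$ and $\gamma(i\mu_2) = B(i\mu_2)/A(i\mu_2)$; here $\mu_2>|q|$ is again forced by $q$-symmetry, and positivity of the ratio is extracted from the finer expansion of $A,B$ at the zeros carried out exactly as in the proof of Lemma \ref{lem:zeros_a}. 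I expect this sign determination — especially for $z_2\sim -iq$, where $A(i\mu_2)$ is itself small so the sign is not visible at leading order — to be the only delicate point; everything else is a direct conjugation and analytic-continuation argument built on results already in hand.
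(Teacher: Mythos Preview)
Your approach is essentially the paper's. The paper proves \eqref{eq:psi_+_real} for the full matrix $\psi^+$ in one stroke (conjugate the ODE on $x\ge0$, match the normalization at $+\infty$, invoke uniqueness), and then repeats the identical conjugation argument directly at $z=i\mu$ to obtain $\psi^+(0,i\mu)\in\rb^{2\times2}$, after which it simply cites \eqref{eq:symm_scattering} and \eqref{eq:gamma_z_k}. Your column-by-column version and your analytic continuation of \eqref{eq:psi_+_real} to $z=i\mu$ are equivalent rewordings of this.

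On the sign: the paper is just as terse as you feared---it does not spell out why the positive square root is selected, so your continuity/perturbation argument for $\mu_1$ already says more than what is written there. One correction to your sketch for $\mu_2$: under the real symmetry you have $\ovl{A(-\ovl{z_k})}=A(z_k)$ and $\ovl{B(-\ovl{z_k})}=B(z_k)$, so the second branch of \eqref{eq:gamma_z_k} (which requires $A(z_k)=0$) would read $B(i\mu_2)/A(i\mu_2)=B(i\mu_2)/0$ and is never the relevant one at a purely imaginary zero; you must use the first branch $\gamma(i\mu_2)=\frac{\mu_2-\beta}{\mu_2+q}\,\frac{A(i\mu_2)}{B(i\mu_2)}$ (with $\beta=q$ this is $\frac{A(i\mu_2)}{B(i\mu_2)}$), and read the sign from the expansions of $A,B$ at $i\mu_2$ as in Lemma~\ref{lem:zeros_a}.
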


\begin{proof}
Let $Q(x)=\bsm0&u(x) \\-\ovl{u(x)}&0\esm$.
As $u(x), x\geq 0$, is real, we see that
$$
\ovl{\psi^+_x(x,-z)} = (iz \sigma + Q) \ovl{\psi^+(x,-z)}, \ \ x\geq 0, \ \ z\in \rb.
$$
and $\ovl{\psi^+(x,-z)}e^{-ixz\sigma} = \ovl{\psi^+(x,-z)e^{-ix(-z)\sigma}} \to I$ as $x\to\infty$,
which implies \eqref{eq:psi_+_real}.

For the norming constant $\gamma(i\mu)$, as $\mu$ and $u(x)$, $x\geq0$, are real, we have
$$
\ovl{\psi^+_x(x,i\mu)} = (i(i\mu) \sigma + Q) \ovl{\psi^+(x,i\mu)}, \ \ x\geq 0,
$$
and $\ovl{\psi^+(x,i\mu)} e^{-ix(i\mu)\sigma} \to \text{I}$ as $x \to +\infty$,
which implies that $\psi^+(x,i\mu) = \ovl{\psi^+(x,i\mu)}$, $x\geq0$.
Therefore, $\psi^+(0,i\mu)$ is real
and hence the result follows from \eqref{eq:symm_scattering} and \eqref{eq:gamma_z_k}.
\end{proof}

Define
$$
\bal
& E_+^0(z) \equiv \int_z^\infty e^{-is^2/2} \rd s,
\ \ E_+(z) \equiv \int_z^\infty e^{-its^2/2} \rd s = \frac 1{\sqrt t} E_+^0(\sqrt tz),\\
\eal
$$
and
$$
\bal
& E_-^0(z) \equiv \int_{-^\infty}^z e^{-is^2/2} \rd s,
\ \ E_-(z) \equiv \int_{-^\infty}^z e^{-its^2/2} \rd s = \frac 1{\sqrt t} E_-^0(\sqrt tz),\\
\eal
$$
Note that $E_0 \equiv \int_{-^\infty}^\infty e^{-is^2/2} \rd s = \sqrt{2 \pi} e^{-\frac {i\pi}4}$.
\begin{lemma}
\label{lem:g_E_+}
For any $g \in L^2(\rb^+)$, $t\geq1$,
$$
	\int_0^\infty g(z)E_+(z) \rd z = O\Big( \frac {\|g\|_{L^2}}{t^{3/4}} \Big).
$$
\end{lemma}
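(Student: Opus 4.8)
The plan is to reduce the oscillatory integral to a Cauchy--Schwarz estimate together with a $t$-independent $L^2$-bound. First I would apply Cauchy--Schwarz on $\rb^+$,
\[
\left| \int_0^\infty g(z) E_+(z)\, \rd z \right| \le \|g\|_{L^2(\rb^+)}\, \|E_+\|_{L^2(\rb^+)},
\]
so that it suffices to show $\|E_+\|_{L^2(\rb^+)} \le c\,t^{-3/4}$. Using the scaling relation $E_+(z) = t^{-1/2}E_+^0(\sqrt t\,z)$ noted above and the change of variables $w = \sqrt t\,z$,
\[
\|E_+\|_{L^2(\rb^+)}^2 = \int_0^\infty \tfrac 1t \bigl|E_+^0(\sqrt t\,z)\bigr|^2\, \rd z = t^{-3/2}\int_0^\infty \bigl|E_+^0(w)\bigr|^2\, \rd w,
\]
so everything reduces to the claim that $C \equiv \int_0^\infty |E_+^0(w)|^2\, \rd w$ is finite.

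To establish that, I would split at $w=1$. On $[0,1]$ the function $E_+^0$ is continuous, hence bounded — indeed $E_+^0(0)=\int_0^\infty e^{-is^2/2}\,\rd s$ is the convergent Fresnel integral — so $\int_0^1 |E_+^0|^2 < \infty$ trivially. For $w \ge 1$ I would integrate by parts once, writing $e^{-is^2/2} = (-is)^{-1}\tfrac{\rd}{\rd s}e^{-is^2/2}$, to get
\[
E_+^0(w) = \frac{e^{-iw^2/2}}{iw} - \int_w^\infty \frac{e^{-is^2/2}}{is^2}\, \rd s ,
\]
the boundary term at $s=\infty$ vanishing since $|e^{-is^2/2}|=1$ and $s^{-1}\to 0$. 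Thus $|E_+^0(w)| \le w^{-1} + \int_w^\infty s^{-2}\,\rd s = 2w^{-1}$, which is square-integrable on $[1,\infty)$. Combining the two ranges gives $C<\infty$, whence $\|E_+\|_{L^2(\rb^+)} = \sqrt C\, t^{-3/4}$ and the lemma follows.

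There is no real obstacle here: the only point needing any care is the decay estimate $E_+^0(w)=O(w^{-1})$ as $w\to\infty$, and even the hypothesis $t\ge 1$ is not used. As an alternative to the integration by parts one could invoke van der Corput's lemma for $\int_w^\infty e^{-is^2/2}\,\rd s$ (the phase $s^2/2$ having derivative $\ge w$ on $[w,\infty)$), which yields the same $O(w^{-1})$ bound.
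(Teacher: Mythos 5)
Your proof is correct and rests on exactly the same ingredients as the paper's: the scaling $E_+(z)=t^{-1/2}E_+^0(\sqrt t\,z)$, Cauchy--Schwarz, and the decay $E_+^0(w)=O(w^{-1})$ obtained by one integration by parts. The only (cosmetic) difference is that you apply Cauchy--Schwarz once globally after observing $E_+^0\in L^2(\rb^+)$, whereas the paper splits the domain at $z=1/\sqrt t$ and estimates the two pieces separately; both yield the same $t^{-3/4}$ bound, and your version even makes explicit the integration by parts that the paper only asserts.
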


\begin{proof}
As $g \in L^2(\rb^+)$, it follows that
$$
\bal
& \Bigg| \int_0^\infty g(z)E_+(z) \rd z \Bigg|
=\Bigg| \int_0^{1/\sqrt{t}} + \int_{1/\sqrt{t}}^\infty \  g(z) \frac {E_+^0(\sqrt{t}z) }{\sqrt{t}} \rd z \Bigg| \\
&\qquad \qquad \leq \frac c{\sqrt{t}} \Bigg[ \|g\|_{L^2} \cdot \frac 1{t^{1/4}} +  \|g\|_{L^2} \cdot \Big(\int_{1/\sqrt{t}}^\infty |E_+^0(\sqrt{t}z)|^2 \rd z \Big)^{{\frac 12}} \Bigg]. \\
\eal
$$
Since $E_+^0(z) = \frac 1{iz} e^{-iz^2/2} + O\Big(\frac 1{z^2}\Big)$ as $z \to \infty$, we have
$$
\int_{1/\sqrt{t}}^\infty |E_+^0(\sqrt{t}z)|^2 \rd z = \frac 1{\sqrt{t}} \int_1^\infty |E_+^0(u)|^2 \rd u = O\Big(\frac 1{\sqrt{t}}\Big).
$$
\end{proof}

\begin{lemma}
\label{lem:stationary_phase_estimate}
Let $|q| \ll 1 $, $\tau = |q| \sqrt t$ and $z_0 \in \rb$. Suppose that
$$
\bal
& g(z)=g_1(z) + h(z)g_2(z), \\
& h(z)=\prod_{k=1}^n \frac{i q_k}{z+ iq_k}, \ \ q_k = \pm q, \ \ 1\leq k \leq n, \text{ and }\\
& \|g_i\|_{H^{1,0}(\rb)} \leq c |q|, \ \ i=1, 2. \\
\eal
$$
Then, for $C>1$,
$$\bal
& \int_{-\infty}^\infty g(z) e^{-it(z-z_0)^2/2} \rd z\\
& \qquad = \left\{
\bal
& \frac {g_1(z_0)}{\sqrt t}  E_0 + O\Big( \frac {q}{\sqrt{t}} \big(|q|^{\frac 12} + t^{-\frac 14} + \tau |\log \tau| \big)\Big),
\ \ \tau \leq \frac 12,\\
& O(q^2), \ \ \tau\geq 1/C>0. \\
\eal
\right. \\
\eal
$$
\end{lemma}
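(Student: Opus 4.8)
The plan is to treat the two pieces $g_1(z)$ and $h(z)g_2(z)$ separately and to combine standard stationary-phase analysis at the point $z_0$ with a residue/pole analysis coming from the factor $h(z)$. For the first piece, I would write
\[
\int_{-\infty}^\infty g_1(z) e^{-it(z-z_0)^2/2}\,\rd z
= g_1(z_0)\int_{-\infty}^\infty e^{-it(z-z_0)^2/2}\,\rd z + \int_{-\infty}^\infty \big(g_1(z)-g_1(z_0)\big)e^{-it(z-z_0)^2/2}\,\rd z,
\]
so the first term is exactly $g_1(z_0)t^{-1/2}E_0$ after the change of variables $z\mapsto \sqrt t(z-z_0)$. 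For the second term, I would exploit $g_1 \in H^{1,0}$, so $g_1(z)-g_1(z_0)$ vanishes at $z=z_0$; splitting the oscillatory integral as a Cauchy-type operator applied to $g_1(\cdot)-g_1(z_0)$ and invoking the $t^{-3/4}$-type estimates of the kind used in Section~\ref{sec:longtime} (cf.\ \eqref{eq:osci_line}, \eqref{eq:osci_line_0} and Lemma~\ref{lem:g_E_+}), this contributes $O(\|g_1\|_{H^{1,0}}t^{-3/4})=O(q\,t^{-3/4})$. This already produces the $q t^{-1/2}t^{-1/4}$ term in the claimed error. When $\tau\ge 1/C$, i.e.\ $t\ge C^2 q^{-2}$, the leading term $g_1(z_0)t^{-1/2}E_0$ is itself $O(|q|t^{-1/2})=O(q^2/\tau\cdot \tau)=O(q^2)$ (using $t^{-1/2}\le C|q|$), so the whole first-piece contribution collapses into $O(q^2)$.

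The more delicate piece is $\int h(z)g_2(z)e^{-it(z-z_0)^2/2}\,\rd z$. Here $h(z)=\prod_{k=1}^n \frac{iq_k}{z+iq_k}$ is holomorphic and uniformly bounded in the closed lower half-plane $\ovl{\cb^-}$ (each pole $-iq_k$ lies in $\cb^+$ when $q_k>0$, and when $q_k<0$ the pole is at $-iq_k\in\cb^-$ — I would organize the factors by the sign of $q_k$). Write $h=h_+h_-$ where $h_+$ collects the factors with poles in $\cb^+$ and $h_-$ those with poles in $\cb^-$; then $h_+g_2$ can be deformed downward. The idea is to contour-shift the integral off $\rb$ (using that $e^{-it(z-z_0)^2/2}$ decays in the directions $\arg(z-z_0)\in(-\pi/4,0)$) and pick up the residues at the poles $-iq_k\in\cb^-$. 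Since each residue carries a factor $h'$ evaluated near $-iq_k$ and $g_2$ is only $H^{1,0}$, I expect the residue contributions to be controlled by $|g_2(-iq_k)|=O(|q|)$ times an oscillatory factor $e^{-it(-iq_k-z_0)^2/2}=e^{itq_k^2/2}e^{\mp q_k z_0 t}$, whose modulus is $\le 1$; a careful accounting produces terms of size $O(q\,\tau|\log\tau|)$ in the regime $\tau\le\tfrac12$ (the $|\log\tau|$ arising from the near-coincidence of a pole with the stationary point $z_0$ when $|z_0|$ and $|q|$ are comparable, cf.\ the logarithmic losses typical of confluent stationary-phase/pole situations), and $O(q^2)$ in the regime $\tau\ge 1/C$ (where the Gaussian weight at the pole is genuinely exponentially small or the $t^{-1/2}$ gain wins). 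The remaining smooth part $h_- g_2$ together with the part of $h_+ g_2$ that survives deformation is handled as in the first piece, again using $\|g_2\|_{H^{1,0}}\le c|q|$ and the oscillatory estimates, giving $O(q\,t^{-1/2})$ which is absorbed.

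I expect the main obstacle to be the bookkeeping in the regime where $z_0$ is comparable to $q$ (so $\tau=|q|\sqrt t$ is order one near the transition): there the stationary point $z_0$ and a pole $-iq_k$ are both at distance $O(|q|)$ from each other and from the real axis, so neither pure stationary phase nor pure residue extraction is clean, and one must track the interaction carefully to extract the sharp $\tau|\log\tau|$ loss rather than a worse power. The technical device here is the same rational-approximation trick used earlier in the paper: replace $g_i$ near $z_0$ by the rational model $[g_i](z)=g_i(z_0)(1+i(z-z_0))^{-2}$ as in \eqref{eq:bracket}, compute the model integral exactly in terms of $E_0$ and $E_\pm$, and estimate the remainder by $H^{1,0}$-bounds via Lemma~\ref{lem:g_E_+} and the contour estimates. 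Once the model computation is done, matching it against the claimed formula (noting $E_0=\sqrt{2\pi}e^{-i\pi/4}$) and collecting all error contributions $O(q|q|^{1/2}t^{-1/2})$, $O(qt^{-3/4})$, $O(q\tau|\log\tau|t^{-1/2})$ for $\tau\le\tfrac12$ and $O(q^2)$ for $\tau\ge 1/C$ finishes the proof.
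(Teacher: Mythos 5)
Your treatment of the $g_1$ piece is fine and essentially equivalent to the paper's (the paper integrates by parts against $E_\pm(z-z_0)$ rather than subtracting $g_1(z_0)$, but both yield $g_1(z_0)t^{-1/2}E_0+O(qt^{-3/4})$ via Lemma \ref{lem:g_E_+}). The problem is the second piece. Your central device there --- deforming the contour of $\int h(z)g_2(z)e^{-it(z-z_0)^2/2}\,\rd z$ into the lower half-plane and extracting residues at the poles $-iq_k$ --- is not available as stated, because $g_2$ is only an $H^{1,0}(\rb)$ function on the real line: the quantity $g_2(-iq_k)$ is undefined and the integrand has no analytic continuation off $\rb$. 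You acknowledge that $g_2$ is ``only $H^{1,0}$'' but then use $|g_2(-iq_k)|=O(|q|)$ anyway; to salvage the deformation you would have to first split $g_2$ into Hardy-space components or replace it by a rational/analytic model and control the error, and that substitute argument --- which is where all the work lies --- is only gestured at in your final paragraph. Moreover, your heuristic that the $\tau|\log\tau|$ loss comes from ``near-coincidence of a pole with the stationary point $z_0$'' mislocates the mechanism: the estimate is uniform in $z_0$, and the logarithm has nothing to do with $z_0$ approaching a pole.

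The paper's actual route stays entirely on the real line and is more elementary. After the same integration by parts, the dangerous term is $\mathrm{II}=\int h'(z)g_2(z)E_\pm(z-z_0)\,\rd z$. Rescaling $z=|q|u$ (so $h'(z)\,\rd z = h_1'(u)\,\rd u$ with $|h_1'(u)|\lesssim (1+u^2)^{-1}$), one splits off $g_2(|q|u)-g_2(0)$ (controlled by $|qu|^{1/2}\|g_2'\|_{L^2}$, producing the $|q|^{1/2}$ term) and then estimates
$$
\int h_1'(u)\bigl(E_+^0(\tau u-\sqrt t z_0)-E_+^0(-\sqrt t z_0)\bigr)\rd u
$$
using $|E_+^0(\tau u-\sqrt t z_0)-E_+^0(-\sqrt t z_0)|\le\min\{c,\tau|u|\}$; the integral $\tau\int_0^{1/\tau}\frac{u}{u^2+1}\rd u$ is the sole source of $\tau|\log\tau|$. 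The boundary terms $h(z_0)\bigl(g_2(z_0)E_\pm^0(0)-g_2(0)E_\pm^0(-\sqrt t z_0)\bigr)$ are $O(|q|^{3/2})$ because $|h(z_0)|\le c|q|/(|z_0|+|q|)$ while $|g_2(z_0)-g_2(0)|\le |z_0|^{1/2}\|g_2'\|_{L^2}$. For $\tau\ge 1/C$ one simply uses $t^{-1/2}\le C|q|$ and $\int|h_1'|<\infty$ to get $O(q^2)$. No contour deformation, no residues, and no rational approximation $[\,\cdot\,]$ are needed. As written, your proposal has a genuine gap at its key step.
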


\begin{proof}
We first consider $z>z_0$. By integrating by parts and using Lemma \ref{lem:g_E_+}, we see that
$$
\bal
& \int_{z_0}^\infty g_1(z) e^{-it(z-z_0)^2/2} \rd z \\
& \qquad = -g_1(z)E_+(z-z_0) \Big|_{z_0}^\infty + \int_{z_0}^\infty g'_1(z)E_+(z-{z_0}) \rd z \\
& \qquad = \frac {g_1(z_0)}{\sqrt t} E_+^0(0) + O\Big( \frac {q}{t^{3/4}} \Big).\\
\eal
$$
Similarly, we have
$$
\bal
& \int_{z_0}^\infty h(z) g_2(z) e^{-it(z-z_0)^2/2} \rd z \\
& \qquad = -h(z)g_2(z)E_+(z-{z_0}) \Big|_{z_0}^\infty + \int_{z_0}^\infty h(z) g'_2(z) E_+(z-{z_0}) \rd z  \\
& \qquad\qquad + \int_{z_0}^\infty h'(z) g_2(z) E_+(z-{z_0}) \rd z \\
& \qquad= \frac {g_2(z_0)}{\sqrt t} h(z_0) E_+^0(0) + \text{I} + \text{II}. \\
\eal
$$
Since $\|h \cdot g'_2\|_{L^2} \leq c |q|$, $\text{I} = O\big( \frac {q}{t^{3/4}} \big)$ by Lemma \ref{lem:g_E_+}.
Let $\hat{z_0}=z_0/|q|$ and $h_1(u) = \prod_{k=1}^n \frac{i q_k/|q|}{u+ iq_k/|q|}$.
By change of variable $z=|q|u$,
$$
\bal \text{II}
& = \frac 1{\sqrt t} \int_{\hat{z_0}}^\infty h'_1(u) \big(g_2(|q|u)-g_2(0)\big) E_+^0(\tau u - \sqrt t z_0) \rd u \\
& \quad + \frac {g_2(0)}{\sqrt t} \int_{\hat{z_0}}^\infty h'_1(u) \big(E_+^0(\tau u - \sqrt t z_0)  - E_+^0(- \sqrt t z_0)\big)\rd u\\
& \quad + \frac {g_2(0)}{\sqrt t} \big(-h_1(\hat{z_0}) \big) E_+^0(- \sqrt t z_0) \\
& = \text{II}_1 + \text{II}_2 - \frac {g_2(0)}{\sqrt t} h({z_0}) E_+^0(- \sqrt t z_0). \\
\eal
$$
As $\big|g_2(|q|u)-g_2(0)\big| = \big|\int_{0}^{|q|u} g'_2\big| \leq |qu|^{{\frac 12}} \|g'_2\|_{L^2}$,
$$
|\text{II}_1| \leq \frac {|q|^{{\frac 12}} \|g_2'\|_{L^2}} {\sqrt t}  \int_{\hat{z_0}}^\infty |h'_1(u)| |u|^{{\frac 12}} \rd u
= O\Big( \frac {|q|^{3/2}}{\sqrt t}  \Big).
$$

If $\tau \leq \frac 12$, $|E_+^0(\tau u - \sqrt t z_0) - E_+^0(- \sqrt t z_0)|\leq \min\{c,\tau |u|\}$, and hence
$$\bal |\text{II}_2|
& \leq \frac {c|g_2(0)|}{\sqrt t} \Bigg[ \int_{-\infty}^{-1/\tau} |h'_1(u)| +\int_{-1/\tau}^{1/\tau} |h'_1(u)| \tau |u| + \int_{1/\tau}^{\infty} |h'_1(u)|  \Bigg] \\
& \leq \frac {c |g_2(0)|}{\sqrt t} \Bigg[ \tau \int_0^{1/\tau} \frac{u}{u^2+1} +   \int_{1/\tau}^{\infty} \frac 1{u^2}\Bigg] = O\Big(\frac q{\sqrt t} \tau |\log \tau| \Big).\\
\eal
$$
Assembling the above inequalities, we obtain
$$
\bal
& \int_{z_0}^\infty g(z)  e^{-it(z-z_0)^2/2} \rd z \\
& \quad = \frac {g_1(z_0)}{\sqrt t} E_+^0(0) + \frac {h(z_0)}{\sqrt t} \big(g_2(z_0) E_+^0(0) - g_2(0) E_+^0(-\sqrt t z_0)\big) \\
& \quad \qquad + O\Big( \frac {q}{\sqrt{t}} \big(|q|^{\frac 12} + t^{-\frac 14} + \tau |\log \tau| \big)\Big) \\
\eal
$$
Similarly using $E_-^0(z), E_-(z)$, we obtain for $z<z_0$,
$$
\bal
& \int_{-\infty}^{z_0} g(z)  e^{-it(z-z_0)^2/2} \rd z \\
& \quad = \frac {g_1(z_0)}{\sqrt t} E_-^0(0) + \frac {h(z_0)}{\sqrt t} \big(g_2(z_0) E_-^0(0) - g_2(0) E_-^0(-\sqrt t z_0)\big) \\
& \quad \qquad + O\Big( \frac {q}{\sqrt{t}} \big(|q|^{\frac 12} + t^{-\frac 14} + \tau |\log \tau| \big)\Big) \\
\eal
$$
and hence
$$
\bal
& \int_{-\infty}^\infty g(z)  e^{-it(z-z_0)^2/2} \rd z \\
& \ \ = \frac {E_0}{\sqrt t} \big(g_1(z_0) + h(z_0)(g_2(z_0) - g_2(0))\big)
 + O\Big( \frac {q}{\sqrt{t}} \big(|q|^{\frac 12} + t^{-\frac 14} + \tau |\log \tau| \big)\Big) \\
\eal
$$
As
$$
\big|h(z_0) (g_2(z_0)-g_2(0))\big| \leq \frac {c|q|}{|z_0| + |q|} \|g_2'\|_{L^2} |z_0|^{\frac 12} \leq c |q|^{3/2},
$$
the result follows for $\tau \leq \frac 12$.

For $\tau\geq 1/C>0$, note that $\frac 1{\sqrt t} \leq C|q|$. The result follows from the fact that
$$
|\text{II}_2| \leq \frac {c|g_2(0)|}{\sqrt t} \int_{\hat{z_0}}^\infty |h'_1(u)| \rd u = O(q^2),
$$
with the similar estimate for $z<z_0$.
\end{proof}

\begin{lemma}
\label{lem:expansion_rhp}
Let $m(x,t,z)$ solve the normalized RHP $(\mathbb{R}, v )$ without poles where the jump matrix $v$ is given in \eqref{eq:jump_v}.
Let $u(x,t)$ be the associated potential function.
Suppose that $\|r\|_{L^2 \cap L^\infty} \leq c|q| \ll 1$.
Then, for each fixed $z \in \cb^+$,
$$
	m(x,t,z) = I + \frac 1{2\pi i} \int_\rb \bpm 0& \frac{r(s)}{s-z} e^{i\theta(s)}\\ \frac{\overline{r(s)}}{s-z} e^{-i\theta(s)} & 0 \epm \rd s + O\Big(\frac {q^2}{|\textnormal{Im} z|^{{\frac 12}}} \Big),
$$
and
$$
	u(x,t) = \frac 1{2\pi} \int_\rb r(s) e^{i\theta(s)} \rd s +O(q^2).
$$
where $\theta(z) = xz - \frac 12 tz^2$.
\end{lemma}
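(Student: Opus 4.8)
The plan is to solve the singular integral equation defining $m$ perturbatively in the small parameter $\|r\|_{L^2\cap L^\infty}\le c|q|$. Recall from Section~\ref{sec:scat_inv} (equations \eqref{eq:sing_mu}, \eqref{eq:eq_m_polar}) that, since there are no poles, $m(x,t,z) = I + \frac1{2\pi i}\int_\rb \frac{\mu(s)(v(s)-I)}{s-z}\,ds$ where $\mu$ solves $(1-C_v)\mu = I$ with $C_v h = C^-(h(v-I))$. By Lemma~\ref{lem:exist_resol_Cw}, $\|(1-C_v)^{-1}\|_{L^2(\rb)}\le c(1+\rho^2)\le c$ since $\rho\le c|q|\ll1$. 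Hence $\mu - I = (1-C_v)^{-1}C_v I$ and, using $\|C_v\|_{L^2\to L^2}\le c\|v-I\|_{L^\infty}\le c\|r\|_{L^\infty}\le c|q|$ together with $\|C_v I\|_{L^2} = \|C^-(v-I)\|_{L^2}\le c\|v-I\|_{L^2}\le c|q|$, we get $\|\mu - I\|_{L^2(\rb)}\le c|q|$.

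Next I would substitute $\mu = I + (\mu - I)$ into the integral representation. The leading term is $I + \frac1{2\pi i}\int_\rb\frac{v(s)-I}{s-z}\,ds$, and since $v(s) - I = \bsm |r(s)|^2 & r(s)e^{i\theta(s)}\\ \ovl{r(s)}e^{-i\theta(s)} & 0\esm$, its off-diagonal part gives exactly the claimed integral $\frac1{2\pi i}\int_\rb\bsm 0 & \frac{r(s)}{s-z}e^{i\theta(s)}\\ \frac{\ovl{r(s)}}{s-z}e^{-i\theta(s)} & 0\esm ds$. The diagonal entry $\frac1{2\pi i}\int_\rb \frac{|r(s)|^2}{s-z}\,ds$ is bounded by $\frac{c}{|\mathrm{Im}\,z|}\|r\|_{L^2}^2\le \frac{cq^2}{|\mathrm{Im}\,z|}$, which is absorbed into the stated error $O(q^2/|\mathrm{Im}\,z|^{1/2})$. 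The correction term is $\frac1{2\pi i}\int_\rb\frac{(\mu(s)-I)(v(s)-I)}{s-z}\,ds$, which by Cauchy--Schwarz is bounded by $\frac{c}{|\mathrm{Im}\,z|^{1/2}}\|\mu - I\|_{L^2}\|v-I\|_{L^\infty}\le \frac{cq^2}{|\mathrm{Im}\,z|^{1/2}}$ (here one uses $\|\tfrac1{s-z}\|_{L^2(ds)}\le c|\mathrm{Im}\,z|^{-1/2}$), again absorbed into the error. This gives the first displayed formula.

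For the second formula, recall $u(x,t) = -i(m_1(x,t))_{12}$ where $m_1 = \lim_{z\to\infty}z(m(z)-I)$. From the representation, $m_1 = -\frac1{2\pi i}\int_\rb\mu(s)(v(s)-I)\,ds$, so the $(1,2)$-entry is $-\frac1{2\pi i}\int_\rb\big(\mu(s)(v(s)-I)\big)_{12}\,ds$. The leading contribution $-\frac1{2\pi i}\int_\rb r(s)e^{i\theta(s)}\,ds$ yields $u = \frac1{2\pi}\int_\rb r(s)e^{i\theta(s)}\,ds$ after multiplying by $-i$ (so $-i\cdot(-\tfrac1{2\pi i}) = \tfrac1{2\pi}$). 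The remaining contribution is $\int_\rb\big((\mu(s)-I)(v(s)-I)\big)_{12}\,ds$, bounded in absolute value by $\|\mu - I\|_{L^2}\|v-I\|_{L^2}\le cq^2$; additionally the product $(\mu - I)(v-I)$ may contribute a genuinely diagonal piece from $(v-I)_{11}$, but that does not affect the $(1,2)$-entry, and the leading $(v-I)$ alone has $(1,2)$-entry exactly $r e^{i\theta}$ with no diagonal admixture. Hence $u(x,t) = \frac1{2\pi}\int_\rb r(s)e^{i\theta(s)}\,ds + O(q^2)$, as claimed.

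The main obstacle is bookkeeping the norms carefully enough to confirm that \emph{all} error contributions are genuinely $O(q^2)$ (respectively $O(q^2/|\mathrm{Im}\,z|^{1/2})$) and uniform in $x$ and $t$: in particular one must check that the constant $c$ in $\|(1-C_v)^{-1}\|_{L^2}\le c(1+\rho^2)$ from Lemma~\ref{lem:exist_resol_Cw} is independent of $x,t$ (it is, since $v$ enters only through $|r|$, and the oscillatory factor $e^{i\theta}$ has modulus one), and that the Cauchy operator bounds $\|C^\pm\|_{L^2\to L^2}=1$ together with $\|(s-z)^{-1}\|_{L^2}\le c|\mathrm{Im}\,z|^{-1/2}$ are applied with the correct powers. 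Once the uniform resolvent bound is in hand, the rest is a routine expansion in powers of $\|r\|$, exactly as in the small-$r$ arguments of Section~\ref{sec:longtime} (cf. Remark~\ref{rmk:error_prop_norm} and Lemma~\ref{lem:small_r_estimate}).
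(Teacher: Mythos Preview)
Your approach is essentially the same as the paper's: expand $\mu = I + (1-C_v)^{-1}C_vI$, bound $\|\mu-I\|_{L^2}\le c|q|$, and split $\mu(v-I)$ into the leading $(v-I)$ plus a quadratic remainder controlled via Cauchy--Schwarz and $\|(s-z)^{-1}\|_{L^2}\le c|\mathrm{Im}\,z|^{-1/2}$. The paper does exactly this (invoking the elementary Neumann bound $\|(1-C_v)^{-1}\|\le (1-\|r\|_\infty)^{-1}$ rather than Lemma~\ref{lem:exist_resol_Cw}, but either works).

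There is one slip. For the diagonal term $\frac1{2\pi i}\int_\rb \frac{|r(s)|^2}{s-z}\,ds$ you give the bound $\frac{c\|r\|_{L^2}^2}{|\mathrm{Im}\,z|}$ and then assert this is absorbed into $O(q^2/|\mathrm{Im}\,z|^{1/2})$. That implication is false when $|\mathrm{Im}\,z|$ is small, and in the application (evaluation at $z_2=i\mu_2$ with $\mu_2\sim|q|$) the distinction matters: the paper needs $O(|q|^{3/2})$ there, whereas your bound would only give $O(|q|)$. The fix is immediate and is precisely the Cauchy--Schwarz estimate you already use for the correction term: write $|r|^2 = r\cdot\bar r$ with one factor in $L^2$ and one in $L^\infty$, giving
\[
\Big|\int_\rb \frac{|r(s)|^2}{s-z}\,ds\Big| \le \|r\|_{L^\infty}\|r\|_{L^2}\Big\|\frac1{\cdot-z}\Big\|_{L^2} \le \frac{c\,q^2}{|\mathrm{Im}\,z|^{1/2}}.
\]
With this correction your argument matches the paper's proof.
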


\begin{proof}
We refer to Section \ref{sec:scat_inv} for the solution procedure for the RHP $(\mathbb{R}, v )$.
Let $ {w_+} = v-I$. Since $\|r\|_{L^2 \cap L^\infty(\rb)} \leq c|q| \ll 1$,
$$
	\|(1-C_{ v})^{-1}\|_{L^2(\rb)} \leq \frac 1{1-\|r\|_\infty} \leq c,
$$
where $C_v$ is defined in \eqref{eq:cauchy_v_op}.
Hence
$$
	 \mu \equiv I + (1-C_{ v})^{-1} C_{ v} I = I + (1-C_{ v})^{-1} C^-  {w_+} \in I+L^2(\rb).
$$
Here
$$
	 {w_+}(z) = \bpm |r(z)|^2& r(z) e^{i\theta(z)}\\ \overline{r(z)} e^{-i\theta(z)} & 0 \epm.
$$
As
$$
	\Bigg| \int_\rb \frac {g_1(s)g_2(s)}{s-z} \rd s \Bigg| \leq \frac {\|g_1\|_{L^2} \|g_2\|_{L^\infty}}{|\textnormal{Im} z|^{{\frac 12}}}
$$
for any $g_1 \in L^2(\rb), g_2 \in L^\infty(\rb)$, it immediately follows that (see \eqref{eq:sing_mu})
$$
\bal m(x,t,z)
& = I + C( \mu {w_+})(z) \\
& = I + C{w_+}(z) + C\big(((1-C_{ v})^{-1} C^- {w_+}){w_+}\big)(z) \\
& = I + \frac 1{2\pi i} \int_\rb \bpm 0& \frac{r(s)}{s-z} e^{i\theta(s)}\\ \frac{\overline{r(s)}}{s-z} e^{-i\theta(s)} & 0 \epm \rd s + O\Big(\frac {q^2}{|\textnormal{Im} z|^{{\frac 12}}} \Big),\\
\eal
$$
and
$$
u(x,t) = \frac 1{2\pi} \int_\rb ( \mu {w_+})_{12} = \frac 1{2\pi} \int_\rb r(s) e^{i\theta(s)} \rd s +O(q^2).
$$
\end{proof}

Let $\psi^+=(\psi_1^+,\psi_2^+)$, $\psi_0^+=((\psi_0)_1^+,(\psi_0)_2^+)$ be the ZS-AKNS solutions for $u_0^e$, $\eta_{\mu_0}$, respectively.
The following expansion is standard in the perturbation theory.
\begin{lemma}
\label{lem:psi_expansion}
For $z\in \rb$, $x\geq 0$,
$$
	\psi^+(x, z) = \psi_0^+(x, z) - q \psi_0^+(x, z) \int_x^\infty L(y, z) \rd y + O(q^2)
$$
uniformly where $L(x,z) = (\psi_0^+)^{-1}W\psi_0^+$ and $W(x) = \bsm 0&w(x)\\-\ovl{w(x)}&0 \esm$.
\end{lemma}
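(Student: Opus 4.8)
The plan is to realize $\psi^+$ as a conjugation perturbation of $\psi_0^+$, using that on $\rb^+$ the two potentials differ by exactly $qW$. Since the B\"acklund extension agrees with $u_0$ on $\rb^+$ and $\eta_{\mu_0}$ agrees with $v_{\mu_0}$ on $\rb^+$ (Remark \ref{rmk:1_soliton}), we have $u_0^e(x) - \eta_{\mu_0}(x) = qw(x)$ for $x\geq0$, so that $Q(x) = Q_0(x) + qW(x)$ on $\rb^+$, where $Q_0 = \bsm 0&\eta_{\mu_0}\\ -\ovl{\eta}_{\mu_0}&0 \esm$.

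First I would set $\Phi(x,z) \equiv (\psi_0^+(x,z))^{-1}\psi^+(x,z)$ for $z\in\rb$, $x\geq0$; this is well defined because $\det\psi_0^+(x,z) = a(z;\eta_{\mu_0}) = \frac{z-i\mu_0}{z+i\mu_0}$ is independent of $x$ (Remark \ref{rmk:det_constant}) and nonzero on $\rb$. Differentiating the identity $\psi_0^+\Phi = \psi^+$ and substituting $\partial_x\psi^+ = (iz\sigma+Q_0+qW)\psi^+$ and $\partial_x\psi_0^+ = (iz\sigma+Q_0)\psi_0^+$, the $iz\sigma$- and $Q_0$-terms cancel, leaving $\psi_0^+\,\partial_x\Phi = qW\psi_0^+\Phi$, i.e. $\partial_x\Phi = qL\Phi$ with $L=(\psi_0^+)^{-1}W\psi_0^+$. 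Because $m^+ = \psi^+e^{-ixz\sigma}\to I$ and $m_0^+ = \psi_0^+e^{-ixz\sigma}\to I$ as $x\to+\infty$ (see \eqref{eq:zs_asymp}) and, for $z\in\rb$, conjugation by $e^{ixz\sigma}$ is unitary, one gets $\Phi(x,z) = e^{-ixz\sigma}(m_0^+)^{-1}m^+e^{ixz\sigma}\to I$ as $x\to+\infty$. Hence $\Phi$ solves the Volterra equation
\beq
\Phi(x,z) = I - q\int_x^\infty L(y,z)\Phi(y,z)\,\rd y .
\eeq

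Next I would establish the uniform bound $\sup_{x\geq0,\,z\in\rb}\int_x^\infty |L(y,z)|\,\rd y \leq c\|w\|_{L^1(\rb^+)} \leq c'\|w\|_{H^{1,1}(\rb)}$; here I use that $\psi_0^+$ and $(\psi_0^+)^{-1}$ are bounded uniformly for $x\geq0$, $z\in\rb$ (boundedness of $\psi_0^+=((\psi_0)_1^+,(\psi_0)_2^-)$ is read off from the explicit formulas \eqref{eq:m_0_1_plus}, recalling that $e^{ixz\sigma}$ is unitary on $\rb$, and boundedness of the inverse then follows since $|\det\psi_0^+|\equiv1$ on $\rb$), together with $w\in H^{1,1}(\rb)\subset L^1(\rb^+)$. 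With this bound the Volterra equation is solved by Neumann iteration, $\Phi=\sum_{n\geq0}\Phi^{(n)}$ with $\Phi^{(0)}=I$ and $\Phi^{(n+1)}(x)=-q\int_x^\infty L(y)\Phi^{(n)}(y)\,\rd y$, where $\|\Phi^{(n)}\|_{L^\infty}\leq(|q|\,c'\|w\|_{H^{1,1}})^n/n!$. Summing the tail over $n\geq2$ and using that $|q|$ and $\|w\|_{H^{1,1}}$ are bounded gives $\Phi(x,z) = I - q\int_x^\infty L(y,z)\,\rd y + O(q^2)$ uniformly in $x\geq0$, $z\in\rb$. Multiplying on the left by $\psi_0^+$ and using the uniform bound on $\psi_0^+$ once more yields $\psi^+ = \psi_0^+ - q\psi_0^+\int_x^\infty L\,\rd y + O(q^2)$, as claimed.

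I expect the only delicate points to be the passage to the limit $\Phi(x,z)\to I$ as $x\to\infty$ --- where it is essential that $z$ be real, so that the factors $e^{\pm ixz}$ produced by conjugation with $e^{ixz\sigma}$ remain bounded (for complex $z$ this step fails, which is why the lemma is stated only for $z\in\rb$) --- and the bookkeeping needed to ensure that every constant in the iteration is independent of $z\in\rb$ and $x\geq0$; both are routine given the explicit form of $\psi_0^+$ and the $L^1$-control on $w$ coming from $w\in H^{1,1}(\rb)$.
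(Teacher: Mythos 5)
Your proposal is correct and takes essentially the same route as the paper: set $\phi=(\psi_0^+)^{-1}\psi^+$, observe that $\phi_x=qL\phi$ with $\phi\to I$ as $x\to+\infty$ for $z\in\rb$, and solve the resulting Volterra equation by iteration. (One small slip: for $z\in\rb$ the paper has $\det\psi_0^+\equiv 1$ rather than $a(z;\eta_{\mu_0})$ --- see the line preceding \eqref{eq:psi_pm_det} --- but this does not affect your argument, since either quantity is nonvanishing and of modulus one on $\rb$.)
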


\begin{proof}
Let $\psi^+(x,z) = \psi_0^+ (x,z)\phi(x,z)$. Then,
$$
	\phi_x = (\psi_0^+)^{-1}(\psi^+_x- (\psi_0^+)_x \phi) = q L\phi. 
$$
As $\phi = (\psi_0^+)^{-1}\psi^+ \to I$ as $x\to +\infty$, $z\in\rb$, it follows by standard iterations that
$$
	\phi(x,z) = I-q\int_x^\infty L(y,z) \rd y + O(q^2).
$$
\end{proof}

\begin{proof}[Proof of Theorem \ref{T:main2}]
It is enough to consider $x\geq 0$.
We use $\approx$ to denote equality up to order $\frac {q}{\sqrt{t}} (t^{-\frac 14} + |q|^{\frac 12} + \tau |\log \tau|)$
if $t\leq \frac 12 |q|^{-2}$, and order $q^2$ if $t\geq \frac 1C |q|^{-2}$.
Note that $q^2 \leq  \frac {cq}{\sqrt t} \tau |\log \tau|$ if $t\leq \frac 12 |q|^{-2}$.
By the previous lemma, setting $x=0$ and using \eqref{eq:m_0_1_plus}, it follows that
\beq
\label{eq:expansion_psi_0}
\bpm A_1(z) \\B_1(z) \epm = q \bpm A_0(z)&-\ovl{B_0(z)} \\B_0(z)& \ovl{A_0(z)}  \epm \bpm f_1(z)\\ f_2(z) \epm   + O(q^2), \ \ z\in\rb,
\eeq
where
$$
\bpm f_1(z)\\ f_2(z) \epm
=  \int_0^\infty w(s) \bpm -2iz \mu_0 \sech (\mu_0  s)/(z^2 + \mu_0^2)  \\
 e^{isz} \big(z^2 + 2iz\mu_0 \tanh (\mu_0  s ) - \mu_0^2 \big)/ (z+i\mu_0)^2 \epm  \rd s,
$$
and $A_0, B_0, A_1$ and $B_1$ are defined in \eqref{eq:AB_0_AB_1}.
Let $g_3(z)$ be given in \eqref{eq:scattering_expansion2}.
By \eqref{eq:psi_+_real}, we have $g_3(-z_0) = \ovl{g_3(z_0)}$.
Hence using \eqref{A_0_B_0} and \eqref{eq:expansion_psi_0}, we obtain
\beq
\label{eq:g_3_eval_z_0}
\bal g_3(z_0) + g_3(-z_0)
& = 2\mathrm{Re} \big(A_0(-z_0) B_1 (z_0) + A_1(-z_0) B_0(z_0)\big) + O(q^2) \\
& = -2qK(z_0)+O(q^2) \\
\eal
\eeq
where $K(z)$ is given in \eqref{eq:K_z}.

If $a(z)$ has one zero $z_1 = i\mu_1$, then $\beta=-q$ and hence it follows from \eqref{eq:b} that
\beq
\label{eq:b_z_g_3}
b(z) = g_3(z)+g_3(-z)+\frac {2iq}{z-iq} g_3(z),
\eeq
By Lemma \ref{lem:real_norming_const} and Theorem \ref{thm:evol_scatt_data},
the norming constant $\gamma_1(t) = \gamma(t;z_1)$ for $z=z_1$ is given by
$$
\gamma_1(t) = \gamma_1 e^{-i\mu_1^2 t/2} = \sqrt{\frac {\mu_1+q}{\mu_1 -q}} e^{-i\mu_1^2 t/2}.
$$
Set
$$
r_f(z) \equiv r(z) \frac{z+i\mu_1}{z-i\mu_1}, \text{ and } c_1(t) = \frac {\gamma_1(t)}{a'(z_1)}.
$$
where $a(z) = \frac{z-z_1}{z-\overline{z_1}} e^{-l(z)}$.
By \eqref{eq:a_exp_formula},
$$
r_f(z) = \frac{\ovl{b(z)}}{\ovl{a(z)}}\frac{z+i\mu_1}{z-i\mu_1} = \ovl{b(z)}e^{\ovl{l(z)}}.
$$
where $l(z)$ is defined in \eqref{eq:l_z}.
Let $m_f(x,t,z)$, $u_f(x,t)$ be the solution of the normalized RHP and the associated potential function
in Lemma \ref{lem:expansion_rhp} with $r$ replaced by $r_f$.
Set $h(z) \equiv e^{\ovl{l(z)}} - 1$.
As $\|l\|_{H^{1,0}(\rb)} \leq c \|\log(1+|r|^2)\|_{H^{1,0}(\rb)}$,
it follows that $\|h(z)\|_{H^{1,0}(\rb)} = O(|q|^{3/2})$ and hence
we see by Lemma \ref{lem:stationary_phase_estimate}, Lemma \ref{lem:expansion_rhp}, \eqref{eq:g_3_eval_z_0} and \eqref{eq:b_z_g_3} that
$$
m_f(x,t,z_1) \approx \bpm 1 & p_1 \\ p_2 & 1 \epm, \ \
u_f(x,t) \approx i(z_0 - i\mu_1)p_1,
$$
where
\beq
\label{eq:p_12_small_q}
p_1 = -\frac q{\sqrt t} \frac {K(z_0) E_0 e^{\frac {itz_0^2}2}}{\pi i(z_0 - i\mu_1)}, \ \
p_2 = -\frac q{\sqrt t} \frac {K(z_0) \ovl{E_0} e^{-\frac {itz_0^2}2}}{\pi i(z_0 - i\mu_1)} .
\eeq
From Remark \ref{rmk:symmetry_r_backlund},
\beq
\label{eq:approx_l}
	l(iu) = \frac 1{2\pi i} \int_\rb \frac {iu}{s^2+u^2}{\log \big(1+\big|r(s)\big|^2\big)} \rd s = O(q^2),
\eeq
uniformly on $u >0$.
Let $\upsilon_1$, $\upsilon_1^x$ be given in \eqref{eq:nu_j_sm_t}.
It follows by \eqref{eq:approx_l} that $\frac{-c_1(t)}{z_1 - \ovl{z_1}} = \upsilon_1^{-1} + O(q^2)$.
Set
$${\mathfrak{b}} \equiv m_f(z_1) e^{ixz_1 \sigma} \bpm 1 \\ \frac{-c_1(t)}{z_1 - \ovl{z_1}} \epm.$$
Assembling the above results, we see that
\beq
\label{eq:b_one_zero_}
 \upsilon_1 e^{-\frac{\mu_1 x}2} \mf b
\approx \bpm \upsilon_1^x +p_1 \\ p_2 \upsilon_1^x + 1 \epm.
\eeq
Therefore, by the Darboux transformation,
\beq
\label{eq:one_zero_darboux}
\bal u(x,t)
& = u_f(x,t) + i(z_1 - \ovl{z_1}) \mathcal{F}({\mathfrak{b}}) \\
& \approx i(z_0 - i\mu_1)p_1 - \frac {2\mu_1 (\upsilon_1^x + p_1 )(\ovl{p_2} \ovl{\upsilon_1^x} +1)}
{|\upsilon_1^x + p_1|^2 + |p_2\upsilon_1^x +1|^2}. \\
\eal
\eeq
This relation holds, in particular, in the region $x\geq 1/M$.
If $t\geq \frac 1C |q|^{-2}$,  then $\frac q{\sqrt t} = O(q^2)$ and it follows that $p_1, p_2 = O(q^2)$,
which in turn implies that
$$
u(x,t) \approx  \frac {- 2\mu_1\upsilon_1^x}{|\upsilon_1^x|^2+1} \approx e^{i \mu_1^2 t/2} v_{\mu_1}(x).
$$
This proves \eqref{eq:sol_large_x_one_zero} and the second part of \eqref{eq:sol_small_x_one_zero}.

Now assume $t\leq \frac 12 |q|^{-2}$ and $0\leq x\leq M$.
Since $z_0 = O(t^{-1})$, $K(z_0) = \mathrm{Re} \int_0^\infty e^{-isz_0} w(s) + O(t^{-1})$.
Set $K_1(z) \equiv \int_{-\infty}^\infty e^{-isz} w(s) ds$ and $w_0 \equiv \int_0^\infty w(s) \rd s = \frac 12 K_1(0)$.
As $w\in H^{0,1}(\rb)$, $K_1 \in H^{1,0}(\rb)$ and so $|K_1(z_0) - K_1(0)| \leq |z_0|^{\frac 12} \|K_1'\|_{L^2(\rb)} = O(t^{-\frac 12})$.
As $w$ is real and even, $K(z_0) = \frac 12 \mathrm{Re} K_1(z_0) + O(t^{-1}) = w_0 + O(t^{-\frac 12})$.
Thus,
$$
 p_1 = -\frac{qw_0}{\mu_1} \sqrt{\frac 2{\pi t}} e^{i[x^2/(2t) - \pi/4]}+O(qt^{-1}), \ \ p_2 = \ovl{p_1} + O(q t^{-\frac 32}).
 $$
Combining the results, we obtain from \eqref{eq:one_zero_darboux} for $0\leq x\leq M$,
$$
u(x,t) \approx e^{i \mu_1^2 t/2} \Bigg[ v_{\mu_1}(x) - q w_0  \sqrt{\frac{2}{\pi t}} \Big(e^{i\Omega}\sech^2\mu_1 x - e^{-i\Omega} \tanh^2 \mu_1 x \Big) \Bigg],
$$
where $\Omega(x,t) = -x^2/(2t) + \mu_1^2 t/2 + \pi/4$.
This completes the proof of \eqref{eq:sol_small_x_one_zero}.

If $a(z)$ has two zeros in $\cb^+$, we have $\beta=q$ and hence it follows again from \eqref{eq:b} that
$$ b(z) = g_3(z)+g_3(-z)-\frac {2iq}{z+iq} g_3(-z).$$
Again by Lemma \ref{lem:real_norming_const} and Theorem \ref{thm:evol_scatt_data},
the norming constants $\gamma_j(t) = \gamma(t;z_j)$ for $z=z_j$, $j=1,2$ are given by
$$
\gamma_j(t) = \gamma_j e^{-i\mu_j^2 t/2} = \sqrt{\frac {\mu_j-q}{\mu_j +q}} e^{-i\mu_j^2 t/2}, \ \ j=1,2.
$$
Set
$$
r_f(z) = \frac{z+i\mu_1}{z-i\mu_1}\frac{z+i\mu_2}{z-i\mu_2} \frac{\ovl{b(z)}}{\ovl{a(z)}}, \  \text{ and }
c_1(t) = \frac {\gamma_1(t)}{a_1'(z_1)}, \ c_2(t) = \frac {\gamma_2(t)}{a'(z_2)},
$$
where $a_1(z) = \frac{z-z_1}{z-\overline{z_1}} e^{-l(z)}$ and $a(z) = \frac{z-z_2}{z-\overline{z_2}} a_1(z)$.
Denote by $m_f(x,t,z)$ the solution of the normalized RHP $(\rb, v_f)$ without poles associated with $r_f$,
and by $m_1(x,t,z)$ the solution of the normalized RHP $(\rb, v_1 = \bsm 1+|r_1|^2 & r_1 e^{i\theta} \\ r_1 e^{-i\theta} & 1 \esm )$
with $r_1(z) = \frac{z+i\mu_2}{z-i\mu_2} \frac{\ovl{b(z)}}{\ovl{a(z)}}$
and one pair of poles at $z_1=i\mu_1$ and $\ovl{z_1} = -i\mu_1$ with the norming constants $\gamma_1(t), -\ovl{\gamma_1(t)}$
(cf. \eqref{eq:norming_const_fixed}).
Set
$$
\hat \upsilon_1 \equiv -\sqrt {\frac {\mu_1 +q}{\mu_1 - q}} e^{i\mu_1^2t/2}, \ \
\hat \upsilon_1^x \equiv \hat \upsilon_1 e^{-\mu_1 x},
$$
and
$$
{\mathfrak{b}_1} \equiv m_f(z_1) e^{iz_1 x \sigma} \bpm 1 \\ \frac{-c_1(t)}{z_1 - \ovl{z_1}} \epm.
$$
Then, again by \eqref{eq:approx_l}, we have
$$
 \hat\upsilon_1 e^{-\frac{\mu_1 x}2}{\mathfrak{b}_1} \approx \bpm \hat\upsilon_1^x + p_1 \\ p_2 \hat\upsilon_1^x + 1 \epm . \\
$$
as above.
Write $\mf b_1 = ((\mf b_1)_1, (\mf b_1)_2)^T$.
Define
$$\bal
& \hat {{\mathfrak{b}}}_1 \equiv
\bpm ({\mathfrak{b}}_1)_1 & -(\ovl{{\mathfrak{b}}_1})_2 \\ ({\mathfrak{b}}_1)_2 & (\ovl{{\mathfrak{b}}_1})_1 \epm, \ \
 \lambda_1 \equiv \frac{z_2 - \ovl{z_1}}{z_2 - z_1},\\
& \hat \mu \equiv \bpm z_2-z_1 & 0 \\ 0 & z_2-\ovl{z_1}\epm = (z_2-z_1) \bpm 1 & 0 \\ 0 & \lambda_1 \epm. \\
\eal $$
From Lemma \ref{lem:expansion_rhp} and the fact that $\mu_2 = O(|q|)$ (Lemma \ref{lem:zeros_a}), we obtain
$$
m_f(x,t,z_2) = \bpm 1 & p_3 \\ p_4 & 1 \epm+ O\big(|q|^{\frac 32}\big),
$$
where
\beq
\label{eq:p_34_small_q}
 p_3 \equiv \frac 1{2\pi i} \int_\rb \frac{r_f(s)}{s-i\mu_2} e^{i\theta(s)}, \ \
p_4 \equiv \frac 1{2\pi i} \int_\rb \frac{\ovl{r_f(s)}}{s-i\mu_2} e^{-i\theta(s)}.
\eeq
Define $\hat \upsilon_2 \equiv -\sqrt{\frac{\mu_2+q}{\mu_2-q}} e^{i\mu_2^2 t/2}$.
Then, $-\frac{c_2(t)}{z_2 - \ovl{z_2}} = \lambda_1\hat \upsilon_2^{-1}(1+O(q^2))$ by \eqref{eq:approx_l}.
Let $\psi_1(x,t,z) = m_1(x,t,z) e^{ixz\sigma}$.
Note that $\hat \upsilon_2 = O(q)$ by Lemma \ref{lem:zeros_a},
$\lambda_1 = -1+O(q)$ and $\hat\upsilon_1 = - e^{i\mu_2^2 t/2} + O(q)$.
Assembling the above results, we see that
\beq
\label{eq:mf_b_2_small_q}
\bal {\mathfrak{b}_2}
& \equiv m_1(z_2) e^{ixz_2\sigma} \bpm 1 \\ \frac{-c_2(t)}{z_2 - \ovl{z_2}} \epm \\
&=  \hat {{\mathfrak{b}}}_1 \hat\mu \hat {{\mathfrak{b}}}_1^{-1} m_f(z_2) e^{-x\mu_2\sigma} \hat\mu^{-1}
 \bpm 1 \\ \frac{-c_2(t)}{z_2 - \ovl{z_2}} \epm. \\
& = \hat \upsilon_2^{-1} e^{\frac{\mu_2 x}2} \bigg[ \bpm p_3 - s \\ 1+  \ovl{\hat \upsilon_2^x} s \epm  + O(q) \bigg] \\
\eal
\eeq
where $s= \frac{2(p_3-\hat \upsilon_1^x)}{|\hat \upsilon_1^x|^2 + 1}$.
By Lemma \ref{lem:stationary_phase_estimate}, we see that
\beq
\label{eq:eq_3}
	qp_3 = -\frac 1{2\pi} \frac q{\mu_2} \int_\rb \frac{i\mu_2}{s-i\mu_2} \ovl{b(s)}(1+h(s)) e^{i\theta(s)} \rd s \approx 0.
\eeq
Set $s_0 \equiv \frac{-2\hat \upsilon_1^x}{|\hat \upsilon_1^x|^2 + 1}$.
Using \eqref{eq:mf_b_2_small_q} and \eqref{eq:eq_3}, we have
$$ \bal  i(z_2-\ovl{z_2}) \mathcal{F}({\mathfrak{b}_2})
& \approx \frac{ 2q s_0 (1+\hat \upsilon_1^x \ovl{s_0})} {|s_0|^2 + |1+ \hat \upsilon_1^x \ovl{s_0}|^2}
=  \frac{ 4q \hat\upsilon_1^x(|\hat\upsilon_1^x|^2-1)}{(|\hat\upsilon_1^x|^2 + 1)^2}. \\
\eal $$
But,
$$
\bal
& i(z_1 - \ovl{z_1}) \mathcal{F}({\mathfrak{b}}) - i(z_1 - \ovl{z_1}) \mathcal{F}({\mathfrak{b}_1}) \\
& \quad =  - \frac {2\mu_1 (\upsilon_1^x + p_1 )(\ovl{p_2} \ovl{\upsilon_1^x} +1)}
{|\upsilon_1^x + p_1|^2 + |p_2\upsilon_1^x +1|^2}
 + \frac {2\mu_1 (\hat\upsilon_1^x + p_1 )(\ovl{p_2} \ovl{\hat\upsilon_1^x} +1)}
{|\hat\upsilon_1^x + p_1|^2 + |p_2\hat\upsilon_1^x +1|^2} \\
& \quad \approx  \frac{ 4q \hat\upsilon_1^x(|\hat\upsilon_1^x|^2-1)}{(|\hat\upsilon_1^x|^2 + 1)^2}
\approx i(z_2-\ovl{z_2}) \mathcal{F}({\mathfrak{b}_2}). \\
\eal
$$
where $\mf b$ is given in \eqref{eq:b_one_zero_}.
Thus
$$
\bal u(x,t)
& = u_f + i(z_1 - \ovl{z_1}) \mathcal{F}({\mathfrak{b}_1}) + i(z_2-\ovl{z_2}) \mathcal{F}({\mathfrak{b}_2}) \\
& \approx u_f + i(z_1 - \ovl{z_1}) \mathcal{F}({\mathfrak{b}}). \\
\eal
$$
This completes the proof of Theorem \ref{T:main2}.
\end{proof}

\section{Appendix - Adding in or removing poles via a Darboux transformation}
The following calculations are standard in scattering/inverse scattering theory (see e.g. \cite{RS}).
Let $u(x)\in H^{1,1}(\rb)$ be given and consider the associated ZS-AKNS operator
$\partial_x - \bigl(iz\sigma + \bigl(\begin{smallmatrix} 0&u(x) \\ -\overline{u(x)} & 0 \end{smallmatrix}\bigr)\bigr)$
and its reflection coefficient function $r(z)\in H^{1,1}(\rb)$.
Suppose that for each $x\in \rb$, $2 \times 2$ matrix $\psi(x,z) = m(x,z)e^{ixz\sigma}$ solves the corresponding RHP
with a finite number of simple bound states at $z=z_1,  \dots, z_n\in\cb^+$, and at $z=\ovl{z_1},  \dots, \ovl{z_n}\in\cb^-$, $n\geq 0$,
\beq
\left\{
\bal
& \psi(x,z) \text{ is analytic in } z\in\cb\setminus (\rb\cup\{z_1, \ovl{z_1}, \dots, z_n, \ovl{z_n} \}),  \\
& \psi_+(x,z) = \psi_-(x,z) v(z), \ \ v(z) = \bsm 1+|r(z)|^2 & r(z) \\ \ovl{r(z)}& 1 \esm , \ \ z\in \rb,\\
& \psi(x,z)e^{-ixz\sigma} \to I \text{ as } z \to \infty, \\
& \Res\displaylimits_{z = z_k} \psi(x,z)= \lim_{z \to z_k} \psi(x,z) \bpm 0&0\\c(z_k)&0 \epm, 1\leq k \leq n,\\
& \Res\displaylimits_{z = \ovl{z_k}} \psi(x,z)= \lim_{z \to \ovl{z_k}} \psi(x,z) \bpm 0&-\ovl{c(z_k)}\\0&0 \epm. \\
\eal
\right.
\eeq
The goal is to add in another simple bound state at $z=\xi\in \cb^+\setminus \{z_1, \cdots , z_n \}$
and simultaneously at $z=\ovl\xi\in \cb^-\setminus \{\ovl {z_1}, \cdots , \ovl{z_n} \}$.
We use a Darboux transformation $(z+P)(\partial_x - L) = (\partial_x - \tilde L) (z+P)$ as in \eqref{eq:backlund_commute_x}.
By \eqref{eq:sol_P}, $P$ can be chosen in the form $P = {\mathfrak{b}}(x)P_0{\mathfrak{b}}^{-1}(x)$
where $P_0$ is a constant matrix and $\mf b =\mf b(x)$ solves the equation $b' = Qb - i \sigma b P_0$.
In contrast to the choice $P_0 = P(0) \equiv -iq \sigma_3$ in \eqref{eq:backlund},
the appropriate choice here is $P_0 = - \bsm \xi & 0 \\ 0& \ovl{\xi} \esm$; $\mf b$ is determined below.
Set
\beq
\label{eq:darboux}
\tilde\psi(x,z) \equiv {\mathfrak{b}}(x)\mu(z){\mathfrak{b}}^{-1}(x) \psi(x,z) \mu^{-1}(z),
\eeq
where $\mu(z) = z+P_0 = \bsm z-\xi & 0 \\ 0& z-\ovl{\xi} \esm$.
Note that $\tilde \psi(x,z) e^{-ixz\sigma} \to I$ as $z\to\infty$.
Let $\tilde c(\xi)$ be any nonzero constant.
We want to choose ${\mathfrak{b}}(x)$ so that $\tilde\psi$ has a simple pole in the first column at $z=\xi$
and a simple pole in the second column at $z=\ovl{\xi}$ such that for $x\in\rb$,
$$
\bal
& \Res\displaylimits_{z = \xi} \tilde\psi(x,z)= \lim_{z \to \xi} \tilde\psi(x,z) \bpm 0&0\\\tilde c(\xi)&0 \epm,\\
& \Res\displaylimits_{z = \ovl{\xi}} \tilde\psi(x,z)= \lim_{z \to \ovl{\xi}} \tilde\psi(x,z) \bpm 0&-\ovl{\tilde c(\xi)}\\0&0 \epm. \\
\eal
$$
Since
$$
{\mathfrak{b}}^{-1} \tilde\psi = \bpm ({\mathfrak{b}}^{-1}\psi)_{11} & ({\mathfrak{b}}^{-1}\psi)_{12}\frac{z-\xi}{z-\ovl{\xi}} \\ ({\mathfrak{b}}^{-1}\psi)_{21}\frac{z-\ovl{\xi}}{z-\xi} & ({\mathfrak{b}}^{-1}\psi)_{22}  \epm,
$$
we have
$$
\Res\displaylimits_{z = \xi} {\mathfrak{b}}^{-1}(x) \tilde\psi(x,z) =  \bpm 0&0\\({\mathfrak{b}}^{-1}\psi)_{21}(x,\xi)(\xi-\ovl{\xi}) & 0 \epm.
$$
But,
$$
\lim_{z \to \xi} {\mathfrak{b}}^{-1}(x)\tilde\psi(x,z) \bpm 0&0\\\tilde c(\xi)&0 \epm = \bpm 0&0\\\tilde c(\xi)({\mathfrak{b}}^{-1}\psi)_{22}(x,\xi) & 0 \epm.
$$
and hence we must have
$$
(\xi - \ovl{\xi})(e_2, {\mathfrak{b}}^{-1}\psi(x,\xi) e_1) = \tilde c(\xi)(e_2, {\mathfrak{b}}^{-1}\psi(x,\xi) e_2).
$$
Therefore, it follows necessarily that
$$
{\mathfrak{b}}(x)e_1 = c_1(x) \Big( \psi(x,\xi)e_1 - \frac{\tilde c(\xi)}{\xi-\ovl{\xi}} \psi(x,\xi)e_2 \Big)
$$
for some nonzero function $c_1(x)$.
Similarly for $z=\ovl{\xi}$, we see that
$$
{\mathfrak{b}}(x)e_2 = c_2(x) \Big( -\frac{\ovl{\tilde c(\xi)}}{\xi-\ovl{\xi}} \psi(x,\ovl{\xi})e_1+\psi(x,\ovl{\xi})e_2 \Big)
$$
for some nonzero function $c_2(x)$.
Observe that $c_1(x), c_2(x)$ factor out in the formula \eqref{eq:darboux} for $\tilde\psi(x,z)$.
Set
\beq
\label{eq:b_darboux}
{\mathfrak{b}}(x)= \bpm \psi(x,\xi) \bpm 1 \\ \frac{-\tilde c(\xi)}{\xi-\ovl{\xi}}\epm & \psi(x,\ovl{\xi}) \bpm \frac{-\ovl{\tilde c(\xi)}}{\xi-\ovl{\xi}} \\ 1\epm \epm.
\eeq
From the symmetry in Proposition \ref{prop:scattering_symmetry}(i) we see that
$\mf b_2 = \bsm 0&-1\\1&0 \esm \ovl{\mf b_1}$ where $\mf b = (\mf b_1, \mf b_2)$.
Thus, $\det {\mathfrak{b}}(x) =|(\mf b_1)_1(x)|^2 + |(\mf b_1)_2(x)|^2 > 0$ and hence $\mf b(x)$ is invertible for all $x\in \rb$.
The jump matrix $\tilde v$ for $\tilde\psi(x,z)$ is given by
$$
\bal	\tilde v(z)
& = \tilde \psi_-^{-1}(x,z)\tilde \psi_+(x,z) = \mu(z) v(z) \mu^{-1}(z) \\
& = \bpm 1+|\tilde r(z)|^2 & \tilde r(z) \\ \ovl{\tilde r(z)} & 1 \epm, \ \ z\in\rb, \\
\eal
$$
where
$$
\tilde r(z) = r(z) \frac{z-\xi}{z - \ovl{\xi}}.
$$
A straightforward calculation shows that for $1\leq k\leq n$,
$$
\bal
& \Res\displaylimits_{z = z_k} \tilde\psi(x,z)= \lim_{z \to z_k} \tilde\psi(x,z) \bpm 0&0\\\tilde c(z_k)&0 \epm,\\
& \Res\displaylimits_{z = \ovl{z_k}} \tilde\psi(x,z)= \lim_{z \to \ovl{z_k}} \tilde\psi(x,z) \bpm 0&-\ovl{\tilde c(z_k)}\\0&0 \epm. \\
\eal
$$
where
$$
\tilde c(z_k) = c(z_k) \frac{z_k-\ovl\xi}{z_k - \xi}.
$$
The above calculations show that $\tilde m(x,z) = \tilde\psi(x,z)e^{-ixz\sigma}$ solves the RHP of type \eqref{eq:jump_inv_scat}
with $r(z)$, $Z_+$, $K_+$ replaced by $\tilde r(z)$, $\tilde Z_+ =\{z_1, \cdots, z_n, \xi \}$,
$\tilde K_+ = \{\tilde c(z_1), \cdots \tilde c(z_n), \tilde c(\xi) \}$, respectively.
Note that (see \eqref{eq:a_exp_formula})
$$
\tilde a(z) = \frac{z-\xi}{z - \ovl\xi} a(z),
$$
where $a(z)$, $\tilde a(z)$ are the scattering functions for $\psi(x,z)$, $\tilde \psi(x,z)$, respectively.
Hence we see from \eqref{eq:gamma_c_a} that
\beq
\label{eq:norming_const_fixed}
\tilde \gamma (z_k) = \tilde c(z_k) \tilde a'(z_k) = c(z_k) a'(z_k) = \gamma(z_k), \ \ k=1, \cdots, n
\eeq
where $\gamma(z_k)$, $\tilde \gamma (z_k)$ are the corresponding norming constants.
By Remark \ref{rmk:RHP_unique}, $\tilde m$ is unique.
Finally, we compute the corresponding potential $\tilde u(x)$.
From the fact that $m_x = iz[\sigma, m] + Qm$, $Q = \bsm 0&u \\ -\ovl{u}&0 \esm$, we have
$$
Q = -i[\sigma, m_1], \ \ m = I + \frac {m_1}z + o(z^{-1}),
$$
as $z\to\infty$ in any cone $|\textnormal{Im} z|> c|\textnormal{Re} z|$, $c>0$.
Let $\mu_1 = \bsm \xi & 0 \\ 0 & \ovl{\xi} \esm$. For $\tilde m = \tilde\psi e^{-ixz\sigma}$
$$
\bal \tilde m
& = {\mathfrak{b}}\Big(I-\frac {\mu_1}z\Big){\mathfrak{b}}^{-1}\Big( I + \frac {m_1}z + o(z^{-1}))\Big)\Big(I-\frac {\mu_1}z\Big)^{-1} \\
& = I + \frac {m_1-{\mathfrak{b}}\mu_1{\mathfrak{b}}^{-1} +\mu_1}z + o(z^{-1}),\\
\eal
$$
and hence
\beq
\label{eq:darboux_u}
\bal \tilde u(x)
&= -i[\sigma, m_1-{\mathfrak{b}}\mu_1{\mathfrak{b}}^{-1} +\mu_1]_{12} \\
&= u(x) + i(\xi - \ovl{\xi}) \frac{(\mf b_1)_1 \ovl{(\mf b_1)_2}} {|(\mf b_1)_1|^2 +|(\mf b_1)_2|^2}.\\
\eal
\eeq

One can also use Darboux transformations similar to \eqref{eq:darboux} to remove eigenvalues.
We do not provide any further details, except to note that at each step,
if the poles at $z=z_k, \ovl{z_k}$ are removed, then $r(z)\to \tilde r(z) = r(z) \frac{z-\ovl{z_k}}{z-z_k}$, etc.

\section*{Acknowledgements}
The work of the first author was supported in part by NSF Grant DMS-0500923.


\bibliographystyle{alpha}
\bibliography{NLS_delta}

\end{document}